\newtheorem{theorem}{Theorem}[section]
\newtheorem{lemma}[theorem]{Lemma}
\newtheorem{corollary}[theorem]{Corollary}
\newtheorem{proposition}[theorem]{Proposition}
\newtheorem{sublemma}{}[theorem]
\newtheorem{noname}[theorem]{}
\newcommand{\ba}{\backslash}
\newcommand{\baba}{\ba\!\!\ba}
\newcommand{\ud}{{\underline{\downarrow}\,}}
\newcommand{\da}{\downarrow}
\newcommand{\cl}{{\rm cl}}
\newcommand{\thc}{$3$-connected}
\newcommand{\ths}{$3$-separation}
\newcommand{\tws}{$2$-separation}
\newcommand{\cn}{contradiction}
\begin{document}

\title[A splitter theorem for $3$-connected $2$-polymatroids]{A splitter theorem for $3$-connected $2$-polymatroids}

\thanks{The  second and third 
  authors were supported
 by the New Zealand Marsden Fund.}

\author{James Oxley} 
\address{Department of
 Mathematics, Louisiana State University, Baton Rouge, Louisiana, USA}
\email{oxley@math.lsu.edu}

\author{Charles Semple}
\address{Department of Mathematics and Statistics,
University of Canterbury,
Christchurch,
New Zealand}
\email{c.semple@math.canterbury.ac.nz}

\author{Geoff Whittle}
\address{School of Mathematics, Statistics and Operations Research,
Victoria University,
Wellington,  
New Zealand}
\email{Geoff.Whittle@mcs.vuw.ac.nz}

\subjclass{05B35}
\date{\today}

\begin{abstract} Seymour's Splitter Theorem is a basic inductive tool for dealing with $3$-connected matroids. This paper proves a generalization of that theorem for the class of $2$-polymatroids. Such structures include matroids, and they model  both sets of points and lines in a projective space and sets of edges in a graph.  A series compression in such a structure is an analogue of contracting an edge of a graph that is in a series pair. A $2$-polymatroid $N$ is an s-minor of a $2$-polymatroid $M$ if $N$ can be obtained from $M$ by a sequence of contractions, series compressions, and  dual-contractions, where the last are modified deletions. 
The main result proves that if $M$ and $N$ are $3$-connected $2$-polymatroids such that $N$ is an s-minor of $M$, then $M$ has a $3$-connected  s-minor  $M'$ that has an s-minor isomorphic to $N$ and has $|E(M)| - 1$ elements unless $M$ is a whirl or the cycle matroid of a wheel. In the exceptional case, such an $M'$ can be found with $|E(M)| - 2$ elements. 
 \end{abstract}

\maketitle
\vspace*{-30pt}
\section{Introduction}
\label{intro}

Let $M$ be a $3$-connected matroid other than a wheel or a whirl. Tutte~\cite{wtt} proved that   $M$ has an element  whose deletion or contraction is $3$-connected. Seymour~\cite{pds} extended this theorem by showing that, for a proper $3$-connected minor $N$ of $M$, the matroid $M$ has an element  whose deletion or contraction is $3$-connected and has an $N$-minor. 
These theorems have been   powerful inductive tools for working with $3$-connected matroids. In \cite{oswww}, with a view to attacking representability problems for 2-polymatroids, we generalized the Wheels-and-Whirls Theorem to $2$-polymatroids. In this paper, we prove a generalization of the Splitter Theorem for $2$-polymatroids.

A basic example  of a matroid is a set of points in a projective  space. If, instead, we take a finite set of points and lines in a projective space, we get an example of a $2$-polymatroid. Whereas each element of a matroid has rank zero or one,  an individual element in a $2$-polymatroid can also have rank two. Formally, for a positive integer $k$, 
a  {\em $k$-polymatroid}  $M$ is a pair 
$(E,r)$ consisting of a finite set $E$, called the {\it ground set},  and a function $r$, 
called the {\it rank function}, from the power set of $E$ into the integers satisfying the 
following  conditions:

\begin{itemize}
\item[(i)] $r(\emptyset) = 0$;
\item[(ii)] if $X \subseteq Y \subseteq E$, then $r(X) \le r(Y)$;
\item[(iii)] if $X$ and $Y$ are subsets of $E$, then $r(X) + r(Y) \ge r(X \cup Y) + r(X \cap Y)$; 
and
\item[(iv)] $r(\{e\})\leq k$ for all $e\in E$.
\end{itemize}

A matroid is just a 1-polymatroid. Equivalently, it is a $2$-polymatroid in which every element has rank at most one.  
Our focus in this paper will be on $2$-polymatroids.   
From a graph $G$,  in addition to its cycle matroid,   we can derive a second $2$-polymatroid on $E(G)$, which we denote by $M_2(G)$. The latter  is defined
by letting the rank of a set $A$ of edges be the number of vertices incident with edges in $A$.
Observe that non-loop edges of $G$ have rank two in $M_2(G)$. 

Matroid connectivity generalizes naturally to $2$-polymatroids. In particular, 
3-connectivity for matroids extends routinely to a notion of 3-connectivity for
$2$-polymatroids.  A simple $3$-connected
graph $G$ has a $3$-connected cycle matroid. On the other hand, $M_2(G)$ is $3$-connected whenever 
$G$ is a $2$-connected loopless graph.

Deletion and contraction for matroids
extend easily to $2$-polymatroids. This gives a notion of minor for $2$-polymatroids that extends
that of minor for matroids, and, via cycle matroids, that of minor for graphs. But what
happens when we consider the $2$-polymatroid $M_2(G)$? If $e$ is an edge of $G$, then deletion
in $M_2(G)$ corresponds to deletion in $G$, but it is not the same with contraction.
However, there is an operation on $M_2(G)$ that corresponds to contraction in $G$. 
Specifically, if $e$ is an element of the $2$-polymatroid $M$ and $r(\{e\}) > 0$, then 
the {\em compression} of $e$ from $M$, denoted $M\downarrow e$, is obtained by 
placing a rank-$1$ element $x$ freely on $e$, contracting $x$, and then deleting $e$ from the resulting $2$-polymatroid.
In particular, $M_2(G)\downarrow e=M_2(G/e)$
for a non-loop edge $e$ of the graph $G$. 

Representability of matroids extends easily to representability of polymatroids over fields. Indeed, much of the motivation for this paper is derived from our desire to develop tools for attacking representability problems for $2$-polymatroids.  
The class of $2$-polymatroids representable over a field $\mathbb F$ is closed under both 
deletion and contraction. When $\mathbb F$ is finite, this is not the case for compression in general although it is the case for a restricted type of compression.
In \cite{oswww}, we defined a certain type of 3-separator, which we called a `prickly' 3-separator. A series pair in a graph $G$ is a 2-element prickly 3-separator of $M_2(G)$. Larger prickly 3-separators do not arise from graphs, but do arise in more general settings. Compressing elements from prickly 3-separators   preserves representability.  We gave examples in \cite{oswww} to show that, if we wish to generalize Tutte's Wheels-and-Whirls Theorem to $2$-polymatroids, it is necessary to allow compression of elements from prickly 3-separators. The main result of \cite{oswww} proves such a generalization by showing that  a $3$-connected non-empty $2$-polymatroid that not a whirl or the cycle matroid of a 
wheel has an element $e$ such that either  $M\ba e$ or $M/e$ is $3$-connected, or
 $e$ belongs to a prickly $3$-separator, and $M\downarrow e$ is $3$-connected. 
 
 Geelen, Gerards, and Whittle~\cite{ggwrota} have announced that Rota's Conjecture~\cite{rot} is true, that is, for every finite field, there is a finite set of minor-minimal matroids that are not representable over that field. In \cite{oswww}, we showed that, for every field ${\mathbb F}$, the set of minor-minimal 2-polymatroids that are not representable over  ${\mathbb F}$ is infinite, so one generalization of Rota's Conjecture for 2-polymatroids fails. We  believe, however,  that  an alternative generalization of the conjecture does hold. Specifically,  we conjectured in \cite{oswww} that, when ${\mathbb F}$ is finite,  there are only finitely many  2-polymatroids that are minimal with the property of  being non-representable over  ${\mathbb F}$ where we allow, as reduction operations, not only   deletion and  contaction but also compression of elements from prickly 3-separators.

Our main result  appears at the end of this section. We now give the rest of the background needed to understand that result. 
The matroid terminology used here will follow  Oxley~\cite{oxbook}.   Lov\'{a}sz and  Plummer~\cite[Chapter 11]{lovaplum} have given an interesting discussion of $2$-polymatroids and some of their properties. We call $(E,r)$ a {\it polymatroid} if it is a $k$-polymatroid for some positive integer $k$. 
In a $2$-polymatroid $(E,r)$, an  element $x$ will be called a {\it line}, a {\it point}, or a {\it loop} when its rank is $2$, $1$, or $0$, respectively. For readers accustomed to using the terms `point' and `line'   for flats in a matroid of rank one and two, respectively, this may create some potential confusion. However, in this paper, we shall never use the terms `point' and `line' in this alternative way. Indeed, we will not even define a flat of a $2$-polymatroid.




Let $M$ be a polymatroid $(E,r)$. For a subset $X$ of $E$, the {\it deletion} $M\ba X$ and the {\it contraction} $M/X$ of $X$ from $M$ are the pairs $(E-X,r_1)$  and \linebreak$(E-X,r_2)$ where, for all subsets $Y$ of $E-X$, we have $r_1(Y) = r(Y)$ and $r_2(Y) = r(Y \cup X) - r(X)$. 
We shall also write $M|(E- X)$ for $M\ba X$. 
A {\it minor} of the polymatroid $M$ is any polymatroid   that can be obtained from $M$ by a sequence of operations each of which is a deletion  or a contraction. 
It is straightforward to check that every minor of a  $k$-polymatroid is also a $k$-polymatroid. The {\it closure} $\cl(X)$ of a set $X$ in $M$ is, as for matroids, the set $\{x \in E: r(X \cup x) = r(X)\}$. Two polymatroids $(E_1,r_1)$ and $(E_2,r_2)$ are {\it isomorphic} if there is a bijection $\phi$ from $E_1$ onto $E_2$ such that $r_1(X) = r_2(\phi(X))$ for all subsets $X$ of $E_1$.

One natural way to obtain a polymatroid is from a collection of flats of a matroid $M$. Indeed, every polymatroid arises in this way \cite{helg, lova, mcd}. More precisely, we have the following.

\begin{theorem}
\label{herepoly} 
Let $t$ be a function defined on the power set of a finite set $E$. Then $(E,t)$ is a polymatroid if and only if, for some matroid $M$, there is a function $\psi$ from $E$ into the set of flats of $M$ such that $t(X) = r_M(\cup_{x \in X} \psi(x))$ for all subsets $X$ of $E$.
\end{theorem}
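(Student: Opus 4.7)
The plan is to handle the two directions separately. For the ``if'' direction, one directly checks that setting $t(X) = r_M(\bigcup_{x \in X} \psi(x))$ produces a polymatroid. Axiom (i) follows from $r_M(\emptyset) = 0$; axiom (ii) is immediate from the inclusion $\bigcup_{x \in X} \psi(x) \subseteq \bigcup_{x \in Y} \psi(x)$ whenever $X \subseteq Y$ together with monotonicity of $r_M$; and axiom (iii) follows from submodularity of $r_M$ combined with the containment $\bigcup_{x \in X \cap Y} \psi(x) \subseteq (\bigcup_{x \in X} \psi(x)) \cap (\bigcup_{x \in Y} \psi(x))$ and monotonicity of $r_M$. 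Axiom (iv) holds with $k = \max_{x \in E} r_M(\psi(x))$.

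For the ``only if'' direction, the plan is to build the ``natural matroid'' of $(E,t)$ by replacing each element $x$ of $E$ by a set $S_x$ of $t(\{x\})$ new elements, the sets $S_x$ being pairwise disjoint. Set $\tilde E = \bigsqcup_{x \in E} S_x$ and let $\pi \colon \tilde E \to E$ be the natural projection. Define
\[
\tilde r(A) \;=\; \min\{\, t(X) + |A \setminus \pi^{-1}(X)| : X \subseteq E \,\}
\]
for all $A \subseteq \tilde E$, and verify that $\tilde r$ is the rank function of a matroid $M$ on $\tilde E$. Then set $\psi(x) = \cl_M(S_x)$, which is automatically a flat.

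The key identity $\tilde r(\pi^{-1}(X)) = t(X)$ holds because $|\pi^{-1}(X) \setminus \pi^{-1}(X')| = \sum_{x \in X \setminus X'} t(\{x\})$: the choice $X' = X$ gives the upper bound $\tilde r(\pi^{-1}(X)) \le t(X)$, while iterated application of $t(A \cup \{e\}) \le t(A) + t(\{e\})$ (a direct consequence of submodularity together with $r \ge 0$) supplies the reverse inequality. Since $\pi^{-1}(X) \subseteq \bigcup_{x \in X} \psi(x) \subseteq \cl_M(\pi^{-1}(X))$, it then follows that $r_M(\bigcup_{x \in X} \psi(x)) = \tilde r(\pi^{-1}(X)) = t(X)$. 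The main obstacle is proving that $\tilde r$ actually is a matroid rank function. Nonnegativity, $\tilde r(\emptyset) = 0$, and monotonicity are immediate from the minimization formula, but the submodularity of $\tilde r$ is the crux: given $A, B \subseteq \tilde E$ with respective minimizers $X_A, X_B$ in the defining formula, one combines submodularity of $t$ applied to $X_A, X_B$ with the element-wise inequality
\[
|A \setminus \pi^{-1}(X_A)| + |B \setminus \pi^{-1}(X_B)| \;\ge\; |(A \cup B) \setminus \pi^{-1}(X_A \cup X_B)| + |(A \cap B) \setminus \pi^{-1}(X_A \cap X_B)|,
\]
which is verified by a case analysis on how each element of $\tilde E$ lies in $A$, $B$, $\pi^{-1}(X_A)$, and $\pi^{-1}(X_B)$.
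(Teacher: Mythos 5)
Your proof is correct, and it supplies a complete argument where the paper merely cites \cite{helg, lova, mcd} and hints at a construction. The ``if'' direction is routine and you handle it properly. For the ``only if'' direction, you build what is sometimes called the \emph{expansion} or \emph{Dilworth-truncation} matroid $\tilde r(A) = \min_X\{t(X) + |A \setminus \pi^{-1}(X)|\}$, and your verification of submodularity (submodularity of $t$ plus the element-wise counting inequality, which checks out under a four-way case analysis on whether $\pi(e) \in X_A$ and $\pi(e) \in X_B$) is sound. The argument that $\tilde r(\pi^{-1}(X)) = t(X)$ is correct: $X' = X$ gives $\le$, and iterated subadditivity $t(A \cup e) \le t(A) + t(\{e\})$ applied over $X \setminus X'$, combined with monotonicity $t(X \cap X') \le t(X')$, gives $\ge$. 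The sandwich $\pi^{-1}(X) \subseteq \bigcup_{x \in X}\psi(x) \subseteq \cl_M(\pi^{-1}(X))$ then transfers the identity from $\pi^{-1}(X)$ to the union of flats, which is exactly what is needed.

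The route you take is presentationally different from the one the paper gestures at. The paper's sketch builds the matroid iteratively: freely add $t(\{x\})$ points to each $x$ (each single free addition being easily checked to preserve the polymatroid axioms), then delete the original ground set, at which point all remaining elements have rank at most one and the result is a matroid. Your explicit minimization formula produces the same matroid in one step and makes submodularity a direct computation rather than an induction over free additions; the trade-off is that you must prove the formula defines a matroid rank function from scratch rather than inheriting the axioms step by step. Both are standard and essentially equivalent.

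One small point worth adding for completeness: besides $\tilde r(\emptyset) = 0$, monotonicity, and submodularity, a matroid rank function also needs $\tilde r(A) \le |A|$. This is immediate from your formula by taking $X = \emptyset$, giving $\tilde r(A) \le t(\emptyset) + |A| = |A|$, but you should say so, since monotonicity and submodularity alone do not bound the rank by cardinality.
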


The key idea in proving this theorem is that of freely adding a point to an element of a polymatroid. Let $(E,r)$ be a polymatroid, let $x$ be an element of $E$, and let $x'$ be an element that is not in $E$. We can extend the domain of $r$ to include all subsets of $E \cup x'$ by letting
\begin{equation*}
r(X \cup x') = 
\begin{cases}
r(X), & \text{if $r(X \cup x) = r(X)$};\\
r(X) + 1, & \text{if $r(X \cup x) > r(X)$}.
\end{cases}
\end{equation*} 
Then it is not difficult to check that $(E \cup x', r)$ is a polymatroid. We say that it has been obtained from $(E,r)$ by {\it freely adding} $x'$ to $x$. If we repeat this construction by freely adding a new element $y'$ to some element $y$ of $E$, we can show that the order in which these two operations is performed is irrelevant.

Using this idea, we can associate a matroid with every $2$-polymatroid $M$ as follows. 
Let $L$ be the set of lines of $M$. For each $\ell$ in $L$,   freely  add  two points $s_{\ell}$ and $t_{\ell}$ to $\ell$. Let $M^+$ be the $2$-polymatroid obtained after  performing all of 
these $2|L|$ operations. Let $M'$ be $M^+ \ba L$. We call $M'$ the {\it natural matroid derived from $M$}. 

Given a graph $G$ with edge set $E$, as noted earlier, one can define a $2$-polymatroid $M_2(G)$ on $E$ by, for each subset $X$ of $E$, letting $r(X)$   be $|V(X)|$ where $V(X)$ is the set of vertices of $G$ that have at least one endpoint in $X$.  A polymatroid $(E',r')$ is {\it Boolean} if is isomorphic to the $2$-polymatroid that is obtained in this way from some graph. 
One attractive feature of $M_2(G)$ is that, except for the possible presence of isolated vertices, it uniquely determines $G$. More precisely, if $G_1$ and $G_2$ are graphs neither of which has any isolated vertices and if $M_2(G_1) = M_2(G_2)$, then there is a labelling of the vertices of $G_2$ such that $G_1 = G_2$.  This contrasts with the situation for matroids where quite different graphs can have the same cycle matroids.

Let $M$ be a polymatroid $(E,r)$. The {\it connectivity function}, $\lambda_M$ or $\lambda$, of $M$ is defined, for all subsets $X$ of $E$, by $\lambda_M(X) = r(X) + r(E-X) - r(M)$. Observe that $\lambda_M(E-X) = \lambda_M(X)$. 
It is routine to check, using the submodularity of the rank function, that the connectivity function is submodular, that is, for all subsets $Y$ and $Z$ of $E$,
$$\lambda_M(Y) + \lambda_M(Z) \ge \lambda_M(Y \cup Z) + \lambda_M(Y \cap Z).$$

Let $M$ be a polymatroid. For a positive integer $n$, a subset $X$ of $E(M)$ is {\it $n$-separating} if $\lambda_M(X) \le n-1$ and is {\it exactly $n$-separating} if  $\lambda_M(X) = n-1$  We say that $M$ is  
{\it $2$-connected} if it has no proper non-empty $1$-separating subset. We will also say that $M$ is {\it disconnected} if it is not 2-connected. 
We call $M$ {\it $3$-connected} if $M$ is $2$-connected and $M$ has no {\it $2$-separation}, that is, $M$ has no partition $(X,Y)$ with $\max\{|X|, r(X)\} > 1$ and $\max\{|Y|, r(Y)\} > 1$ but $\lambda(X) \le 1$. When $M$ is a $3$-connected $2$-polymatroid $(E,r)$, a {\it $3$-separation} of $M$ is a partition 
$(X,Y)$ of $E$ such that 
$\lambda(X) = 2$ and both $r(X)$ and $r(Y)$ exceed $2$.

Duality plays a fundamental role in matroid theory and will  also be  important in our work with $2$-polymatroids. Whereas there is a standard notion of what constitutes the dual of a matroid, for $2$-polymatroids, there is more than one choice. Let $M$ be a $k$-polymatroid $(E,r)$. The {\it $k$-dual} of $M$ is the pair $(E,r^*_k)$ defined by 
$r^*_k(Y) = k|Y| + r(E-Y) - r(M)$. This notion of duality was used, for example, in Oxley and Whittle's treatment \cite{ow2p} of Tutte invariants for $2$-polymatroids. An {\it involution} on the class ${\mathcal M}_k$ of $k$-polymatroids is a function $\zeta$ from ${\mathcal M}_k$ into ${\mathcal M}_k$ such that $\zeta(\zeta(M)) = M$ for all $M$ in ${\mathcal M}_k$. Whittle~\cite{gpw} showed that  the $k$-dual is the only involution on ${\mathcal M}_k$ under which deletion and contraction are interchanged in the familiar way.  However,  a disadvantage of this duality operation is that, for a matroid $M$, we can view $M$ as a $k$-polymatroid for all $k\ge 1$. Hence $M$ has a $1$-dual, which is its usual matroid dual. But it also has a $2$-dual, a $3$-dual, and so on. 
In \cite{oswww}, we used a duality operation   on the class of all polymatroids that, when applied to a $k$-polymatroid, produces another $k$-polymatroid and that, when applied to a matroid produces its usual matroid dual. In this paper, we will use a variant on that operation that agrees with it when applied to $3$-connected $2$-polymatroids with at least two elements.

Both of these versions of duality are members of a family of potential duals for a polymatroid $(E,r)$ that were defined by   McDiarmid~\cite{mcd}  and were based on assigning a weight $w(e)$ to each element $e$  of $E$ where $w(e) \ge r(\{e\})$ for all $e$ in $E$. For a set $X$, we shall write $||X||$ for the sum $\sum_{e \in X} w(e)$. In \cite{oswww}, we took $w(e)$ to be  $\max\{r(\{e\}),1\}$. Here, instead, we will take $w(e) = r(\{e\})$ and 
 define 
the {\it dual} of a polymatroid $(E,r)$ to be the pair $(E,r^*)$ where, for all subsets $Y$ of $E$,
$$r^*(Y) = ||Y|| + r(E- Y) - r(E) = \sum_{e \in Y}  r(\{e\})  + r(E- Y) - r(E).$$
It is straightforward to check that, when  $(E,r)$ is a $k$-polymatroid, so too is $(E,r^*)$. When $M = (E,r)$, we shall write $M^*$ for $(E,r^*)$. When the  polymatroid $M$ is a matroid, its dual as just defined coincides with its usual matroid dual provided $M$ has no loops.  However, if $e$ is a loop of $M$, then $e$ is a loop of $M^*$. The definition of dual used in \cite{oswww} (where we took $||Y|| = \sum_{e \in Y}  \max\{1,r(\{e\})\}$) was chosen to ensure that, when $M$ is a matroid, its polymatroid dual coincides with its matroid dual. Here, however, we are giving up on that, albeit in a rather specialized case. Note, however, that the two definitions of dual coincide unless $M$ has a loop so, in particular, they coincide when $M$ is $3$-connected having at least two elements.  
Moreover, as noted in \cite{oswww}, these two versions of duality share   a number of important properties, the proofs of which are very similar. For example, $\lambda_M(X) = \lambda_{M^*}(X)$. Next we  discuss the reason for the use of the above definition of duality, which follows \cite{susan, jmw}.

Consider the following example, which will guide how we proceed. Begin with the matroid that is the direct sum of $PG(r-1,q)$ and $PG(k-2,q)$ viewing this as a restriction of $PG(r+k-2,q)$.  Let $N$ be the restriction of $PG(r-1,q)$ to the complement of a hyperplane $H$ of it, so $N \cong AG(r-1,q)$.  Take $k$ distinct points, $x_1,x_2,\ldots,x_k$, of $PG(r-1,q)$ that are  in $H$, and let $\{y_1,y_2,\ldots,y_k\}$ be a spanning circuit in $PG(k-2,q)$. For each $i$ in $\{1,2,\ldots,k\}$, let $\ell_i$ be the line  of $PG(r+k-2,q)$ that is spanned by $\{x_i,y_i\}$. Let $M$ be the $2$-polymatroid whose elements are the points of $N$ along with the set $L$ consisting of the  lines  $\ell_1,\ell_2,\ldots,\ell_k$. It is straightforward to check that $M$ and $N$ are $3$-connected. The only way to obtain an $N$-minor of $M$ is to delete all the elements of $L$ since contracting any member of $L$ has the effect of reducing the rank of $E(N)$. But, in each of the $2$-polymatroids $M\ba L'$, where $L'$ is a proper non-empty subset of $L - \ell_k$, the set $\ell_k$  is $2$-separating. Since our goal is a splitter theorem, where we can remove some bounded number of elements from $M$ maintaining both $3$-connectivity and an $N$-minor, we will need a strategy for dealing with this example. One significant feature of this example is the very constrained nature of the 2-separations in each $M\ba L'$ with one side of each such 2-separation consisting of a single line. This is reminiscent of what happens in Bixby's Lemma~\cite{bixby} for $3$-connected matroids where, for every element $e$ of such a matroid $N$, either $N\ba e$ is $3$-connected except  for   some possible series pairs, or $N/e$ is $3$-connected except  for  some possible parallel pairs. Indeed, in the matroid derived from $M\ba L'$, each 2-separating line yields a series pair in the derived matroid.

The strategy that we will adopt is intimately linked to our choice of definition for the dual of a polymatroid. It is well known that, under the familiar definition of duality for matroids, taking the dual of the dual returns us to the original matroid. We now consider the relationship between a  polymatroid $M$ and the polymatroid $(M^*)^*$. If $M$ is a $3$-connected $2$-polymatroid with at least two elements, then $(M^*)^* = M$. To see what happens in general, we follow \cite{jmw}. Let $M$ be the polymatroid $(E,r)$. An element $e$ of $M$ is {\it compact} if $r(\{e\}) = \lambda_M(\{e\})$ or, equivalently, if $r(E - \{e\}) = r(E)$. We call $M$ {\it compact} if every element is compact. Thus, for example, a  matroid is compact if it has no coloops. In the example in the last paragraph, although $M$   is compact, $M\ba \{\ell_1\}$ is not since, for each $i \ge 2$, we have $r(\{\ell_i\}) = 2$ whereas $\lambda_{M\ba \{\ell_1\}}(\{\ell_i\}) = 1$.

The {\it compactification} $M^{\flat}$ of the polymatroid $M$ is the pair $(E,r^{\flat})$ where 
$$r^{\flat}(X) = r(X) + \sum_{x \in X} [\lambda (\{x\}) - r(\{x\})]$$ for all subsets $X$ of $E$. It is shown in \cite{jmw} that $M^{\flat}$ is a compact polymatroid and it is clear that if $M$ is a $2$-polymatroid, then so is $M^{\flat}$. The next result \cite{jmw} encapsulates some key properties of this compactification operation and justifies the approach we take here.

\begin{lemma}
\label{compact0} 
Let $(E,r)$ be a polymatroid $M$. Then 
\begin{enumerate}
\item[(i)] $M^*$ is compact;
\item[(ii)] $(M^*)^* = M^{\flat}$; 
\item[(iii)] $\lambda_M = \lambda_{M^*} = \lambda_{M^{\flat}}$; and 
\item[(iv)] $M/X$ is compact for all non-empty subsets $X$ of $E$ and 
$$(M/X)^* = (M^*\ba X)^{\flat}.$$
\end{enumerate}
\end{lemma}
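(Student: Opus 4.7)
My plan is to verify each part by direct computation from the rank-function definitions, tracking how the weight sums $\sum_{e \in Y} r(\{e\})$ telescope. The identity that drives everything is that, for any single element $e$,
$$r^*(\{e\}) = r(\{e\}) + r(E - \{e\}) - r(E) = \lambda_M(\{e\}),$$
so the rank of a singleton in the dual is precisely the connectivity of that singleton in the primal.

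Parts (i), (ii), and (iii) should fall out of careful bookkeeping. For (i), I would compare $r^*(E - \{e\})$ with $r^*(E)$ and observe that the weight sums agree while the remaining terms differ by $r(\{e\}) - r(\{e\}) = 0$, giving compactness of every element in $M^*$. For (iii), I would expand $\lambda_{M^*}(Y) = r^*(Y) + r^*(E - Y) - r^*(E)$ and note that the full weight sum $\sum_{e \in E} r(\{e\})$ appears once positively and once negatively, cancelling to leave $\lambda_M(Y)$; an exactly parallel cancellation of $\sum_{x \in E}[\lambda(\{x\}) - r(\{x\})]$ yields $\lambda_{M^{\flat}} = \lambda_M$. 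For (ii), I would substitute the driving identity into the formula for $(r^*)^*(Y) = \sum_{e \in Y} r^*(\{e\}) + r^*(E - Y) - r^*(E)$ and simplify $r^*(E - Y) - r^*(E)$ to $r(Y) - \sum_{e \in Y} r(\{e\})$, producing $r(Y) + \sum_{e \in Y}[\lambda(\{e\}) - r(\{e\})] = r^{\flat}(Y)$.

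The main obstacle is part (iv). I would evaluate both sides of the asserted identity $(M/X)^* = (M^*\ba X)^{\flat}$ on a generic $Y \subseteq E - X$, reducing each to a common closed form. On the left, using $r_{M/X}(Z) = r(Z \cup X) - r(X)$ throughout reduces $(M/X)^*(Y)$ to $\sum_{e \in Y} r(\{e\} \cup X) - |Y|\,r(X) + r(E - Y) - r(E)$. The delicate step on the right is computing $\lambda_{M^*\ba X}(\{x\}) - r_{M^*\ba X}(\{x\})$, a connectivity correction measured inside the restricted polymatroid $M^*\ba X$ rather than inside $M$; my expectation is that after substituting $r_{M^*\ba X}(Z) = \sum_{e \in Z} r(\{e\}) + r(E - Z) - r(E)$ for $Z \subseteq E - X$, most of the weight terms telescope and this difference collapses to $r(X \cup \{x\}) - r(X) - r(\{x\})$. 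Once that miracle is in place, the compactification on the right aligns term-by-term with $(M/X)^*$ on the left. For the compactness assertion, I would note that $r_{M/X}((E-X)-\{e\}) - r_{M/X}(E-X) = r(E-\{e\}) - r(E)$, so that the compactness of $e$ in $M/X$ is controlled directly by the primal; combined with the fact from (i) that $M^*$ is compact, this transfers through the established identity to give the required conclusion.
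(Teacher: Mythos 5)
The paper does not supply its own proof of Lemma~\ref{compact0}; it is cited from Jowett, Mo, and Whittle, so there is no in-text argument to compare against. Your calculations for (i), (ii), (iii), and the identity in (iv) are correct. In particular the simplification you anticipate,
$$\lambda_{M^*\ba X}(\{x\}) - r_{M^*\ba X}(\{x\}) = r(X \cup \{x\}) - r(X) - r(\{x\}),$$
does hold, and it makes both sides of $(M/X)^* = (M^*\ba X)^{\flat}$ agree term by term with the closed form $\sum_{e \in Y} r(\{e\} \cup X) - |Y|\,r(X) + r(E - Y) - r(E)$.

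There is, however, a genuine gap in your handling of the compactness assertion in (iv). You observe, correctly, that $r_{M/X}((E-X)-\{e\}) - r_{M/X}(E-X) = r(E-\{e\}) - r(E)$, so $e$ is compact in $M/X$ exactly when $e$ is compact in $M$. But that equivalence cuts against the claim rather than for it: if $M$ has a non-compact element $e$ (for instance a coloop in a matroid) and $e \notin X$, then $e$ remains non-compact in $M/X$, so $M/X$ is not compact. The closing appeal to ``this transfers through the established identity'' does not repair this: the identity only controls $(M/X)^*$, which is automatically compact by (i), and dualizing again returns $(M/X)^{\flat}$ by (ii), not $M/X$ itself. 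The unconditional form of the compactness claim therefore requires an extra hypothesis --- most plausibly that $M$ itself is compact, which is exactly the hypothesis of the later Lemma~\ref{elemprop23}(i), also credited to the same source --- so you should either add that hypothesis or note explicitly that (iv) as printed appears to be misstated.
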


Returning to our guiding example above, although $M\ba \{\ell_1\}$ is neither compact nor $3$-connected, its compactification is both. Observe that this compactification can be obtained from the restriction of the matroid $PG(r-1,q)$ to $E(N) \cup \{x_2,x_3,\ldots,x_k\}$ by relabelling each $x_i$ by $\ell_i$ noting that these $\ell_i$ are now points rather than lines. Thus compactification here has an analogous effect to cosimplification in matroids. 
By incorporating  compactification as part of the deletion operation, which is justified by (iv) of the last lemma, we see that, after deleting a single element, we have both maintained $3$-connectivity and kept an $N$-minor. This is precisely what we want in a splitter theorem.



In $2$-polymatroids, the behaviour of contraction differs   significantly from that for matroids. In particular, consider the $2$-polymatroid $M_2(G)$ obtained from a graph $G$,  where $G$ has  vertex set $V$ and edge set $E$. Let  $e$ be an edge of $G$. Deleting $e$ from $G$ has an unsurprising effect; specifically, $M_2(G) \ba e = M_2(G \ba e)$. But, to find $M_2(G)/e$, we cannot simply look at $M_2(G/e)$. In particular, what do we do with elements whose rank is reduced to zero in the contraction? To deal with this situation, it is standard to extend the definition of a graph to allow the presence of {\it free loops}, that is, edges with no endpoints. This terminology is due to Zaslavsky \cite{zas}. For a graph $G$ with free loops, the associated $2$-polymatroid $M_2(G)$ is defined, as before, to have rank function $r(X) = |V(X)|$. The deletion of a free loop $f$ from a graph just removes $f$ from the graph. We define the contraction of $f$ to be 
the same as its deletion. For an edge $e$ that is not a free loop, 
to obtain a graph $H$ so that $M_2(G) /e = M_2(H)$, we let $H$ have edge set $E  - e$ and vertex set $V  - V(\{e\})$. 
An edge $x$ of $H$ is incident with the vertices in $V(\{x\}) - V(\{e\})$. 

The difference between $M_2(G)/e$ and $M_2(G/e)$ motivated us to introduce an operation for $2$-polymatroids in \cite{oswww} that   mimics the effect of the usual operation of contraction of an edge from the graph.

Let $(E,r)$ be a $2$-polymatroid $M$, and let $x$ be an element of $E$. We have described already what it means to add an element $x'$ freely to $x$. Our new operation $M\downarrow x$  is obtained from $M$ by freely adding $x'$ to $x$ in $M$, then contracting $x'$ from the resulting extension, and finally deleting $x$. Because each of the steps in this process results in a $2$-polymatroid, we have a well-defined operation on $2$-polymatroids. When $x$ has rank at most one  in $M$, one easily checks that $M\downarrow x = M/x$. When $x$ is a line in $M$, 
we  see that $M\downarrow x$ and $M/x$ are different as their ranks are $r(M) - 1$ and $r(M) - 2$, respectively. Combining the different parts of the definition, we see that 
$M\downarrow x$ is the $2$-polymatroid with ground set $E  - \{x\}$ and rank function given, for all subsets $X$ of $E - \{x\}$, by 
\begin{equation} 
\label{getdown}
r_{M\downarrow x}(X) = 
\begin{cases} 
r(X), & \text{if $r(x) = 0$, or $r(X \cup x) > r(X)$;   and}\\
r(X) - 1, & \text{otherwise.}
\end{cases}
\end{equation}
We shall say that $M\downarrow x$ has been obtained from $M$ by {\it compressing} $x$, and $M\downarrow x$ will be called the {\it compression} of $x$. We showed in \cite{oswww} that $M_2(G)\downarrow e = M_2(G/e).$ Songbao Mo~\cite{smo} established  a number of properties of a generalization of this  operation that he defines for connectivity functions and calls {\it elision}.

Instead of treating arbitrary minors, much of graph theory restricts attention to topological minors in which the only allowed contractions involve  edges that meet vertices of degree two.  When $e$ and $f$ are the only edges in a $2$-connected graph $G$ meeting a vertex $v$, and $G$ has at least four vertices,   $\{e,f\}$ is a $3$-separating set in $M_2(G)$. This $3$-separating set is an example of a special type of $3$-separating set that we  introduced in \cite{oswww}. In a  $2$-polymatroid $M$, a 3-separating set $Z$ is {\it prickly} if it obeys the following conditions:
\begin{itemize}
\item[(i)] Each element of $Z$ is a line;
\item[(ii)] $|Z| \ge 2$ and $\lambda(Z) = 2$; 
\item[(iii)] $r((E -Z) \cup Z') = r(E - Z) + |Z'|$ for all proper subsets $Z'$ of $Z$; and 
\item[(iv)] if $Z'$ is a non-empty subset of $Z$, then
\begin{equation*}
r(Z') = 
\begin{cases}
2 & \text{if $|Z'| = 1$};\\
|Z'| + 2 & \text{if $1  < |Z'| < |Z|$; and}\\
|Z| + 1 & \text{if $|Z'| = |Z|.$}
\end{cases}
\end{equation*}
\end{itemize}
A prickly $3$-separating set of $M$ will also be called a {\it prickly $3$-separator} of $M$. 
Observe that, when $Z$ is a prickly $3$-separating set, for all distinct $z$ and $z'$ in $Z$, the $2$-polymatroid $M\ba z$ has $(\{z'\}, E - \{z,z'\})$ as a $2$-separation. 


We are now able to formally state the main result of \cite{oswww}. 
Recall that a $2$-polymatroid is {\it pure} if every individual element  has rank $2$. It is {\it non-empty} if its ground set is non-empty. 

\begin{theorem}
\label{lastmainone}
Let $M$ be a $3$-connected non-empty $2$-polymatroid. Then one of the following holds.
\begin{itemize}
\item[(i)] $M$ has an element $e$ such that $M\ba e$ or $M/e$ is $3$-connected; 
\item[(ii)] $M$ has rank at least three and is a whirl or the cycle matroid of a wheel; or 
\item[(iii)] $M$ is a pure $2$-polymatroid having a  prickly $3$-separating set. Indeed, every minimal $3$-separating set $Z$ with at least two elements is prickly, and $M\downarrow z$ is $3$-connected and pure for all $z$ in $Z$.
\end{itemize}
\end{theorem}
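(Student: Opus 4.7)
The plan is to argue by contrapositive: assume that both conclusions (i) and (ii) fail, and deduce (iii). So let $M$ be a $3$-connected non-empty $2$-polymatroid such that, for every element $e$, neither $M\ba e$ nor $M/e$ is $3$-connected, and suppose further that $M$ is neither a whirl nor the cycle matroid of a wheel. First observe that if every element of $M$ is a point, then $M$ is a matroid, so Tutte's classical Wheels-and-Whirls Theorem immediately forces $M$ to be a wheel or whirl, contradicting our assumption. Hence $M$ has at least one line.

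The next milestone is to show that $M$ must be pure. The idea is that any non-$3$-connected $M\ba e$ (or $M/e$) yields a $2$-separation $(X,Y)$ which lifts, via the inequality $\lambda_M(X\cup \{e\})\le \lambda_{M\ba e}(X)+r(\{e\})$ together with submodularity of $\lambda$, to a $3$-separator in $M$. An uncrossing argument on these lifted $3$-separators, combined with the fact that $M$ itself has no $2$-separation, severely restricts how points and lines can coexist. A careful case analysis at a hypothetical point $p$ — where contraction and compression coincide and the matroid-like flexibility at $p$ can be exploited — would produce an element whose deletion or contraction is $3$-connected, contradicting the failure of (i). Hence every element of $M$ is a line.

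With $M$ pure, choose any minimal $3$-separating set $Z$ with $|Z|\ge 2$; such a $Z$ exists because the failure of (i) on any single element supplies a $3$-separator of $M$ to begin with. To show that $Z$ is prickly, I would read off the four numerical conditions from the $2$-separations of $M\ba z$ for each $z\in Z$. Minimality of $Z$ implies that no proper non-empty subset $Z'$ of $Z$ is $3$-separating, and combining this with the fact that each $M\ba z$ has a $2$-separation of the constrained form described after the definition of a prickly set forces the rank values $r(Z')=2$ for singletons, $r(Z')=|Z'|+2$ for proper subsets of size at least $2$, and $r(Z)=|Z|+1$. The separation identity $r((E-Z)\cup Z')=r(E-Z)+|Z'|$ for proper $Z'\subseteq Z$ then follows from a submodularity computation against $\lambda(Z)=2$. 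Finally, to verify $3$-connectivity of $M\downarrow z$, one uses the rank formula \eqref{getdown} to show that any putative $2$-separation of $M\downarrow z$ pulls back to a $2$-separation of $M$, contradicting the $3$-connectivity of $M$; purity of $M\downarrow z$ is read off immediately from the prickly rank formulas applied to $Z-\{z\}$.

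The main obstacle is the uncrossing step in the purity argument, together with the delicate case analysis of how the lifted $3$-separators of $M$ interact when $e$ is a line. Because $r(\{e\})=2$, the inequalities controlling $\lambda$ upon deletion or contraction are looser than in the matroid setting, and the interplay between several lines that share a common rank-$3$ span — which is exactly the prickly configuration — is what prevents one from finding a $3$-connectivity-preserving single deletion or contraction. Pinning down precisely when this obstruction occurs, and showing that it does so if and only if a prickly $3$-separator is present, is the technical heart of the proof.
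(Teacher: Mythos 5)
Before anything else: the statement you are trying to prove is Theorem~\ref{lastmainone}, which this paper explicitly states is ``the main result of~\cite{oswww}.'' It is recalled here, not proved here, so there is no in-paper proof against which to compare your attempt. You are, in effect, reconstructing the central theorem of an entire earlier paper.

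With that said, your write-up is a plan rather than a proof, and the gap sits exactly where you say it does. The purity step (``any non-$3$-connected $M\ba e$ or $M/e$ yields a $2$-separation that lifts to a $3$-separator of $M$, and an uncrossing argument plus a case analysis at a hypothetical point would contradict the failure of~(i)'') is asserted but not carried out; the lifting inequality $\lambda_M(X\cup e)\le\lambda_{M\ba e}(X)+r(\{e\})$ only yields a $4$-separator when $e$ is a line, and the ``matroid-like flexibility at $p$'' is not an argument. That is the technical heart of the theorem, and without it nothing else follows. There are also smaller soft spots: you need to show that a minimal $3$-separating set of size at least two \emph{exists} (singletons are always $3$-separating in a $3$-connected $2$-polymatroid, so minimality alone does not produce a set of size $\ge 2$); the purity of $M\downarrow z$ does not ``read off immediately'' from the prickly rank formulas on $Z-\{z\}$, because purity concerns \emph{every} element of $M\downarrow z$, in particular elements of $E-Z$, and for those you must separately rule out a line parallel to $z$ (which in turn requires an argument that such a parallel pair would permit a $3$-connectivity-preserving deletion, contradicting the failure of~(i)); and the claim that a $2$-separation of $M\downarrow z$ ``pulls back'' to one of $M$ requires a careful rank computation in the style of Lemma~\ref{portia}, which in this paper is only carried out for $2$-element prickly $3$-separators. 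Your high-level decomposition (all points $\Rightarrow$ matroid $\Rightarrow$ Tutte; otherwise establish purity; then analyze minimal $3$-separators) is sensible and broadly consistent with how such theorems are organized, but as submitted it leaves the load-bearing step, and several subsidiary ones, unverified.
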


In \cite{oswww}, we gave a number of examples to show the need for the third part of the above theorem. It is worth noting here, since it contrasts with what we have already mentioned and what will feature in the main result of this paper, the operation of deletion used in the last theorem does not incorporate compactification. In the main result of this paper, we will incorporate compactification as part of deletion but we will no longer need to allow arbitrary prickly compressions, only those that arise from a $2$-element prickly $3$-separator. 
Let $Z$ be such a set in a $2$-polymatroid $M$. For $z$ in $Z$, we will call $M\downarrow z$ a {\it series compression} of $M$.

For a compact $2$-polymatroid $M_1$, we call  $M_2$ an {\it s-minor} of $M_1$ if $M_2$ can be obtained from $M_1$ by  a sequence of contractions, deletions followed by compactifications, and series compressions.  The next result   is the main theorem of the paper. It concerns s-minors of 3-connected 2-polymatroids. Such a 2-polymatroid is compact provided it has at least three elements. 

\begin{theorem}
\label{mainone}
Let $M$ be a $3$-connected  $2$-polymatroid and $N$ be a $3$-connected proper  s-minor of $M$ having at least four elements.  Then one of the following holds.
\begin{itemize}
\item[(i)] $M$ has an element $e$ such that $M/ e$ is  $3$-connected having an s-minor isomorphic to $N$; or
\item[(ii)] $M$ has an element $e$ such that $(M\ba e)^{\flat} $ is  $3$-connected having an s-minor isomorphic to $N$; or
\item[(iii)] $M$ has a two-element  prickly $3$-separating set $Z$ such that, for each $z$ in $Z$, the series compression $M\downarrow z$ is  $3$-connected having an s-minor isomorphic to $N$; or
\item[(iv)] $r(M) \ge 3$ and $M$ is a whirl or the cycle matroid of a wheel.
\end{itemize}
\end{theorem}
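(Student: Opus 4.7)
The plan is to adapt Seymour's classical Splitter Theorem strategy to the $2$-polymatroid setting, driving the argument with the Wheels-and-Whirls Theorem~\ref{lastmainone}. First I reduce to the case in which (iv) fails: assume $M$ is neither a whirl nor the cycle matroid of a wheel. Since $|E(N)|\ge 4$ forces $|E(M)|\ge 4$, every element of $M$ is compact (if some $x$ had $\lambda(\{x\})<r(\{x\})$, then $(\{x\},E-\{x\})$ would be a $1$- or $2$-separation of $M$), and compactification preserves $\lambda$ by Lemma~\ref{compact0}(iii) and can only shrink single-element ranks, so whenever $M\ba e$ is $3$-connected, so is $(M\ba e)^{\flat}$. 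These observations let us move freely between $M\ba e$ and $(M\ba e)^{\flat}$ when applying Theorem~\ref{lastmainone}.

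The argument proceeds by induction on $|E(M)|-|E(N)|$, which makes sense because each s-minor reduction removes exactly one element. For the base case $|E(M)|-|E(N)|=1$, the single reduction witnessing $N$ as an s-minor of $M$ is a contraction, a deletion-and-compactification, or a series compression on some two-element prickly $3$-separator $Z=\{z_1,z_2\}$. Since $N$ is $3$-connected, this directly yields (i) or (ii); in the series-compression case, the ranks in the definition of a two-element prickly $3$-separator are fully symmetric in $z_1$ and $z_2$, so $M\da z_1\cong M\da z_2\cong N$, giving (iii).

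For the inductive step, apply Theorem~\ref{lastmainone} to $M$ to obtain an element $e$ and one of the three reduction operations $\tau$ such that $\tau(M)$ is $3$-connected. If $\tau(M)$ has $N$ as an s-minor, we are done: in the contraction and deletion-and-compactification cases we land in (i) or (ii) directly, while in the prickly case the two-element symmetry again ensures both compressions work, yielding (iii). Otherwise $\tau(M)$ has no $N$-s-minor, and we must swap to a different element. Since $N$ is a proper s-minor of $M$, there exist some $f$ and s-minor operation $\sigma$ such that $\sigma(M)$ has $N$ as an s-minor; feeding $\sigma(M)$ (possibly after first restoring $3$-connectivity inside it) into the inductive hypothesis, and then combining the $3$-separation that witnesses the failure of $\tau(M)$ to have an $N$-s-minor with the $3$-separations of $M$ used in Theorem~\ref{lastmainone}, we produce a single element of $M$ and an operation that is simultaneously $3$-connectivity-preserving and $N$-preserving.

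The main obstacle, as in every splitter-type theorem, is this swap. Its engine is a $2$-polymatroid analogue of Bixby's Lemma, of the rough form: for every element $e$ of a $3$-connected $2$-polymatroid either $(M\ba e)^{\flat}$ is $3$-connected or $M/e$ is $3$-connected up to controlled series pairs and prickly $3$-separators. Proving this analogue, and then transferring $3$-separations between $M$ and its reductions via submodularity of $\lambda$ together with the duality identity $(M/X)^*=(M^*\ba X)^{\flat}$ of Lemma~\ref{compact0}(iv), is the technical heart of the proof. Two new phenomena absent from Seymour's original argument complicate this: rank-$2$ lines create prickly $3$-separators with no matroid analogue, and Theorem~\ref{lastmainone} may hand back a prickly $3$-separator of size larger than two while (iii) of our main theorem demands a two-element prickly $3$-separator, so one must either shrink the prickly $3$-separator by successive compressions or use a contraction or deletion to bypass the large prickly obstruction. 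Orchestrating the fan-style swap so that it accommodates all three operation types while keeping prickly $3$-separators under control is where I expect the most delicate work to lie.
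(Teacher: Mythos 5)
Your proposal identifies the right cast of characters---Wheels-and-Whirls, a Bixby-type lemma, compactification, and the prickly-$3$-separator obstruction---but the argument meant to assemble them does not go through, and it is genuinely different from the route the paper takes. The central gap is the inductive ``swap'' step. You induct on $|E(M)|-|E(N)|$, apply Theorem~\ref{lastmainone} to get a reduction $\tau$ with $\tau(M)$ $3$-connected, and when $\tau(M)$ fails to carry $N$, you propose to pick some other $N$-preserving operation $\sigma$, ``feed $\sigma(M)$ (possibly after first restoring $3$-connectivity inside it) into the inductive hypothesis,'' and combine $3$-separations. This is circular. The inductive hypothesis speaks only about pairs whose larger member is $3$-connected; a generic $N$-preserving $\sigma(M)$ need not be $3$-connected, and ``restoring $3$-connectivity'' is not an operation but precisely the conclusion you are trying to establish. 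Even granting the hypothesis, it would tell you which element of $\sigma(M)$ to remove, not which element of $M$. Nothing explains how a $2$-separation witnessing the failure of $\tau(M)$ interacts with information about $\sigma(M)$ so as to produce a single simultaneously connectivity- and $N$-preserving move on $M$; you rightly flag this as ``the most delicate work,'' but it is essentially the entire theorem, and none of it is supplied.

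The paper's proof has a different shape. It first establishes (Lemma~\ref{endtime}, Section~\ref{redc}) that in any s-minor chain one may push all series compressions to the end; this nontrivial normal form reduces Theorem~\ref{mainone} to Theorem~\ref{modc}, a splitter theorem purely for c-minors. Your sketch never addresses the fact that contractions and deletions performed after a series compression may destroy the prickly structure that licensed the compression in the first place; Lemma~\ref{endtime} is precisely what deals with that, and its omission is a second genuine gap. Theorem~\ref{modc} is then proved not by induction on $|E(M)|-|E(N)|$ but by assuming a counterexample $(M,N)$ and extracting, step by step, structural constraints on $M$: no doubly labelled points (\ref{Step0}), no disconnected single-element reductions (\ref{Step1}), no element with $\mu(\ell)=2$ or $\mu^*(\ell)=2$ (Section~\ref{alltwos}), existence of non-$N$-$3$-separators (Sections~\ref{fdll} and~\ref{keylargo}), and eventually a minimal such separator of size at least four containing a doubly labelled line, leading to the final contradiction. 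The $N$-side/non-$N$-side bookkeeping and the parameters $\mu(\ell),\mu^*(\ell)$ are the machinery that does the work your ``swap'' gestures at; to make an inductive scheme of the kind you describe succeed you would need to discover something equivalent, and the proposal gives no indication of what that would be.
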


For compact $2$-polymatroids $M_1$ and $M_2$, we call  $M_2$ a {\it c-minor} of $M_1$ if $M_2$ can be obtained from $M_1$ by  a sequence of operations each consisting of a contraction or of a deletion followed by a compactification. As we shall show in Section~\ref{redc}, the last theorem can be proved by establishing the following result.

\begin{theorem}
\label{modc0}
Let $M$ be a $3$-connected  $2$-polymatroid and $N$ be a $3$-connected proper  c-minor of $M$ having at least four elements.  Then one of the following holds.
\begin{itemize}
\item[(i)] $M$ has an element $e$ such that $M/ e$ is  $3$-connected having a c-minor isomorphic to $N$; or 
\item[(ii)] $M$ has an element $e$ such that $(M\ba e)^{\flat} $ is  $3$-connected having a c-minor isomorphic to $N$; or 
\item[(iii)] $M$ has a prickly $3$-separator $\{y,z\}$ such that $M\da y$ is $3$-connected having a c-minor isomorphic to $N$; or 
\item[(iv)] $r(M) \ge 3$ and $M$ is a whirl or the cycle matroid of a wheel.
\end{itemize}
\end{theorem}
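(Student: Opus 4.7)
The plan is a Seymour-style proof by contradiction: assume none of (i)--(iv) holds and derive a contradiction, using Theorem \ref{lastmainone} as the engine that produces candidate reductions. Since (iv) fails, $M$ is not a wheel or whirl, so Theorem \ref{lastmainone} yields either an element $e$ such that $M/e$ or $M\ba e$ is $3$-connected, or a prickly $3$-separating set $Z$ with $|Z|\ge 2$ such that $M\downarrow z$ is $3$-connected for every $z\in Z$. First I would verify that these candidate reductions fit inside the c-minor framework: $M/e$ is compact by Lemma \ref{compact0}(iv), and when $M\ba e$ is $3$-connected the compactification $(M\ba e)^{\flat}$ is also $3$-connected because $\lambda_M = \lambda_{(M\ba e)^{\flat}}$ on $E(M\ba e)$ by Lemma \ref{compact0}(iii). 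Thus, if any of these candidate reductions already has an $N$-minor we are done, so we may assume none does.

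The next step is to use the assumption that $N$ is a c-minor of $M$ to produce a contradiction. I would fix a shortest c-reduction sequence $M = M_0, M_1, \ldots, M_k = N$, where $k \ge 1$ and each transition is a contraction or a deletion followed by compactification. By assumption, the first element $e_1$ removed in this sequence cannot yield a $3$-connected $M_1$, so $M_1$ has a $2$-separation $(X,Y)$. The strategy is to use submodularity of $\lambda$ together with parts (iii)--(iv) of Lemma \ref{compact0} to relate $(X,Y)$ to the $3$-separations of $M$ itself, and then to those of the $3$-connected candidate reductions produced by Theorem \ref{lastmainone}. An uncrossing argument should then exhibit an element $e'$ among these candidates whose removal keeps both $3$-connectivity and an $N$-minor, contradicting the assumption that (i)--(iii) all fail. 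In particular, one shows that any ``bad'' candidate $f$ (one destroying the $N$-minor when removed) forces a specific local structure in $M$ that, when combined across several such candidates, is incompatible with $M$ being non-wheel and non-whirl.

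The principal obstacle will be the case in which Theorem \ref{lastmainone} delivers only prickly $3$-separators $Z$ with $|Z|\ge 3$, since (iii) of Theorem \ref{modc0} insists on $|Z|=2$. Here, the strong structural constraints in the definition of a prickly $3$-separator must be leveraged to argue that either $M$ has a $2$-element prickly $3$-separator after all, or some element of $Z$ admits a $3$-connected contraction or compact deletion that preserves an $N$-minor. The hypothesis $|E(N)| \ge 4$ should enter here: it ensures that the $N$-side of any small separation is large enough to anchor the argument and prevent pathological configurations in which the prickly structure swallows too much of $N$. This prickly case analysis, together with careful bookkeeping of which candidates from Theorem \ref{lastmainone} interact non-trivially with the fixed c-reduction sequence, is where I expect the bulk of the work to lie.
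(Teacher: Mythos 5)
Your proposal rests on a strategy that the paper does not use and that, on closer inspection, does not lead anywhere without essentially reproving the whole theorem from scratch. The key difficulty is in your second paragraph, where after assuming that none of the candidate elements produced by Theorem~\ref{lastmainone} preserves an $N$-minor, you write that ``an uncrossing argument should then exhibit an element $e'$ among these candidates whose removal keeps both $3$-connectivity and an $N$-minor.'' That sentence is the entire content of the Splitter Theorem, and you give no indication of what the uncrossing argument would actually be or why such an $e'$ exists. The element found by Theorem~\ref{lastmainone} is found purely from the structure of $M$ and has no a priori relationship to the $c$-reduction sequence of $M$ down to $N$; there is no reason to expect the $2$-separations of $M_1$ to interact usefully with that element. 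In the matroid setting this is exactly why Seymour's Splitter Theorem is a genuinely harder result than Tutte's Wheels-and-Whirls Theorem and is not derivable from it by an uncrossing trick.

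You have also misidentified the principal obstacle. You single out prickly $3$-separators $Z$ with $|Z|\ge 3$ as the crux, but the paper's proof of Theorem~\ref{modc0} never invokes Theorem~\ref{lastmainone} at all and hence never has to contend with such $Z$. Note also that the deletion operation in Theorem~\ref{lastmainone} is uncompactified, while Theorem~\ref{modc0} uses compactified deletion $M\baba e$; the paper explicitly flags this mismatch just after stating Theorem~\ref{lastmainone}. The actual proof is built from the ground up. It first shows $M$ has no doubly labelled point (Lemma~\ref{Step0}) and no element whose deletion or contraction is disconnected with an $N$-minor (Lemma~\ref{Step1}); then eliminates the case where every non-trivial $2$-separation of $M\ba \ell$ or $M/\ell$ has a two-element non-$N$-side (Section~\ref{alltwos}); then establishes the existence of non-$N$-$3$-separators (Lemma~\ref{nonN}) and steadily forces a minimal one of size at least $4$ (Sections~\ref{bigtime} and~\ref{threeel}); then locates a doubly labelled line $\ell$ in it (Lemma~\ref{dubya}); and finally crosses the resulting $2$-separations of $M\ba\ell$ and $M/\ell$ to identify a line $\ell_{22}$ with $M/\ell_{22}$ $3$-connected having $N$ as a $c$-minor (Lemma~\ref{final}), the desired contradiction. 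The hypothesis $|E(N)|\ge 4$ enters repeatedly to certify that the $N$-side of various separations is large, but not in the way you describe. Your first paragraph's observations (compactness of $M/e$, that $(M\ba e)^\flat$ is $3$-connected when $M\ba e$ is) are correct but peripheral; the core of your plan is missing.
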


The paper is structured as follows. The next section includes some basic preliminaries. In Sections~\ref{clc} and \ref{pc2s}, we develop a number of results relating to connectivity and local connectivity, and to parallel connection and 2-sums. In Section~\ref{strat}, we describe the strategy for proving Theorem~\ref{mainone}. That section serves as a guide to the remaining sections of the paper, with the purpose of each of these sections being to complete an identified step in the proof.  Section~\ref{redc} plays an important role in this proof by showing that the main theorem can be proved by adding the assumption that all series compressions are performed last in the production of an s-minor of $M$ isomorphic to $N$. That result is helpful but it cannot obscure the fact that the proof of Theorem~\ref{mainone} is complex with some subtleties in the logic that need to be carefully negotiated.


\section{Preliminaries}
\label{prelim}

Much of the terminology for matroids carries over to $2$-polymatroids. For example, suppose $x$ and $y$ are distinct points of a $2$-polymatroid $M$, that is, $r(\{x\}) = 1 = r(\{y\})$. If $r(\{x,y\}) = 1$, then $x$ and $y$ are {\it parallel points} of $M$. On the other hand, if   
$r(E - \{x,y\}) = r(E) -1 < r(E - x) = r(E-y)$, then $\{x,y\}$ is a {\it series pair of points} of $M$. Evidently,   if $\{x,y\}$ is a parallel or series pair of points,  then 
$\lambda_M(\{x,y\}) \le 1$. If $x$ and $y$ are distinct lines of $M$ and $r(\{x,y\}) = 2$, then $x$ and $y$ are {\it parallel lines} of $M$.

One tool that is used repeatedly in our earlier work is the submodularity of the connectivity function. Once again, this will play a vital role here. Partitions $(X_1,X_2)$ and $(Y_1,Y_2)$  of a set $E$ are said to {\it cross} if all four of the sets \linebreak $X_1\cap Y_1,$  $X_1 \cap Y_2,  X_2 \cap Y_1$, and $X_2 \cap Y_2$ are non-empty. We shall frequently encounter crossing partitions of the ground set of a $2$-polymatroid. We shall use the term  {\it by uncrossing} to refer to an application of  the submodularity of the connectivity function. 

In this paper, we shall frequently switch between considering the deletion $M\ba X$ of a set $X$ of elements of a $2$-polymatroid $M$ and  the compactification $(M\ba X)^{\flat}$ of this deletion, which we shall sometimes call the {\it compactified deletion of $X$}.  We shall often use the following abbreviated notation for the latter: 
$$(M\ba X)^{\flat} = M\baba X.$$

We shall often encounter the situation when we have a $2$-polymatroid $M$ such that $M^{\flat}$ is  $3$-connected although $M$ itself is not. This occurs when $M$ has a line $\ell$ such that $(\{\ell\},E - \ell)$ is a $2$-separation. We call such a $2$-separation of $M$ {\it trivial}. Thus, in general, a partition $(X,Y)$ of $E$ is a {\it non-trivial $2$-separation} of $M$ if $\lambda_M(X) \le 1$ and $\min\{|X|,|Y|\} \ge 2$.

For a $2$-polymatroid $M$,  we recall that a minor of $M$ is any $2$-polymatroid that can be obtained from $M$ by a sequence of contractions and deletions where, here, deletions are not automatically accompanied by compactifications.  When $M$ and $N$ are compact, we defined   $N$ to be a    c-minor  of $M$ if it can be obtained from  $M$ by  a sequence of contractions and deletions followed by compactifications.  In the proof of  Theorem~\ref{modc0}, it  is  convenient to be able to separate the compactifications from the deletions. Thus we define a {\it c-minor} of an arbitrary 2-polymatroid $M$ to be any $2$-polymatroid that can be obtained from $M$ by  a sequence of contractions,  deletions, and compactifications. As we shall show in Corollary~\ref{complast2}, this extension of the definition is consistent with our original definition. 
For a $2$-polymatroid $N$,  a {\it special $N$-minor} of $M$ is any c-minor of $M$ that is either equal to $N$ or differs from $N$ by having a single point relabelled.




\begin{lemma}
\label{complast}
Let $P$ and $Q$ be $2$-polymatroids such that $Q$ can be obtained from $P$ by a sequence of deletions, contractions, and compactifications with the last move being a compactification. Then $Q$ can be obtained from $P$ by the same sequence of deletions and contractions with none of the compactifications being done except for the last move.
\end{lemma}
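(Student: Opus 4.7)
The plan is to induct on the number $k$ of compactifications that occur before the final compactification. The base case $k=0$ is immediate. The inductive step rests on the single-step identity
$$(M^{\flat}/e)^{\flat} = (M/e)^{\flat} \quad \text{and} \quad (M^{\flat}\ba e)^{\flat} = (M\ba e)^{\flat},$$
valid for any $2$-polymatroid $M$ and element $e$.

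I would prove this identity by routing through the dual. A direct computation from the definitions of $r^{\flat}$ and $r^*$ establishes that $(N^{\flat})^* = N^*$: compactification does not change the dual. Combining this with $(N^*)^* = N^{\flat}$ from Lemma~\ref{compact0}(ii) yields the criterion that two polymatroids on a common ground set have the same compactification if and only if they have the same dual. Using this criterion together with $(N\ba e)^* = N^*/e$ (routine from the definitions) and $(N/e)^* = (N^*\ba e)^{\flat}$ from Lemma~\ref{compact0}(iv), both halves of the identity follow via short chains. For deletion, $(M^{\flat}\ba e)^* = (M^{\flat})^*/e = M^*/e = (M\ba e)^*$; for contraction, $(M^{\flat}/e)^* = ((M^{\flat})^*\ba e)^{\flat} = (M^*\ba e)^{\flat} = (M/e)^*$. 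Either way, equality of duals yields equality of compactifications.

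Combining the single-step identity with the idempotence $(N^{\flat})^{\flat} = N^{\flat}$ (which holds since $M^{\flat}$ is compact by construction) gives a congruence principle: if $X$ and $Y$ satisfy $X^{\flat} = Y^{\flat}$, then after applying any single operation (deletion, contraction, or compactification) to each of $X$ and $Y$, the resulting polymatroids still have the same compactification. To close the induction, pick any intermediate compactification at step $j$, so $P_j = P_{j-1}^{\flat}$, and compare the original sequence with the shorter one obtained by omitting this step. After the $(j-1)$-th step both sequences sit at $P_{j-1}$, and applying the next operation to $P_{j-1}^{\flat}$ in the original and to $P_{j-1}$ in the shorter sequence produces polymatroids with the same compactification by the single-step identity (or by idempotence, if that operation is itself a compactification). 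Iterating the congruence principle through the remaining operations preserves equality of compactifications, and since the final operation is a compactification, both sequences terminate at the same polymatroid. One intermediate compactification is thereby eliminated, and the induction closes.

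The main obstacle is the single-step identity itself: since $M^{\flat}/e$ and $M/e$ can genuinely differ as polymatroids (for instance, they may assign different singleton ranks), one cannot simply commute $^{\flat}$ past deletion or contraction at the polymatroid level, and direct rank-function verification is cumbersome. Dualization bypasses this issue because the dual is insensitive to the singleton-rank adjustments performed by compactification, so equality of duals after an operation translates, via $(N^*)^*=N^{\flat}$, back to equality of compactifications.
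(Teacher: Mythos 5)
Your proof is correct, and its skeleton matches the paper's: establish the single-step commutation $(M^{\flat}\ba e)^{\flat} = (M\ba e)^{\flat}$ and $(M^{\flat}/e)^{\flat} = (M/e)^{\flat}$, then induct to push all intermediate compactifications to the end. Where you genuinely diverge is in how you prove that single-step identity. The paper's Lemma~\ref{complast1} does it by a direct computation with the defining formula for $r^{\flat}$, carefully tracking how the singleton correction terms $\lambda(\{x\}) - r(\{x\})$ cancel. You instead reduce to the dual via the criterion ``same dual implies same compactification,'' built from $(N^{\flat})^* = N^*$ (this is the paper's Lemma~\ref{compact2}, which sits after Lemma~\ref{complast} in the text but is logically independent, following from Lemma~\ref{compact0} alone), together with the exchange laws $(N\ba e)^* = N^*/e$ and $(N/e)^* = (N^*\ba e)^{\flat}$, the latter being Lemma~\ref{compact0}(iv). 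The dual route is more conceptual: it isolates the real reason compactification commutes with deletion and contraction up to a final $\flat$, namely that the dual is blind to the singleton-rank adjustments that compactification performs. The paper's route is more self-contained (no dual machinery invoked) but reads as an opaque calculation. Your induction also differs mildly—you induct on the number of intermediate compactifications and push a ``congruence through the tail'' of the sequence, while the paper inducts on the number of removed sets and peels from the back—but both are sound. One small caveat worth flagging in your write-up: the identity $(N\ba e)^* = N^*/e$ that you call routine is genuinely needed and is \emph{not} a restatement of Lemma~\ref{compact0}(iv), which has the form $(N/X)^* = (N^*\ba X)^{\flat}$ with a trailing $\flat$; spelling out the two-line rank verification of $(N\ba e)^* = N^*/e$ would make the argument fully self-contained.
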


To prove this lemma, we shall require a preliminary result.

\begin{lemma}
\label{complast1}
Let $P$ be the $2$-polymatroid $(E,r)$. For $A \subseteq E$,
\begin{itemize}
\item[(i)] $(P^{\flat}\ba A)^{\flat} = (P\ba A)^{\flat}$; and 
\item[(ii)] $(P^{\flat}/ A)^{\flat} = (P/ A)^{\flat}$.
\end{itemize}
\end{lemma}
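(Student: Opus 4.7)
The plan is to prove both identities by a direct unfolding of the compactification formula
\[r^{\flat}(X) \;=\; r(X) + \sum_{x\in X}\bigl[\lambda_P(\{x\}) - r(\{x\})\bigr] \;=\; r(X) + \sum_{x\in X}\bigl[r(E-x)-r(E)\bigr].\]
Write $\delta_P(x) := r(E-x) - r(E)$ so that $r^{\flat}(X) = r(X) + \sum_{x \in X}\delta_P(x)$, and use the same shorthand for any polymatroid appearing later. The crux of both parts is the computation of the local deficit of $y \in E-A$ inside the already-compactified polymatroid $P^{\flat}\ba A$ (resp.\ $P^{\flat}/A$): the two copies of $\sum_z \delta_P(z)$ that appear in the relevant difference $r^{\flat}((E-A)-y) - r^{\flat}(E-A)$ telescope, leaving
\[\delta_{P^{\flat}\ba A}(y) = \delta_{P\ba A}(y) - \delta_P(y) \quad\text{and}\quad \delta_{P^{\flat}/A}(y) = \delta_{P/A}(y) - \delta_P(y).\]

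For (i), I would fix $Y \subseteq E - A$ and expand $(P^{\flat}\ba A)^{\flat}(Y) = r^{\flat}(Y) + \sum_{y\in Y}\delta_{P^{\flat}\ba A}(y)$. Substituting the formulas above produces
\[\Bigl(r(Y) + \sum_{y\in Y}\delta_P(y)\Bigr) + \sum_{y\in Y}\bigl[\delta_{P\ba A}(y) - \delta_P(y)\bigr] \;=\; r(Y) + \sum_{y\in Y}\delta_{P\ba A}(y),\]
and the right-hand side is exactly $(P\ba A)^{\flat}(Y)$ since $r_{P\ba A}(Y) = r(Y)$ on subsets of $E-A$.

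For (ii), the calculation is analogous. Using the identity $r_{P^{\flat}/A}(Y) = r^{\flat}(Y\cup A) - r^{\flat}(A) = r_{P/A}(Y) + \sum_{y \in Y}\delta_P(y)$ together with the simplified expression for $\delta_{P^{\flat}/A}(y)$ above, the $\pm\delta_P(y)$ terms again cancel and one is left with $r_{P/A}(Y) + \sum_{y\in Y}\delta_{P/A}(y) = (P/A)^{\flat}(Y)$. The main (and essentially only) obstacle is bookkeeping: each local deficit measured in $P^{\flat}\ba A$ or $P^{\flat}/A$ silently carries a $-\delta_P(y)$ contribution inherited from the first compactification, and one has to match these terms cleanly against the $+\delta_P(y)$ hidden inside $r^{\flat}(Y)$ (or inside the contraction-rank identity). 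Once the $\delta$-notation is in place, everything reduces to this cancellation.
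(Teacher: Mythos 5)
Your proof is correct and follows essentially the same route as the paper's: both arguments unfold the compactification formula $r^{\flat}(X) = r(X) + \sum_{x\in X}[\lambda(\{x\}) - r(\{x\})]$ and establish the identity by a direct cancellation of the $\pm[\lambda(\{x\})-r(\{x\})]$ contributions, and your $\delta$-notation merely packages the telescoping that the paper carries out inline. One slight imprecision in your write-up: for part (ii) the local deficit is $\delta_{P^{\flat}/A}(y) = r_{P^{\flat}/A}((E-A)-y) - r_{P^{\flat}/A}(E-A) = r^{\flat}(E-y) - r^{\flat}(E)$ rather than $r^{\flat}((E-A)-y) - r^{\flat}(E-A)$ as stated, but the resulting formula $\delta_{P^{\flat}/A}(y) = \delta_{P/A}(y) - \delta_P(y)$ is nevertheless correct (indeed both sides vanish, since $\delta_{P/A}(y) = \delta_P(y)$), so the computation goes through as claimed.
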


\begin{proof} Let $P_1$ be a $2$-polymatroid with ground set $E$  and rank function $r_1$. Then, for $X \subseteq E - A$, we have 
\begin{eqnarray}
\label{xae}
r_{(P_1 \ba A)^{\flat}}(X) & =  &r_{P_1 \ba A}(X) + \sum_{x \in X} [\lambda_{P_1 \ba A}(\{x\})  - r_{P_1\ba A}(\{x\})] \nonumber \\
					&=& r_1(X) + \sum_{x \in X} [r_1(E - A - x)  - r_1(E - A)].
					\end{eqnarray}
Thus
\begin{equation}
\label{xae0}
r_{(P  \ba A)^{\flat}}(X)	= r(X) + \sum_{x \in X} [r(E - A - x)  - r(E - A)].
\end{equation}

Next we observe that, for $x$ in $X$, 
\begin{align}
\label{xae1}
r_{P^{\flat}}(E-A - x) - r_{P^{\flat}}(E-A) & =   r(E - A - x) +   \sum_{y \in E-A-x} [\lambda(\{y\})  - r(\{y\})]  \nonumber \\
& \hspace*{0.75in} -  r(E - A) -   \sum_{y \in E-A} [\lambda(\{y\})  - r(\{y\})] \nonumber \\
& =  r(E - A - x) - r(E - A) - \lambda(\{x\})  + r(\{x\}).
					 					\end{align}
Thus, by (\ref{xae}), (\ref{xae1}),  and (\ref{xae0}), 
\begin{align*}
r_{(P^{\flat} \ba A)^{\flat}}(X) &= r_{P^{\flat}}(X) + \sum_{x \in X} [r_{P^{\flat}}(E - A - x)  - r_{P^{\flat}}(E - A)]\\
						& =  r(X)	+ \sum_{x \in X} [\lambda(\{x\}) - r(\{x\}) +r(E - A - x)  - r(E - A) \\
						&  \hspace*{2.5in} - \lambda(\{x\}) + r(\{x\})]\\
						& =  r(X) + 	\sum_{x \in X} [r(E - A - x)  - r(E - A)]\\
						& = 	r_{(P  \ba A)^{\flat}}(X).	
\end{align*}
We conclude that (i) holds. 

Again, for $X \subseteq E - A$, we have 
\begin{eqnarray}
\label{xae2}
r_{(P_1 / A)^{\flat}}(X) & =  &r_{P_1 / A}(X) + \sum_{x \in X} [\lambda_{P_1 / A}(\{x\})  - r_{P_1/ A}(\{x\})] \nonumber \\
					&=& r_1(X \cup A) -r_1(A) + \sum_{x \in X} [r_{P_1 / A}(E - A - x)  - r_{P_1 / A}(E - A)]  \nonumber \\
					& = & r_1(X \cup A) -r_1(A) + \sum_{x \in X} [r_{1}(E   - x)  - r_{1}(E)].
					\end{eqnarray}
Thus
\begin{equation}
\label{xae3}
r_{(P  / A)^{\flat}}(X)	= r(X \cup A)  -r(A) + \sum_{x \in X} [r(E - x)  - r(E)].
\end{equation}
Therefore, by (\ref{xae2}), (\ref{xae1}), and (\ref{xae3})
\begin{align*}
r_{(P^{\flat} /A)^{\flat}}(X) &= r_{P^{\flat}}(X \cup A) -r_{P^{\flat}}(A) + \sum_{x \in X} [r_{P^{\flat}}(E   - x)  - r_{P^{\flat}}(E)]\\
						& = r(X \cup A) -r (A)  	+ \sum_{x \in X} [ \lambda(\{x\}) - r(\{x\}) +r(E - x)  - r(E)\\
						&  \hspace*{2.6in} - \lambda(\{x\}) + r(\{x\})]\\
						& =  r(X \cup A) -r (A)  	+ \sum_{x \in X} [r(E - x)  - r(E)]\\
						& = 	r_{(P / A)^{\flat}}(X).	
\end{align*}
Hence (ii) holds.
\end{proof}

\begin{proof}[Proof of Lemma~\ref{complast}.]
We may assume that there are disjoint subsets $A_1,A_2,\ldots,A_n$ of $E$ such that, in forming $Q$ from $P$, these sets are removed in order via deletion or contraction with the possibility that, after each such move, a compactification is performed. To prove the lemma, we argue by induction on $n$. 
It follows immediately from Lemma~\ref{complast1} that the lemma holds if $n= 1$. Assume the result holds for $n < m$ and let $n = m\ge 2$. Then there is a $2$-polymatroid $R$ such that $Q$ is $(R \ba A_n)^{\flat}$ or $(R / A_n)^{\flat}$, so, by Lemma~\ref{complast1},  $Q$ is $(R^{\flat} \ba A_n)^{\flat}$ or $(R^{\flat} / A_n)^{\flat}$, respectively. In forming $R$, a certain sequence of deletions, contractions, and compactifications is performed. Let $R_0$ be the $2$-polymatroid that is obtained from $P$ by performing the same sequence of operations except for the compactifications. Then, by the induction assumption, $R^{\flat} = R_0^{\flat}$. Since 
$(R^{\flat} \ba A_n)^{\flat} = (R_0^{\flat} \ba A_n)^{\flat} = (R_0 \ba A_n)^{\flat}$ and $(R^{\flat} / A_n)^{\flat} =(R_0^{\flat} / A_n)^{\flat} = (R_0 / A_n)^{\flat}$, the lemma follows by induction. 
\end{proof}

The following are straightforward consequences  of Lemma~\ref{complast}. We prove only the second of these. 

\begin{corollary}
\label{complast3}
Let $P$ and $Q$ be  $2$-polymatroids such that $Q$ is compact. Then $Q$ is a c-minor of $P$ if and only if $Q$ can be obtained from $P$ by a sequence of deletions and contractions followed by a single compactification.
\end{corollary}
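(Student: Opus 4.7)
My plan is to argue both directions separately, with the nontrivial direction following immediately by applying Lemma~\ref{complast} once we insert a harmless final compactification.

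The backward direction is immediate from the definition of c-minor: any sequence of deletions and contractions followed by a single compactification is a special case of a sequence of contractions, deletions, and compactifications, so the resulting $2$-polymatroid is a c-minor of $P$.

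For the forward direction, suppose $Q$ is a c-minor of $P$. By definition, there is a sequence of contractions, deletions, and compactifications that produces $Q$ from $P$. The key observation is that since $Q$ is compact, we have $Q^{\flat}=Q$: indeed, for each $x\in E(Q)$, compactness gives $\lambda_Q(\{x\})=r_Q(\{x\})$, so the correction term $\lambda(\{x\})-r(\{x\})$ in the definition of $r^{\flat}$ vanishes for every element, and $r_{Q^{\flat}}=r_Q$. Therefore we may append a compactification as the final move of the producing sequence without altering $Q$. Now the producing sequence ends in a compactification, so Lemma~\ref{complast} applies and allows us to delete every intermediate compactification, leaving a sequence consisting solely of deletions and contractions followed by a single closing compactification, as required.

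There is no real obstacle here; the only point to double-check is the fact that a compact $2$-polymatroid is fixed by compactification, which is essentially built into the definitions. Everything else is a direct invocation of Lemma~\ref{complast}.
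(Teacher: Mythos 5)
Your proof is correct and takes exactly the route the paper intends: the paper declines to write out a proof of this corollary precisely because, as you observe, once one notes that $Q^{\flat}=Q$ for compact $Q$, appending a harmless final compactification puts the producing sequence into the form covered by Lemma~\ref{complast}. The paper's own proof of Corollary~\ref{complast2} uses this same observation as its first two sentences.
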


\begin{corollary}
\label{complast2}
Let $P$ and $Q$ be compact $2$-polymatroids. Then $Q$ is a c-minor of $P$ if and only if $Q$ can be obtained from $P$ by a sequence of operations each of which consists of either  a contraction or a deletion followed by a compactification.
\end{corollary}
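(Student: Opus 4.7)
The plan is that the backward direction is immediate---any sequence of contractions and delete-compactify moves is in particular a sequence of deletions, contractions, and compactifications, so the result is a c-minor of $P$ by definition. For the forward direction, I would reduce to Corollary~\ref{complast3} and then push the trailing compactification inward, one operation at a time, using Lemma~\ref{complast1}.

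Suppose $Q$ is a c-minor of the compact $2$-polymatroid $P$. Since $Q$ is compact, Corollary~\ref{complast3} furnishes a sequence of deletions and contractions of non-empty sets producing a $2$-polymatroid $R$ with $Q = R^{\flat}$. Writing this sequence as $R = P \circ_1 X_1 \circ_2 X_2 \cdots \circ_n X_n$ with each $\circ_i \in \{\ba, /\}$ and each $X_i$ non-empty, I would argue by induction on $n$. The base case $n = 0$ uses that $P$ is compact to conclude $Q = P^{\flat} = P$, so the empty sequence suffices; this is the only place the compactness hypothesis on $P$ is actually required.

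For the inductive step, set $R' = P \circ_1 X_1 \cdots \circ_{n-1} X_{n-1}$. By Lemma~\ref{complast1},
\[
Q = R^{\flat} = (R' \circ_n X_n)^{\flat} = ((R')^{\flat} \circ_n X_n)^{\flat}.
\]
If $\circ_n = /$, then Lemma~\ref{compact0}(iv) forces $(R')^{\flat}/X_n$ to be compact, so the outer compactification is trivial and $Q = (R')^{\flat}/X_n$; if $\circ_n = \ba$, the outer compactification is retained and $Q = (R')^{\flat} \baba X_n$. In either case, the induction hypothesis supplies $(R')^{\flat}$ from $P$ in the desired interleaved form, and appending a single contraction or delete-compactify move respectively produces $Q$. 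I do not foresee a genuine obstacle here: Lemma~\ref{complast1} provides exactly the commutation needed to absorb compactification into the last operation, while Lemma~\ref{compact0}(iv) collapses the contraction case to a bare contraction, so the induction runs without fuss.
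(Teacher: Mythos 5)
Your proof is correct and takes essentially the same approach as the paper's: both reduce to the trailing-flat form via Lemma~\ref{complast}/Corollary~\ref{complast3} and then absorb the flat into the operations using Lemma~\ref{complast1}, with the inserted flat vanishing after a contraction because compactness is preserved. You simply render as an explicit induction what the paper states informally as inserting a compactification after each deletion and observing compactness is retained throughout. One caution on the contraction case: Lemma~\ref{compact0}(iv), read for an arbitrary polymatroid $M$, is too strong as stated (contract one element from $U_{2,2}$ and the survivor is a coloop, so the result is not compact); the safer justification is that $(R')^{\flat}$ is compact, being a compactification, so Lemma~\ref{elemprop23}(i) gives compactness of $(R')^{\flat}/X_n$ --- which is precisely the compactness-preservation observation the paper itself invokes.
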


\begin{proof} We need to show that if $Q$ is a c-minor of $P$, then $Q$ can be obtained as described. Now $Q^{\flat} = Q$. Thus, by Lemma~\ref{complast}, $Q$ can be obtained from $P$ by a sequence of deletions and contractions with one compactification being done as the final move. By Lemma~\ref{complast1}, we can perform a compactification after each deletion and still obtain $Q$ at the end of the process. Since $P$ is compact and each contraction of a compact $2$-polymatroid is compact, we retain compactness throughout this sequence of moves, so the result holds.
\end{proof}

\begin{lemma}
\label{compact2}
Let $M$ be a polymatroid. Then 
$$(M^{\flat})^* = M^* = (M^*)^{\flat}.$$
\end{lemma}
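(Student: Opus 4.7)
The plan is to verify each of the two claimed equalities by short direct calculations from the definitions of $r^*$ and $r^{\flat}$, using in one case the compactness of $M^*$.

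First I would handle the identity $M^* = (M^*)^{\flat}$. By Lemma~\ref{compact0}(i), $M^*$ is compact, so every element $e$ satisfies $\lambda_{M^*}(\{e\}) = r^*(\{e\})$. Since the correction term in the compactification is
\[
r^{\flat}(X) - r(X) = \sum_{x \in X}[\lambda(\{x\}) - r(\{x\})],
\]
applying this to $M^*$ gives a correction of zero for every $X$, whence $(M^*)^{\flat} = M^*$. This is the easy equality.

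Next I would establish $(M^{\flat})^* = M^*$ by direct evaluation. Write $\lambda(\{x\}) - r(\{x\}) = r(E - x) - r(E)$, so that for any $X \subseteq E$,
\[
r^{\flat}(X) = r(X) + \sum_{x \in X}\bigl[r(E - x) - r(E)\bigr].
\]
Two instances of this formula are crucial. Setting $X = \{e\}$ yields $r^{\flat}(\{e\}) = \lambda(\{e\})$. Computing the difference $r^{\flat}(E - Y) - r^{\flat}(E)$ by subtracting the formula for $X = E$ from the formula for $X = E - Y$, the shared contributions from elements of $E - Y$ cancel, leaving
\[
r^{\flat}(E - Y) - r^{\flat}(E) = r(E - Y) - r(E) - \sum_{x \in Y}\bigl[r(E - x) - r(E)\bigr].
\]
Then I would plug these two expressions into the definition $(r^{\flat})^*(Y) = \sum_{e \in Y} r^{\flat}(\{e\}) + r^{\flat}(E - Y) - r^{\flat}(E)$. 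The sum $\sum_{e \in Y}\lambda(\{e\})$ exactly cancels $\sum_{x \in Y}[\lambda(\{x\}) - r(\{x\})]$ up to the remaining $\sum_{e \in Y} r(\{e\})$, producing
\[
(r^{\flat})^*(Y) = \sum_{e \in Y} r(\{e\}) + r(E - Y) - r(E) = r^*(Y),
\]
as required.

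There is no real obstacle here; the only thing to be careful about is keeping track of which rank function each symbol refers to and ensuring the telescoping of the $\sum_{x}[r(E - x) - r(E)]$ terms is done over the correct index sets. Combining the two computations gives $(M^{\flat})^* = M^* = (M^*)^{\flat}$.
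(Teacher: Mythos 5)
Your proof is correct. The first equality $M^* = (M^*)^{\flat}$ is handled exactly as the paper does, by noting $M^*$ is compact. For the second equality $(M^{\flat})^* = M^*$, you take a genuinely different route: the paper reaches it abstractly, invoking Lemma~\ref{compact0}(ii) (the identity $(M^*)^* = M^{\flat}$, quoted from [jmw]) twice --- first to rewrite $M^{\flat}$ as $(M^*)^*$, then to collapse the resulting triple dual $((M^*)^*)^*$ into $(M^*)^{\flat}$, and finally appealing to the first equality --- whereas you compute $(r^{\flat})^*$ directly from the two defining formulas, using $\lambda(\{x\}) - r(\{x\}) = r(E - x) - r(E)$ to telescope the compactification corrections. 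Your calculation checks out: in
\[
(r^{\flat})^*(Y) = \sum_{e \in Y} r^{\flat}(\{e\}) + r^{\flat}(E - Y) - r^{\flat}(E),
\]
the terms $\sum_{e \in Y} \lambda(\{e\})$ coming from $r^{\flat}(\{e\}) = \lambda(\{e\})$ cancel against the correction $-\sum_{x \in Y}[\lambda(\{x\}) - r(\{x\})]$ arising in $r^{\flat}(E-Y) - r^{\flat}(E)$, leaving precisely $\sum_{e \in Y} r(\{e\}) + r(E-Y) - r(E) = r^*(Y)$. The trade-off is that the paper's proof is shorter but opaque without the imported lemma, while yours is entirely self-contained and makes visible exactly why compactifying before dualizing changes nothing; your version would actually be preferable in a paper that wanted to avoid depending on Lemma~\ref{compact0}(ii).
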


\begin{proof}
By Lemma~\ref{compact0}(i), $M^*$ is compact, so $M^* = (M^*)^{\flat}.$ Also, by Lemma~\ref{compact0}(ii), 
  $(M^{\flat})^* = ((M^*)^*)^* = (M^*)^{\flat}$.
\end{proof}

\begin{lemma}
\label{csm}
Let $P$ and $Q$ be $2$-polymatroids, where $Q$ is compact.  Then 
$P$ has a c-minor isomorphic to $Q$ if and only if $P^*$ has a c-minor isomorphic to $Q^*$. 
\end{lemma}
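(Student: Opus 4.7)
The strategy is to establish only the forward direction: if $Q$ is a c-minor of $P$, then $Q^*$ is a c-minor of $P^*$. The reverse direction follows by applying the forward direction with $P^*$ and $Q^*$ in place of $P$ and $Q$, noting that $Q^*$ is compact by Lemma~\ref{compact0}(i). This yields that $(P^*)^*$ has a c-minor isomorphic to $(Q^*)^*$; by Lemma~\ref{compact0}(ii) together with compactness of $Q$, this says $P^{\flat}$ has a c-minor isomorphic to $Q$. Since $P^{\flat}$ is itself a c-minor of $P$ (via the empty sequence of deletions and contractions followed by a single compactification), transitivity of the c-minor relation closes the loop.

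For the forward direction, suppose $Q$ is a c-minor of $P$. By Corollary~\ref{complast3}, combined with the routine fact that deletion and contraction on disjoint sets commute in the polymatroid setting, we may write $Q \cong (P/C \ba D)^{\flat}$ for some disjoint $C, D \subseteq E(P)$. By Lemma~\ref{compact2}, $Q^* \cong (P/C \ba D)^*$, so it suffices to realise $(P/C \ba D)^*$ as a c-minor of $P^*$.

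The key technical step is a dual formula for deletion to parallel Lemma~\ref{compact0}(iv). Applying that lemma to $M^*$ in place of $M$ gives $(M^*/X)^* = ((M^*)^*\ba X)^{\flat} = (M^{\flat}\ba X)^{\flat} = (M\ba X)^{\flat}$, where the last equality uses Lemma~\ref{complast1}(i). Taking duals and invoking Lemma~\ref{compact0}(ii) and Lemma~\ref{compact2} yields the companion identity $(M\ba X)^* = (M^*/X)^{\flat}$. Specialising to $M = P/C$ and $X = D$, rewriting $(P/C)^*$ via Lemma~\ref{compact0}(iv), and absorbing the resulting intermediate compactification via Lemma~\ref{complast1}(ii), we obtain
$$(P/C \ba D)^* = \bigl((P^*\ba C)/D\bigr)^{\flat},$$
which is a c-minor of $P^*$ by Corollary~\ref{complast3}.

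The principal obstacle is bookkeeping: carefully interchanging the deletion, contraction, and compactification operations, and tracking exactly where each application of Lemma~\ref{complast1} absorbs a trailing $(\cdot)^{\flat}$. Conceptually the result is transparent --- duality swaps deletion and contraction, with compactifications inserted precisely where compactness might fail --- but the formal verification requires the chain of identities assembled above.
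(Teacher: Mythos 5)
Your proof is correct, and it takes a genuinely different route from the paper's.

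The paper also begins with Corollary~\ref{complast3}, but then reintroduces a compactification after every intermediate step (justified by Lemma~\ref{complast1}) and inducts on the number $n$ of removed sets, computing the dual of a single compactified deletion or contraction in the base case $n=1$ and chaining through an intermediate compact c-minor $R$ in the inductive step. Your approach sidesteps the induction entirely: after invoking Corollary~\ref{complast3}, you use the standard commutation identities $M\ba A / B = M/B \ba A$, $M/A/B = M/(A\cup B)$, $M\ba A\ba B = M\ba(A\cup B)$ to collapse the whole sequence into the normal form $(P/C\ba D)^{\flat}$, then dualize in one shot. The price is that you must explicitly derive the companion identity $(M\ba X)^* = (M^*/X)^{\flat}$ by dualizing Lemma~\ref{compact0}(iv) and absorbing flats via Lemma~\ref{complast1}; this identity appears in the paper only implicitly inside the $n=1$ computation. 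Your closing argument for the reverse direction (apply the forward direction to $P^*$, $Q^*$; use $(P^*)^*=P^{\flat}$, compactness of $Q$, and the fact that $P^{\flat}$ is a c-minor of $P$) is in substance the same as the paper's. One small point worth flagging if you write this up formally: Lemma~\ref{compact0}(iv) is stated for non-empty $X$, so the cases $C=\emptyset$ or $D=\emptyset$ deserve a one-line remark (the identities hold trivially there since $M^*$ and any flat $(\cdot)^{\flat}$ are already compact), but this is a harmless edge case. Overall your argument is a clean, non-inductive alternative that buys brevity at the cost of making one more dual identity explicit.
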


\begin{proof} Suppose $P$ has a c-minor isomorphic to $Q$. By Corollary~\ref{complast3}, $Q$ can be obtained from $P$ by a sequence of deletions and contractions with one compactification being done as the final move. By Lemma~\ref{complast1}, we can perform a compactification after each deletion and after each contraction and still obtain $Q$ at the end of the process. Indeed, since $(P^{\flat}\ba A)^{\flat} = (P\ba A)^{\flat}$  and 
  $(P^{\flat}/ A)^{\flat} = (P/ A)^{\flat}$, we see that $P^{\flat}$ has a c-minor isomorphic to $Q$. Thus we may assume that, in forming $Q$ from $P^{\flat}$, we remove, in order, disjoint sets $A_1, A_2,\ldots, A_n$ where each such removal is followed by a compactification. To prove that 
  $P^*$ has a c-minor isomorphic to $Q^*$, we shall argue by induction on $n$. 
  
Suppose $n = 1$. Then $Q$ is  $(P^{\flat}\ba A_1)^{\flat}$ or $(P^{\flat}/ A_1)^{\flat}$.
Then, by Lemmas~\ref{compact0} and \ref{compact2},
 $$((P^{\flat}\ba A_1)^{\flat})^* = (((P^*)^*\ba A_1)^{\flat})^*= ((P^*/A_1)^*)^* = (P^*/A_1)^{\flat} = P^*/A_1$$
 and
$$((P^{\flat}/ A_1)^{\flat})^* = (P^{\flat}/ A_1)^* = ((P^{\flat})^*\ba A_1)^{\flat} = (P^*\ba A_1)^{\flat}.$$
Since $Q$ is compact, we deduce that the result holds for $n = 1$. Assume it holds for $n< k$ and let $n = k \ge 2$. Then there is a compact $2$-polymatroid $R$ that is a c-minor of $P$ such that 
$Q$ is $(R\ba A_n)^{\flat}$ or $(R/A_n)^{\flat}$. By the induction assumption, $R^*$ is a c-minor of $P^*$, and $Q^*$ is a c-minor of $R^*$. Hence 
$Q^*$ is a c-minor of $P^*$.

For the converse, we note that, by what we have just proved, if $Q^*$ is a c-minor of $P^*$, then $(Q^*)^*$ is a c-minor of $(P^*)^*$, that is, 
$Q^{\flat}$ is a c-minor of $P^{\flat}$. But $Q$ is compact so $Q$ is a c-minor of $P^{\flat}$. Hence $Q$ is a c-minor of $P$.
\end{proof}



When we compactify a $2$-polymatroid, loosely speaking what we are doing is dealing simultaneously with a number of $2$-separations. It will be helpful to be able to treat these $2$-separations one at a time. In the introduction, we defined the compression $M\downarrow x$ for an element $x$ of a $2$-polymatroid $M$. Ultimately, that operation removes $x$. Let $M\underline{\downarrow} x$ be the $2$-polymatroid that is obtained from $M$ by freely adding an element $x'$ on $x$ and then contracting $x'$. Thus $M\underline{\downarrow} x$ has ground set $E$ and rank function given, for all subsets $X$ of $E$, by 
\begin{equation} 
\label{getdown4 }
r_{M\underline{\downarrow} x}(X) = 
\begin{cases} 
r(X), & \text{if $r(x) = 0$, or $r(X \cup x) > r(X)$;   and}\\
r(X) - 1, & \text{otherwise.}
\end{cases}
\end{equation}
We shall say that $M\underline{\downarrow} x$ has been obtained by {\it compactifying} $x$. Evidently 

$$M\da x = (M\ud x) \ba x.$$

\begin{lemma}
\label{compel}
Let $M$ be a $2$-connected $2$-polymatroid that is  not compact. Let $Z$ be the set of lines $z$ of $M$ such that $\lambda(\{z\}) = 1$. Then 
$$M^{\flat} = ((\ldots((M\underline{\downarrow} z_1)\underline{\downarrow} z_2)\ldots)\underline{{\downarrow}} z_n)$$
where $Z = \{z_1,z_2,\ldots,z_n\}$.
\end{lemma}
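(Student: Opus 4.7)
The plan is to argue by induction on $n = |Z|$, after first recording an explicit formula for $r^{\flat}$ and a key closure fact about elements of $Z$. A short calculation using $2$-connectivity of $M$ shows that $\lambda_M(\{x\}) = r(\{x\})$ for every element $x$ outside $Z$, while $\lambda_M(\{z\}) - r(\{z\}) = -1$ for every $z \in Z$. Hence the definition of $r^{\flat}$ collapses to
$$r^{\flat}(X) = r(X) - |X \cap Z| \qquad \text{for every } X \subseteq E.$$

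The main intermediate step is the following closure property: for every $z \in Z$ and every $X \subseteq E$, we have $z \in \cl_M(X)$ if and only if $z \in X$. The non-trivial direction goes as follows. If $z \notin X$, then $X \subseteq E - z$, so monotonicity of closure gives $\cl_M(X) \subseteq \cl_M(E - z)$; were $z$ in the latter, we would have $r(E - z) = r(E)$, forcing $\lambda_M(\{z\}) = r(\{z\}) = 2$ and contradicting $z \in Z$.

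With these in hand, I would prove by induction on $k \in \{0, 1, \ldots, n\}$ that, setting $M_0 = M$ and $M_k = M_{k-1}\ud z_k$,
$$r_{M_k}(X) = r(X) - |X \cap \{z_1, \ldots, z_k\}| \qquad \text{for all } X \subseteq E.$$
The base $k = 0$ is immediate. For the inductive step, the inductive formula applied to $\{z_k\}$ gives $r_{M_{k-1}}(\{z_k\}) = 2$, since $z_k$ is distinct from $z_1, \ldots, z_{k-1}$, so $z_k$ is not a loop of $M_{k-1}$. The definition of $\ud$ then yields $r_{M_k}(X) = r_{M_{k-1}}(X) - 1$ if $z_k \in \cl_{M_{k-1}}(X)$ and $r_{M_{k-1}}(X)$ otherwise. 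Comparing $r_{M_{k-1}}(X \cup z_k)$ with $r_{M_{k-1}}(X)$ via the inductive formula, the $\{z_1, \ldots, z_{k-1}\}$-correction terms cancel (again because $z_k$ is distinct from the earlier $z_i$), so $z_k \in \cl_{M_{k-1}}(X)$ if and only if $z_k \in \cl_M(X)$; the closure property then equates this with $z_k \in X$, completing the step. Setting $k = n$ gives $r_{M_n} = r^{\flat}$, as desired.

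The main obstacle is propagating the closure property through the intermediate polymatroids: one must verify that the spanning behaviour of each $z_k$ remains unchanged after compactifying $z_1, \ldots, z_{k-1}$. Once the closure property for $M$ itself has been established, this reduces to the transparent algebraic cancellation above, and the induction runs smoothly; as a bonus, the formula for $r_{M_n}$ is manifestly symmetric in the $z_i$, so the conclusion is independent of the order of compactifications.
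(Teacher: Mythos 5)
Your proof is correct, and it follows essentially the same strategy as the paper: induction on the number of compactifications performed, verified by a direct comparison of rank functions. The main difference is cosmetic but pleasant — you carry the explicit invariant $r_{M_k}(X) = r(X) - |X \cap \{z_1,\ldots,z_k\}|$ through the induction, whereas the paper instead verifies that $M_1 = M\ud z_1$ is again $2$-connected with $Z - z_1$ as its set of $\lambda = 1$ lines and then shows $M_1^{\flat} = M^{\flat}$; your closure lemma (for $z \in Z$, $z \in \cl_M(X)$ iff $z \in X$) is the key observation that the paper leaves implicit in its "easily checked" base case.
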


\begin{proof} We argue by induction on $n$. Suppose $n = 1$. Let $X \subseteq E(M)$. Then  
\begin{equation*} 
\label{getdown2}
r_{M^{\flat}}(X) = 
\begin{cases} 
r(X), & \text{if $z_1 \not \in X$;   and}\\
r(X) - 1, & \text{otherwise.}
\end{cases}
\end{equation*}
On the other hand,
\begin{equation*} 
\label{getdown3}
r_{M\underline{\downarrow} z_1}(X) = 
\begin{cases} 
r(X), & \text{if   $r(X \cup z_1) > r(X)$;   and}\\
r(X) - 1, & \text{otherwise.}
\end{cases}
\end{equation*}
The result is easily checked in this case. 

Now assume that $n \ge 2$ and that the lemma holds if $|Z| \le n-1$.   Let $M_1 = M\underline{\downarrow} z_1$. Then $M_1$ is easily shown to be $2$-connected having $\{z_2,z_3,\ldots,z_n\}$ as its set of lines $z$ for which $\lambda_{M_1}(\{z\}) = 1$. Thus, by the induction assumption, 
$$M_1^{\flat} = ((\ldots((M_1\underline{\downarrow} z_2)\underline{\downarrow} z_3)\ldots)\underline{{\downarrow}} z_n).$$
Since $M_1 = M\underline{\downarrow} z_1$, it suffices to show that $M_1^{\flat} = M^{\flat}.$

Suppose $X \subseteq E$. Then 
$$r^{\flat}(X) = r(X) + \sum_{x \in X}(\lambda(\{x\}) - r(\{x\})).$$
Now 
\begin{equation*} 
\label{getdown5.1}
r_{M_1}(X) = 
\begin{cases} 
r(X), & \text{if   $r(X \cup z_1) > r(X)$;   and}\\
r(X) - 1, & \text{otherwise.}
\end{cases}
\end{equation*}
Thus 
\begin{equation*} 
\label{getdown6}
r_{M_1}(X) = 
\begin{cases} 
r(X), & \text{if   $z_1 \not \in X$;   and}\\
r(X) - 1, & \text{otherwise.}
\end{cases}
\end{equation*}
Hence 
\begin{eqnarray*}
r_{M_1^{\flat}}(X)  & = & r_{M_1}(X) + \sum_{x \in X}(\lambda_{M_1}(\{x\}) - r_{M_1}(\{x\}))\\
				& = & r_{M_1}(X) + \sum_{x \in X \cap (Z - z_1)}(\lambda_{M}(\{x\}) - r_{M}(\{x\}))\\
				& = &  r_{M}(X) + \sum_{x \in X \cap Z}(\lambda_{M}(\{x\}) - r_{M}(\{x\}))\\
				& = & r^{\flat}(X).
\end{eqnarray*}
We conclude, by induction, that the lemma holds.
\end{proof}

We will need some elementary properties of deletion, contraction, and series compression.

\begin{lemma}
\label{elemprop}
Let $A$ and $B$ be disjoint subsets of the ground set $E$ of a $2$-polymatroid $P$. Then 
\begin{itemize}
\item[(i)] $P/A/B = P/ (A \cup B) = P/B/A$;
\item[(ii)] $P\baba A\baba B = P\baba (A \cup B) = P\baba B \baba A$; and 
\item[(iii)] $P/ A\baba B =  P\baba B / A$.
\end{itemize}
\end{lemma}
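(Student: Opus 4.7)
The plan is to reduce each identity to the analogous (and more familiar) identity for ordinary deletion and contraction on polymatroids, then absorb the compactifications using Lemmas~\ref{complast1} and \ref{compact0}(iv). For part (i), no compactification is involved, so I would just expand the definition of contraction twice. For any $X\subseteq E-(A\cup B)$,
\[
r_{P/A/B}(X)=r_{P/A}(X\cup B)-r_{P/A}(B)=r(X\cup A\cup B)-r(A\cup B),
\]
which is exactly $r_{P/(A\cup B)}(X)$; swapping the roles of $A$ and $B$ gives $P/B/A=P/(A\cup B)$ as well.

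For part (ii), I would first record the ordinary-deletion identity $P\ba A\ba B=P\ba(A\cup B)$, which is immediate from the definition since the rank function of each side, evaluated on $X\subseteq E-(A\cup B)$, is just $r(X)$. Then, using Lemma~\ref{complast1}(i) to strip an interior compactification,
\[
P\baba A\baba B=\bigl((P\ba A)^{\flat}\ba B\bigr)^{\flat}=\bigl((P\ba A)\ba B\bigr)^{\flat}=\bigl(P\ba(A\cup B)\bigr)^{\flat}=P\baba(A\cup B),
\]
and symmetry in $A,B$ gives the full chain of equalities.

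For part (iii), a direct rank computation shows that ordinary deletion and contraction on disjoint sets commute, so $P/A\ba B=P\ba B/A$. Compactifying both sides yields $P/A\baba B=(P/A\ba B)^{\flat}=\bigl((P\ba B)/A\bigr)^{\flat}$, whereas by definition $P\baba B/A=(P\ba B)^{\flat}/A$. The crux, and the main obstacle, is the identification
\[
\bigl((P\ba B)/A\bigr)^{\flat}=(P\ba B)^{\flat}/A,
\]
which is where compactness actually does something nontrivial. When $A$ is non-empty, Lemma~\ref{compact0}(iv) applied to the polymatroid $(P\ba B)^{\flat}$ shows that $(P\ba B)^{\flat}/A$ is already compact and hence equals its own compactification; combined with Lemma~\ref{complast1}(ii), we get
\[
(P\ba B)^{\flat}/A=\bigl((P\ba B)^{\flat}/A\bigr)^{\flat}=\bigl((P\ba B)/A\bigr)^{\flat},
\]
which closes the loop. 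The degenerate case $A=\emptyset$ is trivial, since both sides then reduce to $P\baba B$. Everything else in the argument is bookkeeping with the submodular rank function; the single place where the formal machinery of compactness is used is precisely the appeal to Lemma~\ref{compact0}(iv) above, which spares us from unpacking the correction terms $\lambda(\{x\})-r(\{x\})$ by hand.
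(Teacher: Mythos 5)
Your proof is correct, and for part (iii) -- the only part the paper proves in full -- it takes a genuinely different route. The paper establishes (iii) by a direct rank computation: it expands $r_{P/A\baba B}(X)$ from the definition of $\flat$, rewrites each correction term $\lambda_{P/A\ba B}(\{x\}) - r_{P/A\ba B}(\{x\})$ as $r(E-B-x)-r(E-B)$, and reads off $r_{P\baba B / A}(X)$. You instead reduce everything to the single abstract identity $\bigl((P\ba B)/A\bigr)^{\flat} = (P\ba B)^{\flat}/A$, which you obtain by combining Lemma~\ref{complast1}(ii) with the fact that $(P\ba B)^{\flat}/A$ is already compact. This spares you the correction-term bookkeeping and reuses machinery the paper has already built, at the modest cost of splitting off the trivial case $A = \emptyset$. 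Your treatments of (i) and (ii) are sound and are exactly the routine arguments the paper has in mind; (ii) in particular is the natural application of Lemma~\ref{complast1}(i) to absorb the interior compactification.

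One small caution that does not affect the correctness of your proof: Lemma~\ref{compact0}(iv) as printed omits the hypothesis that $M$ be compact. The unqualified statement ``$M/X$ is compact for all non-empty $X$'' fails, for instance, for $M=U_{2,2}$ with $X$ a single element; the corrected version is exactly Lemma~\ref{elemprop23}(i). Your application is to $M = (P\ba B)^{\flat}$, which is compact by construction, so the conclusion you use is genuinely true -- but citing Lemma~\ref{elemprop23}(i) there would be the safer choice.
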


\begin{proof} Because the proofs of all three parts are routine, we only include a proof of (iii). 
Suppose $X \subseteq E- (A \cup B)$. Then 

\begin{align*}
r_{P/A\baba B}(X) &= r_{((P/A)\ba  B)^{\flat}}(X)\\
			    &= r_{P/A}(X) + \sum_{x \in X}[\lambda_{P/A\ba B}(\{x\}) - r_{P/A\ba B}(\{x\})]\\
			    & = r_{P/A}(X) + \sum_{x \in X}[r_{P/A}(E-A-B-x) - r_{P/A}(E-A-B)]\\
			     & = r(X\cup A) - r(A) + \sum_{x \in X}[r(E-B-x) - r(E-B)]\\
			    & = r(X\cup A) - r(A) + \sum_{x \in X}[\lambda_{P\ba B}(\{x\}) - r_{P\ba B}(\{x\})]\\
			    & = r_{P\baba B}(X\cup A) - r_{P\baba B}(A)\\
			    & = r_{P\baba B/A}(X).
			    \end{align*}
We conclude that (iii) holds.
\end{proof}

The remainder of this section presents a number of basic properties of 2-element prickly 3-separators and of the compression operation.

\begin{lemma}
\label{atlast}
Let $P$ be a $2$-polymatroid having $j$ and $k$ as lines and with $r(\{j,k\}) = 3$. Suppose $X \subseteq E(P) - k$ and $j \in X$. Then $r_{P\da k}(X) = r(X \cup k) - 1$.
\end{lemma}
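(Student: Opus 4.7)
The plan is to prove this directly from the definition of $r_{P\da k}$ given in (\ref{getdown}). Since $k$ is a line, $r(\{k\}) = 2 \ne 0$, so the two-case formula reduces to: $r_{P\da k}(X) = r(X)$ when $r(X \cup k) > r(X)$, and $r_{P\da k}(X) = r(X) - 1$ when $r(X \cup k) = r(X)$. I would split the proof into these two cases.

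In the case $r(X \cup k) = r(X)$, the claim is immediate: we get $r_{P\da k}(X) = r(X) - 1 = r(X \cup k) - 1$, and nothing about $j$ is needed. In the case $r(X \cup k) > r(X)$, we have $r_{P\da k}(X) = r(X)$, so it suffices to show that $r(X \cup k) = r(X) + 1$, i.e., that $r(X \cup k) \le r(X) + 1$.

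The hypothesis that $j \in X$ with $j$ a line and $r(\{j,k\}) = 3$ is used exactly here via an uncrossing argument. Applying submodularity to $X$ and $\{j,k\}$, and noting that $X \cup \{j,k\} = X \cup k$ (since $j \in X$) and $X \cap \{j,k\} = \{j\}$ (since $k \notin X$), we obtain
\begin{equation*}
r(X \cup k) + r(\{j\}) \le r(X) + r(\{j,k\}).
\end{equation*}
Substituting $r(\{j\}) = 2$ and $r(\{j,k\}) = 3$ gives $r(X \cup k) \le r(X) + 1$, as required. Combined with $r(X \cup k) > r(X)$, this forces equality $r(X \cup k) = r(X) + 1$, completing the second case.

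There is essentially no obstacle here; the lemma is a straightforward bookkeeping consequence of the definition of compression together with a single submodular inequality. The only mild subtlety is remembering that $r(\{j\}) = 2$, not $1$, because $j$ is a line rather than a point, so the uncrossing yields slack $1$ (giving $r(X \cup k) \le r(X) + 1$) rather than slack $2$.
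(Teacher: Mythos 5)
Your proof is correct and follows essentially the same route as the paper: both reduce to the two-case definition of $r_{P\da k}$ and use the submodularity of $r$ applied to $X$ and $\{j,k\}$ (the paper phrases this as $\sqcap(j,k)=1$) together with $j\in X$ to bound $r(X\cup k)\le r(X)+1$. Your write-up is just a more detailed spelling-out of the paper's one-line argument.
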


\begin {proof} 
By definition, 
\begin{equation*}
r_{P\da k}(X) = 
\begin{cases} 
r(X), & \text{if   $r(X \cup k) > r(X)$;}\\
r(X) - 1, & \text{otherwise.}
\end{cases}
\end{equation*}
As $j \in X$ and $\sqcap(j,k) = 1$, it follows that $r(X \cup k)$ is $r(X)$ or $r(X) + 1$. It follows that 
$r_{P\da k}(X) = r(X \cup k) - 1$.
\end{proof}

\begin{lemma}
\label{atlast2}
Let $P$ be a $2$-polymatroid having $j$ and $k$ as lines and with $r(\{j,k\}) = 3$. Suppose $\ell$ is a line of $P$ that is not in $\{j,k\}$ and is not parallel to $k$. Then $\{\ell\}$ is  $2$-separating in $P$ if and only if it is $2$-separating in $P\da k$. 
\end{lemma}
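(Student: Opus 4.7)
The plan is to show the stronger assertion that $\lambda_{P\da k}(\{\ell\}) = \lambda_P(\{\ell\})$, from which the lemma follows immediately since $\{\ell\}$ is $2$-separating precisely when its connectivity is at most $1$. All three ingredients needed for the connectivity in $P\da k$ (namely $r_{P\da k}(\{\ell\})$, $r_{P\da k}(E(P)-\{k,\ell\})$, and the total rank $r(P\da k)$) can be computed directly: the first from the defining formula \eqref{getdown} for compression, and the second and third by invoking Lemma~\ref{atlast}.

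First I would handle the small side. Since $\ell$ is a line that is not parallel to $k$ and $k$ is a line, we have $r(\{\ell,k\}) \ne 2$, and submodularity with $r(\{\ell\})=r(\{k\})=2$ forces $r(\{\ell,k\})\ge 3 > r(\{\ell\})$. The second clause of the definition of $r_{P\da k}$ then gives $r_{P\da k}(\{\ell\}) = r(\{\ell\}) = 2$.

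Next I would treat the large side using Lemma~\ref{atlast}. Because $\ell \notin \{j,k\}$, the element $j$ lies in both $E(P)-k$ and $E(P)-\{\ell,k\}$; $j$ is a line and $r(\{j,k\})=3$, so Lemma~\ref{atlast} is directly applicable in each case and yields
\begin{align*}
r_{P\da k}(E(P)-k) &= r(E(P)) - 1, \\
r_{P\da k}(E(P)-\{\ell,k\}) &= r(E(P) - \ell) - 1.
\end{align*}
Plugging these into the definition of $\lambda_{P\da k}(\{\ell\})$, the two offsets of $-1$ cancel:
\begin{align*}
\lambda_{P\da k}(\{\ell\}) &= r_{P\da k}(\{\ell\}) + r_{P\da k}(E(P)-\{\ell,k\}) - r_{P\da k}(E(P)-k) \\
&= 2 + (r(E(P)-\ell) - 1) - (r(E(P)) - 1) \\
&= 2 + r(E(P)-\ell) - r(E(P)) \\
&= \lambda_P(\{\ell\}).
\end{align*}
Thus $\lambda_{P\da k}(\{\ell\}) \le 1$ if and only if $\lambda_P(\{\ell\}) \le 1$, which is the claim.

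There is essentially no obstacle here beyond verifying that the hypotheses of Lemma~\ref{atlast} apply to the sets $E(P)-k$ and $E(P)-\{\ell,k\}$; this is guaranteed by the assumptions $\ell\ne j$ and $\ell\ne k$. The hypothesis that $\ell$ is not parallel to $k$ is used only in the computation of $r_{P\da k}(\{\ell\})$, to rule out the ``otherwise'' branch of \eqref{getdown}.
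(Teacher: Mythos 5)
Your proposal is correct and follows essentially the same route as the paper: both reduce the rank computations in $P\da k$ to Lemma~\ref{atlast} using the fact that $j$ lies in the relevant complements, and both use the non-parallelism hypothesis to get $r_{P\da k}(\{\ell\})=2$. Packaging the conclusion as the equality $\lambda_{P\da k}(\{\ell\})=\lambda_P(\{\ell\})$ is a slightly cleaner statement, but the underlying calculation is identical; the only small quibble is that establishing $r(\{\ell,k\})\ge 3$ needs only monotonicity of the rank function, not submodularity.
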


\begin {proof}  Clearly $\{\ell\}$ is $2$-separating in $P$ if and only if $r(E- \ell) \le r(E) - 1$. Since $\ell$ is not parallel to $k$, we see that $r_{P\da k}(\ell) = r(\ell) = 2$. Now $\{\ell\}$ is $2$-separating in $P\da k$ if and only if $r_{P\da k}(E- \{k,\ell\}) \le r_{P\da k}(E- k) - 1$. By Lemma~\ref{atlast}, the last inequality holds if and only if $r(E - \ell) - 1 \le r(E) - 1 - 1.$ We conclude that the lemma holds.
\end{proof}

\begin{lemma}
\label{symjk}
Let $\{j,k\}$ be a prickly $3$-separator in a $2$-polymatroid $P$. 
Then $P\da j$ can be obtained from $P\da k$ by relabelling $j$ as $k$. 
\end{lemma}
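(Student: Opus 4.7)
The plan is to exhibit the obvious bijection $\phi : E(P)-j \to E(P)-k$ that sends $k \mapsto j$ and is the identity on $E(P)-\{j,k\}$, and then to verify the rank identity $r_{P\da j}(X) = r_{P\da k}(\phi(X))$ for every $X \subseteq E(P)-j$ by a case split on whether $k \in X$. From the definition of a prickly $3$-separator applied to $Z = \{j,k\}$, I will extract exactly the ingredients I need: $r(\{j\}) = r(\{k\}) = 2$, $r(\{j,k\}) = 3$, and (via axiom (iii) with $Z' = \{j\}$ and $Z' = \{k\}$) the equalities
\[
r(E-k) \;=\; r(E-\{j,k\}) + 1 \;=\; r(E-j).
\]

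For the case $k \in X$, note that $\phi(X) = (X-k) \cup \{j\}$ lies in $E(P)-k$ and contains $j$. I will apply Lemma~\ref{atlast} to $P\da j$ with the set $X$ (using $k \in X \subseteq E-j$ and $r(\{j,k\}) = 3$) to conclude $r_{P\da j}(X) = r(X \cup j) - 1$, and apply it again to $P\da k$ with the set $\phi(X)$ to conclude $r_{P\da k}(\phi(X)) = r(\phi(X) \cup k) - 1 = r(X \cup j) - 1$. The two sides agree on the nose.

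The genuinely interesting case is $k \notin X$, so that $X \subseteq E-\{j,k\}$ and $\phi(X) = X$. Here both defining formulas for $r_{P\da j}(X)$ and $r_{P\da k}(X)$ bifurcate according to whether $r(X \cup j) > r(X)$ and whether $r(X \cup k) > r(X)$; the key claim is that both strict inequalities hold, which collapses both sides to $r(X)$. To prove $r(X \cup j) > r(X)$, I will argue by contradiction: assuming $r(X \cup j) = r(X)$, apply submodularity of $r$ to the pair $(X \cup j,\; E-\{j,k\})$, whose union is $E-k$ and whose intersection is $X$, to obtain
\[
r(X) + r(E-\{j,k\}) \;\geq\; r(E-k) + r(X),
\]
which forces $r(E-k) \leq r(E-\{j,k\})$, contradicting the prickly identity $r(E-k) = r(E-\{j,k\}) + 1$. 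The inequality $r(X \cup k) > r(X)$ follows by the symmetric argument.

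I do not expect a serious obstacle: the whole verification reduces to Lemma~\ref{atlast} (in the case $k \in X$) and to the single one-line submodularity application above (in the case $k \notin X$). The only subtlety worth flagging is the temptation, in the second case, to try to match the two piecewise formulas directly; recognizing that the apparently problematic equality $r(X \cup j) = r(X)$ simply cannot occur for $X \subseteq E-\{j,k\}$ is what makes the case analysis close cleanly.
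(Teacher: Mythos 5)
Your proposal is correct and follows essentially the same route as the paper: split by whether the subset of $E(P)-j$ contains $k$, use $\sqcap(j,k)=1$ (packaged as Lemma~\ref{atlast}) in the case $k \in X$, and reduce to $r_{P\da j}(X)=r_P(X)=r_{P\da k}(X)$ in the case $X \subseteq E-\{j,k\}$. The only difference is that you spell out, via a submodularity calculation against $E-\{j,k\}$, why $r(X\cup j)$ and $r(X\cup k)$ both exceed $r(X)$ in the second case; the paper asserts this in one clause without elaboration, so your extra paragraph is a harmless (and arguably welcome) filling-in of an implicit step.
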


\begin{proof} 
Suppose $X \subseteq E - \{j,k\}$. Then, since both $r(X \cup j)$ and $r(X \cup k)$ exceed $r(X)$, 
$$r_{P\da j}(X) = r_P(X) = r_{P\da k}(X).$$

Now, as $\sqcap(j,k) = 1$, it follows that either 
$r(X \cup j \cup k) = r(X \cup j) + 1$, or $r(X \cup j \cup k) = r(X \cup j)$. Thus 
$r_{P\da k}(X \cup j) = r(X \cup j \cup k) - 1$. By symmetry, 
$r_{P\da j}(X \cup k) = r(X \cup j \cup k) - 1$, and the lemma follows.
\end{proof}

The first part of the next lemma was proved by Jowett, Mo, and Whittle~\cite[Lemma 3.6]{jmw}.

\begin{lemma}
\label{elemprop23}
Let  $P$ be a compact polymatroid $(E,r)$.  For $A\subseteq E$,  
\begin{itemize}
\item[(i)] $P/A$ is compact; and
\item[(ii)] if $P$ is a $2$-polymatroid and $\{j,k\}$ is a prickly $3$-separator of $P$, then $P\da k$ is compact. 
\end{itemize}
\end{lemma}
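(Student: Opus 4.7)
The plan is to establish both parts by direct verification of the compactness condition $r(E - e) = r(E)$ for each element $e$ after applying the relevant operation, using the explicit rank formulas for contraction and compression.

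For part (i), fix $e \in E - A$. The contraction formula gives $r_{P/A}(E - A) = r(E) - r(A)$ and $r_{P/A}((E - A) - e) = r(E - e) - r(A)$. Since $P$ is compact, $r(E - e) = r(E)$, so these two values coincide and $e$ is compact in $P/A$. (This is Lemma~3.6 of \cite{jmw}, which I would simply cite.)

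For part (ii), I will check that $r_{P\da k}((E - k) - e) = r_{P\da k}(E - k)$ for every $e \in E - k$. Since $j$ is a line with $r(\{j, k\}) = 3$ and $j \in E - k$, Lemma~\ref{atlast} gives $r_{P\da k}(E - k) = r((E - k) \cup k) - 1 = r(E) - 1$. I now split the verification into two cases according to whether $e$ equals $j$.

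If $e \in E - \{j, k\}$, then $j \in E - \{k, e\}$, and Lemma~\ref{atlast} applies again to yield $r_{P\da k}(E - \{k, e\}) = r(E - e) - 1$; compactness of $P$ forces $r(E - e) = r(E)$, so $r_{P\da k}(E - \{k, e\}) = r(E) - 1$, matching $r_{P\da k}(E - k)$. If $e = j$, Lemma~\ref{atlast} no longer applies since $j \notin E - \{j, k\}$, so I return to the defining formula for compression. From the prickly conditions, $\lambda(\{j, k\}) = 2$ and $r(\{j, k\}) = 3$, whence $r(E - \{j, k\}) = r(E) - 1$; compactness of $P$ gives $r(E - j) = r(E)$. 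Because $r((E - \{j, k\}) \cup k) = r(E - j) = r(E) > r(E - \{j, k\})$, the compression formula yields $r_{P\da k}(E - \{j, k\}) = r(E - \{j, k\}) = r(E) - 1 = r_{P\da k}(E - k)$. The only mildly subtle point is this last case, where the convenient reduction through Lemma~\ref{atlast} is unavailable and the prickly conditions must be used directly to pin down $r(E - \{j, k\})$.
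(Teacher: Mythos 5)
Your proof is correct and uses the same underlying ingredients as the paper's: the compression rank formula, compactness of $P$, the prickly conditions giving $r(E-\{j,k\}) = r(E)-1$, and the submodularity fact $\sqcap(j,k)=1$. The main difference is presentation: you route all cases with $e \neq j$ through Lemma~\ref{atlast} and handle $e=j$ directly, whereas the paper works straight from the compression formula, isolates the single exceptional possibility $r(E-k-y)=r(E-y)-2$, rules out $y=j$ there, and then derives a contradiction from $j \in E-k-y$ and $\sqcap(j,k)=1$ — which is exactly the submodularity step already packaged inside Lemma~\ref{atlast}. Your organization is slightly cleaner in that it reuses the lemma proved immediately beforehand rather than re-deriving its content, but the mathematics is the same.
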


\begin{proof} We prove (ii). 
It suffices to show that $r_{P\da k}(E - k - y) = r_{P\da k}(E-k)$ for all $y$ in $E - k$. Since $P$ is compact, $r(E- k) = r(E)$, so 
$r_{P\da k}(E-k) = r(E) - 1$. Now 
\begin{equation*} 
\label{getdown5}
r_{P\da k}(E - k - y) = 
\begin{cases} 
r(E-k-y), & \text{if   $r(E-y) -1 \ge r(E-y - k)$;   and}\\
r(E-k-y) - 1, & \text{otherwise.}
\end{cases}
\end{equation*}
It follows that 
$r_{P\da k}(E - k - y) = r(E) - 1 = r_{P\da k}(E-k)$ unless $r(E- k - y) = r(E-y) - 2$. Consider the exceptional case. Evidently $y \neq j$ as $r(E- k - j) = r(E-j) - 1$. Thus $j \in E- k -y$. Since $\sqcap(j,k) = 1$, it follows that $r(E- y) \le r(E-k-y) + 1$. This contradiction completes the proof of (ii). 
\end{proof}

\begin{lemma}
\label{pricklytime0} 
In a $2$-polymatroid $P$, let $k$ and $y$ be distinct elements. Then 
\begin{itemize}
\item[(i)] $P\da k \ba y = P\ba y \da k$; and 
\item[(ii)] $P\da k /y = P/y \da k.$
\end{itemize}
\end{lemma}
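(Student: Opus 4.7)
The plan is to prove both identities by verifying equality of rank functions on the common ground set $E - \{k,y\}$ using the explicit formula for compression given in \eqref{getdown}. In each part, the ground sets of both sides agree, so everything reduces to a case analysis based on the conditional in the definition of $\da k$.

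For (i), let $X \subseteq E - \{k,y\}$. Since deletion is simply restriction of the rank function, $r_{P\ba y}(S) = r(S)$ for every $S \subseteq E - y$; in particular $r_{P\ba y}(X) = r(X)$, $r_{P\ba y}(k) = r(k)$, and $r_{P\ba y}(X\cup k) = r(X\cup k)$. Applying \eqref{getdown} to $P\ba y$ therefore produces the same conditional branch, and the same numerical value, as applying \eqref{getdown} to $P$ and then restricting to $E - y$ via $\ba y$. So $r_{P\da k \ba y}(X) = r_{P\ba y \da k}(X)$.

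For (ii), fix $X \subseteq E-\{k,y\}$ and expand
\[
r_{P\da k/y}(X)=r_{P\da k}(X\cup y)-r_{P\da k}(y),\qquad r_{P/y}(S)=r(S\cup y)-r(y),
\]
so that $r_{P/y\da k}(X)$ equals $r(X\cup y)-r(y)$ when the first clause of \eqref{getdown} applies in $P/y$ (i.e., $r_{P/y}(k)=0$ or $r_{P/y}(X\cup k)>r_{P/y}(X)$), and equals $r(X\cup y)-r(y)-1$ otherwise. Break into cases according to whether (a) $r(k)=0$, (b) $r(y\cup k)>r(y)$, and (c) $r(X\cup y\cup k)>r(X\cup y)$. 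In each case compare the values obtained on the two sides; three of the four feasible cases (namely, $r(k)=0$; both strict; both non-strict) give $r(X\cup y)-r(y)$ on both sides, while the remaining case (strict on $\{y,k\}$, non-strict on $X\cup y$ vs.\ $X\cup y\cup k$) gives $r(X\cup y)-r(y)-1$ on both sides.

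The only real obstacle is to rule out the would-be fourth case in which $r(y\cup k)=r(y)$ yet $r(X\cup y\cup k)>r(X\cup y)$; this amounts to showing that the closure operator of a polymatroid is monotone, i.e., if $k\in\cl(y)$ and $y\subseteq X\cup y$ then $k\in\cl(X\cup y)$. This follows from a single application of submodularity: assuming $k\notin X\cup y$, the inequality
\[
r(y\cup k)+r(X\cup y)\ge r(X\cup y\cup k)+r\bigl((y\cup k)\cap(X\cup y)\bigr)=r(X\cup y\cup k)+r(y)
\]
combined with $r(y\cup k)=r(y)$ forces $r(X\cup y\cup k)\le r(X\cup y)$, and monotonicity gives equality. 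With this lemma in hand the case analysis closes, completing (ii).
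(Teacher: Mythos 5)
Your proof is correct, and it follows the same basic strategy as the paper's, namely direct verification of the rank functions via the explicit compression formula~(\ref{getdown}). The treatment of part (i) matches the paper's one-line remark that deletion is rank-preserving, so there is nothing more to say there.

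For part (ii) there is a small but genuine structural difference worth noting. The paper first peels off two boundary cases before computing: when $r(\{k\})\le 1$ it observes that $P\da k=P/k$ and appeals to the commutativity of contraction, and when $y$ is a line parallel to $k$ it argues separately via $P/y\da k=P/y/k=P/k/y=P\da k/y$. Only after both reductions, in the generic regime $r(\{y,k\})>r(\{y\})$, does the paper evaluate $r_{P\da k/y}(X)$ and $r_{P/y\da k}(X)$ directly; the point of the reduction is that then $r_{P\da k}(\{y\})=r(\{y\})$ with no decrement, which trims the case analysis. You instead keep all cases together and observe that, by submodularity, the condition $r(\{y,k\})=r(\{y\})$ automatically forces $r(X\cup y\cup k)=r(X\cup y)$, so the only combination not arising from the formula is the one that would give mismatched values. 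That one submodularity step subsumes both of the paper's boundary reductions: the low-rank cases and the parallel-line case all fall into your ``both non-strict'' branch, where the two $-1$ decrements on the left cancel. The net effect is the same; your route is slightly more mechanical (four clean branches plus one impossibility) while the paper's is slightly more structural (two identities that dispose of degeneracies before the calculation). Either reads fine, and nothing is missing.
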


\begin{proof} Part (i) is essentially immediate. We now prove (ii). 
If $r(\{k\}) \le 1$, then $P\da k = P/k$, so 
$$P\da k /y = P/k/y = P/y/k = P/y \da k.$$
Thus we may assume that $r(\{k\}) = 2$.

Suppose $y$ is a line such that $r(\{y,k\}) = 2$.   Then 
$$P/y\da k  = P/y/k = P/k/y = P\da k/y$$ 
where the last equality follows by considering how $P\da k$ is constructed. 
Thus we may assume that $y$ is not a line that is parallel to $k$. Hence 
$$r(\{y,k\}) > r(\{y\}).$$ 
Let $X$ be a subset of $E - k - y$. Then 
$$r_{P\da k /y}(X)  = r_{P\da k}(X \cup y) - r_{P\da k}(\{y\}) = r_{P\da k}(X \cup y) - r(\{y\})$$
where the second equality follows because  $r(\{y,k\}) > r(\{y\}).$ We deduce that 
\begin{equation*}
r_{P\da k /y}(X) = 
\begin{cases}
r(X\cup y) - r(\{y\}), & \text{if $r(X \cup y \cup k) > r(X \cup y)$};\\
r(X\cup y) - r(\{y\}) - 1, & \text{otherwise}.
\end{cases}
\end{equation*}
On the other hand, 
since $r_{P/y}(X \cup k) = r(X \cup k \cup y) - r(\{y\})$ and $r_{P/y}(X) = r(X  \cup y) - r(\{y\})$, we see that 
\begin{equation*}
r_{P/y\da k }(X) = 
\begin{cases}
r(X\cup y) - r(\{y\}), & \text{if $r(X \cup y \cup k) > r(X \cup y)$};\\
r(X\cup y) - r(\{y\}) - 1, & \text{otherwise}.
\end{cases}
\end{equation*}
Thus 
$$r_{P\da k /y}(X) = r_{P/y\da k }(X)$$
so the lemma holds.
\end{proof}

\begin{lemma}
\label{elemprop24}
Let  $P$ be a compact $2$-polymatroid $(E,r)$ having $\{j,k\}$ as a prickly $3$-separator. Suppose $y \in E - \{j,k\}$.  If $\{j,k\}$ is not a prickly $3$-separator of $P/y$, then  
\begin{itemize}
\item[(i)] $r(\{j,k,y\}) = 3$ and $P/y$ has $\{j,k\}$ as a $1$-separating set; or
\item[(ii)] $P\da k/y = P/y\baba k$; or 
\item[(iii)] $P\da k/y = P/y/k$; or 
\item[(iv)] $P\da k/y$ can be obtained from $P/y\baba j$ by relabelling $k$ as $j$.
\end{itemize}
\end{lemma}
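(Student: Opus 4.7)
The plan is to perform a case analysis on the rank parameters of $P/y$ at $\{j,k\}$. By Lemma~\ref{pricklytime0}(ii), $P\da k/y = P/y\da k$, so it suffices to identify $P/y\da k$. Set $a = r(\{y\})$, $b = r_{P/y}(\{j\})$, $c = r_{P/y}(\{k\})$, $d = r_{P/y}(\{j,k\})$, and $W = E - \{j,k\}$. The prickly identities $r(W) = r(E) - 1$ and $r(W \cup j) = r(W \cup k) = r(E)$ imply, by direct substitution, that the spanning portion of the prickly definition holds automatically for $\{j,k\}$ in $P/y$, while the submodular bounds $r(\{j,k,y\}) \le r(\{j,k\}) + r(\{j,y\}) - r(\{j\})$ together with the $j$-$k$ swap give $\max(b,c) \le d \le \min(b+1, c+1, 3)$. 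Hence the failure of prickly in $P/y$ reduces to one of $b < 2$, $c < 2$, or $d < 3$ holding.

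First, suppose $c \le 1$, so $k$ is a point or a loop of $P/y$. An elementary check of the compression rule shows $P/y\da k = P/y/k$, yielding conclusion (iii); if additionally $y$ is a line contained in $\cl(\{j,k\})$, then $r(\{j,k,y\}) = 3$ and $\lambda_{P/y}(\{j,k\}) = d - 1 \le 0$, giving conclusion (i). Next, suppose $c = 2$ and $b \le 1$. The bound $d \le b + 1$ together with $d \ge c = 2$ forces $b = 1$ and $d = 2$ (the value $b = 0$ would force $y \parallel j$ and hence $c \le 1$, contradicting $c = 2$). By Lemma~\ref{symjk}, $P\da j$ is obtained from $P\da k$ by relabelling $j$ as $k$; hence $P\da k/y$ is the relabelling of $P\da j/y$ by $k \mapsto j$. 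Since $j$ is a point of $P/y$, $P\da j/y = P/y\da j = P/y/j$. A direct rank-function computation, using the prickly hypothesis to confine the non-compactness of $P/y \ba j$ to the element $k$ alone, shows $P/y/j = P/y\baba j$; combining these equalities yields conclusion (iv).

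Finally, suppose $b = c = 2$ and $d = 2$, so $j$ and $k$ remain lines in $P/y$ but $k \subseteq \cl_{P/y}(\{j\})$. A parallel computation gives $P/y\da k = P/y\baba k$, which is conclusion (ii). The essential input is the equivalence ``$k \subseteq \cl_{P/y}(X)$ iff $j \in X$'' for each $X \subseteq E - \{y, k\}$: the ``if'' direction follows from $d = b = 2$ via $\cl_{P/y}(\{j\}) \supseteq \{k\}$; the ``only if'' direction uses $X \cup \{y\} \subseteq W$ whenever $j \notin X$, together with $r(W \cup k) > r(W)$ from the prickly condition, to conclude $k \not\subseteq \cl(X \cup \{y\})$. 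Combined with the analogous localisation of compactness defects in $P/y\ba k$, this shows that the compactification adjustment matches the compression shift term by term on every subset.

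The main technical obstacle is the localisation of compactness defects underlying Cases 2 and 3: under the prickly hypothesis and compactness of $P$, one must verify that $k$ (respectively $j$) is the unique non-compact element of $P/y\ba j$ (respectively $P/y\ba k$), so that the compactification adjustment is concentrated on a single exceptional element. The crucial identity here is $\lambda_P(\{j,k\}) = 2 < 3 = r_P(\{j,k\})$, which singles out $\{j,k\}$ as the only cospanning-defective pair in the structure; once this localisation is secured, the required rank-function equalities reduce to the combinatorial equivalence isolated in the third case.
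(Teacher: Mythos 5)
Your proof is correct and reaches the same conclusions in the same places, but it reorganises the argument: rather than decomposing $P/y$ as a $2$-sum when $r_{P/y}(\{j,k\})=2$ (which is what the paper does, invoking Proposition~\ref{dennis3.6} and then enumerating the four possible rank-$2$ pieces $P_1$ on $\{j,k,p\}$), you parametrise directly by the triple $(b,c,d)=(r_{P/y}(\{j\}),r_{P/y}(\{k\}),r_{P/y}(\{j,k\}))$ and verify the rank-function identities by hand. Your three cases correspond to the paper's: $c\le 1$ absorbs both the $d=1$ case (which the paper handles separately via outcome (i)) and the $2$-sum cases (b),(d); $(b,c,d)=(1,2,2)$ is the paper's (c); and $(b,c,d)=(2,2,2)$ is the paper's (a). Where the paper can read off the structure of $P/y\da k$ from the explicit form of $P_1$, you instead isolate the non-compact element of $P/y\ba k$ (respectively $P/y\ba j$) and show the compactification adjustment coincides with the compression shift via the closure equivalence ``$k\in\cl_{P/y}(X)$ iff $j\in X$.'' This ``localisation of compactness defects'' is a serviceable and somewhat more self-contained substitute for the $2$-sum machinery, though the underlying computations are the same; the paper's route gives more structural information for free (you can see exactly what $P/y\da k$ looks like locally), while yours stays closer to the rank axioms and avoids citing Hall's decomposition result. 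One small imprecision: in Case 1 you write $\lambda_{P/y}(\{j,k\})=d-1\le 0$, but $d\ge 1$ always, so the inequality should simply be equality $d-1=0$; this does not affect the argument.
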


\begin{proof} Suppose first that $r_{P/y}(\{j,k\}) = 1$. Then $y$ is a line of $P$ that is in the closure of $\{j,k\}$. Thus $\lambda_{P/y}(\{j,k\}) = 0$ and (i) holds. 

Next assume that $r_{P/y}(\{j,k\}) = 2$. Then $\lambda_{P/y}(\{j,k\}) = 1$. Thus  $P/y$ can be written as the 2-sum, with basepoint $p$ of two polymatroids, one of which, $P_1$, has ground set $\{j,k,p\}$ and has rank $2$. As $P$ is compact, so is $P/y$. There are four choices for $P_1$:
\begin{itemize}
\item[(a)] $j$ and $k$ are parallel lines and $p$ is a point lying on them both; 
\item[(b)] $P_1$ is isomorphic to the matroid $U_{2,3}$; 
\item[(c)] $P_1$ has $k$ as a line and has $j$ and $p$ as distinct points on this line; or
\item[(d)] $P_1$ has $j$ as a line and has $k$ and $p$ as distinct points on this line;
\end{itemize}
By Lemma~\ref{pricklytime0}, $P\da k/y = P/y \da k$. If $P_1$ is one of the $2$-polymatroids in (b) or (d), then, as $k$ is a point of $P/y$, it follows that $P/y \da k = P/y / k$, so (iii) holds. Next suppose that $P_1$ is the $2$-polymatroid in (a). Then, as $P/y$ is compact, it follows that $P/y \da k = P/y \baba k$, so (ii) holds. Finally, suppose that $P_1$ is the $2$-polymatroid in (c). Then $P\da j/y = P/y \da j = P/y \baba j$. By Lemma~\ref{symjk}, 
$P\da j$ can be obtained from $P\da k$ by relabelling $j$ as $k$. Thus $P\da j/y$ can be obtained from $P/y \baba j$  by relabelling $k$ as $j$, that is, (iv) holds.

We may now assume that $r_{P/y}(\{j,k\}) = 3$. Then $\sqcap(y, \{j,k\}) = 0$ and one easily checks that $\{j,k\}$ is a prickly $3$-separator of $P/y$; a contradiction. 
\end{proof}

\begin{lemma}
\label{elemprop25}
Let  $\{j,k\}$ be a prickly $3$-separator in a $2$-polymatroid $P$. If $P\da k$ is $3$-connected, then so is $P$.
\end{lemma}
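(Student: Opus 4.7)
The plan is to prove the contrapositive: if $P$ is not $3$-connected, then $P\da k$ is not $3$-connected. The key tool is the rank formula $r_{P\da k}(X) = r(X) - 1$ if $k \in \cl(X)$ and $r_{P\da k}(X) = r(X)$ otherwise (for $X \subseteq E - k$), together with $r_{P\da k}(E - k) = r(E) - 1$, which holds because the prickly axioms force $r(E - k) = r(E)$ and hence $k \in \cl(E - k)$. Throughout I will exploit the prickly identities $\lambda(\{j, k\}) = 2$, $r(\{j, k\}) = 3$, $\lambda(\{j\}) = \lambda(\{k\}) = 2$ (the latter from $r(E - j) = r(E)$), and $\sqcap(k, E - \{j, k\}) = 1$; the last implies that no element of $E - \{j, k\}$ is parallel to $k$ in $P$.

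First I dispose of the case where $P$ is not $2$-connected. Let $C \subsetneq E$ be a proper non-empty $1$-separator of $P$. Uncrossing $C$ with $\{j, k\}$ (and replacing $C$ by its complement if needed), I may assume $\{j, k\} \subseteq E - C$. A four-way case analysis on whether $k \in \cl(C)$ and whether $k \in \cl(E - k - C)$, combined with the rank formula, yields $\lambda_{P\da k}(C) \leq 0$, contradicting the $2$-connectivity of $P\da k$.

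Next assume $P$ is $2$-connected with a $2$-separation $(A, B)$, $k \in B$, and $\lambda_P(A) = 1$. The candidate $2$-separation of $P\da k$ is $(A, B - k)$. The same four-case analysis, combined with $r(A) + r(B) = r(E) + 1$, gives $\lambda_{P\da k}(A) \leq 1$. For properness of side $A$: if $|A| = 1$ then $A = \{a\}$ with $a$ a line; the prickly axioms rule out $a = j$ (as $\lambda_P(\{j\}) = 2 \neq 1$) and force $a$ not parallel to $k$, whence $r_{P\da k}(a) = 2$. Side $B - k$ is valid whenever $|B| \geq 3$, and also when $B = \{k, y\}$ with $y$ a line (again $y$ cannot be parallel to $k$).

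The main obstacle is the remaining case $B = \{k, y\}$ with $y$ a point, where the naive lift has an invalid singleton side $\{y\}$. I handle this by dichotomizing on whether $y \in \cl(k)$. If $y \notin \cl(k)$, then $r(\{k, y\}) = 3$ combined with $\lambda_P(B) = 1$ gives $r(E - \{k, y\}) = r(E) - 2$; the $2$-connectivity of $P$ forces $y \in \cl(E - y)$, and the four-case analysis then yields $\lambda_{P\da k}(A) = 0$, so $A$ is itself a $1$-separator of $P\da k$. If $y \in \cl(k)$, the plan is to switch to the alternative partition $(\{j, y\}, E - \{j, k, y\})$ of $E(P\da k)$; using $r(E - j) = r(E - k) = r(E)$ and $r(E - \{j, k\}) = r(E) - 1$ together with $y \in \cl(k)$, I compute $r_{P\da k}(\{j, y\}) = 2$ and $\lambda_{P\da k}(\{j, y\}) \leq 1$. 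The most delicate bookkeeping is confirming validity of the second side $E - \{j, k, y\}$; here the threat of a degenerate singleton side must be excluded via a careful global argument combining the prickly identities with the structure of the original $2$-separation $(A, B)$ of $P$, and this constitutes the subtlest part of the argument.
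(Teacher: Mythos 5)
Your instinct to worry about the degenerate singleton side in the last subcase is exactly right, and unfortunately the worry is fatal: the lemma is false as stated, so the ``careful global argument'' you were hoping for does not exist. Here is a four-element counterexample. In a rank-$3$ projective space let $E(P)=\{j,k,y,w\}$ where $j$ and $k$ are distinct lines meeting in a point $p_0$, $y$ is a point on $k$ with $y\notin j$ and $y\neq p_0$, and $w$ is a point on $j$ with $w\notin k$ and $w\neq p_0$. Then $r(E)=3$, $r(\{j,k\})=3$, $r(\{k,y\})=r(\{j,w\})=r(\{y,w\})=2$, and a direct check of conditions (i)--(iv) shows $\{j,k\}$ is a prickly $3$-separator of $P$. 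But $(\{j,w\},\{k,y\})$ has $\lambda_P=2+2-3=1$ with both sides of size two, so it is a $2$-separation and $P$ is not $3$-connected. Meanwhile $P\da k$ has rank $2$, with $j$ a spanning line and $y,w$ two non-parallel points on it: every partition of $\{j,y,w\}$ with connectivity at most one has a single point as one side, so $P\da k$ is $3$-connected. This is precisely the configuration you could not exclude (here $y\in\cl(k)$ and $W=A-j=\{w\}$).

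Your approach (contrapositive, the rank formula for compression, a case split on the small side $B$) is close in spirit to the paper's, though the paper organizes the split more cleanly: take an exact $m$-separation $(X,Y)$ of $P$ with $k\in X$, lift to $(X-k,Y)$ in $P\da k$, and split on $j\in X-k$ versus $j\in Y$. When $j\in X-k$ the connectivity transfers verbatim. When $j\in Y$ the paper asserts $r_{P\da k}(X-k)+r_{P\da k}(Y)-r(P\da k)\le m-2$, which rests on the bound $r_{P\da k}(Y)\le r(Y)-1$; by Lemma~\ref{atlast} this requires $k\in\cl(Y)$, and that need not hold. In the example above, $X=\{k,y\}$, $Y=\{j,w\}$, $r_{P\da k}(Y)=r(Y\cup k)-1=2=r(Y)$, and one gets $\lambda_{P\da k}(X-k)=m-1=1$ while the lifted partition $(\{y\},\{j,w\})$ fails to be a $2$-separation because $\max\{|\{y\}|,r_{P\da k}(\{y\})\}=1$. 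So the published proof has a parallel gap at exactly the place you got stuck, and the same example defeats it.

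The statement can be repaired by adding the hypothesis $|E(P)|\ge 5$ (equivalently $|E(P\da k)|\ge 4$, which is all the paper needs where this lemma is invoked), and your alternative partition then goes through: with $W=E-\{j,k,y\}$ one has $|W|\ge 2$, the prickly identity $\sqcap(j,E-\{j,k\})=1$ forces $j\notin\cl(W)$ and hence $r(W)\le r(E)-2$, and $r_{P\da k}(W)=r(W)$ since $k\notin\cl(E-\{j,k\})$, giving $\lambda_{P\da k}(\{j,y\})\le 2+(r(E)-2)-(r(E)-1)=1$ with both sides legitimate. Adding that hypothesis and carrying out this computation would complete your argument; you should also verify, or simply note the vacuity of, the $y\notin\cl(k)$ branch, where $r(\{k,y\})=3$ together with $\lambda_P(\{k,y\})=1$ forces $r(E-\{k,y\})=r(E)-2$, contradicting $r(E-\{k,y\})\ge r(E-k)-1=r(E)-1$.
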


\begin{proof}
Let $(X,Y)$ be an exact $m$-separation of $P$ for some $m$ in $\{1,2\}$ where $k \in X$. 
Then $r(X) + r(Y) - r(P) = m-1$. Now $r(P\da k) = r(P) -1$. 

Consider $r_{P\da k}(X-k) + r_{P\da k}(Y)$. Suppose first that $j \in X - k$.  Then, by Lemma~\ref{atlast},  $r_{P\da k}(X - k) = r(X) - 1$ and $r_{P\da k}(Y) = r(Y)$. Hence
$$r_{P\da k}(X - k) + r_{P\da k}(Y) - r(P\da k) = m-1.$$
As $P\da k$ is $2$-connected, we cannot have $m = 1$ since both $X - k$ and $Y$ are non-empty. Thus $m = 2$. Now $\max\{|X|,r(X)\} \ge 2$ and 
$\max\{|Y|,r(Y)\} \ge 2$. Thus $\max\{|Y|,r_{P\da k}(Y)\} \ge 2$. If $X = \{j,k\}$, then $r(X) + r(Y) - r(P) = 1$; a contradiction to the fact that $\{j,k\}$ is a $3$-separator of $P$. We deduce that $|X - k| \ge 2$, so $(X-k,Y)$ is a $2$-separation of $P\da k$; a contradiction.

We may now assume that $j \in Y$. Then 
$$r_{P\da k}(X - k) + r_{P\da k}(Y) - r(P\da k) \le r(X) - 1 + r(Y) - 1 -r(P) + 1 = m-2.$$
As $P\da k$ is $2$-connected, it follows that $X - k$ is empty. Then $r(\{k\}) + r(E-k) - r(E) = 1$; a \cn. Thus the lemma holds.
\end{proof}

\section{Some results for connectivity and local connectivity}
\label{clc}

This section notes a number of properties of the connectivity and local-connectivity functions that will be used in the proof of the main theorem. 
First  we show that  compression is, in most situations, a self-dual operation. We proved this result in \cite[Proposition 3.1]{oswww} for the variant of duality used there. By making the obvious replacements in that proof,  it is straightforward to check that the result holds with the modified definition of duality used here. We omit the details.

\begin{proposition}
\label{compdual}
Let $e$ be a line of a $2$-polymatroid $M$ and suppose that $M$ contains no line parallel to $e$.  Then 
$$M^*\downarrow e = (M\downarrow e)^*.$$
\end{proposition}

The next result implies that the main theorem is  a self-dual result.

\begin{proposition}
\label{sminordual}
Let $P$ and $Q$ be compact $2$-polymatroids. Then $Q$ is an  s-minor of $P$ if and only if $Q^*$ is an s-minor of $P^*$.
\end{proposition}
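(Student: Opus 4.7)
The plan is to prove this by induction on the number of operations in the sequence that defines $Q$ as an s-minor of $P$. First I would observe that, since $P$ and $Q$ are compact, Lemma~\ref{compact0}(ii) gives $(P^*)^* = P^{\flat} = P$ and likewise for $Q$; hence it suffices to prove the forward direction, as applying it to $P^*$ and $Q^*$ recovers the converse. The base case (no operations, $Q = P$) is immediate.

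For the inductive step, let $R$ be the compact $2$-polymatroid such that $Q$ is obtained from $R$ by a single operation; compactness of $R$ is preserved throughout by Lemma~\ref{elemprop23}(i), (ii) and the definition of compactified deletion. By the induction hypothesis, $R^*$ is an s-minor of $P^*$. If $Q = R/e$, then Lemma~\ref{compact0}(iv) gives $Q^* = (R/e)^* = (R^*\ba e)^{\flat} = R^*\baba e$, a compactified deletion from $R^*$. If $Q = R\baba e = (R\ba e)^{\flat}$, then applying Lemma~\ref{compact0}(iv) to $R^*$ and using $(R^*)^* = R^{\flat} = R$ with Lemma~\ref{complast1}(i) yields $(R^*/e)^* = (R\ba e)^{\flat} = Q$; taking duals and noting $R^*/e$ is compact gives $Q^* = R^*/e$, a contraction of $R^*$. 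So the c-minor operations are handled.

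The interesting case is when $Q = R\downarrow k$ is a series compression, so $\{j,k\}$ is a $2$-element prickly $3$-separator of $R$. I would first verify that $\{j,k\}$ is also a prickly $3$-separator of $R^*$: Lemma~\ref{compact0}(iii) gives $\lambda_{R^*}(\{j,k\})=2$, and direct computation with $r^*(Y)=\|Y\|+r(E-Y)-r(E)$ together with the prickly conditions on $R$ (and the compactness of $R$, giving $r(E-\{j\})=r(E-\{k\})=r(E)$) shows $r^*(\{j\})=r^*(\{k\})=2$ and $r^*(\{j,k\})=3$ and checks condition~(iii). Next I would show that $R$ has no line parallel to $k$: if $\ell$ were such a line, then $\ell\ne j$ (since $r(\{j,k\})=3$), so $\ell\in E-\{j,k\}$, and submodularity applied to $X=E-\{j,k\}$ and $Y=\{k,\ell\}$, whose intersection $\{\ell\}$ has rank $2$ and whose union is $E-\{j\}$, gives
\[
r(E-\{j,k\})+2\ge r(E-\{j\})+2,
\]
contradicting the prickly condition (iii) requirement that $r(E-\{j\})=r(E-\{j,k\})+1$. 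Therefore Proposition~\ref{compdual} applies, yielding $R^*\downarrow k=(R\downarrow k)^*=Q^*$; since $\{j,k\}$ is a prickly $3$-separator of $R^*$, this is a legitimate series compression.

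In each case $Q^*$ is obtained from $R^*$ by one of the permitted s-minor operations, so $Q^*$ is an s-minor of $P^*$. The main obstacle is the series-compression case: one must verify both that the prickly structure is self-dual on compact $2$-polymatroids and that $k$ admits no parallel line in $R$ (the latter being the hypothesis needed to invoke Proposition~\ref{compdual}). Both verifications are reasonably short submodularity/rank computations, but they are not stated explicitly in the preliminaries and so must be established in the course of the proof.
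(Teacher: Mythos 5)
Your proof is correct and follows the same approach as the paper's: prove only the forward direction by using $(P^*)^*=P$, then induct on the length of the operation sequence, observing that each permitted move dualizes to a permitted move via Lemma~\ref{compact0}(iv) and Proposition~\ref{compdual}. You go further than the paper, though, in two worthwhile ways. First, Proposition~\ref{compdual} carries the hypothesis that $M$ has no line parallel to $e$, a condition the paper invokes without comment. You correctly verify it: if $\ell$ were a line parallel to $k$ and distinct from $j$, then $\ell\in E-\{j,k\}$, and submodularity on $E-\{j,k\}$ and $\{k,\ell\}$ gives $r(E-\{j,k\})\ge r(E-\{j\})$, contradicting the prickly condition $r(E-\{j\})=r(E-\{j,k\})+1$. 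Second, the self-duality of prickly $3$-separators that you propose to check directly is in fact proved as Lemma~\ref{pricklytime} in the paper, but that lemma appears in a later section and is not available at this point; the paper's own ``one easily checks'' is exactly what you propose to do, and your sketch of the computation via Lemma~\ref{compact0}(iii) and the formula for $r^*$ is the right one. In short, your proposal reproduces the paper's argument and fills in the two hypothesis checks that the paper leaves implicit.
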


\begin{proof} By Lemma~\ref{compact0},  both $P^*$ and $Q^*$ are compact.  Moreover, $(P^*)^* = P$ and $(Q^*)^* = Q$. 
Assume $Q$ is an s-minor of $P$. To prove the lemma, it suffices to show that $Q^*$ is an s-minor of $P^*$. By Lemma~\ref{compact0} again, for an element $\ell$ of $P$, we have that 
   $(P\baba \ell)^* = P^*/ \ell$ and $(P/ \ell)^* = P^*\baba \ell$. Moreover, if $\{j,k\}$ is a prickly 3-separator of $P$, then one easily checks that it is a prickly  3-separator of $P^*$. By Lemma~\ref{elemprop23}, $P\da k$ is compact and, by Proposition~\ref{compdual}, $(P\da k)^* = P^*\da k$. Because the dual of each allowable move on $P$ produces a 2-polymatroid that is obtained from $P^*$ by an allowable move, the lemma follows by a straightforward induction argument.
\end{proof}

There is an attractive link between the connectivity of a $2$-polymatroid $M$ and the connectivity of the natural matroid associated with $M$.

\begin{lemma}
\label{missinglink}
Let $M$ be a $2$-polymatroid with at least two elements and let $M'$ be the natural matroid derived from $M$. Then 
\begin{itemize}
\item[(i)] $M$ is $2$-connected if and only if $M'$ is $2$-connected; and
\item[(ii)] $M$ is $3$-connected if and only if $M'$ is $3$-connected.
\end{itemize}
\end{lemma}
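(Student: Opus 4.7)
The plan hinges on the natural map $\phi$ that sends $X \subseteq E(M)$ to $\phi(X) := (X \setminus L) \cup \{s_\ell, t_\ell : \ell \in X \cap L\} \subseteq E(M')$, where $L$ is the set of lines of $M$. I would first establish the rank identity $r_{M'}(\phi(X)) = r_M(X)$ for every $X \subseteq E(M)$, by induction on $|X \cap L|$: since each pair $\{s_\ell, t_\ell\}$ was freely added to $\ell$ in $M^+$, we have $\ell \in \cl_{M^+}(\{s_\ell, t_\ell\})$ and $s_\ell, t_\ell \in \cl_{M^+}(\ell)$, so $r_{M^+}(\phi(X)) = r_{M^+}(\phi(X) \cup (X \cap L)) = r_{M^+}(X) = r_M(X)$, and the subsequent deletion of $L$ does not affect this rank. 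Taking $X = E(M)$ gives $r(M') = r(M)$, and hence $\lambda_{M'}(\phi(X)) = \lambda_M(X)$ for every $X \subseteq E(M)$.

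The easy direction of both (i) and (ii) follows at once: any $m$-separation $(X, Y)$ of $M$ with $m \in \{1, 2\}$ yields a partition $(\phi(X), \phi(Y))$ of $E(M')$ with the same $\lambda$, and the max-condition transfers since $|\phi(X)| \ge |X|$ and the rank is preserved (if $X$ has a line, $\phi(X)$ only gets larger, and if $X$ contains only points and loops, $\phi(X) = X$).

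For the hard direction, assume $M'$ has an $m$-separation $(U, V)$ with $m \in \{1, 2\}$ and choose such a separation minimising $|L^*|$, where $L^* := \{\ell \in L : |\{s_\ell, t_\ell\} \cap U| = 1\}$ is the set of split pairs. If $L^* = \emptyset$, then $U = \phi(X)$ for some $X \subseteq E(M)$ and the rank identity converts $(U, V)$ into the desired $m$-separation of $M$. To preclude $L^* \ne \emptyset$, pick $\ell \in L^*$ with $s_\ell \in U$, $t_\ell \in V$, and set $a := r(U) - r(U - s_\ell)$, $b := r(V \cup s_\ell) - r(V)$, $c := r(U \cup t_\ell) - r(U)$, $d := r(V) - r(V - t_\ell)$. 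The free-addition formula forces each of $a, b, c, d$ to lie in $\{0, 1\}$ and yields $r_{M'}(U \cup t_\ell) = r_{M^+}((U - s_\ell) \cup \ell)$ together with symmetric expressions for the other three quantities. The two candidate moves have $\lambda$-changes $\Delta_1 = b - a$ (moving $s_\ell$ to $V$) and $\Delta_2 = c - d$ (moving $t_\ell$ to $U$). These cannot both be strictly positive: $\Delta_1 > 0$ requires $a = 0$, i.e.\ $\ell \in \cl_{M^+}(U - s_\ell)$; but then $r_{M'}(U \cup t_\ell) = r_{M^+}((U - s_\ell) \cup \ell) = r_{M^+}(U - s_\ell) = r(U)$, forcing $c = 0$ and hence $\Delta_2 \le 0$. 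Therefore at least one swap preserves $\lambda \le m - 1$ while strictly reducing $|L^*|$, contradicting minimality.

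The main obstacle is verifying the max-condition $\max\{|\cdot|, r(\cdot)\} > 1$ on both sides of the partition produced by the swap. This is automatic when both $|U|, |V| \ge 3$, but can fail in the corner case $|U| = 2$ or $|V| = 2$, since a single element of the matroid $M'$ has rank at most one. Such configurations force $|E(M)| \le 4$ together with at least one line, and I would dispatch the finitely many cases by direct inspection — for example, two parallel lines in $M$ give $M' \cong U_{2,4}$, which is $3$-connected and so admits no obstructing $m$-separation — or by strengthening the minimality of $(U, V)$ to break the remaining symmetric ties.
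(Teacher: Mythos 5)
Your overall strategy coincides with the paper's: establish the rank-preserving identity $r_{M'}(\phi(X)) = r_M(X)$, deduce that any $k$-separation of $M$ transfers to $M'$, and for the converse take a $k$-separation $(U,V)$ of $M'$ minimising the number of split pairs $\{s_\ell,t_\ell\}$, then argue that one of the two single-element swaps drops the split count without raising $\lambda$. Your derivation that $\Delta_1>0$ forces $a=0$, hence $c=0$, hence $\Delta_2\le 0$ is the same free-placement computation the paper uses, and the rank identity argument is sound (though the stated induction on $|X\cap L|$ is unnecessary).

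The gap is in the corner-case handling. Your assertion that a failing max-condition forces $|E(M)|\le 4$ is false. The swap that is guaranteed to be $\lambda$-non-increasing shrinks one specified side by one element; say that side is $V$, so the problem arises precisely when $|V|=2$. This bounds $|V|$ only, and places no bound on $|U|$ or $|E(M)|$. (Concretely: if $M$ consists of a line $\ell$ together with arbitrarily many points, one of which is a coloop $v$, then $M'$ is disconnected and a minimal-split $2$-separation $(U,V)$ with $V=\{t_\ell,v\}$ can have $|E(M')|$ as large as you like.) So ``dispatch the finitely many cases by direct inspection'' has nothing finite to inspect, and the example you give ($M'\cong U_{2,4}$, which is $3$-connected) never even enters this branch of the argument since you begin by supposing $M'$ has a $k$-separation. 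The paper instead closes the corner cases by a short rank computation. After assuming $|X'|\ge|Y'|\ge 1$ and $|E(M')|\ge 5$ (so the small ground sets are cleared separately), it shows that a failing size condition on $Y'-t_\ell$ together with $r(X'-s_\ell)=r(X')$ and $r(X'\cup t_\ell)=r(X')$ forces, in the $k=1$ case, $r(X')=r(M')$ and hence $\lambda_{M'}(X')=r(Y')\ge 1$ contradicting the defining equation for a $1$-separation; and in the $k=2$ case it forces $\lambda_{M'}(X'\cup t_\ell)=0$, exhibiting $M'$ as disconnected, which is handled by the $k=1$ part of the lemma. Your alternative suggestion of ``strengthening the minimality of $(U,V)$'' is too vague to rescue the argument; what is actually needed is exactly this arithmetic, which your sketch does not supply.
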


\begin{proof} 
The result is immediate if $M$ is a matroid or has a loop, so we may assume that $M$ is loopless and has at least one line. 
Let $L$ be the set of lines of $M$ and let $M^+$ be the  matroid that is obtained from $M$ by freely adding two points on each line in $L$.  Then $M' = M^+\ba L$.

Suppose that $M$ has a $k$-separation $(X,Y)$ for some $k$ in $\{1,2\}$. Replacing each line in each of $X$ and $Y$ by two points freely placed on the line gives sets $X'$ and $Y'$ that partition $E(M')$ such that $r(X') = r(X)$ and $r(Y') = r(Y)$. Hence $(X',Y')$ is a $k$-separation of $M'$. 

Now suppose that $M'$ has a $k$-separation for some $k$ in $\{1,2\}$. Choose such a $k$-separation $(X',Y')$ to minimize the number $m$ of lines of $M$ that have exactly one of the corresponding points of $M'$ in $X'$. If $m = 0$, then there is a $k$-separation of $M$ that corresponds naturally to $(X',Y')$. Thus we may assume that $M$ has a line $\ell$ whose corresponding points, $s_{\ell}$ and $t_{\ell}$, are in $X'$ and $Y'$, respectively. Now
\begin{equation}
\label{kay}
r(X') + r(Y') - r(M') = k-1.
\end{equation}

Suppose $|E(M')| = 3$. Then $M$ consists of a point and a line. For each $n$  in  $\{2,3\}$, both $M$ and $M'$ are $n$-connected   if and only if the point lies on the line. Thus the result holds if $|E(M')| = 3$. Now assume that $|E(M')| = 4$. Then $M$ consists of either two lines, or a line and two points. Again the result is easily checked. Thus we may assume that $|E(M')| \ge 5$. 
We may also assume that $|X'| \ge |Y'|$.  Then $|X'|\ge 3$.  Now $r(X'- s_{\ell}) + r(Y' \cup s_{\ell}) - r(M') \ge k$, otherwise the choice of $(X',Y')$ is contradicted. Thus $r(X' - s_{\ell}) = r(X')$ and $r(Y' \cup s_{\ell}) = r(Y') + 1$. Hence, in $M^+$, as $s_{\ell}$ and $t_{\ell}$ are freely placed on $\ell$, we see that 
$r((X' - s_{\ell}) \cup \ell) = r(X' - s_{\ell})$, so 
$$r(X' - s_{\ell}) = r((X' - s_{\ell}) \cup t_{\ell}) = r(X' \cup t_{\ell}).$$ Hence $(X' \cup t_{\ell},Y'-t_{\ell})$ violates the choice of $(X',Y')$ unless either  $k=1$ and $Y' = \{t_{\ell}\}$, or $k =2$ and $Y'$ consists of two points. In the first case,   $r(X') = r(M')$, so $r(X') + r(Y') - r(M') = 1,$ a contradiction to (\ref{kay}). In the second case, since one of the points in $Y'$ is $t_{\ell}$, the points are not parallel so $r(Y') = 2$ and $r(X') = r(M') - 1$. Thus $r(X' \cup t_{\ell}) = r(M') - 1$ and $r(Y'-t_{\ell}) = 1$; a contradiction to (\ref{kay}).
\end{proof}

Let $M$ be a  polymatroid $(E,r)$. If $X$ and $Y$ are subsets of $E$, the {\it local connectivity} 
$\sqcap(X,Y)$ between $X$ and $Y$ is defined by $\sqcap(X,Y) =  r(X) + r(Y) - r(X \cup Y)$. 
Sometimes we will write $\sqcap_M$ for $\sqcap$, and $\sqcap^*$ for $\sqcap_{M^*}$. It is straightforward to prove the following. Again this holds for both the version of duality used here and the variant used in \cite{oswww}.

\begin{lemma} 
\label{sqcapdual}
Let $M$ be a  polymatroid $(E,r)$. For disjoint subsets $X$ and $Y$ of $E$, 
$$\sqcap_{M^*}(X,Y) = \sqcap_{M/(E - (X \cup Y))}(X,Y).$$
\end{lemma}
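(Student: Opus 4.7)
The plan is to verify this identity by direct computation, showing both sides equal the same explicit expression involving only ranks in $M$.

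First I would expand the left-hand side using the definition $r^*(A) = \|A\| + r(E-A) - r(E)$, where $\|A\| = \sum_{e \in A} r(\{e\})$. The key observation is that since $X$ and $Y$ are disjoint, $\|X \cup Y\| = \|X\| + \|Y\|$, so all the weight terms cancel in the alternating sum defining $\sqcap_{M^*}$. Concretely,
\begin{align*}
\sqcap_{M^*}(X,Y) &= r^*(X) + r^*(Y) - r^*(X \cup Y)\\
&= \bigl[\|X\| + r(E-X) - r(E)\bigr] + \bigl[\|Y\| + r(E-Y) - r(E)\bigr]\\
&\quad - \bigl[\|X\| + \|Y\| + r(E - (X \cup Y)) - r(E)\bigr]\\
&= r(E-X) + r(E-Y) - r(E - (X \cup Y)) - r(E).
\end{align*}

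Next I would compute the right-hand side. Set $Z = E - (X \cup Y)$. Using the contraction formula $r_{M/Z}(A) = r(A \cup Z) - r(Z)$ for $A \subseteq X \cup Y$, I would write
\begin{align*}
\sqcap_{M/Z}(X,Y) &= r_{M/Z}(X) + r_{M/Z}(Y) - r_{M/Z}(X \cup Y)\\
&= r(X \cup Z) + r(Y \cup Z) - r(X \cup Y \cup Z) - r(Z).
\end{align*}
Observing that $X \cup Z = E - Y$, $Y \cup Z = E - X$, $X \cup Y \cup Z = E$, and $Z = E - (X \cup Y)$, this expression coincides exactly with the one derived for $\sqcap_{M^*}(X,Y)$.

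Since the proof is a straightforward manipulation, there is no genuine obstacle; the only substantive point is recognizing that the disjointness of $X$ and $Y$ forces the weight terms $\|\cdot\|$ to cancel in the submodular expression for $\sqcap_{M^*}$. Without disjointness the identity would fail, which explains the hypothesis. Note too that exactly the same calculation works for the variant of duality used in \cite{oswww}, because there also $\|X \cup Y\| = \|X\| + \|Y\|$ when $X \cap Y = \emptyset$; this is why the lemma holds unchanged for both versions of duality.
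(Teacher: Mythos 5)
Your computation is correct, and both sides do reduce to $r(E-X) + r(E-Y) - r(E-(X\cup Y)) - r(E)$; the paper itself states this lemma as "straightforward to prove" and omits the argument, so there is no alternative to compare against. Your remark that disjointness is what makes $\|X\cup Y\|$ split additively, and that the same cancellation occurs for the $\max\{1,r(\{e\})\}$ weighting of \cite{oswww}, correctly identifies both why the hypothesis is needed and why the paper can invoke the lemma for either notion of duality.
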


The next lemma will be used repeatedly, often without explicit reference. Two sets $X$ and  $Y$ in a polymatroid $M$ are {\it skew} if $\sqcap(X,Y) = 0$.

\begin{lemma}
\label{skewer}
Let $M$ be a  $2$-polymatroid and $z$ be an element of $M$ such that $(A,B)$ is a $2$-separation of $M/z$. Suppose $z$ is  skew to   $A$.  Then 
$(A,B\cup z)$   is a  $2$-separation of $M$. Moreover,  if $M$ is $3$-connected, then $A$ is not a single line in $M/z$.
\end{lemma}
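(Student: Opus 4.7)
The plan is to translate the hypothesis that $(A,B)$ is a $2$-separation of $M/z$ directly into a statement about $\lambda_M(A)$, exploiting the skewness of $z$ and $A$ to eliminate a $-r(\{z\})$ term that would otherwise make the inequality too weak.

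First I would expand $\lambda_{M/z}(A) \le 1$ using the definition of the contraction's rank function:
\begin{equation*}
r_{M/z}(A) + r_{M/z}(B) - r(M/z) = r(A \cup z) + r(B \cup z) - r(M) - r(\{z\}) \le 1.
\end{equation*}
By the skewness hypothesis $\sqcap(A, \{z\}) = 0$, so $r(A \cup z) = r(A) + r(\{z\})$. Substituting this into the displayed inequality cancels $r(\{z\})$ and yields $r(A) + r(B \cup z) - r(M) \le 1$, i.e.\ $\lambda_M(A) \le 1$. This gives the first assertion, noting that $A$ and $B \cup z$ partition $E(M)$ and both are non-empty.

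For the second assertion, suppose for a contradiction that $M$ is $3$-connected and that $A = \{a\}$ where $a$ is a line of $M/z$, so $r_{M/z}(\{a\}) = 2$. Using skewness once more, $r_M(\{a\}) = r(A \cup z) - r(\{z\}) = r_{M/z}(\{a\}) = 2$, so $a$ is a line of $M$ as well. Then $\max\{|A|, r_M(A)\} = 2 > 1$, and since $(A,B)$ is a genuine partition of $E(M/z)$ we have $|B| \ge 1$, hence $|B \cup z| \ge 2$ and $\max\{|B \cup z|, r_M(B\cup z)\} > 1$. Combined with $\lambda_M(A) \le 1$ from the first part, the partition $(A, B \cup z)$ is a $2$-separation of $M$, contradicting $3$-connectedness.

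There is no serious obstacle here; the only subtlety is in the skewness substitution, which is the whole point of the hypothesis and the reason that the contracted $2$-separation lifts without needing to put $z$ on the $A$-side. I would present the two parts in the order above, since the second part uses both the inequality derived in the first part and the rank computation $r_M(\{a\}) = r_{M/z}(\{a\})$ that comes from the same skewness identity.
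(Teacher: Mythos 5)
Your proof is correct and follows essentially the same route as the paper's: expand $\lambda_{M/z}(A)$ via the rank function of the contraction, use the skewness identity $r(A\cup z)=r(A)+r(\{z\})$ to cancel the $r(\{z\})$ terms and conclude $\lambda_M(A)=\lambda_{M/z}(A)\le 1$, then use the same identity to transfer the rank of the singleton $A$ from $M/z$ to $M$ in the second part. The only cosmetic remark is that, for the first assertion, non-emptiness of $A$ and $B\cup z$ is weaker than the definition's requirement $\max\{|X|,r(X)\}>1$ on each side; but this is immediate (for $B\cup z$ since $|B\cup z|\ge 2$, and for $A$ since $r_M(A)\ge r_{M/z}(A)$ by submodularity), and the paper is equally silent about it.
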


\begin{proof} Clearly $r(A \cup z) - r(z) = r(A)$, so $(A,B \cup z)$ is a \tws\ of $M$.    If $M$ is \thc\ and $A$ consists of a single line $a$ of $M/z$, then  $a$ is a line of $M$, so $a$ and $z$ are skew, and we obtain the \cn\ that $M$ has a \tws.
\end{proof}


Numerous properties of the connectivity function of a matroid are proved simply by applying properties of the rank function; they do not rely on the requirement that $r(\{e\}) \le 1$ for all elements $e$. Evidently, such properties also hold for the connectivity function of a polymatroid. The next few lemmas note some of these properties.

The first two are   proved in \cite[Lemmas 8.2.3 and 8.2.4]{oxbook}. 
\begin{lemma}
\label{8.2.3} Let $(E,r)$ be a polymatroid   and
let $X_1, X_2, Y_1$, and $Y_2$ be subsets of $E$ with $Y_1 \subseteq X_1$ and $Y_2 \subseteq X_2$. 
Then 
$$\sqcap(Y_1,Y_2) \le \sqcap(X_1,X_2).$$
\end{lemma}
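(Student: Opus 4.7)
The plan is to prove $\sqcap(Y_1,Y_2) \le \sqcap(X_1,X_2)$ by two applications of the submodularity of $r$, upgrading one side at a time. The only tool needed is axiom (iii) of the polymatroid definition; the argument is otherwise identical to the matroid case.

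First I would apply submodularity to the pair $X_1$ and $Y_1 \cup Y_2$. Because $Y_1 \subseteq X_1$, we have $X_1 \cap (Y_1 \cup Y_2) \supseteq Y_1$, so monotonicity of $r$ gives $r(X_1 \cap (Y_1 \cup Y_2)) \ge r(Y_1)$, and submodularity then yields
\[
r(X_1) + r(Y_1 \cup Y_2) \ge r(X_1 \cup Y_2) + r(Y_1).
\]
Rearranging, $r(X_1) - r(Y_1) \ge r(X_1 \cup Y_2) - r(Y_1 \cup Y_2)$.

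Next I would apply submodularity to the pair $X_2$ and $X_1 \cup Y_2$. Since $Y_2 \subseteq X_2$, the intersection $X_2 \cap (X_1 \cup Y_2)$ contains $Y_2$, so
\[
r(X_2) + r(X_1 \cup Y_2) \ge r(X_1 \cup X_2) + r(Y_2),
\]
whence $r(X_2) - r(Y_2) \ge r(X_1 \cup X_2) - r(X_1 \cup Y_2)$. Adding this to the inequality from the previous step causes the $r(X_1 \cup Y_2)$ terms to telescope, and we obtain
\[
r(X_1) + r(X_2) - r(Y_1) - r(Y_2) \ge r(X_1 \cup X_2) - r(Y_1 \cup Y_2),
\]
which is exactly $\sqcap(X_1,X_2) \ge \sqcap(Y_1,Y_2)$. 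There is no real obstacle here; the only thing to watch is that the two monotonicity inclusions used ($Y_1 \subseteq X_1 \cap (Y_1 \cup Y_2)$ and $Y_2 \subseteq X_2 \cap (X_1 \cup Y_2)$) are set up correctly so that the intermediate term $r(X_1 \cup Y_2)$ cancels when the two inequalities are summed.
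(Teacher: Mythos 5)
Your proof is correct and is essentially the standard argument (the paper itself does not reprove this lemma but cites Lemma 8.2.3 of Oxley's \emph{Matroid Theory}, noting that the matroid proof carries over verbatim to polymatroids since it only uses the rank axioms). Both of your applications of submodularity are set up correctly: $Y_1 \subseteq X_1$ gives $X_1 \cup Y_1 \cup Y_2 = X_1 \cup Y_2$ and $X_1 \cap (Y_1\cup Y_2)\supseteq Y_1$, and similarly for the second step with $Y_2 \subseteq X_2$, so the telescoping on $r(X_1 \cup Y_2)$ goes through exactly as you describe.
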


\begin{lemma}
\label{8.2.4} Let $(E,r)$ be a polymatroid $M$ and
let $X,C$, and $D$ be disjoint subsets of $E$. 
Then 
$$\lambda_{M\ba D/C}(X) \le \lambda_M(X).$$
Moreover, equality holds if and only if 
$$r(X \cup C) = r(X) + r(C)$$ and 
$$r(E - X) + r(E-D) = r(E) + r(E- (X \cup D)).$$
\end{lemma}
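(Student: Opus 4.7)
The plan is direct: unfold $\lambda_{M\backslash D/C}(X)$ using the definitions of deletion and contraction for polymatroids, then regroup the difference $\lambda_M(X) - \lambda_{M\backslash D/C}(X)$ into two non-negative terms, each of which is a single application of submodularity of $r$. The two equality conditions will then drop out by requiring both of those submodular inequalities to be tight.

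First I would compute, for $X \subseteq E - (C \cup D)$, that $r_{M\backslash D/C}(X) = r(X \cup C) - r(C)$ and $r_{M\backslash D/C}(E - D - C - X) = r(E - D - X) - r(C)$, and $r_{M\backslash D/C}(E - D - C) = r(E - D) - r(C)$. Substituting into the definition of $\lambda$ gives
\[
\lambda_{M\backslash D/C}(X) \;=\; r(X \cup C) + r(E - D - X) - r(E - D) - r(C).
\]

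Subtracting this from $\lambda_M(X) = r(X) + r(E - X) - r(E)$ and regrouping yields
\[
\lambda_M(X) - \lambda_{M\backslash D/C}(X) \;=\; \bigl[r(X) + r(C) - r(X \cup C)\bigr] + \bigl[r(E - X) + r(E - D) - r(E) - r(E - D - X)\bigr].
\]
The first bracket is $\sqcap(X,C)$, which is $\ge 0$ by submodularity applied to the disjoint pair $X,C$. For the second bracket, I would note that $(E - X) \cup (E - D) = E - (X \cap D) = E$ because $X$ and $D$ are disjoint, while $(E - X) \cap (E - D) = E - X - D$; hence submodularity of $r$ applied to $E - X$ and $E - D$ shows this bracket is also $\ge 0$. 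This proves the inequality.

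Equality forces each bracket to vanish: the first gives $r(X \cup C) = r(X) + r(C)$, and the second gives $r(E - X) + r(E - D) = r(E) + r(E - (X \cup D))$, exactly the stated characterization. The only real obstacle is the bookkeeping of which union/intersection identities hold under the disjointness hypotheses on $X,C,D$; once those are pinned down, the proof is essentially two invocations of submodularity.
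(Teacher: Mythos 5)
Your proof is correct, and it is essentially the standard argument: the paper does not give a proof of this lemma in-text but cites \cite[Lemma 8.2.4]{oxbook}, where the same direct computation (unfold $\lambda_{M\ba D/C}$, regroup the difference into the two submodular brackets) appears. Your bookkeeping of the disjointness hypotheses -- $X \cap C = \emptyset$ for the first bracket and $X \cap D = \emptyset$ for the union/intersection identities in the second -- is exactly what makes both brackets instances of submodularity, and the equality characterization then follows.
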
 

The following \cite[Corollary 8.7.6]{oxbook} is a straightforward consequence of the last 
lemma. 

\begin{corollary}
\label{8.7.6}
Let $X$ and $D$ be disjoint subsets of the ground set $E$ of a polymatroid $M$. Suppose that $r(M\ba D) = r(M)$. Then 
\begin{itemize}
\item[(i)] $\lambda_{M\ba D}(X) = \lambda_M(X)$ if and only if $D \subseteq \cl_M(E - (X \cup D))$; and 
\item[(ii)] $\lambda_{M\ba D}(X) = \lambda_M(X\cup D)$ if and only if 
$D \subseteq \cl_M(X)$.
\end{itemize}
\end{corollary}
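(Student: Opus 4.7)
The plan is to deduce both parts from Lemma~\ref{8.2.4} specialized to $C = \emptyset$, together with the hypothesis $r(M\ba D) = r(M)$, i.e.\ $r(E-D) = r(E)$.

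For part (i), I would apply Lemma~\ref{8.2.4} with $C = \emptyset$. The first condition for equality, namely $r(X\cup C) = r(X) + r(C)$, is then automatically satisfied. Hence $\lambda_{M\ba D}(X) = \lambda_M(X)$ holds if and only if $r(E-X) + r(E-D) = r(E) + r(E-(X\cup D))$. Substituting $r(E-D) = r(E)$, this reduces to $r(E-X) = r(E-(X\cup D))$, which by definition of the closure is exactly the condition $D \subseteq \cl_M(E-(X\cup D))$. This gives (i) immediately.

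For part (ii), I would instead proceed by a direct expansion. Using $r_{M\ba D}(X) = r(X)$ and $r_{M\ba D}(E-D-X) = r(E-D-X)$, together with $r(M\ba D) = r(E-D) = r(E)$, one has
\[
\lambda_{M\ba D}(X) \;=\; r(X) + r(E-X-D) - r(E),
\]
while by definition
\[
\lambda_M(X \cup D) \;=\; r(X\cup D) + r(E - X - D) - r(E).
\]
Subtracting, $\lambda_{M\ba D}(X) = \lambda_M(X\cup D)$ if and only if $r(X) = r(X\cup D)$, i.e.\ $D \subseteq \cl_M(X)$, giving (ii).

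There is no real obstacle here; the corollary is essentially bookkeeping on top of Lemma~\ref{8.2.4}. The only mild care needed is to note that the rank-function identities used to define closure and those used in the equality clause of Lemma~\ref{8.2.4} line up cleanly once $r(E-D) = r(E)$ is invoked, so that in (i) the two-part equality criterion collapses to a single closure condition.
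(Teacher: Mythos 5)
Your proof is correct, and it matches the intended derivation: the paper states this corollary as a direct consequence of Lemma~\ref{8.2.4} (citing Oxley's book rather than reproving it), and specializing that lemma to $C = \emptyset$ together with $r(E-D) = r(E)$ is exactly the expected route for part (i). Your part (ii) is done by direct expansion of the two connectivity functions rather than by reapplying Lemma~\ref{8.2.4} to the complementary side $E-(X\cup D)$; both are equally short and valid, so this is just a cosmetic difference.
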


It is well known that, when $M$ is a matroid, for all subsets $X$ of $E(M)$, 
$$\lambda_M(X) = r_M(X) + r_{M^*}(X) - |X|.$$
It is easy to check that the following variant on this holds for polymatroids. 
Recall that $||X|| = \sum_{x \in X} r(\{x\}).$

\begin{lemma}
\label{rr*}
In a  polymatroid $M$, for all subsets $X$ of $E(M)$,
$$\lambda_M(X) = r_M(X) + r_{M^*}(X) - ||X||.$$
In particular, if every element of $X$ has rank one, then 
$$\lambda_M(X) = r_M(X) + r_{M^*}(X) - |X|.$$
\end{lemma}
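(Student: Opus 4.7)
The proof is a direct unpacking of the definition of the dual rank function. My plan is to start from the definition $r_{M^*}(X) = \|X\| + r_M(E - X) - r_M(E)$ given in the introduction, substitute into the right-hand side of the claimed identity, and observe that the $\|X\|$ terms cancel leaving exactly $r_M(X) + r_M(E - X) - r_M(E)$, which is $\lambda_M(X)$ by definition.

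More precisely, I would write
\[
r_M(X) + r_{M^*}(X) - \|X\| \;=\; r_M(X) + \bigl(\|X\| + r_M(E - X) - r_M(E)\bigr) - \|X\|,
\]
which simplifies to $r_M(X) + r_M(E - X) - r_M(E) = \lambda_M(X)$, establishing the first displayed equation. The second assertion is then immediate: if every element $x$ of $X$ satisfies $r_M(\{x\}) = 1$, then $\|X\| = \sum_{x \in X} r_M(\{x\}) = |X|$, and substituting into the first equation gives $\lambda_M(X) = r_M(X) + r_{M^*}(X) - |X|$.

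There is no real obstacle here; the statement is essentially a restatement of the definition of $r^*$ rearranged to isolate $\lambda_M(X)$. The only thing to note in passing is that this identity works uniformly for all polymatroids $M$ regardless of whether $M$ has loops (so no care is needed to compare with the alternative definition of duality discussed earlier in the paper), because both sides depend only on the rank function $r_M$ and the element ranks $r_M(\{x\})$ for $x \in X$.
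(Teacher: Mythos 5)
Your proof is correct and is exactly the straightforward unpacking of the definition of $r^*$ that the paper has in mind; the paper itself states this lemma without proof, noting only that it is ``easy to check.''
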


The next lemma contains another useful equation whose proof is straightforward.

\begin{lemma}
\label{obs1}
Let $(X,Y)$ be a partition of the ground set of a polymatroid $M$. Suppose $z \in Y$. Then 
$$\sqcap(X,\{z\}) + \sqcap_{M/z}(X,Y-z) =  \sqcap(X,\{z\}) + \lambda_{M/z}(X) = \lambda_M(X).$$
\end{lemma}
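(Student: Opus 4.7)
The first equality is immediate: since $(X, Y-z)$ partitions $E(M/z)$, we have by definition
$$\sqcap_{M/z}(X,Y-z) = r_{M/z}(X) + r_{M/z}(Y-z) - r_{M/z}(E-z) = \lambda_{M/z}(X).$$
So the content of the lemma is the second equality, which I plan to establish by direct expansion of both sides in terms of the rank function $r$ of $M$.

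The main computation proceeds as follows. On the left,
$$\sqcap(X,\{z\}) = r(X) + r(\{z\}) - r(X \cup z),$$
while using the formula $r_{M/z}(A) = r(A \cup z) - r(\{z\})$ for $A \subseteq E - z$ gives
$$\lambda_{M/z}(X) = r_{M/z}(X) + r_{M/z}(Y - z) - r(M/z) = r(X \cup z) + r(Y) - r(E) - r(\{z\}).$$
Adding these two expressions, the terms $r(X \cup z)$ cancel, as do the terms $r(\{z\})$, leaving
$$r(X) + r(Y) - r(E) = \lambda_M(X),$$
which is the required identity.

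There is essentially no obstacle here: the result is a routine bookkeeping identity relating the connectivity of $X$ in $M$ to the local connectivity $\sqcap(X,\{z\})$ and the connectivity of $X$ in the contraction $M/z$. The only thing to be careful about is that the formula $r_{M/z}(A) = r(A \cup z) - r(\{z\})$ is valid for a polymatroid (not just a matroid), which follows directly from the definition of contraction given in the paper.
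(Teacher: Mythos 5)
Your proof is correct. The paper does not actually supply a proof of Lemma~\ref{obs1} — it merely remarks that the identity is ``straightforward'' — and the computation you give is exactly the intended direct expansion: the first equality is a triviality from the definitions since $X \cup (Y-z) = E(M/z)$, and the second follows by substituting $r_{M/z}(A) = r(A \cup z) - r(\{z\})$ (which is the paper's definition of contraction for polymatroids) into $\lambda_{M/z}(X)$, adding $\sqcap(X,\{z\})$, and observing that the $r(X\cup z)$ and $r(\{z\})$ terms cancel to leave $r(X) + r(Y) - r(E) = \lambda_M(X)$.
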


The  next two lemmas are natural generalizations of matroid results that appear in \cite{osw}. 

\begin{lemma} 
\label{univ} 
Let $(E,r)$ be a  polymatroid and
let $X$ and $Y$ be disjoint subsets of $E$. 
Then 
$$\lambda(X \cup Y) = \lambda(X) + \lambda(Y) - \sqcap(X,Y) - \sqcap^*(X,Y).$$
\end{lemma}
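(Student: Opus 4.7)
The plan is to set $Z = E \setminus (X \cup Y)$ and reduce everything to the rank function $r$ of $M$. Because $(X,Y,Z)$ partitions $E$, the three $\lambda$-terms and $\sqcap(X,Y)$ expand immediately from their definitions:
\begin{align*}
\lambda(X) &= r(X) + r(Y \cup Z) - r(E),\\
\lambda(Y) &= r(Y) + r(X \cup Z) - r(E),\\
\lambda(X \cup Y) &= r(X \cup Y) + r(Z) - r(E),\\
\sqcap(X,Y) &= r(X) + r(Y) - r(X \cup Y).
\end{align*}

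The only term not immediately expressible in terms of $r$ is $\sqcap^*(X,Y)$, which a priori requires the dual rank function. Here I would invoke Lemma~\ref{sqcapdual} to rewrite it as $\sqcap_{M/Z}(X,Y)$, and then use $r_{M/Z}(S) = r(S \cup Z) - r(Z)$ to obtain
$$\sqcap^*(X,Y) = r(X \cup Z) + r(Y \cup Z) - r(Z) - r(E).$$

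With these five identities in hand, the quantity $\lambda(X) + \lambda(Y) - \sqcap(X,Y) - \sqcap^*(X,Y)$ becomes a signed sum of rank values. Collecting like terms, the occurrences of $r(X)$, $r(Y)$, $r(X \cup Z)$, $r(Y \cup Z)$ each cancel in pairs, as do two copies of $r(E)$ against the $+r(E)$ from $\sqcap^*$ and the $+r(X \cup Y)$ from $-\sqcap(X,Y)$ survives, together with $r(Z) - r(E)$ left over from $\sqcap^*$. What remains is precisely $r(X \cup Y) + r(Z) - r(E) = \lambda(X \cup Y)$.

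The main (and essentially only) conceptual step is the use of Lemma~\ref{sqcapdual} to convert $\sqcap^*(X,Y)$ into a local-connectivity computed in $M/Z$; the rest is bookkeeping. No submodularity or uncrossing is needed, and the restriction $r(\{e\}) \le k$ never enters, so the identity holds for arbitrary polymatroids just as it does for matroids.
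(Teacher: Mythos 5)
Your proof is correct. The paper itself does not prove Lemma~\ref{univ}: it is stated without proof as a ``natural generalization'' of a matroid identity from \cite{osw}, so there is no in-paper argument to compare against. Your derivation is a valid, self-contained justification. The conceptual step --- converting $\sqcap^*(X,Y)$ into a rank computation --- is handled by invoking Lemma~\ref{sqcapdual} to get $\sqcap^*(X,Y) = \sqcap_{M/Z}(X,Y)$ and then expanding $r_{M/Z}$. An equally direct route, which avoids Lemma~\ref{sqcapdual}, is to expand $\sqcap^*(X,Y) = r^*(X) + r^*(Y) - r^*(X\cup Y)$ straight from the definition $r^*(S) = \|S\| + r(E\setminus S) - r(E)$; disjointness of $X$ and $Y$ makes the $\|\cdot\|$ terms cancel, giving the same expression $r(X\cup Z) + r(Y\cup Z) - r(Z) - r(E)$. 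Either way, the identity reduces to a telescoping sum of rank values, as you note. One small quibble: the prose in your last paragraph describing which $r(E)$ terms cancel against which is a bit garbled (two copies of $-r(E)$ from the two $\lambda$'s are only partially cancelled by the single $+r(E)$ from $-\sqcap^*$, leaving the $-r(E)$ in $\lambda(X\cup Y)$), but the displayed identities are exact and the cancellation does come out to $r(X\cup Y) + r(Z) - r(E)$ as claimed.
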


\begin{lemma}
\label{oswrules} Let $A, B, C$, and $D$ be subsets of the ground set of a polymatroid. Then 
\begin{itemize}
\item[(i)] $\sqcap(A \cup B, C \cup D) + \sqcap(A,B) + \sqcap(C,D) = 
\sqcap(A \cup C, B \cup D) + \sqcap(A,C) + \sqcap(B,D)$; and 
\item[(ii)] $\sqcap(A \cup B, C) + \sqcap(A,B) = 
\sqcap(A \cup C, B) + \sqcap(A,C).$
\end{itemize}
\end{lemma}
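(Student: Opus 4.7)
The plan is to verify both identities directly from the definition $\sqcap(X,Y)=r(X)+r(Y)-r(X\cup Y)$, observing that after full expansion both sides collapse to a symmetric expression in $r(A), r(B), r(C), r(D)$ together with a single rank term on the total union. Nothing about matroid or polymatroid structure is needed beyond the formal definition of $\sqcap$; in particular, no use of submodularity, monotonicity, or disjointness of the four sets is required.

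For part (ii), I would expand the left-hand side as
\[
\sqcap(A\cup B,C)+\sqcap(A,B)=\bigl[r(A\cup B)+r(C)-r(A\cup B\cup C)\bigr]+\bigl[r(A)+r(B)-r(A\cup B)\bigr],
\]
which simplifies to $r(A)+r(B)+r(C)-r(A\cup B\cup C)$. The same expansion applied to $\sqcap(A\cup C,B)+\sqcap(A,C)$ gives the identical quantity, since the outcome is symmetric in the roles of $B$ and $C$. So (ii) follows at once.

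For part (i), the same strategy works. Expanding the left-hand side gives
\[
r(A\cup B)+r(C\cup D)-r(A\cup B\cup C\cup D)+r(A)+r(B)-r(A\cup B)+r(C)+r(D)-r(C\cup D),
\]
and the intermediate rank terms $r(A\cup B)$ and $r(C\cup D)$ cancel, leaving
\[
r(A)+r(B)+r(C)+r(D)-r(A\cup B\cup C\cup D).
\]
The right-hand side expands analogously, with $r(A\cup C)$ and $r(B\cup D)$ cancelling, to the same symmetric expression. Hence (i) holds.

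There is no genuine obstacle here; the ``hard part,'' if one may call it that, is simply to notice in advance that pairing $\sqcap(A\cup B,C\cup D)$ with the correction terms $\sqcap(A,B)$ and $\sqcap(C,D)$ is precisely what is needed to eliminate the intermediate rank terms, yielding an expression invariant under any reassignment of the labels $A,B,C,D$. One could also present (ii) as the special case of (i) obtained by taking $D=\emptyset$ and using $\sqcap(X,\emptyset)=0$, but a direct expansion is cleaner and avoids any worry about the empty-set case.
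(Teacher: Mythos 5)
Your proof is correct. Both identities follow by telescoping: after expanding each local-connectivity term via $\sqcap(X,Y)=r(X)+r(Y)-r(X\cup Y)$, the intermediate ranks cancel and both sides reduce to $r(A)+r(B)+r(C)+r(D)-r(A\cup B\cup C\cup D)$ (or the three-set analogue for (ii)), which is manifestly symmetric under permuting the labels. The paper itself does not prove the lemma but cites \cite{osw} for the matroid version; the direct expansion you give is exactly the argument one would expect there, and nothing beyond the formal definition of $\sqcap$ is used, as you correctly observe.
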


\begin{lemma}
\label{general}
Let $M$ be a polymatroid and $(A,B,Z)$ be a partition of its ground set into possibly empty subsets. 
Then 
$$\lambda_{M/Z}(A) = \lambda_{M\ba Z}(A) - \sqcap_M(A,Z) - \sqcap_M(B,Z) + \lambda_M(Z).$$
\end{lemma}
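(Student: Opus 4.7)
The plan is to prove this identity by straightforwardly expanding each of the four terms on the right-hand side using the definitions of $\lambda$, $\sqcap$, and the rank functions of $M\ba Z$ and $M/Z$, and then verifying that everything cancels to give $\lambda_{M/Z}(A)$. Since $(A,B,Z)$ is a partition of $E$, we have $A \cup B = E - Z$, and this observation will drive the cancellation.

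First I would compute the left-hand side. Using the rank function $r_{M/Z}(X) = r(X \cup Z) - r(Z)$ on subsets of $E - Z$, and noting that the ground set of $M/Z$ is $A \cup B$, I get
\begin{align*}
\lambda_{M/Z}(A) &= r_{M/Z}(A) + r_{M/Z}(B) - r_{M/Z}(A \cup B)\\
&= \bigl[r(A\cup Z) - r(Z)\bigr] + \bigl[r(B\cup Z) - r(Z)\bigr] - \bigl[r(E) - r(Z)\bigr]\\
&= r(A \cup Z) + r(B \cup Z) - r(E) - r(Z).
\end{align*}

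Next I would compute the right-hand side by substituting the definitions:
\begin{align*}
\lambda_{M\ba Z}(A) &= r(A) + r(B) - r(E-Z),\\
\sqcap_M(A,Z) &= r(A) + r(Z) - r(A \cup Z),\\
\sqcap_M(B,Z) &= r(B) + r(Z) - r(B \cup Z),\\
\lambda_M(Z) &= r(Z) + r(E-Z) - r(E).
\end{align*}
Substituting these into $\lambda_{M\ba Z}(A) - \sqcap_M(A,Z) - \sqcap_M(B,Z) + \lambda_M(Z)$, the terms $r(A)$, $r(B)$, and $r(E-Z)$ each cancel, leaving exactly $r(A \cup Z) + r(B \cup Z) - r(E) - r(Z)$, which matches the left-hand side.

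There is no real obstacle here; the identity is purely a bookkeeping exercise in the rank function. The only thing to be careful of is the fact that the ground set of $M/Z$ (and of $M\ba Z$) is $E - Z = A \cup B$, which is what converts $r_{M/Z}(A \cup B)$ into $r(E) - r(Z)$ and $r_{M\ba Z}(A \cup B)$ into $r(E-Z)$. The identity itself does not require submodularity or any $2$-polymatroid-specific hypothesis, so it holds for an arbitrary polymatroid (indeed, for any integer-valued set function for which $\lambda$ and $\sqcap$ are defined in the standard way).
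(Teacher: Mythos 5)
Your proof is correct and takes essentially the same approach as the paper: direct expansion of all the $\lambda$ and $\sqcap$ terms via the rank function of $M$, followed by cancellation. The paper organizes the bookkeeping slightly differently (it rewrites $\lambda_{M/Z}(A)$ as $\lambda_{M\ba Z}(A) + r(M\ba Z) - r(M) - r(Z)$ and then verifies $\sqcap(A,Z) + \sqcap(B,Z) - \lambda(Z) = r(M) + r(Z) - r(M\ba Z)$ as a separate identity), but the underlying computation is the same rank arithmetic you carry out.
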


\begin{proof} 
We have $B = A \cup Z$ and $A = B \cup Z$. Then 
\begin{align*}
\lambda_{M/Z}(A) &= r_{M/Z}(A) + r_{M/Z}(B) - r(M/Z)\\
& = r(A \cup Z) - r(Z) + r(B\cup Z) - r(Z) -r(M) + r(Z)\\
& = r(B) + r(A) - r(M) - r(Z)\\
&= r(A) + r(B) - r(M\ba Z) + r(M\ba Z) - r(M) - r(Z)\\
& = \lambda_{M\ba Z}(A) + r(M\ba Z) - r(M) - r(Z).
\end{align*}

The required result holds if and only if 
$$\sqcap_M(A,Z) + \sqcap_M(B,Z) - \lambda_M(Z) = r(M) + r(Z) - r(M\ba Z).$$
Now 
\begin{align*}
\sqcap(A,Z) + \sqcap(B,Z) - \lambda_M(Z) &= r(A) + r(Z) - r(A \cup Z) + r(B) + r(Z) - r(B\cup Z)\\
& \hspace*{1.7in} - r(Z) - r(M\ba Z) + r(M)\\
& = r(A) + r(Z) - r(B) + r(B) + r(Z) - r(A) - r(Z)\\
& \hspace*{1.7in} - r(M\ba Z) + r(M)\\
& = r(M) + r(Z) - r(M\ba Z),
\end{align*}
as required.
\end{proof}

\begin{corollary}
\label{general2}
Let $M$ be a polymatroid and $(A,B,Z)$ be a partition of its ground set into possibly empty subsets. 
Suppose $r(M\ba Z) = r(M)$. Then
$$\lambda_{M/Z}(A) = \lambda_{M\ba Z}(A) - \sqcap_M(A,Z) - \sqcap_M(B,Z) + r(Z).$$
\end{corollary}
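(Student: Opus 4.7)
The plan is to deduce this immediately from Lemma~\ref{general}, which gives the identity
\[
\lambda_{M/Z}(A) \;=\; \lambda_{M\ba Z}(A) \;-\; \sqcap_M(A,Z) \;-\; \sqcap_M(B,Z) \;+\; \lambda_M(Z).
\]
So the only task is to show that, under the additional hypothesis $r(M\ba Z) = r(M)$, the term $\lambda_M(Z)$ simplifies to $r(Z)$.

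First I would unpack the hypothesis: since $r(M\ba Z) = r(E-Z)$ and $r(M) = r(E)$, the assumption says $r(E-Z) = r(E)$. Then applying the definition of the connectivity function,
\[
\lambda_M(Z) \;=\; r(Z) + r(E-Z) - r(E) \;=\; r(Z).
\]
Substituting this into the conclusion of Lemma~\ref{general} yields the desired equation
\[
\lambda_{M/Z}(A) \;=\; \lambda_{M\ba Z}(A) \;-\; \sqcap_M(A,Z) \;-\; \sqcap_M(B,Z) \;+\; r(Z),
\]
completing the proof.

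There is no real obstacle here; the statement is a direct specialization of Lemma~\ref{general} obtained by recognizing that the hypothesis $r(M\ba Z) = r(M)$ forces $Z$ to span the full rank, which is precisely the condition that collapses $\lambda_M(Z)$ to $r(Z)$. The work is entirely in Lemma~\ref{general}, and Corollary~\ref{general2} is simply the form of that identity that will be convenient to invoke in later arguments where one is deleting a set that does not drop the rank.
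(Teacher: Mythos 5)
Your proof is correct and follows exactly the same route as the paper: apply Lemma~\ref{general} and then observe that the hypothesis $r(M\ba Z)=r(M)$ reduces $\lambda_M(Z)=r(Z)+r(M\ba Z)-r(M)$ to $r(Z)$.
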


\begin{proof}
As $\lambda_M(Z) = r(Z) + r(M\ba Z) - r(M)$ and $r(M\ba Z) = r(M)$, the result is an immediate consequence of the last lemma.
\end{proof}

\begin{lemma}
\label{general3}
Let $M$ be a polymatroid and $(A,B,C)$ be a partition of its ground set into possibly empty subsets. 
Suppose $\lambda(A) = 1 = \lambda(C)$ and $\lambda(B) = 2$. Then $\sqcap(A,B) = 1$.
\end{lemma}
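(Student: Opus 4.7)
The plan is to reduce the claim to the single identity $\sqcap(A,C)=0$, and then obtain $\sqcap(A,B)=1$ by a short submodular computation.

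First I would apply Lemma~\ref{univ} to the disjoint pair $A,C$: since $\lambda(A\cup C)=\lambda(B)=2$ (because $(A\cup C,B)$ is a partition of $E$ and $\lambda$ is invariant under complementation), we get
\[
2 \;=\; \lambda(A)+\lambda(C)-\sqcap(A,C)-\sqcap^{*}(A,C) \;=\; 1+1-\sqcap(A,C)-\sqcap^{*}(A,C).
\]
Hence $\sqcap(A,C)+\sqcap^{*}(A,C)=0$. Since local connectivity, in both $M$ and $M^{*}$, is nonnegative (an immediate consequence of submodularity of the rank function together with $r(\emptyset)=0$), this forces $\sqcap(A,C)=0$.

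Next I would extract $\sqcap(A,B)$ from Lemma~\ref{oswrules}(ii) applied to the triple $(A,B,C)$:
\[
\sqcap(A\cup B,C)+\sqcap(A,B) \;=\; \sqcap(A\cup C,B)+\sqcap(A,C).
\]
Because $A\cup B\cup C=E$, we have $\sqcap(A\cup B,C)=r(A\cup B)+r(C)-r(M)=\lambda(C)=1$ and similarly $\sqcap(A\cup C,B)=\lambda(B)=2$. Substituting and using $\sqcap(A,C)=0$ from the previous step yields $1+\sqcap(A,B)=2+0$, so $\sqcap(A,B)=1$, as required.

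There is essentially no obstacle here: the argument is a two-line uncrossing once one notices that $\lambda(A\cup C)=\lambda(B)$ and invokes the duality-symmetric identity of Lemma~\ref{univ} to conclude $\sqcap(A,C)=0$. The only point that deserves a word of care is the nonnegativity of $\sqcap^{*}$, which is immediate because $M^{*}$ is itself a polymatroid by the construction recalled earlier in the paper. If one prefers to avoid duality altogether, the same conclusion $\sqcap(A,C)=0$ may alternatively be obtained by writing out $\sqcap(A,B)$, $\sqcap(A,C)$, and $\sqcap(B,C)$ in terms of the three ranks $r(A),r(B),r(C),r(M)$ via the given values of $\lambda(A),\lambda(B),\lambda(C)$, and then appealing to submodularity $r(B\cup C)\le r(B)+r(C)$ to pin down $r(M)=r(A)+r(B)+r(C)-2$.
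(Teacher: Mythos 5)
Your proof is correct, and it is a genuinely different route from the paper's. Both arguments arrive at the same intermediate identity, $\sqcap(A,B) = 1 + \sqcap(A,C)$: the paper does it by expanding $r(A\cup B)$ and substituting, while you extract it cleanly from Lemma~\ref{oswrules}(ii) together with $\sqcap(A\cup B,C)=\lambda(C)$ and $\sqcap(A\cup C,B)=\lambda(B)$. Where you diverge is the last step. The paper bounds $\sqcap(A,B)$ from above by monotonicity (Lemma~\ref{8.2.3}: $\sqcap(A,B)\le\sqcap(A,B\cup C)=\lambda(A)=1$), so the two inequalities squeeze to give $\sqcap(A,B)=1$ without ever computing $\sqcap(A,C)$. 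You instead pin down $\sqcap(A,C)=0$ directly, via Lemma~\ref{univ} applied to $A,C$ together with the nonnegativity of $\sqcap$ in both $M$ and $M^*$. The paper's route is slightly more elementary in that it stays entirely in $M$ and needs only monotonicity of $\sqcap$; your route has the side benefit of actually producing $\sqcap(A,C)=0$ (and $\sqcap^*(A,C)=0$), a small extra piece of information, at the cost of invoking the dual. Your closing remark about a dual-free version via $r(B\cup C)\le r(B)+r(C)$ is essentially the paper's argument re-derived, so you have both approaches available.
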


\begin{proof} 
We have 
$$2= r(B) + r(A\cup C) -r(M)$$ and 
\begin{align*}
r(M)  &= r(A\cup B) + r(C) - 1\\
& = r(A) + r(B) - \sqcap(A,B) + r(C) - 1\\
& = r(A) + r(B) + r(C) - 1 - \sqcap (A,B).
\end{align*}
Thus
$$2 = r(B) + r(A \cup C) - r(A) - r(B) - r(C) + 1 + \sqcap(A,B)$$
so $\sqcap(A,B) = 1 + \sqcap(A,C) \ge 1$. By Lemma~\ref{8.2.3}, $\sqcap(A,B) \le \sqcap(A,B\cup C) = 1$, so $\sqcap(A,B) = 1$.
\end{proof}

\begin{lemma}
\label{general4}
Let $M$ be a polymatroid and $(A,B,C)$ be a partition of its ground set into possibly empty subsets. 
 Then $\sqcap^*(A,B) = \lambda_{M/C}(A)$.
\end{lemma}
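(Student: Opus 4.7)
The plan is to reduce the identity to Lemma~\ref{sqcapdual} combined with the definition of the connectivity function. Applying Lemma~\ref{sqcapdual} with $X = A$ and $Y = B$ is natural because $A$ and $B$ are disjoint and $E - (A \cup B) = C$ (since $(A,B,C)$ partitions $E$). That lemma then gives
$$\sqcap^*(A,B) = \sqcap_{M^*}(A,B) = \sqcap_{M/C}(A,B).$$

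The remaining step is to observe that, in the polymatroid $M/C$, the sets $A$ and $B$ together form the entire ground set. Hence, unwinding the definition of local connectivity and then of the connectivity function,
$$\sqcap_{M/C}(A,B) = r_{M/C}(A) + r_{M/C}(B) - r_{M/C}(A \cup B) = r_{M/C}(A) + r_{M/C}(B) - r(M/C) = \lambda_{M/C}(A).$$
Chaining the two displays yields $\sqcap^*(A,B) = \lambda_{M/C}(A)$, as required.

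There is no substantive obstacle here: the result is essentially just Lemma~\ref{sqcapdual} read off in the special case where $X$ and $Y$ exhaust the complement of $C$, together with the tautological identification of $\sqcap_{M/C}(A,B)$ with $\lambda_{M/C}(A)$ when $A \cup B = E(M/C)$. The only thing worth being careful about is that the partition $(A,B,C)$ is allowed to have empty parts; this causes no issue, since Lemma~\ref{sqcapdual} applies to arbitrary disjoint subsets and both $\sqcap$ and $\lambda$ are well-defined on the empty set (taking value $0$), so the equality remains valid in every degenerate case.
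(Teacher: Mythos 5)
Your proof is correct. However, it takes a genuinely different route from the paper's. You invoke Lemma~\ref{sqcapdual}, which states $\sqcap_{M^*}(X,Y) = \sqcap_{M/(E-(X\cup Y))}(X,Y)$, specialize to $X=A$, $Y=B$ with $E-(A\cup B)=C$, and then observe that $\sqcap_{M/C}(A,B)=\lambda_{M/C}(A)$ because $A\cup B$ is the entire ground set of $M/C$. The paper instead avoids Lemma~\ref{sqcapdual} entirely and works through the compactification machinery of Lemma~\ref{compact0}: it first rewrites $\sqcap_{M^*}(A,B)$ as $\lambda_{M^*\ba C}(A)$ (by the same observation that $A\cup B$ exhausts the ground set after $C$ is removed), then uses $\lambda_{M^*\ba C}=\lambda_{(M^*\ba C)^{\flat}}$, the identity $(M^*\ba C)^{\flat}=(M/C)^*$, and finally $\lambda_{(M/C)^*}=\lambda_{M/C}$. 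Both arguments are short and valid; yours is slightly more direct but rests on a lemma the paper states without proof (``it is straightforward to prove''), whereas the paper's chain is self-contained modulo the cited properties of compactification. Your remark about the empty-set degeneracies is accurate: both $\sqcap$ and $\lambda$ vanish on $\emptyset$, so no cases fail.
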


\begin{proof} 
By making repeated use of Lemma~\ref{compact0}, we have
\begin{align*}
\sqcap^*(A,B)  & = \sqcap_{M^*}(A,B)\\
 & = \lambda_{M^*\ba C}(A)\\
& = \lambda_{(M^*\ba C)^{\flat}}(A)\\
& =  \lambda_{(M/C)^*}(A)\\
& =  \lambda_{M/C}(A).
\end{align*}
\end{proof}

The following is a consequence of a result of Oxley and Whittle~\cite[Lemma 3.1]{owconn}.

\begin{lemma}
\label{Tutte2}
Let $M$ be a $2$-connected $2$-polymatroid with $|E(M)| \ge 2$. If $e$ is a point of $M$, then $M\ba e$ or $M/e$ is $2$-connected.
\end{lemma}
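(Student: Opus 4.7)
The plan is to deduce this lemma directly from Oxley and Whittle's Lemma~3.1 in \cite{owconn}. That result is phrased at the level of connectivity functions: for any symmetric, submodular, integer-valued set function $\lambda$ on a finite set $E$ with $\lambda(\emptyset)=0$, if $\lambda$ is $2$-connected (meaning $\lambda(X)\ge 1$ for every proper non-empty $X\subseteq E$) and $e\in E$ satisfies $\lambda(\{e\})=1$, then at least one of the two natural one-element reductions of $\lambda$ to $E-e$ is itself $2$-connected. Our task is to verify the hypotheses and identify the reductions with $\lambda_{M\ba e}$ and $\lambda_{M/e}$.

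First, I would observe that since $M$ is $2$-connected and $|E(M)|\ge 2$, we must have $\lambda_M(\{e\})\ge 1$. As $e$ is a point, $r(\{e\})=1$, so $\lambda_M(\{e\})=1+r(E-e)-r(M)$, which forces $r(E-e)=r(M)$ and $\lambda_M(\{e\})=1$. Thus $e$ is not a coloop, and the hypothesis $\lambda_M(\{e\})=1$ needed to invoke \cite[Lemma 3.1]{owconn} holds.

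Next, I would identify the two reductions of $\lambda_M$ at $e$ with $\lambda_{M\ba e}$ and $\lambda_{M/e}$. For $X\subseteq E-e$, since $r(M\ba e)=r(M)$,
\[\lambda_{M\ba e}(X)=r(X)+r((E-e)-X)-r(M),\]
so $\lambda_{M\ba e}$ is just the restriction of $\lambda_M$ to subsets of $E-e$. Using $r_{M/e}(X)=r(X\cup e)-1$ and $r(M/e)=r(M)-1$, a short computation yields
\[\lambda_{M/e}(X)=r(X\cup e)+r(((E-e)-X)\cup e)-r(M)-1.\]
These two functions are precisely the two reductions of the connectivity function $\lambda_M$ at a rank-$1$ element in the sense of \cite{owconn}; this correspondence is what makes the rank-$1$ hypothesis on $e$ crucial.

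Finally, applying \cite[Lemma 3.1]{owconn} gives that at least one of $\lambda_{M\ba e}$ and $\lambda_{M/e}$ is $2$-connected as a connectivity function, which is exactly the statement that one of $M\ba e$ and $M/e$ is $2$-connected. The main (and only) obstacle is the bookkeeping required to align the polymatroid operations with the abstract connectivity-function operations of \cite{owconn}, but this is routine precisely because $e$ has rank $1$, so both operations behave on rank exactly as matroid deletion and contraction do.
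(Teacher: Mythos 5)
Your approach is exactly the paper's: it likewise derives the lemma immediately from \cite[Lemma 3.1]{owconn} with no further argument, and your verification that $\lambda_M(\{e\})=1$ (hence $r(E-e)=r(M)$) is the right hypothesis check. One small inaccuracy in the wording: $\lambda_{M\ba e}$ is \emph{not} literally the restriction of $\lambda_M$ to subsets of $E-e$, since for $X\subseteq E-e$ one has $\lambda_{M\ba e}(X)=r(X)+r((E-e)-X)-r(M)$ whereas $\lambda_M(X)=r(X)+r(E-X)-r(M)$, and these differ whenever $e\notin\cl_M((E-e)-X)$; the explicit formulas you display are correct, though, and they are what the comparison with the abstract reductions in \cite{owconn} actually requires.
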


The next result is another straightforward extension of a matroid result. 

\begin{lemma}
\label{matroidef}
Let $M$ be a $2$-connected $2$-polymatroid having $e$ and $f$ as points. Then 
\begin{itemize}
\item[(i)] $\lambda_{M/ f}(\{e\}) = 0$ if and only if $e$ and $f$ are parallel points; and  
\item[(ii)] $\lambda_{M\ba f}(\{e\}) = 0$ if and only if $e$ and $f$ form a series pair.
\end{itemize}
\end{lemma}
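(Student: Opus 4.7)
The plan is to prove both parts by direct computation of the relevant connectivity functions, with the key preliminary observation that, since $M$ is $2$-connected and $f$ is a point, we have $r(E-f) = r(M)$. Indeed, if $r(E-f) < r(M)$, then $\lambda_M(\{f\}) = r(\{f\}) + r(E-f) - r(M) \le 0$, making $\{f\}$ a proper non-empty $1$-separating set of $M$, contradicting $2$-connectivity. The same argument shows $r(E - e) = r(M)$.

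For part (i), I would expand
\[
\lambda_{M/f}(\{e\}) = r_{M/f}(\{e\}) + r_{M/f}(E - \{e,f\}) - r(M/f).
\]
Using the definition of contraction, this equals
\[
[r(\{e,f\}) - r(\{f\})] + [r(E-f) - r(\{f\})] - [r(M) - r(\{f\})].
\]
Since $r(\{f\}) = 1$ and $r(E-f) = r(M)$, this simplifies to $r(\{e,f\}) - 1$. Hence $\lambda_{M/f}(\{e\}) = 0$ if and only if $r(\{e,f\}) = 1$. As both $e$ and $f$ are points, the latter occurs exactly when $e$ and $f$ are parallel points.

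For part (ii), I would similarly expand
\[
\lambda_{M\ba f}(\{e\}) = r(\{e\}) + r(E - \{e,f\}) - r(E-f).
\]
Substituting $r(\{e\}) = 1$ and $r(E-f) = r(M)$ gives $\lambda_{M\ba f}(\{e\}) = 1 + r(E-\{e,f\}) - r(M)$. Thus $\lambda_{M\ba f}(\{e\}) = 0$ if and only if $r(E - \{e,f\}) = r(M) - 1$. Combined with the equalities $r(E-e) = r(E-f) = r(M)$ that follow from $2$-connectivity, this is exactly the defining condition that $\{e,f\}$ is a series pair of points.

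No step here is a genuine obstacle; the only thing requiring care is the initial observation that $r(E-e) = r(E-f) = r(M)$, and keeping track of the fact that $r(\{e\}) = r(\{f\}) = 1$ so that the reductions to the definitions of parallel points and series pairs are clean.
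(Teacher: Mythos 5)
Your proof is correct and takes a genuinely different route from the paper's. The paper passes to the natural matroid $M'$ derived from $M$ (freely adding two points on each line and deleting the lines), observes that $M'/f$ has $\{e\}$ as a component, and then invokes matroid-theoretic facts about series and parallel pairs to conclude; part (ii) is then waved off as ``similar''. Your argument instead computes $\lambda_{M/f}(\{e\})$ and $\lambda_{M\ba f}(\{e\})$ directly in $M$, reducing both to elementary rank identities, and handles (ii) explicitly. The trade-off: the paper's approach leans on machinery it has already built (the natural-matroid dictionary and its Lemma~3.6), so within the paper it is shorter; your approach is self-contained and makes transparent exactly where $2$-connectivity enters, namely in forcing $r(E-e) = r(E-f) = r(M)$, and it spells out how the computed quantity $r(\{e,f\}) - 1$ (respectively $r(E-\{e,f\}) - r(M) + 1$) matches the definition of parallel (respectively series) pair. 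For a reader who has not internalized the natural-matroid correspondence, your version is the clearer one.

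One small slip: in the middle term of your expansion for (i), you wrote $r(E-f) - r(\{f\})$, but $r_{M/f}(E-\{e,f\}) = r\bigl((E-\{e,f\}) \cup f\bigr) - r(\{f\}) = r(E-e) - r(\{f\})$. Since you have already established $r(E-e) = r(E-f) = r(M)$, the numerical value is unchanged and the conclusion $\lambda_{M/f}(\{e\}) = r(\{e,f\}) - 1$ still follows, but the intermediate expression as written is mislabelled and should be corrected to $r(E-e)$.
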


\begin{proof} We prove (i) omitting the similar proof of (ii).  If $e$ and $f$ are parallel points of $M$, then $\lambda_{M/ f}(\{e\}) = 0$. Now assume that  $\lambda_{M/ f}(\{e\}) = 0$. Let $M'$ be the natural matroid derived from $M$. Then  $M'/f$ has $\{e\}$ as a component. Hence $\{e,f\}$ is a series or parallel pair in $M'$. But if $\{e,f\}$ is a series pair, then $M'/f$ is $2$-connected; a \cn. We conclude that $\{e,f\}$ is a   parallel pair of points in $M$, so (i) holds.
\end{proof}

The next result is a generalization of a lemma of Bixby \cite{bixby} (see also \cite[Lemma 8.7.3]{oxbook}) that is widely used when dealing with $3$-connected matroids.

\begin{lemma}
\label{newbix}
Let $M$ be a $3$-connected $2$-polymatroid and $z$ be a point of $M$. Then either 
\begin{itemize}
\item[(i)] $M/z$ is $2$-connected having one side of every $2$-separation being a pair of points of $M$ that are parallel in $M/z$; or
\item[(ii)] $M\ba z$ is $2$-connected having one side of every $2$-separation being either a single line of $M$, or a pair of points of $M$ that form a series pair in $M\ba z$.
\end{itemize}
\end{lemma}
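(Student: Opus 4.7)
The plan is to reduce to the matroid version of Bixby's Lemma applied to the natural matroid $M'$ of $M$, combined with the connectivity identities developed above. If $|E(M)|\le 2$ the statement is vacuous, so I assume $|E(M)|\ge 3$; then $M$ is compact and, by Lemma~\ref{compact0}, $(M\ba z)^{*}=M^{*}/z$, so the statement is essentially self-dual with (i) and (ii) swapped under passage to $M^{*}$ (noting that $z$ remains a point in $M^{*}$ since $M$ is compact). It therefore suffices to analyze the 2-separations of $M/z$ carefully and invoke duality for the $M\ba z$ side.

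By Lemma~\ref{Tutte2}, $M/z$ or $M\ba z$ is 2-connected. Suppose $M/z$ is 2-connected and let $(A,B)$ be a 2-separation of it. By Lemma~\ref{obs1},
\[
\lambda_M(A)=\sqcap(A,z)+\lambda_{M/z}(A)=\sqcap(A,z)+1.
\]
If $\sqcap(A,z)=0$, then $\lambda_M(A)=1$; a short check---using $|B\cup z|\ge 2$ and the fact that $|A|=1$ forces $A=\{a\}$ with $a$ a line of $M$ (so $r(A)=2$)---confirms that $(A,B\cup z)$ satisfies the non-triviality condition, giving a 2-separation of $M$ and contradicting 3-connectedness. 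Hence $\sqcap(A,z)=1=\sqcap(B,z)$, $z\in\cl(A)\cap\cl(B)$, and $|A|,|B|\ge 2$.

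Next, by Lemma~\ref{missinglink}, $M'$ is 3-connected and $z$ remains a point of $M'$. Applying the matroid Bixby Lemma to $(M',z)$ yields that either $M'/z$ is 2-connected with every 2-separation having a parallel pair as one side, or $M'\ba z$ is 2-connected with every 2-separation having a series pair as one side. I would transfer 2-separations between $M/z$ and $M'/z$ (respectively $M\ba z$ and $M'\ba z$) using the splitting/merging argument from the proof of Lemma~\ref{missinglink}, choosing in each case a 2-separation of $M'/z$ (or $M'\ba z$) that minimizes the number of lines of $M$ split across it. In such a minimal 2-separation of $M'/z$, the parallel pair on the small side is either (a) two original points of $M$, giving case (i) directly for $M/z$; or (b) a pair $\{s_\ell,t_\ell\}$ for some line $\ell$ of $M$, in which case the corresponding side of the associated 2-separation of $M/z$ is the single element $\ell$. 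In subcase (b) one switches to the dual analysis on $M\ba z$ via $M^{*}$ to recover case (ii); the $M'\ba z$ side of Bixby is handled symmetrically, with a series pair in $M'\ba z$ translating either to a series pair of points of $M$ or to a single line $\ell$ of $M$.

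The main obstacle will be the detailed tracking of how 2-separations of $M'$ project back to those of $M$ when the freely added points $s_\ell,t_\ell$ may either stay together or split across a 2-separation. The minimization technique in the proof of Lemma~\ref{missinglink} handles most of this bookkeeping, but the hardest part will be verifying that in a minimal 2-separation the Bixby small side is exactly of one of the two targeted forms---two original points of $M$ or a single pair $\{s_\ell,t_\ell\}$---rather than some mixed configuration. This will require combining the freeness of $s_\ell$ and $t_\ell$ on $\ell$ with the 3-connectedness of $M$ and the $\sqcap$-constraints established in the preceding paragraph.
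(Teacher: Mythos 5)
Your plan of passing to the natural matroid $M'$ and invoking Bixby's Lemma is the right high-level move and matches the paper's strategy, but two of the pivots you rely on do not hold up, and they mask the observation that actually makes the transfer back to $M$ work. First, the claim that the statement is ``essentially self-dual with (i) and (ii) swapped'' is false as stated: outcome (ii) allows a single line of $M$ as one side of a 2-separation of $M\ba z$, while outcome (i) allows only a parallel pair of points of $M$ and has no analogous ``single line'' option. This asymmetry is not cosmetic --- it reflects exactly the case that your proof never addresses, namely when $z$ lies on a line $\ell$ of $M$. In that case $z\in\cl_M(\{\ell\})$, and one checks directly that $M\ba z$ is $3$-connected (if $(A,B)$ were a 2-separation of $M\ba z$ with $\ell\in A$, then $z\in\cl(A)$ would make $(A\cup z,B)$ a 2-separation of $M$), so (ii) holds vacuously. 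The paper disposes of this case first, and then works only with $z$ not on any line. Your $\sqcap$-analysis in the second paragraph likewise only establishes $|A|,|B|\ge 2$ after tacitly excluding this case, since $|A|=1$ and $\sqcap(A,z)=1$ force $A$ to be a line through $z$.

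Second, your subcase (b) --- where the Bixby parallel pair in $M'/z$ is $\{s_\ell,t_\ell\}$ --- is not resolved by ``switching to the dual''; finding a 2-separation of $M/z$ whose small side is a single line means (i) fails outright for $M$, and an appeal to $M^*$ does not by itself produce the $M\ba z$ conclusion required by (ii). The correct resolution is that once $z$ is not on any line, subcase (b) simply cannot occur: $\{s_\ell,t_\ell,z\}$ being a circuit of $M'$ would put $z\in\cl_{M'}(\{s_\ell,t_\ell\})=\cl(\ell)$, contradicting the assumption. The same freeness argument also rules out the mixed parallel pair $\{s_\ell,x\}$ with $x\ne t_\ell$, which your case split silently omits: if $\{s_\ell,x,z\}$ were a circuit, freeness of $s_\ell$ on $\ell$ would force $\ell\subseteq\cl(\{x,z\})$ and hence put $z$ on $\ell$. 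Once you make the ``$z$ on a line'' split and carry out the freeness argument, the contract case of Bixby yields only parallel pairs of original points of $M$ (giving (i)), and the delete case yields series pairs that are either two points of $M$ or a pair $\{s_\ell,t_\ell\}$ collapsing to a single line $\ell$ (giving (ii)). The minimization-over-split-lines device from Lemma~\ref{missinglink} is not the right tool here, since Bixby hands you a specific parallel or series pair and you cannot shift one of its members across the separation.
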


\begin{proof} 
If $z$ lies on a line in $M$, then $M\ba z$ is \thc. Thus we may assume that $z$ does not lie on a line in $M$. Take the matroid $M'$ that is naturally derived from $M$. Then, by Bixby's Lemma, either $M'/z$ is $2$-connected having one side of every $2$-separation being a pair of parallel points of $M'$, or $M'\ba z$ is $2$-connected having one side of every $2$-separation being a series pair of points of $M'$. In the first case, if $\{a,b\}$ is a parallel pair of points of $M'/z$, then $\{a,b,z\}$ is a circuit of $M'$. Because the points added to $M$ to form $M'$ are freely placed on lines, we cannot have a circuit containing just one of them. Since $z$ is not on a line of $M$, we deduce that $a$ and $b$ are points of $M$. We conclude that, in the first case, (i) holds. 

Now suppose that $M'\ba z$ is not \thc\ and has $\{u,v\}$ as a series pair. Then either $u$ and $v$ are both matroid points of $M$,  or  $M$ has a line on which the points $u$ and $v$ are freely placed in the formation of $M'$. We deduce that (ii) holds.
\end{proof}

We recall from \cite{oswww} that, when   $\{a,b,c\}$ is a set of three points in a $2$-polymatroid $Q$, we call $\{a,b,c\}$  a {\it triangle} if every subset of $\{a,b,c\}$ of size at least two has rank two. If, instead, $r(E - \{a,b,c\}) = r(Q) - 1$ but $r(X) = r(Q)$ for all proper supersets $X$ of $E - \{a,b,c\}$, then we call $\{a,b,c\}$ a {\it triad} of $Q$. When $Q$ is $3$-connected, $\{a,b,c\}$ is a triad of $Q$ if and only if $\{a,b,c\}$ is a triangle of $Q^*$. It is straightforward to check that a triangle and a triad of $Q$ cannot have exactly one common element.  Just as for matroids, we call a sequence $x_1,x_2,\dots,x_k$ of distinct points of a $2$-polymatroid $Q$ a {\it fan} of {\it length} $k$ if $k \ge 3$ and the sets $\{x_1,x_2,x_3\}, \{x_2,x_3,x_4\},\dots,\{x_{k-2},x_{k-1},x_k\}$  are alternately triangles and triads beginning with either a triangle or a triad.



The following lemma will be helpful in proving our main result when fans arise in the argument.

\begin{lemma}
\label{fantan}
Let $M$ and $N$ be $3$-connected $2$-polymatroids where $|E(N)| \ge 4$ and $M$ is not a whirl or the cycle matroid of a wheel. Suppose $M$ has a fan $ x_1,x_2,x_3,x_4$ where $\{x_1,x_2,x_3\}$ is a triangle and $M/x_2$ has a c-minor isomorphic to $N$. Then  $M$ has a point $z$ such that either $M\ba z$ or $M/z$ is $3$-connected having a c-minor isomorphic to $N$, or both $M\ba z$ and $M/z$ have c-minors isomorphic to $N$.
\end{lemma}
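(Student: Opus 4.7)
The plan is to show that $z = x_1$ satisfies the lemma, with both $M\ba x_1$ and $M/x_1$ having c-minors isomorphic to $N$ (the second clause of the conclusion). The key structural ingredient is that, since $\{x_1,x_2,x_3\}$ is a triangle of points, $x_1$ and $x_3$ are parallel points in $M/x_2$; the bijection swapping $x_1\leftrightarrow x_3$ realizes isomorphisms $M/x_2\ba x_1\cong M/x_2\ba x_3$ and $M/x_2/x_1\cong M/x_2/x_3$; and the triangle gives the identity $M/\{x_1,x_2,x_3\} = M\ba x_1/\{x_2,x_3\} = M/x_1/\{x_2,x_3\}$.

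By Corollary~\ref{complast3}, I will write $N = (M/x_2/A\ba B)^\flat$ for some disjoint $A, B \subseteq E(M) - x_2$. Since $N$ is $3$-connected with $|E(N)|\ge 4$, it contains no parallel points and no loops, so at least one of $x_1, x_3$ lies in $A\cup B$; after relabeling via the swap isomorphism if necessary, I may assume $x_1\in A\cup B$.

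To show $N$ is a c-minor of $M\ba x_1$: if $x_1\in B$, then the commutativity $M/x_2\ba x_1 = M\ba x_1/x_2$ (Lemma~\ref{elemprop}) does the job. If $x_1\in A$, then $x_3$ is a loop in $M/\{x_1,x_2\}$ and must be removed since $N$ has no loops, and the triangle identity $M/\{x_1,x_2,x_3\} = M\ba x_1/\{x_2,x_3\}$ realizes $N$ as a c-minor of $M\ba x_1$. A parallel argument gives $N$ as a c-minor of $M/x_1$: the case $x_1\in A$ is immediate, and the case $x_1\in B$ with $x_3\in A\cup B$ is handled by the identity $M/\{x_1,x_2,x_3\} = M/x_1/\{x_2,x_3\}$.

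The main obstacle is the sub-case where $x_1$ is deleted and $x_3$ is retained as a point of $N$, since here neither the direct triangle reroute nor the swap immediately realizes $N$ as a c-minor of $M/x_1$. I plan to handle it by using the fan hypothesis in full: the triad $\{x_2,x_3,x_4\}$ combined with the assumption that $M$ is neither a whirl nor the cycle matroid of a wheel will force the rank identity $r(Y\cup x_1) = r(Y\cup x_2)$ for each $Y\subseteq E-\{x_1,x_2,x_3\}$ arising in the c-minor sequence; together with the triangle relation on subsets containing $x_3$, this will show $M/x_1\ba x_2 = M\ba x_1/x_2$ as $2$-polymatroids, so the c-minor witnessing $N$ in $M\ba x_1/x_2$ also witnesses $N$ in $M/x_1\ba x_2$, hence in $M/x_1$. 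In the complementary situation (when the rank identity happens to fail), $M\ba x_1$ itself will be $3$-connected, and the first clause of the conclusion will be satisfied directly by $z=x_1$.
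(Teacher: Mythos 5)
Your proposal takes a fundamentally different route from the paper. The paper argues by contradiction: it extends $x_1,x_2,x_3,x_4$ to a \emph{maximal} fan $x_1,\dots,x_n$, shows by induction that $M/x_i$ (resp.\ $M\ba x_i$) has a c-minor isomorphic to $N$ for every even (resp.\ odd) $i$, establishes in Sublemma~\ref{whereare} that no triangle contains two even-indexed fan elements and no triad contains two odd-indexed ones, and then uses the maximality of the fan together with Lemma~\ref{rr*} to force $E(M)$ to equal the fan, so that $M$ is a whirl or the cycle matroid of a wheel, a contradiction. You instead attempt to prove directly that $z=x_1$ satisfies the conclusion. This is a different strategy, not a variant, and the lemma does not in fact claim that any of $x_1,\dots,x_4$ is a workable choice of $z$: the maximal-fan extension is exactly how the paper locates $z$.

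The concrete gap is in your final paragraph, the only place that addresses the genuinely hard case you correctly isolate ($x_1$ deleted, $x_3$ retained). Your plan there rests on the rank identity $r(Y\cup x_1)=r(Y\cup x_2)$, which amounts to the equality $M/x_1\ba x_2 = M\ba x_1/x_2$. That identity is far stronger than anything a triangle-plus-triad configuration guarantees and is false in general: in $M(K_4)$ with the fan $x_1=23$, $x_2=12$, $x_3=13$, $x_4=14$, taking $Y=\{24,34\}$ gives $r(Y\cup x_1)=2$ (a triangle) but $r(Y\cup x_2)=3$. That $M(K_4)$ is a wheel is no rescue, since nothing stops $x_1$ from lying in triangles outside the fan in a non-wheel $M$, and the hypothesis that $M$ is not a wheel or whirl cannot be expected to deliver an identity of this sort at a single triangle. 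Nor is any argument given for the ``complementary'' claim that $M\ba x_1$ is $3$-connected whenever the identity fails; no feature of the fan hypothesis forces that conclusion, and the dichotomy is simply asserted. As written, the problematic sub-case is open, and I do not see how to close it without something like the maximal-fan argument the paper uses.
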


\begin{proof} 
Assume that the lemma fails. Extend $x_1,x_2,x_3,x_4$ to a maximal fan $x_1,x_2,\dots,x_n$. 
Since $M/x_2$ has a c-minor isomorphic to $N$ and has $x_1$ and $x_3$ as a parallel pair of points, it follows that each of $M/x_2\ba x_1$ and $M/x_2\ba x_3$ has a c-minor isomorphic to $N$. Thus each of $M\ba x_1$ and $M\ba x_3$ has a c-minor isomorphic to $N$. Hence $M/x_4$ has a c-minor isomorphic to $N$. A straightforward  induction argument establishes that $M/x_i$ has a c-minor isomorphic to $N$ for all even $i$, while $M\ba x_i$ has a c-minor isomorphic to $N$  for all odd $i$. Then $M/x_i$ is not \thc\ when $i$ is even, while $M\ba x_i$ is not \thc\ when $i$ is odd. 

Next we show that
\begin{sublemma}
\label{whereare}
$M$ has no
 triangle that contains more than one element $x_i$ with $i$ even; and $M$ has no
 triad that contains more than one element $x_i$ with $i$ odd.
\end{sublemma}

Suppose $M$ has a triangle  that contains $x_i$ and $x_j$ where $i$ and $j$ are distinct even integers. Since $M/x_i$ has $x_j$ in a parallel pair of points, $M/x_i\ba x_j$, and hence $M\ba x_j$, has a c-minor isomorphic to $N$. As $M/ x_j$ also  has a c-minor isomorphic to $N$, we have a \cn. Thus the first part of \ref{whereare} holds. A similar argument proves the second part.

Suppose $n$ is odd. Then, since neither $M\ba x_n$ nor $M\ba x_{n-2}$ is \thc, by \cite[Lemma 4.2]{oswww}, $M$ has a triad $T^*$ containing $x_n$ and exactly one of $x_{n-1}$ and $x_{n-2}$. By \ref{whereare}, $T^*$ contains $x_{n-1}$. Then, by the maximality of the fan, the third element of $T^*$ lies in $\{x_1,x_2,\dots,x_{n-2}\}$. But, as  each of the  points in the last set is in a triangle that is contained in that set, we obtain the \cn\ that 
$M$ has a triangle having a single element in common with the triad $T^*$.

We may now assume that $n$ is even.  As neither $M/x_n$ nor $M/x_{n-2}$ is \thc, by \cite[Lemma 4.2]{oswww}, $M$ has a triangle $T$ that contains $x_n$ and exactly one of $x_{n-1}$ and $x_{n-2}$. By \ref{whereare}, $x_{n-1} \in T$.  The maximality of the fan again implies that the  third element of $T$ is in $\{x_1,x_2,x_3,\dots,x_{n-2}\}$. As every element of the last set, except $x_1$, is in a triad that is contained in the set, to avoid having $T$ meet  such a triad in a single element, we must have that   $T = \{x_n,x_{n-1},x_1\}$.  
If $n= 4$, then $M|\{x_1,x_2,x_3,x_4\} \cong U_{2,4}$ so $\{x_2,x_3,x_4\}$ is a triangle; a \cn\ to \ref{whereare}.  We deduce that $n> 4$. Now neither $M\ba x_1$ nor $M\ba x_{n-1}$ is \thc. Thus, by \cite[Lemma 4.2]{oswww}, $M$ has a triad $T^*_2$ containing $x_1$ and exactly one of  $x_n$ and $x_{n-1}$. By \ref{whereare}, $x_n \in T^*_2$. The triangles $\{x_1,x_2,x_3\}$ and $\{x_3,x_4,x_5\}$ imply that $x_2 \in T^*_2$. Let $X = \{x_1,x_2,\dots, x_n \}$. Then, using the triangles   we know, including  $\{x_n,x_{n-1},x_1\}$, we deduce that $r(X) \le \tfrac{n}{2}$. Similarly,  the triads in $M$, which are triangles in $M^*$, imply that $r^*(X) \le \tfrac{n}{2}$. Thus, by Lemma~\ref{rr*}, $\lambda(X) = 0$. Hence $X = E(M)$. As every element of $M$ is a point, $M$ is a matroid. Since every point of $M$ is in both a triangle and a triad, by Tutte's Wheels-and-Whirls-Theorem \cite{wtt}, we obtain the \cn\ that $M$ is a whirl or the cycle matroid of a wheel.
\end{proof}

\begin{lemma}
\label{hath}
Let $M$ be a $3$-connected $2$-polymatroid having $a$ and $\ell$ as distinct elements. Then $(E(M) - \{a,\ell\},\{\ell\})$ is not a $2$-separation of $M/a$.
\end{lemma}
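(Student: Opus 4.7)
The plan is to derive a direct contradiction by reducing to Lemma~\ref{skewer}. Suppose, for a contradiction, that $(E(M)-\{a,\ell\},\{\ell\})$ is a $2$-separation of $M/a$. By the definition of a $2$-separation of a $2$-polymatroid, the side $\{\ell\}$ must satisfy $\max\{|\{\ell\}|,r_{M/a}(\{\ell\})\}>1$. Since $|\{\ell\}|=1$, this forces $r_{M/a}(\{\ell\})\ge 2$, and since $M$ is a $2$-polymatroid we then have $r_{M/a}(\{\ell\})=2$.

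Next I would translate this rank condition into a statement about $a$ and $\ell$ in $M$. From $r_{M/a}(\{\ell\})=r(\{a,\ell\})-r(\{a\})=2$ together with $r(\{\ell\})\le 2$ and submodularity, we get $r(\{a,\ell\})=r(\{a\})+r(\{\ell\})=r(\{a\})+2$, so $r(\{\ell\})=2$ (i.e.\ $\ell$ is a line of $M$) and $\sqcap(\{a\},\{\ell\})=0$. Thus $a$ is skew to the set $A=\{\ell\}$ in $M$.

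At this point Lemma~\ref{skewer} applies: taking $z=a$, $A=\{\ell\}$, and $B=E(M)-\{a,\ell\}$, the hypotheses that $(A,B)$ is a $2$-separation of $M/a$ and that $a$ is skew to $A$ are both met, while $A$ is a single line of $M/a$. Since $M$ is assumed $3$-connected, the \emph{Moreover} clause of Lemma~\ref{skewer} yields the desired contradiction, completing the proof.

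I do not expect any essential obstacle here, since the whole argument is just a careful unpacking of the $2$-separation axiom followed by an invocation of Lemma~\ref{skewer}; the only point that needs a line of checking is verifying the skewness of $a$ and $\ell$ from the rank bounds inherent in $2$-polymatroids.
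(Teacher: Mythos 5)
Your proof is correct and follows essentially the same route as the paper's: you derive $\sqcap(\{a\},\{\ell\})=0$ from the fact that $\ell$ must be a line of $M/a$, then push the $2$-separation of $M/a$ back up to a $2$-separation of $M$, contradicting $3$-connectivity. The paper carries out the last step inline rather than invoking Lemma~\ref{skewer}, but it is exactly the computation packaged there, so the two arguments are the same in substance.
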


\begin{proof}
Assume the contrary. Then $\ell$ is a line in $M/a$, so $\sqcap(a,\ell) = 0$. We have 
$$r_{M/a}(E(M) - \{a,\ell\}) + r_{M/ a}(\ell) =  r(M/a) + 1.$$
As $\sqcap(a,\ell) = 0$, it follows that $(E(M) - \ell,\{\ell\})$ is  a $2$-separation of $M$; a \cn.
\end{proof}

\section{Parallel connection and $2$-sum}
\label{pc2s}

In this section, we follow Mat\'{u}\v{s}~\cite{fm} and Hall~\cite{hall} in defining the  parallel connection and $2$-sum of polymatroids. 
For a positive integer $k$, let $M_1$ and $M_2$ be $k$-polymatroids $(E_1,r_1)$ and $(E_2,r_2)$. Suppose first that $E_1 \cap E_2 = \emptyset$. 
The {\it direct sum} $M_1 \oplus M_2$ of $M_1$ and $M_2$ is the $k$-polymatroid $(E_1 \cup E_2,r)$ where, for all subsets $A$ of $E_1 \cup E_2$, we have $r(A) = r(A\cap E_1) + r(A \cap E_2)$. The following result is easily checked.

\begin{lemma}
\label{dualsum}
For $k$-polymatroids  $M_1$ and $M_2$ on disjoint sets,
$$(M_1 \oplus M_2)^* = M_1^* \oplus M_2^*.$$
\end{lemma}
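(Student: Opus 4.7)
The plan is a direct verification from the definitions, since both sides are polymatroids on the same ground set $E_1 \cup E_2$, so it suffices to show that their rank functions agree on every subset. Fix an arbitrary $Y \subseteq E_1 \cup E_2$, and write $Y_1 = Y \cap E_1$ and $Y_2 = Y \cap E_2$. I would then expand $r^*_{M_1 \oplus M_2}(Y)$ according to the paper's definition of the dual, namely $r^*(Y) = \|Y\| + r(E \setminus Y) - r(E)$.

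The key observation is that each of the three ingredients $\|Y\|$, $r(E \setminus Y)$, and $r(E)$ splits cleanly along the partition $E_1 \cup E_2$. The weight $\|Y\|$ is defined elementwise as $\sum_{e \in Y} r(\{e\})$, and for an element $e \in E_i$ the rank $r_{M_1 \oplus M_2}(\{e\})$ equals $r_i(\{e\})$; hence $\|Y\| = \|Y_1\|_{M_1} + \|Y_2\|_{M_2}$. The direct-sum rank formula gives $r_{M_1 \oplus M_2}(E \setminus Y) = r_1(E_1 \setminus Y_1) + r_2(E_2 \setminus Y_2)$ and $r(M_1 \oplus M_2) = r_1(M_1) + r_2(M_2)$.

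Substituting and regrouping,
\begin{align*}
r^*_{M_1 \oplus M_2}(Y) &= \bigl[\|Y_1\|_{M_1} + r_1(E_1 \setminus Y_1) - r_1(M_1)\bigr] \\
& \quad + \bigl[\|Y_2\|_{M_2} + r_2(E_2 \setminus Y_2) - r_2(M_2)\bigr] \\
&= r^*_1(Y_1) + r^*_2(Y_2) \\
&= r_{M_1^* \oplus M_2^*}(Y),
\end{align*}
which is exactly the desired identity. There is no real obstacle here; the only thing to be careful of is using the paper's specific definition of $\|\cdot\|$ (weights $w(e) = r(\{e\})$ rather than $\max\{1, r(\{e\})\}$), since this is what guarantees that the weight of an element is computed intrinsically and therefore decomposes across the direct sum.
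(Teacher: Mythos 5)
Your verification is correct and is precisely the direct computation the paper alludes to when it says the result is ``easily checked''; the decomposition of $\|Y\|$, of $r(E\setminus Y)$, and of $r(E)$ across the direct sum is the whole content. Your parenthetical remark about the paper's choice of weight $w(e)=r(\{e\})$ is apt, though in fact the argument would go through equally well with $w(e)=\max\{1,r(\{e\})\}$, since that quantity is also computed intrinsically from $r(\{e\})$ and hence splits across the direct sum in the same way.
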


Clearly a $2$-polymatroid is $2$-connected if and only if it cannot be written as the direct sum of two non-empty $2$-polymatroids. 
Now suppose that  $E_1 \cap E_2 = \{p\}$ and $r_1(\{p\}) = r_2(\{p\})$. Let $P(M_1,M_2)$ be $(E_1 \cup E_2, r)$ where $r$ is defined for all subsets $A$ of $E_1 \cup E_2$ by 
$$r(A) = \min\{r_1(A \cap E_1) + r_2(A\cap E_2), r_1((A \cap E_1)\cup p) + r_2((A \cap E_2)\cup p) - r_1(\{p\})\}.$$
As Hall notes, it is routine to check that $P(M_1,M_2)$ is a $k$-polymatroid. We call it the {\it parallel connection} of $M_1$ and $M_2$ with respect to the {\it basepoint} $p$. When $M_1$ and $M_2$ are both matroids, this definition coincides with the usual definition of the parallel connection of matroids. Hall extends the definition of parallel connection to deal with the case when $r_1(\{p\}) \neq r_2(\{p\})$ but we shall not do that here. 

Now suppose that $M_1$ and $M_2$ are $2$-polymatroids having at least two elements, that $E(M_1) \cap E(M_2) = \{p\}$, 
that neither $\lambda_{M_1}(\{p\})$ nor $\lambda_{M_2}(\{p\})$ is $0$, and that 
$r_1(\{p\}) = r_2(\{p\}) = 1$. We define the {\it $2$-sum}, $M_1 \oplus_2 M_2$, of $M_1$ and $M_2$ to be $P(M_1,M_2)\ba p$. We remark that this extends Hall's definition  since, to ensure that  $M_1 \oplus_2 M_2$ has more elements than each of $M_1$ and $M_2$, he requires that they each  have at least three elements. He imposes the same requirement in his   Proposition 3.6. The next result is that result with this restriction omitted. Hall's proof \cite{hall} remains valid.

\begin{proposition} 
\label{dennis3.6}
Let $M$ be a $2$-polymatroid $(E,r)$ having a partition $(X_1,X_2)$ such that $r(X_1) + r(X_2) = r(E) + 1$. Then there are $2$-polymatroids $M_1$ and $M_2$ with ground sets $X_1 \cup p$ and $X_2 \cup p$, where $p$ is a new element not in $E$, such that 
$M = P(M_1,M_2)\ba p$. In particular, for all $A \subseteq X_1 \cup p$,
\begin{equation*}
r_1(A) = 
\begin{cases} 
r(A), & \text{if   $p \not\in A$;}\\
r((A-p) \cup X_2) - r(X_2) + 1, & \text{if $p \in A$.}
\end{cases}
\end{equation*}
\end{proposition}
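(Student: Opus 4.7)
The plan is to define $M_1=(X_1\cup p,r_1)$ by the displayed formula and $M_2=(X_2\cup p,r_2)$ by the symmetric formula $r_2(A)=r(A)$ when $p\notin A$ and $r_2(A)=r((A-p)\cup X_1)-r(X_1)+1$ when $p\in A$, and then verify two things: (a) each $M_i$ is a $2$-polymatroid with $r_i(\{p\})=1$, and (b) $M=P(M_1,M_2)\ba p$. For (a), observe $r_1(\emptyset)=0$ and $r_1(\{p\})=r(X_2)-r(X_2)+1=1$; the bound $r_1(\{e\})\le 2$ for $e\in X_1$ is inherited from $M$. Monotonicity across the boundary case (adding $p$) amounts to the inequality $r(A)\le r(A\cup X_2)-r(X_2)+1$ for $A\subseteq X_1$, which follows from $r(A\cup X_2)-r(X_2)=r(A)-\sqcap_M(A,X_2)$ together with the bound $\sqcap_M(A,X_2)\le \sqcap_M(X_1,X_2)=1$ from Lemma \ref{8.2.3}. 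Submodularity of $r_1$ reduces, via case analysis on whether $p$ lies in each of $A,A'$, to submodularity of $r_M$; the only non-trivial ingredient is the identity $((A-p)\cup X_2)\cap A'=A'\cap(A-p)$ for $A'\subseteq X_1$, which holds because $X_1\cap X_2=\emptyset$.

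For (b), take any $A\subseteq E$ (so $p\notin A$), write $B=A\cap X_1$ and $C=A\cap X_2$, and unpack the parallel-connection rank formula, using $r_i(\{p\})=1$ and the hypothesis $r(X_1)+r(X_2)=r(E)+1$ to simplify the second branch. The task becomes showing
\[
r(A)=\min\bigl\{\,r(B)+r(C),\ r(B\cup X_2)+r(C\cup X_1)-r(E)\,\bigr\}.
\]
Both expressions on the right are at least $r(A)$ by submodularity of $r$: the first because $B\cup C=A$ and $B\cap C=\emptyset$, and the second because $(B\cup X_2)\cup(C\cup X_1)=E$ while $(B\cup X_2)\cap(C\cup X_1)=A$ (again using $X_1\cap X_2=\emptyset$).

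To get equality I would split on the value of $\sqcap_M(B,C)$. By Lemma \ref{8.2.3}, $\sqcap_M(B,C)\le \sqcap_M(X_1,X_2)=1$. If $\sqcap_M(B,C)=0$, the first branch already equals $r(A)$. Otherwise $\sqcap_M(B,C)=1$, and the chain $1=\sqcap_M(B,C)\le \sqcap_M(B,X_2)\le \sqcap_M(X_1,X_2)=1$ (together with its symmetric counterpart) forces $\sqcap_M(B,X_2)=\sqcap_M(C,X_1)=1$; expanding $r(B\cup X_2)=r(B)+r(X_2)-1$ and $r(C\cup X_1)=r(C)+r(X_1)-1$ and using $r(X_1)+r(X_2)-r(E)=1$ collapses the second branch to $r(B)+r(C)-1=r(A)$.

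The only genuinely subtle step is this last case: when $\sqcap_M(B,C)=1$ the first branch strictly overshoots $r(A)$, and the argument depends on the local-connectivity value propagating from $(B,C)$ to $(B,X_2)$ and $(C,X_1)$. Lemma \ref{8.2.3} is precisely what provides this propagation, and with it the remainder of the proof is routine bookkeeping.
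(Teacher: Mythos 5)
Your proof is correct. Note that the paper does not actually reproduce an argument for this proposition: it simply cites Hall and asserts his proof remains valid once the three-element restriction is dropped, so there is no in-paper proof to compare against; you have supplied the verification that the paper leaves to the reference. Your approach is the natural one and all the steps check out: $r_1(\{p\})=r_2(\{p\})=1$, the parallel-connection minimum simplifies to $\min\{r(B)+r(C),\ r(B\cup X_2)+r(C\cup X_1)-r(E)\}$ under the hypothesis $r(X_1)+r(X_2)=r(E)+1$, both branches are $\ge r(A)$ by submodularity, and the case split on $\sqcap_M(B,C)\in\{0,1\}$ closes the gap, with Lemma~\ref{8.2.3} correctly forcing $\sqcap_M(B,X_2)=\sqcap_M(C,X_1)=1$ in the second case. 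The only thing I would tidy is to state explicitly that $M_2$ is a $2$-polymatroid by the symmetric computation, so the parallel connection is actually defined.
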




\begin{lemma} 
\label{dennisplus}
Let $(X,Y)$ be a partition of the ground set of a  $2$-polymatroid $M$ such that $\lambda(X) = 1$. Then, for some element $p$ not in $E(M)$, there are $2$-polymatroids $M_X$ and $M_Y$ on $X \cup p$ and $Y \cup p$, respectively, such that $M = M_X \oplus_2 M_Y$. Moreover, for $y \in Y$, 
\begin{itemize}
\item[(i)] $\lambda_{M_Y \ba y}(\{p\}) = \sqcap(X,Y-y)$;
\item[(ii)] $\lambda_{M_Y/y}(\{p\}) + \sqcap(X,\{y\}) = \lambda(X) = 1$; 
\item[(iii)] if $\sqcap(X,Y-y) = 1$, then $M\ba y = M_X \oplus_2 (M_Y\ba y)$;
\item[(iv)] if $\sqcap(X,\{y\}) = 0$, then $M/ y = M_X \oplus_2 (M_Y/ y)$; and 
\item[(v)] if $r(\{y\}) \le 1$, then 
\begin{equation*}
M\da y = 
\begin{cases} 
(M_X/p) \oplus (M_Y\ba y /p), & \text{if  $\sqcap(X,\{y\}) = 1$;}\\
 M_X \oplus_2 (M_Y\da y), & \text{if $\sqcap(X,\{y\}) = 0$.}  
\end{cases}
\end{equation*}
\item[(vi)] if $y$ is a line, then  
\begin{equation*}
M\da y = 
\begin{cases} 
(M_X\ba p) \oplus (M_Y\da y \ba p), & \text{if  $r(Y) = r(Y - y) +2$;}\\
M_X \oplus_2 (M_Y \da y)  & \text{if $r(Y) \le r(Y - y) +1$.}  
\end{cases}
\end{equation*}
In particular,  $M\da y = M_X \oplus_2 (M_Y \da y)$ when $\sqcap_{M\da y}(X,Y-y) = 1.$
\end{itemize}
\end{lemma}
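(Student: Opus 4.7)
The plan is to derive the 2-sum decomposition from Proposition~\ref{dennis3.6} and then establish each of (i)--(vi) by rank computations against the explicit formulas supplied there. The hypothesis $\lambda(X) = 1$ is exactly $r(X) + r(Y) = r(E) + 1$, so that proposition yields $M_X$ on $X \cup p$ and $M_Y$ on $Y \cup p$ with $M = P(M_X, M_Y) \ba p = M_X \oplus_2 M_Y$, together with an explicit rank formula for each factor; this formula is the workhorse throughout. A quick computation shows that $\lambda_{M_Y}(\{p\}) = \lambda_{M_X}(\{p\}) = \lambda(X) = 1$, so the 2-sum is well defined. For (i), one computes $r_{M_Y}(\{p\}) = 1$, $r_{M_Y}(Y-y) = r(Y-y)$, and $r_{M_Y}((Y-y) \cup p) = r((Y-y) \cup X) - r(X) + 1$, and substitutes into $\lambda_{M_Y\ba y}(\{p\})$ to obtain $\sqcap(X, Y-y)$. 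A parallel computation yields (ii): first $r_{M_Y/y}(\{p\}) = r_{M_Y}(\{p,y\}) - r_{M_Y}(\{y\}) = 1 - \sqcap(X,\{y\})$, from which $\lambda_{M_Y/y}(\{p\}) = \lambda(X) - \sqcap(X,\{y\})$ follows.

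Parts (iii) and (iv) combine the above with the standard commutation identities $P(M_X, M_Y) \ba y = P(M_X, M_Y \ba y)$ and $P(M_X, M_Y) / y = P(M_X, M_Y / y)$ for $y \ne p$, both verified directly from the parallel-connection rank formula. Together with $M = P(M_X, M_Y) \ba p$, these give $M \ba y = P(M_X, M_Y \ba y) \ba p$ and $M / y = P(M_X, M_Y / y) \ba p$; parts (i) and (ii) then certify the respective non-degeneracy hypotheses needed for the claimed 2-sums to be well defined.

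For (v), since $r(\{y\}) \le 1$ the formula for compression collapses to $M\da y = M/y$, so the case $\sqcap(X,\{y\}) = 0$ is immediate from (iv). When $\sqcap(X,\{y\}) = 1$, necessarily $r(y) = 1$ and $y \in \cl(X)$; then $y$ and $p$ are parallel points of $M_Y$, contracting $y$ makes $p$ a loop of $M_Y/y$, and deleting this loop splits the parallel connection into the direct sum $(M_X/p) \oplus (M_Y\ba y / p)$, which can be checked by a direct rank comparison exploiting $r(X)+r(Y)=r(E)+1$. For (vi), I unpack $M\da y$ using the compression formula (\ref{getdown}) and case-split on whether $r(Y) = r(Y-y) + 2$, i.e.\ $y$ is skew to $Y-y$ in $M_Y$, or $r(Y) \le r(Y-y) + 1$. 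A rank computation shows that the first case severs the 2-sum, producing the direct sum $(M_X \ba p) \oplus (M_Y\da y \ba p)$, while in the second case the 2-sum structure is preserved and $M\da y = M_X \oplus_2 (M_Y \da y)$. The final ``in particular'' assertion follows by translating $\sqcap_{M\da y}(X, Y-y) = 1$ into an inequality between $r(Y)$ and $r(Y-y)$ that rules out the first sub-case.

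The main obstacle is (vi): tracking the rank function through the three-step compression while maintaining a clean decomposition is where the calculation requires the most care. The hinge of the case-split is the interplay between $\sqcap(X,\{y\})$, $\sqcap(X, Y-y)$, and $r(Y) - r(Y-y)$, and identifying cleanly when the basepoint $p$ must be ``split off'' (direct sum) as opposed to continuing to mediate between the two sides (2-sum) is the key conceptual step underpinning both cases of (vi) and the second case of (v).
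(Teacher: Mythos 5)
Your proposal is correct and follows essentially the same route as the paper: derive the decomposition from Proposition~\ref{dennis3.6}, use the parallel-connection rank formula and Hall's commutation identities to handle deletion and contraction, treat compression of a point as contraction, and case-split on $r(Y)-r(Y-y)$ for compression of a line, with parts (i) and (ii) supplying the non-degeneracy hypotheses that keep the $2$-sum well-defined. The paper simply carries out the factor-identification rank computations in (iv) and (vi) in full detail, whereas you describe the strategy; no genuinely different idea is involved.
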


\begin{proof} The existence of $M_X$ and $M_Y$ such that $M = P(M_X,M_Y)\ba p$ is an immediate consequence of Proposition~\ref{dennis3.6}. 
To see that $P(M_X,M_Y)\ba p = M_X \oplus_2 M_Y$, one needs only to  check that 
$r_{M_X}(\{p\}) = 1 = r_{M_Y}(\{p\})$ and $\lambda_{M_X}(\{p\}) = 1 = \lambda_{M_Y}(\{p\})$.

The proof of (i) follows by a  straightforward application of the rank formula in Proposition~\ref{dennis3.6}. We omit the details. To see that (ii) holds,   note that 
\begin{align*}
\lambda_{M_Y/y}(\{p\}) & = r_{M_Y}(\{p,y\}) - r(\{y\}) + r_{M_Y}(Y) - r_{M_Y}(Y\cup p)\\
& = r(y \cup X) - r(X) + 1  - r(\{y\}) + r(Y) - r(X \cup Y) + r(X) - 1\\
& = r(y \cup X)    - r(\{y\}) + r(Y) - r(X \cup Y)  \\
& = r(X) - \sqcap(X,\{y\}) + r(Y) - r(X \cup Y)\\
& = \lambda_M(X) - \sqcap(X,\{y\}).
\end{align*}

By Hall~\cite[Proposition 3.1]{hall}, $M\ba y = P(M_X,M_Y\ba y)\ba p$. If $\sqcap(X,Y-y) = 1$, then, by (i), $\lambda_{M_Y \ba y}(\{p\}) = 1$. 
Hence, by Hall~\cite[Proposition 3.1]{hall}, $M\ba y = M_X \oplus_2(M_Y \ba y)$; that is, (iii) holds. 

To prove (iv), assume that $\sqcap(X,\{y\}) = 0$. We could again follow Hall~\cite[Proposition 3.1]{hall} to get that $M/y = P(M_X, M_Y/y)\ba p$. But since he omits a full proof of this fact, we include it for completeness.

By Proposition~\ref{dennis3.6}, $M/y = P(M_1,M_2)\ba p$ for some $M_1$ and $M_2$. 
For $A \subseteq X \cup p$, 
\begin{align*}
r_{M_1}(A) &= 
\begin{cases} 
r_{M/y}(A), & \text{if   $p \not\in A$;}\\
r_{M/y}((A-p) \cup (Y-y)) - r_{M/y}(Y-y) + 1, & \text{if $p \in A$;}
\end{cases}\\
&= 
\begin{cases} 
r(A\cup y) - r(\{y\}), & \text{if   $p \not\in A$;}\\
r((A-p) \cup Y)   - r(Y)   + 1, & \text{if $p \in A$;}
\end{cases}\\
& = r_{M_X}(A).
\end{align*}
Thus $M_1 = M_X$.

Now, for $A \subseteq (Y-y) \cup p$, 
\begin{align*}
r_{M_2}(A) &= 
\begin{cases} 
r_{M/y}(A), & \text{if   $p \not\in A$;}\\
r_{M/y}((A-p) \cup X) - r_{M/y}(X) + 1, & \text{if $p \in A$;}
\end{cases}\\
&= 
\begin{cases} 
r(A\cup y) - r(\{y\}), & \text{if   $p \not\in A$;}\\
r((A-p) \cup X \cup y) - r(\{y\}) - r(X \cup y) + r(\{y\}) + 1, & \text{if $p \in A$;}
\end{cases}\\
& = 
\begin{cases} 
r(A\cup y) - r(\{y\}), & \text{if   $p \not\in A$;}\\
r((A-p) \cup X \cup y) - r(\{y\}) - r(X)  + 1, & \text{if $p \in A$.}
\end{cases}
\end{align*}
But 
\begin{align*}
r_{M_Y/y}(A) &= r_{M_Y}(A \cup y) - r_{M_Y}(\{y\})\\
&= 
\begin{cases} 
r(A\cup y) - r(\{y\}), & \text{if   $p \not\in A$;}\\
r((A-p) \cup X \cup y) - r(\{y\}) - r(X)  + 1, & \text{if $p \in A$;}
\end{cases}\\
&= r_{M_2}(A).
\end{align*}
Thus $M_2 = M_Y/y$, so $M/y = P(M_X,M_Y/y)\ba p$. As $\sqcap(X,\{y\}) = 0$, we see, by (ii), that $\lambda_{M_Y/y}(\{p\}) = 1$. Hence $M/y = M_X \oplus_2 (M_Y/y)$; that is, (iv) holds.

For (v), since $r(\{y\}) \le 1$, we have  $M\da y = M/y$. If $\sqcap(X,\{y\}) = 1$, then $y$ is parallel to $p$ in $M_Y$, so, by \cite[Proposition 3.1]{hall},  
  $M\da y = (M_X/p) \oplus (M_Y/p)$. If $\sqcap(X,\{y\}) = 0$, then, as $M_Y \da y = M_Y/y$, it follows by (iv) that 
$$M\da y = M/y =  M_X \oplus_2 (M_Y/y) = M_X \oplus_2 (M_Y \da y).$$

To prove (vi), suppose first that $r(Y) = r(Y- y) +2$. We have 
$$r_{M_Y}(\{y,p\}) = r(y \cup X) - r(X) + 1 = 3 - \sqcap(X, \{y\}).$$ 
Assume $\sqcap(X, \{y\}) = 0$. Then $M_Y$ is the 2-sum, with basepoint $q$, say, of two 2-polymatroids, one of which has ground set $\{q,y,p\}$ and  consists of two points and the line $y$ freely placed in the plane. Clearly, $M\da y = (M_X \ba p) \oplus (M_Y \ba y \ba p)$.    Now assume that     $\sqcap(X, \{y\}) = 1$. Then $M_Y$ is the direct sum of  two 2-polymatroids, one of which has rank $2$  and  consists of  the line $y$ with the point $p$ on it. Once again, we see that $M\da y = (M_X \ba p) \oplus (M_Y \ba y \ba p)$.

We may now assume that $r(Y) \le r(Y-y) + 1$. Hence $r_{M\da y}(Y-y) = r(Y) - 1$.  Clearly $r(X \cup y) > r(X)$. Thus 
\begin{equation}
\label{numbth}
\sqcap_{M\da y}(X,Y-y) = 1.
\end{equation} 
By Proposition~\ref{dennis3.6},  $M\da y = P(M_1,M_2)\ba p$ for some $2$-polymatroids $M_1$ and $M_2$ with ground sets $X \cup p$ and $(Y-y) \cup p$, respectively. We shall show that $M_1 = M_X$ and $M_2 = M_Y\da y$.  

First observe that, for $A \subseteq X$, we have 

\begin{equation*}
r_{M_1}(A) = 
\begin{cases} 
r_{M\da y}(A), & \text{if   $p \not\in A$;}\\
r_{M\da y}((A-p) \cup (Y- y)) - r_{M\da y}(Y-y) + 1, & \text{if $p \in A$.}
\end{cases}
\end{equation*}
Since $r(X \cup y) > r(X)$, we see that if $p \not\in A$, then $r_{M_1}(A) = r_{M\da y}(A) = r_M(A) = r_{M_X}(A)$.

Now suppose $p \in A$. Assume $r((A-p) \cup (Y-y)) = r((A-p) \cup Y)$. Then 
$$r_{M\da y}((A-p) \cup (Y-y)) = r((A-p) \cup (Y-y)) - 1 = r((A-p) \cup Y) - 1.$$
Moreover, $r_{M\da y}(Y-y) = r(Y) -1.$ Hence 
$$r_{M_1}(A) = r_M((A-p) \cup Y) - r_M(Y) + 1 = r_{M_X}(A).$$

To show that $M_1 = M_X$, it remains to consider when $p \in A$ and $r((A-p) \cup (Y-y)) < r((A-p) \cup Y)$. Then, as $r(Y-y) \ge r(Y) - 1$, we deduce that 
$r((A-p) \cup (Y-y)) = r((A-p) \cup Y) - 1$, so $r(Y -y) = r(Y) - 1$. Thus we have 
\begin{eqnarray*}
r_{M_1}(A) & = & r_{M\da y}((A-p) \cup (Y-y)) - r_{M\da y}(Y-y) + 1\\
		& = & r((A-p) \cup (Y-y)) - r(Y-y) + 1\\
		& = & r((A-p) \cup Y) - 1 - r(Y) + 1 + 1\\
		& = & r_{M_X}(A).
\end{eqnarray*} 
We conclude that $M_1 = M_X$.

To show that $M_2 = M_Y\da y$, suppose that $A \subseteq (Y-y) \cup p$. Now 
\begin{equation*}
r_{M_2}(A) = 
\begin{cases} 
r_{M\da y}(A), & \text{if   $p \not\in A$;}\\
r_{M\da y}((A-p) \cup X) - r_{M\da y}(X) + 1, & \text{if $p \in A$.}
\end{cases}
\end{equation*}
Suppose $p \not\in A$. Then 
\begin{align*}
r_{M_2}(A) &= 
\begin{cases} 
r(A), & \text{if  $r(A \cup y) > r(A)$;}\\
r(A) - 1, & \text{otherwise;}
\end{cases}\\
&=r_{M_Y\da y}(A).
\end{align*}

Now assume that $p \in A$. Then $r_{M\da y}(X) = r(X)$. Thus 
\begin{equation*}
r_{M_2}(A) = 
\begin{cases} 
r((A- p) \cup X) - r(X) + 1, & \text{if  $r((A - p) \cup X \cup y) > r((A-p) \cup X)$;}\\
r((A- p) \cup X) - 1 - r(X) + 1, & \text{otherwise.}
\end{cases}
\end{equation*}
Moreover, 
\begin{equation*}
r_{M_Y\da y}(A) = 
\begin{cases} 
r_{M_Y}(A), & \text{if  $r_{M_Y}(A\cup y) > r_{M_Y}(A)$;}\\
r_{M_Y}(A) - 1, & \text{otherwise.}
\end{cases}
\end{equation*}

Now $r_{M_Y}(A) = r((A-p) \cup X) - r(X) + 1$. Thus 
\begin{align*}
r_{M_Y}(A\cup y) - r_{M_Y}(A) &= r((A-p) \cup y \cup X) - r(X) + 1 - r((A-p) \cup X) \\
& \hspace*{2.5in}+r(X) -1\\
					& = r((A-p) \cup y \cup X)   - r((A-p) \cup X).
\end{align*}					
We conclude that, when $p \in A$, we have $r_{M_Y\da y}(A) = r_{M_2}(A)$. Thus $M_Y\da y = M_2.$ Hence
$M\da y = P(M_X, M_Y\da y) \ba p$. Using (\ref{numbth}), it is straightforward to show that $\lambda_{M_Y\da y}(\{p\}) = 1$. It follows that 
$M\da y = M_X \oplus_2 (M_Y \da y)$.
\end{proof}

The following was shown by Hall~\cite[Corollary 3.5]{hall}. 

\begin{proposition}
\label{connconn} 
Let $M_1$ and $M_2$ be $2$-polymatroids $(E_1,r_1)$ and $(E_2,r_2)$ where $E_1 \cap E_2 = \{p\}$ and $r_1(\{p\}) = r_2(\{p\}) = 1$ and each of $M_1$ and $M_2$ has at least two elements. Then the following are equivalent.
\begin{itemize}
\item[(i)] $M_1$ and $M_2$ are both $2$-connected; 
\item[(ii)] $M_1 \oplus_2 M_2$ is $2$-connected; 
\item[(iii)] $P(M_1,M_2)$ is $2$-connected. 
\end{itemize}
\end{proposition}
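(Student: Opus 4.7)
My plan is to prove the three-way equivalence by establishing (i) $\Leftrightarrow$ (iii) and (ii) $\Leftrightarrow$ (iii) separately, relying throughout on the rank formula of Proposition~\ref{dennis3.6} and on submodularity of each $r_i$ and of $r_P$.

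For (i) $\Leftrightarrow$ (iii), given a prospective $1$-separation $(X,Y)$ of $P = P(M_1,M_2)$ with $p \in X$, I would split on which branch of the min-formula gives $r_P(Y)$. Setting $X_i = X \cap E_i$ and $Y_i = Y \cap E_i$, when $r_P(Y) = r_1(Y_1) + r_2(Y_2)$ the quantity $\lambda_P(X)$ simplifies to $\lambda_{M_1}(X_1) + \lambda_{M_2}(X_2)$, which is $\geq 1$ since some $Y_i$ is non-empty and the corresponding $M_i$ is $2$-connected. When $r_P(Y) = r_1(Y_1 \cup p) + r_2(Y_2 \cup p) - 1$, submodularity of each $r_i$ applied to $X_i$ and $Y_i \cup p$ (which meet in $\{p\}$ and union to $E_i$) gives $r_i(X_i) + r_i(Y_i \cup p) \geq r_i(E_i) + 1$, forcing $\lambda_P(X) \geq 1$. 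Conversely, a $1$-separation $(A_1,B_1)$ of $M_1$ with $p \in A_1$ (the case $A_1 = \{p\}$ being excluded by $\lambda_{M_1}(\{p\}) \neq 0$) translates by direct rank computation to a $1$-separation $(A_1 \cup E_2, B_1)$ of $P$.

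For (ii) $\Leftrightarrow$ (iii), the essential preliminary observation is that $\lambda_{M_i}(\{p\}) \neq 0$ together with $r_i(\{p\}) = 1$ forces $r_i(E_i \setminus p) = r_i(E_i)$ for each $i$, and therefore $r(N) = r(P)$ and $\lambda_P(\{p\}) = 1$, where $N = P \ba p$. For any partition $(X,Y)$ of $E_N$, writing $d_X = r_P(X \cup p) - r_P(X)$ and $d_Y = r_P(Y \cup p) - r_P(Y)$, one has $\lambda_P(X) = \lambda_N(X) + d_Y$ and $\lambda_P(X \cup p) = \lambda_N(X) + d_X$, with $d_X, d_Y \in \{0,1\}$ and $d_X + d_Y \geq 1$ by submodularity applied to $X \cup p$ and $Y \cup p$. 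Assuming (iii), if $N$ has a $1$-separation $(X,Y)$ with $d_X + d_Y = 1$, then one of $X$, $X \cup p$ is $1$-separating in $P$, a contradiction; while if $d_X = d_Y = 1$, the rank formula forces $r_P(X) = r_1(X_1) + r_2(X_2)$ and similarly for $Y$, so $\lambda_N(X) = \sum_i [r_i(X_i) + r_i(Y_i) - r_i(E_i)] + 1 \geq 1$ by submodularity combined with $r_i(E_i \setminus p) = r_i(E_i)$, contradicting $\lambda_N(X) = 0$. Conversely, assuming (ii), any $1$-separation $(A,B)$ of $P$ with $p \in A$ has $|A| \geq 2$ (since $\lambda_P(\{p\}) = 1$), and non-negativity of $\lambda_N$ forces $r_P(A \setminus p) = r_P(A)$, so $(A \setminus p, B)$ is $1$-separating in $N$.

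The main obstacle I anticipate is the ``double increment'' case $d_X = d_Y = 1$ in the direction (iii) $\Rightarrow$ (ii): here $p$ is independent of both sides of the putative $1$-separation of $N$, and the contradiction emerges only after combining submodularity of each $r_i$ with the hypothesis $r_i(E_i \setminus p) = r_i(E_i)$. The remaining steps are routine casework on the two branches of the min-formula.
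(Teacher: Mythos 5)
The paper does not give its own proof of this proposition --- it is cited directly from Hall~\cite[Corollary 3.5]{hall} --- so there is no in-paper argument to compare against. Assessing your proof on its own terms: it is correct and complete, given the standing hypothesis (implicit in the definition of $M_1 \oplus_2 M_2$) that $\lambda_{M_i}(\{p\}) \neq 0$ for each $i$. The casework on the two branches of the min-formula for (i)~$\Leftrightarrow$~(iii), and the reduction of (ii)~$\Leftrightarrow$~(iii) to the rank increments $d_X, d_Y$, are both sound; I checked each branch and the algebra goes through.

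One sentence is stated more strongly than it is true, though the way you use it is fine. You assert that, for \emph{any} partition $(X,Y)$ of $E_N$, submodularity applied to $X \cup p$ and $Y \cup p$ gives $d_X + d_Y \geq 1$. What submodularity actually yields is $r_P(X \cup p) + r_P(Y \cup p) \geq r(P) + r_P(\{p\})$, i.e.\ $d_X + d_Y \geq 1 - \lambda_N(X)$, which can be $0$: taking $M_1 = M_2 \cong U_{1,2}$ with $E_1 = \{p,a\}$, $E_2 = \{p,b\}$, and $(X,Y) = (\{a\},\{b\})$ gives $d_X = d_Y = 0$ with $\lambda_N(X) = 1$. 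The inequality $d_X + d_Y \geq 1$ holds exactly when $\lambda_N(X) = 0$, which is precisely the setting in which you invoke it (a putative $1$-separation of $N$), so the argument survives; but the standalone claim as phrased is false and should be restricted to that case.
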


One situation that will often occur will be when we have a certain $3$-connected $2$-polymatroid $N$ arising as a c-minor of a $2$-polymatroid $M$ that has a $2$-separation. Recall that a   special $N$-minor  of $M$ is a c-minor of $M$ that either equals $N$ or differs from $N$ by having a single point relabelled.

\begin{lemma}
\label{p49}
Let $M$ be a $2$-polymatroid that can be written as the $2$-sum $M_X \oplus_2 M_Y$ of $2$-polymatroids $M_X$ and $M_Y$ with ground sets $X \cup p$ and $Y \cup p$, respectively. Let $N$ be a $3$-connected $2$-polymatroid with $|E(N)| \ge 4$ and $E(N) \subseteq E(M)$. If $M_X$ has a special $N$-minor, then $M$ has a special $N$-minor. 
\end{lemma}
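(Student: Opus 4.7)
Let $N'$ be the hypothesized special $N$-minor of $M_X$, so $N'$ is a c-minor of $M_X$ with $E(N')$ differing from $E(N)$ in at most one element. Since $E(N) \subseteq E(M) = X \cup Y$ while $E(N') \subseteq E(M_X) = X \cup \{p\}$ and $p \notin E(M)$, the plan is to split the proof on whether $p \in E(N')$.

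In the easy case $p \notin E(N')$, the sequence of c-operations taking $M_X$ to $N'$ must delete or contract $p$ at some step. A direct computation from the parallel-connection rank formula in Proposition~\ref{dennis3.6}, using the identity $r_{M_Y}(Y) = r_{M_Y}(Y \cup p)$ supplied by $\lambda_{M_Y}(\{p\}) = 1$, yields $M\ba Y = M_X\ba p$ and $M/Y = M_X/p$. Since $M\ba Y$ and $M/Y$ are themselves c-minors of $M$, the tail of the $M_X$-sequence (after $p$ is removed) can be executed on $M\ba Y$ or $M/Y$ to produce $N'$, and $N'$ is then a special $N$-minor of $M$.

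In the harder case $p \in E(N')$, the c-minor sequence from $M_X$ to $N'$ operates only on elements of $X \setminus E(N')$ and keeps $p$ throughout. I would show, by the symmetric versions of Lemma~\ref{dennisplus}(iii)--(iv) applied to the $X$-side of the 2-sum, that the same sequence carried out on $M$ preserves the 2-sum structure at every step, producing $N' \oplus_2 M_Y$ as a c-minor of $M$. I then reduce the $Y$-side by induction on $|Y|$ within this 2-sum: when $|Y| \ge 2$, Lemma~\ref{8.2.3} gives $\sqcap(X, Y-y) \le 1$ for every $y \in Y$, and since $\sqcap(X,\{y'\}) \le \sqcap(X,Y-y)$ whenever $y' \in Y-y$, for any $y$ we either have $\sqcap(X,\{y\}) = 0$ (so Lemma~\ref{dennisplus}(iv) contracts $y$ while preserving the 2-sum) or some $y$ with $\sqcap(X,Y-y) = 1$ (so Lemma~\ref{dennisplus}(iii) deletes $y$). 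The base case $|Y| = 1$ has $M_Y$ on $\{p,y\}$ with $y$ either a point parallel to $p$ (in which case $N' \oplus_2 M_Y$ is $N'$ with $p$ relabelled as $y$, composing with the $N \to N'$ relabelling through the common pivot $p$ to give at most one relabelling of $N$) or a line containing $p$, which is handled by compactification of $y$ via Lemma~\ref{compel} to convert $y$ to a point parallel to $p$ before applying the previous subcase.

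The main obstacles I anticipate are: verifying the symmetric analogue of Lemma~\ref{dennisplus}(iii)--(iv) for the $X$-side (straightforward once the roles of $X$ and $Y$ are swapped, but requires checking); handling the base case when $|Y|=1$ and $y$ is a line on $p$, where direct deletion or contraction of $y$ would collapse the basepoint $p$ into $E(N')$, so the compactification step is essential; and bookkeeping the relabellings to ensure that the $N \to N'$ relabelling (at most one, of the form $e \leftrightarrow p$) and the 2-sum relabelling $p \leftrightarrow y$ compose to at most one total relabelling $e \leftrightarrow y$ of $N$, which they do precisely because both involve $p$ as a common pivot.
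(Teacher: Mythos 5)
Your overall strategy---dispose of the case $p\notin E(N')$ via $M\ba Y=M_X\ba p$ and $M/Y=M_X/p$, then handle $p\in E(N')$ by collapsing the $Y$-side using Lemma~\ref{dennisplus}(iii)--(iv) and the $\sqcap$ dichotomy from Lemma~\ref{8.2.3}---is essentially the paper's proof, and your $Y$-side induction and base case (including the compactification of a 2-separating line $y$ to recover $M_X(y)$) are correct.

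The gap is the intermediate step you insert before that induction: the claim that the c-minor sequence taking $M_X$ to $N'$ can be carried out on $M$ ``preserving the 2-sum structure at every step'' to produce $N'\oplus_2 M_Y$ as a c-minor of $M$. This step is both unjustified and unnecessary. It is unjustified for two reasons. First, the symmetric analogues of Lemma~\ref{dennisplus}(iii)--(iv) have hypotheses (namely $\sqcap(Y,X-x)=1$ when deleting $x$, and $\sqcap(Y,\{x\})=0$ when contracting $x$) that you never verify hold for the elements removed in the $M_X$-sequence. They do in fact hold, but seeing this requires an argument you do not give: since $N'$ is $3$-connected with $|E(N')|\ge 4$ and contains $p$, one has $\lambda_{N'}(\{p\})\ge 1$, and then Lemma~\ref{8.2.4} propagates $\lambda(\{p\})\ge 1$ back through every intermediate stage, which by Lemma~\ref{dennisplus}(i)--(ii) (symmetrized) forces the required values of $\sqcap$. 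Second, the c-minor sequence from $M_X$ to $N'$ involves compactifications, and you do not address how compactifying $M_X'\oplus_2 M_Y$ relates to compactifying $M_X'$ alone; the sets of 2-separating lines in the two objects need not coincide, so the passage to $N'\oplus_2 M_Y$ via a global compactification is not automatic. And the step is unnecessary because the $Y$-side induction applies directly to $M=M_X\oplus_2 M_Y$: Lemma~\ref{dennisplus} never touches the internal structure of $M_X$, so repeatedly contracting or deleting elements of $Y$ reduces $M$ to (a compactification of) $M_X(y)$, which inherits the special $N$-minor from $M_X$ by relabelling $p$ as $y$. Dropping the $X$-side reduction entirely, and running the $Y$-side induction on $M$ as the paper does, closes the gap.
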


\begin{proof} Since $M_X\ba p = M\ba Y$ and $M_X/p = M/Y$, we may assume that the special $N$-minor of $M_X$ uses $p$. Hence every other element of the special $N$-minor of $M_X$ is in $E(N)$. For $y$ in $Y$, we will denote by $M_X(y)$ the $2$-polymatroid that is obtained from $M_X$ by relabelling $p$ by $y$. We argue by induction on $|Y|$. 

Suppose $|Y| = 1$ and let $y$ be the element of $Y$. If $y$ is a point,  then the result is immediate since $M = M_X(y)$. If $y$ is a line, then compactifying this line gives $M_X(y)$ and again the result holds.

Now suppose that $|Y| > 1$ and choose $y$ in $Y$. Suppose $\sqcap(\{y\},X) = 1$, which is certainly true if $|Y| = 1$. Then $M|(X \cup y) = M_X(y)$ if $y$ is a point. If $y$ is a line, then compactifying $y$ in $M|(X \cup y)$ gives $M_X(y)$. In each case, the result holds. We may now assume that $|Y| > 1$ and $\sqcap(\{y\},X) = 0$. Then, by 
Lemma~\ref{dennisplus}(iv), $M/y = M_X \oplus_2 (M_Y/y)$ so the result follows by induction. 
\end{proof}

\begin{lemma}
\label{p69}
Let $M$ be a $2$-polymatroid that can be written as the $2$-sum $M_X \oplus_2 M_Y$ of $2$-polymatroids $M_X$ and $M_Y$ with ground sets $X \cup p$ and $Y \cup p$, respectively. Let $N$ be a $3$-connected $2$-polymatroid with $|E(N)| \ge 4$ such that $N$ is a c-minor of $M$.  If $|E(N) \cap X| \ge |E(N)| - 1$, then $M_X$ has a special $N$-minor that uses $E(N) \cap X$. 
\end{lemma}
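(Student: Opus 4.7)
Since $N$ is $3$-connected with $|E(N)|\ge 4$, it is compact, so by Corollary~\ref{complast3} we may write $N=(M/C\ba D)^{\flat}$ for disjoint $C,D\subseteq E(M)$ with $E(N)=E(M)-C-D$. Split $C=C_X\cup C_Y$ and $D=D_X\cup D_Y$ according to $X$ and $Y$ and set $M':=M/C_Y\ba D_Y$, so that $N=(M'/C_X\ba D_X)^{\flat}$ by Lemma~\ref{elemprop}. The strategy is to use the $2$-sum rank formula of Proposition~\ref{dennis3.6} to show that $M'$ coincides with a simple modification of $M_X$: either $M_X/p$, $M_X\ba p$, or the relabelling $M_X(y)$ that renames $p$ as $y$. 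The argument splits on whether $|E(N)\cap Y|$ is $0$ or $1$.

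When $E(N)\cap Y=\emptyset$, we have $C_Y\cup D_Y=Y$, and a direct computation from Proposition~\ref{dennis3.6} gives, for every $A\subseteq X$,
\[
r_{M'}(A)=r_{M_X}(A)-[p\in\cl_{M_X}(A)]\cdot[p\in\cl_{M_Y}(C_Y)],
\]
where $[\cdot]$ denotes the Iverson bracket. Hence $M'=M_X/p$ if $p\in\cl_{M_Y}(C_Y)$ and $M'=M_X\ba p$ otherwise. In either case $N=(M'/C_X\ba D_X)^{\flat}$ exhibits $N$ itself as a c-minor of $M_X$ on the ground set $E(N)\cap X=E(N)$, which is the required special $N$-minor.

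When $E(N)\cap Y=\{y\}$, we have $C_Y\cup D_Y=Y-y$; set $\rho=r_{M_Y/C_Y}(y)$ and $s=\sqcap_{M_Y/C_Y}(p,y)$. The analogous computation shows that whenever $p\in\cl_{M_Y}(C_Y)$ or $s=0$, the element $y$ is skew to $X$ in $M'$. Since $/C_X\ba D_X$ and compactification both distribute across direct sums, $N$ would then be a nontrivial direct sum, contradicting its $3$-connectivity together with $|E(N)|\ge 4$. Hence $p\notin\cl_{M_Y}(C_Y)$ and $s=1$; also $\rho\ne 0$, since otherwise $y$ would be a loop in $M'$ and hence in $N$. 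For $\rho=1$, the formula reveals that $M'=M_X(y)$ exactly. For $\rho=2$, the element $y$ is a $2$-separating line in $M'$ and $M'\underline{\downarrow}\,y=M_X(y)$, so by Lemma~\ref{compel} the full compactifications agree: $(M')^{\flat}=(M_X(y))^{\flat}$. In either subcase, Lemma~\ref{complast1} yields $N=(M_X(y)/C_X\ba D_X)^{\flat}$; renaming $y$ back to $p$ exhibits the c-minor $(M_X/C_X\ba D_X)^{\flat}$ of $M_X$ on ground set $(E(N)\cap X)\cup\{p\}$, which is a special $N$-minor using $E(N)\cap X$. The main obstacle is the $\rho=2$ subcase: one must verify that the extra compactification of $y$ inside $M'$ does not disturb the compactifications of the elements of $X$, which is settled by a direct check that $\lambda_{M'}(x)=\lambda_{M_X(y)}(x)$ for every $x\in X$ using Proposition~\ref{dennis3.6}.
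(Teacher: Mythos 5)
Your approach is essentially the one the paper uses: by Corollary~\ref{complast3} write $N=(M/C\ba D)^{\flat}$, remove the $Y$-elements first to form $M'=M/C_Y\ba D_Y$, and show via the $2$-sum rank formula that $M'$ is, up to relabelling and compactification, one of $M_X/p$, $M_X\ba p$, or $M_X(y)$, with the direct-sum/$3$-connectivity contradiction ruling out the degenerate subcases. One small remark: the appeal to Lemma~\ref{compel} in the $\rho=2$ subcase is not strictly justified since $M'$ is not known to be $2$-connected, but the fallback you propose (verifying $\lambda_{M'}(\{x\})=\lambda_{M_X(y)}(\{x\})$ for $x\in X$, combined with $M'\ud y=M_X(y)$) is the right argument and suffices on its own.
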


\begin{proof}
As $N$ is a c-minor of $M$, it follows by Corollary~\ref{complast3} that $N$ can be obtained from $M$ by a sequence of deletions and contractions followed by one compactification at the end. Let $N_1$ be the $2$-polymatroid that is obtained prior to the last compactification. 
We know that we can shuffle these deletions and contractions at will. In producing $N_1$ from $M$, let $C_Y$ and $D_Y$ be the sets of elements of $Y$ that are contracted and deleted, respectively.

Suppose $\sqcap(X,C_Y) = 1$. Now $M = P(M_X,M_Y) \ba p$. Consider $P(M_X,M_Y) /C_Y\ba D_Y$. This has $p$ as a loop, so 
$P(M_X,M_Y) \ba p /C_Y\ba D_Y = P(M_X,M_Y)/ p /C_Y\ba D_Y$. Since $P(M_X,M_Y)/ p= (M_X/p) \oplus (M_Y/p)$, we deduce that $Y = D_Y \cup C_Y$, so $N_1$ is a c-minor of $M_X/p$. Thus $N$ is a c-minor of $(M_X)^{\flat}$ and hence of $M_X$.

We may now assume that $\sqcap(X,C_Y) = 0$. Suppose $Y \cap E(N) = \emptyset$. Then $M\ba D_Y/C_Y = M\ba Y = M_X\ba p$. Hence $N_1$ is a c-minor of $M_X$. As we can perform a compactification whenever we want, we deduce that $N$ is a c-minor of $(M_X)^{\flat}$. It remains to consider the case when $Y \cap E(N)$ consists of a single element, $y$. In $M/C_Y\ba D_Y$, we must have $\sqcap(X,\{y\}) = 1$, otherwise $\sqcap(X,\{y\}) = 0$ and $\{y\}$ is 1-separating in $N_1$ and hence in $N$; a contradiction. We deduce that, in $M_Y/C_Y\ba D_Y$, the element $y$ is either a point parallel to the basepoint $p$ or a line through $p$. In the latter case, $(M/C_Y\ba D_Y)^{\flat}$ is $(M_X(y))^{\flat}$ where $M_X(y)$ is obtained from $M_X$ by relabelling $p$ by $y$. In both cases, $(M_X(y))^{\flat}$ has $N$ as a c-minor so $(M_X)^{\flat}$ and hence $M_X$ has a special $N$-minor.
\end{proof}

\begin{lemma}
\label{useful}
Let $p$ be a point in a $2$-polymatroid $P$ having ground set $E$. If $\sqcap(p,E-p) = 1$, then $P$ has as a minor a $2$-element $2$-connected $2$-polymatroid using $p$.
\end{lemma}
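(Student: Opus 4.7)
Since $p$ is a point, the hypothesis $\sqcap(p, E-p) = 1$ is equivalent to $r(E - p) = r(E)$, that is, $p \in \cl(E - p)$. My strategy is to shrink $E - p$ down to a single element that still ``sees'' $p$, by deleting the irrelevant part of $E - p$ and contracting away the redundancy in the relevant part.

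Concretely, I choose a set $Y \subseteq E - p$ that is minimal subject to $p \in \cl(Y)$; such a $Y$ exists because $E - p$ itself works. Pick any $q \in Y$. The minimality of $Y$ has two consequences I will use: first, $p \notin \cl(Y - q)$; and second, $q \notin \cl(Y - q)$, because otherwise $\cl(Y - q) = \cl(Y)$ and the first consequence would fail.

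Now form the minor $Q = P \ba (E - p - Y) / (Y - q)$, whose ground set is $\{p, q\}$. A routine application of the deletion/contraction rank formulas gives $r_Q(\{p\}) = 1$ (from $p \notin \cl(Y - q)$), $r_Q(\{q\}) = r(Y) - r(Y - q) \ge 1$ (from $q \notin \cl(Y - q)$), and $r_Q(\{p, q\}) = r(Y) - r(Y - q)$ (from $p \in \cl(Y)$). Hence $\sqcap_Q(p, q) = 1$ and $\lambda_Q(\{p\}) = \lambda_Q(\{q\}) = 1$, so neither singleton is $1$-separating, and $Q$ is a two-element $2$-connected $2$-polymatroid using $p$. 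I expect no real obstacle; the only subtlety is to extract from minimality both the fact that $p$ remains a point in $Q$ and the fact that $q$ does not become a loop in $Q$, which is exactly what is needed to force $\sqcap_Q(p, q) = 1$.
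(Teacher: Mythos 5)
Your proof is correct, and it takes a genuinely different route from the paper's. The paper proves the lemma by induction on $|E-p|$: if some $z \in E-p$ has $\sqcap(p,z)=1$ we are done by restricting to $\{p,z\}$; otherwise pick $z$ with $\sqcap(p,z)=0$, verify that $\sqcap_{P/z}(p,E-\{p,z\})$ is still $1$, and recurse on $P/z$. You instead give a one-shot, non-inductive construction: choose $Y \subseteq E-p$ minimal with $p \in \cl(Y)$, fix $q \in Y$, and take $Q = P\ba(E-p-Y)/(Y-q)$. Your computations of $r_Q(\{p\})$, $r_Q(\{q\})$, and $r_Q(\{p,q\})$ are right, and the two consequences of minimality ($p\notin\cl(Y-q)$ and $q\notin\cl(Y-q)$) are exactly what makes them work; the second follows because, in a polymatroid, $q\in\cl(Y-q)$ would force $\cl(Y-q)=\cl(Y)\ni p$. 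What the paper's induction buys is brevity and no need to reason about closure minimality; what yours buys is a single explicit minor with a transparent certificate ($\sqcap_Q(p,q)=\lambda_Q(\{p\})=\lambda_Q(\{q\})=1$). Both are fine; I would tighten your closing sentence into an actual verification rather than ``I expect no real obstacle,'' since the verification is short and you have already done the key part of it.
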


\begin{proof}
We argue by induction on $|E-p|$. the result is certainly true if $|E-p| = 1$. Assume it true for $|E-p| < n$ and let $|E-p| = n$. If $E-p$ contains an element $z$ such that $\sqcap(p,z) = 1$, then the result is immediate. Thus $E- p$ contains an element $z$ such that $\sqcap (p,z) = 0$. Then 
$\sqcap_{P/z}(p,E - \{p,z\}) = r(p) + r(E-p) - r(P) = 1.$ Thus, by the induction assumption, $P/z$ and hence $P$ has, as a minor, a $2$-element $2$-connected $2$-polymatroid using $p$.
\end{proof}


\begin{lemma}
\label{claim1} 
Let $(X,Y)$ be an exact $2$-separation of a $2$-polymatroid $M$ and let $N$ be a $3$-connected $2$-polymatroid that is a c-minor of $M$. Suppose that $|X - E(N)| \le 1$ and  
 $y \in Y$.
\begin{itemize}
\item[(i)] If $\sqcap_{M\ba y}(X,Y-y) = 1$, then $M\ba y$ has a special $N$-minor.
\item[(ii)] If $\sqcap_{M/ y}(X,Y-y) = 1$, then $M/y$ has a special $N$-minor.
\item[(iii)] If $\sqcap_{M\da y}(X,Y-y) = 1$, then $M\da y$ has a special $N$-minor.
\end{itemize}
\end{lemma}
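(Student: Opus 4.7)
All three parts are handled by a common strategy. By Proposition~\ref{dennis3.6}, the $2$-separation $(X,Y)$ of $M$ yields a decomposition $M = M_X \oplus_2 M_Y$ at some basepoint $p \notin E(M)$. In each case the connectivity hypothesis on $y$ combines with Lemma~\ref{dennisplus} to give a $2$-sum decomposition of the modified $2$-polymatroid: part (iii) of that lemma gives $M\ba y = M_X \oplus_2 (M_Y\ba y)$ for (i); for (ii), Lemma~\ref{obs1} forces $\sqcap(X,\{y\}) = 0$, so part (iv) gives $M/y = M_X \oplus_2 (M_Y/y)$; and for (iii), the final clause of Lemma~\ref{dennisplus}(vi) when $y$ is a line, together with part (v) when $r(\{y\}) \le 1$, yields $M\da y = M_X \oplus_2 (M_Y\da y)$.

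Given the $2$-sum decomposition, the plan is to show that $M_X$ has a special $N$-minor using $E(N) \cap X$ and then apply Lemma~\ref{p49} to transfer this to the modified $2$-polymatroid. To invoke Lemma~\ref{p69}, one needs $|E(N) \cap X| \ge |E(N)| - 1$, equivalently $|E(N) \cap Y| \le 1$. I will establish this by analyzing the interaction of $N$ with the $2$-separation $(X,Y)$. Since $N$ is a c-minor of $M$ and compactification preserves $\lambda$ (Lemma~\ref{compact0}(iii)), Lemma~\ref{8.2.4} applied to the sequence of deletions and contractions producing $N$ gives $\lambda_N(E(N) \cap X) \le \lambda_{M \ba D / C}(E(N) \cap X)$ for suitable $D,C$. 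A short rank computation using $|X - E(N)| \le 1$ then shows $\lambda_N(E(N) \cap X) \le 1$: if $X \subseteq E(N)$ this is immediate from $\lambda_M(X) = 1$, while if $X - E(N) = \{x_0\}$ one splits by whether $x_0 \in D$ (using Lemma~\ref{8.2.3} to bound $\sqcap(X - x_0, x_0) \le \sqcap(E - x_0, x_0)$) or $x_0 \in C$ (where a direct expansion of $\lambda_{M/x_0}(X - x_0)$ gives $1 - \sqcap(Y, x_0) \le 1$). Thus $(E(N) \cap X, E(N) \cap Y)$ is at most $2$-separating in $N$; since $N$ is $3$-connected with $|E(N)| \ge 4$, one side has both cardinality and rank at most $1$, and since $|E(N) \cap X| \ge |X| - 1 \ge 1$, the generic conclusion is $|E(N) \cap Y| \le 1$.

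The degenerate case occurs when $|X| = 2$ and $|E(N) \cap X| = 1$; here $M_X$ has only three elements and so cannot contain $N$ as a c-minor. Instead one swaps the roles of $X$ and $Y$, noting that $|E(N) \cap (Y - y)| \ge |E(N)| - 1$ when $y \notin E(N)$, so Lemma~\ref{p69} applied on the $Y$-side of the $2$-sum decomposition of $M \ba y$ (respectively $M/y$, $M\da y$) supplies the special $N$-minor, and Lemma~\ref{p49} then completes the argument. The main obstacle lies in the sub-case $y \in E(N)$: then $y$ is absent from the modified polymatroid, so the resulting special $N$-minor must relabel $y$ as some element of $Y - y$ or $X$. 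This is handled by combining Lemma~\ref{useful} (applied to $M_Y$, $M_Y/y$, or $M_Y\da y$ as appropriate, to produce a $2$-element $2$-connected c-minor through $p$) with the inductive construction in the proof of Lemma~\ref{p49}, so that the single relabeling allowed in the definition of a special $N$-minor simultaneously accounts for both the $2$-sum basepoint $p$ and the removal of $y$.
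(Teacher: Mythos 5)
Your first paragraph correctly identifies the $2$-sum decompositions supplied by Lemma~\ref{dennisplus}(iii), (iv), (vi), together with Lemma~\ref{obs1} for part~(ii), and that is exactly the paper's starting point. The problem is the second and third paragraphs.

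To invoke Lemma~\ref{p69}, one needs $|E(N)\cap X|\ge |E(N)|-1$, equivalently $|E(N)-X|\le 1$. You correctly notice this, but then try to derive it from the printed hypothesis $|X-E(N)|\le 1$, which is a different and inequivalent condition. That derivation has a genuine gap. You want $\lambda_N(E(N)\cap X)\le 1$ and then to conclude via $3$-connectivity of $N$. But when $X - E(N) = \{x_0\}$ and $x_0$ is among the deleted elements, Lemma~\ref{8.2.4} only gives $\lambda_N(X-x_0)\le\lambda_M(X-x_0)$, and one computes directly from $\lambda_M(X)=1$ that
$$\lambda_M(X-x_0)\;=\;1+\sqcap_M(X-x_0,\{x_0\})-\sqcap_M(Y,\{x_0\}).$$
If $x_0\in\cl_M(X-x_0)$ while $x_0$ is skew to $Y$, this equals $1+r(\{x_0\})\ge 2$, so $\lambda_N(E(N)\cap X)\le 1$ does not follow; the bound $\sqcap(X-x_0,\{x_0\})\le\sqcap(E-x_0,\{x_0\})$ from Lemma~\ref{8.2.3}, which you invoke, does nothing to close this. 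The hypothesis the lemma actually needs, and what every application in the paper supplies (the setup always has $X$ as the $N$-side of the $2$-separation), is $|E(N)-X|\le 1$; the printed ``$|X-E(N)|\le 1$'' is a slip. Once you read the hypothesis that way, your entire second paragraph, and the ``degenerate case'' swap of the third, becomes unnecessary: Lemma~\ref{p69} applies directly to give a special $N$-minor of $M_X$ using $E(N)\cap X$, Lemma~\ref{useful} applied to $M_Y\ba y$ (respectively $M_Y/y$, $M_Y\da y$) yields a two-element $2$-connected minor $\{p,z\}$, and relabelling $p$ by $z$ in $(M_X)^{\flat}$ produces the required special $N$-minor of $M\ba y$ (respectively $M/y$, $M\da y$), exactly as in the proof of Lemma~\ref{p49}. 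The case $y\in E(N)$ that you flag as an obstacle is absorbed automatically: since $|E(N)\cap Y|\le 1$, the only element of $E(N)$ lost when $y$ is removed is $y$ itself, and that single relabelling is precisely what the definition of a special $N$-minor permits.
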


\begin{proof} 
By Lemma~\ref{dennisplus}, $M = M_X \oplus_2 M_Y$ where $M_X$ and $M_Y$have ground sets $X \cup p$ and $Y\cup p$, respectively. 
By Lemma~\ref{p69}, $M_X$ has a special $N$-minor using $E(N) \cap X$. Suppose $\sqcap_{M\ba y}(X,Y-y) = 1$. Then $\sqcap_{M_Y}(\{p\},Y-y) = 1$. Thus, by Lemma~\ref{useful}, $M_Y\ba y$ has as a minor a $2$-polymatroid with ground set $\{p,z\}$ for some $z$ in $Y-y$ where either $p$ and $z$ are parallel points, or $z$ is a line and $p$ is a point on this line. It follows that $(M \ba y)^{\flat}$ has as a c-minor the $2$-polymatroid that is obtained from $(M_X)^{\flat}$ by relabelling $p$ by $z$. Hence $M\ba y$ has a special $N$-minor and (i) holds. 

Now suppose that $\sqcap_{M/ y}(X,Y-y) = 1$. Then, by Lemma~\ref{obs1}, $\sqcap(X,\{y\}) = 0$. Thus, by Lemma~\ref{dennisplus}(iv), 
$M/y = M_X \oplus_2 (M_Y/y)$. Then, by replacing $M_Y\ba y$ by $M_Y/ y$ in the argument in  the previous paragraph, we deduce that (ii) holds.

Finally, suppose that $\sqcap_{M\da y}(X,Y-y) = 1$. Assume first that $r(\{y\}) \le 1$. Then $M\da y = M/y$, so $\sqcap_{M/ y}(X,Y-y) = 1$, and the result follows by (ii). Now let $y$ be a line of $M$. Then, by Lemma~\ref{dennisplus}(vi), $M\da y = M_X \oplus_2 (M_Y \da y)$. Again, by replacing $M_Y \ba y$ by $M_Y \da y$ in the argument in the first paragraph, we get that (iii) holds.
\end{proof}

\begin{lemma}
\label{switch}
Let $Q$ be a $2$-polymatroid having $k$ and $\ell$ as distinct elements and suppose that $\ell$ is a $2$-separating line. 
Then
$$Q \ud \ell \da k = Q \da k \ud \ell.$$
\end{lemma}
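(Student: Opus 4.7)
The plan is to expand both sides into sequences of more primitive operations---free addition of a new element on an existing element, contraction, and deletion---and then invoke the commutativity of these primitives when applied to distinct elements, showing that the two sequences yield the same $2$-polymatroid.

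By definition, $Q\ud\ell$ is obtained from $Q$ by freely adding a new element $\ell'$ on $\ell$ and then contracting $\ell'$, while $Q\da k=Q\ud k\ba k$ is obtained by freely adding $k'$ on $k$, contracting $k'$, and deleting $k$. Hence $Q\ud\ell\da k$ unfolds as the sequence $\sigma_1$: add $\ell'$ on $\ell$; contract $\ell'$; add $k'$ on $k$; contract $k'$; delete $k$. Similarly, $Q\da k\ud\ell$ unfolds as $\sigma_2$: add $k'$ on $k$; contract $k'$; delete $k$; add $\ell'$ on $\ell$; contract $\ell'$.

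The proof runs on three commutativity facts. First, two free additions on distinct existing elements commute; this is noted just after Theorem~\ref{herepoly}. Second, contractions of distinct elements commute, and contraction also commutes with deletion on a distinct element (Lemma~\ref{elemprop}). Third --- the only genuinely new input --- free addition of a fresh element $x'$ on an existing element $x$ commutes both with contraction and with deletion of any element distinct from $x$. The third claim is verified by a direct comparison of the rank formulae: the rank of a set $X\cup x'$ after a free addition depends only on whether $x$ spans $X$ in the ambient polymatroid, and this span condition is preserved when one contracts or deletes an element other than $x$ itself.

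Armed with these commutativities, $\sigma_1$ is brought into the canonical order (add $\ell'$; add $k'$; contract $\ell'$; contract $k'$; delete $k$) by swapping ``contract $\ell'$'' with ``add $k'$''. The sequence $\sigma_2$ is brought into (add $\ell'$; add $k'$; contract $k'$; delete $k$; contract $\ell'$) by moving ``add $\ell'$'' leftward past ``delete $k$'' and then past ``contract $k'$'', and finally swapping the two initial additions; further shuffles among the two contractions and the deletion then match the canonical order from $\sigma_1$. This gives $Q\ud\ell\da k=Q\da k\ud\ell$. The main (modest) obstacle is the bookkeeping for the free-addition/contraction commutativity, which is routine case analysis on the two ranges of the rank formula; the hypothesis that $\ell$ is a $2$-separating line does not enter this argument but matches the setting in which the identity is used elsewhere in the paper.
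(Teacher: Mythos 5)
Your proof is correct, and it takes a genuinely different route from the paper's. The paper leans on the hypothesis that $\ell$ is a $2$-separating line: after disposing of the easy case $\lambda(\ell)=0$, it invokes Lemma~\ref{dennisplus} to write $Q$ as a $2$-sum $P(Q_1,Q_2)\ba p$ with $Q_2$ a line $\ell$ carrying the basepoint $p$, and then checks the identity by splitting into the two cases of whether $k$ is a point parallel to $p$ in $Q_1$. You instead unfold both compressions into the primitive moves (free addition, contraction, deletion) and match the two resulting sequences by commuting primitives. The genuinely new fact you need, that free addition of $x'$ on $x$ commutes with contraction or deletion of any $y\neq x$, does hold: for $Z=X\cup x'$ one has $r_{(M+x')/y}(Z)=r_{M+x'}(X\cup\{x',y\})-r_M(\{y\})$, and the rank formula for free addition shows this equals $r_{M/y}(X)$ or $r_{M/y}(X)+1$ according to whether $r_{M/y}(X\cup x)=r_{M/y}(X)$, which is exactly $r_{(M/y)+x'}(Z)$. (Your phrase ``this span condition is preserved'' is a little loose --- the point is rather that the span condition tested in $M/y$ corresponds to the span condition tested in $M$ after the contraction --- but you flag the verification as routine case analysis and it is.) The other swaps you use are either standard (contractions commute with each other and with deletions) or asserted in the paper (two free additions commute). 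The upshot is that your argument establishes the stronger statement with no hypothesis on $\ell$ at all; the paper's hypothesis is there because the $2$-sum machinery gives a quick proof in the only setting where the lemma is applied, whereas your elementary bookkeeping shows the identity is unconditional.
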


\begin{proof}
The result is easily checked if  $\lambda(\ell) = 0$, so assume that   $\lambda(\ell) = 1$. Then, by Lemma~\ref{dennisplus}, $Q = P(Q_1,Q_2)\ba p$ for some $2$-polymatroids $Q_1$ and $Q_2$ with ground sets $(E(Q) - \ell) \cup p$ and $\{\ell, p\}$ where $Q_2$ consists of the line $\ell$ with the point freely placed on it. Moreover, either 
\begin{itemize}
\item[(i)] $k$ is a point that is parallel to $p$ in $Q_1$; or 
\item[(ii)] $Q \da k = P(Q_1 \da k,Q_2)\ba p$. 
\end{itemize}

Consider the first case. Then $Q\da k = Q/k$ and $Q\da k \ud \ell$ can be obtained from $Q_1/p$ by adjoining $\ell$ as a loop. On the other hand, $Q \ud \ell$ can be obtained from $Q_1$ by relabelling $p$ as $\ell$. Thus $Q \ud \ell \da k$, which equals $Q \ud \ell/k$, can be obtained from $Q_1/p$ by adjoining $\ell$ as a loop. Hence the result holds in case (i).

Now suppose (ii) holds. Then $Q \da k \ud \ell$ can be obtained from $Q_1 \da k$ by relabelling $p$ as $\ell$. 
On the other hand, $Q\ud \ell$ can be obtained from  $Q_1$ by relabelling $p$ as $\ell$. Hence $Q\ud \ell \da k$ can be obtained from $Q_1\da k$ by relabelling $p$ as $\ell$. Thus the lemma holds.
\end{proof}

We end this section with three lemmas concerning $2$-element prickly $3$-separators.

\begin{lemma}
\label{portia}
Let $\{j,k\}$ be a prickly $3$-separator in a $3$-connected $2$-polymatroid $M$. Then $M\da j$ and $M\da k$ are $3$-connected.
\end{lemma}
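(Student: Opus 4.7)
The plan is to show $M\da k$ is $3$-connected; by Lemma~\ref{symjk}, $M\da j$ differs from $M\da k$ only by relabelling $k$ as $j$, so it will then be $3$-connected as well. I will proceed by contradiction: suppose $(X,Y)$ is a partition of $E(M)-k$ witnessing a failure of $3$-connectedness of $M\da k$. There are two possibilities: either $X$ is a proper non-empty $1$-separating subset of $M\da k$ (so $\lambda_{M\da k}(X)\le 0$), or $(X,Y)$ is a $2$-separation of $M\da k$ (so $\lambda_{M\da k}(X)\le 1$ with $\max\{|X|,r_{M\da k}(X)\}\ge 2$ and $\max\{|Y|,r_{M\da k}(Y)\}\ge 2$). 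After swapping $X$ and $Y$ if necessary, I may assume $j\in X$, so $Y\subseteq E-\{j,k\}$.

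The key observation uses part (iii) of the definition of a prickly $3$-separator, applied with $Z'=\{k\}$: it forces $r((E-\{j,k\})\cup k)=r(E-\{j,k\})+1$, which rearranges to $\sqcap_M(k,E-\{j,k\})=1$. By Lemma~\ref{8.2.3}, $\sqcap_M(k,Y)\le 1$, so $r_M(Y\cup k)\ge r_M(Y)+1>r_M(Y)$, and consequently $r_{M\da k}(Y)=r_M(Y)$. On the other hand, because $j\in X$ and $r(\{j,k\})=3$, Lemma~\ref{atlast} gives $r_{M\da k}(X)=r_M(X\cup k)-1$. Combining these with $r(M\da k)=r(M)-1$ yields the key identity
$$\lambda_{M\da k}(X)=r_M(X\cup k)+r_M(Y)-r(M)=\lambda_M(X\cup k).$$

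The rest is routine translation. If $\lambda_{M\da k}(X)\le 0$, then $X\cup k$ is a proper non-empty subset of $E(M)$ with $\lambda_M(X\cup k)\le 0$, contradicting the $2$-connectedness of $M$. If instead $(X,Y)$ is a $2$-separation of $M\da k$, the size conditions transfer directly: $|X\cup k|\ge 2$ because $j,k\in X\cup k$, and $\max\{|Y|,r_M(Y)\}=\max\{|Y|,r_{M\da k}(Y)\}\ge 2$; hence $(X\cup k,Y)$ is a genuine $2$-separation of $M$, contradicting its $3$-connectedness. The potential obstacle is the dual sub-case where $k\in\cl_M(Y)$ (which would force $r_{M\da k}(Y)=r_M(Y)-1$ and yield only $\lambda_{M\da k}(X)=\lambda_M(X\cup k)-1$, producing a $3$-separation rather than a $2$-separation of $M$), but the prickly condition (iii) is precisely what rules this possibility out, and that is where the full strength of the prickly hypothesis is used.
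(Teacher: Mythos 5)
Your proof is correct and follows essentially the same approach as the paper: both arguments show that a putative $m$-separation $(X,Y)$ of the compression (with $j\in X$ and $Y\subseteq E-\{j,k\}$) lifts, via the prickly rank identities, to an $m$-separation $(X\cup k,Y)$ of $M$ itself, and then invoke the $3$-connectedness of $M$ for the contradiction. The only cosmetic difference is that the paper routes the rank bookkeeping through the auxiliary free extension $M'$ obtained by adding a free point $j'$ on $j$, whereas you read the same identity $\lambda_{M\da k}(X)=\lambda_M(X\cup k)$ off directly from the rank formula for $M\da k$ using Lemma~\ref{atlast} together with condition~(iii) of the prickly definition and Lemma~\ref{8.2.3}; the underlying computation is the same.
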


\begin{proof} It suffices to show that $M\da j$ is $3$-connected. We form $M\da j$ by freely adding a point $j'$ to $j$, deleting $j$, and contracting $j'$. As $M$ is $3$-connected, so is the $2$-polymatroid $M'$ we get by adding $j'$.  Now $M\da j = M'\ba j/j'$. Assume this $2$-polymatroid is not $3$-connected, letting $(U,V)$ be an $m$-separation of it for some $m$ in $\{1,2\}$. Then 
$$r_{M'/j'}(U) + r_{M'/j'}(V) = r(M'/j') + m-1.$$ 
Thus 
$$r_{M'}(U\cup j') + r_{M'}(V\cup j') = r(M') + m.$$
Without loss of generality, $k \in V$. Then $r_{M'}(V\cup j') = r_M(V \cup j)$ and $r_{M'}(U\cup j') = r_M(U)+ 1$. Therefore 
$$r_{M}(U) + r_{M}(V\cup j) = r(M) + m- 1.$$
As $M$ is $3$-connected, we deduce that $m = 2$. Then $\max\{|U|, r_{M'/j'}(U)\} \ge 2$. Hence $(U, V \cup j)$ is a $2$-separation of $M$; a contradiction.
\end{proof}

\begin{lemma}
\label{pricklytime}
The set $\{j,k\}$ is a prickly $3$-separator of the $2$-polymatroid $M$ if and only if it is a prickly $3$-separator in $M^*$.
\end{lemma}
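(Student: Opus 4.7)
The plan is to verify, by direct computation with the dual rank formula $r^*(Y) = ||Y|| + r(E - Y) - r(E)$, that the numerical conditions defining pricklyness of a two-element set translate into an equivalent system on passing from $M$ to $M^*$.

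First I would unpack pricklyness of $\{j,k\}$ in $M$. Combining conditions (i)--(iv) with $|Z|=2$, it is equivalent to the four equations $r(\{j\})=2$, $r(\{k\})=2$, $r(\{j,k\})=3$, and $r(E-\{j,k\}) = r(E)-1$ (the last coming from $\lambda(\{j,k\})=2$ together with $r(\{j,k\})=3$); condition (iii) then gives automatically $r(E-j) = r(E-k) = r(E-\{j,k\})+1 = r(E)$.

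Next I would apply the dual formula to each quantity appearing in the analogous conditions for $M^*$. A short calculation yields $r^*(\{j\}) = r(\{j\}) + r(E-j) - r(E)$, $r^*(\{j,k\}) = r(\{j\}) + r(\{k\}) + r(E-\{j,k\}) - r(E)$, $r^*(E-\{j,k\}) = ||E|| - r(\{j\}) - r(\{k\}) + r(\{j,k\}) - r(E)$, and $r^*(E) = ||E|| - r(E)$. In particular $r^*(E-j) = r^*(E) = r^*(E-k)$ holds automatically, since $M^*$ is compact by Lemma~\ref{compact0}(i); and $\lambda_{M^*}(\{j,k\}) = \lambda_M(\{j,k\})$ by Lemma~\ref{compact0}(iii). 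So condition (iii) and the $3$-separating requirement for $\{j,k\}$ in $M^*$ need no further verification.

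The remaining check is that $r^*(\{j\})=2$, $r^*(\{k\})=2$, $r^*(\{j,k\})=3$, and $r^*(E-\{j,k\}) = r^*(E)-1$ are equivalent to their unstarred counterparts. Since $r(\{j\}) \le 2$ and $r(E-j) \le r(E)$, the equation $r^*(\{j\})=2$ forces simultaneously $r(\{j\})=2$ and $r(E-j)=r(E)$, and conversely; likewise for $k$. Granted these, $r^*(\{j,k\})=3$ reduces exactly to $r(E-\{j,k\}) = r(E)-1$, and $r^*(E-\{j,k\}) = r^*(E)-1$ reduces exactly to $r(\{j,k\})=3$. Thus the two systems of four equations coincide, and the lemma follows in both directions at once. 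The only real obstacle is keeping the bookkeeping straight; no new ideas are required.
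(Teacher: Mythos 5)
Your proof is correct, and the computation it rests on is sound, but your argument for the reverse implication is genuinely different from the paper's. The paper verifies the forward direction (prickly in $M$ implies prickly in $M^*$) by a direct check against the dual rank function -- essentially the same arithmetic you carry out -- but for the converse it does not redo the calculation. Instead it applies the forward direction with $M^*$ in place of $M$ to conclude that $\{j,k\}$ is prickly in $(M^*)^*$, invokes Lemma~\ref{compact0}(ii) to identify $(M^*)^*$ with $M^{\flat}$, and then unwinds the defining formula for $r^{\flat}$ to read off the conditions on $M$ itself (for instance, $r_{M^{\flat}}(\{j\})=2$ immediately gives $\lambda_M(\{j\})=2$, hence $r(\{j\})=2$ and $r(E-j)=r(E)$). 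You avoid this detour entirely by noting that each of your four equivalences $r^*(\{j\})=2 \Leftrightarrow (r(\{j\})=2 \text{ and } r(E-j)=r(E))$, and so on, is already a biconditional, so a single pass gives both directions at once. The key observation that makes this work cleanly, which you make, is that the two conditions hidden inside condition~(iii) of the definition, namely $r^*(E-j)=r^*(E)$ and $r^*(E-k)=r^*(E)$, are forced by the compactness of $M^*$ (Lemma~\ref{compact0}(i)) rather than being independent constraints, so the dual side really does collapse to four numerical equations. Your approach buys self-containment and symmetry; the paper's buys economy (the converse comes for free once one has already built up the $M^{\flat}$ machinery). One small wording caution: prickliness of $\{j,k\}$ in $M$ is not literally ``equivalent to the four equations'' you first list, since $r(E-j)=r(E)$ and $r(E-k)=r(E)$ are not consequences of those four; they are an independent part of condition~(iii). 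Your calculation in fact tracks all six conditions correctly (each of $r^*(\{j\})=2$ and $r^*(\{k\})=2$ absorbs two of them), so the proof is complete, but the prose in the opening and closing sentences understates what is being matched.
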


\begin{proof} Suppose $\{j,k\}$ is a prickly $3$-separator of $M$. By Lemma~\ref{compact0}, $\lambda_{M^*}(\{j,k\}) = \lambda_{M}(\{j,k\}) = 2$. Moreover, it is straightforward to check that 
$r_{M^*}(\{j\}) = 2 = r_{M^*}(\{k\})$, that $r_{M^*}(\{j,k\}) = 3$, and that $\sqcap_{M^*}(\{j\},E-\{j,k\}) = 1 = \sqcap_{M^*}(k,E-\{j,k\})$. Hence $\{j,k\}$ is a prickly $3$-separator of $M^*$. Conversely, suppose that $\{j,k\}$ is a prickly $3$-separator of $M^*$. Then, by what we have just shown,  $\{j,k\}$ is a prickly $3$-separator of $(M^*)^*$, that is, of $M^{\flat}$. Now $2 = \lambda_{M^{\flat}}(\{j,k\}) = \lambda_{M}(\{j,k\})$. Moreover, since $r_{M^{\flat}}(\{j\}) = 2$, it follows that $\lambda(\{j\}) = 2$, so $r(\{j\}) = 2$ and $r(E-j) = r(E)$. Similarly, $\lambda(\{k\}) = 2 = r(\{k\})$ and $r(E-k) = r(E)$. It follows, since  $r_{M^{\flat}}(\{j,k\}) = 3$, that $r(\{j,k\}) = 3$. By using the fact that $\sqcap_{M^{\flat}}(\{j\},E-\{j,k\}) = 1 = \sqcap_{M^{\flat}}(\{k\},E-\{j,k\})$, it is not difficult to check that  
$\sqcap(\{j\},E-\{j,k\}) = 1 = \sqcap(\{k\},E-\{j,k\})$. We conclude that $\{j,k\}$ is a prickly $3$-separator of $M$, so the lemma holds. 
\end{proof}

\begin{lemma}
\label{ess3}
Let $\{j,k\}$ be a prickly $3$-separator in a $2$-polymatroid $P$. Then 
\begin{itemize}
\item[(i)] $P\da k \ba j = P\ba k,j$; and
\item[(ii)] $P\da k / j = P/ k,j$.
\end{itemize}
\end{lemma}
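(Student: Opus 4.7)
The plan is to verify each identity by direct comparison of rank functions on an arbitrary subset of the common ground set, relying on the structural constraints built into the definition of a prickly $3$-separator. From that definition I extract the following facts: $j$ and $k$ are both lines with $r(\{j,k\})=3$ (so $\sqcap(j,k)=1$), and $r((E-\{j,k\})\cup k)=r(E-\{j,k\})+1$, which says $k\notin\cl(E-\{j,k\})$. By monotonicity, $k\notin\cl(X)$ and hence $r(X\cup k)>r(X)$ for every $X\subseteq E-\{j,k\}$.

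For part (i), both $P\da k\ba j$ and $P\ba\{k,j\}$ have ground set $E-\{j,k\}$. For $X\subseteq E-\{j,k\}$, the right-hand side has rank $r_P(X)$, while the rank formula (\ref{getdown}) for compression, combined with the inequality $r(X\cup k)>r(X)$ from the previous paragraph, gives $r_{P\da k}(X)=r_P(X)$; the restriction to $E-\{j,k\}$ then yields $r_{P\da k\ba j}(X)=r_P(X)$. So the two rank functions agree.

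For part (ii), fix $X\subseteq E-\{j,k\}$. The right-hand side has rank $r_P(X\cup\{j,k\})-r_P(\{j,k\})=r_P(X\cup\{j,k\})-3$. For the left-hand side, write $r_{P\da k/j}(X)=r_{P\da k}(X\cup j)-r_{P\da k}(\{j\})$. Lemma~\ref{atlast} applies to the set $X\cup j\subseteq E-k$ (which contains $j$, with $r(\{j,k\})=3$), giving $r_{P\da k}(X\cup j)=r_P(X\cup\{j,k\})-1$; and the formula (\ref{getdown}) gives $r_{P\da k}(\{j\})=r_P(\{j\})=2$ because $r(\{j\}\cup k)=3>2=r(\{j\})$ puts us in the first case. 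Subtracting yields $r_P(X\cup\{j,k\})-3$, matching the right-hand side.

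There is no serious obstacle here: both identities reduce to routine bookkeeping with the piecewise compression formula. The only point that requires attention is the verification that $r(X\cup k)>r(X)$ for every $X\subseteq E-\{j,k\}$, which follows directly from the prickly-$3$-separator axiom $r((E-\{j,k\})\cup k)=r(E-\{j,k\})+1$ together with monotonicity; once this is in hand, part (i) is immediate and part (ii) is a clean application of Lemma~\ref{atlast} together with a single evaluation of (\ref{getdown}) on the singleton $\{j\}$.
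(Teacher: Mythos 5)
Your proof is correct and follows essentially the same route as the paper's: a direct comparison of rank functions on subsets of $E-\{j,k\}$, using the compression formula (\ref{getdown}) together with the prickly constraints. The two minor presentational differences — you explicitly justify $r(X\cup k)>r(X)$ via monotonicity of closure and the prickly axiom (the paper merely asserts it), and you invoke Lemma~\ref{atlast} where the paper re-derives it inline — do not change the substance of the argument.
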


\begin{proof} Suppose $X \subseteq E(P) - \{j,k\}$. Then 
$r_{P\da k}(X) = r_P(X)$ as $r(X \cup k) > r(X)$. Thus (i) holds. 

To see (ii), observe that $r_{P\da k /j}(X) = r_{P\da k}(X \cup j) - r(\{j\})$ since $r(\{j,k\}) > r(\{j\})$. Now, as $\sqcap(\{j\},\{k\}) = 1$, we deduce that 
$r(X \cup j) \le r(X \cup j \cup k) \le r(X \cup j) +1$. Thus
\begin{equation*}
r_{P\da k}(X \cup j) = 
\begin{cases} 
r(X \cup j), & \text{if  $r(X \cup j \cup k) = r(X \cup j) + 1$;}\\
r(X \cup j) - 1, & \text{if  $r(X \cup j \cup k) = r(X \cup j)$.}
\end{cases}
\end{equation*}
Hence 
$r_{P\da k}(X \cup j) = r(X \cup j \cup k) - 1.$
Thus 
$r_{P\da k /j}(X) = r(X \cup j \cup k) - 3 = r_{P/k,j}(X)$, so (ii) holds. 
\end{proof}

\section{The strategy of the proof}
\label{strat}

The proof of Theorem~\ref{mainone} is long and will occupy  the rest of the paper. In this section, we outline the steps in the proof. 
 We shall assume that the theorem fails for $M$. Hence $|E(M)| \ge |E(N)| + 2$. As $|E(N)| \ge 4$, we deduce that $|E(M)| \ge 6$. 
 

We know that $M$ has  $N$ as an s-minor. This means, of course, that  $N$ can be obtained from $M$ by a sequence of contractions, deletions accompanied by compactifications, and series compressions. Our first goal will be to prove the following.

\begin{lemma}
\label{endtime}
The $2$-polymatroid $M$ has an s-minor that is isomorphic to $N$ such that, in the production of this s-minor, all of the series compressions are done last in the process. 
\end{lemma}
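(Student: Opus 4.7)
\emph{Plan of proof.} I will argue by induction on a disorder measure of a production sequence. Fix any sequence $M = P_0 \to P_1 \to \cdots \to P_n$ realizing some s-minor $P_n \cong N$, in which every move is a single-element contraction, a single-element compactified deletion, or a series compression, and let $\mu$ be the number of consecutive pairs in which a series compression is immediately followed by a contraction or compactified deletion. If $\mu = 0$, the sequence already has the required shape. Otherwise I will produce a new sequence, still realizing an s-minor isomorphic to $N$, whose disorder is strictly smaller; iteration then proves the lemma.

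Pick the largest index $i$ for which move $i$ is a series compression $\da k$, associated with a prickly $3$-separator $\{j,k\}$ of $P_{i-1}$, and move $i+1$ is a contraction $/y$ or a compactified deletion $\baba y$ with $y \neq k$. \textbf{Case A: $P_{i+1} = P_i / y$.} If $y = j$, Lemma~\ref{ess3}(ii) gives $P_{i-1}\da k/j = P_{i-1}/k/j$, replacing the series compression by two contractions. If $y \neq j$, Lemma~\ref{pricklytime0}(ii) yields $P_{i-1}\da k / y = P_{i-1}/y \da k$, so either $\{j,k\}$ is still a prickly $3$-separator of $P_{i-1}/y$ and the series compression is legally pushed one step later, or one of the exceptional outcomes (i)--(iv) of Lemma~\ref{elemprop24} applies. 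Outcomes (ii)--(iv) directly rewrite $\da k /y$ as $/y\baba k$, $/y/k$, or (after relabelling $k$ as $j$) $/y\baba j$, where the relabelling is harmless because an s-minor is sought only up to isomorphism. In outcome (i), the conditions $r(\{j,k,y\}) = 3$ and $\lambda_{P_{i-1}/y}(\{j,k\}) = 0$ combined with the prickly hypothesis on $\{j,k\}$ force $r(y) = 2$ and $r_{P_{i-1}/y}(\{k\}) \le 1$, so $k$ is a point or a loop in $P_{i-1}/y$; hence $P_{i-1}/y \da k = P_{i-1}/y / k = P_{i-1}/k/y$, and the series compression is again eliminated.

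\textbf{Case B: $P_{i+1} = P_i \baba y$.} If $y = j$, Lemma~\ref{ess3}(i) and Lemma~\ref{elemprop}(ii) give $P_{i-1}\da k \baba j = (P_{i-1}\ba\{j,k\})^{\flat} = P_{i-1}\baba k \baba j$, replacing the series compression by two compactified deletions. If $y \neq j$, Lemma~\ref{pricklytime0}(i) gives $P_{i-1}\da k \ba y = P_{i-1}\ba y \da k$, so
\[
P_{i-1}\da k \baba y \;=\; (P_{i-1}\ba y \da k)^{\flat}.
\]
A structural case analysis in the spirit of Lemma~\ref{elemprop24}, but using Lemma~\ref{complast1} and Lemma~\ref{compel} to track how the compactification acts on individual elements, then shows that either $\{j,k\}$ survives as a prickly $3$-separator of $P_{i-1}\baba y$ (so we may legally swap the two moves), or the compactification absorbs $k$ into a point or a loop, in which case $\da k$ coincides with $/k$ or $\baba k$ and the series compression again disappears. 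In every subcase $\mu$ drops by at least $1$.

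The principal obstacle is the compactified deletion case with $y \notin \{j,k\}$: the compactification inside $\baba y$ can change $\lambda(\{k\})$ from $2$ to $1$, which is then absorbed by the next compactification, so one must track the rank and local-connectivity of $k$ and $j$ carefully through both operations. However, the only ranks and local connectivities that arise after $\baba y$ parallel those enumerated in the analysis for Case A, so the verification reduces to the same finite list of rank identities. With both cases settled, the induction closes and yields a production of an s-minor isomorphic to $N$ in which all series compressions occur after all contractions and compactified deletions.
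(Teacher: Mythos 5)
Your overall strategy — bubble-sort the sequence so that series compressions migrate to the end — is the same in spirit as the paper's, which selects a realisation minimising first the number $m$ of series compressions and then the sequence $(n_1,\dots,n_m)$ lexicographically. But there are two genuine gaps.

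\textbf{The disorder measure does not terminate.} You define $\mu$ to be the number of \emph{adjacent} pairs (compression, removal) and assert that each rewrite strictly decreases $\mu$. That is false. Take the swap case: if $\{j,k\}$ remains a prickly $3$-separator of $P_{i-1}/y$, you replace $(\da k,/y)$ by $(/y,\da k)$. This removes the disorder pair at position $(i,i+1)$, but if $m_{i-1}$ is itself a series compression you have created a new pair $(m_{i-1},/y)$ at position $(i-1,i)$, and if $m_{i+2}$ is a removal you have created a new pair $(\da k,m_{i+2})$ at position $(i+1,i+2)$. So $\mu$ can stay fixed or even increase. The same failure occurs in your elimination subcases: replacing $(\da k,/y)$ by $(/k,/y)$ can spawn a new adjacent pair $(m_{i-1},/k)$ when $m_{i-1}$ is a compression, leaving $\mu$ unchanged. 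What you actually need is a potential that is insensitive to adjacency — say the lexicographic pair (number of series compressions, number of \emph{inversions}, i.e.\ pairs $a<b$ with move $a$ a compression and move $b$ a removal), or the paper's $(m,(n_1,\dots,n_m))$. With that change your swap/eliminate dichotomy does terminate; with the adjacent-pair count it does not.

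\textbf{Case B with $y\notin\{j,k\}$ is not an application of Lemma~\ref{elemprop24} and is the bulk of the proof.} Lemma~\ref{elemprop24} controls $P\da k/y$; it says nothing about $P\da k\baba y=(P\ba y\da k)^{\flat}$, because the compactification does not act one element at a time on the prickly pair but simultaneously on every $2$-separating line of $P\ba y$. This is where the paper does real work: writing $S$ for the set of $2$-separating lines of $P\ba y$, it first proves that if $S\cap\{j,k\}=\emptyset$ then either $\{j,k\}$ is prickly in $P\baba y$ (the swap case) or $\lambda_{P\ba y}(\{j,k\})=1$, and in the latter event a delicate computation with Lemma~\ref{switch} shows the compression can be replaced by an ordinary contraction of $k$. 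It then separately treats $j,k\in S$, then $k\in S,\,j\notin S$, then $j\in S,\,k\notin S$, in each case commuting $\ud$-operations and $\da k$ via Lemma~\ref{switch} to rewrite $P\da k\baba y$ as $P\baba y\baba k$ (or a relabelling of $P\baba y\baba j$). Your proposal compresses this to ``a structural case analysis in the spirit of Lemma~\ref{elemprop24}\dots then shows that either $\{j,k\}$ survives\dots or the compactification absorbs $k$ into a point or a loop'' without carrying out any of it. In particular it is not true that the only alternative to $\{j,k\}$ surviving is that $k$ becomes a point or loop in $P\baba y$: one must also handle the configurations where $j$ (not $k$) is in $S$, where both are, and where neither is yet $\lambda_{P\ba y}(\{j,k\})=1$, and each resolves to a different rewrite. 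Since these subcases constitute most of the paper's proof, they cannot be waved away.

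Your Case A and your treatment of $y=j$ in both cases are correct, and match the paper's use of Lemmas~\ref{pricklytime0}, \ref{ess3}, and \ref{elemprop24}; in particular your observation that outcome~(i) forces $k$ to be a point or loop of $P/y$ and hence $\da k$ collapses to $/k$ is sound. But the termination potential must be changed and Case~B with $y\notin\{j,k\}$ must actually be carried out.
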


Next we focus on the c-minor $N_0$ of $M$ that is obtained in the above process after all of the contractions and compactified deletions are done but before doing any of the series compressions. By Lemma~\ref{elemprop25}, $N_0$ is $3$-connected. In view of this, we see that, to prove Theorem~\ref{mainone}, it suffices to prove Theorem~\ref{modc0}, which we restate here for the reader's convenience.

\begin{theorem}
\label{modc}
Let $M$ and $N$ be distinct $3$-connected $2$-polymatroids such that $N$ is a c-minor of $M$ and $|E(N)| \ge 4$. Then 
\begin{itemize}
\item[(i)] $r(M) \ge 3$ and $M$ is a whirl or the cycle matroid of a wheel; or 
\item[(ii)] $M$ has an element $\ell$ such that $M\baba \ell$ or $M/\ell$ is $3$-connected having a c-minor isomorphic to $N$; or 
\item[(iii)] $M$ has a prickly $3$-separator $\{y,z\}$ such that $M\da y$ is $3$-connected having a c-minor isomorphic to $N$.
\end{itemize}
\end{theorem}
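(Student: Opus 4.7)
The plan is to argue by contradiction, assuming that $M$ is a minimum counterexample (measured, say, by $|E(M)|-|E(N)|$) for which none of (i)--(iv) holds. Applying Theorem~\ref{lastmainone} to $M$ immediately disposes of the wheel/whirl case (giving outcome (i) of Theorem~\ref{modc}), leaving two scenarios to address: either there is an element $e$ such that $M\ba e$ or $M/e$ is $3$-connected, or every minimal $3$-separating subset of $M$ of size at least two is a prickly $3$-separator, with $M\da z$ $3$-connected and pure for each $z$ in such a set.

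The bulk of the proof will be devoted to the first scenario. Fix $e$ as guaranteed by Theorem~\ref{lastmainone}, and consider the subcase in which $M/e$ is $3$-connected. If $M/e$ has a c-minor isomorphic to $N$, outcome (ii) holds; so assume not, and symmetrically for deletion (with compactification). The strategy is then to use Lemma~\ref{newbix} (the polymatroid analogue of Bixby's lemma) to tightly constrain the $2$-separations of $M\ba e$ or $M/e$: each such $2$-separation must have one side consisting of a single line, a parallel pair of points, or a series pair of points. These rigid $2$-separations can be fed into the $2$-sum decomposition of Lemma~\ref{dennisplus}, and the c-minor transfer results of Lemmas~\ref{p49},~\ref{p69}, and~\ref{claim1} can then be used to migrate the $N$-c-minor across the $2$-separation and locate a different element $f$ such that one of $M/f$, $M\baba f$ is $3$-connected with a c-minor isomorphic to $N$. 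When the accumulating triangles and triads form a fan, I expect Lemma~\ref{fantan} to terminate the argument, contradicting the non-wheel/whirl assumption.

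The second scenario is handled via the prickly $3$-separator. Using that $M$ is not a wheel or whirl and is not resolvable via a single-element reduction, we show that some minimal $3$-separating set yields a two-element prickly $3$-separator $\{y,z\}$ (the minimum size at which the series-compression operation is defined in the theorem). Lemma~\ref{portia} delivers $3$-connectivity of $M\da y$, and the task becomes tracing the $N$-c-minor through the compression: by Lemma~\ref{elemprop24}, contracting an element outside $\{y,z\}$ preserves the prickly structure or exhibits one of four alternatives, and with Lemmas~\ref{atlast},~\ref{symjk}, and~\ref{ess3} together with Lemma~\ref{pricklytime0}, the sequence of deletions and contractions defining the $N$-c-minor can be commuted past the compression, giving an $N$-c-minor of $M\da y$ and hence outcome (iii).

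The hard part will be the delicate interaction between compactification and the c-minor structure. After a deletion, compactification can absorb lines that become $2$-separating, and such an absorption may identify or alter elements that are part of the target $N$. Lemma~\ref{complast1} allows compactifications to be postponed or interleaved, but controlling which elements of $N$ persist as distinct, non-absorbed points/lines throughout the reduction sequence requires threading an intricate case analysis---particularly when the candidate element $e$ lies inside, or is adjacent to, one of the highly constrained $2$-separations produced by Lemma~\ref{newbix}. I anticipate that this bookkeeping, rather than any single structural insight, is what makes the argument long and subtle, as the authors warn.
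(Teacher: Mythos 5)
The proposal has a genuine and serious gap at its very first step. You propose to apply Theorem~\ref{lastmainone} to $M$ and argue from the element $e$ it produces, but that theorem gives an element $e$ for which $M\ba e$ or $M/e$ is $3$-connected \emph{with no guarantee whatsoever that the resulting $2$-polymatroid has $N$ as a c-minor}. The element $e$ may well lie in $E(N)$, or be one whose removal in either direction destroys the $N$-minor. This is the fundamental difficulty that distinguishes a splitter theorem from a wheels-and-whirls theorem, and nothing in your sketch crosses it: once you assume ``$M/e$ is $3$-connected but has no $N$-c-minor, and symmetrically for deletion,'' you are stuck with an element that has done nothing for you. Your next move, invoking Lemma~\ref{newbix}, does not resolve this, because Lemma~\ref{newbix} constrains the $2$-separations of $M\ba e$ or $M/e$ without telling you which of them, if either, retains the $N$-minor; nor does your ``migrate the $N$-c-minor across the $2$-separation'' step come with any mechanism for showing that the element $f$ you locate preserves $3$-connectivity \emph{and} the $N$-minor simultaneously.

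The paper's proof does not proceed this way at all. It never invokes Theorem~\ref{lastmainone} as a starting reduction. Instead it takes the $N$-c-minor as a fixed organizing device from the beginning: for each element $\ell$ whose deletion (resp.\ contraction) keeps an $N$-minor, it tracks the $N$-side and non-$N$-side of $2$-separations of $M\ba\ell$ (resp.\ $M/\ell$) and measures the largest non-$N$-side $\mu(\ell)$ (resp.\ $\mu^*(\ell)$). Sections~\ref{edlp}--\ref{alltwos} are devoted to eliminating the cases $\mu(\ell)=2$ and $\mu^*(\ell)=2$; Section~\ref{fdll} produces a doubly labelled element; Section~\ref{keylargo} converts that into a non-$N$-$3$-separator; Sections~\ref{bigtime}--\ref{threeel} force a minimal non-$N$-$3$-separator to have size at least four; and Section~\ref{fourel} closes by locating a doubly labelled element $\ell$ inside that separator, taking crossing $2$-separations $(D_1,D_2)$ of $M\ba\ell$ and $(C_1,C_2)$ of $M/\ell$ whose non-$N$-sides lie in $Y_1-\ell$, and deriving the contradiction via $M/\ell_{22}$. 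Your prickly-$3$-separator scenario has a related defect: outcome (iii) of Theorem~\ref{lastmainone} yields a prickly $3$-separator of arbitrary size $\ge 2$ and says nothing about preserving $N$, whereas outcome (iii) of Theorem~\ref{modc} requires a two-element one with the $N$-minor intact. Bridging that gap is again the whole content of the paper's argument, not a corollary of the commutation lemmas you cite.
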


Our focus then becomes proving Theorem~\ref{modc}. For the rest of this section, we assume that the pair $(M,N)$ a counterexample to that theorem. The first two steps in the argument, whose proofs appear in Section~\ref{edlp},  are as follows.

\begin{lemma} 
\label{Step0}
$M$ has no point $z$ such that both $M\baba z$ and $M/z$ have c-minors isomorphic to $N$.
\end{lemma}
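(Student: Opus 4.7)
The plan is to argue by contradiction. Suppose that $M$ has a point $z$ such that both $M\baba z$ and $M/z$ have c-minors isomorphic to $N$. Since $(M,N)$ is a counterexample to Theorem~\ref{modc}, neither $M\baba z$ nor $M/z$ is $3$-connected, for otherwise conclusion~(ii) of that theorem would hold. Applying Lemma~\ref{newbix} to $z$ gives two possibilities: (a) $M/z$ is $2$-connected and every $2$-separation of $M/z$ has a side consisting of a parallel pair of points in $M/z$; or (b) $M\ba z$ is $2$-connected and every $2$-separation of $M\ba z$ has a side consisting of a single line of $M$ or a series pair of points in $M\ba z$. Before splitting into cases, I would verify that the whole setup is self-dual: using Lemmas~\ref{compact0} and~\ref{csm} together with Lemma~\ref{pricklytime} and Proposition~\ref{compdual}, both the $3$-connectedness and the c-minor relation are preserved under $*$, and all three excluded conclusions of Theorem~\ref{modc} dualize, so $(M^*, N^*)$ is again a counterexample. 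Since $M^*/z = (M\baba z)^*$ and $M^*\baba z = (M/z)^*$ by Lemma~\ref{compact0}(iv), the point $z$ inherits the hypothesis in $M^*$, and case~(b) for $(M, z)$ becomes case~(a) for $(M^*, z)$. So I may assume case~(a) holds.

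Because $M/z$ is not $3$-connected, a $2$-separation $(\{a, b\}, Y)$ of $M/z$ exists with $\{a,b\}$ a pair of points parallel in $M/z$, and so $r_M(\{a, b, z\}) = 2$. Since $M$ is $3$-connected with $|E(M)| \ge |E(N)| + 1 \ge 5$, $M$ contains no parallel pair of points (such a pair would give a $1$- or $2$-separation violating $3$-connectedness), and hence $\{a, b, z\}$ is a triangle of $M$. Because $a$ and $b$ are parallel in $M/z$, the sequence producing the $N$-c-minor of $M/z$ may be reordered to delete $a$ first (possibly after relabeling $b$ as $a$), so $M/z\ba a = M\ba a / z$ has a c-minor isomorphic to $N$. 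Hence $M\ba a$, and therefore $M\baba a$ by Lemma~\ref{complast1}, has a c-minor isomorphic to $N$; symmetrically for $b$. By the counterexample hypothesis, none of $M\baba a$, $M\baba b$, or $M\baba z$ is $3$-connected.

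To close the argument, I would apply Bixby's Lemma to $a$ and to $b$ in turn, using the triangle $\{a, b, z\}$ to pin down the resulting $2$-separations. The triangle forces $\{b, z\}$ to be a parallel pair of points in $M/a$, and a short rank calculation shows that $(\{b, z\}, E(M) - \{a, b, z\})$ is a $2$-separation of $M/a$. Crossing this $2$-separation with those already present in $M/z$ and $M\ba a$, and exploiting the $2$-sum decomposition of Lemma~\ref{dennisplus}, I aim to deduce one of three outcomes: a point $w$ for which $M/w$ or $M\baba w$ is $3$-connected with an $N$-c-minor; an extension of $\{a, b, z\}$ to a fan $x_1, x_2, x_3, x_4$ that allows Lemma~\ref{fantan} to be applied; or a prickly $3$-separator $\{y, z'\}$ for which $M\da y$ is $3$-connected with an $N$-c-minor. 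Each of these contradicts the fact that $(M,N)$ is a counterexample. The hard part is precisely this final combinatorial step: the simultaneous failure of $3$-connectedness for $M\baba a$, $M\baba b$, and $M\baba z$, all of which carry $N$-c-minors, must be parlayed through the triangle into one of the three excluded outcomes, and this will require repeated uncrossing of $2$-separations together with careful tracking of which elements survive into the $N$-c-minor via Lemmas~\ref{p49}, \ref{p69}, and~\ref{claim1}.
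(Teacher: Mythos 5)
Your opening is sound: you reduce by duality to the triangle case and deduce the triangle $\{a,b,z\}$, essentially as the paper does (the paper cites a prior result giving a triangle or triad through $z$ directly rather than routing through Lemma~\ref{newbix}, but the effect is the same), and the derivation that $M\ba a$ and $M\ba b$ carry $N$-c-minors is fine. The trouble is the concluding step, which you have explicitly deferred and which, as sketched, would not close. One of your three proposed outcomes is to extend $\{a,b,z\}$ to a fan and invoke Lemma~\ref{fantan}. But the third alternative in the conclusion of Lemma~\ref{fantan} is that $M$ has a point $w$ such that both $M\ba w$ and $M/w$ carry $N$-c-minors, which --- after compactifying as in Lemma~\ref{complast1} --- is precisely the hypothesis of the present lemma. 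So appealing to Lemma~\ref{fantan} only returns you to another doubly labelled point, and without a strictly decreasing measure the argument is circular. Consistently with this, the paper never derives Lemma~\ref{Step0} from Lemma~\ref{fantan}; it always cites the two together (for example in the proofs of Lemmas~\ref{noone} and \ref{y13}).

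The missing idea is the paper's maximal-fan argument, carried out directly inside the proof of Lemma~\ref{Step0}. From the triangle $\{t,z,s\}$ and the failure of $3$-connectedness of $M\ba s$ and $M\ba t$, a cited result produces a triad $\{z,s,u\}$, giving a length-$4$ fan $t,z,s,u$. One then takes a fan $x_1,\dots,x_k$ of \emph{maximal} length with $M\ba x_2$ and $M/x_2$ both carrying $N$-c-minors, and shows by induction that every interior $x_i$ is doubly labelled. The endgame exploits the fact that a triangle and a triad cannot meet in exactly one element, together with maximality, to force the fan to wrap around onto itself: the closing triangle must be $\{x_k,x_{k-1},x_1\}$, hence $k$ is even, and then $r(X)\le k/2$ and $r^*(X)\le k/2$ for $X=\{x_1,\dots,x_k\}$, so $\lambda(X)=0$ by Lemma~\ref{rr*}. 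Since $M$ is $3$-connected, $X=E(M)$, so $M$ is a matroid in which every element lies in a triangle and a triad, and Tutte's Wheels-and-Whirls Theorem gives the contradiction that $M$ is a wheel or a whirl. None of the lemmas you flag for the hard part (\ref{p49}, \ref{p69}, \ref{claim1}) plays any role in this closing step; the combinatorics that is actually needed is that of fans, not of $2$-sum decompositions.
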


\begin{lemma} 
\label{Step1}
$M$ has no element $\ell$ such that   $M\ba \ell$ or $M/ \ell$ is disconnected having a c-minor isomorphic to $N$.
\end{lemma}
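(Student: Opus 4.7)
The plan is to assume for contradiction that $M$ has such an element $\ell$ with $M \ba \ell$ disconnected having $N$ as a c-minor; the case with $M/\ell$ disconnected reduces to this by duality via Proposition~\ref{sminordual} and Lemma~\ref{csm}. Let $(A, B)$ be the partition of $E(M) - \ell$ witnessing the disconnection, so $A$ and $B$ are non-empty and skew; in particular $M \ba \ell = (M|A) \oplus (M|B)$. Because $N$ is $2$-connected with $|E(N)| \ge 4$, it is a c-minor of one summand, say $M|A$, and so $|A| \ge |E(N)| \ge 4$. Hence we may apply $3$-connectedness of $M$ to the partition $(A, B\cup\{\ell\})$, and a short rank computation using $r(B \cup \{\ell\}) \le r(B) + 2$ and $r(M) \ge r(A) + r(B)$ forces $\lambda_M(A) = 2$, together with $r(\{\ell\}) = 2$, $r(B \cup \{\ell\}) = r(B) + 2$, and $r(M) = r(A) + r(B)$. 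Thus $\ell$ is a line skew to $B$, lying in $\cl(A \cup B)$, with $\lambda_M(\{\ell\}) = 2$.

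Next, for each $b \in B$, skewness of $b$ to $A$ yields $M|(A \cup \{b\}) = (M|A) \oplus (M|\{b\})$, so both $M|(A \cup \{b\}) \baba b$ and $M|(A \cup \{b\})/b$ return $M|A$ (up to compactifying a loop when $b$ is a line being contracted). Commuting operations via Lemma~\ref{complast} and Corollary~\ref{complast3} then shows that both $M\baba b$ and $M/b$ have c-minors isomorphic to $N$. If any $b \in B$ were a point, Lemma~\ref{Step0} would yield the desired contradiction, so every element of $B$ is a line; in particular $r(B) \ge 2$. This makes the partition $(A \cup \{\ell\}, B)$ satisfy the size/rank conditions of $3$-connectedness, and an analogous rank computation yields $r(A \cup \{\ell\}) = r(A) + 2$, so $\ell$ is skew to $A$ as well. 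Now taking $b = \ell$ in the argument above shows $M/\ell$ has $N$ as a c-minor.

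The proof will be completed by showing $M/\ell$ is $3$-connected, which will contradict the failure of condition (ii) of Theorem~\ref{modc} for $(M, N)$. Suppose $(X, Y)$ is a $2$-separation of $M/\ell$. Since $\ell$ is compact with $r(\{\ell\}) = 2$, a direct computation gives
\begin{equation*}
\sqcap_{M/\ell}(X, Y) = \sqcap_M(X, Y) + 2 - \sqcap_M(X, \{\ell\}) - \sqcap_M(Y, \{\ell\}),
\end{equation*}
while applying the $3$-connectedness of $M$ to the partitions $(X \cup \{\ell\}, Y)$ and $(X, Y \cup \{\ell\})$ forces $\sqcap_M(X, \{\ell\}), \sqcap_M(Y, \{\ell\}) \le \sqcap_M(X, Y)$. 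Combined with $\sqcap_{M/\ell}(X, Y) \le 1$, these yield $\sqcap_M(X, Y) \ge 1$, and the direct-sum decomposition of $M\ba\ell$ further gives $\sqcap_M(X, Y) = \lambda_{M|A}(X \cap A) + \lambda_{M|B}(X \cap B)$. A case analysis on whether $X \cap A$ and $X \cap B$ are trivial or non-trivial---exploiting skewness of $\ell$ to each of $A$ and $B$ (which forces $\sqcap_M(X, \{\ell\}) = 0$ whenever $X$ sits entirely inside one summand), and using that $B$ consists only of lines in the non-degenerate cases---produces a $2$-separation of $M$ itself, the final contradiction.

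The main obstacle is this final case analysis, particularly the subcases where $M|A$ or $M|B$ is itself disconnected, a configuration compatible with the $3$-connectedness of $M$ because of the extra connecting role played by $\ell$.
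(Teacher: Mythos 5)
Your setup is essentially sound and you establish some facts the paper does not bother to state explicitly (namely that $\ell$ is skew to \emph{both} $A$ and $B$, and that the connectivity formula $\sqcap_{M/\ell}(X,Y)=\sqcap_M(X,Y)+2-\sqcap_M(X,\{\ell\})-\sqcap_M(Y,\{\ell\})$ together with $3$-connectedness forces $\sqcap_M(X,Y)\ge 1$). The reduction by duality, the conclusion that $B$ consists only of lines via Lemma~\ref{Step0}, and the fact that $M/\ell$ has $N$ as a c-minor are all correct. The approach of aiming to contradict clause (ii) of Theorem~\ref{modc} by showing $M/\ell$ is $3$-connected is also legitimate.

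However, the proof is genuinely incomplete, and you acknowledge as much in your last sentence. The final case analysis is precisely where all the work lives, and it is not carried out. Two specific problems arise. First, your stated target---that the case analysis ``produces a $2$-separation of $M$ itself''---is not what the paper's argument actually delivers, and I doubt it is the right target. The paper uncrosses the low-order partition $(C_1,C_2)$ of $M/\ell$ against the $1$-separation $(X,Y)$ of $M\ba\ell$ and, after a submodularity calculation, reduces to two outcomes: either both $Y\cap C_1$ and $Y\cap C_2$ have rank at most one (whence $|Y|\le 2$, contradicting $Y\supseteq E(N)$ with $|E(N)|\ge 4$), or some $C_i$ collapses to a single point $p$ with $k=1$, in which case $\{p\}$ is a component of $M/\ell$, making $p$ doubly labelled and contradicting Lemma~\ref{Step0}. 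Neither of these is literally a $2$-separation of $M$, so aiming for that will likely lead you astray in the subcase where $M|A$ or $M|B$ is itself disconnected (a subcase you correctly flag as the sticking point). Second, by only supposing ``$(X,Y)$ is a $2$-separation of $M/\ell$'' you implicitly leave untreated the possibility that $M/\ell$ fails even to be $2$-connected with a trivial side (a single point forming a component); this is exactly the paper's second outcome and requires Lemma~\ref{Step0}, not a rank argument in $M$. You should redirect the endgame toward those two contradictions rather than a $2$-separation of $M$, and you should close the $2$-connectedness case explicitly.
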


Note that the use of $\ell$ above, and in what follows, does not imply that $\ell$ is a line, although most of our attention will be focused on that case.

Now $N$ occurs as a c-minor of $M$. Although we will often work with c-minors of $M$ that are isomorphic to $N$, at a certain point in the argument, we will settle on a particular labelled c-minor of $M$ that is isomorphic to $N$.

When  $M$ has $N$ as a c-minor and has a 2-separation $(X,Y)$, either $X$ or $Y$, say $X$, contains at least $|E(N)| - 1$ elements of $N$. We call $X$ the {\it $N$-side} of the 2-separation and $Y$ the {\it non-$N$-side}.

Suppose $M\baba \ell$ has $N$ as a c-minor. Because the theorem fails, $M\baba \ell$ is not $3$-connected. Now, by Lemma~\ref{compact0}(iii), 
$\lambda_{M\baba \ell} = \lambda_{M\ba \ell}$. Thus a partition $(X,Y)$ of $E - \ell$ with $\min\{|X|,|Y|\} \ge 2$ is a 2-separation of $M\baba \ell$ if and only if it is a 2-separation of $M\ba \ell$. It follows that we can label the $N$- and non-$N$-sides of a non-trivial $2$-separation of $M\ba \ell$ based on their labels in the corresponding $2$-separation of $M\baba \ell$. Among all $2$-separations of $M\baba \ell$, let the maximum cardinality of the non-$N$-side be $\mu(\ell)$. Similarly, if $M/\ell$ has $N$ as a c-minor, let $\mu^*(\ell)$ be the maximum cardinality of the non-$N$-side of a $2$-separation of $M/\ell$. We observe that  $\mu(\ell)$ and $\mu^*(\ell)$ are not defined unless $M\baba \ell$ and $M/ \ell$, respectively, have $N$ as a c-minor.

The next step in the argument establishes the following. 


\begin{noname} 
\label{Step2.2}
$M$ has no element $\ell$ for which $\mu(\ell) = 2$ or $\mu^*(\ell)= 2$.
\end{noname}

The argument for (\ref{Step2.2}) is quite long since it involves a detailed analysis of the various structures that can arise on the non-$N$-side when $\mu(\ell) = 2$. We then use duality to eliminate the cases when $\mu^*(\ell)= 2$. These arguments appear in Section~\ref{alltwos}. 


Recall that a special $N$-minor of $M$ is any c-minor of $M$ that is either equal to $N$ or differs from $N$ by having a single point relabelled. The next major step in the argument, which is dealt with in Lemma~\ref{bubbly},  proves the following. 

\begin{noname} 
\label{Step3}
If $(X,Y)$ is a $2$-separation of $M\ba \ell$ where $X$ is the $N$-side and $|Y| = \mu(\ell)$, then $Y$ contains an element $y$ such that both $M\baba y$  and $M/ y$ have special $N$-minors.
\end{noname}

In Lemma~\ref{nonN}, we use the element found in the last step to prove the following.

\begin{noname} 
\label{Step4}
There is a c-minor $N'$ of $M$ that is isomorphic to $N$ such that $M$ has a $3$-separator $(X,Y)$ with $|E(N') \cap Y| \le 1$ such that if $|Y| = 2$, then both elements of $Y$ are lines.
\end{noname}

The particular c-minor $N'$ whose existence is proved in (\ref{Step4}) is the one used throughout the rest of the argument. From that point on in the argument, we use $N$ to denote $N'$. An exactly 3-separating set $Y$ is called a {\it non-$N$-$3$-separator} if $|E(N) \cap Y| \le 1$ and, when $|Y| = 2$, both elements of $Y$ are lines. By (\ref{Step4}), a non-$N$-$3$-separator exists. Hence there is a minimal such set. 


At the beginning of Section~\ref{bigtime}, we prove that 

\begin{noname} 
\label{Step5}
$M$ has a minimal non-$N$-$3$-separator with at least three elements. 
\end{noname}

The rest of Section~\ref{bigtime} is devoted to showing the following.

\begin{noname} 
\label{Step5.5}
A minimal non-$N$-$3$-separator of $M$ with exactly  three elements consists of three lines. 
\end{noname}

The purpose of Section~\ref{threeel} is   to prove that 

\begin{noname} 
\label{Step6}
$M$ has a minimal non-$N$-$3$-separator with at least four elements. 
\end{noname}

The argument to show (\ref{Step6}) is quite long since it involves treating all non-$N$-$3$-separators that consist of exactly three lines.

We say that an element $\ell$ of $M$ is {\it doubly labelled} if both $M\ba \ell$ and $M/\ell$ have special $N$-minors. The next step, which is shown in Section~\ref{fourel}, establishes the following.   

\begin{noname} 
\label{Step7}
If $Y_1$ is a minimal non-$N$-$3$-separator of $M$ with at least four elements, then $Y_1$ contains a doubly labelled element. 
\end{noname}

Next we take the doubly labelled element $\ell$ identified in the last step. We then take non-trivial $2$-separations $(D_1,D_2)$ and $(C_1,C_2)$ of $M\ba \ell$ and $M/ \ell$, respectively, having $D_1$ and $C_1$ as their $N$-sides. We show that these 2-separations can be chosen so that each of  $D_2$ and $C_2$  is contained in $Y_1 - \ell$, and neither contains any points of $M$. 

We then show that each of $D_1\cap C_2, D_2 \cap C_1$, and $D_2 \cap C_2$ consists of a single line of $M$, that the union of these lines spans $\ell$, and these four lines together make up $Y_1$.

The final contradiction is obtained by showing that $M/\ell_{22}$ is \thc\ having a c-minor isomorphic to $N$, where $\ell_{22}$ is the unique element in $D_2 \cap C_2$.

\section{The reduction to c-minors}
\label{redc}

\setcounter{theorem}{1}

The goal of this section is to prove Lemma~\ref{endtime} and thereby show that Theorem~\ref{mainone} can be proved by verifying Theorem~\ref{modc}.

\begin{proof}[Proof of Lemma~\ref{endtime}.]
Consider the s-minors of $M$ that are isomorphic to $N$ and are obtained using the minimum number of series compressions. Suppose $N_1$ is such an s-minor and let the number of series compressions used in its production be $m$. If $m = 0$, then $N_1$ is an s-minor of $M$ satisfying the requirements of the lemma. Hence we may assume that $m > 0$. Let $n_1$ be the number of elements that are removed after the last series compression has been completed. For $2 \le i \le m$, let $n_i$ be the number of elements that are removed via deletion or contraction between the $(m-i+1)$st and the $(m-i+2)$nd series compressions. Consider the sequence $(n_1,n_2,\dots,n_m)$ and  let $N_0$  be a choice for $N_1$ for which the corresponding sequence is lexicographically minimal. If each $n_i$ is zero, then we have found, as desired, an s-minor of $M$ in which all of the series compressions are performed after all of the contractions and compactified deletions.  Assume then that $n_i$ is the first non-zero $n_j$. Let $P$ be the $2$-polymatroid that we have immediately prior to the $(m-i+1)$st series compression, with this series compression involving compressing the line $k$ from the prickly $3$-separator $\{j,k\}$ of $P$. Let $Q$ be the 2-polymatroid we have immediately prior to the $(m - i +2)$nd series compression.

By Lemma~\ref{ess3}, we may assume that $j$ is neither deleted or contracted in producing $N_0$ 
otherwise we can replace the compression of $k$ by a deletion followed by a compactification or by a contraction. 
By Lemma~\ref{elemprop}, we may assume that either 
\begin{itemize}
\item[(a)] all of the elements removed in producing $Q$ from $P\da k$ are done so by deletion followed by compactification; or 
\item[(b)] the next move in the production of $Q$ is the contraction of an element, say $y$. 
\end{itemize}
   
Assume that (b) holds. 
By Lemma~\ref{pricklytime0}, $P\da k /y = P/y \da k.$  Assume that $\{j,k\}$ is not a prickly $3$-separator of $P/y$. We now apply Lemma~\ref{elemprop24}. If $r(\{y,j,k\}) = 3$, then $j$ is a loop of $P\da k/y$ so $j$ must be deleted or contracted to produce $N_0$; a \cn. 
 If $P\da k/y$ is $P/y\baba k$ or $P/y/k$, then we do not need to compress $k$ in the production of $N_0$, so the choice of $N_0$ is contradicted. We  
 are left with the possibility that           
 $P\da k/y$ can be obtained from $P/y\baba j$ by relabelling $k$ as $j$. Again we  obtain the \cn\  that we can reduce the number of series compressions where, if $j \in E(N_0)$, we replace $N_0$ by the 2-polymatroid in which $j$ is relabelled by $k$.  
We conclude that $\{j,k\}$ is a prickly $3$-separator of $P/y$. In that case,  interchanging the compression of $k$ and the contraction of $y$ in $P$ produces a $2$-polymatroid in which $n_i$ is reduced and so the choice of $N_0$ is contradicted. We  deduce that (b) does not hold, so (a) holds.

In the construction of $N_0$, let $y$ be the first element that is deleted following the compression of $k$. Now, by Lemma~\ref{pricklytime0}, 
$P\da k \baba y = (P\da k \ba y)^{\flat} = (P\ba y \da k)^{\flat}.$  As $P$ is compact, $r(E - y) = r(E)$. If $r(E - \{y,j,k\}) = r(E) - 3$, then $P\ba y$ has $\{j,k\}$ as a $1$-separating set. This is a contradiction as $j$ cannot be deleted or contracted in the production of $N_0$ from $P$. Hence 

\begin{equation}
\label{2down}
r(E - \{y,j,k\}) \ge r(E) - 2.
\end{equation}

Let $S$ be the set of $2$-separating lines in $P\ba y$. Clearly no member of $S - k$ is parallel to $k$. 
We show next that

\begin{sublemma}
\label{newjk} 
$S \cap \{j,k\} \neq \emptyset.$
\end{sublemma}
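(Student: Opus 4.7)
The plan is to argue by contradiction, exploiting the lexicographic minimality of $(n_1,\ldots,n_m)$. I would assume $S \cap \{j,k\} = \emptyset$, so that $\lambda_{P\ba y}(\{j\}) = \lambda_{P\ba y}(\{k\}) = 2$, equivalently $r(E-\{y,j\}) = r(E-\{y,k\}) = r(E)$. Combined with (\ref{2down}) and the prickly identity $r(E-\{j,k\}) = r(E)-1$, this forces $r(E-\{y,j,k\}) \in \{r(E)-1,\, r(E)-2\}$, yielding two cases. In each, the goal is to exhibit an alternative production of $N_0$ that either uses the same number $m$ of series compressions with a lexicographically smaller sequence, or uses strictly fewer compressions, contradicting the choice of $N_0$.

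In the case $r(E-\{y,j,k\}) = r(E)-1$, we have $\lambda_{P\ba y}(\{j,k\}) = 2$, and the above rank equalities together with $j,k \notin S$ show that $\{j,k\}$ is a prickly $3$-separator of both $P\ba y$ and $(P\ba y)^{\flat} = P\baba y$; the formula $r^\flat(X) = r(X) + \sum_{x \in X}[\lambda(\{x\}) - r(\{x\})]$ contributes nothing on subsets of $\{j,k\}$ since those elements are compact in $P\ba y$. Thus $(P\baba y)\da k$ is a bona fide series compression. To verify $(P\baba y)\da k = P\da k \baba y$, I would use Lemma~\ref{pricklytime0} to rewrite the right-hand side as $(P\ba y\da k)^{\flat}$. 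Lemma~\ref{atlast2}, together with the observation that no element of $S-k$ is parallel to $k$, shows that the $2$-separating lines of $(P\ba y)\da k$ coincide with $S$, so repeated application of Lemma~\ref{switch} commutes each $\ud z_i$ (for $z_i \in S$) past $\da k$, yielding $((P\ba y)^{\flat})\da k = ((P\ba y)\da k)^{\flat}$. Substituting $P \to P\baba y \to (P\baba y)\da k$ for the block $P \to P\da k \to P\da k \baba y$ in the production of $N_0$ reduces $n_i$ by one without changing $m$, contradicting lexicographic minimality.

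In the case $r(E-\{y,j,k\}) = r(E)-2$, $\{j,k\}$ is only a $2$-separator of $P\ba y$, so the reordering strategy fails. Instead I would establish the identity $P\da k\baba y = (P\ba y)/k$, which allows the compression of $k$ to be replaced by a contraction preceded by $\baba y$, producing $N_0$ with only $m-1$ series compressions and contradicting the minimality of $m$. The identity is proven by comparing the rank function on each $X \subseteq E - \{y,k\}$: the right-hand side is $r_P(X\cup k) - 2$, while the left-hand side equals $r_{P\ba y\da k}(X)$ plus the compactification correction $\sum_{x\in X}[\lambda_{P\ba y\da k}(\{x\}) - r_{P\ba y\da k}(\{x\})]$. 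A direct computation gives $\lambda_{P\ba y \da k}(\{j\}) = 1$, and the hypotheses $S \cap \{j,k\} = \emptyset$ and the compactness of $P$, together with Lemma~\ref{atlast2}, ensure that $j$ is the only element of $E - \{y,k\}$ that is non-compact in $P\ba y\da k$; a subcase analysis on $r_P(X\cup k) - r_P(X) \in \{0,1,2\}$ then matches the two sides exactly. Both cases yield a contradiction, so $S \cap \{j,k\} \neq \emptyset$. The main obstacle will be the verification of the rank identity in the second case, where the tracking of non-compact elements in $P\ba y\da k$ must be carried out with care, leaning on the prickly structure to rule out any other compactification corrections.
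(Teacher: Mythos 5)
Your overall strategy matches the paper's: argue by contradiction from $S \cap \{j,k\} = \emptyset$, split on whether $r(E-\{y,j,k\})$ equals $r(E)-1$ or $r(E)-2$ (equivalently, on $\lambda_{P\ba y}(\{j,k\}) \in \{2,1\}$), and in each case exhibit an alternative production of $N_0$ contradicting the chosen minimality. Your Case~1 is essentially the paper's argument and is sound: since $j$ and $k$ are compact in $P\ba y$, the set $\{j,k\}$ remains a prickly $3$-separator of $P\baba y$; and since in this case $j$ is not $2$-separating in $P\ba y\da k$, the $2$-separating lines of $P\ba y\da k$ are precisely $S$, so Lemma~\ref{switch} commutes $\da k$ past each $\ud\ell_i$ to give $(P\baba y)\da k = P\da k\baba y$, and reordering reduces $n_i$.

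Case~2 has a genuine gap. The identity $P\da k\baba y = (P\ba y)/k$ cannot hold as stated: the left side is compact, while $P\ba y/k$ is not compact whenever $S \neq \emptyset$. Indeed, for $\ell \in S$ one checks that $\lambda_{P\ba y/k}(\{\ell\}) = r_{P\ba y/k}(\{\ell\}) - 1$, so $\ell$ is non-compact in $P\ba y/k$. What you actually need, and what the paper proves, is $P\da k\baba y = P\baba y/k = P/k\baba y$, with compactification on both sides. This is not merely a notational slip, because the error propagates into your verification: the claim that ``$j$ is the only element of $E-\{y,k\}$ that is non-compact in $P\ba y\da k$'' is false whenever $S \neq \emptyset$, since by Lemma~\ref{atlast2} every $\ell \in S$ is still a $2$-separating line of $P\ba y\da k$. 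Your pointwise rank comparison therefore carries the $j$-correction on the left but silently drops the $S$-corrections that appear on \emph{both} sides of the correct (compactified) identity. Those $S$-corrections must be tracked and shown to match across the two sides; that is exactly the content the paper obtains structurally, by writing $P\ba y$ as a $2$-sum to show $P\ba y\da k\ud j = P\ba y/k$, observing that $S$ is the set of $2$-separating lines of each, and then commuting the $\ud\ell_i$'s through via Lemma~\ref{switch}. A direct rank computation can be made to work, but not with an uncompactified right-hand side and not while ignoring the elements of $S$.
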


Suppose, instead,  that neither $j$ nor $k$ is in $S$. Then, by Lemma~\ref{atlast2}, $S$ is the set of $2$-separating lines of $P\ba y \da k$. Let $S = \{\ell_1,\ell_2,\dots,\ell_t\}$. Then 
$$P\baba y \da k = P\ba y \ud \ell_1 \ud \ell_2\ud \dots\ud \ell_t \da k.$$ 
Thus, by repeated application of Lemma~\ref{switch} and using Lemma~\ref{pricklytime0}, we see that 
\begin{align*}
P\baba y \da k & = P\ba y \da k \ud \ell_1 \ud \ell_2\ud \dots\ud \ell_t \\
			& = P\da k \ba y \ud \ell_1 \ud \ell_2\ud \dots\ud \ell_t \\
			& = P\da k \baba y.
\end{align*}
To prevent us from being able to reduce $n_i$, we must have that 

\begin{sublemma}
\label{newjk2} 
$\{j,k\}$ is not a prickly $3$-separator of $P\baba y$. 
\end{sublemma}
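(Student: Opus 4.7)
The plan is to prove this by contradiction. Suppose, toward a contradiction, that $\{j,k\}$ \emph{is} a prickly $3$-separator of $P\baba y$. Then the series compression $(P\baba y)\da k$ is a legitimate operation in the formation of an s-minor. Combined with the identity $P\baba y \da k = P\da k \baba y$ established in the lines immediately preceding, this shows that the two consecutive operations ``first compress $k$ from $P$, then perform the compactified deletion of $y$'' can be interchanged to ``first perform the compactified deletion of $y$ from $P$, then compress $k$ from $P\baba y$'' without altering the resulting $2$-polymatroid.

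I would then substitute this swap into the sequence of operations that produces $N_0$. The result is a new sequence of operations, still using exactly $m$ series compressions, that realizes the same s-minor $N_0 \cong N$ of $M$. In the new sequence, the $(m-i+1)$st series compression (the compression of $k$) occurs one position later than before, while every other series compression, contraction, and compactified deletion remains in its original position. Consequently, the entry $n_i$ — the number of deletions or contractions between the $(m-i+1)$st and $(m-i+2)$nd series compressions — decreases by $1$. When $i<m$, the entry $n_{i+1}$ increases by $1$; when $i=m$, the deletion of $y$ simply moves to before the first series compression and is no longer counted in any $n_j$. In either case, since $n_1 = n_2 = \cdots = n_{i-1} = 0$ by assumption and $n_i$ drops by $1$, the new sequence is lexicographically strictly smaller than $(n_1,n_2,\ldots,n_m)$, contradicting the minimality built into our choice of $N_0$. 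Hence $\{j,k\}$ cannot be a prickly $3$-separator of $P\baba y$.

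The main obstacle — verifying that the interchange of the two operations produces the same $2$-polymatroid — is already handled by the identity $P\baba y \da k = P\da k \baba y$, together with the assumed validity of the compression of $k$ in $P\baba y$. No further calculations are required; the remainder is a bookkeeping check on the gap sequence $(n_1,\dots,n_m)$.
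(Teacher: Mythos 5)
Your proposal is correct and is exactly the argument the paper has in mind; the paper condenses it to the single sentence ``To prevent us from being able to reduce $n_i$, we must have that \ref{newjk2} holds,'' leaving the reader to fill in the swap-and-lexicographic-comparison bookkeeping that you have spelled out. Your accounting of how the entries $n_i$ and $n_{i+1}$ change (and the $i=m$ edge case) is accurate and matches the definition of the gap sequence.
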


Continuing with the proof of \ref{newjk}, suppose $\lambda_{P\ba y}(\{j,k\}) = 1$. Then $P\ba y$ is the $2$-sum with basepoint $p$ of two $2$-polymatroids $P_1$ and $P_2$ having ground sets $(E - \{y,j,k\}) \cup p$ and $\{j,k,p\}$, respectively. Since neither $j$ nor $k$ is  2-separating in $P\ba y$, it follows that, in the rank-$3$ $2$-polymatroid $P_2$, the point $p$ does not lie on either of the lines $j$ or $k$. By Lemma~\ref{dennisplus}(iv), $P\ba y \da k = P_1 \oplus (P_2 \da k)$. Now $P_2 \da k$ consists of the line $j$ with the point $p$ lying on it. Hence $j$ is a $2$-separating line of $P\ba y \da k$, so $S \cup j$ is the set of $2$-separating lines of  $P\ba y \da k$. Since $P_2 \da k\ud j = P_2 /k$, we deduce that  $P\ba y \da k\ud j = P\ba y/k$. It follows that $S$ is the set of 2-separating lines of  $P\ba y/k$. 
Thus 
\begin{align*}
P\da k \baba y				& = 	(P\ba  y \da k)^{\flat}\\
                          & = P\ba y \da k \ud j \ud \ell_1 \ud \ell_2\ud \dots\ud \ell_t  \\
                          & =  (P\ba y \da k \ud j) \ud \ell_1 \ud \ell_2\ud \dots\ud \ell_t  \\
			& = P  \ba y /k \ud \ell_1 \ud \ell_2\ud \dots\ud \ell_t\\
			& = P/k \ba y \ud \ell_1 \ud \ell_2\ud \dots\ud \ell_t\\
			& = P/k\baba y.
\end{align*}
We conclude that, instead of compressing $k$, we can contract it, which contradicts that choice of $N_0$. We conclude that  $\lambda_{P\ba y}(\{j,k\}) = 2$. Moreover, $\sqcap(j, E-\{y,j,k\}) = 1 = \sqcap(k, E-\{y,j,k\})$ since neither $j$ nor $k$ is in $S$.  Thus $\{j,k\}$ is a prickly $3$-separator of $P\ba y$. It follows without difficulty that $\{j,k\}$ is a prickly $3$-separator of $P\baba y$; a contradiction to \ref{newjk2}. We conclude that \ref{newjk} holds. 

We now know that $j$ or $k$ is in $S$.  Suppose next that both $j$ and $k$ are in $S$. Thus $r(E - \{y,j\}) = r(E) - 1 = r(E - \{y,k\})$. By submodularity and (\ref{2down}), we deduce that $r(E - \{y,j,k\}) = r(E) - 2$. Hence $P\ba y$ is the $2$-sum with basepoint $p$ of two $2$-polymatroids $P_1$ and $P_2$ having ground sets $(E - \{y,j,k\}) \cup p$ and $\{j,k,p\}$, respectively. Moreover, in $P_2$, the point $p$ lies on both $j$ and $k$. 
Now $P\baba y = P\ba y \ud \ell_1 \ud \ell_2\ud \dots\ud \ell_t \ud j \ud k$. Hence $P\baba y$ has $j$ and $k$ as parallel points. 
Thus 
\begin{align*}
P\da k \baba y				& = 	(P\ba  y \da k)^{\flat}\\
                          & = P\ba y \da k  \ud \ell_1 \ud \ell_2\ud \dots\ud \ell_t \ud j \\
			& = P  \ba y  \ud \ell_1 \ud \ell_2\ud \dots\ud \ell_t \ud j \da k ~\text{~~~~by Lemma~\ref{switch};} \\
			& = P  \ba y \ud \ell_1 \ud \ell_2\ud \dots\ud \ell_t \ud j \ud k \ba k ~\text{~~~~as $Q\da x = Q \ud x \ba x$ when $r(\{x\}) = 2$;} \\
			& = P\baba y\ba k\\
			& =  P\baba y\baba k,
\end{align*}
where the last step follows because $P\da k \baba y$ is compact and so $P\baba y\ba k$ is compact. Again we have a contradiction since we have managed to remove $k$ via deletion rather than by series compression.

Now assume that  $k$ is in $S$ but $j$ is not. Then 
\begin{align*}
P\da k \baba y				& = 	(P\ba  y \da k)^{\flat}\\
                          & = P\ba y \ \ud \ell_1 \ud \ell_2\ud \dots\ud \ell_t \da k \\
			& = P  \ba y  \ud \ell_1 \ud \ell_2\ud \dots\ud \ell_t \ud k \ba k\\
			& = P\baba y \ba k\\
			& = P\baba y \baba k.
\end{align*}
Once again we have managed to avoid the need to perform a series compression on   $k$; a contradiction. 

Finally, suppose $j$ is in $S$ but $k$ is not. Then we use the fact that $P\da j$ is $P\da k$ with $j$ relabelled as $k$. 
The argument in the last paragraph yields a \cn\ where, when $j \in E(N_0)$, we replace $N_0$ by the 2-polymatroid in which $j$ is labelled as $k$. 
\end{proof}


\section{Eliminating doubly labelled points} 
\label{edlp}

In this section, we prove that, when $(M,N)$ is a counterexample to Theorem~\ref{modc}, $M$ has no doubly labelled point and has no element whose deletion or contraction is disconnected having a c-minor isomorphic to $N$. 

The following elementary lemmas will be helpful.

\begin{lemma}
\label{helpful}
Let $T$ be a set of three points in a  $2$-polymatroid $Q$ and suppose $x \in T$. 
\begin{itemize}
\item[(i)] If $T$ is a triangle of $Q$, then $\lambda_{Q/ x}(T-x) \le 1$. 
\item[(ii)] If $T$ is a triad of  $Q$, then $\lambda_{Q\ba x}(T-x) \le 1$. 
\end{itemize}
\end{lemma}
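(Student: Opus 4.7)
The plan is to prove both parts by directly unpacking the definition of $\lambda$ in the contracted (respectively, deleted) polymatroid and applying the rank relations that come with being a triangle (respectively, a triad). Write $T = \{x,y,z\}$, and set $A = E(Q) - T$.

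For part (i), I would use that $x$ is a point, so $r(\{x\}) = 1$ and $r_{Q/x}(Y) = r(Y \cup x) - 1$ for every $Y \subseteq A \cup \{y,z\}$. The triangle condition gives $r(T) = 2$, hence $r_{Q/x}(\{y,z\}) = r(T) - 1 = 1$. Also $r(Q/x) = r(Q) - 1$, and $r_{Q/x}(A) = r(A \cup x) - 1$. Substituting into the definition yields
\[
\lambda_{Q/x}(\{y,z\}) \;=\; 1 + \bigl(r(A \cup x) - 1\bigr) - \bigl(r(Q) - 1\bigr) \;=\; r(A \cup x) - r(Q) + 1 \;\le\; 1,
\]
since $r(A \cup x) \le r(Q)$.

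For part (ii), I would exploit the fact that, by definition of a triad, $r(A) = r(Q) - 1$ while every proper superset of $A$ in $E(Q)$ has rank $r(Q)$. In particular, $A \cup \{y,z\} = E(Q) - x$ is such a proper superset, so $r(Q \ba x) = r(Q)$. Since $y$ and $z$ are points, $r(\{y,z\}) \le 2$. Substituting into the definition,
\[
\lambda_{Q \ba x}(\{y,z\}) \;=\; r(\{y,z\}) + r(A) - r(Q \ba x) \;=\; r(\{y,z\}) + (r(Q) - 1) - r(Q) \;=\; r(\{y,z\}) - 1 \;\le\; 1,
\]
as required.

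Both arguments are straightforward bookkeeping with rank, so there is no real obstacle; the only care needed is to check that $A$ is indeed a proper subset of $A \cup \{y,z\}$ (so the triad property applies), which is automatic since $y,z \notin A$.
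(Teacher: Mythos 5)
Your proof is correct, and since the paper states this lemma without supplying a proof, there is nothing to compare it against; the argument you give is the natural direct calculation. Both parts correctly unpack $\lambda$, use that $x$ is a point (so contraction behaves simply and $r(Q/x) = r(Q)-1$), use $r(T)=2$ in (i) and the triad condition $r(E-T) = r(Q)-1$, $r(E-x)=r(Q)$ in (ii), and finish with the trivial bounds $r(A\cup x)\le r(Q)$ and $r(\{y,z\})\le 2$.
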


\begin{lemma}
\label{tryto}
Let $T_1$ and $T_2$ be distinct triads in a $2$-polymatroid $Q$. Then $r(E(Q) - (T_1 \cup T_2)) \le r(Q) - 2.$
\end{lemma}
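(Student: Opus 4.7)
The plan is to apply submodularity of the rank function to the complements $X_i = E(Q) - T_i$ and exploit the defining properties of a triad.

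First I would record that $r(X_1) = r(X_2) = r(Q) - 1$, directly from the definition of a triad, since $X_i = E(Q) - T_i$. Next I would observe that because $T_1$ and $T_2$ are distinct sets of size three, $T_1 \cap T_2$ is a proper subset of $T_1$. Therefore $X_1 \cup X_2 = E(Q) - (T_1 \cap T_2)$ is a proper superset of $E(Q) - T_1$. The second half of the triad condition (``$r(X) = r(Q)$ for all proper supersets $X$ of $E - T_1$'') then forces
$$r(X_1 \cup X_2) = r(Q).$$
This uniformly handles the three possible overlap cardinalities $|T_1 \cap T_2| \in \{0,1,2\}$; when the intersection is empty, $X_1 \cup X_2 = E(Q)$ and the equality is trivial.

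Now I would invoke submodularity:
$$r(X_1) + r(X_2) \ge r(X_1 \cup X_2) + r(X_1 \cap X_2).$$
Substituting the values above gives
$$2(r(Q)-1) \ge r(Q) + r(X_1 \cap X_2),$$
so $r(X_1 \cap X_2) \le r(Q) - 2$. Since $X_1 \cap X_2 = E(Q) - (T_1 \cup T_2)$, this is precisely the claimed inequality.

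There is no real obstacle here; the only point worth being careful about is ensuring that $T_1 \cap T_2 \subsetneq T_1$ (which uses $|T_1| = |T_2| = 3$ together with $T_1 \neq T_2$) so that the triad condition applies to give $r(X_1 \cup X_2) = r(Q)$ in every case, including the degenerate case $T_1 \cap T_2 = \emptyset$.
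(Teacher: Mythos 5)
Your proof is correct and takes the same route as the paper: submodularity applied to the complements $E(Q)-T_1$ and $E(Q)-T_2$. The paper simply asserts that the result ``follows easily by applying the submodularity of the rank function,'' whereas you spell out the one non-trivial step, namely that the triad condition forces $r(X_1\cup X_2)=r(Q)$ because $X_1\cup X_2$ is a proper superset of $X_1$; that observation is exactly what is needed and your handling of the case $T_1\cap T_2=\emptyset$ is fine.
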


\begin{proof}
We know that $r(E(Q) - T_i) = r(Q) - 1$ for each $i$. The lemma follows easily by applying the submodularity of the rank function.
\end{proof}

\begin{proof}[Proof of Lemma~\ref{Step0}.]
Suppose $M$ has a point $z$ such that both $M\baba z$ and $M/z$ have c-minors isomorphic to $N$. Then neither $M\ba z$ nor $M/z$ is \thc. We may also assume that $M$ is neither a whirl nor the cycle matroid of a wheel. By \cite[Lemma 4.1]{oswww}, $M$ has   points $s$ and $t$ such that $\{z,s,t\}$ is a triangle or a triad of $M$. By replacing $M$ by $M^*$ if necessary, we may assume that $\{z,s,t\}$ is a triangle of $M$. Then $M/z$ has $s$ and $t$ as a pair of parallel points. Thus both $M/z\ba s$ and $M/z\ba t$ have c-minors isomorphic to $N$. As the theorem fails, neither $M\ba s$ nor $M\ba t$ is \thc. Thus, by \cite[Lemma 4.2(i)]{oswww}, $M$ has a triad that contains $z$ and exactly one of $s$ and $t$. We may assume that
 the triad is $\{z,s,u\}$. Then $t,z,s,u$ is a fan in $M$.
 
Now take a fan $x_1,x_2,\dots,x_k$ in $M$ of maximal length such that both $M\ba x_2$ and $M/x_2$ have c-minors isomorphic to $N$. Then $k \ge 4$.  A straightforward induction argument, whose details we omit, gives the following.

\begin{sublemma}
\label{fant}
For all $i$ in $\{2,3,\dots,k-1\}$, both $M\ba x_i$ and $M/x_i$ have c-minors isomorphic to $N$.
\end{sublemma}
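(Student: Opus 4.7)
\medskip

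The plan is to prove \ref{fant} by induction on $i$, propagating along the fan from the base case $i=2$ (which is given by hypothesis) up through $i=k-1$. The inductive step will argue: if both $M\ba x_i$ and $M/x_i$ have c-minors isomorphic to $N$ for some $i$ with $2\le i\le k-2$, then the same holds for $x_{i+1}$. The key observation is that the two three-element subsets $\{x_{i-1},x_i,x_{i+1}\}$ and $\{x_i,x_{i+1},x_{i+2}\}$ are both consecutive members of the defining sequence of triangles/triads of the fan, and hence are of opposite types. Since both of them exist for $2\le i\le k-2$, we can simultaneously access a triangle and a triad, each containing $\{x_i,x_{i+1}\}$.

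Now I split into two subcases based on which of these two triples is the triangle. Suppose first that $\{x_{i-1},x_i,x_{i+1}\}$ is a triangle and $\{x_i,x_{i+1},x_{i+2}\}$ is a triad. In $M/x_i$ the points $x_{i-1}$ and $x_{i+1}$ are parallel (by Lemma~\ref{matroidef} or directly from the triangle), and in $M\ba x_i$ the points $x_{i+1}$ and $x_{i+2}$ form a series pair. Applying the hypothesis that $M/x_i$ has a c-minor isomorphic to $N$ and deleting $x_{i+1}$ (an element of a parallel pair), we obtain that $M/x_i\ba x_{i+1}$, and hence $M\ba x_{i+1}$, has a c-minor isomorphic to $N$. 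Applying the hypothesis that $M\ba x_i$ has a c-minor isomorphic to $N$ and contracting $x_{i+1}$ (an element of a series pair), we obtain that $M\ba x_i/x_{i+1}$, and hence $M/x_{i+1}$, has a c-minor isomorphic to $N$. The other subcase, with the triangle and triad interchanged, is entirely symmetric: the triad $\{x_{i-1},x_i,x_{i+1}\}$ gives a series pair in $M\ba x_i$ producing a c-minor of $M/x_{i+1}$, and the triangle $\{x_i,x_{i+1},x_{i+2}\}$ gives a parallel pair in $M/x_i$ producing a c-minor of $M\ba x_{i+1}$.

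The step that requires care is the assertion that if a $2$-polymatroid $P$ has a c-minor isomorphic to $N$ and $\{a,b\}$ is a parallel pair of points in $P$, then $P\ba b$ also has a c-minor isomorphic to $N$ (and the dual statement for a series pair and contraction). Because $N$ is $3$-connected with $|E(N)|\ge 4$, it has no parallel pairs of points, so any sequence of deletions, contractions, and compactifications producing an $N$-minor from $P$ must remove at least one of $a,b$ by a deletion or contraction. Using the commutation lemmas of Section~\ref{prelim} (in particular Lemmas~\ref{complast}, \ref{complast1}, and \ref{elemprop}), we can reshuffle so that $b$ is deleted first; what remains is a c-minor of $P\ba b$ isomorphic (up to a possible relabelling of $a$ by $b$) to $N$, which suffices. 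The dual assertion for a series pair of points under contraction follows in the same way by Lemma~\ref{csm} or by a direct reshuffling argument. With these transfer facts in hand, the induction closes immediately and yields \ref{fant} for all $i\in\{2,3,\dots,k-1\}$.
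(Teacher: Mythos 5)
Your proof is correct and fills in exactly what the paper leaves out; the authors write ``a straightforward induction argument, whose details we omit,'' and your induction---propagating from $x_i$ to $x_{i+1}$ by simultaneously using the triangle and the triad on either side of the edge $\{x_i,x_{i+1}\}$---is the natural argument they have in mind. Indeed, the paper uses the very same moves explicitly just afterwards, in handling the terminal triple $\{x_{k-2},x_{k-1},x_k\}$ (e.g.\ ``As $M/x_{k-1}$ has a c-minor isomorphic to $N$, so do $M/x_{k-1}\ba x_k$ and hence $M\ba x_k$'') and in deducing that $M/x_1$ has a c-minor from the series pair $\{x_1,x_3\}$ in $M\ba x_2$.

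One remark on the transfer fact you isolate (that an $N$-c-minor survives deleting one member of a parallel pair of points, and dually contracting one member of a series pair): the paper invokes this repeatedly without comment, so making it explicit is reasonable. Your justification is essentially right but is phrased as ``reshuffle so that $b$ is deleted first,'' which silently covers two cases: if $b$ was deleted in the original sequence, Lemmas~\ref{complast}--\ref{complast1} and~\ref{elemprop} let you move that deletion to the front; if instead $b$ was \emph{contracted}, then $a$ becomes a loop in $P/b$ and so is also removed, and since $P\ba a/b$ and $P\ba b/a$ differ only by the relabelling $a\leftrightarrow b$ (an automorphism of $P$ fixing everything else), you again land on an $N$-c-minor of $P\ba b$. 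Your parenthetical ``up to a possible relabelling'' gestures at this, but spelling out that the relabelling comes from swapping the two parallel (respectively series) elements would make the argument airtight.
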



Now consider $\{x_{k-2},x_{k-1},x_k\}$. Suppose first that it is a triangle. As $M/x_{k-1}$ has a c-minor isomorphic to $N$, so do  $M/x_{k-1}\ba x_k$ and hence $M\ba x_k$. As $M\ba x_{k-1}$ also has a c-minor isomorphic to $N$, neither $M\ba x_k$ nor $M\ba x_{k-1}$ is $3$-connected. Thus, by \cite[Lemma 4.2]{oswww}, it follows that $M$ has a triad $T^*$ containing $x_k$ and exactly one of $x_{k-2}$ and $x_{k-1}$. Let its third element be $x_{k+1}$. By the choice of $k$, it follows that $x_{k+1} \in \{x_1,x_2,\dots,x_{k-3}\}$. Suppose $k = 4$. Then $x_1 \in T^*$. Then  $\{x_1,x_2,x_3,x_4\}$ contains two distinct triads so, by Lemma~\ref{tryto}, 
$r(E - \{x_1,x_2,x_3,x_4\}) \le r(M) - 2$. Thus $\lambda(\{x_1,x_2,x_3,x_4\}) \le 1$; a contradiction since $|E| \ge 6$. We deduce that $k \ge 5$.

As $M$ cannot have a triangle and a triad that meet in a single element, either
\begin{itemize}
\item[(i)] $x_{k+1} = x_1$ and $\{x_1,x_2,x_3\}$ is a triad; or 
\item[(ii)] $T^*$ contains $\{x_k,x_{k-2}\}$, and $x_{k+1} \in \{x_{k-3},x_{k-4}\}$.
\end{itemize}
In the latter case, let $X = \{x_{k-4},x_{k-3},x_{k-2},x_{k-1},x_k\}$. Then, by Lemma~\ref{tryto}, 
$$r(X) + r(E-X) - r(M) \le 3 + r(M) - 2 - r(M) = 1.$$
Since $M$ is $3$-connected, we obtain a \cn\ unless $E-X$ is empty or contains a single element, which must be a point. In the exceptional case, $M$ is a $3$-connected matroid having 5 or 6 elements and containing a 5-element subset that contains two triangles and two triads. But there is no \thc\ matroid with these properties. We deduce that (ii) does not hold.

We now know that (i) holds and that $T^*$ contains $\{x_k,x_{k-1}\}$. Then $k$ is even. Let $X = \{x_1,x_2,\dots,x_k\}$. As $M \ba x_2$ has a c-minor isomorphic to $N$ and has $\{x_1,x_3\}$ as a series pair of points, it follows that $M \ba x_2/x_1$, and hence, $M/x_1$ has a c-minor isomorphic to $N$. Thus, by \cite[Lemma 4.2]{oswww}, $M$ has a triangle containing $x_1$ and exactly one of $x_2$ and $x_3$. This triangle must also contain $x_k$ or $x_{k-1}$. Hence
$r(X) \le r(\{x_2,x_4,x_6,\dots,x_k\}) \le \tfrac{k}{2}.$ Also $r^*(X) \le r(\{x_1,x_3,x_5,\dots,x_{k-1}\})\le \tfrac{k}{2}.$ Thus, by Lemma~\ref{rr*}, $\lambda(X) = 0$, so $X = E(M)$. 
Hence $M$ is a \thc\ matroid in which every element is in both a triangle and a triad, so $M$ is a whirl or the cycle matroid of a wheel; a \cn.

We still need to consider the case when  $\{x_{k-2},x_{k-1},x_k\}$ is a triad of $M$. Then it is a triangle of $M^*$ and the result follows by replacing $M$ by $M^*$ in the argument above.
\end{proof}

\begin{proof}[Proof of Lemma~\ref{Step1}.] 
Suppose $M\ba \ell$ is disconnected having a c-minor isomorphic to $N$. Then $E(M\ba \ell)$ has a non-empty proper subset  $X$ 
 such that $\lambda_{M\ba \ell}(X) = 0$ and   $M\ba \ell \ba X$ has $N$ as a c-minor. Then, by Lemma~\ref{Step0}, every element of $X$ must be a line. Let $Y = E(M\ba \ell) - X$. Since $r(M\ba \ell) = r(M),$ we deduce that 
\begin{equation}
\label{xym}
r(X) + r(Y) = r(M).
\end{equation}
As $r(X) \ge 2$ and $(X, Y \cup \ell)$ is not a \tws\ of $M$, we deduce that $r(Y \cup \ell) = r(Y) + 2$. It follows, since $Y$ and $\ell$ are skew and $M\ba X$ has $N$ as a c-minor, that $M/ \ell$ has $N$ as a c-minor. Since $(M,N)$ is a counterexample to the theorem, $M/ \ell$ is not \thc. Thus there is a partition $(C_1,C_2)$ of $E(M) - \ell$ such that, for some $k$ in $\{1,2\}$,  
\begin{equation}
\label{eqk}
r_{M/ \ell}(C_1) + r_{M/ \ell}(C_2) \le r(M/ \ell) + k-1
\end{equation} 
where, if $k = 2$, we may assume that $\min\{|C_1|, r_{M/ \ell}(C_1),|C_2|, r_{M/ \ell}(C_2)\} \ge 2$. 
Hence
\begin{equation}
\label{c12}
r(C_1 \cup \ell) + r(C_2\cup \ell) \le r(M) + 3.
\end{equation}
By (\ref{xym}), (\ref{c12}), and submodularity, 
$r(X \cup C_1 \cup \ell) + r(X \cap C_1) + r(Y \cup C_2 \cup \ell) + r(Y \cap C_2) \le 2r(M) + 3$. 
Then 
$$r(X \cup C_1 \cup \ell) + r(Y \cap C_2) \le r(M) + 1  \text{~~ or}$$ 
$$r(Y \cup C_2 \cup \ell) + r(X \cap C_1) \le r(M) + 1,$$ 
so 
$$r(Y \cap C_2) \le 1  \text{~~or~~}  r(X \cap C_1) \le 1.$$ 
By symmetry, 
$$r(Y \cap C_1) \le 1  \text{~~or~~}  r(X \cap C_2) \le 1.$$
Since $X$ does not contain any points, either $r(Y \cap C_2) \le 1$ and $r(Y \cap C_1) \le 1$; or, for some $i$ in $\{1,2\}$, 
$$X \cap C_i = \emptyset \text{~~ and ~~} r(Y \cap C_i) \le 1.$$
In the former case, $ |Y| \le 2$; a \cn\ since $Y$ contains $E(N)$. In the latter case, we may assume that $C_1$ consists of a single point $p$. Then we deduce that $k = 1$ in (\ref{eqk}). Thus $p$ is a point of $M$ and $\{p\}$ is a component of $M/ \ell$. Hence both $M\ba p$ and $M/p$ have $N$ as c-minors; a \cn\ to Lemma~\ref{Step0}. We conclude that if $M\ba \ell$ has a c-minor isomorphic to $N$, then $M\ba \ell$ is $2$-connected.

Now suppose that $M/ \ell$ is disconnected having a c-minor isomorphic to $N$. By Lemma~\ref{compact0}, 
$$\lambda_{M/ \ell} = \lambda_{(M/ \ell)^*} = \lambda_{(M^*\ba \ell)^{\flat}} = \lambda_{M^*\ba \ell}.$$
Thus, by replacing $M$ by $M^*$ in the argument above, we deduce that if $M/ \ell$ has  a c-minor isomorphic to $N$, then $M/ \ell$ is $2$-connected.
\end{proof}

\section{If all 2-separations have a side with at most two elements}
\label{alltwos}

The purpose of this section is to treat (\ref{Step2.2}). The argument here is long as it involves analyzing numerous cases. The setup is that $M$ and $N$ are   \thc\ $2$-polymatroids such that $|E(N)| \ge 4$. The pair $(M,N)$ is a counterexample to Theorem~\ref{modc} and  $M$ has an element $\ell$ such that $M\ba \ell$ has $N$ as a c-minor. Thus $M\baba \ell$ is not $3$-connected. We assume that the non-$N$-side of every non-trivial $2$-separation of $M\ba \ell$ has exactly two elements. Thus $\mu(\ell) = 2$. Let   $(X,Y)$ be a non-trivial 2-separation of $M\ba \ell$ in which $Y$ is the non-$N$-side. Now $M\ba \ell$ can be written as the 2-sum, with basepoint $p$, of $2$-polymatroids $M_X$ and  $M_Y$ having ground sets $X \cup p$ and $Y \cup p$. 
The first lemma identifies the  various possibilities for $M_Y$.

\begin{lemma}
\label{old2} 
Let $P$ be a $2$-connected $2$-polymatroid with three elements and rank at least two. Suppose $P$ has a distinguished point $p$. Then $P$ is one of the nine $2$-polymatroids, $P_1, P_2,\dots,P_9$, depicted in Figure~\ref{9lives}.
\end{lemma}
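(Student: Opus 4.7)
My plan is to enumerate all $2$-connected $2$-polymatroids $(P,p)$ on three elements $\{p,a,b\}$ with $r(P)\ge 2$ and $p$ a point, by case analysis on the ranks of $a$ and $b$. Since $P$ is $2$-connected it has no loops, so each of $r(\{a\}),r(\{b\})$ lies in $\{1,2\}$. The $2$-connectedness criterion, on a three-element polymatroid, reduces to requiring $\lambda(\{x\})\ge 1$ for each element $x$, equivalently $r(E(P)-\{x\})\ge r(P)-r(\{x\})+1$; I would use this throughout to prune candidate configurations and to verify those that survive.

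Case~1, with both $a,b$ points. Submodularity forces $r(P)\le 3$, and $r(P)=3$ gives $\lambda(\{p\})=1+2-3=0$, so $r(P)=2$. Then $\lambda(\{x\})\ge 1$ for every $x$ demands that every pair inside $\{p,a,b\}$ spans rank two; that is, no two of $p,a,b$ are parallel. This yields exactly one configuration, namely $U_{2,3}$. Case~2, with one line (say $a$) and one point $b$. If $r(P)=2$ then $p$ and $b$ both lie on $a$, giving two configurations according as $\{p,b\}$ is a parallel pair or a distinct pair on $a$. If $r(P)=3$, running through the three singleton $\lambda$-values rules out every arrangement except the one in which $b\notin\cl(\{a\})$, $p\notin\cl(\{a\})$, $p$ and $b$ are distinct, and $b\in\cl(\{p,a\})$; this is one further configuration (a line $a$ together with two distinct points $p,b$ off $a$ spanning a rank-$3$ plane with $a$). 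Case~3, with $a,b$ both lines. The controlling invariant is $r(\{a,b\})\in\{2,3,4\}$. When $r(\{a,b\})=2$ the lines coincide and $p$ must lie on them, giving one configuration. When $r(\{a,b\})=3$ the lines meet at a single point, and 2-connectedness forces $p\in\cl(\{a,b\})$; the pair $(r(\{p,a\}),r(\{p,b\}))$ can then take the values $(2,2)$, $(2,3)$ or its symmetric partner, and $(3,3)$, producing three configurations up to the $a\leftrightarrow b$ symmetry ($p$ at the intersection, $p$ on exactly one line, $p$ off both but in the plane). When $r(\{a,b\})=4$ the lines are skew, and checking the singleton $\lambda$-values shows that $p$ must be off both lines and inside the rank-$4$ span of $\{a,b\}$, giving one final configuration.

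Tallying gives $1+3+5=9$ pointed configurations, matching $P_1,\dots,P_9$ in Figure~\ref{9lives}. The main obstacle is correctly handling the two-line case without overcounting through the $a\leftrightarrow b$ symmetry, and, in all cases, resisting the temptation to include borderline candidates that at first glance look 2-connected but in fact fail the singleton test; for example, $a\parallel b$ with $p$ a distinct third point (giving $\lambda(\{p\})=1+1-2=0$), or $p$ parallel to $b$ with $a$ a line off them (giving $\lambda(\{a\})=0$), or $p$ outside the rank-$4$ span of two skew lines (giving $\lambda(\{p\})=1+4-5=0$). Each such elimination is a one-line computation, so the proof is conceptually routine but requires disciplined bookkeeping.
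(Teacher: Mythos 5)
Your proof is correct and is essentially the paper's argument — an exhaustive enumeration of small $2$-polymatroids pruned by the singleton $\lambda$-test — merely organized by the rank-profile of the two non-distinguished elements rather than by $r(P)$ as the paper does (the paper's $4+4+1$ tally over $r(P)\in\{2,3,4\}$ and your $1+3+5$ tally over the number of lines among $\{a,b\}$ describe the same nine pointed $2$-polymatroids).
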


\begin{figure}[h]
\center
\input{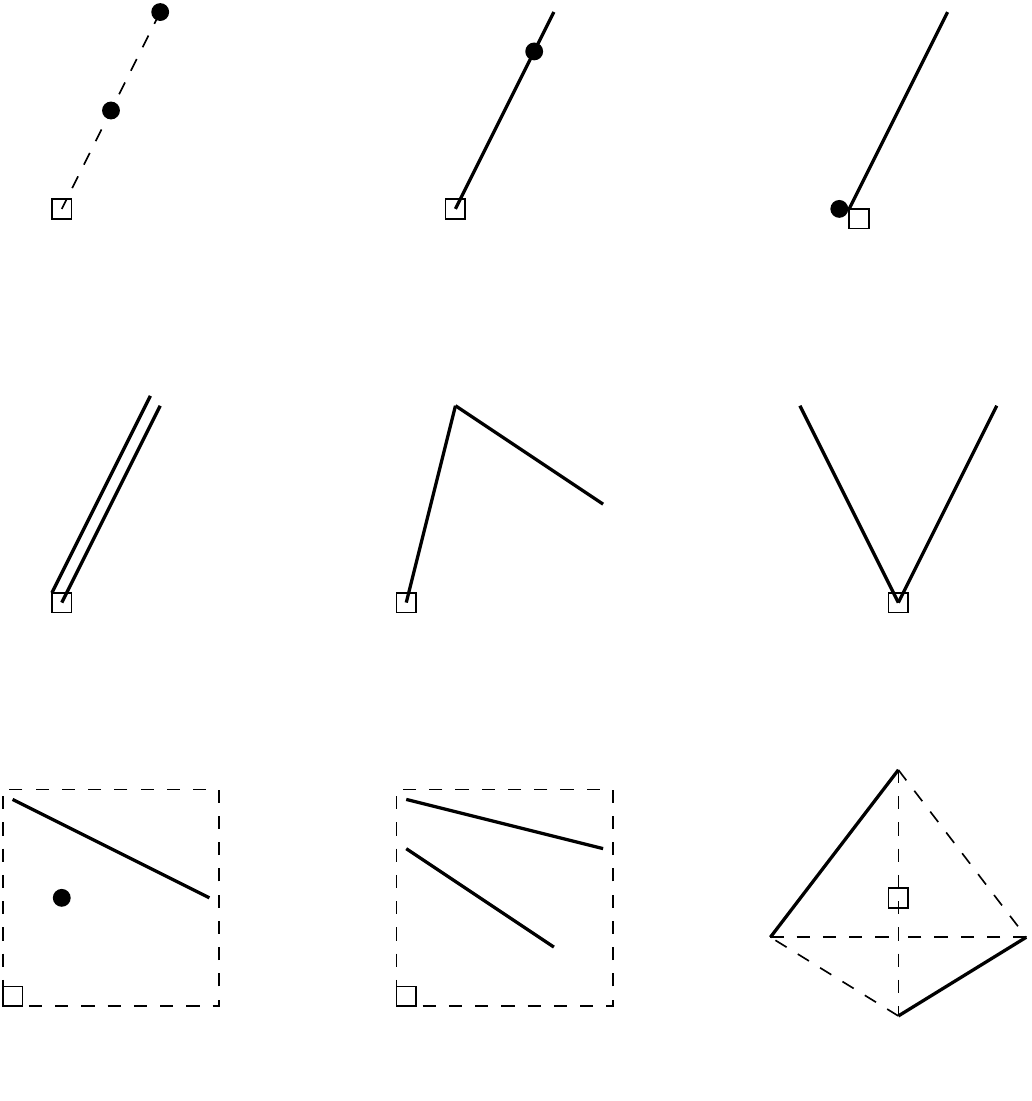_t}
\caption{The nine possible $3$-element $2$-polymatroids in Lemma~\ref{old2}.}
\label{9lives}
\end{figure}


\begin{proof} As $P$ is $2$-connected having rank at least $2$, we see that $2 \le r(P) \le 4$. If $r(P) = 2$, then $P$ is one of $P_1,P_2,P_3,$ or $P_4$; if $r(P) = 3$, then $P$ is one of $P_5,P_6,P_7,$ or $P_8$; if $r(P) = 4$, then $P$ is $P_9$.
\end{proof}

We shall systematically eliminate the various possibilities for $M_Y$. In each case, we will label the two elements of $M_Y$ other than $p$ by $a$ and $b$.

\begin{lemma}
\label{not23} 
$M_Y$ is not isomorphic to $P_2$ or $P_3$.
\end{lemma}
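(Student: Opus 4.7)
The plan is to suppose, for contradiction, that $M_Y$ is isomorphic to $P_2$ or $P_3$ and, in each case, exhibit a single-element operation that contradicts either Lemma~\ref{Step1} or the assumption that $(M,N)$ is a counterexample to Theorem~\ref{modc}. Write $Y=\{a,b\}$, with the labeling chosen to match the structure of $P_i$, so in either case $p$ and $a$ are points of $M_Y$ both lying on the line $b$, with $p \parallel a$ precisely in the $P_3$ case. By Lemma~\ref{Step1} and Proposition~\ref{connconn}, $M\ba \ell$ is $2$-connected, so Lemma~\ref{dennisplus} expresses it as $M_X\oplus_2 M_Y$ with basepoint $p$, and Lemma~\ref{p69} provides a special $N$-minor of $M_X$ using $E(N)\cap X$.

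For the $P_3$ case, $a$ is a point parallel to $p$ in $M_Y$, so $\sqcap_{M\ba \ell}(X,\{a\})=\sqcap_{M_Y}(\{p\},\{a\})=1$. Since $a$ is a point, $M/a=M\da a$. I will show that $M/a$ has a c-minor isomorphic to $N$ by using that $a$ and $p$ are parallel in $M_Y$: the special $N$-minor of $M_X$ survives the contraction of $a$ because $a$ acts as a parallel copy of $p$, and the move can be routed through the relabeling sending $p\mapsto a$. Then, via Lemma~\ref{dennisplus}(v) applied to $M\ba \ell$ together with an analysis of the local connectivity between $\ell$ and $\{a\}$, I will argue that $M/a$ is disconnected. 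Invoking Lemma~\ref{Step1} then yields the required contradiction.

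For the $P_2$ case, $a$ is a point with $\sqcap_{M_Y}(\{p\},\{a\})=0$ and $b$ is a line with $\sqcap_{M_Y}(\{p\},\{b\})=1$. Using Lemma~\ref{dennisplus}(iii), $(M\ba \ell)\ba a = M_X\oplus_2(M_Y\ba a)$, and $M_Y\ba a$ consists of the point $p$ lying on the line $b$, which compactifies to a parallel pair of points. Consequently, $((M\ba \ell)\ba a)^{\flat}$ is obtained from $M_X$ by relabeling $p$ as $b$, and Lemma~\ref{p49} gives that this carries a special $N$-minor. I will then pass to $(M\ba a)^{\flat}$ itself and argue that the hypothesis $\mu(\ell)=2$ forces $M_X$ to be effectively $3$-connected, since any non-trivial $2$-separation of $M_X$ with the non-$p$ side of size at least two would, using Lemma~\ref{8.2.4} and Lemma~\ref{compact0}, lift to a non-trivial $2$-separation of $M\ba \ell$ with non-$N$-side of size at least three, contradicting $\mu(\ell)=2$. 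This allows me to conclude that $(M\ba a)^{\flat}$ is $3$-connected with a c-minor isomorphic to $N$, contradicting case (ii) of Theorem~\ref{modc}.

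The main obstacle will be the interplay between $\ell$ and the $2$-sum decomposition: since $M\ba \ell=M_X\oplus_2 M_Y$ describes $M\ba \ell$ and not $M$, I must carefully track how $\ell$ affects the operations $M/a$ (in the $P_3$ case) and $(M\ba a)^{\flat}$ (in the $P_2$ case). In the $P_3$ case this amounts to verifying that $\ell$ cannot restore connectivity after contracting $a$, and in the $P_2$ case it amounts to ensuring that reinstating $\ell$ does not introduce additional $2$-separations in $(M\ba a)^{\flat}$ beyond those already controlled by $\mu(\ell)=2$.
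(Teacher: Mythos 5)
The paper's own proof of Lemma~\ref{not23} is a two-line argument that you have missed entirely: in both $P_2$ and $P_3$, the element $a$ is a \emph{point lying on the line} $b$, and this remains true in $M$ (since $r_M(\{a,b\}) = r_{M_Y}(\{a,b\}) = 2$). A point lying on a line in a $3$-connected $2$-polymatroid can always be deleted without destroying $3$-connectivity (this standard fact is invoked, for instance, in the opening line of the proof of Lemma~\ref{newbix}). So $M\ba a$ is $3$-connected outright, and since $M\ba a$ has a c-minor isomorphic to $N$ (reduce within $Y$ to $M_X$ with $p$ relabelled as $b$, using Lemma~\ref{p69}), this contradicts the counterexample assumption. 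Both cases are handled identically; there is no reason to split them.

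Your $P_3$ branch has a genuine gap. You claim that $M/a$ has a c-minor isomorphic to $N$ because ``the special $N$-minor of $M_X$ survives the contraction of $a$ since $a$ acts as a parallel copy of $p$.'' This is not justified, and I do not think it is true in general. Contracting $a$, which is parallel to the basepoint $p$, has the effect of contracting $p$ in $M_X$: by Lemma~\ref{dennisplus}(v), $M\ba \ell/a = (M_X/p)\oplus(M_Y\ba a/p)$. The special $N$-minor of $M_X$ supplied by Lemma~\ref{p69} may use $p$ as a relabelled point, and such a minor does not survive contraction of $p$. Put differently, Lemma~\ref{claim1}(ii) requires $\sqcap_{M\ba\ell/a}(X,Y-a)=1$ to deliver a special $N$-minor in $M\ba\ell/a$, but here $\sqcap_{M\ba\ell/a}(X,\{b\})=0$ (the point $b$ becomes $1$-separating), so that lemma gives you nothing. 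Your disconnection claim also depends on how $\ell$ interacts with $\{b\}$ in $M/a$, which you defer rather than settle.

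Your $P_2$ branch reaches the right conclusion — $(M\ba a)^\flat$ is $3$-connected with a c-minor isomorphic to $N$ — but the route through ``$M_X$ is effectively $3$-connected because $\mu(\ell)=2$'' is both unnecessary and not tight: a $2$-separation of $M_X$ with non-$p$-side of size $\ge 2$ lifts to a $2$-separation of $M\ba\ell$, but it is not automatic that its non-$N$-side has size $\ge 3$ (the non-$p$-side of $M_X$ could itself be the non-$N$-side, of size exactly $2$), nor is it clear that the relevant obstruction to $3$-connectivity of $M\ba a$ arises from a $2$-separation of $M_X$ at all. None of this machinery is needed once you observe that $a$ lies on $b$ in $M$.
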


\begin{proof}
Assume the contrary. Then $M_Y$ and hence $M$ has a point $q$ on a line $y$ where $q\neq p$. Thus $M\ba q$ is \thc. As $M\baba \ell$ has $N$ as a c-minor, it follows that $M\baba \ell\ba q$, and hence $M\ba q$, has a c-minor isomorphic to $N$; a \cn.
\end{proof}

\begin{lemma}
\label{fourmost} 
$M_Y$ is not isomorphic to $P_4$.
\end{lemma}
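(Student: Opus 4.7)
Suppose for contradiction that $M_Y \cong P_4$, so $M_Y$ consists of a point $p$ and two lines $a, b$ with $r_{M_Y}(\{a\}) = r_{M_Y}(\{b\}) = r_{M_Y}(\{a,b\}) = 2$ and $p$ lying on both lines. The plan is to show that $a$ satisfies condition~(ii) of Theorem~\ref{modc}: namely, that $M\baba a$ is \thc\ and has a c-minor isomorphic to $N$. This will contradict the assumption that $(M, N)$ is a counterexample. As a preliminary observation, the rank formula for the $2$-sum $M\ba \ell = M_X \oplus_2 M_Y$ gives $r_M(\{a,b\}) = r_{M_Y}(\{a,b\}) = 2$, so $a$ and $b$ are parallel lines in $M$; in particular, $a \in \cl_M(\{b\}) \subseteq \cl_M(E-a)$, so $r_M(E-a) = r_M(E)$ and $\lambda_M(\{a\}) = 2$.

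I would then show that $M\ba a$ is \thc. Suppose instead that $(S, T)$ is a $2$-separation of $M\ba a$ and, without loss of generality, $b \in T$. Since $a \in \cl_M(\{b\}) \subseteq \cl_M(T)$ and $r_M(E-a)=r_M(E)$, Corollary~\ref{8.7.6}(i) (with $D=\{a\}$) gives $\lambda_{M\ba a}(S) = \lambda_M(S)$. Because $r(\{a\}) = 2$, the partition $(S, T\cup a)$ of $E(M)$ satisfies the size requirements of a $2$-separation of $M$, so $3$-connectivity forces $\lambda_M(S) \ge 2$, contradicting $\lambda_{M\ba a}(S) \le 1$. Hence $M\ba a$ is \thc, and since every \thc\ $2$-polymatroid with at least three elements is compact, $M\baba a = M\ba a$.

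The main remaining step---and the crux of the argument---is showing that $M\ba a$ has an $N$-minor. Because $|Y| = 2$, we have $|E(N) \cap X| \ge |E(N)| - 1$, so Lemma~\ref{p69} supplies a special $N$-minor of $M_X$. The equality $\sqcap_{M\ba \ell}(X, \{b\}) = 1$ together with Lemma~\ref{dennisplus}(iii) gives $M\ba \ell \ba a = M_X \oplus_2 (M_Y \ba a)$, where $M_Y \ba a$ consists of the line $b$ with the point $p$ lying on it. The line $b$ is the unique $2$-separating line of $M\ba \ell \ba a$ (with $\lambda(\{b\})=1$), and a direct rank-function computation (or an application of Lemma~\ref{compel}) will identify $(M\ba \ell \ba a)^{\flat}$ with $M_X$ under the relabelling $p \leftrightarrow b$. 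Since this relabelled copy inherits the special $N$-minor of $M_X$, the polymatroid $(M\ba \ell \ba a)^{\flat} = (M\ba a)\baba \ell$ has a c-minor isomorphic to $N$, and hence so does $M\baba a = M\ba a$, completing the contradiction.
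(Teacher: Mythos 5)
Your proof is correct, but it takes a genuinely different route from the paper's. The paper's argument is short and indirect: it picks the parallel line $y\notin E(N)$, notes that $M\ba y$ is $3$-connected (so, since $(M,N)$ is a counterexample, $M\ba y$ cannot have $N$ as a c-minor), deduces that $M/y$ must, and then observes that the other parallel line $y'$ is a \emph{loop} of $M/y$, so it too must be absent from $E(N)$ and $M\ba y'$ has $N$ as a c-minor --- giving the same contradiction. You instead prove directly that $M\ba a$ has a c-minor isomorphic to $N$ via the $2$-sum decomposition $M\ba\ell\ba a = M_X\oplus_2(M_Y\ba a)$ together with Lemma~\ref{p69}. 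Both work, but the paper's use of the loop observation avoids the need for Lemma~\ref{p69} and the explicit compactification step and is noticeably shorter. Two small remarks on your write-up: you do not actually need the claim that $b$ is the \emph{unique} $2$-separating line of $M\ba\ell\ba a$ nor the precise identification of $(M\ba\ell\ba a)^{\flat}$ with a relabelled $M_X$ --- once you have $M\ba\ell\ba a = M_X\oplus_2(M_Y\ba a)$ and $M_X$ has a special $N$-minor, Lemma~\ref{p49} (or Lemma~\ref{claim1}(i) applied in $M\ba\ell$) gives the special $N$-minor of $M\ba\ell\ba a$ immediately; and your $3$-connectivity argument for $M\ba a$ should also quietly dispose of the $1$-separation case, which is trivial by the same closure observation.
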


\begin{proof}
Assume the contrary. Let the two parallel lines in $M_Y$ be  $y$ and $y'$ where we may assume that $y \not\in E(N)$. Now $M\ba y$ is \thc, so $M\ba y$ does not have $N$ as a c-minor. Thus $M/y$ has $N$ as a c-minor. But $y'$ is a loop of $M/y$, so $y' \not\in E(N)$ and $M\ba y'$ has $N$ as a c-minor. Since $M\ba y'$ is \thc, we have a \cn.     
\end{proof}

The next lemma is designed to facilitate the elimination of the cases when $M_Y$ is one of $P_1$, $P_7$, or $P_9$.

\begin{lemma}
\label{179}
Suppose both $a$ and $b$ are skew to $p$ in $M_Y$, and both $M_Y/a$ and $M_Y/b$ are $2$-connected. Then $M/a$ and $M/b$ have $2$-separations $(X_a,Y_a)$ and $(X_b,Y_b)$ such that $\ell \in Y_a \cap Y_b$. Moreover, both $M/a$ and $M/b$ have special $N$-minors, and
\begin{itemize}
\item[(i)] $b \in X_a$ and $a \in X_b$;
\item[(ii)] both $Y_a$ and $Y_b$ properly contain $\{\ell\}$; 
\item[(iii)] $(X_a,Y_a - \ell)$ and $(X_b,Y_b-\ell)$ are $2$-separating partitions of $M/a\ba \ell$ and $M/b\ba \ell$, respectively, and $\ell \in \cl_{M/a}(Y_a - \ell)$ and $\ell \in \cl_{M/b}(Y_b - \ell)$; 
\item[(iv)] $(X_a \cup a,Y_a - \ell)$ and $(X_b \cup b,Y_b-\ell)$ are $2$-separating partitions of $M\ba \ell$; 
\item[(v)] for $c$ in $\{a,b\}$, provided $a$ or $b$ is a point, $(X_c,Y_c - \ell)$ is a $2$-separation of $M/c\ba \ell$ and $(X_c \cup c,Y_c - \ell)$ is a $2$-separation of $M\ba \ell$;
\item[(vi)] either $(Y_a - \ell) \cap (Y_b - \ell) \neq \emptyset$; or each of $X_b \cap (Y_a - \ell)$ and $X_a \cap (Y_b - \ell)$ consists of a single point, both $a$ and $b$ are lines of $M$, and, when $r(\{a,b\}) = 4$, the element $\ell$ is a point of $M$.
\end{itemize}
\end{lemma}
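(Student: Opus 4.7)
The plan is to exploit the $2$-sum structure $M\ba\ell=M_X\oplus_2 M_Y$ given by Lemma~\ref{dennisplus}, together with Lemmas~\ref{p49} and~\ref{p69}. First, because $a$ is skew to $p$ in $M_Y$, Proposition~\ref{dennis3.6}'s rank formula yields $\sqcap_M(X,\{a\})=0$; similarly $\sqcap_M(X,\{b\})=0$. Lemma~\ref{dennisplus}(iv) then gives $M\ba\ell/a=M_X\oplus_2(M_Y/a)$ and the analogous statement for $b$ (with $M\ba\ell/a=M/a\ba\ell$ by Lemma~\ref{elemprop}(iii)), both $2$-sums being $2$-connected by Proposition~\ref{connconn}. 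Since $|E(N)\cap X|\ge|E(N)|-1$, Lemma~\ref{p69} supplies $M_X$ with a special $N$-minor using $E(N)\cap X$, and Lemma~\ref{p49} lifts this to a special $N$-minor of each $2$-sum (hence, after compactification, of $M/a$ and $M/b$). The construction in Lemma~\ref{p49} places $b$ (resp.\ $a$) in the image of $E(N)$. Because $(M,N)$ is a counterexample, neither $M/a$ nor $M/b$ is $3$-connected, and by Lemma~\ref{Step1} each is $2$-connected, so each admits a $2$-separation.

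Next I would construct the specific $2$-separations. Since $b$ (resp.\ $a$) belongs to the image of $E(N)$, it lies on the $N$-side of any chosen $2$-separation, giving $b\in X_a$ and $a\in X_b$, which is (i). For the placement of $\ell$: starting from any $2$-separation $(X_a',Y_a')$ of $M/a$, a short argument using $\lambda_{M/a\ba\ell}(X_a')\le\lambda_{M/a}(X_a')\le 1$ (Lemma~\ref{8.2.4}) together with the $2$-connectivity of $M/a\ba\ell$ (which forces $\lambda_{M/a\ba\ell}(X_a')=1$) shows that $\ell$ may always be moved to the non-$N$-side, so we may assume $\ell\in Y_a$. The same computation supplies (iii): $\lambda_{M/a\ba\ell}(X_a)\le 1$ and $\ell\in\cl_{M/a}(Y_a\setminus\ell)$. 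Property (iv) follows from Lemma~\ref{obs1} using $\sqcap_{M\ba\ell}(X_a,\{a\})=0$, which reduces to $\sqcap_M(X,\{a\})=0$ established above (with mild additional care needed when $b\in X_a$ is not skew to $a$ in $M_Y$). Property (ii) is Lemma~\ref{hath}: $(E(M)-\{a,\ell\},\{\ell\})$ cannot be a $2$-separation of $M/a$. Property (v) follows because a $1$-separation of $M/a$ would contradict $2$-connectivity, which rests on the special $N$-minor and Lemma~\ref{Step1}.

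For (vi), suppose $(Y_a\setminus\ell)\cap(Y_b\setminus\ell)=\emptyset$; then by (i), $Y_a\setminus\ell\subseteq X_b$ and $Y_b\setminus\ell\subseteq X_a$. I would uncross the $2$-separating partitions $(X_a\cup a,Y_a\setminus\ell)$ and $(X_b\cup b,Y_b\setminus\ell)$ of $M\ba\ell$ supplied by (iv), using submodularity of $\lambda_{M\ba\ell}$. The four arising intersections, combined with the bound $\mu(\ell)=2$ on non-$N$-side sizes and the counterexample status of $(M,N)$, force $|X_b\cap(Y_a\setminus\ell)|=|X_a\cap(Y_b\setminus\ell)|=1$; a secondary $2$-separation argument then ensures each such singleton is a point (otherwise the non-trivial $2$-separation condition $\max\{|\cdot|,r(\cdot)\}\ge 2$ produces additional forbidden structure). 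Rank calculations on $M|\{a,b,\ell\}$ using $\sqcap_M(X,\{a,b\})=1$ (from $\lambda_{M\ba\ell}(X)=1$) and $\lambda_M(X)\ge 2$ then force $a,b$ to be lines and, when $r(\{a,b\})=4$, $\ell$ to be a point. The main obstacle throughout is this uncrossing in (vi): the two $2$-separations a priori live in different contractions and must be compared after transfer to $M\ba\ell$, and ruling out degenerate multi-element intersections requires careful bookkeeping.
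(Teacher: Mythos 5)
Your proposal has the right skeleton for (ii)--(iv) but two genuine gaps, one in (i) and a bigger one in (v).

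For (i), your argument is ``$b$ belongs to the image of $E(N)$, so it lies on the $N$-side of any chosen $2$-separation.'' This does not follow: being in the ground set of a special $N$-minor of $M/a$ puts no constraint on which side of a given $2$-separation of $M/a$ the element lies on. More importantly, it is answering the wrong question: $X_a$ is defined to be the side \emph{not} containing $\ell$, with no reference to $N$-sides, so even if your claim were true it would not deliver $b\in X_a$. The correct argument is the direct one: if $b\in Y_a$, then since $\{\ell,b\}\subseteq Y_a$ we have $X_a\subseteq X$; now $a$ is skew to $X$ (because $a$ is skew to $p$ in $M_Y$), so $(X_a,\,Y_a\cup a)$ is a $2$-separation of $M$, contradicting $3$-connectedness.

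For (v), the sentence ``a $1$-separation of $M/a$ would contradict $2$-connectivity'' does not touch the actual content. Parts (iii) and (iv) only give $2$-\emph{separating partitions}; the point of (v) is to upgrade these to genuine $2$-separations under the hypothesis that $a$ or $b$ is a point, and the only obstruction is that $Y_c-\ell$ might be a single point $e$ of $M/c\ba\ell$, making $(X_c,Y_c-\ell)$ trivial. Ruling this out is the hardest part of the lemma: from $r_{M/c}(\{e,\ell\})=1$ one gets $r_M(\{c,e,\ell\})=r_M(\{c,e\})=1+r(\{c\})$, and then a case split on whether $c$ is a point or a line, and $\ell$ a point or a line, is needed, invoking Lemma~\ref{newbix} (Bixby-type), Lemma~\ref{fantan}, and Lemma~\ref{Step0}. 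None of this is present in your sketch, and the conclusion does not follow from the $2$-connectedness remark alone.

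For (vi), your proposed uncrossing of $(X_a\cup a,Y_a-\ell)$ and $(X_b\cup b,Y_b-\ell)$ in $M\ba\ell$ is a plausible alternative route, but as written the key step (``a secondary $2$-separation argument then ensures each such singleton is a point'') is exactly what needs proving. The paper's (vi) avoids uncrossing entirely: under $(Y_a-\ell)\cap(Y_b-\ell)=\emptyset$ one has $Y_b-\ell\subseteq X_a$ and (using (iii)) $\ell\in\cl_M((Y_b-\ell)\cup b)\subseteq\cl_M(X_a\cup a)$, so $\lambda_M(Y_a-\ell)\le 1$ by (iv), forcing $Y_a-\ell$ to be a single point; then (v) forces both $a$ and $b$ to be lines, and an explicit rank chase handles the $r(\{a,b\})=4$ subcase. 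Your version also cites $\mu(\ell)=2$, which plays no role in the paper's proof of this lemma and does not obviously help.

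Your treatment of (ii)--(iv) and the existence of the $2$-separations and special $N$-minors is essentially the paper's argument (Lemmas~\ref{dennisplus}, \ref{p49}, \ref{p69}, \ref{claim1}, \ref{hath}), modulo the minor imprecision in (iv) that the relevant skewness is $\sqcap_M(Y_a-\ell,\{a\})=0$ rather than $\sqcap_{M\ba\ell}(X_a,\{a\})=0$.
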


\begin{proof} Since both $M_Y/a$ and $M_Y/b$ are $2$-connected, it follows by Lemma~\ref{claim1} that  both $M\ba \ell/a$ and $M\ba \ell/b$ have special $N$-minors. Hence so do both $M/a$ and $M/b$. Since the theorem fails, $M/a$ and $M/b$ have $2$-separations $(X_a,Y_a)$ and $(X_b,Y_b)$ such that $\ell \in Y_a \cap Y_b$.

To see that (i) holds, it suffices to show that $b \in X_a$. Assume $b\in Y_a$. Then 
$$r_{M/a}(X_a) + r_{M/a}(Y_a) = r(M/a) + 1,$$ so 
$r_{M}(X_a \cup a) - r_M(\{a\})+ r_{M}(Y_a \cup a) = r(M) + 1$. As $a$ is skew to $p$ in $M_Y$, it follows that $a$ is skew to $X$ in $M$. Since $X_a \subseteq X$, it follows that $(X_a,Y_a \cup a)$ is a \tws\ of $M$; a \cn. Hence (i) holds.

Part (ii) is an immediate consequence of Lemma~\ref{hath}. To prove (iii), first observe that, by Proposition~\ref{connconn}, $M/a\ba \ell$ is $2$-connected. We show next that 
\begin{equation}
\label{aell}
r(M/a \ba \ell) = r(M/a).
\end{equation}
Suppose not. Then $r(M/a \ba \ell) \le r(M/a) - 1.$   Since $M/a$ is   $2$-connected, it follows that   equality must hold here and $\ell$ is a line of $M/a$. This gives a \cn\ to Lemma~\ref{hath}. Hence (\ref{aell}) holds. 

Now
\begin{align*}
r(M/a\ba \ell) + 1 & \le r_{M/a\ba \ell}(X_a) + r_{M/a\ba \ell}(Y_a - \ell)\\
& \le r_{M/a}(X_a) + r_{M/a}(Y_a)\\
& = r(M/a) +1\\
& = r(M/a \ba \ell) +1,
\end{align*}
where the last equality follows from (\ref{aell}). We see that equality must hold throughout the last chain of inequalities. Hence 
$(X_a, Y_a - \ell)$ is a $2$-separating partition of $M/a\ba \ell$, and $\ell \in \cl_{M/a}(Y_a - \ell)$. Using symmetry, we deduce that  (iii) holds. 

Since $b\in X_a$, we see that $Y_a - \ell \subseteq X$, so $a$ is skew to $Y_a - \ell$. It follows by (iii) that $(X_a \cup a,Y_a - \ell)$ is a 2-separating partition of $M\ba \ell$, and (iv) follows by symmetry.

To show (v), observe that, since $Y_c - \ell$ avoids $\{a,b\}$, it follows that $c$ is skew to $Y_c - \ell$. Thus it suffices to show that $(X_c,Y_c - \ell)$ is a $2$-separation of $M/c\ba \ell$. Assume it is not. Then $Y_c - \ell$ consists of a single point $e$ of $M/c\ba \ell$. Then $e$ is a point of $M$ and, by (iii),  $r_{M/c}(\{e,\ell\}) = r_{M/c}(\{e\}) = 1$, so 
\begin{equation}
\label{cel}
r_M(\{c,e,\ell\})= r_M(\{c,e\}) = 1 + r(\{c\}).
\end{equation} 

Suppose $c$ is a point. If $\ell$ is a line, then $c$ and $e$ are on $\ell$, so $M\ba e$ is \thc. Since $M/c$ has $e$ and $\ell$ as parallel points, $M\ba e$ is \thc\ having a c-minor isomorphic to $N$; a \cn. Thus we may assume that $\ell$ is a point. Then $\{e,\ell,c\}$ is a triangle in $M$. Thus, for $\{c,d\} = \{a,b\}$, we see that $(X \cup \ell \cup c,\{d\})$ is a \tws\ of $M$ unless $d$ is a point of $M$. In  the exceptional case,  $M$ has $e,c,\ell,d$ as a fan with $M/c$ having a c-minor isomorphic to $N$. Thus, by Lemmas~\ref{fantan} and \ref{Step0}, we have a \cn. 

We may now assume that $c$ is a line. Then $r(\{c,e\}) = 3$, so, by (\ref{cel}), $r(\{c,\ell\}) = 3$. Thus  $(X,\{c,\ell\})$ is a \tws\ of $M\ba d$ where $\{c,d\} = \{a,b\}$. Moreover, by hypothesis, $d$ is a point. Thus, by Lemma~\ref{newbix}, we obtain the \cn\ that $M/d$ is \thc\ unless $M/d$ has a parallel pair $\{z_1,z_2\}$ of points. In the exceptional case, we deduce that $z_1$, say, is $\ell$. Hence  $(X \cup \ell \cup d,\{c\})$ is a \tws\ of $M$; a \cn. We conclude that (v) holds.

To prove (vi), assume that $(Y_a - \ell) \cap (Y_b - \ell) = \emptyset$. Then $Y_b - \ell \subseteq X_a \cup a$. But $\ell \in \cl((Y_b - \ell) \cup b)$ and $b \in X_a$, so $\ell \in \cl(X_a \cup a)$. Because $M$ is \thc, it follows that $Y_a - \ell$ consists of a single point $a'$. By symmetry, $Y_b - \ell$ consists of a single point $b'$. Then $(X_a \cup a, Y_a - \ell)$ is not a \tws\ of $M\ba \ell$, so, by (v), each of $a$ and $b$ is a line of $M$. 

To finish the proof of (vi), it remains to show that, when $r(\{a,b\}) = 4$, the element $\ell$ is a point of $M$. Assume $\ell$ is a line. Then, in $M/a$, we have  $a'$ and $\ell$ as parallel points, so $M/a \ba a'$, and hence $M\ba a'$, has a c-minor isomorphic to $N$. As $\ell \in \cl_{M/a}(\{a'\})$, it follows that $\ell \in \cl_M(X \cup a)$. By symmetry, $\ell \in \cl_M(X \cup b)$. Thus
\begin{align*}
r(X) + 2 + r(X) + 2 & = r(X \cup a) + r(X \cup b)\\
& = r(X \cup a \cup \ell) + r(X \cup b \cup \ell)\\
& \ge r(X \cup \ell) + r(M)\\
& = r(X \cup \ell) + r(X) + 3.
\end{align*}
Thus 
$$r(X \cup \ell) \le r(X) + 1.$$
Then 
\begin{align*}
3 + r(X) + 1 & \ge r(\{a',a,\ell\}) + r(X \cup \ell)\\
& \ge r(\{a',\ell\}) + r(X \cup \{a',a,\ell\})\\
& = r(\{a',\ell\}) + r(X \cup a)\\
& = r(\{a',\ell\}) + r(X) + 2.
\end{align*}
We deduce that $r(\{a',\ell\}) = 2$, so $a'$ is a point on the line $\ell$. Thus $M\ba a'$ is   \thc\ having a c-minor isomorphic to $N$; a \cn.
\end{proof}

Next we eliminate the possibility that $M_Y$ is $P_1$.

\begin{lemma}
\label{noone} 
$M_Y$ is not isomorphic to $P_1$.
\end{lemma}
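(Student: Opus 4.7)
The plan is as follows. Suppose, for a contradiction, that $M_Y\cong P_1$. In view of Lemmas~\ref{not23} and \ref{fourmost}, the only rank-two configuration remaining in Lemma~\ref{old2} is $P_1=U_{2,3}$, so we may take $Y=\{a,b\}$ with $a,b,p$ three distinct collinear points of $M_Y$. Then each of $a$ and $b$ is skew to $p$ in $M_Y$, and $M_Y/a$ and $M_Y/b$ are both isomorphic to $U_{1,2}$, hence are $2$-connected, so Lemma~\ref{179} applies.

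Applying Lemma~\ref{179} yields $2$-separations $(X_a,Y_a)$ of $M/a$ and $(X_b,Y_b)$ of $M/b$ with $\ell\in Y_a\cap Y_b$, $b\in X_a$, and $a\in X_b$; both $M/a$ and $M/b$ have special $N$-minors. Because $a$ and $b$ are points of $M$, the second alternative of Lemma~\ref{179}(vi) is excluded, and so $A\cap B\neq\emptyset$, where $A=Y_a-\ell$ and $B=Y_b-\ell$; note that $A,B\subseteq X$. Part (iii) of Lemma~\ref{179} gives $\ell\in\cl_M(A\cup a)\cap\cl_M(B\cup b)$, and part (iv) gives that $(X_a\cup a,A)$ and $(X_b\cup b,B)$ are $2$-separating partitions of $M\ba\ell$.

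I would now split on the cardinalities of $A$ and $B$. In the principal subcase $|A|=|B|=1$, write $A=B=\{d\}$; then the closure conditions become $\ell\in\cl_M(\{a,d\})\cap\cl_M(\{b,d\})$. Because $M$ is $3$-connected with $|E(M)|\ge 6$, $M$ has no parallel pair of points, so $r_M(\{a,d\})=r_M(\{b,d\})=2$, and the flats $\cl_M(\{a,d\})$ and $\cl_M(\{b,d\})$ both coincide with the unique rank-two flat containing $\{d,\ell\}$. Hence $r_M(\{a,b,d,\ell\})=2$. Using $|X|\ge |E(N)|-1\ge 3$, one then checks that $(\{a,b,d,\ell\},X-d)$ is a non-trivial $2$-separation of $M$, contradicting its $3$-connectedness.

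In the remaining subcases, where $|A|\ge 2$ or $|B|\ge 2$, the plan is to uncross $A$ and $B$ in $M\ba\ell$. Since $M\ba\ell$ is $2$-connected (by Lemma~\ref{Step1} and Proposition~\ref{connconn}) and $A\cap B\neq\emptyset$ with both $A\cup B$ and $A\cap B$ proper subsets of $E(M\ba\ell)$ (their complements containing $\{a,b\}$), submodularity of $\lambda_{M\ba\ell}$ forces $\lambda_{M\ba\ell}(A\cap B)=\lambda_{M\ba\ell}(A\cup B)=1$. Combining this with the hypothesis $\mu(\ell)=2$ and Lemma~\ref{claim1} produces, from one of these uncrossed sets, either a non-trivial $2$-separation of $M\ba\ell$ whose non-$N$-side exceeds $\mu(\ell)=2$, or a special $N$-minor $M\baba z$ or $M/z$ (for a suitable $z\in X$) that is $3$-connected, contradicting the choice of $(M,N)$ as counterexample. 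The main obstacle is this last paragraph: one must carefully track which side of each uncrossed $2$-separation of $M\ba\ell$ is the $N$-side, and handle separately the cases where $\ell$ is a point versus a line, since these affect the local geometry of $M$ near $\{a,b,\ell\}$.
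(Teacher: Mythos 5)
Your framing via Lemma~\ref{179} is the right starting point, and you correctly observe that when $M_Y\cong P_1$ both $a$ and $b$ are points skew to $p$, so Lemma~\ref{179}(v) applies and the second alternative of (vi) is excluded, giving $A\cap B\neq\emptyset$. But the "principal subcase" $|A|=|B|=1$ is handled incorrectly, and in fact cannot occur, which makes the flaw worth flagging. If $Y_a-\ell=\{d\}$ is a singleton, then by Lemma~\ref{179}(v) the pair $(X_a,\{d\})$ must be a genuine $2$-separation of $M/a\ba\ell$; a side consisting of a single point is never a $2$-separation, so $d$ is forced to be a line of $M/a\ba\ell$, and hence a line of $M$ (since $a$ is a point skew to $X\supseteq\{d\}$). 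In particular $r_M(\{a,d\})=3$, not $2$: the assertion "$M$ has no parallel pair of points, so $r_M(\{a,d\})=r_M(\{b,d\})=2$" only makes sense if $d$ is a point, which it is not. The collinearity conclusion $r_M(\{a,b,d,\ell\})=2$ therefore does not follow, and the contradiction you build from it is not available.

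The remaining subcase ($|A|\ge 2$ or $|B|\ge 2$) is where the actual work has to happen, and you rightly acknowledge that the sketch is incomplete: uncrossing alone gives $\lambda_{M\ba\ell}(A\cap B)=\lambda_{M\ba\ell}(A\cup B)=1$, but converting that into a violation of $\mu(\ell)=2$ or a $3$-connected special $N$-minor requires pinning down where $\ell$ and $\{a,b\}$ sit relative to the uncrossed sets. You also defer the split on whether $\ell$ is a point or a line; the paper handles this once and for all at the outset (its sublemma showing $\ell$ must be a line), using a fan argument with Lemmas~\ref{fantan} and \ref{Step0}, and never revisits the point case. After that the paper proceeds quite differently from your plan: rather than splitting on $|A|$, it proves that $(Y_a-\ell)\cup(Y_b-\ell)=E-\{a,b,\ell\}$ and that $Y_a-Y_b$ and $Y_b-Y_a$ are both non-empty; together with $(Y_a\cap Y_b)-\ell\neq\emptyset$ from (vi) this forces $|Y_a-\ell|\ge 2$ (so your subcase~1 is vacuous), and then $\mu(\ell)=2$ pins $|Y_a-\ell|=|Y_b-\ell|=2$, hence $|E(M)|=6$. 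The final contradiction is then a direct count: neither side of the $2$-separation $(X_a\cup a,Y_a-\ell)$ can carry at least $|E(N)|-1\ge 3$ elements of $E(N)$, since $|X_a\cup a|=3$ with $\{a,b\}$ inside it and at most one of $a,b$ in $E(N)$, while $|Y_a-\ell|=2$. I'd suggest aiming for that element-count endgame rather than the case split on singleton $A$ and $B$.
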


\begin{proof} 
Assume $M_Y$ is isomorphic to $P_1$. Since $\{a,b\}$ is a series pair in $M\ba \ell$, it follows that both $M/a$ and $M/b$ have c-minors isomorphic to $N$. Hence neither $M/a$ nor $M/b$ is \thc. 

We show next that 

\begin{sublemma}
\label{nonesub0}
$\ell$ is a line of $M$. 
\end{sublemma}

Assume $\ell$ is a point. Then $\{\ell,a,b\}$ is a triad of $M$. Since neither $M/a$ nor $M/b$ is \thc, it follows by \cite[Lemma 4.2]{oswww} that $M$ has a triangle containing $a$ and exactly one of $b$ and $\ell$. If $M$ has $\{a,b,c\}$ as  a triangle, then $M/a$ has $\{b,c\}$ as a parallel pair of points. Thus $M/a\ba b$, and hence $M\ba b$, has a c-minor isomorphic to $N$. Thus $b$ is a doubly labelled point; a \cn\ to Lemma~\ref{Step0}. We deduce that $M$ has $\{a,\ell\}$ in a triangle with a point $d$, say. Then $M$ has $d,a,\ell,b$ as a fan with $M/a$ having a c-minor isomorphic to $N$. Thus, by Lemmas~\ref{fantan} and \ref{Step0}, we have a \cn. We conclude that \ref{nonesub0} holds. 

By Lemma~\ref{179}, $M/a$ and $M/b$ have $2$-separations $(X_a,Y_a)$ and $(X_b,Y_b)$ such that $\ell \in Y_a \cap Y_b$. Moreover, both $M/a$ and $M/b$ have special $N$-minors, and 
\begin{itemize}
\item[(i)] $b \in X_a$ and $a \in X_b$;
\item[(ii)] both $Y_a$ and $Y_b$ properly contain $\{\ell\}$; 
\item[(iii)] $(X_a,Y_a - \ell)$ and $(X_b,Y_b-\ell)$ are $2$-separating partitions of $M/a\ba \ell$ and $M/b\ba \ell$, respectively, and $\ell \in \cl_{M/a}(Y_a - \ell)$ and $\ell \in \cl_{M/b}(Y_b - \ell)$; 
\item[(iv)] $(X_a \cup a,Y_a - \ell)$ and $(X_b \cup b,Y_b-\ell)$ are $2$-separating partitions of $M\ba \ell$; and 
\item[(v)] $(Y_a - \ell) \cap (Y_b - \ell) \neq \emptyset$.
\end{itemize}

\begin{sublemma}
\label{noonesub3}
$(Y_a - \ell)\cup (Y_b - \ell) = E - \{a,b,\ell\}$.  
\end{sublemma}

We know that $\lambda_{M\ba \ell}(Y_a - \ell) = 1 = \lambda_{M\ba \ell}(Y_b - \ell)$ and $(Y_a - \ell) \cap (Y_b - \ell) \neq \emptyset$, so 
$\lambda_{M\ba \ell}((Y_a - \ell)\cap (Y_b - \ell)) \geq 1$. Thus, by applying the submodularity of the connectivity function, we see that 
\begin{align*}
1 + 1  & = \lambda_{M\ba \ell}(Y_a - \ell) + \lambda_{M\ba \ell}(Y_b - \ell)\\
& \ge \lambda_{M\ba \ell}((Y_a - \ell)\cap (Y_b - \ell)) + \lambda_{M\ba \ell}((Y_a - \ell)\cup (Y_b - \ell))\\
& \ge  1 + \lambda_{M\ba \ell}((Y_a - \ell)\cup (Y_b - \ell)).
\end{align*}
Since $M\ba \ell$ is $2$-connected, we deduce that $\lambda_{M\ba \ell}((Y_a - \ell)\cup (Y_b - \ell)) = 1$. 

This application of  the submodularity of the connectivity function is an example of an `uncrossing' argument. For the rest of the paper, we will omit the details of such arguments and will follow our stated practice of using the abbreviation `by uncrossing' to mean `by applying the submodularity of the connectivity function.' 

Now $(X_a \cup a, Y_a)$ is not a 2-separation of $M$, so, as $\ell \in \cl_{M/a}(Y_a- \ell)$, we see that  
$$r(Y_a - \ell) < r(Y_a) \le r(Y_a  \cup a) = r((Y_a - \ell) \cup a) \le r(Y_a - \ell) + 1.$$
Hence
$$r(Y_a) = r(Y_a  \cup a) = r((Y_a - \ell) \cup a) = r(Y_a - \ell) + 1.$$
Thus $r((Y_a - \ell) \cup (Y_b - \ell) \cup \{a,b\}) = r(Y_a \cup Y_b  \cup \{a,b\}) = r(Y_a \cup Y_b) \le r((Y_a - \ell)\cup (Y_b - \ell)) + 1.$
Also, as $\{a,b\}$ is a series pair of points in $M\ba \ell$, we see that 
$r(X_a  \cap X_b) \le r((X_a \cap X_b) \cup \{a,b\}) - 1$. Therefore, 
$\lambda_{M}(Y_a \cup Y_b \cup \{a,b\}) \leq 1$. Thus, we may assume that 
$X_a \cap X_b$ consists of a single matroid point, $z$, otherwise $(Y_a - \ell)\cup (Y_b - \ell) = E- \{a,b,\ell\}$ as desired.

Now $\lambda_{M\ba \ell}(X_a \cap X_b) = \lambda_{M\ba \ell}(\{a,b,z\}) = 1$. If $a \notin \cl(\{b,z\})$, then 
$\lambda_{M\ba \ell}((Y_a - \ell) \cup (Y_b - \ell) \cup a) \le 1$, so 
$\lambda_{M}((Y_a-\ell) \cup (Y_b - \ell) \cup a \cup \ell) = \lambda_{M}(Y_a \cup Y_b \cup a) \le 1$; a \cn. Thus $a \in \cl(\{b,z\})$. Hence $\{a,b,z\}$ is a triangle of $M$. It follows   that the point $b$ is doubly labelled; a \cn\ to Lemma~\ref{Step0}. We conclude that \ref{noonesub3} holds. 

\begin{sublemma}
\label{noonesub4}
$Y_a  -  Y_b \neq \emptyset$ and $Y_b  - Y_a \neq \emptyset$.  
\end{sublemma}

By symmetry, it suffices to prove the first of these. Assume $Y_a - Y_b = \emptyset$. Then, as $(Y_a - \ell) \cup (Y_b - \ell) = E - \{a,b,\ell\}$, we deduce that $X_b = \{a\}$, so $(X_b,Y_b - \ell)$ is not a \tws\ of $M\ba \ell/b$; a \cn. Thus \ref{noonesub4} holds. 

By  \ref{noonesub4} and the fact that $(X_a \cup a) \cap (X_b\cup b)$ contains $\{a,b\}$, we see that each of $X_a\cup a$ and $X_b\cup b$ has at least three elements. It follows by the definition of $\mu(\ell)$ that each of $Y_a- \ell$ and $Y_b- \ell$ has exactly two elements. Since each of $Y_a - Y_b, (Y_a \cap Y_b) - \ell,$ and $Y_b - Y_a$ is non-empty, each of these sets has exactly one element. As the union of these sets is $E - \{\ell,a,b\}$, we deduce that $|E(M)| = 6$ and $|X_a \cup a| = 3$. Since  at least one of $a$ and $b$ is not in $E(N)$, we deduce that each of $X_a\cup a$ and 
$Y_a - \ell$  contains at most two elements of $N$; a \cn\ as one of these sets must contain at least three elements of $E(N)$. We conclude that Lemma~\ref{noone} holds. 
\end{proof}


\begin{lemma}
\label{no7} 
$M_Y$ is not isomorphic to $P_7$.
\end{lemma}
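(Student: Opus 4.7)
The plan is to argue by contradiction, paralleling the proof of Lemma~\ref{noone}. Assume $M_Y \cong P_7$. From the figure, $P_7$ is a rank-$3$ configuration in which $a$ and $b$ are both skew to the distinguished point $p$, and a direct rank computation verifies that $\sqcap_{M_Y}(\{a\},\{p\}) = \sqcap_{M_Y}(\{b\},\{p\}) = 0$ and that both $M_Y/a$ and $M_Y/b$ collapse to rank-$1$ $2$-polymatroids on a parallel pair of points, so both are $2$-connected. Therefore the hypotheses of Lemma~\ref{179} are satisfied.

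Applying Lemma~\ref{179} yields $2$-separations $(X_a, Y_a)$ of $M/a$ and $(X_b, Y_b)$ of $M/b$ with $\ell \in Y_a \cap Y_b$, $b \in X_a$, $a \in X_b$, both $M/a$ and $M/b$ having special $N$-minors, and with $(X_a \cup a, Y_a - \ell)$ and $(X_b \cup b, Y_b - \ell)$ both $2$-separations of $M\ba \ell$. Since $\mu(\ell) = 2$, we have $|Y_a - \ell|, |Y_b - \ell| \le 2$.

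Next we split on the dichotomy of Lemma~\ref{179}(vi). In the case $(Y_a - \ell)\cap(Y_b - \ell) \ne \emptyset$, we uncross the two $2$-separations of $M\ba\ell$ to deduce $\lambda_{M\ba\ell}((Y_a - \ell)\cup(Y_b - \ell)) = 1$, and then push to $(Y_a - \ell)\cup(Y_b - \ell) = E - \{a, b, \ell\}$ exactly as in Lemma~\ref{noone}: any residual point $z \in X_a \cap X_b$ would force $\{a, b, z\}$ to be a triangle and produce a doubly labeled point, contradicting Lemma~\ref{Step0}. With $Y_a - Y_b$ and $Y_b - Y_a$ both non-empty (as otherwise $(X_b, Y_b - \ell)$ fails to be a proper $2$-separation) and $|Y_a - \ell| = |Y_b - \ell| = 2$, this bounds $|E(M)| \le 6$; since at most one of $a, b$ lies in $E(N)$ and $|E(N)| \ge 4$, the $N$-side of the original $2$-separation carries fewer than $|E(N)| - 1$ elements of $E(N)$, a contradiction. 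In the second case of (vi), $X_b \cap (Y_a - \ell) = \{a'\}$ and $X_a \cap (Y_b - \ell) = \{b'\}$ are singletons with $a', b'$ points of $M$; since $r_M(\{a, b\}) = 3$, the proviso on $\ell$ is vacuous. Then by Lemma~\ref{179}(iii), $a'$ and $\ell$ are parallel points of $M/a$, so $M\ba a'$ inherits the special $N$-minor of $M/a$, and the symmetric argument through $M/b$ equips $M/a'$ with a special $N$-minor. Hence $a'$ is doubly labeled, contradicting Lemma~\ref{Step0}.

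The principal obstacle lies in the uncrossing analysis in the first case, particularly the sub-step establishing $X_a \cap X_b \subseteq \{a, b\}$; this requires carefully detecting the forced triangle on any extra element and invoking Lemma~\ref{Step0}, as is done in the analogous step of Lemma~\ref{noone}. A secondary subtlety is in the second case, where we must verify that the inherited $N$-minors are genuinely \emph{special} (differing from $N$ by at most one relabelled point), which follows from the fact that the parallel-point collapse produced by (iii) identifies $a'$ (or $b'$) with $\ell$ in exactly one coordinate of the c-minor.
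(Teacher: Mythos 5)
Your proposal has a genuine gap in the main case, caused by transplanting the $P_1$ argument into the $P_7$ setting without accounting for the crucial difference that in $P_7$ the element $a$ is a \emph{line}, not a point.

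In Lemma~\ref{noone} ($P_1$), the step establishing $(Y_a - \ell)\cup(Y_b - \ell) = E - \{a,b,\ell\}$ relies on the chain
$r(Y_a - \ell) < r(Y_a) \le r(Y_a \cup a) = r((Y_a - \ell)\cup a) \le r(Y_a - \ell) + 1$,
where the final inequality uses $r(\{a\}) = 1$. In $P_7$, $a$ is a line, so adding $a$ can raise the rank by two, the chain fails to close, and the conclusion $r(Y_a) = r(Y_a - \ell)+1$ is not obtained. The subsequent deduction that a residual point $z$ in $X_a \cap X_b$ forces a \emph{triangle} $\{a,b,z\}$ is also unavailable, since a triangle by the paper's definition is a set of three \emph{points} and $a$ is a line. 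Your cardinality bound $|E(M)| \le 6$ then has no support. (You also misdescribe $M_Y/b$: contracting the point $b$ from $P_7$ yields a rank-$2$ configuration with the line $a$ and a point $p$, not a rank-$1$ parallel pair. That error is harmless, but it suggests you have not computed carefully in this case.)

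The paper's proof of Lemma~\ref{no7} does not mirror Lemma~\ref{noone}. It instead uses $r_M(\{a,b\}) = 3$ (one point, one line) to show that $X_a \cap X_b$ is empty or a single point, and establishes an independent auxiliary claim: no point $\gamma$ in $E(M) - \{\ell,a,b\}$ can make $\{a,b,\gamma\}$ $2$-separating in $M\ba\ell$ (else $\{a,b,\gamma\}$ would be a $3$-element non-$N$-side contradicting $\mu(\ell) = 2$). This immediately rules out $X_a \cap X_b$ being a single point, so $X_a \cap X_b = \emptyset$; uncrossing then forces each of $(X_a \cup a)\cap(Y_b - \ell)$ and $(X_b \cup b)\cap(Y_a - \ell)$ to be a single point, and those singletons themselves violate the auxiliary claim. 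That chain of reasoning is absent from your proposal and cannot be recovered from the Lemma~\ref{noone} template.

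Finally, a smaller observation: the second alternative of Lemma~\ref{179}(vi) requires both $a$ and $b$ to be lines of $M$. In $P_7$, $b$ is a point, so that alternative is vacuous and you can assert $(Y_a - \ell)\cap(Y_b - \ell)\neq\emptyset$ outright (as the paper does in its item (v)); the branch you develop for the second case, while not incorrect, is needless.
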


\begin{proof} 
Assume that $M_Y$ is isomorphic to $P_7$, letting $a$ be the line. Then, by Lemma~\ref{179},   $M/a$ and $M/b$ have $2$-separations $(X_a,Y_a)$ and $(X_b,Y_b)$ such that $\ell \in Y_a \cap Y_b$. Moreover, both $M/a$ and $M/b$ have special $N$-minors, and  
\begin{itemize}
\item[(i)] $b \in X_a$ and $a \in X_b$;
\item[(ii)] both $Y_a$ and $Y_b$ properly contain $\{\ell\}$; 
\item[(iii)] $(X_a,Y_a - \ell)$ and $(X_b,Y_b-\ell)$ are $2$-separating partitions  of $M/a\ba \ell$ and $M/b\ba \ell$, respectively, and $\ell \in \cl_{M/a}(Y_a - \ell)$ and $\ell \in \cl_{M/b}(Y_b - \ell)$; 
\item[(iv)] $(X_a \cup a,Y_a - \ell)$ and $(X_b \cup b,Y_b-\ell)$ are $2$-separating partitions   of $M\ba \ell$; and 
\item[(v)] $(Y_a - \ell) \cap (Y_b - \ell) \neq \emptyset$.
\end{itemize}

We show next that 

\begin{sublemma}
\label{*5}
$X_a \cap X_b$ is empty or consists of a single point. 
\end{sublemma}

Suppose $b \not\in \cl(Y_b)$. Then $r(Y_b \cup b) = r(Y_b) + 1$. Thus $(X_b \cup b, Y_b)$ is a \tws\ of $M$; a \cn. 
Hence $r(Y_b \cup b) = r(Y_b)$. Now
\begin{align*}
r((Y_a - \ell) \cup (Y_b - \ell)) + 2 & \ge r((Y_a - \ell) \cup (Y_b - \ell) \cup a)\\
& = r(Y_a  \cup (Y_b - \ell) \cup a)\\
& = r(Y_a  \cup Y_b  \cup a)\\
& = r(Y_a  \cup Y_b  \cup a \cup b).
\end{align*}
Also $r(X_a \cap X_b) \le r((X_a \cup a) \cap (X_b \cup b)) - 2$ since $X_a \cap X_b \subseteq X$ and $\sqcap_M(X,Y) = 1$ while $r_M(\{a,b\}) = 3$. 
Thus 
\begin{align*}
\lambda_M(X_a \cap X_b)   & = r(Y_a  \cup Y_b  \cup a \cup b) + r(X_a \cap X_b) - r(M)\\
& \le r((Y_a - \ell) \cup (Y_b - \ell)) + 2 + r((X_a \cup a) \cap (X_b \cup b)) - 2 - r(M\ba \ell)\\
& = \lambda_{M\ba \ell}((X_a\cup a) \cap (X_b\cup b))\\
& = 1,
\end{align*}
where the second-last step follows by uncrossing $(X_a \cup a, Y_a - \ell)$ and $(X_b \cup b, Y_b - \ell)$. 
We deduce that \ref{*5} holds.

\begin{sublemma}
\label{*6}
$E(M) - \{\ell,a,b\}$ contains no  point $\gamma$ such that $\{a,b,\gamma\}$ is $2$-separating in $M\ba \ell$.
\end{sublemma}

To see this, suppose that such a point $\gamma$ exists. Recall that $M\ba \ell$ has $N$ as a c-minor so at most one element of $\{a,b\}$ is in $E(N)$. 
Thus at most two elements of $\{a,b,\gamma\}$ are in $E(N)$. But $|E(N)| \ge 4$. Hence $\{a,b,\gamma\}$ is the non-$N$-side of a $2$-separation of $M\ba \ell$ contradicting the fact that $\mu(\ell) = 2$. We conclude that \ref{*6} holds. 

An immediate consequence of \ref{*6} is that $X_a \cap X_b$ does not consist of a single point. Hence, by \ref{*5}, $X_a\cap X_b = \emptyset$.
As $(X_a,Y_a)$ is a \tws\ of $M/a$, it follows that   $X_a$ cannot contain just the element $b$. Thus 
$(X_a \cup a) \cap (Y_b - \ell) \neq \emptyset$. We show next that 

\begin{sublemma}
\label{*7}
$(X_b \cup b) \cap (Y_a - \ell) \neq \emptyset$.
\end{sublemma}

Suppose $(X_b \cup b) \cap (Y_a - \ell) =  \emptyset$. Then $Y_b - \ell = E(M) - \{a,b,\ell\} = X$ so $r(Y_b - \ell) = r(M) - 2$. Hence 
$r((Y_b - \ell) \cup b) \le  r(M) - 1$. But $\ell \in \cl_{M/b}(Y_b - \ell)$. Thus $r(Y_b  \cup b) \le r(M) - 1$, so $\{a\}$ is 2-separating in $M$; a \cn. We deduce that \ref{*7} holds. 

By uncrossing, $\lambda_{M\ba \ell}((X_b\cup b) \cap (Y_a - \ell)) = 1 = \lambda_{M\ba \ell}((X_a\cup a) \cap (Y_b - \ell))$. As $\ell$ is in both 
$\cl((Y_a - \ell) \cup a)$ and $\cl((Y_b - \ell) \cup b)$, we deduce that each of $(X_a \cup a) \cap (Y_b - \ell)$ and $(X_b \cup b) \cap (Y_a - \ell)$ consists of a single point. Thus we get a \cn\ to \ref{*6} that completes the proof of Lemma~\ref{no7}.
\end{proof}

On combining Lemmas~\ref{not23}, \ref{noone}, and \ref{no7}, we immediately obtain the following.

\begin{corollary}
\label{pointless} 
The non-$N$-side of every $2$-separation of $M\ba \ell$ does not contain any points. 
\end{corollary}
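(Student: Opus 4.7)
The plan is to combine the preceding lemmas with the classification in Lemma~\ref{old2} to run a short case analysis. I start by taking any non-trivial $2$-separation $(X,Y)$ of $M\ba \ell$ with $Y$ the non-$N$-side. Since $\mu(\ell) = 2$ by the standing hypothesis of this section, and since every non-$N$-side of a non-trivial $2$-separation of $M\ba \ell$ has at least two elements, we must have $|Y| = 2$.

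Next I invoke Proposition~\ref{dennis3.6} and Lemma~\ref{dennisplus} to decompose $M\ba \ell = M_X \oplus_2 M_Y$ with basepoint $p$, so that $M_Y$ is a $3$-element $2$-polymatroid whose rank is at least two (since $r_{M_Y}(\{p\}) = 1 = \lambda_{M_Y}(\{p\})$) and which is $2$-connected by Proposition~\ref{connconn}. Lemma~\ref{old2} then places $M_Y$ in the list $P_1, P_2, \dots, P_9$ of Figure~\ref{9lives}. Writing the non-basepoint elements of $M_Y$ as $a$ and $b$, I observe that $Y$ contains a point of $M$ precisely when $\{a,b\}$ contains a point in $M_Y$, and that inspection of the nine configurations shows this to occur exactly for $P_1$ (where $a$ and $b$ are both points forming a series pair through $p$), $P_2$ and $P_3$ (where one of $a,b$ is a point lying on the other, which is a line), and $P_7$ (the rank-$3$ configuration in which the non-line element of $\{a,b\}$ is a point).

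The work is now done: Lemma~\ref{noone} excludes $P_1$, Lemma~\ref{not23} excludes $P_2$ and $P_3$, and Lemma~\ref{no7} excludes $P_7$. Hence $M_Y$ must be one of $P_4, P_5, P_6, P_8, P_9$, in every one of which both $a$ and $b$ are lines of $M_Y$, and hence lines of $M$. Since $(X,Y)$ was an arbitrary $2$-separation with $Y$ as non-$N$-side, the corollary follows. There is no real obstacle to overcome here; the substance lies entirely in the earlier lemmas, and what remains is only a bookkeeping check against the list in Figure~\ref{9lives}.
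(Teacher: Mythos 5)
Your argument is correct and is precisely the proof the paper has in mind: the paper states the corollary follows ``on combining Lemmas~\ref{not23}, \ref{noone}, and \ref{no7},'' and you simply make explicit that, once those lemmas and the classification in Lemma~\ref{old2} are in hand, the only surviving types for $M_Y$ are $P_4, P_5, P_6, P_8, P_9$, in each of which both non-basepoint elements are lines. One minor point worth tightening: the parenthetical justification ``since $r_{M_Y}(\{p\}) = 1 = \lambda_{M_Y}(\{p\})$'' does not by itself rule out $r(M_Y) = 1$ (a rank-one $2$-polymatroid on three mutually parallel points satisfies both equalities and is $2$-connected); what actually forces $r(M_Y) \ge 2$, so that Lemma~\ref{old2} applies, is the $3$-connectedness of $M$, since $r(M_Y) = 1$ would make $a$ and $b$ parallel points of $M$.
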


\begin{lemma} 
\label{no9}
$M_Y$ is not isomorphic to $P_9$.
\end{lemma}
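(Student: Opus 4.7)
My plan is to assume for contradiction that $M_Y\cong P_9$, so that $a$ and $b$ are both lines of $M$, each skew to $p$ in $M_Y$, with $r_{M_Y}(\{a,b,p\})=4$ and $p\in\cl(\{a,b\})$. Since $M_Y/a$ and $M_Y/b$ are each $2$-connected, Lemma~\ref{179} applies and yields $2$-separations $(X_a,Y_a)$ of $M/a$ and $(X_b,Y_b)$ of $M/b$ with $\ell\in Y_a\cap Y_b$, together with special $N$-minors in $M/a$ and $M/b$ and conclusions (i)--(vi). Since $\mu(\ell)=2$, every $2$-separation of $M\ba \ell$ has a non-$N$-side of cardinality exactly $2$, and by Corollary~\ref{pointless} that non-$N$-side contains no points.

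I would first handle the alternative of Lemma~\ref{179}(vi), in which $(Y_a-\ell)\cap(Y_b-\ell)=\emptyset$, each of $X_b\cap(Y_a-\ell)$ and $X_a\cap(Y_b-\ell)$ is a single point, and, because $r(\{a,b\})=4$ in $P_9$, the element $\ell$ is a point of $M$. The $2$-separation $(X_a\cup a,Y_a-\ell)$ of $M\ba\ell$ from Lemma~\ref{179}(iv) has a point in $Y_a-\ell$; by Corollary~\ref{pointless}, $Y_a-\ell$ cannot be the non-$N$-side, so $X_a\cup a$ must be, forcing $X_a=\{b\}$. The symmetric argument gives $X_b=\{a\}$, and hence $(Y_a-\ell)\cup(Y_b-\ell)=E(M)-\{a,b,\ell\}=X$. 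But then the sole point in $X_b\cap(Y_a-\ell)$ and the sole point in $X_a\cap(Y_b-\ell)$ together cover $X$, so $|X|\le 2$, making $|E(M)|\le 5$ and leaving too few elements of $E(N)$ to lie in $X$; contradiction.

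In the main case, $(Y_a-\ell)\cap(Y_b-\ell)\neq\emptyset$. Here I would imitate the pattern in the proofs of Lemmas~\ref{noone} and \ref{no7}: uncross the $2$-separating partitions $(X_a\cup a,Y_a-\ell)$ and $(X_b\cup b,Y_b-\ell)$ of $M\ba\ell$ from Lemma~\ref{179}(iv) to show that both $(Y_a-\ell)\cap(Y_b-\ell)$ and $(Y_a-\ell)\cup(Y_b-\ell)$ are $2$-separating in $M\ba\ell$. Using $\ell\in\cl_{M/a}(Y_a-\ell)$ and $\ell\in\cl_{M/b}(Y_b-\ell)$ together with $r(\{a,b\})=4$ and $\sqcap(X,Y)=1$, one computes that $r((Y_a-\ell)\cup(Y_b-\ell)\cup\{a,b\})\le r((Y_a-\ell)\cup(Y_b-\ell))+2$ and $r(X_a\cap X_b)\le r((X_a\cup a)\cap(X_b\cup b))-2$, and hence $\lambda_M(X_a\cap X_b)\le 1$. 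Thus $X_a\cap X_b$ is either empty or a small set whose structure forces, via Corollary~\ref{pointless}, that $(Y_a-\ell)\cup(Y_b-\ell)=E(M)-\{a,b,\ell\}$. Then, arguing as in \ref{noonesub4} of Lemma~\ref{noone}, each of $Y_a-\ell$ and $Y_b-\ell$ must have exactly two elements, and each is the non-$N$-side of a $2$-separation of $M\ba\ell$. Since $a,b\notin E(N)$ and $|E(N)|\ge 4$, counting $N$-elements on the three pieces $Y_a-Y_b$, $(Y_a\cap Y_b)-\ell$, and $Y_b-Y_a$ of $E(M)-\{a,b,\ell\}$ leaves $X_a\cup a$ and $X_b\cup b$ with too few elements of $E(N)$ to be the $N$-side; this gives the final contradiction.

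The main obstacle will be the rank bookkeeping in the main case: compared with the proof of Lemma~\ref{no7}, we now have $r(\{a,b\})=4$ rather than $3$, so the rank drops associated with moving $a$ or $b$ across a $2$-separation change, and we cannot appeal to Lemma~\ref{179}(v) since neither $a$ nor $b$ is a point. I expect the delicate step to be showing that the $2$-separating partitions one obtains by uncrossing cannot accommodate any matroid point on the non-$N$-side without violating Corollary~\ref{pointless}, while still forcing $|Y_a-\ell|=|Y_b-\ell|=2$ so as to contradict $|E(N)|\ge 4$.
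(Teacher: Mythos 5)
Your plan correctly identifies the setup and the appeal to Lemma~\ref{179}, and the two cases it isolates (the alternative of part (vi) versus the intersecting case) are the right ones to consider. However, both halves of your argument have genuine gaps.

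In the first case, you invoke Corollary~\ref{pointless} on the pair $(X_a\cup a, Y_a-\ell)$, but Lemma~\ref{179}(iv) only guarantees that this is a \emph{$2$-separating partition}, not a $2$-separation. In the alternative of part (vi) the set $Y_a-\ell$ is in fact a \emph{single point} (this is proved explicitly in the last paragraph of the proof of Lemma~\ref{179}: ``Because $M$ is $3$-connected, it follows that $Y_a-\ell$ consists of a single point $a'$''), so $\max\{|Y_a-\ell|,r(Y_a-\ell)\}=1$ and the pair is not a $2$-separation at all. Corollary~\ref{pointless} therefore says nothing, and the deduction $X_a=\{b\}$ is unsupported. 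The correct move, as the paper does, is a direct rank count: from $\ell\in\cl_{M/a}(\{a'\})$ and $r(X)=r(M)-3$ one gets $r(X\cup a\cup\ell)\le r(M)-1$, i.e.\ $\{b\}$ is $2$-separating in $M$, contradicting $3$-connectivity.

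In the main case your proposed rank estimate $r((Y_a-\ell)\cup(Y_b-\ell)\cup\{a,b\})\le r((Y_a-\ell)\cup(Y_b-\ell))+2$ is false for $P_9$, and it is not merely a delicate bookkeeping issue. Since $a$ and $b$ are skew lines ($r(\{a,b\})=4$) and $(Y_a-\ell)\cup(Y_b-\ell)\subseteq X$ with $\sqcap(X,\{a,b\})=\sqcap(X,Y)=1$, Lemma~\ref{8.2.3} gives $\sqcap((Y_a-\ell)\cup(Y_b-\ell),\{a,b\})\le 1$ and hence
$$r\bigl((Y_a-\ell)\cup(Y_b-\ell)\cup\{a,b\}\bigr)\ge r\bigl((Y_a-\ell)\cup(Y_b-\ell)\bigr)+3,$$
in the wrong direction. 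The analogous step in the proof of Lemma~\ref{no7} relied on $b\in\cl(Y_b)$, which held there because $b$ was a point parallel to $p$ in $M_Y\cong P_7$; in $P_9$ the element $b$ is a line skew to $p$, so this containment fails. Without it your uncrossing step does not give $\lambda_M(X_a\cap X_b)\le 1$, and the rest of your second case (in particular ``each of $Y_a-\ell$ and $Y_b-\ell$ must have exactly two elements'', which would also need $Y_b-Y_a\ne\emptyset$ as in \ref{noonesub4}, and here the paper in fact derives the opposite: $Y_b-\ell\subseteq Y_a-\ell$) does not follow. The paper's route in the main case is different: first establish $|(Y_a-\ell)\cup(Y_b-\ell)|\ge 2$, uncross to get that this union is $2$-separating, then show via a rank argument (again using $r(\{b\})=2$ to exhibit $\{b\}$ as $2$-separating) that it must be the non-$N$-side, and only then conclude with the $2$-separation $(Y_a\cup\{a,b\},X_a\cap X_b)$ of $M$.
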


\begin{proof}
Assume $M_Y$ is isomorphic to $P_9$. Since each of  $M_Y\ba \ell/a$ and $M_Y\ba \ell/b$ consists of a line through $p$, it follows that both $M/a$ and $M/b$ have c-minors isomorphic to $N$. Hence neither $M/a$ nor $M/b$ is \thc. 
Then $M/a$ and $M/b$ have $2$-separations $(X_a,Y_a)$ and $(X_b,Y_b)$ such that $\ell \in Y_a \cap Y_b$. Moreover, by Lemma~\ref{179}, \begin{itemize}
\item[(i)] $b \in X_a$ and $a \in X_b$;
\item[(ii)] both $Y_a$ and $Y_b$ properly contain $\{\ell\}$; 
\item[(iii)] $(X_a,Y_a - \ell)$ and $(X_b,Y_b-\ell)$ are $2$-separating partitions of $M/a\ba \ell$ and $M/b\ba \ell$, respectively, and $\ell \in \cl_{M/a}(Y_a - \ell)$ and $\ell \in \cl_{M/b}(Y_b - \ell)$; 
\item[(iv)] $(X_a \cup a,Y_a - \ell)$ and $(X_b \cup b,Y_b-\ell)$ are $2$-separating partitions of $M\ba \ell$; and 
\item[(v)] either $(Y_a - \ell) \cap (Y_b - \ell) \neq \emptyset$; or each of $X_b \cap (Y_a - \ell)$ and $X_a \cap (Y_b - \ell)$ consists of a single point, both $a$ and $b$ are lines of $M$, and $\ell$ is a point of $M$.
\end{itemize}

\begin{sublemma}
\label{no9.3}
$(Y_a - \ell) \cap (Y_b - \ell) \neq \emptyset$.
\end{sublemma}

Assume the contrary. Then, by (v), $X_b \cap (Y_a - \ell)$  consists of a point, $a'$, say. By (iii), $\ell \in \cl_{M/a}(\{a'\})$, so $\ell \in \cl(\{a',a\})$. As $r(M) - 3 = r(X)$, it follows that $r(X \cup a \cup \ell) \le r(M) - 1$. Hence the line $\{b\}$ is 2-separating in $M$; a \cn. Thus \ref{no9.3} holds.

\begin{sublemma}
\label{no9.3.5}
$|(Y_a - \ell) \cup (Y_b - \ell)| \geq 2$.
\end{sublemma}

Assume $(Y_a - \ell) \cup (Y_b - \ell)$ contains a unique element, $z$. Then, by \ref{no9.3}, $z \in (Y_a - \ell) \cap (Y_b - \ell)$. Now $\ell \in \cl_{M/a}(\{z\})$, so $\ell \in \cl_M(\{z,a\})$. Thus 
$$r(X \cup a \cup \ell) = r(X \cup a) = r(X) + 2 = r(M) -1,$$
so$(X \cup a \cup \ell, \{b\})$ is a \tws\ of $M$; a \cn. Thus \ref{no9.3.5} holds. 

By \ref{no9.3} and uncrossing, we see that 
$\lambda_{M\ba \ell}((X_a \cup a) \cap (X_b \cup b)) = 1$. Next we show the following.

\begin{sublemma}
\label{no9.4}
$(Y_a - \ell) \cup (Y_b - \ell)$ is the non-$N$-side of a $2$-separation of $M\ba \ell$ and it is a $2$-element set, both members of which are lines.
\end{sublemma}

By \ref{no9.3.5}, $((X_a \cup a) \cap (X_b \cup b),(Y_a - \ell) \cup (Y_b - \ell))$ is a \tws\ of $M\ba \ell$. 
Suppose $(X_a \cup a) \cap (X_b \cup b)$ is the non-$N$-side of this 2-separation. Then, as $\mu(\ell) = 2$, we deduce that  $(X_a \cup a) \cap (X_b \cup b) = \{a,b\}$. 
Thus, as $\ell \in \cl((Y_a - \ell) \cup a)$, 
\begin{align*}
r(M) + 1 & = r((Y_a - \ell) \cup (Y_b - \ell)) + r(\{a,b\})\\
& = r((Y_a - \ell) \cup (Y_b - \ell) \cup a) + r(\{b\})\\
&  = r(Y_a  \cup Y_b  \cup a) + r(\{b\}).
\end{align*}
Hence $\{b\}$ is 2-separating in $M$; a \cn. 
Thus $(Y_a - \ell) \cup (Y_b - \ell)$ must be the non-$N$-side of a 2-separation of $M\ba \ell$, so this set has cardinality two. Moreover, 
by Corollary~\ref{pointless}, both elements of this set are lines. Thus \ref{no9.4} holds.

We deduce from \ref{no9.4} that $Y_a - \ell$ and $Y_b - \ell$ are the  non-$N$-sides of  2-separations of $M\ba \ell$. Thus, by symmetry, we may assume that 
$Y_b - \ell \subseteq Y_a - \ell$. Hence 

\begin{equation}
\label{unc}
(Y_a - \ell) \cup (Y_b - \ell) = Y_a - \ell. 
\end{equation}

\begin{sublemma}
\label{no9.5}
$(Y_a \cup \{a,b\}, X_a \cap X_b)$ is a $2$-separation of $M$.
\end{sublemma}

Since $Y_a - \ell \supseteq Y_b - \ell$, we have
$$r(Y_a \cup a) = r((Y_a - \ell) \cup a) = r(Y_a - \ell) + 2$$  
and
$$r(Y_a \cup b) = r((Y_a - \ell) \cup b) = r(Y_a - \ell) + 2.$$
Moreover, 
$$r(Y_a \cup \{a,b\}) = r((Y_a- \ell) \cup \{a,b\})   \ge r(Y_a - \ell) + 3.$$
Thus, by submodularity, 
\begin{align*}
r(Y_a - \ell) + 2 + r(Y_a - \ell) + 2 & = r(Y_a \cup a) + r(Y_a \cup b)\\
& \ge r(Y_a \cup a \cup b) + r(Y_a)\\
& \ge r(Y_a - \ell) + 3 + r(Y_a)\\
& \ge r(Y_a - \ell) + 3 + r(Y_a - \ell) + 1,
\end{align*}
where the last step follows because $\ell \notin \cl(Y_a - \ell)$. 

We see that equality must hold throughout the last chain of inequalities. 
Hence $r(Y_a) = r(Y_a - \ell) + 1$ and $r(Y_a \cup \{a,b\}) = r(Y_a - \ell) + 3 = r(Y_a) + 2$. 
As $\lambda_{M\ba \ell}(Y_a - \ell) = 1$, it follows that $\lambda_{M}(Y_a) = 2$, that is,
$$r(Y_a) + r((X_a \cup a) \cap (X_b \cup b)) - r(M) = 2.$$
Hence
\begin{align*}
r(Y_a \cup \{a,b\}) + r(X_a \cap X_b) - r(M) & \le r(Y_a) + 2 + r((X_a \cup a) \cap (X_b \cup b))\\
& \hspace*{2in} - 3 - r(M)\\
& = 1.
\end{align*}
Thus $(Y_a \cup \{a,b\}, X_a \cap X_b)$ is a 2-separating partition of $M$. Since $(X_a \cup a) \cap (X_b \cup b)$ is the $N$-side of a 2-separation of $M\ba \ell$, it follows that $X_a \cap X_b$ contains at least two elements of $E(N)$ as $\{a,b\}$ contains at most one element of $E(N)$. Thus
$(Y_a \cup \{a,b\}, X_a \cap X_b)$ is a 2-separation of $M$, that is, \ref{no9.5} holds.  
But \ref{no9.5} gives a \cn\ and thereby completes the proof of Lemma~\ref{no9}.
\end{proof}

We now know that there are only three possibilities for $M_Y$, namely $P_5$, $P_6$, or $P_8$. The next few lemmas will be useful in treating all three cases. 

\begin{lemma}
\label{cactus}
Assume $M\ba \ell$ has $(X,\{a,b\})$ as a $2$-separation where $r(\{a,b\}) = 3$ and each of $a$ and $b$ is a line. Then $r(X \cup \ell)  = r(X) + 1$ if and only if $\{a,b\}$ is a prickly $3$-separating set in $M$.
\end{lemma}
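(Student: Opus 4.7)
The plan is to anchor both implications to the single rank identity $r(M) = r(X) + 2$, from which everything else reduces to short rank arithmetic. First I would establish this identity. Because $M$ is $3$-connected with at least four elements, $M$ is compact, so $r(M\ba \ell) = r(M)$. On the other hand, the $2$-separation $(X, \{a,b\})$ of $M\ba \ell$ combined with the hypothesis $r(\{a,b\}) = 3$ gives
\[
r(M\ba \ell) = r(X) + r(\{a,b\}) - \sqcap_{M\ba\ell}(X,\{a,b\}) = r(X) + 3 - 1 = r(X) + 2,
\]
so $r(M) = r(X) + 2$.

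For the reverse direction, I would assume $\{a,b\}$ is a prickly $3$-separator. Then $\lambda_M(\{a,b\}) = 2$, and using $r(\{a,b\}) = 3$ together with $r(M) = r(X)+2$, this rearranges immediately to $r(X \cup \ell) = r(M) - 1 = r(X) + 1$.

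For the forward direction, I would verify each clause of the definition of a prickly $3$-separator with $Z = \{a,b\}$. Clauses (i) and (iv) are built into the hypotheses that $a$ and $b$ are lines with $r(\{a,b\}) = 3$. Clause (ii)'s equation $\lambda_M(\{a,b\}) = 2$ follows by substituting $r(X \cup \ell) = r(X) + 1$ and $r(M) = r(X) + 2$ into the defining formula for $\lambda_M$. Clause (iii) requires $r((X \cup \ell)\cup a) = r(X \cup \ell) + 1$ and the symmetric statement for $b$. Since $r((X \cup \ell) \cup \{a,b\}) = r(M) = r(X \cup \ell) + 1$, the value $r((X \cup \ell)\cup a)$ must be either $r(X \cup \ell)$ or $r(X \cup \ell) + 1$.

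The main (and really only) obstacle is ruling out the alternative $a \in \cl(X \cup \ell)$ in the forward direction. If this held, a direct calculation would give
\[
\lambda_M(\{b\}) = r(\{b\}) + r((X \cup \ell) \cup a) - r(M) = 2 + (r(M)-1) - r(M) = 1,
\]
and since $r(\{b\}) = 2$ and $|X \cup \ell \cup a| \ge 3$, the pair $(\{b\}, (X \cup \ell)\cup a)$ would be a genuine $2$-separation of $M$, contradicting the $3$-connectedness of $M$. A symmetric argument handles $b$, completing the forward direction.
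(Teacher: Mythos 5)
Your proof is correct and takes essentially the same route as the paper's: derive $r(M) = r(X) + 2$ from the $2$-separation of $M\ba\ell$ and compactness of $M$, translate the prickly condition $\lambda_M(\{a,b\}) = 2$ directly into $r(X\cup\ell) = r(X)+1$, and for the converse verify clause (iii) by observing that $a\in\cl(X\cup\ell)$ would make $\{b\}$ a $2$-separating line of $M$. Your write-up is simply more explicit than the paper's, which states $r(X)=r(M)-2$ without spelling out the compactness step.
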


\begin{proof}
If  $\{a,b\}$ is a  $3$-separating set in $M$, then $r(X \cup \ell) = r(M) - 1$. But $r(X) = r(M) - 2$, so $r(X \cup \ell)  = r(X) + 1$. Conversely, if   $r(X \cup \ell) = r(X) + 1$, then $r(X \cup \ell) = r(M) - 1$, so $\{a,b\}$ is a 3-separating set in $M$. Now $r(X \cup \ell \cup a) = r(M)$ otherwise $\{b\}$ is 2-separating in $M$. By symmetry, $r(X \cup \ell \cup b) = r(M)$. Hence $\{a,b\}$ is a prickly $3$-separating set in $M$.
\end{proof}

\begin{lemma}
\label{cactus2}
Assume $M$ has $\{a,b\}$ as a prickly $3$-separating set that is $2$-separating in $M\ba \ell$. Then $M\da a$ and $M\da b$ are $3$-connected having c-minors isomorphic to $N$. 
\end{lemma}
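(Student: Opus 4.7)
The 3-connectivity of both $M\da a$ and $M\da b$ is immediate from Lemma~\ref{portia}, so the substance of the lemma lies in producing c-minors isomorphic to $N$. Since $M\da a$ and $M\da b$ are isomorphic as unlabelled $2$-polymatroids by Lemma~\ref{symjk}, it suffices to handle one of them. As $\{a,b\}$ is 2-separating in $M\ba \ell$ and $M\ba \ell$ has $N$ as a c-minor with $|E(N)| \ge 4$, the set $\{a,b\}$ must be the non-$N$-side of the corresponding 2-separation, whence $|E(N) \cap \{a,b\}| \le 1$. Relabelling if necessary, I may assume $a \notin E(N)$ and aim to show $M\da a$ has a c-minor isomorphic to $N$.

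The plan is to express $M\da a \ba \ell$ as a $2$-sum. By Lemma~\ref{pricklytime0}(i), $M\da a \ba \ell = M\ba \ell \da a$. The 2-separation $(X,\{a,b\})$ of $M\ba \ell$ (with basepoint $p$) gives, via Lemma~\ref{dennisplus}, a decomposition $M\ba \ell = M_X \oplus_2 M_Y$, where $M_Y$ has ground set $\{a,b,p\}$. Because $\{a,b\}$ is a prickly $3$-separator in $M$ and $b$ is a line, $r_{M\ba \ell}(\{a,b\}) = 3 = r_{M\ba \ell}(\{b\}) + 1$, placing me in the second case of Lemma~\ref{dennisplus}(vi). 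This yields
$$M\da a \ba \ell = M_X \oplus_2 (M_Y \da a).$$

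From here the standard $2$-sum lemmas finish the argument. Since $|E(N) \cap X| \ge |E(N)| - 1$, Lemma~\ref{p69} produces a special $N$-minor of $M_X$ using $E(N) \cap X$. As $a \notin E(N)$, we have $E(N) \subseteq X \cup \{b\} = E(M\da a \ba \ell)$, so Lemma~\ref{p49} applied to the decomposition above lifts this to a special $N$-minor of $M\da a \ba \ell$, giving $M\da a$ a c-minor isomorphic to $N$. Lemma~\ref{symjk} then transfers the conclusion to $M\da b$. The main obstacle is combinatorial bookkeeping: verifying that the prickly $3$-separator structure of $\{a,b\}$ in $M$ places me in the correct case of Lemma~\ref{dennisplus}(vi), and that the rank calculations in $M\ba \ell$ inherit cleanly from those in $M$. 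Once the decomposition $M\da a \ba \ell = M_X \oplus_2 (M_Y \da a)$ is in hand, Lemmas~\ref{p69} and~\ref{p49} close the argument directly.
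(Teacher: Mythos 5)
Your proof is correct and takes essentially the same route as the paper: both obtain $3$-connectivity from Lemma~\ref{portia} and use Lemma~\ref{pricklytime0} together with the $2$-sum decomposition $M\ba\ell\da a = M_X \oplus_2 (M_Y\da a)$ to produce a c-minor isomorphic to $N$ in $M\da a$, then invoke symmetry for $M\da b$. The only cosmetic difference is that you package the final step via Lemmas~\ref{p69} and~\ref{p49}, whereas the paper observes directly that $M_Y\da a$ is a line $b$ with the point $p$ on it and compactifies $b$ to recover $M_X$ with $p$ relabelled.
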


\begin{proof} By Lemma~\ref{portia}, $M\da a$ and $M\da b$ are \thc. Since $M_X$ and $M_Y$  have ground sets $X \cup p$ and $\{a,b,p\}$, we see that $r(M_Y) = 3$.  By Lemma~\ref{pricklytime0}, $M\da a \ba \ell = M\ba \ell \da a$. But $M\ba \ell \da a$ equals the 2-sum of $M_X$ and the $2$-polymatroid consisting of a line $b$ through the point $p$. Compactifying $b$ in $M\ba \ell \da a$ gives the $2$-polymatroid that is obtained from $M_X$ by relabelling $p$ by $b$. Hence $M\da a \baba \ell$ has a c-minor isomorphic to $N$. Thus, using symmetry, so do $M\da a$ and $M\da b$.
\end{proof}

\begin{lemma} 
\label{pixl}
If $M_Y$ is $P_5$, $P_6$, or $P_8$, then $r(X \cup \ell) = r(X) +2$, so $\ell$ is a line.
\end{lemma}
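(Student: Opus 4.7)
The plan is to pin down the two possible values for $r(X \cup \ell)$ and then eliminate the smaller one using the prickly-$3$-separator lemmas already in hand.

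First I would note that $M$ being $3$-connected with $|E(M)| \ge 6$ forces $r(M) = r(M \ba \ell)$: if $\ell$ is a line then $\lambda_M(\{\ell\}) \ge 2$ (since $r(\{\ell\}) = 2 > 1$ and $|E - \ell| > 1$) gives $r(M) \le r(M \ba \ell)$; if $\ell$ is a point then $\lambda_M(\{\ell\}) \ge 1$ by $2$-connectedness, which again gives $r(M) \le r(M \ba \ell)$. Combined with the hypothesis that $(X,Y)$ is an exact $2$-separation of $M \ba \ell$ and the observation that $r(Y) = r(M_Y) = 3$ for each of $P_5, P_6, P_8$ (by Lemma~\ref{old2} these are rank-$3$), we obtain $r(M \ba \ell) = r(X) + r(Y) - 1 = r(X) + 2 = r(M)$.

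Next, since $r(Y) = 3 > 1$ and $|E(M) - Y| \ge 4 > 1$, the $3$-connectedness of $M$ gives $\lambda_M(Y) \ge 2$, which unfolds to $r(X \cup \ell) \ge r(X) + 1$. The trivial bound $r(X \cup \ell) \le r(X) + r(\ell) \le r(X) + 2$ then leaves only the two cases $r(X \cup \ell) \in \{r(X)+1, r(X)+2\}$.

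The main step is to eliminate $r(X \cup \ell) = r(X) + 1$, and this is where I would exploit the explicit description of $P_5, P_6, P_8$ in Figure~\ref{9lives}. Among the four rank-$3$ cases $P_5, P_6, P_7, P_8$, only $P_7$ has a non-basepoint point (already used in Lemma~\ref{no7} via skewness of $b$ to $p$); the remaining three, $P_5, P_6, P_8$, are precisely the three arrangements of two meeting lines $a, b$ together with the distinguished point $p$ (placed at the common point, on exactly one of the lines, or off both but in their plane). In particular, for $P_5, P_6, P_8$ both $a$ and $b$ are lines with $r_{M_Y}(\{a,b\}) = 3$; and, by Proposition~\ref{dennis3.6}, $r_M(\{a,b\}) = r_{M_Y}(\{a,b\}) = 3$. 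Hence Lemma~\ref{cactus} applies to the $2$-separation $(X,\{a,b\})$ of $M\ba \ell$ and, under the assumption $r(X \cup \ell) = r(X) + 1$, yields that $\{a,b\}$ is a prickly $3$-separating set of $M$. Lemma~\ref{cactus2} then delivers that $M \da a$ and $M \da b$ are $3$-connected and have c-minors isomorphic to $N$; this is outcome (iii) of Theorem~\ref{modc}, contradicting the standing assumption that $(M,N)$ is a counterexample.

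Therefore $r(X \cup \ell) = r(X) + 2$, forcing $r(\ell) \ge 2$ and so $\ell$ is a line. The only point requiring mild care is the inspection step identifying $P_5, P_6, P_8$ as the three configurations in which both non-basepoint elements are lines meeting at a common point; everything else is an almost immediate application of already-proved connectivity bounds and of Lemmas~\ref{cactus} and \ref{cactus2}.
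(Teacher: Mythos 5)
Your proof is correct and follows essentially the same route as the paper: assume $r(X\cup\ell)=r(X)+1$, invoke Lemma~\ref{cactus} to deduce that $\{a,b\}$ is a prickly $3$-separator, invoke Lemma~\ref{cactus2} to land in outcome (iii) of Theorem~\ref{modc}, and conclude by contradiction. The extra work you do to pin down the two candidate values of $r(X\cup\ell)$ (and the explicit verification that $a,b$ are lines with $r(\{a,b\})=3$) is sound but not genuinely new; the paper reaches the same endpoint more tersely by simply observing that $\ell\notin\cl(X)$ once the case $r(X\cup\ell)=r(X)+1$ has been excluded.
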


\begin{proof} Assume $r(X \cup \ell) =   r(X) +1$. Then, by Lemma~\ref{cactus}, $\{a,b\}$ is a prickly 3-separating set in $M$. Then, by Lemma~\ref{cactus2}, $M\da a$ and $M\da b$ are $3$-connected having c-minors isomorphic to $N$; a \cn\ to the fact that $(M,N)$ is a counterexample to Theorem~\ref{modc}. Thus $r(X \cup \ell) \neq   r(X) +1$. Since $\ell \not\in \cl(X)$, we deduce that $r(X \cup \ell) = r(X) +2$, so $\ell$ is a line. 
\end{proof}

Next we deal with the case when $M\ba \ell$ has $(X,Y)$ as its only \tws\ with $|Y| = 2$, beginning with the possibility that $M_Y= P_6$.   

\begin{lemma}
\label{duh}
Suppose $M_Y = P_6$ and $(X,Y)$ is the only non-trivial $2$-separation of $M\ba \ell$. Then   
\begin{itemize}
\item[(i)] $M\baba a$ or $M\baba b$ is $3$-connected having a special $N$-minor; or 
\item[(ii)] each of $\{a,\ell\}$ and $\{b,\ell\}$ is a prickly $3$-separator of $M$, and each of $M\da a$ and $M\da b$ is $3$-connected having a c-minor isomorphic to $N$.
\end{itemize}
\end{lemma}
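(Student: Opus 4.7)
The plan is to argue by contradiction, assuming neither option (i) nor option (ii) of the lemma holds, and then extract a contradiction from the assumed uniqueness of the non-trivial $2$-separation $(X,\{a,b\})$ of $M\ba \ell$.

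First I would unpack the rank consequences of $M_Y\cong P_6$ together with Lemma~\ref{pixl}: $\ell$ is a line of $M$, $\sqcap_M(X,\ell)=0$, and since $\ell$ is a line in a $3$-connected $2$-polymatroid we must have $r(E-\ell)=r(M)$, forcing $r(M)=r(X)+2$. From the picture of $P_6$ together with the $2$-sum decomposition $M\ba\ell=M_X\oplus_2 M_Y$ we obtain the ranks $r(\{a,b\})=3$, $r(X\cup a)=r(X)+r_{M_Y}(\{a,p\})-1$, and the analogous identity for $b$, so in particular both $a$ and $b$ are lines of $M$.

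Next I would compute
$$\lambda_M(\{a,\ell\})=r(\{a,\ell\})+r(X\cup b)-r(M),\qquad \lambda_M(\{b,\ell\})=r(\{b,\ell\})+r(X\cup a)-r(M).$$
Plugging in the values from the first step reduces the question of whether $\{a,\ell\}$ (respectively $\{b,\ell\}$) is $3$-separating to whether the lines $a$ and $\ell$ (respectively $b$ and $\ell$) cross at a point in $M$. The argument then splits into two cases. In the first case, both $\{a,\ell\}$ and $\{b,\ell\}$ are $3$-separating; verifying the remaining defining conditions (directly, or following the template of Lemma~\ref{cactus}) shows both are prickly $3$-separators of $M$, and then Lemma~\ref{cactus2} delivers $M\da a$ and $M\da b$ as $3$-connected $2$-polymatroids with c-minors isomorphic to $N$. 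This is exactly option (ii).

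In the remaining case, I may assume by symmetry that $b$ and $\ell$ are skew in $M$. Then $\lambda_{M\ba a}(\{\ell\})=1$, so $\{\ell\}$ is a trivial $2$-separation of $M\ba a$ that is removed by compactification, and $\ell$ becomes a point of $M\baba a$ of rank one. I would then argue that $M\baba a$ is $3$-connected: any non-trivial $2$-separation of $M\ba a$, when uncrossed with the unique non-trivial $2$-separation $(X,\{a,b\})$ of $M\ba\ell$, would produce either a $2$-separation of $M$ itself (contradicting $3$-connectivity) or a second non-trivial $2$-separation of $M\ba\ell$ (contradicting the uniqueness hypothesis). Finally, the existence of a special $N$-minor of $M\baba a$ follows from Lemma~\ref{p69} applied to $M\ba\ell=M_X\oplus_2 M_Y$ to produce a special $N$-minor of $M_X$, together with Lemma~\ref{p49} applied to the $2$-sum decomposition of $M\ba a$ (noting $\sqcap_{M\ba\ell}(X,\{b\})=1$), giving option (i).

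The main obstacle will be the uncrossing in the second case. One has to show carefully that every crossing pattern between a hypothetical $2$-separation of $M\ba a$ and the unique $(X,\{a,b\})$ is excluded, especially to rule out $2$-separations that sit entirely inside $X\cup\{b,\ell\}$ but are not visible as $2$-separations of $M\ba\ell$. The uniqueness assumption on $M\ba\ell$ is the crucial lever, and keeping track of the $N$-side throughout the uncrossing, so as to preserve the special $N$-minor, is the subtle bookkeeping.
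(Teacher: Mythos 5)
Your proposal is correct and follows essentially the same approach as the paper: assume $M\baba a$ is not $3$-connected, translate a non-trivial $2$-separation $(A,B)$ of $M\ba a$ with $\ell \in A$ into a $\leq 2$-separating partition $((A-\ell)\cup a,B)$ or $(A-\ell,B\cup a)$ of $M\ba\ell$, and use the uniqueness hypothesis on $M\ba\ell$ to force $A = \{b,\ell\}$ and $B=X$, giving $r(\{b,\ell\})=3$ and the prickly structure; your two cases are just the contrapositive formulation of this. The one step to adjust is the extraction of the special $N$-minor of $M\baba a$: in $M\ba a$ the set $\{b,\ell\}$ is exactly $3$-separating (and $\{\ell\}$ is only a trivial $2$-separator), so there is no $2$-sum decomposition of $M\ba a$ to which Lemma~\ref{p49} applies directly; you should instead apply Lemma~\ref{p49} (or Lemma~\ref{claim1}(i)) to $M\ba\ell\ba a = M_X\oplus_2(M_Y\ba a)$, or simply observe, as the paper does, that $a$ and $b$ are parallel points of $M\baba\ell$, so deleting one of them preserves a special $N$-minor.
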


\begin{proof} By Lemma~\ref{pixl}, $\ell$ is a line of $M$ and $\sqcap(X,\ell) = 0$. In $M\baba \ell$, we see that $a$ and $b$ are parallel points. Hence each of $M\ba a$ or $M\ba b$ has a special $N$-minor. But 
$r(E - \{a,b,\ell\}) = r(M) - 2$ and $r(E - \{a,\ell\}) = r(M) - 1$, so $\{\ell\}$ is 2-separating in $M\ba a$. Now both $M\baba a$ or $M\baba b$   have special $N$-minors. Hence we may assume that neither of these matroids is \thc.   


Next we show that 

\begin{sublemma}
\label{duh2}
$r(\{a,\ell\}) = 3 = r(\{b,\ell\})$. 
\end{sublemma}

We shall show that $r(\{b,\ell\}) = 3$, which, by symmetry, will suffice. As $M\baba a$ is not \thc, $M\ba a$ has a non-trivial 2-separation $(A,B)$ in which $A$   contains $\ell$. Then $(A - \ell,B)$ is a 2-separating partition of $M\ba a \ba \ell$. Observe that $r(M\ba a \ba \ell) = r(M) - 1$. Suppose $b \in B$. Then $r(B \cup a) = r(B) + 1$. Thus $(A - \ell, B\cup a)$ is a 2-separating partition of $M \ba \ell$. Since $B\cup a \neq \{a,b\}$, we deduce that  $A - \ell$ contains a unique element. Moreover, as $\sqcap(X,\ell) = 0$, it follows that $r(A) = r(A - \ell) + 2$. Thus $(A- \ell, B \cup a)$ is a 1-separating partition of $M\ba a$; a \cn\ to Lemma~\ref{Step1}. 

We may now assume that $b \in A - \ell$. Then $((A - \ell) \cup a, B)$ is a non-trivial \tws\ of $M\ba \ell$. Thus $(A - \ell) \cup a = \{a,b\}$, so $A = \{b,\ell\}$. Hence $B = X$ and $r(\{b,\ell\}) = 3$. Thus \ref{duh2} holds.



As $r(X \cup a) = r(M) - 1$, we deduce that $\{b, \ell\}$ is a prickly $3$-separator of $M$. Now $M\ba \ell \da b$, which, by Lemma~\ref{pricklytime0},  equals $M \da b \ba \ell$, has a c-minor isomorphic to $N$. Hence so does $M\da b$ and, by symmetry, $M\da a$. Thus, by Lemma~\ref{portia},   
part~(ii) of the lemma holds.
\end{proof}

\begin{lemma}
\label{duh85}
Suppose $M_Y$ is $P_5$ or $P_8$. Let $a$ be an element of $Y$ for which $\sqcap(\{a\},\{p\}) = 0$. Then 
\begin{itemize}
\item[(i)] $M/a$ has a $2$-separation; and 
\item[(ii)] for every   $2$-separation $(A,B)$ of $M/a$ with $\ell$ in $A$,
\begin{itemize}
\item[(a)] $b \in B$; 
\item[(b)] $(A - \ell,B \cup a)$  is a $2$-separation of $M\ba \ell$ and $|B-b| \ge 2$;
\item[(c)] $|A - \ell| \le 2$ and if $|A - \ell| = 1$, then $A - \ell$ consists of a line of $M/a$; 
\item[(d)] $r_{M/a}(A - \ell) = r_{M/a}(A)$; and 
\item[(e)] $\sqcap(\{a,b\}, A - \ell) = 0$.
\end{itemize}
\end{itemize}
Moreover, if $(X,Y)$ is the unique non-trivial $2$-separation of $M\ba \ell$, then $M/a$ has a unique $2$-separation $(A,B)$ with $\ell$ in $A$. Further,  $A - \ell$ consists of a line of $M/a$.
\end{lemma}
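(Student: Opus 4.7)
The plan is to exploit the $2$-sum decomposition $M\ba\ell = M_X \oplus_2 M_Y$ from Lemma~\ref{dennisplus}, the hypothesis $\sqcap_{M_Y}(\{a\},\{p\}) = 0$, and the fact that $\ell$ is a line skew to $X$ (Lemma~\ref{pixl}). All claims will be translated into rank identities in $M$, pinned down by $\mu(\ell) = 2$ and the $3$-connectivity of $M$.

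For (i), I first show $M/a$ has a c-minor isomorphic to $N$. By Lemma~\ref{dennisplus}(ii), $\sqcap_{M\ba\ell}(X,\{a\}) = 1 - \lambda_{M_Y/a}(\{p\})$; our hypothesis gives $\lambda_{M_Y/a}(\{p\}) = 1 + r_{M_Y}(\{a,b\}) - r(M_Y)$, so $\sqcap_{M\ba\ell}(X,\{a\}) \in \{0,1\}$ according as $p \in \cl_{M_Y}(\{a,b\})$ or not. In the first subcase, Lemma~\ref{obs1} yields $\sqcap_{M\ba\ell/a}(X,\{b\}) = 1$, and Lemma~\ref{claim1}(ii) supplies a special $N$-minor of $M/a$. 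In the second subcase, $X$ is separating in $M\ba\ell/a$, and Lemma~\ref{p69} applied to $M_X$ combined with a compactification gives a special $N$-minor of $M/a$ directly. Since $(M,N)$ is a counterexample to Theorem~\ref{modc}, $M/a$ is not $3$-connected; by Lemma~\ref{Step1} it is $2$-connected, hence has a $2$-separation, establishing~(i).

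For (ii), fix a $2$-separation $(A,B)$ of $M/a$ with $\ell \in A$. For (a), assume $b \in A$; expanding $\lambda_{M/a}(A) = 1$ via $r_{M/a}(S) = r_M(S \cup a) - r_M(\{a\})$, and using Lemma~\ref{pixl} to handle $\ell$, yields $\lambda_M(B) \le 1$ with $|B|, r(B) \ge 2$, contradicting the $3$-connectivity of $M$. So $b \in B$ and $A - \ell \subseteq X$. For (b), the same expansion, together with $r_M(X \cup \ell) = r_M(X) + 2$, shows $(A - \ell, B \cup a)$ is a $2$-separating partition of $M\ba\ell$. The inequality $|B - b| \ge 2$ follows because the alternatives $|B| \in \{1,2\}$ either fail to give a valid $2$-separation of $M/a$ (since $\{b\}$ alone cannot be $2$-separating without forcing $\{b\}$ to be $2$-separating in $M$) or force $(A - \ell, B \cup a) = (X, Y)$, producing a configuration contradicting Lemma~\ref{hath}. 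Parts (c)--(e) then follow from $\mu(\ell) = 2$ constraining the non-$N$-side of $(A - \ell, B \cup a)$ to have size at most two; the trivial case $|A - \ell| = 1$ forces a line by Corollary~\ref{pointless}, while (d) and (e) come from the skewness of $\ell$ and $\{a,b\}$ to $X$ together with $r(\{a,b\}) = 3$.

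The uniqueness addendum is then immediate: if $(X,Y)$ is the only non-trivial $2$-separation of $M\ba\ell$, (c) forbids $|A - \ell| = 2$ (otherwise $(A - \ell, B \cup a)$ would be a distinct non-trivial $2$-separation of $M\ba\ell$), so $|A-\ell|=1$ and this element is a line of $M/a$; uniqueness of $(A,B)$ then follows because two such $2$-separations would yield two distinct non-trivial $2$-separations of $M\ba\ell$ via~(b). The main obstacle will be the subcase $\sqcap_{M\ba\ell}(X,\{a\}) = 1$ in~(i), where the clean $2$-sum description of $M\ba\ell/a$ in Lemma~\ref{dennisplus}(iv) breaks down and one must work directly with the disconnected structure, together with the careful bookkeeping required to lift the $2$-separation of $M/a$ back to $M\ba\ell$ in~(b).
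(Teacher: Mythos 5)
Your high-level plan matches the paper's — use the $2$-sum decomposition, the hypothesis $\sqcap(\{a\},\{p\}) = 0$, and translate everything into rank identities in $M$ — and your arguments for~(a) (showing $b \in B$ via skewness of $a$ to $B \subseteq X$) and for deducing (c)--(e) once the $2$-separation of $M\ba\ell$ is in hand are sound. But the proposal skips the genuine difficulty, which is showing that $(A-\ell, B)$ is a bona fide $2$-separation of $M\ba\ell/a$ rather than merely $2$-separating. Specifically: nothing rules out that $A-\ell$ consists of a \emph{single point} $c$ of $M/a$. You invoke Corollary~\ref{pointless} to claim that if $|A-\ell| = 1$ then the element must be a line, but that corollary applies only to the non-$N$-side of an actual $2$-separation of $M\ba\ell$, and if $A-\ell = \{c\}$ with $c$ a point then $\max\{|A-\ell|, r(A-\ell)\} = 1$, so $(A-\ell, B \cup a)$ fails the size condition and is not a $2$-separation at all. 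Your argument is circular at exactly this point. The paper's proof spends most of its length precisely here (the sublemmas around $(A-\ell, B)$ being a $2$-separation): when $A = \{\ell, c\}$ with $c$ a point, it first pins down $r_{M/a}(\{c,\ell\}) = 1$, so $c$ and $\ell$ are parallel points of $M/a$, hence $M\ba c$ has a c-minor isomorphic to $N$; it then analyses the $2$-separations $(U,V)$ of $M\ba c$ through several further sublemmas (forcing $b \in U$, then ruling out $U = \{\ell,b\}$ and $U = \{\ell, b, d\}$ for a point $d$) before reaching a contradiction. That chain of deductions is indispensable and is absent from your proposal.

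Two smaller points. First, for~(i), your case split on whether $p \in \cl_{M_Y}(\{a,b\})$ is vacuous: for $P_5$ and $P_8$ we have $r(M_Y) = 3$ and $r_{M_Y}(\{a,b\}) = 3$, so $p \in \cl_{M_Y}(\{a,b\})$ always, and the c-minor follows directly from $\sqcap(X,\{a\}) = 0$, Lemma~\ref{obs1}, and Lemma~\ref{claim1}(ii). Second, for~(e), the assertion "comes from the skewness of $\ell$ and $\{a,b\}$ to $X$" is not a proof: the paper derives $\sqcap(\{a,b\}, A-\ell) = 0$ by assuming $\sqcap(\{a,b\}, A-\ell) = 1$, deducing $r((A-\ell) \cup \{a,b\}) + r(B-b) \le r(M\ba\ell) + 1$, and then using~(d) to get the contradiction that $(A \cup \{a,b\}, B-b)$ is a $2$-separation of $M$ — this uses~(d) in an essential way, not just skewness.
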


\begin{proof} 
Certainly $M\ba \ell/a$ and hence $M/a$ has a c-minor isomorphic to $N$. By Lemma~\ref{pixl}, $\ell$ is a line and $\sqcap(X,\ell) = 0$. As the theorem fails, $M/a$ is not \thc, but, by Lemma~\ref{Step1}, it is $2$-connected. Let $(A,B)$ be a \tws\ of $M/a$ with $\ell$ in $A$. 

\begin{sublemma}
\label{bB}
$b \in B$.
\end{sublemma}

Suppose $b \in A$. Then $a$ is skew to $B$ in $M$, so $(A\cup a,B)$ is a \tws\ of $M$; a \cn. Thus \ref{bB} holds.

\begin{sublemma}
\label{mab}
$M$ does not have a point $c$ such that $B = \{b,c\}$.
\end{sublemma}

Assume the contrary. We have $r_{M/a}(A)+r_{M/a}(B)-r(M/a)=1$, that is, $r(A\cup a)-2+r(\{a, b, c\})-r(M)=1$. But $r(A-\ell)\le r(A\cup a)-2$ and $A - \ell = X - c$.  Hence $r(X-c)+r(\{a, b, c\})-r(M)\le 1$. Since $r(M)=r(M\ba \ell)$, this implies that $(X-c, \{a, b, c\})$ is a $2$-separation of $M\ba \ell$ that violates the fact that $\mu(\ell) = 2$.

If such a point $c$ exists, then $A \cup a \supseteq X \cup a$, so $r(A \cup a) = r(M)$. Hence $r(\{a,b,c\}) = 3 = r(\{a,b\})$, so $(X - c, \{a,b,c\})$ is a \tws\ of $M\ba \ell$ that violates the choice of $Y$. Thus \ref{mab} holds. 

Next we show that 

\begin{sublemma}
\label{aminusl}
$(A - \ell, B)$ is a $2$-separation of $M\ba \ell/a$.
\end{sublemma}

Certainly $(A - \ell, B)$ is $2$-separating in $M\ba \ell/a$. We need to show that $\max\{|A - \ell|, r(A - \ell)\} \ge 2$. By Lemma~\ref{skewer}, $A \neq  \{\ell\}$. Assume $A = \{\ell,c\}$ where $c$ is a point of $M/a$. Then $c$ is a point in $M$ as $a$ is skew to $X$. Moreover, 
\begin{equation}
\label{seec}
c \in \cl_M(X - c)
\end{equation} 
otherwise $(X - c, \{a,b,c\})$ is a \tws\ of $M\ba \ell$ that violates the choice of $Y$.

By Lemma~\ref{skewer}, $a$ is not skew to $\{c,\ell\}$, so $r_{M/a}(\{c, \ell\}) < r_{M}(\{c, \ell\}) \le 3$. Suppose $r_{M/a}(\{c,\ell\}) = 2$. Then 
$r_M(B \cup a) = r(M) - 1$, so $(\{c\}, B \cup a)$ is a 1-separation of $M\ba \ell$; a \cn. We conclude that

\begin{sublemma}
\label{aminusl.1}
$r_{M/a}(\{c,\ell\}) = 1$, so $r_M(\{a,c,\ell\}) = 3$ and $r(M\ba \ell/a) = r(M/a)$. 
\end{sublemma}

Since $c$ and $\ell$ are parallel points in $M/a$, we deduce that $M\ba c$ has a c-minor isomorphic to $N$. Thus $M\ba c$ has a \tws\ $(U,V)$ where we may assume that $\ell \in U$ and $a \in V$ otherwise $M$ has a $2$-separation.

Continuing with the proof of \ref{aminusl}, next we show that 

\begin{sublemma}
\label{bisinP}
$b \in U$.
\end{sublemma}

Suppose $b \in V$. Then, as $a \in V$, we see that $r(V \cup \ell) \le r(V) + 1$ and $r(U - \ell) = r(U) - 2$. Thus  $U = \{\ell\}$ otherwise $(U - \ell, V \cup \ell)$ is a 1-separation of $M\ba c$. But, by (\ref{seec}), $c \in \cl(E - c - \ell)$. Hence $(U,V \cup c)$ is a \tws\ of $M$; a \cn. Hence \ref{bisinP} holds. 

\begin{sublemma}
\label{pnotlb}
$U \neq \{\ell,b\}$. 
\end{sublemma}

Assume $U  = \{\ell,b\}$. Then $V = (X - c) \cup a$. Thus $r(V) \ge r(X) + 1 = r(M) - 1$. But $r(U) \ge 3$ so $(U,V)$ is not a \tws\ of $M\ba c$. Thus \cn\ completes the proof of \ref{pnotlb}.

\begin{sublemma}
\label{pnotlbd}
$M$ does not have a point $d$ such that $U =  \{\ell,b,d\}$. 
\end{sublemma}

Assume the contrary. Then 
\begin{equation}
\label{veeq}
r(V) = r((X - \{c,d\}) \cup a) \ge r(M) - 2
\end{equation}
so, as $r(U) + r(V) = r(M) +1$, we must have that 
\begin{equation}
\label{peep}
r(\{\ell,b,d\}) = r(U) \le 3.
\end{equation}
Thus equality must hold in each of (\ref{veeq}) and (\ref{peep}). 

As $r(\{a,c,\ell\}) = 3$, we have 
\begin{align*}
r(\{b,d\}) + r((X -  d) \cup \{a,\ell\}) & = r(\{b,d\}) + r((X - \{c,d\}) \cup \{a,\ell\})\\
 & \le r(\{\ell,b,d\}) + r((X - \{c,d\}) \cup a) + 1\\
& = r(M) + 2.
\end{align*}
Now $r(\{b,d\}) = 3$, otherwise $\{a,b,d\}$ contradicts the choice of $Y$ since at most one of $a$ and $b$ is in $E(N)$. 
Hence $(\{b,d\}, (X -  d) \cup \{a,\ell\})$ is a 3-separation of $M$. 
Thus 
$r((X -  d) \cup \{a,\ell\}) = r(M) - 1$, so $r((X -  d) \cup a) \le  r(M) - 1$. Hence $r(X - d) \le r(M) - 3$, while $r(X) = r(M) - 2$. Thus $(X - d, \{a,b,d\})$ is a \tws\ of $M\ba \ell$ contradicting the choice of $Y$. We conclude that \ref{pnotlbd} holds.

Now recall that $\{\ell,b\} \subseteq U$ and $a \in V$. Moreover, $r(\{a,c,\ell\}) = 3$ and $\sqcap(a,b) = 1$. Thus 
$$r(V \cup \{\ell,b\}) \le r(V) + 2.$$ 
Also $\ell \not\in \cl(X \cup b)$ otherwise $\{a\}$ is 2-separating in $M$; a \cn. Thus 
$$r(U - \{\ell,b\}) \le r(U) - 2.$$
It follows by \ref{pnotlb} and \ref{pnotlbd} that $(U - \{\ell,b\}, V \cup \{\ell,b\})$ is a \tws\ of $M\ba c$, so $(U - \{\ell,b\}, V \cup \{\ell,b\}\cup c)$ is a \tws\ of $M$. This \cn\ completes the proof of \ref{aminusl}. 

We deduce from \ref{aminusl} that (ii)(d) of the lemma holds, that is, 

\begin{sublemma}
\label{rankla}
$r((A - \ell) \cup a) = r(A \cup a).$ 
\end{sublemma}

Moreover, since $a$ is skew to $X$, and $A - \ell \subseteq X$, it follows, by Lemma~\ref{skewer},  that 
\begin{sublemma}
\label{aminuslba}
$(A - \ell, B \cup a)$ is a $2$-separation of $M\ba \ell$.
\end{sublemma}

Now $(A,B)$ is a \tws\ of $M/a$ and $b \in B$. Since $b$ is a point of $M/a$, it follows that $|B| \ge 2$, so $|B \cup a| \ge 3$. Hence $B \cup a$ is the $N$-side of the \tws\  $(A - \ell, B \cup a)$ of $M\ba \ell$. At most one member of $\{a,b\}$ is in $E(N)$. Since $|E(N)| \ge 4$, it follows that at least two elements of $N$ are in $B- b$, so $|B-b| \ge 2$. Thus (ii)(b) of the lemma holds. 
Moreover,  $|A - \ell| \le 2$. Since $A - \ell$ is one side of a \tws, if it contains a single element, that element is a line of $M/a$. Thus (ii)(c) of the lemma holds.

Next we observe that 
\begin{sublemma}
\label{piab}
$\sqcap(\{a,b\}, A - \ell) = 0.$
\end{sublemma}

Since $\sqcap(\{a,b\}, X) = 1$, we see that $\sqcap(\{a,b\}, A - \ell) \le 1$. Assume $\sqcap(\{a,b\}, A - \ell)  = 1$. Then 
$r((A - \ell) \cup \{a,b\}) = r(A - \ell) + 2$. But $r(A - \ell) + r(B \cup a) = r(M\ba \ell) + 1$. Thus

\begin{equation}
\label{eqbb}
r((A - \ell) \cup \{a,b\}) + r(B - b) \le r(M\ba \ell) + 1.
\end{equation}
By \ref{rankla}, $r((A - \ell) \cup a) = r(A \cup a)$. Hence we obtain the \cn\ that $(A \cup \{a,b\}, B- b)$ is a \tws\ of $M$. Thus \ref{piab} holds.

Now suppose that $(X,Y)$ is the unique non-trivial $2$-separation of $M\ba \ell$. We complete the proof of the lemma by showing that 

\begin{sublemma}
\label{lonely}
$M/a$ has a unique $2$-separation $(A,B)$ with $\ell$ in $A$. Moreover, $A - \ell$ consists of a line of $M/a$.
\end{sublemma}

Let $(A_1, B_1)$ and $(A_2,B_2)$ be distinct 2-separations of $M/a$ with  $\ell$ in $A_1 \cap A_2$. Then $b \in B_1 \cap B_2$. By (ii)(c), $|A_i - \ell| \le 2$. Suppose $|A_i - \ell| = 2$. Then, by (ii)(b), $(A_i - \ell, B_i \cup a)$ is a non-trivial \tws\ of $M\ba \ell$, so $A_i - \ell = Y$; a \cn\ as $a \not\in A_i - \ell$. We deduce that $|A_i - \ell| = 1$, so $A_i - \ell$ consists of a line $m_i$ of $M/a$.

Now $(\{m_1\}, B_1 \cup a)$ and $(\{m_2\}, B_2 \cup a)$ are 2-separations of $M\ba \ell$. Thus $r(\{m_1,m_2\}) = 4$ otherwise one easily checks that 
$(\{m_1,m_2\}, (B_1\cap B_2) \cup a)$ is a \tws\ of $M\ba \ell$ that contradicts the uniqueness of $(X,Y)$. Now $\sqcap(a,X) = 0$, so $\sqcap(a,\{m_1,m_2\}) = 0$. Thus $r_{M/a}(\{m_1,m_2\}) = 4$. But, by (ii)(d) of the lemma, 
\begin{align*}
2 + 2 & = r_{M/a}(\{m_1,\ell\}) + r_{M/a}(\{m_2,\ell\})\\
& \ge r_{M/a}(\{m_1,m_2,\ell\}) + r_{M/a}(\{\ell\})\\
& \ge 4 + 1.
\end{align*}
This \cn\ finishes   the proof of \ref{lonely} and thereby completes the proof of the lemma.
\end{proof}

\begin{lemma}
\label{duh8}
If $M_Y = P_8$, then $(X,Y)$ is not the only non-trivial $2$-separation of $M\ba \ell$.
\end{lemma}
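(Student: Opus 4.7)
Suppose, for a contradiction, that $M_Y=P_8$ and that $(X,Y)$ is the only non-trivial $2$-separation of $M\ba\ell$. The strategy is to apply Lemma~\ref{duh85}, extract from $X$ one or two distinguished lines with prescribed local-connectivity to $\{a,b\}$ and $\ell$, and then find a second non-trivial $2$-separation of $M\ba\ell$ contradicting uniqueness.

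Because $P_8$ contains at least one element $c\in\{a,b\}$ with $\sqcap_M(c,p)=0$, Lemma~\ref{duh85} gives a unique $2$-separation $(\{m_c,\ell\},B_c)$ of $M/c$ in which $m_c$ is a line of $X$ satisfying $\sqcap_M(\{a,b\},m_c)=0$ and $\ell\in\cl_{M/c}(m_c)$. Pulling this back to $M$, and using Lemma~\ref{pixl}, which states that $\ell$ is a line of $M$ skew to $X$, gives $r_M(\{m_c,\ell\})=4$, $r_M(\{a,b,m_c\})=5$, and $\ell\in\cl_M(\{c,m_c\})$. By Lemma~\ref{duh85}(ii)(d), the rank identity $r_M(\{a,b\}\cup(X-m_c))=r(M)-1$ forces $\sqcap_M(\{a,b\},X-m_c)=\sqcap_M(m_c,X-m_c)=:s\in\{0,1\}$ and $\lambda_{M\ba\ell}(\{a,b,m_c\})=1+s$. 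If $s=0$, then $(\{a,b,m_c\},X-m_c)$ is a non-trivial $2$-separation of $M\ba\ell$ whose non-$N$-side has cardinality three, so it is distinct from $(X,Y)$, contradicting uniqueness. Hence $s=1$.

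Next, using the structural feature of $P_8$ that both $a$ and $b$ are skew to $p$, apply Lemma~\ref{duh85} with $c=a$ and again with $c=b$ to produce lines $m_a,m_b\in X$. Rule out $m_a=m_b$ as follows: if $m:=m_a=m_b$, then $a,b\in\cl_M(\{m,\ell\})$, a rank-$4$ flat, so $r_M(\{a,b,m\})\le 4$, contradicting $r_M(\{a,b,m\})=5$. Hence $m_a\ne m_b$. Now analyse the partitions $(\{m_a,m_b\},E(M\ba\ell)-\{m_a,m_b\})$ and $(\{a,b,m_a,m_b\},X-\{m_a,m_b\})$ of $M\ba\ell$. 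Using Lemma~\ref{oswrules}(ii) to express $\sqcap_M(\{m_a,m_b\},X-\{m_a,m_b\})$ and $\sqcap_M(\{a,b\},X-\{m_a,m_b\})$ in terms of $\sqcap_M(m_a,m_b)$ and the previously-computed quantities, and plugging these into the $\lambda$-formula, one finds that in every allowable configuration at least one of these partitions is either a non-trivial $2$-separation of $M\ba\ell$ distinct from $(X,Y)$ (contradicting uniqueness) or a $1$-separation of $M\ba\ell$ (contradicting its $2$-connectedness, which is guaranteed by Lemma~\ref{Step1}).

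The main obstacle will be the exhaustive bookkeeping for the case split in the last paragraph, in particular the boundary sub-cases in which $|X-\{m_a,m_b\}|\le 1$. These degenerate cases force $|E(M)|=6$ and $|E(N)|=4$, leaving only a handful of admissible structures for $M$; each such structure must then be handled either by direct inspection or by passing to the natural matroid of $M$ via Lemma~\ref{missinglink} and appealing to classical matroid $3$-connectivity arguments to locate the required second $2$-separation. Managing the parameter combinations carefully so that every case yields a contradiction with uniqueness of $(X,Y)$ or with the $2$-connectedness of $M\ba\ell$ is the chief technical difficulty of the proof.
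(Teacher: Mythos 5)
Your argument ruling out $m_a=m_b$ is sound: if $m_a=m_b=m$, then from $\ell\in\cl_{M/c}(\{m\})$ for each $c\in\{a,b\}$, the skewness of $m$ to $\{a,b\}$ (Lemma~\ref{duh85}(ii)(e)), and the skewness of $\ell$ to $X$ (Lemma~\ref{pixl}), you get $r(\{m,\ell\})=r(\{m,\ell,a\})=r(\{m,\ell,b\})=4$, hence $r(\{m,\ell,a,b\})=4$, contradicting $r(\{m,a,b\})=5$. This is a valid alternative to what the paper does in this case (the paper instead deduces $r(\{\ell_1,\ell\})\le3$ by a submodularity chain). The preliminary $s=1$ step, though correct, plays no role in this and can be dropped.

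The genuine gap is the complementary case $m_a\ne m_b$, which you leave as an unexecuted ``bookkeeping'' exercise. The strategy you sketch—hunting for a second non-trivial $2$-separation or a $1$-separation of $M\ba\ell$ among the partitions $(\{m_a,m_b\},\cdot)$ and $(\{a,b,m_a,m_b\},\cdot)$—is not aimed at the right target, and there is no reason to expect either of those partitions to be low-order separating. The contradiction in this case arises in $M$, not in $M\ba\ell$: with $m_a\ne m_b$, the element $m_b$ lies in $B_1$ (the complementary side of the unique $2$-separation of $M/a$), and since $b\in B_1$ too and $\ell\in\cl_{M/b}(\{m_b\})$, we get $\ell\in\cl_M(\{b,m_b\})\subseteq\cl_M(B_1\cup a)$. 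The singleton $\{m_a\}$, already (trivially) $2$-separating in $M\ba\ell$ by Lemma~\ref{duh85}(ii)(b), then becomes $2$-separating in $M$, contradicting $3$-connectivity. Without this step—or an equivalent one—your proof is incomplete, and the proposed case-by-case inspection of the ``degenerate'' configurations does not substitute for it.
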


\begin{proof} 
Assume $(X,Y)$ is the unique such \tws. By Lemma~\ref{duh85},  $M/a$ and $M/b$ have unique $2$-separations $(A_1,B_1)$ and $(A_2,B_2)$ with 
$\ell$ in $A_1 \cap A_2$. Moreover, $A_1 - \ell$ and $A_2 - \ell$ consist of lines $\ell_1$ and $\ell_2$ in $M/a$ and $M/b$; and $M\ba \ell$ has $(A_1 - \ell, B_1 \cup a)$ and $(A_2 - \ell, B_2 \cup b)$ as $2$-separations. 

Assume $\ell_1 \neq \ell_2$. Then $\{b,\ell_2\} \subseteq B_1 \cup a$, so $\ell \in \cl(B_1 \cup a)$. Hence $(A_1 - \ell, B_1 \cup a \cup \ell)$ is a \tws\ of $M$; a \cn. Thus $\ell_1 = \ell_2$. Hence $r(\{\ell_1,b,\ell\}) = r(\{\ell_1,b\})   = 4$. But we also know that $r(\{\ell_1,a,\ell\}) = r(\{\ell_1,a\}) = 4$. By Lemma~\ref{duh85}(ii)(a) and (b), we see that $b \notin \cl_{M/a}(A_1)$, so $r(\{\ell_1,\ell,b,a\}) \ge 5$. Thus
\begin{align*}
4+4 & = r(\{\ell_1,\ell, b\}) + r(\{\ell_1,\ell, a\})\\
& \ge r(\{\ell_1,\ell, b,a\}) + r(\{\ell_1,\ell\})\\
& \ge 5 + r(\{\ell_1,\ell\}).
\end{align*}
Therefore $r(\{\ell_1,\ell\}) \le 3$. As $\sqcap(\{\ell_1\},\{\ell\}) = 0$, we deduce that $r(\{\ell\}) = 1$; a \cn\ to Lemma~\ref{pixl}.
 \end{proof}

\begin{lemma}
\label{duh5}
If $M_Y = P_5$, then $(X,Y)$ is not the only non-trivial $2$-separation of $M\ba \ell$.
\end{lemma}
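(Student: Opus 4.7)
The plan is to mirror the proof of Lemma~\ref{duh8} for $P_8$, adapted to the asymmetry between $a$ and $b$ in $P_5$: here exactly one of them, say $a$, satisfies $\sqcap_{M_Y}(a,p)=0$, while $b$ is a line through $p$ with $\sqcap_{M_Y}(b,p)=1$, so Lemma~\ref{duh85} applies to $a$ but not to $b$. Throughout, both $a$ and $b$ are lines of $M$ with $r(\{a,b\})=3$, $\sqcap(a,b)=1$, $\sqcap(X,a)=0$, and $\sqcap(X,b)=1$; the element $\ell$ is a line of $M$ with $\sqcap(X,\ell)=0$ by Lemma~\ref{pixl}; and $r(X\cup a) = r(X)+2 = r(M)$ forces $\ell \in \cl_M(X\cup a)$.

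First I would apply Lemma~\ref{duh85} to $a$ to obtain the unique $2$-separation $(A_1,B_1)$ of $M/a$ with $\ell \in A_1$, $b \in B_1$, and $A_1-\ell = \{\ell_1\}$ where $\ell_1$ is a line of $M/a$. Since $(X,Y)$ is the unique non-trivial $2$-separation of $M\ba\ell$, the $2$-separation $(\{\ell_1\},B_1\cup a)$ of $M\ba\ell$ granted by Lemma~\ref{duh85}(ii)(b) must be trivial, placing $\ell_1$ in $X$ as a $2$-separating line of $M\ba\ell$. Lemma~\ref{8.2.3} applied with $\sqcap(X,\ell)=0$ then yields $\sqcap(\ell_1,\ell)=0$. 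From Lemma~\ref{duh85}(ii)(e) and (iii) I extract $r(\{a,b,\ell_1\}) = 5$ and $\ell \in \cl_M(\{a,\ell_1\})$, so that $r_M(\{a,\ell_1,\ell\}) = r_M(\{a,\ell_1\}) = 4$.

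The crux is establishing the analogue $\ell \in \cl_M(\{b,\ell_1\})$, i.e., $r(\{b,\ell_1,\ell\}) = 4$, since Lemma~\ref{duh85} is not available for $b$. My plan is to exploit the $P_5$ structure via the $2$-sum formula, which gives $M\ba\ell/b = (M_X/p) \oplus \{a\}$ as a direct sum in which $a$ is an isolated line-component. If $\sqcap_M(b,\ell)=2$, then $\ell$ becomes a loop of $M/b$ and $M/b$ is disconnected; after verifying that $M/b$ still has $N$ as a c-minor (traced through the production of $N$ from $M\ba\ell$, noting $a,\ell \notin E(N)$ in this scenario so that $N$ is a c-minor of $M_X/p$), this contradicts Lemma~\ref{Step1}. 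Hence $\sqcap(b,\ell) \le 1$. In the remaining subcases I would combine the uniqueness of $(X,Y)$ as the only non-trivial $2$-separation of $M\ba\ell$ with the closure fact $\ell \in \cl_M(X\cup a)$ to bound $r(\{b,\ell_1,\ell\}) \le 4$, thereby obtaining $\ell \in \cl_M(\{b,\ell_1\})$. This subcase analysis---parallel in spirit to the uniqueness reasoning at the end of Lemma~\ref{duh85}---is the chief technical step and the main obstacle I expect.

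Once both closure relations are established, the contradiction follows as in Lemma~\ref{duh8}. Since $\ell \in \cl_M(\{a,\ell_1\}) \subseteq \cl_M(\{a,b,\ell_1\})$, we have $r(\{a,b,\ell_1,\ell\}) = r(\{a,b,\ell_1\}) = 5$; submodularity then gives
\[
r(\{a,\ell_1,\ell\}) + r(\{b,\ell_1,\ell\}) \ \ge \ r(\{a,b,\ell_1,\ell\}) + r(\{\ell_1,\ell\}),
\]
and substituting $r(\{a,\ell_1,\ell\}) = r(\{b,\ell_1,\ell\}) = 4$ and $r(\{a,b,\ell_1,\ell\}) = 5$ yields $r(\{\ell_1,\ell\}) \le 3$. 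Combined with $\sqcap(\ell_1,\ell) = 0$ this forces $r(\ell) \le 1$, contradicting Lemma~\ref{pixl}.
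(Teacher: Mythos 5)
Your preliminary work is on the right track: applying Lemma~\ref{duh85} to $a$, locating the line $\ell_1$ of $M/a$, observing $\ell_1\in X$ so that $\sqcap(\{\ell_1\},\{\ell\})=0$ via Lemma~\ref{8.2.3}, and extracting $r(\{a,b,\ell_1\})=5$ and $r(\{a,\ell_1,\ell\})=4$ from Lemma~\ref{duh85}(ii)(e) and the closure fact $\ell\in\cl_{M/a}(\{\ell_1\})$ are all exactly what the paper does. Your final submodularity computation is also sound, \emph{if} both closure relations held.

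The gap is the step you yourself flag: establishing $\ell\in\cl_M(\{b,\ell_1\})$, equivalently $r(\{b,\ell_1,\ell\})=4$. Nothing in your sketch actually produces this bound. The route via $M/b$ and the $2$-sum decomposition is not promising: Lemma~\ref{dennisplus}(iv) requires $\sqcap(X,\{b\})=0$, which fails here ($\sqcap(X,\{b\})=1$), so the decomposition of $M\ba\ell/b$ is not the one you quote; and even granting some direct-sum decomposition of $M/b$, I do not see how information about ranks in $M/b$ bounds $r_M(\{b,\ell_1,\ell\})$. The $\sqcap(b,\ell)=2$ subcase, incidentally, dies instantly (it would make $\{a\}$ $2$-separating in $M$, contradicting compactness of $M$), so the laborious c-minor tracing is unnecessary there; but the real issue is the remaining cases, which you leave entirely to a ``parallel in spirit'' appeal.

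The paper does not prove $\ell\in\cl_M(\{b,\ell_1\})$ at all. Instead, after establishing $\sqcap(\{a\},\{\ell\})=0$ (and hence $r(\{a,\ell\})=4$) by a short prickly-$3$-separator argument using Lemma~\ref{cactus} and Lemma~\ref{portia}, the argument pivots to examining $M\ba a$. By Lemma~\ref{claim1}, $M\ba a$ has a c-minor isomorphic to $N$, hence a non-trivial $2$-separation $(U,V)$, which can be taken with $\ell_1\in U$ and $\ell\in V$ because $r(\{\ell_1,\ell\})=4=r(\{\ell_1,\ell,a\})$. One then shows $b\in V$, that $(U,(V-\ell)\cup a)$ is a $2$-separation of $M\ba\ell$ with $r((V-\ell)\cup a)=r(V)$, and the uniqueness hypothesis forces $(V-\ell)\cup a=\{a,b\}$, giving $r(\{b,\ell\})=3$. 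Submodularity then yields $r(\{a,b,\ell\})=4$, i.e.\ $b\in\cl_{M/a}(\{\ell\})$, so $(\{\ell_1,\ell,b\},E-\{\ell_1,\ell,a,b\})$ is a second $2$-separation of $M/a$---contradicting the uniqueness supplied by Lemma~\ref{duh85}. That detour through $M\ba a$ is the missing idea; your attempt to import the $P_8$ symmetry fails precisely because $\sqcap(\{b\},\{p\})=1$ blocks a second application of Lemma~\ref{duh85}, and the paper works around that obstruction rather than through it.
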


\begin{proof}
Assume $(X,Y)$ is the unique such \tws. Label $Y$ so that $\sqcap(a,X) = 0$ and $\sqcap(b,X) = 1$.

\begin{sublemma}
\label{duh5a}
$\sqcap(a,\ell) = 0$.
\end{sublemma}

Suppose $\sqcap(a,\ell) = 1$. Then $r(\{a,\ell\}) = 3$. Now $r(E - \ell) = r(E)$ and $r(E - \{\ell,a\}) = r(E) - 1$. Thus, by Lemma~\ref{cactus},  $\{a,\ell\}$ is a prickly 3-separator of $M$. Now $M\ba \ell \da a$ has a c-minor isomorphic to $N$ since it is the 2-sum of $M_X$ and the $2$-polymatroid consisting of the line $b$ with the point $p$ on it. But, by Lemma~\ref{pricklytime0}, $M\ba \ell \da a = M\da a \ba \ell$. Thus, by Lemma~\ref{portia}, $M\da a$ is \thc\ having a c-minor isomorphic to $N$; a \cn. We conclude that \ref{duh5a} holds. 

By Lemma~\ref{duh85}, $M/a$ has a unique \tws\ and it has the form $(\{\ell_1,\ell\}, E - \{\ell_1,\ell,a\})$ where $\ell_1$ is a line of $M/a$. Moreover, $r(\{\ell_1,\ell,a\}) = r(\{\ell_1,\ell\})$. 
Now $\sqcap(\ell,a) = 0$ and, by Lemma~\ref{pixl}, $\ell$ is a line of $M$. Thus 
\begin{equation}
\label{ral}
r(\{a,\ell\}) = 4.
\end{equation}

Now $M\ba \ell \ba a$, and hence $M\ba a$, has a c-minor isomorphic to $N$. Thus $M\ba a$ has a non-trivial \tws\ $(U,V)$. Without loss of generality, we may assume that 
$\ell_1 \in U$ and $\ell \in V$ since $r(\{\ell_1,\ell\}) = 4 = r(\{\ell_1,\ell,a\})$. 

\begin{sublemma}
\label{binV}
$b \in V$.
\end{sublemma}

Suppose $b \in U$. Then, as $\sqcap(X,\ell) = 0$, we see that, unless $V = \{\ell,c\}$ for some point $c$, the partition $(U \cup \ell,V-\ell)$ is a \tws\ of $M\ba a$, so $(U \cup \ell \cup a,V-\ell)$ is a \tws\ of $M$. 

Consider the exceptional case. Then $r(V-\ell) = r(V) - 2 = 1$. Now $r(M\ba a, \ell) = r(M) - 1$ and $r(U) + r(V) = r(M) + 1$. We see that $r(U) = r(E - \{a,\ell,c\}) = r(M) - 2.$ Hence $\lambda_{M\ba a,\ell}(\{c\}) = 0$; a \cn. We conclude that \ref{binV} holds.

We now have that $V \supseteq \{\ell,b\}$. Next observe that 

\begin{sublemma}
\label{new2s}
$(U,(V - \ell) \cup a)$ is a $2$-separation of $M\ba \ell$, and $r((V - \ell) \cup a) = r(V)$.
\end{sublemma}

To see this, first note that, since $b \in V- \ell$, we have 
\begin{equation}
\label{vee1}
r((V- \ell) \cup a) \le r(V - \ell) + 1.
\end{equation} 
We also have 
\begin{equation}
\label{vee2}
r(V - \ell) \le r(V) - 1
\end{equation}
otherwise $r(V - \ell) = r(V)$ so $\ell \in \cl(E - \{a,\ell\})$. But $r(E - \{a,\ell\}) = r(E) - 1$, so $(E-a,\{a\})$ is a \tws\ of $M$; a \cn. 
Combining (\ref{vee1}) and (\ref{vee2}) gives \ref{new2s}.

Since $(X,Y)$ is the unique non-trivial \tws\ of $M\ba \ell$, we deduce that $(V - \ell) \cup a = \{a,b\}$. Moreover, by \ref{new2s}, 
$r(\{a,b\}) = 3 = r(\{b,\ell\})$. It follows using submodularity that $r(\{a,b,\ell\}) = 4$. Thus $b \in \cl_{M/a}(\{\ell\})$. Hence 
$(\{\ell_1,\ell,b\}, E- \{\ell_1,\ell,a,b\})$ is a \tws\ of $M/a$, which contradicts the fact that $(\{\ell_1,\ell\}, E - \{\ell_1,\ell,a\})$ is the unique \tws\ of $M/a$. 
This completes the proof of Lemma~\ref{duh5}.
\end{proof}

By Lemma~\ref{cactus2}, $M\ba \ell$ has no 2-element 2-separating set that is a prickly 3-separating set in $M$.

\begin{lemma}
\label{oldstep5}
Let $\{a,b\}$ and $\{c,d\}$ be disjoint $2$-separating sets of $M\ba \ell$ where each of $a$, $b$, $c$, and $d$ is a line, $r(\{a,b\}) = 3 = r(\{c,d\})$, and $\sqcap(\{a\},E - \{a,b,\ell\}) = 0$. 
Then either
\begin{itemize}
\item[(i)] $M/a$ is $3$-connected having a c-minor isomorphic to $N$; or
\item[(ii)] $M/ \ell$ has a c-minor isomorphic to $N$ and $\ell \in \cl_{M/a}(\{c,d\})$.
\end{itemize}
\end{lemma}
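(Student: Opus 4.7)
The plan is to establish one of conclusions (i) or (ii) via a case split on whether $M/a$ is $3$-connected, after first showing that $M/a$ always has a c-minor isomorphic to $N$. In the non-$3$-connected case, the disjoint $2$-separating set $\{c,d\}$ will be used to locate a specific $2$-separation of $M/a$ that forces the closure condition.

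For the first step, I would use the skewness hypothesis $\sqcap(\{a\}, E - \{a,b,\ell\}) = 0$. Let $X = E - \{a,b,\ell\}$. Since $\{a,b\}$ is a $2$-element non-$N$-side, $|E(N) \cap X| \ge |E(N)| - 1$. Writing $M\ba\ell = M_X \oplus_2 M_Y$ with basepoint $p$, the skewness translates to $\sqcap_{M_Y}(a, p) = 0$. Lemma~\ref{dennisplus}(iv) then gives $M\ba\ell/a = M_X \oplus_2 (M_Y/a)$, where $b$ and $p$ are parallel points of $M_Y/a$; after compactification, $M\ba\ell/a$ is obtained from $M_X$ by relabelling $p$ as $b$. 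Lemma~\ref{p69} produces a special $N$-minor of $M_X$, which, combined with Lemma~\ref{p49}, yields a special $N$-minor of $M\ba\ell/a$, and hence $M/a$ has a c-minor isomorphic to $N$. If $M/a$ is $3$-connected, conclusion (i) holds.

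Otherwise $M/a$ is not $3$-connected. By Lemma~\ref{Step1}, $M/a$ is $2$-connected, so it admits a non-trivial $2$-separation $(A, B)$. I would first verify that $\{c,d\}$ remains $2$-separating in $M/a \ba \ell$: since $\{c,d\} \subseteq X$ and hence $\sqcap(\{a\}, \{c,d\}) = 0$, a direct rank calculation (using $r(\{c,d\}) = 3$, $r(M\ba\ell) = r(M)$, and $r(M\ba\ell/a) = r(M) - 2$) yields $\lambda_{M/a\ba\ell}(\{c,d\}) = 1$. I would then choose the $2$-separation $(A,B)$ of $M/a$ so that $\ell \in A$ and $B$ is the non-$N$-side, and uncross it with $(\{c,d\}, E - \{a,c,d,\ell\})$. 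Eliminating degenerate subcases using Corollary~\ref{pointless} and the $\mu(\ell) = 2$ hypothesis, the goal is to force $B = \{c,d\}$ and $\ell \in \cl_{M/a}(\{c,d\})$, which gives the closure half of conclusion (ii).

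The remaining piece, that $M/\ell$ has a c-minor isomorphic to $N$, would be obtained from the derived closure relation $\ell \in \cl_M(\{a,c,d\})$ together with the $2$-sum structure. Specifically, the c-minor of $M\ba \ell$ is produced entirely inside $M_X$, and the dependence $\ell \in \cl(\{a,c,d\})$ allows the deletion of $\ell$ in the original construction to be replaced by a contraction of $\ell$ together with a contraction of one of $c$ or $d$, producing the same ultimate operations on $E(N)$ and hence the same c-minor. The main technical obstacle will be the uncrossing case analysis in the third paragraph, since several sub-configurations of $(A,B)$ relative to $\{c,d\}$ can arise; these must be ruled out by a careful use of Corollary~\ref{pointless}, the $\mu(\ell) = 2$ hypothesis, and the earlier structural lemmas (Lemmas~\ref{duh85}, \ref{duh8}, \ref{duh5}).
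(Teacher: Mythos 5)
There is a genuine gap in your non-$3$-connected case, and a second one in your argument for the $N$-minor of $M/\ell$.

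In the non-$3$-connected case you take a $2$-separation $(A,B)$ of $M/a$ with $\ell\in A$ and announce that the goal is to force $B=\{c,d\}$. That goal is unachievable: by Lemma~\ref{duh85}(ii)(a), $b\in B$ (otherwise $(A\cup a,B)$ is a $2$-separation of $M$), while $\{a,b\}$ and $\{c,d\}$ are disjoint by hypothesis, so $b\notin\{c,d\}$ and hence $B\neq\{c,d\}$; indeed Lemma~\ref{duh85}(ii)(b) gives $|B-b|\ge 2$, so $|B|\ge 3$. The side that Lemma~\ref{duh85} localizes is $A-\ell$: it is the small side, with $|A-\ell|\le 2$ and $\ell\in\cl_{M/a}(A-\ell)$. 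The paper's proof of $\ell\in\cl_{M/a}(\{c,d\})$ argues by contradiction from that fact, after first establishing $\sqcap_{M/a}(Z,\{\ell\})=0$ for $Z=E-\{\ell,a,b,c,d\}$ (this needs Lemma~\ref{cactus} together with the observation, via Lemma~\ref{cactus2} and the failure of the theorem, that neither $\{a,b\}$ nor $\{c,d\}$ is a prickly $3$-separator). Your proposal omits that step entirely, yet it is the hinge: it rules out the case $A-\ell\subseteq Z$ and, with an uncrossing against $(\{c,d\},E-\{\ell,c,d\})$ in $M\ba\ell$ (not $M/a$, where the ground sets don't match), it rules out $|A-\ell\cap\{c,d\}|=1$ via the $\mu(\ell)=2$ hypothesis.

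Your treatment of the remaining half of (ii), that $M/\ell$ has a c-minor isomorphic to $N$, is too loose to constitute a proof. The sentence ``the dependence $\ell\in\cl(\{a,c,d\})$ allows the deletion of $\ell$ in the original construction to be replaced by a contraction of $\ell$ together with a contraction of one of $c$ or $d$'' is not an argument; whether the resulting operations still produce $N$ depends on exactly how $\ell$ sits in the small side $M_W'$ of the $2$-sum decomposition of $M/a$. The paper handles this by nesting the $2$-sum $M_X=M_Z\oplus_2 M_W$ inside $M\ba\ell=M_X\oplus_2 M_Y$, using (\ref{mazl}) to show $\ell$ is skew to the basepoint $q$ in $M_W'$, then classifying the $3$-element $2$-polymatroid $M_W'/\ell$ against the catalogue $P_1,\dots,P_9$ of Lemma~\ref{old2}; in each surviving case one reads off directly that $M/a/\ell$, hence $M/\ell$, retains the c-minor. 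That case analysis is where the real content lies and your proposal does not replace it.

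Your first step, showing $M/a$ has a c-minor isomorphic to $N$ from the skewness $\sqcap(\{a\},X)=0$ via the $2$-sum decomposition, is correct and is the same as the paper's.
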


\begin{proof} Assume that the lemma fails. Let $Z = E - \{\ell,a,b,c,d\}$. Then, as neither $\{a,b\}$ nor $\{c,d\}$ is a prickly 3-separating set of $M$, by Lemma~\ref{cactus}, we see that 
$$\sqcap(Z \cup \{c,d\}, \{\ell\}) = 0 = \sqcap(Z \cup \{a,b\}, \{\ell\}),$$
so $\sqcap(Z,\{\ell\}) = 0$ and $\sqcap(\{a,b\},\{\ell\}) = 0$. It follows, as $\sqcap(\{a\}, Z) = 0$, that
\begin{equation}
\label{mazl}
\sqcap_{M/a}(Z,\{\ell\}) = 0.
\end{equation}

Let $X = E - \{a,b,\ell\}$ and $Y = \{a,b\}$. Then $M\ba \ell = M_X \oplus_2 M_Y$ where $M_Y$ has ground set $\{p,a,b\}$. Then $M_X$ has a c-minor isomorphic to $N$. As $\sqcap(\{a\}, X) = 0$, it follows that $M\ba \ell/a$, and hence $M/a$, has a c-minor isomorphic to $N$. 

\begin{sublemma}
\label{lcd}
$\ell \in \cl_{M/a}(\{c,d\})$.
\end{sublemma}

Assume $\ell \not\in \cl_{M/a}(\{c,d\})$. Since $M/a$ is not \thc, it has a \tws\ $(A,B)$ with $\ell \in A$ and $b \in B$. Moreover, by Lemma~\ref{duh85}, we know that $(A-\ell, B\cup a)$ is a 2-separation of $M\ba \ell$, that $|B - b| \ge 2$, that $|A - \ell| \le 2$, and that  $\ell \in \cl_{M/a}(A - \ell)$. 

Suppose $|A - \ell| = 1$. Then, by Lemma~\ref{duh85} again, $A - \ell$ consists of a line $m$ of $M/a$ and $\ell \in \cl_{M/a}(\{m\})$. Thus $m \not \in \{c,d\}$, so $m \in Z$ and we have a contradiction to (\ref{mazl}). Now suppose that $|A - \ell| = 2$. Then $\ell \in \cl_{M/a}(A - \ell)$. Thus $\{c,d\} \neq A - \ell$. If $\{c,d\}$ avoids $A - \ell$, then we again get a \cn\ to (\ref{mazl}). Thus $A- \ell$  meets $\{c,d\}$ in a single element. Then, by uncrossing the 2-separations $(A - \ell, B \cup a)$ and $(\{c,d\}, E - \{\ell,c,d\})$ of $M\ba \ell$, we see that $(A - \ell) \cup \{c,d\})$ is a 3-element 2-separating set in $M\ba \ell$. At most one element of $\{c,d\}$ is in $E(N)$. Thus $(A - \ell) \cup \{c,d\}$ is the non-$N$-side of a \tws\ of $M\ba \ell$. This is a \cn\ as this set has three elements. We conclude that \ref{lcd} holds. 

We shall complete the proof of Lemma~\ref{oldstep5} by showing that $M/ \ell$ has a c-minor isomorphic to $N$. In the argument that follows, it helps to think in terms of the matroids that are naturally derived from the $2$-polymatroids we are considering. We know that $M\ba \ell = M_X \oplus_2 M_Y$ where $M_Y$ has ground set $\{a,b,p\}$ with $p$ being the basepoint of the 2-sum. As $\{c,d\}$ is 2-separating in $M\ba \ell$, it is also 2-separating in $M_X$. Thus $M_X = M_Z \oplus_2 M_W$ where $M_W$ has ground set $\{c,d,q\}$ with $q$ being the basepoint of this 2-sum. Now $\{c,d\}$ does not span $p$ otherwise $\{a,b,c,d\}$ is 2-separating in $M\ba \ell$ and contains at most two elements of $N$, a \cn\ to the definition of $Y$. By two applications of Lemma~\ref{claim1}, we see that $M_X$, and hence $M_Z$, has a  c-minor isomorphic to $N$.

Now $M\ba \ell/a$ equals $M_X$ after relabelling the element $p$ of the latter by $b$. We will call this relabelled $2$-polymatroid $M_X'$. By \ref{lcd}, $M/a$ is obtained from $M'_X$ by adding $\ell$ to the closure of $\{c,d\}$ as a point or a line. Thus $M/a$ is the 2-sum with basepoint $q$ of $M'_Z$ and $M'_W$ where $M'_Z$ is obtained from $M_Z$ by relabelling $p$ as $b$, while $M'_W$ is obtained from $M_W$ by adding $\ell$. By (\ref{mazl}), $\ell$ is skew to $Z$ in $M/a$, so $\ell$ is skew to $q$ in $M'_W$. Now  $\ell$ is a not a line of $M'_W$, otherwise at least one of $c$ and $d$ is parallel to the basepoint $q$ in $M'_W$, so $M/a/ \ell$ and hence $M/\ell$ has a c-minor isomorphic to $N$.
Hence $\ell$ is a point of $M'_W$, so $M'_W / \ell$ has rank $2$. It has no point parallel to $q$ otherwise $M/a /\ell$ has a c-minor isomorphic to $N$. Thus $M'_W/ \ell$ can be obtained from one of  $P_1, P_2$, or $P_4$ by relabelling the element $p$   by $q$. In the first two cases, we can contract a point from $M'_W/ \ell$ to obtain a $2$-polymatroid consisting of two parallel points, one of which is $q$, so we get the \cn\ that $M/a / \ell$ has a c-minor isomorphic to $N$. In the third case, deleting one of the lines, say $c$, of $M'_W/ \ell$ leaves $d$ as a line through $q$. Thus $\{d\}$ is $2$-separating in  $M/a \ba \ell \ba c$. Compactifying $d$, we obtain a $2$-polymatroid  having a c-minor isomorphic to $N$. Again we obtain the \cn\ that  $M/a \ba \ell$ has a c-minor isomorphic to $N$. 
\end{proof}

\begin{lemma}
\label{3connel}
Let $\{a,b\}$ and $\{c,d\}$ be disjoint $2$-separating sets of $M\ba \ell$ where each of $a$, $b$, $c$, and $d$ is a line, $r(\{a,b\}) = 3 = r(\{c,d\})$. 
Assume $M/ \ell$ has a c-minor isomorphic to $N$. 
Then at least one of  $\sqcap(\{a\}, E - \{\ell,a,b\})$ and  $\sqcap(\{b\}, E - \{\ell,a,b\})$ is not equal to one.
\end{lemma}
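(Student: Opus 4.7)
The plan is to argue by contradiction. Assume that $\sqcap(\{a\}, X) = 1 = \sqcap(\{b\}, X)$, where $X = E - \{\ell, a, b\}$, and seek a contradiction using the hypothesis that $M/\ell$ has a c-minor isomorphic to $N$. Writing $M \ba \ell$ as the 2-sum $M_X \oplus_2 M_Y$ for the 2-separation $(X, \{a,b\})$, this assumption, combined with the elimination of $M_Y \in \{P_1, P_2, P_3, P_4, P_7, P_9\}$ in Lemmas~\ref{not23}, \ref{fourmost}, \ref{noone}, \ref{no7}, \ref{no9}, forces $M_Y \cong P_5$: the elements $a$ and $b$ are lines meeting at a common basepoint $p \in \cl_{M_X}(X)$, and consequently, after compactifying, $a$ and $b$ become parallel points of $M \baba \ell$.

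The first preliminary step is to establish $r_M(\{a,b,\ell\}) \ge 4$. Since $r(X) = r(M) - 2$, we have $\lambda_M(\{a,b,\ell\}) = r(\{a,b,\ell\}) - 2$; if this were at most one, the pair $(\{a,b,\ell\}, X)$ would be a 2-separation of $M$, contradicting 3-connectivity because $\{c,d\} \subseteq X$ yields $|X| \ge 2$ and $r(X) \ge 2$.

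The crux is to exploit the second 2-separator $\{c,d\}$. The same $M_{Y'}$-classification applied to the 2-separation $(E - \{\ell, c, d\}, \{c,d\})$ of $M \ba \ell$ yields $M_{Y'} \in \{P_5, P_6, P_8\}$. In the $P_6$ and $P_8$ cases, at least one element, which I may assume is $c$, satisfies $\sqcap(\{c\}, E - \{\ell, c, d\}) = 0$. Applying Lemma~\ref{oldstep5} with the roles of $\{a,b\}$ and $\{c,d\}$ swapped, option~(i) would yield a 3-connected $M/c$ with an $N$-c-minor, contradicting the counterexample hypothesis; so option~(ii) forces $\ell \in \cl_{M/c}(\{a,b\})$, equivalently $r(\{a,b,c,\ell\}) = r(\{a,b,c\})$. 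Combining this identity with $\sqcap(\ell, X) = 0$ from Lemma~\ref{pixl} (which implies $\sqcap(\ell, c) = 0$) and the $P_5$-structure of $\{a,b\}$, a short submodular calculation via Lemma~\ref{oswrules} should yield $\sqcap(\ell, \{a,b\}) \ge 2$, i.e.\ $r(\{a,b,\ell\}) \le 3$, contradicting the rank bound above.

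The remaining case is that $\{c,d\}$ also has the $P_5$ structure. Here Lemma~\ref{oldstep5} is unavailable, and I would uncross the two 2-separations of $M\ba \ell$ to identify additional 2-element 2-separating sets whose behaviour under compactification and the $M/\ell$ c-minor hypothesis yields either a prickly $3$-separator of $M$---which contradicts, via Lemma~\ref{cactus2}, that $(M,N)$ is a counterexample---or a direct rank contradiction. I expect the main obstacle to lie in this doubly-$P_5$ subcase: lacking Lemma~\ref{oldstep5}, one must exploit the symmetry of the two $P_5$-decompositions and work through a careful analysis of the $2$-separations induced in $M/\ell$ (or in $M \baba \ell$) to produce the required contradiction.
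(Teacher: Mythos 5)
Your plan diverges from the paper's proof, which avoids any case split on the type of $\{c,d\}$ and never invokes Lemma~\ref{oldstep5}. Instead, with $(R,B)$ a $2$-separation of $M/\ell$, the paper applies Lemma~\ref{general} to get $\sqcap(R,\{\ell\})+\sqcap(B,\{\ell\})=\lambda_{M\ba\ell}(R)+1$; since $\ell$ is skew to each of $Z\cup\{a,b\}$ and $Z\cup\{c,d\}$ (Lemma~\ref{pixl}), both $R$ and $B$ must meet both $\{a,b\}$ and $\{c,d\}$, say $\{a,c\}\subseteq R$ and $\{b,d\}\subseteq B$. Under your contrary hypothesis $\sqcap(a,X)=\sqcap(b,X)=1$, Lemma~\ref{oswrules}(i) together with $\sqcap(\{a,b\},\{c,d\})=0$ gives $\sqcap(\{a,c\},\{b,d\})=2$, so $\lambda_{M\ba\ell}(R)\ge2$ and hence, without loss of generality, $\sqcap(R,\{\ell\})=2$. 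But Lemma~\ref{oswrules}(ii) shows $\sqcap(Z\cup\{c,d\}\cup a,\{\ell\})\le 1$, and $R\subseteq Z\cup\{a,c\}$, giving the contradiction in one stroke.

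There are two gaps in your route. First, in the case where one of $c,d$, say $c$, satisfies $\sqcap(\{c\},Z\cup\{a,b\})=0$, the extra fact $\ell\in\cl_{M/c}(\{a,b\})$ supplied by Lemma~\ref{oldstep5} cannot be turned into $\sqcap(\ell,\{a,b\})\ge 2$: Lemma~\ref{pixl}, applied to the $2$-separation $(Z\cup\{a,b\},\{c,d\})$ of $M\ba\ell$, gives $\sqcap(\ell,Z\cup\{a,b\})=0$, hence $\sqcap(\ell,\{a,b\})=0$, so your target inequality is false outright (it would already contradict your own preliminary bound $r(\{a,b,\ell\})\ge 4$). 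The data $r(\{a,b,c,\ell\})=r(\{a,b,c\})$, $\sqcap(\ell,c)=0$, and $\sqcap(a,X)=\sqcap(b,X)=1$ are rank-consistent with $\sqcap(\ell,\{a,b\})=0$, so no submodular manipulation of them alone can produce a contradiction; you need the leverage of a $2$-separation of $M/\ell$, which is exactly what the paper's argument exploits. Second, the remaining case where both $\sqcap(c,\cdot)$ and $\sqcap(d,\cdot)$ equal one (the paper labels this $P_6$, not $P_5$ --- your labels of $P_5$ and $P_6$ are swapped throughout) is left as a sketch of intent. Your own closing remark that one must analyse $2$-separations of $M/\ell$ points in the right direction; the paper's proof shows how to carry that out uniformly, making the case split unnecessary.
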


\begin{proof}
As before, let $Z = E - \{a,b,c,d,\ell\}$. Since the theorem fails, it follows by Lemmas~\ref{Step1} and \ref{cactus2} that $M/ \ell$ is $2$-connected and neither $\{a,b\}$ nor $\{c,d\}$ is a prickly 3-separating set of $M$. Moreover, by Lemma~\ref{pixl}, $\ell$ is a line that is skew to each of $Z \cup \{a,b\}$ and $Z \cup \{c,d\}$. Thus, if $(R,B)$ is a \tws\ of $M/ \ell$, then, by 
Lemma~\ref{skewer},  $\sqcap(R,\{\ell\}) \ge 1$ and $\sqcap(B,\{\ell\}) \ge 1$.

By Lemma~\ref{general}, 
$$\sqcap(R,\{\ell\}) + \sqcap(B,\{\ell\}) + \lambda_{M/ \ell}(R) = \lambda_{M\ba \ell}(R) + \lambda_M(\{\ell\}),$$
so 
\begin{equation}
\label{eq1rb}
\sqcap(R,\{\ell\}) + \sqcap(B,\{\ell\})   = \lambda_{M\ba \ell}(R) + 1.
\end{equation}

As $\sqcap(\{\ell\}, Z \cup \{a,b\}) = 0 = \sqcap(\{\ell\}, Z \cup \{c,d\})$, it follows by Lemma~\ref{8.2.3} that 
both $R$ and $B$ meet both $\{a,b\}$ and $\{c,d\}$.  Without loss of generality, we may assume that $\{a,c\} \subseteq R$ and $\{b,d\} \subseteq B$.



Now suppose that $\sqcap(\{a\}, E - \{\ell,a,b\}) = 1 = \sqcap(\{b\}, E - \{\ell,a,b\})$. 
By Lemma~\ref{oswrules}(i),
\begin{align*}
\sqcap(\{a,c\}, \{b,d\}) + \sqcap(\{a\},\{c\}) + \sqcap(\{b\},\{d\}) & = \sqcap(\{a,b\}, \{c,d\}) + \sqcap(\{a\},\{b\})\\ 
&\hspace*{1.5in}  + \sqcap(\{c\},\{d\}).
\end{align*}
As $\mu(\ell) = 2$, we see that $\sqcap(\{a,b\}, \{c,d\}) = 0$, so $\sqcap(\{a\},\{c\}) = 0 =  \sqcap(\{b\},\{d\})$. Thus 
$$\sqcap(\{a,c\}, \{b,d\})    =     \sqcap(\{a\},\{b\}) + \sqcap(\{c\},\{d\}) = 2.$$
Hence $\sqcap(R,B) \ge 2$, that is, $\lambda_{M\ba \ell}(R) \ge 2$. Thus, by (\ref{eq1rb}),  $\sqcap(R,\{\ell\}) = 2$ or $\sqcap(B,\{\ell\}) = 2$. By symmetry, we may assume the former.  But, as $\sqcap(\{c,d\} \cup Z, \{\ell\}) = 0$ and $\sqcap(\{c,d\} \cup Z, \{a\}) = 1$, by Lemma~\ref{oswrules}(ii), 
\begin{align*}
\sqcap(\{c,d\} \cup Z \cup a, \{\ell\}) + 1 & = \sqcap(\{c,d\} \cup Z \cup a, \{\ell\}) +  \sqcap(\{c,d\} \cup Z,  \{a\})\\
& = \sqcap(\{c,d\} \cup Z \cup \ell, \{a\}) + \sqcap(\{c,d\} \cup Z, \{\ell\})\\
& \le 2 + 0.
\end{align*}
Thus $\sqcap(\{c,d\} \cup Z \cup a, \{\ell\}) \le 1$. But $R \subseteq Z \cup \{a,c\}$ so  $\sqcap(R, \{\ell\}) \le 1$; a contradiction. 
\end{proof}

\begin{lemma}
\label{22sep}
The $2$-polymatroid $M\ba \ell$ does not have  two disjoint  $2$-element $2$-separating sets. 
\end{lemma}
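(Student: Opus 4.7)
Plan. Assume for contradiction that $M\ba\ell$ has two disjoint $2$-element $2$-separating sets $Y_1 = \{a,b\}$ and $Y_2 = \{c,d\}$. By Corollary~\ref{pointless} together with Lemmas~\ref{fourmost}, \ref{noone}, \ref{no7}, and \ref{no9}, each of $a, b, c, d$ is a line of $M$ and $r(\{a,b\}) = r(\{c,d\}) = 3$; by Lemma~\ref{pixl}, $\ell$ is a line of $M$ skew to both $X := E - \{a,b,\ell\}$ and $X' := E - \{c,d,\ell\}$. Writing $Z := E - \{a,b,c,d,\ell\}$, I first observe that $\sqcap(\{a,b\},\{c,d\}) = 0$: otherwise $\sqcap(\{a,b\},\{c,d\}) = 1$ (the only alternative since $\sqcap(\{a,b\},\{c,d\}) \le \sqcap(\{a,b\},X) = 1$), giving $r(\{a,b,c,d\}) = 5$ and $\lambda_{M\ba\ell}(\{a,b,c,d\}) = 1$; as each of $Y_1, Y_2$ contains at most one element of $E(N)$, the induced $2$-separation $(\{a,b,c,d\}, Z)$ of $M\ba\ell$ has non-$N$-side of size $4$, contradicting $\mu(\ell) = 2$. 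Hence $r(\{a,b,c,d\}) = 6$.

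The central argument then invokes Lemmas~\ref{oldstep5} and \ref{3connel} in tandem. In the generic case where some local connectivity, say $\sqcap(a, E - \{a,b,\ell\})$, equals $0$, Lemma~\ref{oldstep5} applied to $Y_1$ with disjoint set $Y_2$ rules out its option~(i) (which would contradict the counterexample hypothesis for Theorem~\ref{modc}), yielding option~(ii): $M/\ell$ has a c-minor isomorphic to $N$ and $\ell \in \cl_{M/a}(\{c,d\})$. Lemma~\ref{3connel} applied to $Y_2$ then forces at least one of $\sqcap(c, E-\{c,d,\ell\})$ or $\sqcap(d, E-\{c,d,\ell\})$ to be $0$; a second application of Lemma~\ref{oldstep5}, with the roles of $Y_1$ and $Y_2$ swapped, gives $\ell \in \cl_{M/c}(\{a,b\})$. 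Translating these two closure conditions using $\sqcap(a,\{c,d\}) = \sqcap(c,\{a,b\}) = 0$ yields $r(\{a,c,d,\ell\}) = r(\{a,c,d\}) = 5$ and $r(\{a,b,c,\ell\}) = r(\{a,b,c\}) = 5$. Combined with $r(\{a,c\}) = 4$ and $r(\{a,b,c,d\}) = 6$, submodularity forces $r(\{a,c,\ell\}) = 4$, so $\ell \in \cl(\{a,c\})$. A rank computation using $r(X \cup \ell) = r(M)$ (from $\{a,b\} \subseteq \cl(X\cup\ell)$) then pins down $\sqcap(a,\ell) = 2$, making $a$ parallel to $\ell$ in $M$. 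A final calculation of $\lambda_M(\{a,\ell\})$ using $\sqcap(b, X)$ (or, in the sub-case $\sqcap(b, X) = 0$, a symmetric deduction forcing $b \parallel \ell$ and therefore $a \parallel b$, contradicting $r(\{a,b\}) = 3$) yields the desired contradiction with the $3$-connectivity of $M$.

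The main obstacle is the remaining case in which all four of $\sqcap(a,\cdot), \sqcap(b,\cdot), \sqcap(c,\cdot), \sqcap(d,\cdot)$ equal $1$, corresponding to both $M_Y$ (the $\{a,b\}$-side) and the analogous $M_W$ (the $\{c,d\}$-side) being isomorphic to $P_6$. Here Lemma~\ref{oldstep5} does not apply, and Lemma~\ref{3connel} only tells us that $M/\ell$ does not have $N$ as a c-minor. The strategy is to adapt the proof of Lemma~\ref{duh}, which handles the $P_6$ configuration under the hypothesis that $(X, Y_1)$ is the unique non-trivial $2$-separation of $M\ba\ell$. In our setting, in the key step of that argument, analysing a non-trivial $2$-separation $(A,B)$ of $M\ba a$ with $\ell\in A$ and $b\in B$, the induced $2$-separation $(A-\ell, B\cup a)$ of $M\ba\ell$ is permitted to equal $(\{c,d\}, X')$ rather than being forced to be trivial; a rank calculation using $\sqcap(\ell, \{c,d\}) = 0$ (from Lemma~\ref{pixl}) and $\sqcap(\{a,b\}, \{c,d\}) = 0$ then shows that either $M\baba a$ is $3$-connected having a c-minor isomorphic to $N$, or $\{a,\ell\}$ is a prickly $3$-separator of $M$ in which case $M\da a$ is $3$-connected with a c-minor isomorphic to $N$ by Lemma~\ref{portia}. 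In both cases the counterexample hypothesis is contradicted, completing the proof.
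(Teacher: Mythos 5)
There is a genuine gap, in two places. First, your endgame in the ``generic'' case does not work. After obtaining $\ell \in \cl_{M/a}(\{c,d\})$ and $\ell \in \cl_{M/c}(\{a,b\})$ and hence $r(\{a,c,\ell\})=4$, you claim a rank computation ``pins down $\sqcap(a,\ell)=2$, making $a$ parallel to $\ell$.'' But Lemma~\ref{pixl}, which you invoked at the outset, gives $\ell$ skew to $E-\{a,b,\ell\}$ and to $E-\{c,d,\ell\}$, so $\sqcap(\{a\},\{\ell\})=0$; no correct computation can yield $\sqcap(a,\ell)=2$, and the sub-case ``$b$ parallel to $\ell$, hence $a$ parallel to $b$'' is impossible for the same reason. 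More importantly, the facts assembled at that stage ($r(\{a,c,\ell\})=4$ together with the skewness and rank conditions) are not self-contradictory -- two skew lines $a,c$ with a third line $\ell$ in their span but skew to each is a perfectly consistent configuration -- so no ``final calculation of $\lambda_M(\{a,\ell\})$'' can finish the proof there. The paper needs two further substantive steps: a second submodularity computation (with $r(\{b,c,\ell\})$) to rule out the case where both elements of a pair are skew to the complement (the type-$8$ possibility), and then an analysis of a non-trivial $2$-separation $(U,V)$ of $M\ba a$, which exists because $M\ba a$ has an $N$-minor by Lemma~\ref{claim1}, locating $c$ and $d$ within it to reach the contradiction. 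Your sketch omits both.

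Second, your treatment of the case in which both pairs have each line attached to the complement (both sides of type $P_6$) is only a gesture. You propose adapting Lemma~\ref{duh}, but the step of that proof you need -- forcing the $2$-separation of $M\ba a$ to be $(\{b,\ell\}, X)$, whence $r(\{b,\ell\})=3$ and prickliness -- rests precisely on the uniqueness of the non-trivial $2$-separation of $M\ba\ell$, which fails here by hypothesis. You acknowledge that the induced $2$-separation may be $(\{c,d\}, E-\{c,d,\ell\})$, i.e.\ $(A,B)=(\{c,d,\ell\}, Z\cup\{b\})$, but the asserted dichotomy (``$M\baba a$ is $3$-connected or $\{a,\ell\}$ is prickly'') does not follow from the rank facts you cite: that $2$-separation is non-trivial, it survives compactification, and nothing in the configuration forces $r(\{a,\ell\})=3$. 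The paper disposes of this case by an entirely different route: Lemma~\ref{3connel} shows $M/\ell$ has no c-minor isomorphic to $N$, Lemma~\ref{oldstep5} then forces the second pair to be of the same type, and a short argument splits on whether $\sqcap(Z\cup\{\alpha,\gamma\},\{\ell\})=0$ for some transversal $\{\alpha,\gamma\}$ (which would recover an $N$-minor of $M/\ell$, a contradiction) or not (which gives an immediate submodularity contradiction from $r(Z\cup a\cup\ell)+r(M)\le r(Z\cup\{a,c\}\cup\ell)+r(Z\cup\{a,d\}\cup\ell)$). You would need either to reproduce an argument of that kind or to supply the missing rank calculation; as written, both halves of your finish are unsupported.
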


\begin{proof} Assume that $M\ba \ell$ has  $\{a,b\}$ and $\{c,d\}$ as disjoint $2$-separating sets. Then each of $a, b, c$, and $d$ is a line and $r(\{a,b\}) = 3 = r(\{c,d\})$. As before, let $Z = E - \{a,b,c,d,\ell\}$. Suppose $Y$ is $\{a,b\}$ or $\{c,d\}$, and $X = E - \ell - Y$. Then $M\ba \ell = M_X \oplus_2 M_Y$. By  Lemmas~\ref{not23}, \ref{fourmost}, \ref{noone}, \ref{no7}, and \ref{no9}, we know that $M_Y$ is isomorphic to $P_5$, $P_6$, or $P_8$. By Lemma~\ref{pixl},

\begin{sublemma}
\label{skewy}
$\ell$ is skew to $X$, so $\ell$ is skew to each of $a$, $b$, $c$, and $d$.
\end{sublemma}

When $M_Y \cong P_n$, we shall say that $Y$ is a {\it type-$n$ $2$-separator} of $M\ba \ell$.

\begin{sublemma}
\label{not66}
Neither $\{a,b\}$ nor $\{c,d\}$ is of type-$6$.
\end{sublemma}

Assume the contrary. Suppose $\{a,b\}$ is of type-6. Then, by Lemma~\ref{3connel}, $M/ \ell$ does not have a c-minor isomorphic to $N$. Thus, by Lemma~\ref{oldstep5}, neither $\sqcap(\{c\},X)$ nor $\sqcap(\{d\},X)$ is $0$. Hence $\{c,d\}$ is also of type-6. 
Suppose $\alpha \in \{a,b\}$ and $\gamma \in \{c,d\}$. Then $r(Z \cup \{\alpha, \gamma\}) = r(Z) + 2$. Of course, $r(M) = r(Z) + 4$.

Suppose $r(Z \cup \{\alpha, \gamma\} \cup \ell) =  r(M)$. Then $\sqcap(Z \cup \{\alpha, \gamma\}, \ell) = 0$. Let  the elements of $\{a,b,c,d\} - \{\alpha, \gamma\}$ be $\beta$ and $\delta$. In $M\ba \beta \ba \delta$, the set $\{\ell\}$  is $1$-separating. Thus $M\ba \beta \ba \delta\ba \ell =  M\ba \beta \ba \delta/ \ell$. As $M\ba \beta \ba \delta\ba \ell$ has a c-minor isomorphic to $N$, so does $M/\ell$. We then get a \cn\ to Lemma~\ref{3connel} since 
$\sqcap(a, E - \{\ell,a,b\}) = 1 = \sqcap(b, E - \{\ell,a,b\})$.

We may now assume that $r(Z \cup \{\alpha, \gamma\} \cup \ell) \le  r(M) - 1.$ By  \ref{skewy}, $\ell$ is skew to $Z \cup \{a,b\}$, so 
$r(Z \cup a \cup \ell) =  r(M) - 1.$ Thus, using the submodularity of $r$, we have 
\begin{align*}
2r(M) - 1 & = r(Z \cup a \cup \ell) + r(M)\\
& \le r(Z \cup \{a,c\} \cup \ell) + r(Z \cup \{a,d\} \cup \ell)\\
& \le 2r(M) - 2.
\end{align*}
This \cn\ establishes \ref{not66}.

We now know that  each of  $\{a,b\}$ and $\{c,d\}$  is of type-5 or of type-8. 
In particular, we may assume that 
$\sqcap(\{a\}, Z \cup \{c,d\}) = 0 = \sqcap(\{c\}, Z \cup \{a,b\})$. 
Since $\mu(\ell) = 2$ and $\{a,b,c,d\}$ contains at most two elements of $N$, we see that 
\begin{equation}
\label{abcd6}
r(\{a,b,c,d\}) = 6. 
\end{equation}

By Lemma~\ref{oldstep5}, 

\begin{sublemma}
\label{mell}
$\ell \in \cl_{M/a}(\{c,d\})$ and $\ell \in \cl_{M/c}(\{a,b\})$.
\end{sublemma}
We deduce that $r(\{a,c,d,\ell\}) = r(\{a,c,d\}) = 5$ and 
$r(\{a,b,c,\ell\}) = r(\{a,b,c\}) = 5$.  By submodularity and (\ref{abcd6}),
\begin{align*}
10 & = r(\{a,c,d,\ell\}) + r(\{a,b,c,\ell\})\\
& \ge r(\{a,b,c,d,\ell\}) + r(\{a,c,\ell\})\\
& \ge 6 + 4 = 10.
\end{align*}
We conclude that 
\begin{equation}
\label{eqnacl}
r(\{a,c,\ell\}) = 4.
\end{equation}

Next we show the following. 

\begin{sublemma}
\label{type5}
Both $\{a,b\}$ and $\{c,d\}$ are of type-$5$.
\end{sublemma}

Suppose $\{a,b\}$ is of type-8. Then $\sqcap(\{b\}, Z \cup \{c,d\}) = 0$. Thus we can replace $a$ by $b$ in the argument used to prove (\ref{eqnacl}) to get that $r(\{b,c,\ell\}) = 4$. Hence 
\begin{align*}
4 + 4 & = r(\{a,c,\ell\}) + r(\{b,c,\ell\})\\
& \ge r(\{a,b,c,\ell\}) + r(\{c,\ell\})\\
& \ge 5 + 4.
\end{align*}
This \cn\ and symmetry implies that 
\ref{type5} holds.

Now, by Lemma~\ref{claim1}, $M\ba \ell \ba a$, and hence $M\ba a$, has a c-minor isomorphic to $N$. Thus $M\ba a$ is not \thc. Let $(U,V)$ be a non-trivial \tws\ of $M\ba a$. Then we may assume that $\ell \in U$ and $c\in V$ otherwise $M$ has a \tws. 

Suppose $d \in U$. Then,   by \ref{skewy}, $(U \cup c, V - c)$ is a 1-separation of $M\ba a$; a \cn. Thus $d \in V$. By \ref{skewy} again, $r(U- \ell) = r(U) - 2$, so we obtain the \cn\ that $(U - \ell, V \cup \ell \cup a)$ is a 1- or \tws\ of $M$ unless $U - \ell$ 
consists of a single point, $u$, and $r(U) = 3$. In the exceptional case, since $M\ba a\ba \ell$ is $2$-connected, we see that $u \in \cl(V)$, so $(U - u, V \cup u)$ is a $1$-separation of $M\ba a$; a \cn.
\end{proof}

\begin{lemma}
\label{muend}
Suppose that $M$ has an element $\ell$ such that $M\ba \ell$ has $N$ as a c-minor.   
Then the largest non-$N$-side in a $2$-separation of $M\ba \ell$ has size exceeding two.
\end{lemma}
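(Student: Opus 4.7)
The plan is to argue for contradiction by supposing $\mu(\ell) = 2$. Then there is a non-trivial $2$-separation $(X,Y)$ of $M\ba \ell$ with $Y$ the non-$N$-side and $|Y| = 2$. Using Lemma~\ref{dennisplus} to write $M\ba \ell = M_X \oplus_2 M_Y$, where $M_Y$ has ground set $Y \cup p$ with basepoint $p$, I would invoke Lemma~\ref{old2} to reduce to the nine possibilities $P_1,\dots,P_9$ for the $3$-element $2$-connected $2$-polymatroid $M_Y$.

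Most of these possibilities have already been eliminated in this section: Lemma~\ref{not23} rules out $P_2$ and $P_3$, Lemma~\ref{fourmost} rules out $P_4$, Lemma~\ref{noone} rules out $P_1$, Lemma~\ref{no7} rules out $P_7$, and Lemma~\ref{no9} rules out $P_9$. Hence $M_Y$ is isomorphic to one of $P_5$, $P_6$, or $P_8$. For each of these three remaining types, Lemmas~\ref{duh}, \ref{duh8}, and \ref{duh5} exclude the possibility that $(X,Y)$ is the only non-trivial $2$-separation of $M\ba \ell$ (a contradiction would follow directly from either conclusion of Lemma~\ref{duh}, since $(M,N)$ is assumed to be a counterexample to Theorem~\ref{modc}). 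So I would obtain a second non-trivial $2$-separation $(X',Y')$ distinct from $(X,Y)$, with non-$N$-side $Y'$ also of cardinality $2$, by the standing hypothesis $\mu(\ell) = 2$.

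The main obstacle, and the payoff step, is closing the argument from here. By Lemma~\ref{22sep}, $Y$ and $Y'$ cannot be disjoint, so they must meet in exactly one element, giving $|Y \cup Y'| = 3$. By Lemma~\ref{Step1}, $M\ba \ell$ is $2$-connected, so $\lambda_{M\ba \ell}(Y \cap Y') \ge 1$; uncrossing $(X,Y)$ and $(X',Y')$ then forces $\lambda_{M\ba \ell}(Y \cup Y') \le 1$. Since $|X \cap X'| = |E(M)| - 4 \ge 2$, the partition $(Y \cup Y', X \cap X')$ is a non-trivial $2$-separation of $M\ba \ell$. Because each of $Y$ and $Y'$ contains at most one element of $E(N)$, the union $Y \cup Y'$ contains at most two elements of $E(N)$, which is strictly less than $|E(N)| - 1$ since $|E(N)| \ge 4$. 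Therefore $Y \cup Y'$ is the non-$N$-side of a non-trivial $2$-separation of $M\ba \ell$ of size $3$, contradicting $\mu(\ell) = 2$ and completing the proof.
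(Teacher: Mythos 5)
Your proof is correct and follows essentially the same route as the paper: reduce $M_Y$ to one of $P_5$, $P_6$, $P_8$ via Lemmas~\ref{not23}, \ref{fourmost}, \ref{noone}, \ref{no7}, \ref{no9}; use Lemmas~\ref{duh}, \ref{duh8}, \ref{duh5} together with the standing counterexample hypothesis to produce a second non-trivial $2$-separation; and then rule that out. You are slightly more careful than the printed proof at the final step: the paper simply cites Lemma~\ref{22sep} as "showing that $M\ba \ell$ cannot have a second non-trivial $2$-separation," but that lemma is stated only for \emph{disjoint} $2$-element $2$-separating sets. Your explicit uncrossing argument for the overlapping case — where $Y$ and $Y'$ share one element, so $\lambda_{M\ba\ell}(Y\cap Y')\ge 1$ forces $\lambda_{M\ba\ell}(Y\cup Y')= 1$, giving a $3$-element non-$N$-side and contradicting $\mu(\ell)=2$ — supplies the step the paper leaves implicit. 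It is a routine case, but your version closes it cleanly.
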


\begin{proof}
Assume $\mu(\ell) = 2$. Then $M\ba \ell = M_X \oplus_2 M_Y$ where $|Y| = 2$. In Lemma~\ref{old2}, we identified the nine possibilities for $M_Y$. We showed in   Lemmas~\ref{not23}, \ref{fourmost}, \ref{noone}, \ref{no7}, and \ref{no9}  that $M_Y$ must be isomorphic to $P_5$, $P_6$, or $P_8$. In Lemmas~\ref{duh}, \ref{duh8}, and \ref{duh5}, we showed that $(X,Y)$ cannot be the sole non-trivial 2-separation of $M\ba \ell$. Lemma~\ref{22sep} completes the proof by showing that $M\ba \ell$ cannot have a second non-trivial 2-separation.
\end{proof}

\begin{lemma}
\label{dualmu}
Suppose that $M$ has an element $\ell$ such that $M/\ell$ has $N$ as a c-minor.   
Then the largest non-$N$-side in a $2$-separation of $M/ \ell$ has size exceeding two.
\end{lemma}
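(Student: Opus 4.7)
The plan is to derive Lemma~\ref{dualmu} from Lemma~\ref{muend} by passing to the dual pair $(M^*, N^*)$. Since $M$ is $3$-connected with $|E(M)| \geq 6$ and $N$ is $3$-connected with $|E(N)| \geq 4$, both $M$ and $N$ are compact, so by Lemma~\ref{compact0}(ii), $(M^*)^* = M$ and $(N^*)^* = N$. Moreover, by Lemma~\ref{compact0}(iii), $\lambda_{M^*} = \lambda_M$, so $M^*$ is $3$-connected. Similarly, $N^*$ is $3$-connected with $|E(N^*)| \geq 4$.

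My first step will be to observe that $(M^*, N^*)$ is again a counterexample to Theorem~\ref{modc}. Indeed, by Lemma~\ref{csm}, $N^*$ is a c-minor of $M^*$. The allowable moves in Theorem~\ref{modc} are self-dual as a collection: by Lemma~\ref{compact0}(iv), $(M/e)^* = M^*\baba e$ and $(M\baba e)^* = M^*/e$; by Lemma~\ref{pricklytime}, $\{y,z\}$ is a prickly $3$-separator of $M$ if and only if it is a prickly $3$-separator of $M^*$; and by Proposition~\ref{compdual}, $(M\da y)^* = M^*\da y$ for any line $y$ not parallel to another line, which is the case here because $M^*$ is $3$-connected. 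Lastly, a whirl or cycle matroid of a wheel has a dual of the same type. Since each favorable outcome for $M$ has, under duality, a corresponding favorable outcome for $M^*$, and since Lemma~\ref{csm} preserves the required c-minor relationship, the failure of Theorem~\ref{modc} for $(M,N)$ translates directly to its failure for $(M^*, N^*)$.

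Next, the hypothesis that $M/\ell$ has $N$ as a c-minor is equivalent, via Lemma~\ref{csm} and Lemma~\ref{compact0}(iv), to the statement that $(M/\ell)^* = (M^*\ba \ell)^{\flat} = M^*\baba \ell$ has $N^*$ as a c-minor. I can therefore apply Lemma~\ref{muend} to the counterexample $(M^*, N^*)$ with the same distinguished element $\ell$, concluding that $\mu(\ell) > 2$ for $M^*\baba \ell$, i.e., the largest non-$N^*$-side in a $2$-separation of $M^*\baba \ell$ has size exceeding two.

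The final step is just to translate this back. By Lemma~\ref{compact0}(iii), $\lambda_{M^*\baba \ell} = \lambda_{M^*\ba \ell} = \lambda_{(M/\ell)^*} = \lambda_{M/\ell}$, so a partition $(A,B)$ of $E(M)-\ell$ is a $2$-separation of $M^*\baba \ell$ if and only if it is a $2$-separation of $M/\ell$. Because the non-$N$-side and non-$N^*$-side of a $2$-separation are defined purely by counting elements of the common ground set $E(N) = E(N^*)$, they coincide, and the conclusion about $M^*\baba \ell$ transfers verbatim to $M/\ell$. The only subtle point, which I would verify carefully, is the claim in the first step that each of the three non-wheel/whirl outcomes in Theorem~\ref{modc} is genuinely preserved by duality as a single package (in particular, that Proposition~\ref{compdual} applies to every prickly $3$-separator arising here, which it does because in a $3$-connected $2$-polymatroid no two lines are parallel).
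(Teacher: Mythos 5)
Your proof is correct and follows essentially the same route as the paper's: apply Lemma~\ref{csm} and Lemma~\ref{compact0} to pass to $M^*\ba\ell$ (or its compactification) with $N^*$ as a c-minor, identify the $2$-separations of $M/\ell$ and $M^*\ba\ell$ via the shared connectivity function, and invoke Lemma~\ref{muend} for the dual pair. The paper compresses the verification that $(M^*,N^*)$ is again a counterexample into the phrase ``Replacing $(M,N)$ by $(M^*,N^*)$''; your proposal simply spells this self-duality out more fully.
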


\begin{proof} 
By Lemma~\ref{csm}, $(M/ \ell)^*$ has a c-minor isomorphic to $N^*$. By Lemma~\ref{compact0}, $(M/ \ell)^* = (M^*\ba \ell)^{\flat}$. Thus $M^*\ba \ell$ has a c-minor isomorphic to $N^*$. Let $Y$ be a largest non-$N$-side in a \tws\ of $M/ \ell$. By Lemma~\ref{compact0} again, $Y$ is a largest non-$N^*$-side in a \tws\ of $M^*\ba \ell$. Replacing $(M,N)$ by $(M^*,N^*)$ in Lemma~\ref{muend}, we deduce that $|Y| > 2$.
\end{proof}

\section{Finding a doubly labelled line}
\label{fdll}

Recall that we are assuming that $(M,N)$ is a counterexample to Theorem~\ref{modc} where $N$ is a $3$-connected $2$-polymatroid that is a c-minor of $M$. In this section, we prove some lemmas that will eventually enable us to deduce that $M$ has a doubly labelled line. The first step in this process is to prove the following elementary but useful lemma. 

\begin{lemma}
\label{predichotomy} 
Suppose $y \in E(M) - E(N)$. If $y$ is not a doubly labelled element of $M$, and $M'$ has a special $N$-minor for some $M'$ in $\{M\ba y,M/ y\}$, then $M'$ has $N$ as a c-minor.
\end{lemma}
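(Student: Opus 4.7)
My plan is to argue that at least one of $M\ba y$ and $M/y$ must contain $N$ itself (not merely a special $N$-minor) as a c-minor, and then use the hypothesis that $y$ is not doubly labelled to pin $M'$ down to that side. The argument is short and direct, with no delicate rank comparisons required.

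First, I would invoke Corollary~\ref{complast3}: since $N$ is a c-minor of $M$ and $N$ is compact (being $3$-connected with at least four elements), there exist disjoint sets $C$ and $D$ with $C \cup D = E(M) - E(N)$ such that $N = (M/C \ba D)^{\flat}$. Because $y \in E(M) - E(N) = C \cup D$, exactly one of $y \in C$ or $y \in D$ holds. If $y \in D$, then by the commutativity of deletions and contractions on disjoint sets we have $M/C \ba D = (M\ba y)/C \ba (D - y)$, so $N = ((M\ba y)/C \ba (D - y))^{\flat}$ exhibits $N$ as a c-minor of $M\ba y$. Symmetrically, if $y \in C$, then Lemma~\ref{elemprop}(i) yields $M/C = (M/y)/(C - y)$, so $N = ((M/y)/(C - y)\ba D)^{\flat}$ is a c-minor of $M/y$.

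Thus at least one member of $\{M\ba y, M/y\}$, call it $M'_0$, has $N$ itself as a c-minor and in particular has a special $N$-minor (namely $N$). Since $y$ is not doubly labelled, at most one of $M\ba y$ and $M/y$ admits a special $N$-minor, so the other member, $M''_0$, has none. The hypothesis that $M'$ has a special $N$-minor therefore forces $M' = M'_0$, and we conclude that $M'$ has $N$ as a c-minor, as required.

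I do not expect any real obstacle in this argument. The only ingredient beyond the definitions is a routine rewriting of a compactified minor by shifting the operation on $y$ to the front of the sequence, and this is already packaged in Corollary~\ref{complast3} together with Lemma~\ref{elemprop}.
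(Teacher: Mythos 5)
Your argument is correct and follows essentially the same route as the paper's (very terse) proof: some member of $\{M\ba y, M/y\}$ has $N$ itself as a c-minor because $y\in E(M)-E(N)$, and the not-doubly-labelled hypothesis then forces $M'$ to be that member. You simply unpack the first claim via Corollary~\ref{complast3} and commutativity, which the paper leaves implicit.
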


\begin{proof} Since $y \in E(M) - E(N)$,   some $M''$ in  $\{M\ba y,M/ y\}$ has $N$ as a c-minor. Since $y$ is not doubly labelled, we see that $M'' = M'$.  
\end{proof}

The next lemma identifies an important dichotomy. 

\begin{lemma}
\label{dichotomy} 
Let $M'$ be a c-minor of $M$ having $N$ as a c-minor and let $(X',Y')$ be a $2$-separation of $M'$ having $X'$ as the $N$-side. 
Assume that, for all elements $y$ of $Y'$, at least one of $M'\ba y$ and $M'/y$ does not have a special $N$-minor. Then either 
\begin{itemize}
\item[(i)] $\sqcap_{M'}(\{y\},X') = 1$ for all $y$ in $Y'$; or 
\item[(ii)] $\sqcap_{M'}(Y'-y,X') = 0$ for all $y$ in $Y'$.
\end{itemize}
\end{lemma}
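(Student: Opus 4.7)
The plan is to argue by contradiction, using monotonicity of local connectivity as the main tool. If $\lambda_{M'}(X') = 0$, then $\sqcap_{M'}(X',\cdot)$ vanishes on every subset of $Y'$ by Lemma~\ref{8.2.3}, so (ii) holds trivially; I therefore assume $\sqcap_{M'}(X',Y') = 1$, whence each of $\sqcap_{M'}(X',\{y\})$ and $\sqcap_{M'}(X',Y'-y)$ takes a value in $\{0,1\}$.

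The next step is to reinterpret the standing hypothesis via two identities: $\sqcap_{M'}(X',\{y\}) + \sqcap_{M'/y}(X',Y'-y) = 1$ (Lemma~\ref{obs1}) and $\sqcap_{M'\ba y}(X',Y'-y) = \sqcap_{M'}(X',Y'-y)$ (immediate from the rank formula). Because $X'$ is the $N$-side, Lemma~\ref{claim1} applies and converts a value of $1$ in either local connectivity into the existence of a special $N$-minor of the corresponding deletion or contraction. Now suppose (i) fails, witnessed by some $y_1 \in Y'$ with $\sqcap_{M'}(X',\{y_1\}) = 0$. Then Lemma~\ref{obs1} gives $\sqcap_{M'/y_1}(X',Y'-y_1) = 1$, so Lemma~\ref{claim1}(ii) produces a special $N$-minor of $M'/y_1$; the standing hypothesis then rules out such a minor for $M'\ba y_1$, and the contrapositive of Lemma~\ref{claim1}(i) forces $\sqcap_{M'}(X',Y'-y_1) = 0$. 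By the symmetric argument, if (ii) fails at some $y_2 \in Y'$ with $\sqcap_{M'}(X',Y'-y_2) = 1$, then Lemma~\ref{claim1}(i) produces a special $N$-minor of $M'\ba y_2$, the standing hypothesis rules out such a minor for $M'/y_2$, and Lemmas~\ref{claim1}(ii) and~\ref{obs1} together yield $\sqcap_{M'}(X',\{y_2\}) = 1$.

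The finish is then clean: since $\sqcap_{M'}(X',\{y_1\}) = 0 \ne 1 = \sqcap_{M'}(X',\{y_2\})$, we have $y_1 \ne y_2$, so $\{y_2\} \subseteq Y' - y_1$, and monotonicity of local connectivity (Lemma~\ref{8.2.3}) delivers
$$1 = \sqcap_{M'}(X',\{y_2\}) \le \sqcap_{M'}(X',Y'-y_1) = 0,$$
the desired contradiction. I do not foresee a real obstacle in this plan; the only care required is translating the hypothesis ``at least one of $M'\ba y, M'/y$ has no special $N$-minor'' cleanly into local-connectivity statements via Lemmas~\ref{obs1} and~\ref{claim1}, after which the contradiction is a one-line monotonicity step.
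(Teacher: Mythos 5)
Your proof is correct and uses the same essential machinery as the paper (Lemmas~\ref{obs1}, \ref{claim1}, and \ref{8.2.3}), but you organize it as a direct contradiction rather than the paper's two-stage argument of first proving a pointwise dichotomy at each $y$ and then showing the same alternative holds uniformly across $Y'$ (where the paper also detours through the explicit $2$-sum decomposition $M' = M'_{X'}\oplus_2 M'_{Y'}$ to establish uniformity, which your final monotonicity step replaces more cleanly). Both arguments come down to the same three implications; yours is somewhat shorter and also takes care of the degenerate case $\lambda_{M'}(X')=0$ explicitly, which the paper leaves implicit.
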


\begin{proof} Suppose $y \in Y'$. If $\sqcap_{M'}(\{y\},X) = 0$, then, by Lemma~\ref{obs1}, $\sqcap_{M'/y}(X',Y'-y) = 1$, so, by Lemma~\ref{claim1}(ii), $M'/y$ has a special $N$-minor. If $\sqcap_{M'}(Y'-y,X') = 1$, then, by Lemma~\ref{claim1}(i), $M'\ba y$ has a special $N$-minor. By hypothesis, $M'\ba y$ or $M'/y$ has no special $N$-minor. 
We deduce the following. 

\begin{sublemma}
\label{dich1}
Either $\sqcap_{M'}(\{y\},X') = 1$ or $\sqcap_{M'}(Y'-y,X') = 0$. 
\end{sublemma}

Next we show that all the elements of $Y'$ behave similarly. 

\begin{sublemma}
\label{dich2}
If $\sqcap_{M'}(\{y\},X') = 1$, then $\sqcap_{M'}(\{z\},X) = 1$ for all $z$ in $Y'$.
\end{sublemma}

To see this, note first that $M' = M'_{X'} \oplus_2 M'_{Y'}$. Since $\sqcap_{M'}(\{y\},X') = 1$, it follows that $p \in \cl_{M'_{Y'}}(\{y\})$. Suppose $z \in Y'-y$. Then $p \in \cl_{M'_{Y'}}(Y' - z)$. Hence 
$\sqcap_{M'}(X',Y'-z) = 1$ so $M'\ba z$ has a special $N$-minor. Thus $M'/z$ does not have a special $N$-minor. Hence, by Lemma~\ref{claim1}(ii), 
$\sqcap_{M'/z}(X',Y'-z) = 0$, so, by Lemma~\ref{obs1}, $\sqcap_{M'}(X',\{z\}) = 1$, and \ref{dich2} holds.

Now suppose that $\sqcap_{M'}(\{y\},X') = 0$. Then, by \ref{dich2}, $\sqcap_{M'}(\{z\},X') = 0$ for all $z$ in $Y'$. Thus $M'/z$ has a special $N$-minor for all $z$ in $Y'$. The hypothesis implies that $M'\ba z$ has no special $N$-minor for all $z$ in $Y'$. Then, by Lemma~\ref{claim1}(i), $\sqcap_{M'}(Y'-z,X') = 0$ and the lemma follows.
\end{proof}

The next lemma describes what happens when (i) of Lemma~\ref{dichotomy} holds. 

\begin{lemma}
\label{p63rev}  
Suppose $M\baba \ell$ has $N$ as a c-minor. Let $(X,Y)$ be a $2$-separation of $M\ba \ell$ in which $X$ is the $N$-side and $|Y| \ge 3$. Then 
\begin{itemize}
\item[(i)] $Y$ contains a doubly labelled element; or 
\item[(ii)] $\sqcap(\{y\},X) \neq 1$ for some $y$ in $Y$; or 
\item[(iii)] $Y$ contains an element $y$ such that $M\baba y$ has $N$ as a c-minor and every non-trivial $2$-separation of $M\ba y$ has the form $(Z_1,Z_2)$ where $Z_1$ is the $N$-side and $Z_2 \subseteq Y - y$.
\end{itemize}
\end{lemma}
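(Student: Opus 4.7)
Plan: The plan is to assume that conditions (i) and (ii) both fail and then exhibit some $y \in Y$ satisfying (iii). So suppose no element of $Y$ is doubly labelled in $M$, and $\sqcap_{M\ba \ell}(\{y\},X)=1$ for every $y\in Y$.

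First, I would apply Lemma~\ref{dennisplus} to write $M\ba \ell = M_X\oplus_2 M_Y$ with basepoint $p$. The hypothesis $\sqcap_{M\ba \ell}(\{y\},X)=1$ translates to $p\in\cl_{M_Y}(\{y\})$ for every $y\in Y$; in particular each $y\in Y$ is either a point parallel to $p$ in $M_Y$ or a line through $p$ in $M_Y$. Since $|Y|\geq 3$, for each $y\in Y$ there is some $z\in Y-y$ with $p\in\cl_{M_Y}(\{z\})\subseteq \cl_{M_Y\ba y}(Y-y)$, which forces $\sqcap_{M\ba\ell\ba y}(X,Y-y)=1$. Thus Lemma~\ref{claim1}(i), applied with $M\ba\ell$ in place of $M$, gives that $M\ba\ell\ba y$ has a special $N$-minor, and hence so does $M\ba y$. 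Because $y$ is not doubly labelled, Lemma~\ref{predichotomy} then yields that $M\ba y$ (and therefore $M\baba y$) has $N$ as a c-minor for every $y\in Y-E(N)$; note that $|Y-E(N)|\geq |Y|-1\geq 2$.

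The crucial step is to choose a $y\in Y-E(N)$ for which every non-trivial 2-separation of $M\ba y$ has the form required by (iii). I will argue by contradiction: suppose that for every candidate $y\in Y-E(N)$, $M\ba y$ has a non-trivial 2-separation $(Z_1,Z_2)$, with $Z_1$ the $N$-side, such that $Z_2\cap(X\cup\{\ell\})\neq\emptyset$. Among all such pairs $(y,(Z_1,Z_2))$ choose one minimising $|Z_2\cap(X\cup\{\ell\})|$. The plan is then to uncross $(Z_1,Z_2)$ with the 2-separating partition $(X\cup\{\ell\},Y-y)$ of $M\ba y$, whose 2-separating nature I verify from $\lambda_{M\ba\ell\ba y}(X)\leq 1$ together with the structural description of $M_Y$ and the position of $\ell$ relative to $X$ and $Y$ (using Lemma~\ref{8.2.4} and Lemma~\ref{univ}).

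The main obstacle will be carrying out this uncrossing cleanly: the output must produce either a 2-separation of $M$ itself (contradicting 3-connectivity), or a new 2-separation of $M\ba y$ whose non-$N$-side meets $X\cup\{\ell\}$ in strictly fewer elements than $Z_2$ did (contradicting minimality), or a configuration that forces $Z_2\subseteq Y-y$ directly. A subsidiary nuisance is the potential unique element of $E(N)\cap Y$: since we have at least two candidates in $Y-E(N)$ to pick from, I will choose $y$ to avoid this element, so the labelling of $Z_1$ as the $N$-side is unambiguous throughout the argument.
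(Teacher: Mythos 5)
Your preliminaries match the paper: the $2$-sum decomposition, the observation that $p\in\cl_{M_Y}(\{y\})$ for every $y\in Y$, and the deduction via Lemmas~\ref{claim1}(i) and~\ref{predichotomy} that $M\ba y$ has $N$ as a c-minor for each $y\in Y-E(N)$. From there, the step you yourself label ``the main obstacle'' is a genuine gap: the uncrossing you propose requires $(X\cup\{\ell\},Y-y)$ to be a $2$-separating partition of $M\ba y$, and this is not guaranteed. Since $r(X)+r(Y)-r(M)=1$, one has $\lambda_{M\ba y}(X\cup\ell)=[r(X\cup\ell)-r(X)]+[r(Y-y)-r(Y)]+1$, which equals $2$ when, for instance, $\sqcap(X,\{\ell\})=0$ and $r(Y-y)=r(Y)-1$. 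The inequality $\lambda_{M\ba\ell\ba y}(X)\le 1$ that you cite does not transfer, because $\lambda_{M\ba y}(X\cup\ell)-\lambda_{M\ba\ell\ba y}(X)=\sqcap_{M\ba y}(\{\ell\},E-\{\ell,y\})-\sqcap(X,\{\ell\})\ge 0$ and can be strictly positive; Lemmas~\ref{8.2.4} and~\ref{univ} do not bridge this.

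The paper circumvents the problem with a specific choice of $y$ that your proposal does not reconstruct. It first shows $Y$ has no points: if $z\in Y$ is a point (necessarily parallel to $p$ in $M_Y$), then $M\ba z$ turns out to be $3$-connected with a special $N$-minor, contradicting the counterexample assumption. It then proves that the matroid $M_Y/p$ has a circuit, and chooses $y$ lying on such a circuit with $y\notin E(N)$; this choice forces $r(X\cup(Y-y))=r(M)$, equivalently $\ell\in\cl_{M\ba y}(E-\{\ell,y\})$ and $r(M\ba\ell\ba y)=r(M\ba y)$. That is precisely what is needed to descend a non-trivial $2$-separation $(A,B)$ of $M\ba y$ with $\ell\in A$ to a $2$-separation $(A-\ell,B)$ of $M\ba\ell\ba y$ and uncross there against $(X,Y-y)$. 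Your minimality criterion on $|Z_2\cap(X\cup\{\ell\})|$ provides no control over how the rank of $\ell$ behaves after deleting $y$, so neither version of the uncrossing is available. The circuit argument in $M_Y/p$, together with the elimination of points from $Y$, is the load-bearing content you are missing.
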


\begin{proof} Suppose that $\sqcap(\{y\},X) = 1$ for all $y$ in $Y$ and that $Y$ does not contain any doubly labelled elements. As usual, we write  $M\ba \ell$ as the $2$-sum with basepoint  $p$ of the $2$-polymatroids $M_X$ and $M_Y$ having ground sets $X \cup p$ and $Y \cup p$, respectively. First we show that 

\begin{sublemma}
\label{nopoint}
$Y$ does not contain a point.
\end{sublemma}

Assume that $Y$ does contain  a point, $z$. Then, since $\sqcap(\{z\},X) = 1$, we see that $z$ is parallel to $p$ in $M_Y$. 
By Proposition~\ref{connconn}, $M\ba \ell\ba z$ is $2$-connected. Hence $M\ba z$ is $2$-connected. Also, in $M_X$ and $M_Y$, the sets $X$ and $Y-z$ span $p$, and hence span $z$. We show next that 

\begin{sublemma}
\label{nopointsub}
$M\ba z$ is $3$-connected.
\end{sublemma}

Suppose that $M\ba z$ has a $2$-separation $(R,B)$ where $\ell \in R$. Then $(R-\ell,B)$ is 2-separating in $M\ba z \ba \ell$. Note that $r(M\ba \ell) = r(M)$, so $r(M\ba \ell\ba z) = r(M)$. We have 
$$r(R) + r(B) = r(M\ba z) + 1.$$ 
Thus 
$$r(R - \ell) + r(B) \le r(M\ba z,\ell) +1.$$ 
Now $R \neq \{\ell\}$ otherwise $Y - z \subseteq B$ and we obtain the contradiction that $(R,B \cup z)$ is a $2$-separation of $M$. Observe that, since $M\ba \ell\ba z$ is $2$-connected, $r(R - \ell) = r(R)$. As $M$ is $3$-connected, neither $B$ nor $R - \ell$ spans $z$. Thus neither $X$ nor $Y - z$ is contained in $B$ or $R - \ell$. Hence $(X,Y-z)$ and $(R - \ell,B)$ cross. 

Now $\lambda_{M\ba \ell \ba z}(Y-z) = \lambda_{M\ba \ell}(Y) = 1$ and $\lambda_{M\ba \ell \ba z}(B) = 1$. Thus, by uncrossing, 
$\lambda_{M\ba \ell \ba z}(B\cap (Y-z)) = 1$. Since $\ell \in \cl(R - \ell)$ and $z \in \cl(X)$, we deduce that $\lambda_M(B \cap (Y-z)) = 1$. As $M$ is $3$-connected, it follows that $B \cap (Y - z)$ consists of a single point $y$. Then, by assumption, $\sqcap(X,\{y\}) = 1$. But $\sqcap(X,\{z\}) = 1$. Thus $y$ is parallel to $p$ in $M_Y$. Hence $y$ and $z$ are parallel points in $M$; a contradiction. We conclude that \ref{nopointsub} holds. 

To complete the proof of \ref{nopoint}, we shall show that $M\baba z$ has a special $N$-minor. We know that $M\ba \ell = M_X \oplus_2 M_Y$ where $z$ is parallel in $M_Y$ to the basepoint $p$ of the $2$-sum. Moreover, by Lemma~\ref{p69}, $M_X$ has a special $N$-minor. 
Now $M\ba \ell\ba z$ is $2$-connected and, by \cite[Proposition 3.1]{hall}, $M\ba \ell\ba z = M_X \oplus_2 (M_Y \ba z)$. 
Hence $M\ba z$ has a special $N$-minor. Thus $M\baba z$ is $3$-connected having a c-minor isomorphic to $N$; a contradiction. We deduce that \ref{nopoint} holds.

We now know that every element of $Y$ is a line $y$ with $\sqcap(X,\{y\}) = 1$. Hence, in $M_Y$, the basepoint $p$ lies on $y$. Thus, for all $y$ in $Y$, we see that $M\ba \ell\ba y$ is $2$-connected. Then, 
by Lemma~\ref{p49} again,  we deduce that 

\begin{sublemma}
\label{usey}
for all $y$ in $Y$, both $M\ba \ell\ba y$ and $M\ba y$ have   special $N$-minors.
\end{sublemma}

Since every line in $Y$ contains $p$, it follows that $M_Y/p$ is a matroid. Next we show that 

\begin{sublemma}
\label{caseD1.1}
$M_Y/p$ has a circuit.
\end{sublemma}

Assume that $M_Y/p$ has no circuits. Let $y$ and $y'$ be two distinct elements of $Y$. Then $r(X \cup (Y - \{y,y'\})) = r(X) + |Y - \{y,y'\}|$ and 
$r(X \cup Y) = r(X) + |Y|$. 
As a step towards \ref{caseD1.1}, we show that 

\begin{sublemma}
\label{caseD1.1sub}
$\sqcap((X\cup Y) - \{y,y'\},\{\ell\}) = 0$. 
\end{sublemma}

Suppose that $\sqcap((X\cup Y) - \{y,y'\},\{\ell\}) \ge 1$. 
Then, as $r(Y) = |Y| + 1$, 
\begin{align*}
\lambda_M(\{y,y'\}) & = r(X \cup (Y - \{y,y'\})\cup \ell) + r(\{y,y'\}) - r(M)\\
& \le r(X) + |Y - \{y,y'\}| +1 + 3 - r(M)\\
& = r(X) + r(Y) - r(M) + 1 = 2.
\end{align*}
As $M$ is $3$-connected, we see that $\lambda_M(\{y,y'\}) = 2$, so equality holds thoughout the last chain of inequalities. Thus $\{y,y'\}$ is a prickly 3-separator of $M$ and $\lambda_{M\ba \ell}(\{y,y'\}) = 1$. By Lemma~\ref{portia}, $M\da y$ is $3$-connected. 
By Lemma~\ref{dennisplus}(vi), $(M\ba \ell)\da y =  M_X \oplus_2 (M_Y \da y)$.  Thus $\sqcap_{M\ba \ell \da y}(X,Y-y) = 1$ so, by Lemma~\ref{claim1}(iii), $(M\da y)\ba \ell$, and hence $M\da y$, has a special $N$-minor. This contradiction 
 implies that \ref{caseD1.1sub} holds for all distinct $y$ and $y'$ in $Y$.


As the next step towards proving \ref{caseD1.1}, we now show that 

\begin{sublemma}
\label{lcn}
$M/ \ell$ has a c-minor isomorphic to $N$.
\end{sublemma}

In $M\ba \ell$, deleting all but one element, $y$, of $Y$ leaves the $2$-polymatroid that, when $y$ is compactified, equals $M_X$ with $p$ relabelled as $y$. Hence $M\ba \ell \ba (Y - y)$ has a c-minor isomorphic to $N$. By \ref{caseD1.1sub}, since $|Y| \ge 3$, we deduce that $\{\ell\}$ is 1-separating in $M\ba (Y - y)$. Hence $M\ba (Y - y) \ba \ell = M\ba (Y - y) / \ell$, so, by \ref{usey}, we deduce that \ref{lcn} holds. 

Still continuing towards the proof of  \ref{caseD1.1}, next  we observe that

\begin{sublemma}
\label{linear}
$\ell$ is a line of $M$.
\end{sublemma}

Suppose $\ell$ is a point. By Lemma~\ref{newbix}, $M/ \ell$ is $2$-connected having one side of every $2$-separation being a pair of points of $M$ that are parallel in $M/ \ell$. By \ref{lcn}, $M$ must have such a pair $\{u,v\}$ of points. Then both $M\ba u$ and $M\ba v$ have c-minors isomorphic to $N$. By \cite[Lemma 4.2]{oswww}, $M$ has a triad   of points containing $\ell$ and one of $u$ and $v$, say $u$. Let $w$ be the third point in this triad. Then $M\ba \ell$ has $\{u,w\}$ as a series pair of points, so $M\ba \ell/u$, and hence $M/u$, has a c-minor isomorphic to $N$. Thus the point $u$ contradicts  Lemma~\ref{Step0}.

By \ref{lcn}, $M/ \ell$ has a 2-separation $(U,V)$. Thus $r(U \cup \ell) + r(V \cup \ell) - r(M) = 3$. By symmetry, we may assume that 
$U \subseteq (X \cup Y) - \{y,y'\}$ for some $y'$ in $Y - y$. Then, by \ref{caseD1.1sub} and \ref{linear}, $r(U \cup \ell) = r(U) + 2$. Hence $(U, V \cup \ell)$ is a \tws\ of $M$. This \cn\ completes the proof of \ref{caseD1.1}.

Choose $y$ in $Y$ such that $y$ is in a circuit of $M_Y/p$ and $y \in E(M) - E(N)$. By \ref{usey}, $M\ba y$ has a special $N$-minor. Thus, by Lemma~\ref{predichotomy}, $M\ba y$ has $N$ as a c-minor. Now $r(M\ba \ell\ba y) = r(M\ba \ell) = r(M) = r(M\ba y)$. Hence $\ell \in \cl_{M\ba y}(X \cup (Y - y))$ and $M\ba \ell \ba y$ is $2$-connected. Next we show the following.

\begin{sublemma}
\label{caseD1.2}
Every non-trivial $2$-separation of $M\ba y$ has the form $(X \cup Y' \cup \ell, Y'')$ where $Y'$ and $Y''$ are disjoint and $Y' \cup Y'' = Y-y$.
\end{sublemma}

Let $(A,B)$ be a non-trivial $2$-separation of $M\ba y$ that is not in the stated form. Without loss of generality, $\ell \in A$. Then $X \not \subseteq A$. Since $M\ba \ell\ba y$ is $2$-connected having the same rank as $M\ba y$, it follows that $r(A - \ell) = r(A)$ and $(A- \ell,B)$ is a 2-separation of $M\ba \ell\ba y$. We also know that $(X,Y-y)$ is a 2-separation of $M\ba \ell \ba y$. Now $\ell \not\in \cl(X)$ and $\ell \not\in \cl(Y-y)$. But $\ell \in \cl(A - \ell)$,  so $(A - \ell) \cap (Y -y) \neq \emptyset  \neq (A - \ell) \cap X$. By uncrossing, $\lambda_{M\ba \ell\ba y}(B \cap X) = 1$. As $\ell \in \cl(A - \ell)$ and $y \in \cl(Y-y)$, we deduce that $\lambda_M(B\cap X) = 1$. Thus $B \cap X$ consists of a single point $x$ of $M$. Then $B \cap (Y-y) \neq \emptyset$. Therefore, by uncrossing again, $\lambda_{M\ba \ell\ba y}(X \cap (A - \ell)) = 1$, so $\lambda_{M\ba \ell}(X \cap (A - \ell)) = 1$. Thus $(X-x,Y\cup x)$ is a 2-separation of $M\ba \ell$. If $r(Y \cup x) = r(Y)$, then $x$ is parallel to $p$ in $M_X$. Hence,  we see that $x$ lies on $y$. Then $M\ba x$ is $3$-connected having a special $N$-minor; a contradiction. Thus we may assume that $r(Y \cup x)  = r(Y) + 1$. Then $r(X- x) = r(X) - 1$. Hence, in $M_X$, the points $p$ and $x$ are a series pair. Thus $M_X$ is the 2-sum with basepoint $q$ of a $2$-polymatroid $M'_X$, say, and a copy of $U_{2,3}$ with ground set $\{q,p,x\}$. Moreover, every element of $Y$ is a line through $p$ in $M_Y$. Thus  we see that both $M\ba y$ and $M/y$ have   special $N$-minors; a contradiction. We conclude that \ref{caseD1.2} holds,  so (iii) of the lemma holds, and  the proof of the lemma is complete. 
\end{proof}

\begin{lemma}
\label{prep65rev}  
Suppose $M\baba \ell$ has $N$ as a c-minor. Let $(X,Y)$ be a $2$-separation of $M\ba \ell$ in which $X$ is the $N$-side and $|Y| \ge 3$. 
Let $M_X \oplus_2 M_Y$ be the associated $2$-sum decomposition of $M\ba \ell$ with respect to the basepoint $p$. 
Then 
\begin{itemize}
\item[(i)] $Y$ contains a doubly labelled element; or 
\item[(ii)] $\sqcap(Y-y,X) > 0$ for some $y$ in $Y$; or 
\item[(iii)] $r(X\cup \ell \cup y_0) > r(X \cup y_0)$ for some $y_0$ in $Y$, and $M/y_0$ has a special $N$-minor. Moreover,  either
	\begin{itemize} 
	\item[(a)]  every non-trivial $2$-separation of $M/y_0$ has the form $(Z_1,Z_2)$ where $Z_1$ is the $N$-side and $Z_2 \subseteq Y - y_0$; or
	\item[(b)]   $M_X$ is the $2$-sum with basepoint $q$ of two  $2$-polymatroids, one of which is a copy of $U_{2,3}$ with ground set $\{p,z,q\}$.
	\end{itemize}
	\end{itemize}
\end{lemma}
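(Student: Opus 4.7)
The plan is to assume conclusions (i) and (ii) of the statement both fail and to derive (iii). Under these assumptions, no $y\in Y$ is doubly labelled in $M$ and $\sqcap_{M\ba\ell}(Y-y,X)=0$ for every $y\in Y$. Since $(M\ba\ell)\ba y$ and $(M\ba\ell)/y$ are c-minors of $M\ba y$ and $M/y$ respectively (using Lemma~\ref{pricklytime0}(ii)), no $y\in Y$ is doubly labelled in $M\ba\ell$ either. Applying Lemma~\ref{dichotomy} to $M\ba\ell$ with the $2$-separation $(X,Y)$ then places us in case (ii) of that lemma.

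Next, I would locate $y_0\in Y$ satisfying $\sqcap_{M\ba\ell}(\{y_0\},X)=0$. Suppose for contradiction that $\sqcap(\{y_1\},X)=\sqcap(\{y_2\},X)=1$ for two distinct elements $y_1,y_2\in Y$. Then in $M_Y$ we have $p\in \cl_{M_Y}(\{y_1\})\subseteq \cl_{M_Y}(Y-y_2)$, which contradicts $\sqcap(Y-y_2,X)=0$. Hence at most one element of $Y$ has $\sqcap(\{y\},X)=1$, so since $|Y|\ge 3$ some $y_0$ satisfies $\sqcap(\{y_0\},X)=0$. Lemma~\ref{obs1} now gives $\sqcap_{(M\ba\ell)/y_0}(X,Y-y_0)=1$, and then Lemma~\ref{claim1}(ii) combined with $(M\ba\ell)/y_0=(M/y_0)\ba\ell$ from Lemma~\ref{pricklytime0}(ii) delivers a special $N$-minor of $M/y_0$.

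To verify $r(X\cup\ell\cup y_0)>r(X\cup y_0)$, I would suppose that $\ell\in\cl_M(X\cup y_0)$ and seek a contradiction. Combined with $\sqcap_{M\ba\ell}(\{y_0\},X)=0$ and the rank formula for the $2$-sum, this assumption forces either a $2$-separation of $M$ (contradicting $3$-connectedness), or that $\{y_0,\ell\}$ is a prickly $3$-separator of $M$, in which case Lemma~\ref{cactus2} produces a $3$-connected compression with a c-minor isomorphic to $N$, contradicting our standing assumption that $(M,N)$ is a counterexample; marginal exceptional configurations would be absorbed by Lemma~\ref{Step0} or a fan argument via Lemma~\ref{fantan}.

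Finally, to establish (iii)(a) or (iii)(b), I would take any non-trivial $2$-separation $(A,B)$ of $M/y_0$ with $A$ the $N$-side and aim to show $B\subseteq Y-y_0$. Because $\sqcap(\{y_0\},X)=0$, the partition $(X,Y-y_0)$ remains a $2$-separation in $(M/y_0)\ba\ell$, so I would uncross $(A,B)$ with $(X,Y-y_0)$ inside $(M/y_0)\ba\ell$, tracking the position of $\ell$ carefully. Any violation of $B\subseteq Y-y_0$ would produce a non-trivial $2$-separation of $M\ba\ell$ whose non-$N$-side is a short cross-piece meeting $X$; such a cross-piece must either supply a doubly labelled element of $Y$ (contradicting (i)) or force $M_X$ to split as a further $2$-sum with a $U_{2,3}$-factor on $\{p,z,q\}$, which is precisely conclusion (iii)(b). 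The main obstacle is this last uncrossing, where one must distinguish the case in which the cross-piece is a single point parallel to $p$ in $M_X$ (yielding (iii)(b)) from the case in which it is a more complex configuration (yielding a contradiction); this mirrors the delicate case analysis carried out in the proof of \ref{caseD1.2} within Lemma~\ref{p63rev}.
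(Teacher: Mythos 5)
Your proposal has a genuine gap in the third paragraph, and it is the crux of the lemma. You cannot establish $r(X\cup\ell\cup y_0)>r(X\cup y_0)$ by a local contradiction argument starting from an arbitrary $y_0$ with $\sqcap(\{y_0\},X)=0$. First note that the skew condition holds for \emph{every} $y_0\in Y$: for any $y'\ne y_0$ one has $\sqcap(\{y_0\},X)\le\sqcap(Y-y',X)=0$ by Lemma~\ref{8.2.3}. So your choice of $y_0$ carries no extra information, and you would in effect be claiming $r(X\cup\ell\cup y)>r(X\cup y)$ for every $y\in Y$, which need not hold. Part (iii) is an existence statement, and the existence of the required $y_0$ is obtained by a global argument: one shows that if $r(X\cup\ell\cup y)=r(X\cup y)$ for \emph{all} $y\in Y=\{y_1,\dots,y_n\}$, then every $y_i$ is a line and an induction (using submodularity, the skewness $\sqcap(Y-y,X)=0$, and $3$-connectivity of $M$) gives $r(X\cup\ell\cup\{y_1,\dots,y_k\})=r(X)+1+k$ and $r(Y-\{y_1,\dots,y_k\})=r(Y)-k$ for $1\le k\le n-1$; from this $\{y_{n-1},y_n\}$ is a prickly $3$-separator of $M$ with $M\downarrow y_n$ $3$-connected and having a special $N$-minor, contradicting the standing assumption that $(M,N)$ is a counterexample. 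Your alternative -- that $\ell\in\cl_M(X\cup y_0)$ together with $\sqcap(\{y_0\},X)=0$ forces a $2$-separation or the prickly $3$-separator $\{y_0,\ell\}$ -- is not supported: the relevant computation gives $\lambda_M(\{y_0,\ell\})=r(\{y_0,\ell\})+r(Y-y_0)-r(Y)+1$, which is typically $3$ or $4$ and need not equal $2$, and the prickly pair that actually arises is a pair of elements of $Y$, not $\{y_0,\ell\}$. Moreover, the rank condition $r(X\cup\ell\cup y_0)>r(X\cup y_0)$ is precisely what rules out the case $A\subseteq X$ in the final uncrossing argument; without it, that subcase is open and the conclusion $B\subseteq Y-y_0$ does not follow.

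Two smaller points: the citation to Lemma~\ref{pricklytime0}(ii) for $(M\ba\ell)/y=(M/y)\ba\ell$ and $(M\ba\ell)\ba y=(M\ba y)\ba\ell$ is a misattribution, since that lemma concerns compression; the commutation you want is Lemma~\ref{elemprop}(iii). And invoking Lemma~\ref{dichotomy} in your first paragraph is circular: its conclusion (ii) is literally the assumption you have already made by negating (ii) of the present statement, so the application yields nothing new.
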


\begin{proof} Assume that neither (i) nor (ii) holds. Suppose $y \in Y$. As $\sqcap(Y,X) = 1$, it follows that $r(Y) > r(Y-y)$ so
\begin{sublemma}
\label{*1}
$r(Y-y) \le r(Y) - 1.$
\end{sublemma}

Next we show that 
\begin{sublemma}
\label{MYy}
$\lambda_{M_Y}(\{y\}) = \lambda_{M\ba (X \cup \ell)}(\{y\}) + 1.$
\end{sublemma}

We see that $\lambda_{M_Y}(\{y\}) = r_M(\{y\}) + r_{M_Y}((Y-y) \cup p) - r(M_Y).$  Since $\sqcap(Y-y, X) = 0$, we deduce that 
$r_{M_Y}((Y-y) \cup p) = r_M(Y-y) + 1.$ As $M_Y$ is $2$-connected, $r(Y) = r(M_Y)$ and \ref{MYy} follows.

We now extend \ref{*1} as follows. 

\begin{sublemma}
\label{*2}
Let $\{y_1,y_2,\dots,y_k\}$ be a subset of $Y$. Then 
$$r(Y - \{y_1,y_2,\dots,y_k\}) \le r(Y) - k.$$
\end{sublemma}

By \ref{*1}, $r(Y-y_1) \le r(Y) - 1$ and $r(Y-y_2) \le r(Y) - 1$. Thus, by submodularity, $r(Y-\{y_1,y_2\}) \le r(Y) - 2$. Repeating this argument gives \ref{*2}.

Next we show the following.

\begin{sublemma}
\label{mcony}
For all $y$ in $Y$, the $2$-polymatroid $M\ba \ell/y$ has a special $N$-minor and $\lambda_{M\ba \ell/y}(X) = 1$.
\end{sublemma}

Let $M' = M\ba \ell$. By Corollary~\ref{general2}, 

\begin{align}
\label{Yyy}
\lambda_{M'/y}(X) & = \lambda_{M'\ba y}(X) - \sqcap_{M'}(X,y) -  \sqcap_{M'}(Y-y,y) + r(\{y\}) \nonumber \\
& =  \lambda_{M'\ba y}(X)  - r_{M'}(Y-y) + r_{M'}(Y) \text{~as $\sqcap(X,Y-y') = 0$ for all $y'$ in $Y$;} \nonumber\\
& =  r_{M'}(Y) -   r_{M'}(Y-y).
\end{align} 
But 
\begin{align*}
1 & = \lambda_{M'}(X)\\
 & = r(X) + r(Y) - r(M')\\
 & \ge r(X \cup y) - r(\{y\}) + r(Y) - r(\{y\}) - r(M') + r(\{y\})\\
 & = \lambda_{M'/y}(X).
\end{align*} 
We conclude, using (\ref{Yyy}) that, since $r(Y) \neq r(Y-y)$, we have $\lambda_{M'/y}(X) = 1$ for all $y$ in $Y$. Then, by Lemma~\ref{claim1}(ii), $M'/y$ has a special $N$-minor. Hence $M\ba \ell /y$ has a special $N$-minor, that is, \ref{mcony} holds.

\begin{sublemma}
\label{neworg}
If $y \in Y$ and $\ell$ is in a parallel pair of  points in $M/y$, then $r(X \cup \ell \cup y) = r(X \cup y)$.
\end{sublemma}

To see this, observe that, as $M$ is $3$-connected, $\ell \not\in \cl_M(Y)$. Thus $\ell$ is parallel to a point of $X$ in $M/y$, and \ref{neworg} follows.

\begin{sublemma}
\label{subsume}
Let $Y = \{y_1,y_2,\dots,y_n\}$. If $r(X \cup \ell \cup y_i) = r(X \cup y_i)$ for all $i$ in $\{1,2,\dots,n\}$, then $\{y_{n-1},y_n\}$ is a prickly $3$-separator of $M$, and $M\da y_n$ is $3$-connected having a special $N$-minor. 
\end{sublemma}

First observe that each $y_i$ in $Y$ is a line for if $y_i$ is a point, then 
$$r(X \cup \ell \cup y_i) = r(X \cup y_i) = r(X) + r(\{y_i\}) = r(X) + 1.$$
As $r(Y-y_i) \le r(Y) - 1$, we deduce that $(X \cup \ell \cup y_i,Y-y_i)$ is a 2-separation of $M$; a contradiction.

Continuing with the proof of \ref{subsume}, next we show the following.

\begin{sublemma}
\label{subsume2} 
For $1 \le k \le n-1$, 
\begin{align*}
r(X \cup \ell \cup \{y_1,y_2,\dots,y_k\}) & = r(X) + 1 + k \text{~~and}\\
r(Y -  \{y_1,y_2,\dots,y_k\}) & = r(Y) - k.
\end{align*}
\end{sublemma}

We argue by induction on $k$. By assumption, $r(X \cup \ell \cup y_1) = r(X) + r(\{y_1\}) = r(X) + 2$. Moreover, $r(Y-y_1) \le r(Y) - 1$. Equality must hold otherwise we get the contradiction that $(X \cup \ell \cup y_1, Y- y_1)$ is a 2-separation of $M$. We deduce that the result holds for $k = 1$. Assume it holds for $k < m$ and let $k = m\ge 2$. Then

\begin{multline*}
r(X \cup \{y_1,y_2,\dots,y_{m-1}\} \cup \ell) + r(X \cup \{y_2,y_3,\dots,y_{m}\} \cup \ell)\\
\shoveleft{\hspace*{1in}\ge r(X \cup \{y_2,y_3,\dots,y_{m-1}\} \cup \ell) + r(X \cup \{y_1,y_2,\dots,y_{m}\} \cup \ell).}
\end{multline*}
If $m = 2$, then $r(X \cup \{y_2,y_3,\dots,y_{m-1}\} \cup \ell) = r(X \cup \ell) \ge r(X) + 1$. If $m>2$, then 
$r(X \cup \{y_2,y_3,\dots,y_{m-1}\} \cup \ell) = r(X) + m - 1$ by the induction assumption. Thus
\begin{align}
\label{eq1}
r(X  \cup \{y_1,y_2,\dots,y_m\}\cup \ell) & \le r(X) + m + r(X) + m - (r(X) + m-1) \nonumber \\
 & = r(X) + m + 1.
\end{align}
But 
\begin{equation}
\label{eq2}
r(Y - \{y_1,y_2,\dots,y_m\}) \le r(Y) - m.
\end{equation}
It follows that equality must hold in (\ref{eq1}) and (\ref{eq2}). Thus, by induction, \ref{subsume2} holds.

By \ref{subsume2}, $r(Y - \{y_1,y_2,\dots,y_{n-1}\}) = r(Y) - (n-1).$ But $r(Y - \{y_1,y_2,\dots,y_{n-1}\}) = r(\{y_n\}) = 2$. Thus $r(Y) = n+1$, 
 and it follows by \ref{subsume2} that $r(\{y_{n-1},y_n\}) = 3$ and $\{y_{n-1},y_n\}$ is a prickly 3-separating set in $M$. Hence, by Lemma~\ref{portia}, 
$M\da y_n$ is $3$-connected. Recall that 
\begin{equation*} 
r_{M\downarrow y_n}(Z) = 
\begin{cases} 
r(Z), & \text{if   $r(Z \cup y_n) > r(Z)$;   and}\\
r(Z) - 1, & \text{otherwise.}
\end{cases}
\end{equation*} 
Thus 
\begin{align*}
\sqcap_{M\da y_n}(X,Y-y_n) & = r_{M\downarrow y_n}(X) + r_{M\downarrow y_n}(Y -y_n) - r_{M\downarrow y_n}(X \cup (Y-y_n))\\
 & = r(X) + r(Y-y_n) - r(M) + 1\\
 & = r(X) + r(Y) - r(M) \text{~~by \ref{subsume2};}\\
 & = 1.
\end{align*} 
It follows by Lemma~\ref{dennisplus}(vi) that 
$(M\ba \ell)\da y_n =  M_X \oplus_2 M_Y \da y_n$. Then, by Lemma~\ref{claim1}(iii), $(M\ba \ell)\da y_n$ has a special $N$-minor. We deduce that $M\da y_n$ is $3$-connected having a special $N$-minor. Thus \ref{subsume} holds. 

Since we have assumed that the theorem fails, it follows by \ref{subsume} that, for some element $y_0$ of $Y$,
$$r(X \cup \ell \cup y_0) > r(X \cup y_0).$$ 
By \ref{mcony}, $M/y_0$ has a special $N$-minor. 
Thus $M/y_0$ is not \thc. 
Moreover, by \ref{neworg}, the element $\ell$ is not in a pair of parallel points of $M/y_0$. 

Let $(A \cup \ell, B)$ be a \tws\ of $M/y_0$ with $\ell \not\in A$. 
Next we show that 
\begin{sublemma}
\label{2sepab}
$(A,B)$ is an exact $2$-separation of $M/y_0\ba \ell$, and $\ell \in \cl_{M/y_0}(A).$
\end{sublemma}

If $(A,B)$ is not exactly 2-separating in $M/y_0\ba \ell$, then, by Proposition~\ref{connconn}, $M_Y/y_0$ is not $2$-connected, so we obtain the \cn\ that $Y$ contains a  doubly labelled element. Thus $r_{M/y_0}(A \cup \ell) = r_{M/y_0}(A)$ and 
\ref{2sepab} holds. 

We shall show that 

\begin{sublemma}
\label{crosspath}
either (iii)(b) holds, or $(A,B)$ does not cross $(X,Y-y_0)$.
\end{sublemma}

Assume each of $A$ and $B$ meets each of $X$ and $Y - y_0$. Then, by uncrossing, $\lambda_{M\ba \ell/y_0}(X \cap B) = 1$. But $\sqcap(X,\{y_0\}) = 0$, so $r_M(X \cap B) = r_{M/y_0}(X \cap B).$ Also $r_M((Y-y_0) \cup A \cup \ell \cup y_0) = r_{M/y_0}((Y-y_0) \cup A \cup \ell) + r(\{y_0\}).$ Then 
\begin{multline*}
r(X\cap B)) + r((Y-y_0) \cup A \cup \ell \cup y_0) - r(M)\\
\shoveleft{= r_{M/y_0}(X \cap B) + r_{M/y_0}((Y-y_0) \cup A \cup \ell) + r(\{y_0\}) - r(M/y_0) - r(\{y_0\})}\\
\shoveleft{= \lambda_{M/y_0}(X \cap B)}\\
\shoveleft{= \lambda_{M/y_0\ba \ell}(X \cap B) \text{~~ as $\ell \in \cl_{M/y_0}(A)$;}}\\
\shoveleft{= 1.}\\
\end{multline*}
Since $M$ is \thc, it follows that $X \cap B$ consists of a point $z$ of $M$.  

Now $\lambda_{M\ba \ell /y_0}((Y- y_0) \cup z) = 1$, so 
\begin{align*}
1 & = r_{M/y_0}((Y-y_0) \cup z) + r_{M/y_0}(A \cap X) - r(M/y_0)\\
 & = r(Y \cup z) - r(\{y_0\}) + r((A \cap X)\cup y_0)  - r(\{y_0\}) - r(M) + r(\{y_0\})\\
  & = r(Y \cup z)) + r(A \cap X) - r(M\ba \ell) \text{~~since $\sqcap(X,\{y_0\}) = 0$.}\\
\end{align*} 
Thus $Y \cup z$ is $2$-separating in $M\ba \ell$. If $r(Y \cup z) = r(Y)$, then $z$ is parallel to the basepoint $p$ of the 2-sum. Hence each element of $Y$ is doubly labelled; a \cn. Thus we may assume that $r(Y \cup z) = r(Y) + 1$. Then $r(X- z) = r(X) - 1$. Now $M_X$ is $2$-connected, so $r(M_X) = r(X)$ and $M_X$ has $\{p,z\}$ as a series pair of points. It follows that $M_X$ is the 2-sum with basepoint $q$ of a $2$-polymatroid $M'_X$ and a copy of $U_{2,3}$ with ground set $\{q,z,p\}$. Thus (iii)(b) of the lemma holds. Hence so does \ref{crosspath}.

We shall now assume that (iii)(b) does not hold. 

\begin{sublemma}
\label{notsubset}
$A\not \subseteq Y - y_0$ and $B\not \subseteq X$ and $A\not \subseteq X$.
\end{sublemma}

To see this, first suppose that $A \subseteq Y - y_0$. Then, as $\ell \in \cl_{M/y_0}(A)$, we deduce that $\ell \in \cl_M(Y)$; a \cn. Thus $A\not \subseteq Y - y_0$.

Now suppose that $B \subseteq X$. We have 
\begin{align*}
1 & = \lambda_{M/y_0}(B)\\
& = r_{M/y_0}(B)   + r_{M/y_0}(A \cup \ell) - r(M/y_0)\\
 & = r(B \cup y_0) - r(\{y_0\}) + r(A \cup \ell \cup y_0)  - r(\{y_0\}) - r(M) +  r(\{y_0\})\\
 & = r(B)   + r(A \cup \ell \cup y_0)    - r(M)  \text{~~as $B \subseteq X$.}
\end{align*} 
Thus $(A \cup \ell \cup y_0,B)$ is a \tws\ of $M$; a \cn. Thus $B\not\subseteq X$. 

Next suppose that $A \subseteq X$. As $(A \cup \ell,B)$ is a \tws\ of $M/y_0$, we have  
\begin{align*}
1 & =  r_{M/y_0}(A \cup \ell)   + r_{M/y_0}(B) - r(M/y_0)\\
& = r(A \cup \ell \cup y_0)  - r(\{y_0\}) + r(B \cup y_0) - r(\{y_0\}) - r(M) +  r(\{y_0\})\\
& \ge r(A \cup y_0) - r(\{y_0\}) + r(B  \cup y_0)   - r(M)\\
& \ge r(A)  + r(B  \cup y_0)   - r(M\ba \ell) \text{~~as $A \subseteq X$;}\\
& \ge 1 \text{~~as $M\ba \ell$ is $2$-connected.}
\end{align*} 
We deduce that equality holds throughout, so $r(A \cup \ell \cup y_0) = r(A \cup y_0)$. But $A \subseteq X$, so $r(X \cup \ell \cup y_0) = r(X \cup y_0)$, which contradicts the choice of $y_0$. Hence $A\not\subseteq X$, so \ref{notsubset} holds.

By \ref{crosspath}, we deduce that $B \subseteq Y - y_0$. Since, by \ref{mcony}, $M/y_0$ has a special $N$-minor, we see that (iii)(a) of the lemma holds, so the lemma is proved.
\end{proof}

We now combine the above lemmas to prove one of the two main results of this section. 

\begin{lemma}
\label{bubbly}
Suppose $M\ba \ell$ has $N$ as a c-minor. Let $(X,Y)$ be a $2$-separation of $M\ba \ell$ having $X$ as the $N$-side and $|Y| = \mu(\ell)$. Then $Y$ contains a doubly labelled element.
\end{lemma}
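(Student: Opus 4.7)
\textbf{Proof plan for Lemma~\ref{bubbly}.}

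My strategy is to argue by induction on $|Y|=\mu(\ell)$. By Lemma~\ref{muend} we have $|Y|\ge 3$, giving the base case $|Y|=3$. Suppose for contradiction that no element of $Y$ is doubly labelled; then the hypothesis of Lemma~\ref{dichotomy} is satisfied for $M'=M\ba\ell$ with its $2$-separation $(X,Y)$, so one of the following holds:
\begin{itemize}
\item[(i)] $\sqcap_{M\ba\ell}(\{y\},X)=1$ for all $y\in Y$; or
\item[(ii)] $\sqcap_{M\ba\ell}(Y-y,X)=0$ for all $y\in Y$.
\end{itemize}
I plan to handle each case by invoking, respectively, Lemma~\ref{p63rev} and Lemma~\ref{prep65rev}, and in each case show that the only surviving option forces a $2$-separation with a strictly smaller non-$N$-side, allowing an appeal to induction.

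In case (i), options (i) and (ii) of Lemma~\ref{p63rev} are ruled out (the first by our standing assumption, the second by case~(i) of the dichotomy), so option (iii) holds: some $y\in Y$ has $M\baba y$ with $N$ as a c-minor and all non-trivial $2$-separations of $M\ba y$ having non-$N$-side contained in $Y-y$. Since $(M,N)$ is a counterexample, $M\baba y$ is not $3$-connected, and $\mu(y)\le |Y-y|=|Y|-1$. When $|Y|=3$ this contradicts Lemma~\ref{muend}; otherwise we apply the induction hypothesis to $(M,N,y)$ to find a doubly labelled element in the maximum non-$N$-side, which lies in $Y-y\subseteq Y$, contradicting our assumption.

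Case (ii) is handled analogously, but with an extra step: one must first rule out option~(iii)(b) of Lemma~\ref{prep65rev}. Supposing (iii)(b) holds, so $M_X = M'_X\oplus_2 U_{2,3}$ with $U_{2,3}$ on $\{p,z,q\}$, direct computation in the $2$-sum gives $r(X-z)=r(X)-1$ and $r(Y\cup z)=r(Y)+1$, hence
\[
\lambda_{M\ba\ell}(Y\cup\{z\}) \;=\; r(Y)+1+r(X)-1-r(M\ba\ell) \;=\; \lambda_{M\ba\ell}(Y) \;=\; 1.
\]
Thus $Y\cup\{z\}$ is $2$-separating of size $|Y|+1$; since $|Y\cap E(N)|\le 1$, we have $|(Y\cup\{z\})\cap E(N)|\le 2<|E(N)|-1$, so $Y\cup\{z\}$ is a non-$N$-side larger than $\mu(\ell)$, a contradiction. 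So (iii)(a) holds: there is $y_0\in Y$ with $M/y_0$ having $N$ as a c-minor (using Lemma~\ref{predichotomy} and non-doubly-labelledness of $y_0$) and all its non-trivial $2$-separations having non-$N$-side contained in $Y-y_0$. Therefore $\mu^*(y_0)\le|Y|-1$; by Lemma~\ref{dualmu} this already contradicts the base case, while in the inductive step we transport the problem to $(M^*,N^*,y_0)$ using Lemma~\ref{csm} and Lemma~\ref{compact0}(iii) (under which $M/y_0$ with $N$ as c-minor corresponds to $M^*\ba y_0$ with $N^*$ as c-minor, with the same connectivity function), apply the induction hypothesis there, and translate the resulting doubly labelled element back to $M$ via the fact (again from Lemma~\ref{csm} together with Lemma~\ref{compact0}(iv)) that doubly labelled elements of $M$ coincide with doubly labelled elements of $M^*$.

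The main obstacle I expect is the bookkeeping in case~(ii) for the dual transfer: the lemma as stated is about $M\ba\ell$, but Lemma~\ref{prep65rev}(iii)(a) delivers information about $M/y_0$, and one must verify cleanly that a doubly labelled element for $(M^*,N^*)$ identified by the induction hypothesis really is a doubly labelled element for $(M,N)$, and that it lies in $Y$. The cleanest way to organize this is to formulate the induction as a joint statement about both $M\ba\ell$ and $M/\ell$ (with the two halves interchanged by duality), carry out the induction on the common parameter $|Y|$, and use Lemma~\ref{elemprop}, Lemma~\ref{compact0}, and Lemma~\ref{csm} to shuttle c-minors and special $N$-minors between $M$, $M\ba y'$, $M/y'$, and their duals without loss.
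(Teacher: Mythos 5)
Your proposal is correct and is essentially the paper's argument: both combine Lemma~\ref{dichotomy} with Lemmas~\ref{p63rev} and~\ref{prep65rev} and then descend on the size of the non-$N$-side, bounded below by Lemmas~\ref{muend} and~\ref{dualmu}, passing to the dual when the contraction outcome arises. The only difference is organisational---the paper applies the two preparatory lemmas first and then invokes the dichotomy to rule out the simultaneous occurrence of their ``easy'' outcomes, rather than branching on the dichotomy at the outset---and the dual-transfer bookkeeping you flag as a potential obstacle is resolved exactly as you anticipate, via Lemmas~\ref{csm} and~\ref{compact0} together with the self-duality of the counterexample hypotheses (the paper phrases this as iterating ``using $M^*$ in place of $M$'' rather than as a formal induction, but the content is identical).
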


\begin{proof}
By Lemma~\ref{muend}, $|Y| \ge 3$. Assume that $Y$ does not contain a doubly labelled element. Then, by Lemma~\ref{p63rev}, 

\begin{itemize}
\item[(i)(a)] $\sqcap(\{y\},X) \neq 1$ for some $y$ in $Y$; or 
\item[(i)(b)] $Y$ contains an element  $y$ such that $M\baba y$ has $N$ as a c-minor and every non-trivial $2$-separation of $M\ba y$ has the form $(Z_1,Z_2)$ where $Z_1$ is the $N$-side and $Z_2 \subseteq Y - y$.
\end{itemize}

Now,  since $|Y| = \mu(\ell)$, outcome (iii)(b) of Lemma~\ref{prep65rev} does not arise. Thus, by that lemma and Lemma~\ref{predichotomy},

\begin{itemize}
\item[(ii)(a)] $\sqcap(Y-y,X) > 0$ for some $y$ in $Y$; or 
\item[(ii)(b)] $Y$ contains an element $y$ such that $M/ y$ has $N$ as a c-minor and every non-trivial $2$-separation of $M/y$ has the form $(Z_1,Z_2)$ where $Z_1$ is the $N$-side and $Z_2 \subseteq Y - y$.
\end{itemize}

By Lemma~\ref{dichotomy}, (i)(a) and (ii)(a) cannot both hold. Thus (i)(b) or (ii)(b) holds. Therefore, for some $y$ in $Y$, either $M\baba y$ has $N$ as a c-minor and has a \tws\ $(Z_1,Z_2)$ where $Z_1$ is the $N$-side, $Z_2 \subseteq Y - y$, and $|Z_2| = \mu(y) < \mu(\ell)$, or  $M/ y$ has $N$ as a c-minor and has a \tws\ $(Z_1,Z_2)$ where $Z_1$ is the $N$-side, $Z_2 \subseteq Y - y$, and $|Z_2| = \mu^*(y) < \mu(\ell)$. We can now repeat the argument above using $(y,Z_2)$ in place of $(\ell,Y)$ and, in the latter case,  $M^*$ in place of $M$. Since we have eliminated the possibility that $\mu(\ell) = 2$ or $\mu^*(\ell) = 2$, after finitely many repetitions of this argument, we obtain a \cn\ that completes the proof. 
\end{proof}
 
\begin{corollary}
\label{doubly}
The $2$-polymatroid $M$ contains a doubly labelled element.
\end{corollary}

\begin{proof} Take $\ell$ in $E(M) - E(N)$. Then $M\ba \ell$ or $M/ \ell$ has $N$ as a c-minor, so  applying the last lemma to $M$ or its dual gives the result.
\end{proof}

\section{Non-$N$-$3$-separators exist}
\label{keylargo}

The purpose of this section is prove the existence of a non-$N$-3-separating set in $M$ where we recall that such a set $Y$ is exactly $3$-separating, meets $E(N)$ in at most one element, and, when it has exactly two elements, both of these elements are lines.   The following lemma will be key in what follows.

\begin{lemma}
\label{key} 
Let $(X,Y)$ be a $2$-separation of $M\ba \ell$ where $X$ is the $N$-side, $|Y| \ge 2$, and $Y$ is not a series pair of points in $M\ba \ell$. Then $Y$ contains no points. 
\end{lemma}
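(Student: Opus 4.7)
I would argue by contradiction: suppose $z$ is a point of $Y$, and aim to contradict Lemma~\ref{Step0} (no doubly labelled points) unless the hypothesis forces $Y$ to be a series pair of points. Write $M\ba\ell = M_X \oplus_2 M_Y$ for the $2$-sum decomposition at basepoint $p$. By Lemma~\ref{Step1}, $M\ba\ell$ is $2$-connected, so $z$ is not a coloop and $r_{M\ba\ell}(E-\ell-z) = r(M\ba\ell)$. The argument splits on $\sqcap_{M\ba\ell}(\{z\},X) \in \{0,1\}$.

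Assume first that $\sqcap(\{z\},X)=0$. Lemma~\ref{obs1} then gives $\sqcap_{M\ba\ell/z}(X,Y-z) = \lambda_{M\ba\ell}(X) = 1$, so by Lemma~\ref{claim1}(ii) the polymatroid $M/z$ has a c-minor isomorphic to $N$. A direct rank calculation yields $\sqcap_{M\ba\ell\ba z}(X,Y-z) = 1 - (r(Y) - r(Y-z))$. If $z \in \cl_{M\ba\ell}(Y-z)$, this equals $1$ and Lemma~\ref{claim1}(i) supplies $M\ba z$ with a matching c-minor, contradicting Lemma~\ref{Step0}. Otherwise $r(Y-z) = r(Y) - 1$ and, using $r(E-z-\ell) = r(M\ba\ell)$, one gets $\sqcap_{M\ba\ell}(X, Y-z) = 0$. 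Then by Lemma~\ref{matroidef}(ii), $\lambda_{M\ba\ell\ba w}(\{z\}) = 0$ for any $w \in Y - z$, so when $|Y| = 2$ with $w$ a point, $\{z, w\}$ is a series pair of points, contradicting the hypothesis. In the remaining subcases ($|Y| \ge 3$, or $|Y| = 2$ with $w$ a line), the $1$-separation $(X, Y-z)$ of $M\ba\ell\ba z$ means its $X$-component restricts to $M_X \ba p$; when $E(N) \subseteq X$ this component has $N$ as a c-minor (via Lemma~\ref{p69}), so $M\ba z$ does too and Lemma~\ref{Step0} is contradicted. When $E(N) \cap Y$ contains one element, uncrossing $(X,Y-z)$ with the original $(X,Y)$ yields a strictly smaller $2$-separation of $M\ba\ell$ still containing $z$ on its non-$N$-side, allowing an induction on $|Y|$ that terminates in the series-pair base case.

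Now assume $\sqcap(\{z\},X) = 1$. Then $z$ is parallel to $p$ in $M_Y$, and iterating Lemma~\ref{dennisplus}(iii) shows that $M\ba\ell\ba (Y-z)$ is $M_X$ with $p$ relabeled as $z$; by Lemma~\ref{p69} this has $N$ as a c-minor, so $M\baba z$ has a c-minor isomorphic to $N$. To produce the doubly labelled contradiction I must also exhibit such a c-minor for $M/z$. Lemma~\ref{dennisplus}(v) decomposes $M\ba\ell/z = (M_X/p) \oplus (M_Y\ba z/p)$, so deleting $Y - z$ gives $M_X/p$. Since $M$ is $3$-connected with $|E(M)| \ge 6$, $M$ has no parallel points; combined with $z \in \cl_M(X)$, this forces $z$ onto an actual line of $M$ contained in $X$. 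Using this structural fact one shows that the special $N$-minor of $M_X$ (possibly using $p$ as a relabelling of an $E(N)$-element $w \in Y$) transfers to a c-minor of $M_X/p$ isomorphic to $N$ by routing the $N$-minor through the line of $M|X$ on which $z$ lies; hence $M/z$ has a c-minor isomorphic to $N$, contradicting Lemma~\ref{Step0}. The main obstacle is precisely this last transfer: verifying that $p$ can be contracted from $M_X$ without destroying the $N$-structure, which requires leveraging the on-a-line position of $z$ (a consequence of $3$-connectivity) to identify an honest element of $M|X$ playing the role of $p$ (or of the relabelled $N$-element) so that the relabelling carries the $N$-minor intact.
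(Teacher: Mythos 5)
Your overall strategy departs from the paper's in a way that creates the gap you yourself flag at the end. You try to show that $z$ \emph{itself} is doubly labelled, which forces you to produce a special $N$-minor in \emph{both} $M\ba z$ and $M/z$. The paper never attempts this. Instead, it first proves (\ref{key1}) that at least one of $M\ba y, M/y$ has a special $N$-minor, then proves (\ref{key2}) that $M\ba y$ does \emph{not}, and in both branches of the ensuing analysis the contradiction comes from finding a \emph{different} point that is doubly labelled: inside the proof of \ref{key2}, after crossing a $2$-separation $(A\cup\ell,B)$ of $M\ba y$ with $(X,Y-y)$, it is the single point $z$ in $B\cap(Y-y)$ that turns out to be doubly labelled; and after \ref{key2}, the same device applied to a $2$-separation of $M/y$ yields a point $y'$ in $B\cap(Y-y)$. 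Arranging to contradict Lemma~\ref{Step0} at a fresh point sidesteps exactly the transfer problem you run into.

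Concretely, the hole in your case $\sqcap(\{z\},X)=1$ is real. You need $M/z$ to have a special $N$-minor, and you try to get it from $M_X/p$ via a claimed structural fact that ``$z$ lies on a line of $M|X$.'' That is not a consequence of $z\in\cl_M(X)$ plus $3$-connectivity; $z$ can be spanned by a flat of $X$ without lying on any individual line, and even where it does lie on a line, there is no lemma in the paper that lets you reroute a special $N$-minor of $M_X$ (which may use $p$) through that line to obtain one in $M_X/p$. You acknowledge this is the obstacle, and it is not a detail to be filled in but a missing idea. The paper's proof of \ref{key2}, which is precisely this $\sqcap(\{y\},X)=1$ scenario (it is derived there, not assumed), avoids it entirely by the device above.

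There are two smaller issues in the $\sqcap(\{z\},X)=0$ branch. First, for $|Y|\ge 3$ you argue that the $X$-component of the $1$-separation $(X,Y-z)$ of $M\ba\ell\ba z$ is $M_X\ba p$ and hence has $N$ as a c-minor ``via Lemma~\ref{p69}.'' But Lemma~\ref{p69} gives a special $N$-minor of $M_X$, which may use $p$; there is no guarantee this survives in $M_X\ba p$, and the $N$-minor of $M\ba\ell$ reaches its $X$-side via either $M_X\ba p$ or $M_X/p$ depending on choices made in producing it. The proposed uncrossing induction is gestured at rather than carried out. Second, your appeal to Lemma~\ref{matroidef}(ii) runs in the wrong direction: that lemma characterizes when $\lambda_{M\ba f}(\{e\})=0$ in terms of $\{e,f\}$ being a series pair, but you have not established the rank hypothesis $\lambda_{M\ba\ell\ba w}(\{z\})=0$; the correct route (which you do reach correctly in the $|Y|=2$ subcase by a direct rank computation) is to compute $r(\{z,w\})=2$ and $r(X)=r(M\ba\ell)-1$ from $\sqcap(\{z\},X)=\sqcap(\{w\},X)=0$ and $\sqcap(X,Y)=1$, then read off the series-pair condition from the definition. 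Replacing the \ref{matroidef} citation with this computation fixes the small point, but the two larger gaps remain.
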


\begin{proof}
Assume that $Y$ contains a point $y$. Then, by Lemma~\ref{Step0}, $y$ is not doubly labelled.

\begin{sublemma}
\label{key1}
$M\ba y$ or $M/y$ has a special $N$-minor.
\end{sublemma}

To see this, consider the $2$-connected $2$-polymatroid $M_Y$. By Lemma~\ref{Tutte2}, $M_Y\ba y$ or $M_Y/y$ is $2$-connected, so $\sqcap_{M\ba y}(X,Y-y) = 1$ or $\sqcap_{M/ y}(X,Y-y) = 1$. As $M_X$ has a special $N$-minor, so does $M\ba y$ or $M/y$.

\begin{sublemma}
\label{key2}
$M\ba y$ does not have  a special $N$-minor.
\end{sublemma}

Assume $M\ba y$ does have a special $N$-minor. Then, as $y$ is not doubly labelled, $M/y$ does not have a special $N$-minor. Then, by Lemma~\ref{claim1}(ii), $\sqcap_{M/y}(X,Y-y)  = 0$, that is, $r_{M/y}(X) + r_{M/y}(Y-y) - r(M/y) = 0$, so 
$r(X \cup y) + r(Y) = r(M) + r(\{y\}) = r(M) + 1$. But $r(X) + r(Y) = r(M) +1$, so $r(X \cup y) = r(X)$ and $r(Y - y) = r(Y)$ otherwise $(X\cup y, Y-y)$ is a 1-separation of $M\ba \ell$; a \cn\ to Lemma~\ref{Step1}. Since $y \in Y$ and $r(X \cup y) = r(X)$, we see that $\sqcap(X,\{y\}) = 1$. But $\sqcap(X,Y) = 1$. Thus, in $M_Y$, the point $y$ is parallel to the basepoint $p$ of the 2-sum. Hence $M\ba \ell \ba y$ is $2$-connected and $r(M\ba \ell \ba y) = r(M)$.

Let $(A \cup \ell, B)$ be a non-trivial \tws\ of $M\ba y$ where $\ell \not \in A$. Now 
\begin{align*}
1 & \le r(A) + r(B) - r(M\ba \ell,y)\\
& \le r(A \cup \ell) + r(B) - r(M\ba y)\\
& = 1.
\end{align*}
Thus $r(A) = r(A \cup \ell)$. Hence $\ell \in \cl(A)$ so $r(A) \ge 2$. Continuing with the proof of \ref{key2}, we now show the following.

\begin{sublemma}
\label{key3}
$(A,B)$ crosses $(X,Y-y)$.
\end{sublemma}

Because $y \in \cl(X) \cap \cl(Y-y)$ but $y \notin \cl(A) \cup \cl(B)$, we deduce that neither $A$ nor $B$ contains $X$ or $Y-y$, so \ref{key3} holds.

By uncrossing,
$\lambda_{M\ba \ell,y}(B \cap (Y-y)) = 1$. But $\ell \in \cl(A)$ and $y \in \cl(X)$ so $\lambda_M(B \cap (Y - y)) = 1$. Hence $B \cap (Y - y)$ consists of a single point, say $z$. As $z$ is not parallel to $y$, we deduce that $\sqcap(X,\{z\}) = 0$. Thus, by Lemma~\ref{obs1}, 
$\sqcap_{M/z}(X,Y-z) = \lambda_{M\ba \ell/z}(X) = 1$. Hence, by Lemma~\ref{claim1}(ii), $M\ba \ell/z$, and hence $M/z$, has a special $N$-minor. On the other hand, 
$$1 = \sqcap(X,\{y\}) \le \sqcap(X,Y-z) \le \sqcap(X,Y) = 1.$$
Thus $\sqcap_{M\ba z}(X,Y-z) = 1$ so $M\ba z$ has a special  $N$-minor. Since $z$ is a point, we have a \cn\ to Lemma~\ref{Step0} that proves \ref{key2}.

By combining \ref{key1} and \ref{key2}, we deduce that $M/y$ has a special $N$-minor but $M\ba y$ does not. Since $(M,N)$ is a counterexample, $M/y$ is not \thc. By Lemma~\ref{Step1}, $M/y$ is $2$-connected.

As $M\ba y$ does not have a special $N$-minor, by Lemma~\ref{claim1}(i), $\sqcap(X,Y-y) = 0$. But $\sqcap(X,Y) = 1$. As $y$ is a point, it follows that $$r(Y-y) = r(Y) - 1$$ 
and $r(X \cup (Y-y)) = r(X \cup Y)$.  
Moreover, as $(X \cup y,Y-y)$ is not a 1-separation of $M\ba \ell$, we deduce that 
\begin{sublemma}
\label{xy1}
$r(X \cup y) = r(X) + 1.$
\end{sublemma}

Now $r(M_Y \ba p,y) = r(Y - y) = r(Y) - 1$. But $r(M_Y \ba p) = r(Y)$. If $r(M_Y\ba y) = r(Y) - 1$, then $\{y\}$ is a 1-separating set  in $M_Y$. We deduce that $\{p,y\}$ is a series pair of points in $M_Y$. Thus  $M_Y\ba y$ is not 2-connected but $M_Y$ is, so, by Lemma~\ref{Tutte2}, $M_Y/y$ is 2-connected. Hence, by Proposition~\ref{connconn}, $M\ba \ell/y$ is 2-connected.

\begin{sublemma}
\label{notel}
$(\{\ell\}, X \cup (Y-y))$ is not a $2$-separation of $M/y$.  
\end{sublemma}

Assume the contrary. Then 
$r(\{\ell,y\}) + r(X \cup Y) = r(M) + 2.$
But $r_{M/y}(\{\ell\}) = 2$ otherwise we do not have a \tws. Thus $r(\{\ell,y\}) = 3$, so $(\{\ell\}, X \cup Y)$ is a \tws\ of $M$; a \cn. Therefore \ref{notel} holds.

Let $(A\cup \ell,B)$ be a \tws\ of $M/y$ with $\ell$ not in $A$. By \ref{notel}, $A \neq \emptyset.$ Since $M/y \ba \ell$ is 2-connected, $\lambda_{M/y\ba \ell}(A) > 0$. Hence $\lambda_{M/y\ba \ell}(A) = 1$, so $\ell \in \cl_{M/y}(A).$ Hence one easily checks that 
\begin{sublemma}
\label{crank1}
\begin{itemize}
\item[(i)] $r(A \cup y \cup \ell) = r(A \cup y)$; and 
\item[(ii)] $r(A \cup y) + r(B \cup y) = r(M\ba \ell) + 2$.
\end{itemize} 
\end{sublemma}

Next we show that 
\begin{sublemma}
\label{crossagain}
$(A,B)$ crosses $(X,Y-y)$.  
\end{sublemma}

Assume $B \cap (Y - y) = \emptyset$ or $B \cap X = \emptyset$. As $r(X \cup y) = r(X) + 1$ and $r(Y) = r(Y-y) + 1$,   we have $r(B \cup y) = r(B) + 1$. Then, as $r(A\cup y \cup \ell) = r(A \cup y)$, we have, by \ref{crank1},  
$$r(A \cup y \cup \ell) + r(B) = r(M) + 1,$$
that is, $(A \cup y \cup \ell, B)$ is a \tws\ of $M$; a \cn. We deduce that $B \cap (Y-y) \neq \emptyset \neq B \cap X$. 


Now assume that $A \cap (Y-y) = \emptyset.$ Then $A \subseteq X$ and $Y-y \subseteq B$, so $r(X \cup y \cup \ell) = r(X \cup y)$. As 
$r(X \cup y) = r(X) + 1$  and 
$r(Y-y) = r(Y) -1$, it follows that $(X \cup y \cup \ell, Y-y)$ is 2-separating in $M$. Hence $Y-y$ consists of a single point $z$. Now $r(X) + r(Y) = r(M\ba \ell) + 1$, so $r(X) = r(M\ba \ell) - 1$. As $M\ba \ell$ is connected, neither $y$ nor $z$ is in $\cl(X)$ so $\{y,z\}$ is a series pair of points in $M\ba \ell$; a \cn. Hence $A \cap (Y - y) \neq \emptyset$. 

Finally, assume that $X \cap A = \emptyset$. Then $A \subseteq Y -y$, so, as    $r(A \cup y \cup \ell) = r(A \cup y)$, it follows that $r(Y \cup \ell) = r(Y)$, so $(X,Y \cup \ell)$ is a \tws\ of $M$; a \cn. We conclude that \ref{crossagain} holds.

Next we determine the structure of the set $B$.
\begin{sublemma}
\label{whatsb}
In $M$, the set $B$ consists of two points, $x'$ and $y'$, that lie in $B\cap X$ and $B\cap (Y-y)$, respectively.  
\end{sublemma}

By uncrossing, $\lambda_{M\ba \ell/y}(X \cap B) = 1$, so 
$$r((X \cap B) \cup y) + r(A \cup Y) - r(M\ba \ell) = 2.$$ As $X \cap B \subseteq X$, we deduce that $r((X \cap B) \cup y) = r(X \cap B) + 1$. Also $y \in Y$, so $r(A \cup Y) = r(A \cup Y \cup \ell)$. Thus $(X\cap B, A \cup Y \cup \ell)$ is $2$-separating in $M$. Hence $X \cap B$ consists of a point, say $x'$.

By uncrossing again, we see that $\lambda_{M\ba \ell/y}((Y-y) \cap B) = 1$, so
$$r(((Y-y) \cap B) \cup y) + r(A \cup X \cup y) - r(M\ba \ell) = 2.$$
Thus
$$r((Y-y) \cap B) + r(A \cup X \cup y\cup \ell) = r(M) + 1$$ 
since $r(((Y-y) \cap B) \cup y)  = r((Y-y) \cap B) + 1$ and $r(A \cup X \cup y) = r(A \cup X \cup y \cup \ell).$ Hence 
$((Y-y) \cap B, A \cup X \cup y \cup \ell)$ is 2-separating in $M$, so $(Y-y) \cap B$ consists of a single matroid point, $y'$. We deduce that \ref{whatsb} holds.

\begin{sublemma}
\label{doubleup}
The element $y'$ is doubly labelled. 
\end{sublemma}

To see this, first observe that, in $M/y$, the set $B$ is a 2-separating set consisting of two matroid points, $x'$ and $y'$. Suppose $r_{M/y}(B)  = 2$. Then $r_{M/y}(A \cup \ell) = r(M/y) - 1$, so $r(A \cup \ell \cup y) = r(M) -1$. Hence $(A \cup \ell \cup y, \{x',y'\})$ is a \tws\ of $M$; a \cn. We deduce that $r_{M/y}(B) = 1$ so $\{x',y'\}$ is a pair of parallel points in $M/y$. Then   $M/y\ba y'$, and so $M\ba y'$,  has a special $N$-minor.

Now $r_{M/y}(\{x',y'\}) = 1$, so $r(\{x',y',y\}) = 2$. Thus $y \in \cl_{M/y'}(X)$, so $r(X \cup y' \cup y) = r(X \cup y')$. But, by \ref{xy1}, $r(X \cup y) > r(X)$, so $r(X \cup y') > r(X)$. 
 Hence 
$\sqcap(X,\{y'\}) = 0$. Thus, by Lemma~\ref{obs1}, $\sqcap_{M/y'}(X,Y- y') = \sqcap(X,Y) = 1$. We conclude by Lemma~\ref{claim1} that $M/y'$ has a special $N$-minor. Therefore \ref{doubleup} holds.

As \ref{doubleup} contradicts Lemma~\ref{Step0}, we deduce that  Lemma~\ref{key} holds. 
\end{proof}

\begin{lemma}
\label{nonN}
There is a  c-minor $N_0$ of $M$ that is isomorphic to $N$ such that $M$ has a non-$N_0$-$3$-separating set.
\end{lemma}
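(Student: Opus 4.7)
My plan is to prove Lemma~\ref{nonN} directly, using Corollary~\ref{doubly}, Lemma~\ref{Step0}, and an elementary computation of $\lambda_M(\{y\})$. First I would invoke Corollary~\ref{doubly} to obtain a doubly labelled element $y$ of $M$; by definition this means that both $M\ba y$ and $M/y$ carry special $N$-minors, each of which is a c-minor isomorphic to $N$ (either equal to $N$ or obtained from $N$ by relabelling a single point). Since compactification commutes with deletion by Lemma~\ref{complast1}, these c-minors are also c-minors of $M\baba y$ and $M/y$, so Lemma~\ref{Step0} rules out $y$ being a point. Because $M$ is $3$-connected with $|E(M)| \ge |E(N)|+2 \ge 6$, it has no loops either, so $y$ must be a line of $M$.

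Next I would fix $N_0$ to be any c-minor of $M\ba y$ that is isomorphic to $N$; such an $N_0$ exists by the doubly labelled hypothesis. Whether $N_0$ equals $N$ or is $N$ with one point relabelled, we have $E(N_0) \subseteq E(M\ba y) = E(M) - y$, so $y \notin E(N_0)$. This $N_0$ will be the c-minor witnessing the lemma.

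Finally, I would verify that $\{y\}$ is the desired non-$N_0$-$3$-separating set. Because $M$ is $3$-connected with $|E(M) - y| \ge 5$, the line $y$ cannot be a coloop: if $r(E(M)-y) < r(M)$, then $(\{y\}, E(M)-y)$ would be a non-trivial $1$- or $2$-separation of $M$, contradicting $3$-connectivity. Thus $r(E(M)-y) = r(M)$, and so
\[
\lambda_M(\{y\}) \;=\; r(\{y\}) + r(E(M)-y) - r(M) \;=\; 2,
\]
making $\{y\}$ exactly $3$-separating. Combined with $|E(N_0) \cap \{y\}| = 0 \le 1$ and the vacuous two-element condition, this exhibits $\{y\}$ as a non-$N_0$-$3$-separating set. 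The only mildly subtle point is the passage from ``special $N$-minor of $M\ba y$'' to ``c-minor isomorphic to $N$ whose ground set avoids $y$'', which is immediate from the definitions but worth spelling out.
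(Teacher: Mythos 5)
Your argument has a real gap, and it comes from misreading what the paper means by a non-$N$-$3$-separator. You are right that a doubly labelled element $y$ exists (Corollary~\ref{doubly}), that it must be a line by Lemma~\ref{Step0}, and that $\lambda_M(\{y\}) = 2$ because $M$ is compact and $3$-connected so $r(E-y) = r(M)$. But that calculation already shows why a singleton line cannot be the intended object: in any compact $3$-connected $2$-polymatroid, \emph{every} line is exactly $3$-separating. If a one-element set $\{y\}$ with $y$ a line outside $E(N_0)$ counted as a non-$N_0$-$3$-separator, Lemma~\ref{nonN} would be a near-triviality, and—more fatally—the entire subsequent architecture would collapse: $\{y\}$ would be a minimal non-$N$-$3$-separator of size one, so the proof of Lemma~\ref{Step5+}, which opens with ``Assume every minimal non-$N$-$3$-separating set has exactly two elements,'' would have left a case entirely unhandled, and (\ref{Step5}), (\ref{Step5.5}), (\ref{Step6}) would all need to cope with a size-one case that they never mention. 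The paper's own terminology also signals this: (\ref{Step4}) speaks of ``a $3$-separator $(X,Y)$,'' and a $3$-separation in the paper's sense requires $r(Y) > 2$, which a single line cannot satisfy. The qualifier ``when $|Y| = 2$, both elements of $Y$ are lines'' in the definition is there precisely because $|Y| \geq 2$ is taken for granted; it is specifying the exceptional extra condition on the smallest admissible case, not leaving $|Y| = 1$ open.

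So the content of Lemma~\ref{nonN} is genuinely nontrivial: one must produce an exactly $3$-separating set of size at least two (and, if exactly two, consisting of two lines) meeting $E(N_0)$ in at most one element. Your approach does not produce that. The paper's proof, after obtaining the doubly labelled line $\ell$ just as you do, assumes the lemma fails, takes $2$-separations $(D_1,D_2)$ of $M\setminus \ell$ and $(C_1,C_2)$ of $M/\ell$ with non-$N$-sides $D_2, C_2$ of size $\mu(\ell), \mu^*(\ell) \geq 3$, shows the local connectivities of $\ell$ to all four pieces are forced to be $0,0,2,2$ by the failure assumption, deduces the two partitions cross, and by uncrossing finds some $Z \in \{D_2 \cap C_1, D_2 \cap C_2\}$ that is exactly $3$-separating in $M$, has at least two elements, and contains no points (via Lemma~\ref{key}). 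None of that work is optional. You should replace the final step of your argument with an uncrossing argument of that kind, or otherwise produce a $3$-separating set of size $\geq 2$ rather than a singleton.
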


\begin{proof} By Corollary~\ref{doubly}, $M$ has a doubly labelled element $\ell$. By Lemma~\ref{Step0}, $\ell$ is a line. Moreover, by Lemma~\ref{Step1}, each of $M\ba \ell$ and $M/ \ell$ is 2-connected.

Assume the lemma fails. Let $N_D$ and $N_C$ be special $N$-minors of $M\ba \ell$ and $M/ \ell$, respectively. We now apply what we have learned earlier using $N_D$ in place of $N$. Let $(X,Y)$ be a \tws\ of $M\ba \ell$ in which $X$ is the $N_D$-side and $|Y| = \mu(\ell)$. Then $|Y| \ge 3$. Now $\sqcap(X,\{\ell\}) \in \{0,1\}$.

We show next that
\begin{sublemma}
\label{pixel}
$\sqcap(X,\{\ell\}) = 0$ and $\sqcap(Y,\{\ell\}) = 0$.
\end{sublemma}

Assume that $\sqcap(X,\{\ell\}) = 1$. Then $r(X \cup \ell) = r(X) + 1$, so $\lambda_M(Y) = 2$. Thus $Y$ is a non-$N_D$-3-separating set; a \cn. Thus $\sqcap(X,\{\ell\}) = 0$. Similarly, if $\sqcap(Y,\{\ell\}) = 1$, then $\lambda_M(X)  = 2$, so $Y \cup \ell$  is a non-$N_D$-3-separating set. This \cn\ completes the proof of \ref{pixel}.

We deduce that $M\ba \ell$ has a \tws\ $(D_1,D_2)$ where $D_1$ is the $N_D$-side, $|D_2| = \mu(\ell) \ge 3$, and $\sqcap(D_1,\ell) = 0 = \sqcap(D_2,\ell)$. A similar argument to that used to show \ref{pixel} shows that $M/ \ell$ has a \tws\ $(C_1,C_2)$ where $C_1$ is the $N_C$-side, $|C_2| = \mu^*(\ell) \ge 3$, and $\sqcap(C_1,\ell) = 2 = \sqcap(C_2,\ell)$. We observe here that the definition of $\mu^*(\ell)$ depends on $N_C$ here rather than on $N_D$. 

By the  local connectivity conditions between $\ell$ and each of $D_1,D_2, C_1$, and $C_2$,  

\begin{sublemma}
\label{crosscd}
$(C_1,C_2)$ and $(D_1,D_2)$ cross.
\end{sublemma}

We have 
$r(D_1) + r(D_2) = r(E - \ell ) + 1$ and $r(C_1) + r(C_2) = r(E - \ell) + 3$. By uncrossing, 
$$\lambda_{M\ba \ell}(D_2 \cap C_2) + \lambda_{M\ba \ell}(D_1 \cap C_1) \le 4.$$

Suppose $\lambda_{M\ba \ell}(D_2 \cap C_2) \le 1$.  Since $\ell \in \cl(D_1 \cup C_1)$, it follows that 
$\lambda_{M}(D_2 \cap C_2) \le 1$. Thus $D_2 \cap C_2$ consists of a single point, $z$. Then 
$$2 = \sqcap(C_2,\{\ell\}) \le \sqcap(D_1\cup z, \{\ell\}) \le \sqcap(D_1,\{\ell\}) + 1 = 1;$$
a \cn. We deduce that $\lambda_{M\ba \ell}(D_2 \cap C_2) = 2 =  \lambda_{M\ba \ell}(D_1 \cap C_1)$, so 
$\lambda_{M}(D_2 \cap C_2) = 2 =  \lambda_{M}(D_1 \cap C_1)$. By symmetry, 
$\lambda_{M}(D_1 \cap C_2) = 2 =  \lambda_{M}(D_2 \cap C_1)$. 

Clearly each of $D_2 \cap C_1$  and $D_2 \cap C_2$ contains at most one element of $N_D$. As $|D_2| \ge 3$, we deduce from Lemma~\ref{key} that $D_2$ contains no points. 
Hence, some $Z$ in 
$\{D_2 \cap C_1, D_2 \cap C_2\}$ contains at least two elements.  Then $Z$ is a non-$N_D$-3-separator of $M$.
\end{proof}

For the rest of the proof of Theorem~\ref{modc}, we will use the c-minor $N_0$ of $M$ found in the last lemma. To avoid cluttering the notation, we will relabel $N_0$ as $N$.

\begin{lemma}
\label{p124}
Let $Y_1$ be a minimal non-$N$-$3$-separating set in $M$ with $|Y_1| \ge 3$, and let $X_1 = E(M) - Y_1$. Let $\ell$ be an element of $Y_1$ such that $M\ba \ell$ has $N$ as a c-minor. Let $(A,B)$ be a $2$-separation of $M\ba \ell$ where $A$ is the $N$-side and $|B| = \mu(\ell)$. Then one of the following holds.
\begin{itemize}
\item[(i)] $\lambda_{M\ba \ell}(Y_1 - \ell) = 1$; or 
\item[(ii)] $B \subseteq Y_1 - \ell$; or 
\item[(iii)] $(A,B)$ crosses $(X_1, Y_1 - \ell)$ and $\lambda_{M\ba \ell}(A \cap (Y_1 - \ell)) = 1 = \lambda_{M\ba \ell}(B \cap (Y_1 - \ell))$, while 
$\lambda_{M\ba \ell}(A \cap X_1) = 2 = \lambda_{M\ba \ell}(B \cap X_1) = \lambda_{M\ba \ell}(Y_1 - \ell)$.
\end{itemize}
\end{lemma}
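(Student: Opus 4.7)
The plan is to analyze the $2$-separation $(A,B)$ of $M\ba\ell$ against the partition $(X_1, Y_1-\ell)$, exploiting (a) a rank identity coming from compactness of $M$ and $\lambda_M(Y_1)=2$, (b) the $N$-side hypothesis on $A$, and (c) the $3$-connectivity of $M$.

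First I would derive a clean formula for $\lambda_{M\ba\ell}(Y_1-\ell)$. Since $|E(M)|\ge 6$, $M$ is compact, so $r(M\ba\ell)=r(M)$. Combined with $\lambda_M(Y_1)=2$ a direct rank calculation yields
\[
\lambda_{M\ba\ell}(Y_1-\ell)\;=\;2-\bigl(r(Y_1)-r(Y_1-\ell)\bigr).
\]
By Lemma~\ref{Step1}, $M\ba\ell$ is $2$-connected, so the left side is at least $1$. If it equals $1$, then (i) holds. Otherwise the left side equals $2$, forcing $r(Y_1)=r(Y_1-\ell)$, i.e.\ $\ell\in\cl_M(Y_1-\ell)$. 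The key consequence is that for every $Z\subseteq X_1$ we then have $\ell\in\cl_M(E-\ell-Z)$, whence $\lambda_M(Z)=\lambda_{M\ba\ell}(Z)$.

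Now assume that both (i) and (ii) fail, so $\lambda_{M\ba\ell}(Y_1-\ell)=2$ and $B\cap X_1\ne\emptyset$. Put $P=A\cap X_1$, $Q=A\cap(Y_1-\ell)$, $R=B\cap X_1$, $S=B\cap(Y_1-\ell)$. Since $|A\cap E(N)|\ge|E(N)|-1\ge 3$ and $|Y_1\cap E(N)|\le 1$, we have $|P\cap E(N)|\ge 2$ and so $|P|\ge 2$; and (ii) failing gives $R\ne\emptyset$. If $Q=\emptyset$, then $Y_1-\ell\subseteq B$, so $\ell\in\cl_M(B)$; this gives $\lambda_M(A)=\lambda_{M\ba\ell}(A)=1$, contradicting $3$-connectivity of $M$ (the $\max$ condition is satisfied because $|P|\ge 2$). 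A symmetric argument shows $S\ne\emptyset$. Hence $(A,B)$ crosses $(X_1,Y_1-\ell)$.

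Finally, uncrossing $A$ with $Y_1-\ell$ and then $A$ with $X_1$ in $M\ba\ell$ gives
\[
\lambda_{M\ba\ell}(Q)+\lambda_{M\ba\ell}(R)\le 3\quad\text{and}\quad\lambda_{M\ba\ell}(P)+\lambda_{M\ba\ell}(S)\le 3,
\]
each term being at least $1$ by $2$-connectivity of $M\ba\ell$. Since $P,R\subseteq X_1$, the first paragraph gives $\lambda_M(P)=\lambda_{M\ba\ell}(P)$ and $\lambda_M(R)=\lambda_{M\ba\ell}(R)$; since $|P|\ge 2$, $3$-connectivity of $M$ forces $\lambda_{M\ba\ell}(P)\ge 2$, and an analogous argument handles $R$ once $\max\{|R|,r_M(R)\}\ge 2$ is verified. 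These lower bounds collapse the uncrossing inequalities to $\lambda_{M\ba\ell}(P)=\lambda_{M\ba\ell}(R)=2$ and $\lambda_{M\ba\ell}(Q)=\lambda_{M\ba\ell}(S)=1$, which together with $\lambda_{M\ba\ell}(Y_1-\ell)=2$ is exactly (iii).

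The main obstacle is the edge case in which $R$ might consist of a single point of $M$; then $\lambda_{M\ba\ell}(R)=1$ and the lower bound above fails. Excluding this case requires exploiting the maximality $|B|=\mu(\ell)$ together with Lemma~\ref{claim1}: if $R$ were a single point, then shifting it from $B$ to $A$ produces a new $2$-separation of $M\ba\ell$ whose non-$N$-side is $S\subseteq Y_1-\ell$, which either violates the maximality of $\mu(\ell)$ or places us back in case~(ii), producing the required contradiction.
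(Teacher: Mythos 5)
Your overall route matches the paper's: reduce to the case $\ell\in\cl_M(Y_1-\ell)$, show $(A,B)$ crosses $(X_1,Y_1-\ell)$, transfer connectivity bounds from $M$ to $M\ba\ell$ for the two $X_1$-pieces, and close with uncrossing. The rank identity, the use of $|P|\ge 2$ coming from $|A\cap E(N)|\ge 3$, and the uncrossing inequalities are all correct.

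The gap is exactly where you flag it, and your proposed repair does not work. If $R=B\cap X_1$ is a single point of $M$, then $(A\cup R,\,S)$ is indeed a $2$-separation of $M\ba\ell$ with non-$N$-side $S\subseteq Y_1-\ell$. But $|S|<|B|=\mu(\ell)$, and $\mu(\ell)$ is a \emph{maximum}, so no contradiction with maximality arises; and $(A\cup R,S)$ satisfying (ii) says nothing about the given pair $(A,B)$, for which you have already assumed (ii) fails. So neither branch of your ``either\,\ldots\,or'' produces a contradiction, and the case $|R|=1$, $r_M(R)=1$ remains open.

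The tool you are missing is Lemma~\ref{key}, which says that the non-$N$-side of a $2$-separation of $M\ba\ell$ contains no points once it has at least two elements and is not a series pair of points. To apply it here you first need $|B|\ge 3$; that is supplied by Lemma~\ref{muend} (since $|B|=\mu(\ell)$). Then $B$ contains no points, so $R=B\cap X_1$ contains no points, hence $r_M(R)\ge 2$, and your lower bound $\lambda_{M\ba\ell}(R)\ge 2$ goes through. With that in place the rest of your argument completes the proof as you describe.
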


\begin{proof}  Assume neither (i) nor (ii) holds.  Then $\ell \in \cl(Y_1 - \ell)$ and $B \not \subseteq Y_1 - \ell$. If $B \subseteq X_1$, then $\lambda_M(B) = 1$; a \cn. If $B \supseteq Y_1 - \ell$, then $\lambda_M(A) = 1$; a \cn. Finally, observe that $|X_1 \cap A| \ge 2$ since $|E(N)| \ge 4$ and $X_1$ and $A$ are the $N$-sides of their separations. We conclude that $(A,B)$ crosses $(X_1, Y_1 - \ell)$.

By Lemma~\ref{muend}, $|B| \ge 3$. By Lemma~\ref{key}, $B$ contains no points. Now $\lambda_{M\ba \ell}(B \cap X_1) \ge 2$ otherwise, as $\ell \in \cl(Y_1 - \ell)$, we get the \cn\ that 
$\lambda_{M}(B \cap X_1)= 1$. By uncrossing, we deduce that $\lambda_{M\ba \ell}(A \cap (Y_1 - \ell)) \le 1$. Since $|X_1 \cap A| \ge 2$, we get, similarly, that 
$\lambda_{M\ba \ell}(A \cap X_1) \ge 2$, so $\lambda_{M\ba \ell}(B \cap (Y_1- \ell)) \le 1$. As $M\ba \ell$ is $2$-connected, we deduce that 
$\lambda_{M\ba \ell}(A \cap (Y_1 - \ell)) = 1 = \lambda_{M\ba \ell}(B \cap (Y_1 - \ell))$. Hence $\lambda_{M\ba \ell}(A \cap X_1) = 2 = \lambda_{M\ba \ell}(B \cap X_1)$. We conclude that (iii) holds. Hence so does the lemma.
\end{proof}

\section{Finding big enough $3$-separators}
\label{bigtime}

In this section, we first establish (\ref{Step5}) and then we start  the proof of (\ref{Step6}). Specifically, we begin by showing the following.

\begin{lemma}
\label{Step5+} 
$M$ has a minimal non-$N$-$3$-separator with at least three elements.
\end{lemma}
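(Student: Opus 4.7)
The plan is to rule out the possibility that a minimal non-$N$-$3$-separator $Y_1$ of $M$ has at most two elements. The case $|Y_1|=1$ reduces quickly: such a singleton would have to be a line $y\notin E(N)$ (since a point in a $3$-connected $2$-polymatroid cannot be exactly $3$-separating), and is handled by applying the structural machinery of Section~\ref{fdll} (specifically Lemmas~\ref{muend} and~\ref{bubbly}) to the $2$-separations of $M\baba y$ and of $M/y$ to either realise outcome (i) or (ii) of Theorem~\ref{modc}, or to reduce to the two-element case treated below. I would also note, as a sanity check, that under the convention that a non-$N$-$3$-separator $Y$ corresponds to a genuine $3$-separation (both sides of rank exceeding $2$), the one-element case is already excluded by the rank condition $r(Y)\ge 3$.

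The main task is thus to rule out $|Y_1|=2$. Write $Y_1=\{y_1,y_2\}$; by the definition of non-$N$-$3$-separator both $y_i$ are lines and at most one lies in $E(N)$, so take $y_1\notin E(N)$. Since $M$ is $3$-connected and each $y_i$ is a line, $\lambda_M(\{y_i\})=2$ forces $r(E-y_i)=r(M)$. Combined with $\lambda_M(\{y_1,y_2\})=2$ and submodularity this gives $r(\{y_1,y_2\})\in\{3,4\}$. In the sub-case $r(\{y_1,y_2\})=3$, one verifies the four defining axioms directly: each $y_i$ is a line, $r((E-\{y_1,y_2\})\cup y_i)=r(E-\{y_1,y_2\})+1$, and $r(\{y_1,y_2\})=|Y_1|+1=3$, so $\{y_1,y_2\}$ is a prickly $3$-separator of $M$. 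By Lemma~\ref{portia}, $M\da y_1$ is $3$-connected. Using the identifications $M\da y_1\ba y_2=M\ba\{y_1,y_2\}$ and $M\da y_1/y_2=M/\{y_1,y_2\}$ of Lemma~\ref{ess3}, together with Lemma~\ref{pricklytime0} and the fact that one of $M\ba y_1$, $M/y_1$ has $N$ as a c-minor, a short case analysis (depending on whether $y_2\in E(N)$ and on which of $\{\ba,/\}$ on $y_2$ produces the c-minor) shows $M\da y_1$ also has $N$ as a c-minor, realising outcome (iii) of Theorem~\ref{modc} and contradicting our counterexample assumption.

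In the sub-case $r(\{y_1,y_2\})=4$, the lines $y_1$ and $y_2$ are skew and each is skew to $E-\{y_1,y_2\}$, since
\[
\sqcap_M(y_1,E-\{y_1,y_2\})=r(y_1)+r(E-\{y_1,y_2\})-r(E-y_2)=2+(r(M)-2)-r(M)=0.
\]
Then $\lambda_{M\ba y_1}(\{y_2\})=2+(r(M)-2)-r(M)=0$, so $M\ba y_1$ is disconnected. By Lemma~\ref{Step1}, $M\ba y_1$ cannot have $N$ as a c-minor, so $M/y_1$ does. Computing $\lambda_{M/y_1}(X)-\lambda_M(X)=r(X\cup y_1)-r(X)-2$ and using that $y_1$ is skew to every subset of $E-\{y_1,y_2\}$, any non-trivial $2$-separation of $M/y_1$ would, via Lemma~\ref{skewer} and the connectivity identities of Section~\ref{clc}, lift to a non-trivial $2$-separation of $M$, contradicting $3$-connectivity; hence $M/y_1$ is $3$-connected, realising outcome (i) of Theorem~\ref{modc}—again a contradiction. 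The main obstacle is the prickly sub-case: ensuring the c-minor isomorphic to $N$ in $M\ba y_1$ or $M/y_1$ survives the passage to $M\da y_1$ requires careful tracking, via Lemma~\ref{pricklytime0} and Lemma~\ref{ess3}, of how the c-minor is obtained through the four possible deletion/contraction combinations on $\{y_1,y_2\}$.
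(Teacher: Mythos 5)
Two concrete problems. First, your claim that $r(\{y_1,y_2\})\in\{3,4\}$ is not right: submodularity together with $r(E-y_i)=r(M)$ and $\lambda_M(\{y_1,y_2\})=2$ only gives $2\le r(\{y_1,y_2\})\le 4$. Parallel lines (the $r=2$ case) are perfectly compatible with $3$-connectivity of a $2$-polymatroid, and the paper disposes of this case separately (one of $M\ba a$, $M\ba b$ is $3$-connected with an $N$-minor). Your proof omits it.

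Second, and more seriously, the ``short case analysis'' you sketch for the prickly sub-case $r(\{y_1,y_2\})=3$ does not go through. The identities from Lemma~\ref{ess3} are $M\da y_1\ba y_2 = M\ba y_1\ba y_2$ and $M\da y_1/y_2 = M/y_1/y_2$; notably absent is anything involving $M\ba y_1/y_2$. So if $y_2\notin E(N)$ and the c-minor of $M\ba y_1$ isomorphic to $N$ contracts $y_2$, you get $N$ as a c-minor of $M\ba y_1/y_2$, which is a genuinely different $2$-polymatroid from both $M\ba\{y_1,y_2\}$ and $M/\{y_1,y_2\}$ (in the Boolean case $M=M_2(G)$, these have different vertex sets). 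Similarly, if $M/y_1$ has $N$ as a c-minor and $y_2$ is deleted, you land in $M/y_1\ba y_2$, again uncovered. And if $y_2\in E(N)$, the element $y_2$ is a point in $M/y_1$ but a line in $M\da y_1$, so the minor cannot be transported elementwise. The paper's proof is not a local case check: it exploits the hypothesis that \emph{every} minimal non-$N$-$3$-separator has two elements to find a second prickly pair $\{u,v\}$ inside one side of a $2$-separation of $M\ba b$ (using Lemma~\ref{muend} to ensure that side is large), establishes \ref{addon} about $M\ba b\da u$, and derives the contradiction from there, with the $M/b$ case handled by duality. That nested, global structure is what you would need to reproduce; the bookkeeping you propose, however carefully done, cannot replace it. (Your direct $r=4$ argument via Lemma~\ref{skewer}, incidentally, is fine and is a genuine alternative to the paper's dualization; that part is sound.)
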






\begin{proof}
Assume every minimal non-$N$-3-separating set has exactly two elements. Let $\{a,b\}$ be such a set, $Z$. Then both of its members are lines. 
We may assume that $b \not\in E(N)$. Suppose first that $r(Z) = 2$. Then $a$ and $b$ are parallel lines.  Suppose that $N$ is a c-minor of $M/b$. Since $a$ is a loop of $M/b$, we deduce that $a \notin E(N)$ so $M\ba a$ has $N$ as a c-minor. Since $M\ba a$ is \thc, this is a \cn. We may now assume that $M\ba b$ has $N$ as a c-minor. Since it is $3$-connected, we have a \cn\ that implies that $r(Z) > 2$.

Suppose next that $r(Z) = 4$. Then $r^*(Z) = ||Z|| + r(E - Z) - r(M)= 4 -2  = 2$. Hence $Z$ consists of a pair of parallel lines in $M^*$, so we obtain a \cn\ as above. We may now assume that $r(Z) = 3$. Then $Z$ is a prickly $3$-separating set and, by Lemma~\ref{portia}, $M\da b$ is \thc. Hence $M\da b$ has no c-minor isomorphic to $N$.

Now $M\ba b$ or $M/b$ has a c-minor isomorphic to $N$. We begin by assuming the former. Let $(S \cup a, T)$ be a non-trivial \tws\ of $M\ba b$ with $a \not\in S$. Suppose the non-$N$-side of  $(S \cup a, T)$ has $\mu(b)$ elements.  By Lemma~\ref{muend}, $\mu(b) \ge 3$. We have 
$r(S \cup a) + r(T) - r(M) = 1$. As $\sqcap(\{a\},\{b\}) = 1$ and $M$ is \thc,  $r(S \cup a \cup b)  = r(S \cup a) + 1$, so 
\begin{equation}
\label{eqmt}
\lambda_M(T) = 2.
\end{equation}
Moreover, 
$$r(S \cup a)  \ge r(S) + 1$$
otherwise $r(S \cup a) = r(S)$ so $r(E - b) = r(E - \{a,b\})$; a \cn. 

Next we show the following.

\begin{sublemma}
\label{addon}
Suppose $M\ba b$ has a $2$-separation $(S_1,S_2)$ where $S_1$ is the $N$-side and $S_2$ contains a prickly $3$-separator $\{u,v\}$ where $u \notin E(N)$. Then $M\ba b \da u$ is not $2$-connected. 
\end{sublemma}

Suppose $M\ba b \da u$ is  $2$-connected. Now $M\ba b = M_1 \oplus_2 M_2$ where $M_i$ has ground set $S_i \cup p$. Since $M\ba b \da u$ is  $2$-connected, $\sqcap_{M\ba b\da u}(S_1, S_2 - u) = 1$. Then, by Lemma`\ref{claim1}(iii), $M\ba b \da u$ has a special $N$-minor. By Lemma~\ref{portia}, $M\da u$ is \thc. Since it has a c-minor isomorphic to $N$, we have a \cn. Thus \ref{addon} holds.

Now suppose that $T$ is the $N$-side of $(S \cup a, T)$. Then, by Lemma~\ref{key}, $S \cup a$ contains no points. Assume that $r(S \cup a) = r(S) + 1$. 
As $r(S \cup a) + r(T) - r(M\ba b) = 1$, we see that 
$$[r(S) + 1] + r(T) - [r(M\ba b,a) + 1] = 1.$$ Hence $\sqcap(S,T) = 1$, so, by Lemma~\ref{claim1}(i), $M\ba b\ba a$ has a special $N$-minor. 
As $\{a,b\}$ is a prickly 3-separating set, we see that $M\ba b \ba a = M\da b \ba a$ so $M\da b$ has a c-minor isomorphic to $N$; a \cn. 

Next we consider the case when $T$ is the  $N$-side of $(S \cup a, T)$, and $r(S \cup a) = r(S) + 2$. Then 
$r(S) + r(T \cup a \cup b) = r(M) + 2$. Thus $S$ is a non-$N$-$3$-separator and so contains a minimal such set, $\{u,v\}$ where $u \notin E(N)$. From above, we know that $\{u,v\}$ is a prickly $3$-separator of $M$. By \ref{addon}, 
$M\ba b \da u$ is  not $2$-connected.
 Now $M\ba b \da u = M\da u\ba b$.  Let $(J,K)$ be a $1$-separation of $M\da u \ba b$ with $a \in J$. Then      $r_{M\da u}(J \cup b) \le r_{M\da u}(J) + 1$. Thus
\begin{align*}
r_{M\da u}(J \cup b) + r_{M\da u}(K) - r(M\da u)
& \le [r_{M\da u}(J) + r_{M\da u}(K) - r(M\da u\ba b)] \\
& \hspace*{0.9in} + [1 +  r(M\da u\ba b) - r(M\da u)]\\
& = 1 +  r(M\da u\ba b) - r(M\da u).
\end{align*}
By Lemma~\ref{portia}, $M\da u$ is \thc, so $r(M\da u\ba b) = r(M\da u)$, and $K$ consists of a single point, $k$, of $M\da u$. Then
\begin{align*}
1 & = r_{M\da u}(J) + r_{M\da u}(\{k\}) - r(M\da u\ba b)\\
& =   r_{M\da u}(E - \{b,u,k\}) + r_{M\da u}(\{k\})  - r(M\da u)\\
& = r(E - \{b,k\}) - 1 + r(\{k\}) - r(M) + 1\\
& = r(E - \{b,k\})   + r(\{k\}) - r(M\ba b).
\end{align*}
Hence  $\{k\}$ is $1$-separating in $M\ba b$.  Thus $k$ contradicts Lemma~\ref{Step0}.

When $M\ba b$ has a c-minor isomorphic to $N$, it remains to consider the case when $S \cup a$ is the $N$-side of $(S \cup a, T)$. As $\mu(b) \ge 3$, it follows that $|T| \ge 3$. By (\ref{eqmt}), $\lambda_M(T) = 2$. By assumption, $T$ contains a minimal non-$N$-$3$-separating set $T'$. The latter consists of a pair, $\{u,v\}$, of lines that form a prickly 3-separating set. We may assume that  $u \not\in E(N)$. Now $M\ba b$ is certainly $2$-connected. By Lemma~\ref{portia},   $M\da u$ is \thc. Since $\sqcap(\{a\},\{b\}) = 1$, it follows that $M\da u \ba b$ is $2$-connected; a \cn. We conclude that 
$M\ba b$ does not have a c-minor isomorphic to $N$.


We now know that $M/b$ has a c-minor isomorphic to $N$. Moreover,  $M^*$ has a c-minor isomorphic to $N^*$ and has $\{a,b\}$ as a prickly 3-separating set; and $(M/b)^* = (M^* \ba b)^{\flat}$. We use $M^* \ba b$ in place of $M\ba b$ in the argument above to complete the proof  of the lemma.
\end{proof}

The argument to establish that $M$ has a minimal non-$N$-$3$-separator with at least four elements is much   longer than that just given   since it  involves analyzing a number of cases. We shall use three preliminary results. In each, we denote $E(M) - Y_1$ by $X_1$. 

\begin{lemma}
\label{pre3lines}
Let $Y_1$ be a minimal-non-$N$-$3$-separator with exactly three elements. Suppose $\ell \in Y_1$ and $M\ba \ell$ has $N$ as a c-minor. Let $(A,B)$ be a \tws\ of $M\ba \ell$ where $A$ is the $N$-side and $|B|   \ge 3$. Suppose $\ell \in \cl(Y_1 - \ell)$. Then $(A,B)$ crosses $(X_1, Y_1 - \ell)$ and $\lambda_{M\ba \ell}(X_1 \cap A) \ge 2$. Moreover, $Y_1 \cap B$ consists of a single line.
\end{lemma}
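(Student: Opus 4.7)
The plan is to argue directly, since Lemma~\ref{p124} assumes $|B|=\mu(\ell)$ rather than just $|B|\ge 3$; however the hypotheses mean the key computations from that lemma all go through. The starting observation is that, because $M$ is compact (as $|E(M)|\ge 6\ge 3$), we have $r(M\ba\ell)=r(M)$, and the assumption $\ell\in\cl(Y_1-\ell)$ gives $r(Y_1-\ell)=r(Y_1)$, hence
\[
\lambda_{M\ba\ell}(Y_1-\ell)=r(Y_1-\ell)+r(X_1)-r(M\ba\ell)=\lambda_M(Y_1)=2.
\]
More generally, for any $S\subseteq E(M)-\ell$ disjoint from $Y_1-\ell$, one has $\ell\in\cl(Y_1-\ell)\subseteq\cl(E(M)-\ell-S)$, so $r(E(M)-S)=r_{M\ba\ell}(E(M)-\ell-S)$ and therefore $\lambda_M(S)=\lambda_{M\ba\ell}(S)$. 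This identity will be applied with $S=A\cap X_1$ and with $S=B$ or $S=A$.

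First I would verify that $(A,B)$ crosses $(X_1,Y_1-\ell)$. Since $|E(N)\cap Y_1|\le 1$, $|E(N)\cap B|\le 1$, and $|E(N)|\ge 4$, the set $A\cap X_1$ contains at least $|E(N)|-2\ge 2$ elements of $E(N)$, so $|A\cap X_1|\ge 2$; in particular $A\not\subseteq Y_1-\ell$ (which also follows since $|A|\ge 3>|Y_1-\ell|$). Also $B\not\subseteq Y_1-\ell$ because $|B|\ge 3>2$. It remains to exclude $B\subseteq X_1$ and $A\subseteq X_1$; in either case the identity of the previous paragraph, applied to $B$ or to $A$, yields $\lambda_M(B)=1$ or $\lambda_M(A)=1$, contradicting $3$-connectivity of $M$ (note both sides have at least two elements because $|B|\ge 3$ and $|A\cap X_1|\ge 2$).

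Next I would verify $\lambda_{M\ba\ell}(A\cap X_1)\ge 2$. By the identity, $\lambda_M(A\cap X_1)=\lambda_{M\ba\ell}(A\cap X_1)$. As just noted $|A\cap X_1|\ge 2$, and the complement $E(M)-(A\cap X_1)$ contains all of $B\cup Y_1$ and so has at least $|B|+1\ge 4$ elements. Thus if $\lambda_{M\ba\ell}(A\cap X_1)=1$, the partition $(A\cap X_1,E(M)-(A\cap X_1))$ would be a $2$-separation of $M$, contradicting that $M$ is $3$-connected.

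Finally, $Y_1\cap B\subseteq Y_1-\ell$ has at most two elements; crossing forces $A\cap(Y_1-\ell)\ne\emptyset$, so $|Y_1\cap B|\le 1$, and crossing also forces $|Y_1\cap B|\ge 1$, hence $|Y_1\cap B|=1$. The single element of $Y_1\cap B$ is a line: since $|B|\ge 3$, the set $B$ is not a pair (let alone a series pair of points), so Lemma~\ref{key} applies to the $2$-separation $(A,B)$ and gives that $B$ contains no points. I do not anticipate any serious obstacle in this argument; the main point to be careful about is reproving the connectivity identity $\lambda_M=\lambda_{M\ba\ell}$ on subsets missing $Y_1-\ell$, which is what lets us import $3$-connectivity of $M$ to rule out small $\lambda_{M\ba\ell}$-values.
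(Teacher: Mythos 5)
Your proof is correct and follows essentially the same route as the paper's: establish crossing by ruling out containments using $\ell\in\cl(Y_1-\ell)$ to transfer $2$-separations of $M\ba\ell$ to $M$, then use that same transfer principle (which you package as the identity $\lambda_M(S)=\lambda_{M\ba\ell}(S)$ for $S$ disjoint from $Y_1-\ell$) to force $\lambda_{M\ba\ell}(X_1\cap A)\ge 2$, and finally invoke Lemma~\ref{key} to exclude points from $B$. The only cosmetic difference is that you deduce $|Y_1\cap B|=1$ directly from crossing and $|Y_1-\ell|=2$, whereas the paper also records $\lambda_{M\ba\ell}(Y_1\cap B)\le 1$ via uncrossing; both reach the same conclusion.
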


\begin{proof} As $\ell \in \cl(Y_1 - \ell)$, we see that $\lambda_{M_1 \ba \ell}(Y_1 - \ell) = 2$. To see that $(A,B)$ crosses 
$(X_1, Y_1 - \ell)$, 
note first that, as $|Y_1 - \ell| = 2$ and $|A|, |B| \ge 3$,   neither $A$ nor $B$ is contained in $Y_1 - \ell$. Moreover, $Y_1 - \ell$ is not contained in $A$ or $B$ otherwise $(A \cup \ell,B)$ or $(A,B \cup \ell)$ is a \tws\ of $M$; a \cn. Hence $(A,B)$ crosses 
$(X_1, Y_1 - \ell)$. 


As $|E(N)| \ge 4$, we see that $|X_1 \cap A| \ge 2$. Then 
$$\lambda_{M\ba \ell}(X_1 \cap A) \ge 2$$
 otherwise, as $\ell \in \cl(Y_1 - \ell)$, we get the \cn\ that $\lambda_{M}(X_1 \cap A) \le 1$. By uncrossing, $\lambda_{M\ba \ell}(Y_1 \cap B) \le 1$. By Lemma~\ref{key}, $B$ contains no   points, so $Y_1 \cap B$ contains no points. As $|Y_1 - \ell| = 2$, we see that $Y_1 \cap B$ consists of a single line.
 \end{proof}

\begin{lemma}
\label{3lines}
Let $Y_1$ be a minimal-non-$N$-$3$-separator with exactly three elements. If $Y_1$ contains a line $\ell$ such that $M\ba \ell$ has $N$ as a c-minor, then $Y_1$ consists of three lines.
\end{lemma}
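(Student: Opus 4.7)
I argue by contradiction: assume $Y_1 = \{\ell, a, b\}$ contains a non-line, so (after relabelling) $a$ is a point of $M$. The plan is to apply Lemma~\ref{p124} to $\ell$ with a $2$-separation $(A, B)$ of $M\ba \ell$ having $A$ as the $N$-side and $|B| = \mu(\ell)$; by Lemma~\ref{muend}, $|B| \ge 3$. Outcome~(ii) of Lemma~\ref{p124} is ruled out immediately, since $|B| \ge 3 > 2 = |Y_1 - \ell|$.

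For outcome~(iii) of Lemma~\ref{p124}, we have $\ell \in \cl(Y_1 - \ell)$, so Lemma~\ref{pre3lines} yields that $Y_1 \cap B$ consists of a single line. Since $a$ is a point, $a \in A$ and the unique line of $Y_1 \cap B$ must be $b$. Case~(iii) also supplies $\lambda_{M\ba \ell}(\{b\}) = 1$, hence $r(E - \{b, \ell\}) = r(M) - 1$, so $\lambda_M(\{b, \ell\}) = r(\{b, \ell\}) - 1$. Three-connectedness of $M$ forces $r(\{b, \ell\}) \ge 3$. If $r(\{b, \ell\}) = 3$, then $\{b, \ell\}$ is an exactly $3$-separating two-element set of lines with $|E(N) \cap \{b, \ell\}| \le 1$, i.e.\ a non-$N$-$3$-separator properly contained in $Y_1$, contradicting the minimality of $Y_1$. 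If $r(\{b, \ell\}) = 4$, then $r(Y_1) \ge r(\{b, \ell\}) = 4$; however, $\ell \in \cl(\{a, b\})$ gives $r(Y_1) = r(\{a, b\}) \le r(a) + r(b) = 3$, a contradiction. Thus outcome~(iii) is impossible.

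For outcome~(i), $\{a, b\} = Y_1 - \ell$ is $2$-separating in $M\ba \ell$. Since $a$ is a point, Lemma~\ref{key} forces $\{a, b\}$ to be a series pair of points, so $b$ is also a point of $M$. Routine computations give $r(\{a, b\}) = 2$, $r(E - \{a, b, \ell\}) = r(M) - 1$, $\lambda_M(\{a, b\}) = 2$, $r(Y_1) = 3$, and $\sqcap_M(\ell, \{a, b\}) = 1$. Decomposing $M\ba \ell = M_X \oplus_2 M_Y$ along $(X_1, \{a, b\})$, the piece $M_Y$ is isomorphic to the polymatroid $P_1$ of Lemma~\ref{old2}, and Lemma~\ref{p69} supplies a special $N$-minor for $M_X$. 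It follows that both $M/a$ and $M/b$ inherit special $N$-minors (via relabelling the basepoint). At most one of $a, b$ lies in $E(N)$; choose $y \in \{a, b\}$ with $y \notin E(N)$. Using the series-pair structure — under which the other element of $\{a, b\}$ becomes a coloop of $M\ba \ell \ba y$ — together with Lemma~\ref{newbix} (applied at the point of $\{a, b\}$ lying on $\ell$, whose existence is supplied by $\sqcap_M(\ell, \{a, b\}) = 1$) and Lemma~\ref{fantan} applied in the natural matroid $M'$ of $M$, I would show that $M\ba y$ also carries a special $N$-minor. This would make $y$ a doubly labelled point, contradicting Lemma~\ref{Step0}.

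The main obstacle is outcome~(i). The interplay between the line $\ell$ and the series pair $\{a, b\}$ of points is subtle; tracking how the basepoint $p$ of the $2$-sum represents (via relabelling) the element of $\{a, b\}$ that lies in $E(N)$, and how this representation survives the compactification of the resulting coloop after deleting $y$, demands careful bookkeeping through Corollary~\ref{complast3} and Lemma~\ref{dennisplus}. The geometric case where neither $a$ nor $b$ lies on $\ell$ (so that $\ell$ meets $\cl(\{a,b\})$ at a point outside $\{a, b\}$) requires passing to $M'$ to produce a triangle--triad fan on which Lemma~\ref{fantan} can be brought to bear.
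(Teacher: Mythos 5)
Your treatment of outcome~(iii) of Lemma~\ref{p124} is correct and reaches essentially the paper's contradiction, just via a different computation: you show directly that $r(\{b,\ell\})$ must be $3$ (ruling out $4$ via $\ell\in\cl(\{a,b\})$ and $r(\{a,b\})\le 3$) and hence that $\{b,\ell\}$ is a smaller non-$N$-$3$-separator, whereas the paper shows $a\in\cl(X_1)$ and concludes that $Y_1-a$ is one. These amount to the same thing since $Y_1 - a = \{b,\ell\}$, and your route is perfectly serviceable.

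The gap is in outcome~(i). You correctly reduce to the situation where $Y_1-\ell = \{a,b\}$ is a series pair of points in $M\ba\ell$ with $r(Y_1)=3$ and $\sqcap(\ell,\{a,b\})=1$, and you correctly get that both $M/a$ and $M/b$ have special $N$-minors. But then you merely assert that you ``would show that $M\ba y$ also carries a special $N$-minor,'' making $y$ doubly labelled. This is not established, and the route you sketch does not go through: $\sqcap_M(\ell,\{a,b\})=1$ does \emph{not} supply a point of $\{a,b\}$ lying on $\ell$ --- it only says $\ell$ meets the rank-$2$ flat $\cl(\{a,b\})$, and that intersection point may well be a third point of the flat, distinct from both $a$ and $b$. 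So the place where you would have applied Lemma~\ref{newbix} and Lemma~\ref{fantan} need not exist, and I see no reason that $M\ba y$ should have a special $N$-minor in general. The paper's own argument for this case (its sublemma~\ref{3lines1}) takes a different tack: since $M/y_i$ is $2$-connected but not $3$-connected, pick a $2$-separation $(J,K)$ with $\ell\in J$, and split on whether $r_{M/y_i}(\{\ell\})$ equals $2$ or $1$. When it equals $2$ one either shifts $y_j$ to $J$ and lifts to a $2$-separation $(J\cup y_i,K)$ of $M$ (using $y_i\notin\cl(X_1)$ so that $y_i\notin\cl(K)$), or falls into an exceptional subcase where $y_j$ is parallel to a point in $M/y_i$ and only \emph{then} does one get $M\ba y_j$ having a special $N$-minor and a doubly-labelled point. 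Ruling out $r_{M/y_i}(\{\ell\})=2$ for both $i$ forces $r_{M/y_i}(\{\ell\})=1$, i.e.\ $y_i$ lies on $\ell$, for both $i$, which gives $r(\{\ell,y_1,y_2\})=2$ and contradicts the already-computed $r(Y_1)=3$. So the doubly-labelled-point contradiction is only one branch of a case analysis, not the uniform conclusion your proposal needs; the genuine engine of the argument is this rank dichotomy for $\ell$ in $M/y_i$, which your proposal never touches.
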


\begin{proof} Assume that the lemma fails. Let  $(A,B)$ be a \tws\ of $M\ba \ell$ where $A$ is the $N$-side and $|B|  \ge 3$. First we show that 

\begin{sublemma}
\label{3lines1}
$\ell \in \cl(Y_1 - \ell)$.
\end{sublemma}

Assume that $\ell \not\in \cl(Y_1 - \ell)$. Then $(X_1, Y_1 - \ell)$ is a \tws\ of $M\ba \ell$ with $|Y_1 - \ell| = 2$.  By Lemma~\ref{key}, we may assume that $Y_1 - \ell$ consists of a series pair $\{y_1,y_2\}$ of points. Now $r(M\ba \ell) = r(M) = r(X_1) + 1$, so $r(\{\ell,y_1,y_2\}) = 3$. Moreover, for each $i$ in $\{1,2\}$, we see that $M\ba \ell/y_i$, and hence $M/y_i$, has a special $N$-minor.

As the theorem fails for $M$, we know that $M/y_i$ is not \thc. Now $M/y_i$ is certainly $2$-connected. Let $(J,K)$ be a \tws\ of it where we may assume that $\ell \in J$. Now $r_{M/y_i}(\{\ell,y_j\}) = 2$ where $\{i,j\} = \{1,2\}$.

Suppose $r_{M/y_i}(\{\ell\}) = 2$.  Assume $y_j \in K$. Then $(J \cup y_j, K- y_j)$ is a \tws\ of $M/y_i$ unless $K-y_j$ consists of a single point. In the exceptional case, $y_j$ is in a parallel pair of points in $M/y_i$. Hence $M\ba y_j$ has a special $N$-minor. As $M/y_j$ also has such a minor, we contradict Lemma~\ref{Step0}. We deduce that  we may assume that $J$ contains $\{\ell,y_j\}$. Then $r(J \cup y_i) + r(K \cup y_i) = r(M) + 2$, so $r(J \cup y_i,K)$ is a \tws\ of $M$; a \cn.

We may now assume that $r_{M/y_i}(\{\ell\}) = 1$. Then $y_i$ lies on the line $\ell$. Since this must be true for each $i$ in $\{1,2\}$, we see that $r(\{\ell,y_1,y_2\}) = 2$; a \cn. We deduce that \ref{3lines1} holds.

By Lemma~\ref{pre3lines}, we know that $(A,B)$ crosses $(X_1,Y_1)$, that $\lambda_{M\ba \ell}(X_1 \cap A) \ge 2$, and that $Y_1 \cap B$ consists of a single line. As the lemma fails, $A \cap (Y_1 - \ell)$ consists of a single point, $a$. As $\lambda_{M\ba \ell}(X_1 \cap A) \ge 2$ and $\lambda_{M\ba \ell}(A) = 1$, we deduce that $r(A-a) = r(A)$ and $r(B \cup a) = r(B) + 1$. Hence $a \in \cl(X_1)$. Thus $Y_1 - a$ is a minimal non-$N$-$3$-separator; a \cn.
\end{proof}

The next lemma verifies (\ref{Step5.5}).

\begin{lemma}
\label{y13}
Let $Y_1$ be a minimal-non-$N$-$3$-separator having exactly three elements. Then $Y_1$ consists of three lines.
\end{lemma}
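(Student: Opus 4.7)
The plan is to combine Lemma~\ref{3lines} with its dual (obtained via Lemma~\ref{pricklytime}) to reduce to a tightly constrained structural situation, then rule that situation out using Lemma~\ref{p124} and the machinery of Section~\ref{edlp}.

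Since $|E(N)\cap Y_1|\le 1$ and $|Y_1|=3$, at least two elements of $Y_1$ lie outside $E(N)$. For any such element $e$, in the sequence of c-operations producing $N$ from $M$ the element $e$ must be removed, so one of $M\baba e$ or $M/e$ has $N$ as a c-minor, and hence one of $M\ba e$ or $M/e$ does. If such an $e$ is a line, then Lemma~\ref{3lines} (applied directly in the deletion case, or applied to $M^*$ in the contraction case using Lemma~\ref{pricklytime}, Lemma~\ref{csm}, and Lemma~\ref{compact0}) forces $Y_1$ to consist of three lines, and we are done. Hence I may assume every element of $Y_1\setminus E(N)$ is a point; combined with $|Y_1\cap E(N)|\le 1$, this means $Y_1$ contains at most one line and therefore at least two points outside $E(N)$.

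Fix a point $y_1\in Y_1\setminus E(N)$. By Lemma~\ref{Step0}, exactly one of $M\ba y_1$ or $M/y_1$ has $N$ as a c-minor; by duality via $M^*$, I may assume this is $M\ba y_1$. Let $(A,B)$ be a $2$-separation of $M\ba y_1$ with $A$ the $N$-side and $|B|=\mu(y_1)$; by Lemma~\ref{muend}, $|B|\ge 3$. I now apply Lemma~\ref{p124} with $\ell=y_1$. Outcome (ii) is impossible since $|B|\ge 3>2=|Y_1-y_1|$. In outcome (iii), $(A,B)$ crosses $(X_1,Y_1-y_1)$, so $Y_1-y_1=\{y_A,y_B\}$ with $y_A\in A$ and $y_B\in B$, and $\lambda_{M\ba y_1}(\{y_B\})=1$. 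Since $|B|\ge 3$, Lemma~\ref{key} applied to $B$ as the non-$N$-side shows $B$ contains no points, so $y_B$ is a line, which by our standing assumption forces $y_B\in E(N)$. A direct rank computation using $r(M\ba y_1)=r(M)$ then shows $\lambda_M(\{y_B\})=2$, so $\{y_B\}$ is a non-$N$-$3$-separator of $M$ properly contained in $Y_1$, contradicting minimality.

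The remaining case is outcome (i), $\lambda_{M\ba y_1}(Y_1-y_1)=1$, and this is expected to be the main obstacle. Here Lemma~\ref{key} forces $Y_1-y_1=\{y_2,y_3\}$ to be a series pair of points in $M\ba y_1$ (the alternative of two lines contradicts our assumption on $Y_1$). Rank computations using $\lambda_M(Y_1)=2$, $r(M\ba y_1)=r(M)$, and the series pair condition then yield $r(X_1)=r(M)-1$ and $\sqcap(\{y_1\},X_1)=0$, so $\{y_1,y_2,y_3\}$ is a triad of points in $M$. Applying Lemma~\ref{claim1}(ii) to the series pair in $M\ba y_1$, each of $M/y_2$ and $M/y_3$ has a special $N$-minor. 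The final contradiction follows from a triad/fan analysis closely parallel to the proof of Lemma~\ref{fantan}: using \cite[Lemmas 4.1 and 4.2]{oswww}, one builds a maximal fan through $y_1$, and combining Lemma~\ref{Step0} on doubly labelled points with Tutte's Wheels-and-Whirls Theorem forces $M$ to be a whirl or the cycle matroid of a wheel, contradicting the assumption that $(M,N)$ is a counterexample to Theorem~\ref{modc}.
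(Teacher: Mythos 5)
Your opening step (eliminating lines in $Y_1 \setminus E(N)$ via Lemma~\ref{3lines} and duality), your choice of a point $y_1 \in Y_1 \setminus E(N)$, and your invocation of Lemma~\ref{p124} all match the paper's structure, and your handling of outcome~(i) via the triad/fan argument is essentially the paper's final paragraph. The problem is outcome~(iii), where you claim $\{y_B\}$ is a non-$N$-$3$-separator properly contained in $Y_1$, contradicting minimality. That claim is wrong. In a $3$-connected $2$-polymatroid, \emph{every} line $y$ satisfies $\lambda_M(\{y\}) = 2$, so if singletons counted as non-$N$-$3$-separators the notion of a minimal one would collapse and Lemma~\ref{Step5+} would be meaningless. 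Throughout the paper (the statement \ref{Step4}, the proof of Lemma~\ref{Step5+}, the construction in Lemma~\ref{nonN}) non-$N$-$3$-separators are treated as having at least two elements, with the clause about both elements being lines governing the smallest allowed case. So there is no quick minimality contradiction available in outcome~(iii). Note also that the other candidate two-element subset, $(Y_1 - y_A)$, fails the definition too, because it contains the point $y_1$ and a $2$-element non-$N$-$3$-separator must consist of two lines.

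Outcome~(iii) (equivalently $y_1 \in \cl(Y_1 - y_1)$) is in fact the hard part of the lemma, and the paper's sublemma \ref{ellisnot} devotes most of the proof of Lemma~\ref{y13} to it. After deducing, just as you do, that $Y_1 \cap B$ is a single line $m$ and $A \cap (Y_1 - \ell)$ is a single point $a$, the paper must show $a \in \cl(X_1)$ and $r(\{a,\ell,m\}) = r(\{\ell,m\}) = 3$, then split on whether $N$ is reached by contracting or deleting $a$; the latter branch passes through the auxiliary $2$-polymatroid $M\underline{\downarrow} m$, shows $M\underline{\downarrow} m$ is $3$-connected while a related extension $M_1 \ba a/m'$ is not, and closes with a triangle/triad argument. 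None of that is captured by the shortcut you propose, so your proof has a genuine gap at exactly the step where the real work lies. (A minor side remark: citing Lemma~\ref{pricklytime} in your duality reduction is unnecessary; Lemma~\ref{csm} together with Lemma~\ref{compact0} already does the job.)
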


\begin{proof} 
As $|Y_1 \cap E(N)| \le 1$, at least two of the elements of $Y_1$ are not in $E(N)$. Let $\ell$ be one of these elements. Suppose $\ell$ is a line. If $M\ba \ell$ has $N$ as a c-minor, then the result follows by Lemma~\ref{3lines}. If $M/\ell$ has $N$ as a c-minor, then $(M^*\ba \ell)^{\flat}$, and hence 
$M^*\ba \ell$ has $N^*$ as a c-minor and again the result follows by Lemma~\ref{3lines}. 

We may now assume that $\ell$ is a point. By switching to the dual if necessary, we may assume that $M\ba \ell$ has $N$ as a c-minor. Let $(A,B)$ be a \tws\ of $M\ba \ell$ where $A$ is the $N$-side and $|B|   \ge 3$.  Next we show that 

\begin{sublemma}
\label{ellisnot}
$\ell \notin \cl(Y_1 - \ell)$.
\end{sublemma}

Assume $\ell \in  \cl(Y_1 - \ell)$. Then, by Lemma~\ref{pre3lines}, we know that $(A,B)$ crosses $(X_1,Y_1- \ell)$, that $\lambda_{M\ba \ell}(X_1 \cap A) \ge 2$, and that $Y_1 \cap B$ consists of a single line, say $m$. Now $|B \cap X_1| \ge 2$ since $|B| \ge 3$. Then 
$$\lambda_{M\ba \ell}(B \cap X_1) \ge 2$$ 
otherwise, since $\ell \in \cl(Y_1 - \ell)$, we deduce that $\lambda_{M}(B \cap X_1) \ge 1$; a \cn. 
By uncrossing, $\lambda_{M\ba \ell}(Y_1 \cap A) \le 1$. 

Since $|Y_1| = 3$ and $Y_1 \cap B$ consists of the line $m$, we deduce that $A \cap (Y_1 - \ell)$ consists of a single point, say $a$, otherwise one of the elements of $Y_1 - \ell$ is a line that is not in $E(N)$ and we have already dealt with that case. As $\lambda_{M\ba \ell}(X_1 \cap A) \ge 2$ and $\lambda_{M\ba \ell}(A) = 1$, we deduce that 
\begin{equation}
\label{aab}
r(A-a) = r(A) \text{~~and~~} r(B \cup a) = r(B) + 1.
\end{equation}
Hence 
\begin{equation}
\label{ax1}
a \in \cl(X_1). 
\end{equation}
We may assume that $m \in E(N)$ otherwise $m$ is removed in forming $N$ and that case was dealt with in the first paragraph. 

Now $Y_1 = \{a,\ell,m\}$. As $m \in B$, it follows by (\ref{aab}) that $r(\{m,a\}) = 3$.  Moreover, as $\{m,a\} = Y_1 - \ell$ and $\ell \in \cl(Y_1 - \ell)$, we deduce that $r(Y_1) = 3$.  By (\ref{ax1}), $r(X_1 \cup a) = r(X_1)$. We deduce that 
\begin{equation}
\label{lmn}
r(\{a,\ell,m\}) = r(\{\ell,m\}) = 3 \text{~~and~~} r(X_1 \cup a) = r(M) - 1.
\end{equation}

Since $m \in E(N)$, it follows that $a \not\in E(N)$. Suppose that $M\ba \ell /a$ has $N$ as a c-minor.  Still as part of the proof of \ref{ellisnot}, we show next that 

\begin{sublemma}
\label{am2s}
$M/a$ is the $2$-sum with basepoint $q$ of two $2$-polymatroids, one of which consists of the line $m$ having non-parallel points $q$ and $\ell$ on it.
\end{sublemma}

By (\ref{lmn}), $(\{\ell,m\},X_1)$ is a 2-separation of $M/a$. Thus $M/a$ is the 2-sum with basepoint $q$ of two $2$-polymatroids, one of which, $Q$ say, consists of the line $m$ having  points $q$ and $\ell$ on it. Suppose $q$ and $\ell$ are parallel points in $Q$. Then $(\{m\},X_1 \cup \ell)$ is a 2-separation of $M/a$. It follows that  
$(\{m\},X_1 \cup \ell \cup a)$ is a 2-separation of $M$; a \cn. Thus \ref{am2s} holds. 

By \ref{am2s}, both $M/\ell$ and $M\ba \ell$ have $N$ as a c-minor; a \cn\ to Lemma~\ref{Step0}.

We now know that $N$ is a c-minor of $M\ba \ell\ba a$. In that $2$-polymatroid, $\{m\}$ is $2$-separating so, in the formation of $N$, the element $m$ is compactified. As the next step towards showing \ref{ellisnot}, we now show that 

\begin{sublemma}
\label{m1m'}
$M\ud m$ is $3$-connected.
\end{sublemma}

To see this, it will be helpful to consider the $2$-polymatroid $M_1$ that is obtained from $M$ by freely adding the point $m'$ on $m$. By definition, $M\ud m = M_1/m_1$. Certainly $M_1$ is \thc, so $M_1/m'$ is $2$-connected. Assume it has a \tws\ $(U,V)$ where $m \in U$. Then 
$$r(U \cup m') + r(V \cup m') - r(M_1) = 2.$$
But $r(U \cup m') = r(U).$ Hence $r(V \cup m') = r(V)$ otherwise $M_1$ has a \tws; a \cn. But, as $m'$ was freely placed on $m$, we deduce that 
$r(V\cup m' \cup m) = r(V \cup m') = r(V)$. Now, in $M_1\ba m$, we see that $\{\ell,m'\}$ is a series pair of points. As $m' \in \cl(V)$, it follows that $\ell \in V$. Then $r(U - m) < r(U)$ since $\{m\}$ is 2-separating in $M\ba \ell$. Now $r(U - m) = r(U) - 1$ otherwise $r(U - m) = r(U) - 2$ and 
$(U-m,V\cup \{m',m\})$ is a $1$-separation of $M_1$. As $(U-m,V\cup \{m',m\})$ is not a $2$-separation of $M_1$, it follows that $U - m$ consists of a single point $u$ and $r(\{u,m\}) = 2$.  Thus, in $M\ba \ell$, when we compactify $m$, we find that $u$ and $m$ are parallel. Since $m\in E(N)$, we see that $u \not\in E(N)$. Moreover, $M\ba u$ has $N$ as a c-minor. Since $u$ lies on $m$ in $M$, we deduce that $M\ba u$ is \thc\ having $N$ as a c-minor. This \cn\ completes the proof of \ref{m1m'}.

Now, in $M_1/m'$, the elements $a, \ell$, and $m$ form a triangle of points. We know that $M_1/m'\ba \ell$ is not \thc\ otherwise $(M\ba \ell)^{\flat}$ is \thc\ having $N$ as a c-minor. Because $M\ba a$ has $N$ as a c-minor, $M\ba a$ is not \thc, so $M_1 \ba a$ is not \thc. Still continuing with the proof of 
\ref{ellisnot}, we show next that 

\begin{sublemma}
\label{m1m'a}
$M_1\ba a/m'$   is  not $3$-connected.
\end{sublemma}

Let $(G,H)$ be a $2$-separation of $M_1 \ba a$ with $m$ in $G$. Then $(G \cup m',H-m')$ is a \tws\ of $M_1\ba a$ unless $H$ consists of two points. In the exceptional case, $r(H) = 2$ so $r(G) = r(M) - 1$. But then $a \cup (H - m')$ is a series pair in $M$; a \cn. We conclude that we may assume that $m' \in G$. Then $\ell \in H$, otherwise, by (\ref{lmn}),  $(G \cup a, H)$ is a \tws\ of $M_1$; a \cn. 

Observe that $G \neq \{m,m'\}$ otherwise $\{m\}$ is 2-separating in $M\ba a$ and so, as $a \in \cl(X_1)$ , we obtain the \cn\ that $\{m\}$ is 2-separating in $M$.

Now 
\begin{equation}
\label{m1m1m1}
r_{M_1 \ba a/m'}(G - m') + r_{M_1 \ba a/m'}(H) - r(M_1 \ba a/m')= r(G)  + r(H \cup m') - 1 - r(M_1\ba a).
\end{equation} 

Suppose  that $r(H \cup m') = r(H)$. Then $r(H \cup m'\cup m) = r(H)$ as $m'$ is freely placed on $m$. Thus, as  $G \supsetneqq \{m,m'\}$ and $\{m\}$ is 2-separating in $M\ba \ell \ba a$, we see that $(G- m- m', H \cup \{m,m'\})$ is a 1-separation of $M_1\ba a$. Therefore  $(G- m- m', H \cup \{m,a\})$ is a 1-separation of $M$; a \cn. 

We now know that  $r(H \cup m') = r(H) + 1$. Then, as $(G,H)$ is a \tws\ of $M_1 \ba a$, it follows by (\ref{m1m1m1}) that $(G-m',H)$ is a \tws\ of $M_1 \ba a /m'$ unless either $|H| = 1$ and $r_{M_1/m'}(H) = 1$, or $|G-m'| = 1$ and $r_{M_1/m'}(G-m') = 1$. Consider the exceptional cases. The first of these cannot occur since $m'$ is freely placed on $m$; the second cannot occur since it implies that $G = \{m,m'\}$, which we eliminated above. As neither of the exceptional cases occurs, $M_1 \ba a /m'$ has a 2-separation and so 
 \ref{m1m'a} holds. 
 
Recall that $M_1/m' = M\ud m$. In this 
$2$-polymatroid, we have $\{a, \ell,m\}$ as a triangle such that the deletion of either $a$ or $\ell$ destroys $3$-connectedness. Hence, by \cite[Lemma 4.2]{oswww}, there is a triad of $M_1/m'$ that contains $a$ and exactly one of $\ell$ and $m$.  Assume this triad contains $\ell$. 
 Thus, in $M\ba \ell\ud m$, we have that $a$ is in a series pair with some element $b$. Then $M\ba \ell/a$ has $N$ as a c-minor, so $a$ is a doubly labelled point of $M$; a \cn\ to Lemma~\ref{Step0}. We deduce that $M_1/m'$ has a triad containing $\{a,m\}$ but not $\ell$.
Then $M_1/m'\ba \ell$, which equals $M\ba \ell\ud m$, either has a triad   containing $\{a,m\}$ or has $a$ in a series pair. This is straightforward to see by considering  the matroid that is naturally derived from  $M\ba \ell\ud m$ and using properties of the cocircuits in this matroid. Now $a$ is not in a series pair in $M\ba \ell\ud m$ otherwise we again obtain the \cn\ that $a$ is a doubly labelled point. We deduce that $M\ba \ell\ud m$ has a triad containing $\{a,m\}$. Since $m \in B$ and, by (\ref{aab}), $r(A - a) = r(A)$, we must have that the third point, $b$, of this triad is in $A-a$. 

Now $M\ba \ell \ud m$ has $(A,B)$ as a $2$-separation and has $\{a,b,m\}$ as a triad with $\{a,b\} \subseteq A$. Thus 
$(A \cup m,B - m)$ is a \tws\ of $M\ba \ell \ud m$. Since $\ell$ is in the triangle $\{a,m,\ell\}$ in $M \ud m$, it follows that $(A \cup m\cup \ell,B - m)$ is a \tws\ of $M \ud m$. This \cn\ to \ref{m1m'} completes the proof of \ref{ellisnot}.


Since $\ell \not\in \cl(Y_1 - \ell)$, we deduce that $(X_1\cup \ell,Y_1 - \ell)$ is 3-separating in $M$. Because $Y_1$ is a minimal non-$N$-3-separating set, $Y_1 - \ell$ does not consist of two lines. Moreover, $(X_1,Y_1 - \ell)$ is a 2-separation in $M\ba \ell$. 

\begin{sublemma}
\label{pointline}
$Y_1 - \ell$  does not consist of a point and a line.
\end{sublemma}

Assume that $Y_1- \ell$ consists of a line $k$ and a point $y$. If $k \not\in E(N)$, then the argument in the first paragraph of the proof of the lemma gives a \cn. Thus $k \in E(N)$, so $y \not\in E(N)$. If $r(Y_1 - \ell) = 2$, then $M\ba \ell \ba y$, and hence $M\ba y$,  has $N$ as a c-minor. Since $y$ is on the line $k$, we see that $M\ba y$ is  \thc; a \cn. We deduce that $r(Y_1 - \ell) = 3$. Hence
\begin{equation}
\label{x1rk}
r(X_1) = r(M) - 2 \text{~~and~~} r(X_1 \cup \ell) = r(M) - 1.
\end{equation}

Now $M\ba \ell$ is the 2-sum with basepoint $p$, say, of two $2$-polymatroids, $M_X$ and $M_Y$, with ground sets $X_1 \cup p$ and $(Y_1 - \ell) \cup p$, respectively. Then $r(M_Y) = 3$. Moreover, $y$ does not lie on $k$ in $M_Y$, otherwise $M_Y$ is not $2$-connected, a \cn\ to Proposition~\ref{connconn}. 
Thus $M^*\ba y$ has $N^*$ as a c-minor. Then, by applying \ref{ellisnot} to $M^*\ba y$, we deduce that $y \not\in \cl_{M^*}(Y_1 - y)$. Thus 
$r^*(Y_1 - y) = r^*(Y_1) - 1$. It follows that $r(X_1 \cup \ell \cup y) = r(X_1 \cup \ell)$. But $r(X_1 \cup \ell \cup y) = r(M\ba k) = r(M)$ yet $r(X_1 \cup \ell) = r(M) - 1$. This \cn\ completes the proof of \ref{pointline}

We now know that $Y_1 - \ell$ consists of a series pair of points, say $y_1$ and $y_2$. Now $r(M\ba \ell) = r(M) = r(X_1) + 1$. Also $r(\{\ell,y_1,y_2\}) = 3$. Thus $\{\ell,y_1,y_2\}$ is a triad of $M$. Moreover, both $M/y_1$ and $M/y_2$ have special $N$-minors. Thus neither is \thc. By \cite[Lemma 4.2]{oswww}, $M$ has a triangle that contains $y_1$ and exactly one of $y_2$ and $\ell$. Likewise, $M$ has a triangle that contains $y_2$ and exactly one of $y_1$ and $\ell$. Thus either 
\begin{itemize}
\item[(i)] $M$ has a triangle $\{y_1,y_2,z\}$; or 
\item[(ii)] $M$ has triangles $\{y_1,\ell,z_1\}$ and $\{y_2,\ell,z_2\}$ but no triangle containing $\{y_1,y_2\}$. 
\end{itemize}

In the first case, $M/y_1$ has $\{y_2,z\}$ as a pair of parallel points. Hence $M\ba y_2$ has a special $N$-minor. Thus $y_2$ is doubly labelled; a \cn. We deduce that (ii) holds. Thus $M$ contains a fan $x_1,x_2,\dots,x_n$ where $(x_1,x_2,x_3,x_4,x_5) = (z_2,y_2,\ell,y_1,z_1)$. Hence $M/x_2$ has a c-minor isomorphic to $N$. Then, by Lemmas~\ref{fantan} and \ref{Step0}, we obtain a \cn.
\end{proof}

We complete the proof of Lemma~\ref{Step6} by analyzing the various possibilities for a minimal non-$N$-3-separator consisting of exactly three lines.

\section{A minimal non-$N$-3-separator consisting of exactly three lines}
\label{threeel}

In this section, we finish the proof of (\ref{Step6}).  We begin by restating that assertion. 

\begin{lemma}
\label{Step6+} 
$M$ has a minimal non-$N$-$3$-separator with at least four elements. 
\end{lemma}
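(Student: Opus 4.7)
The plan is to prove Lemma~\ref{Step6+} by contradiction, assuming every minimal non-$N$-$3$-separator of $M$ has exactly three elements. By Lemma~\ref{y13} every such set is of the form $Y_1 = \{\ell_1, \ell_2, \ell_3\}$ with all three entries lines; and since $|Y_1 \cap E(N)| \le 1$, at least two of the $\ell_i$ lie outside $E(N)$. Fix $\ell \in Y_1 \setminus E(N)$, and by replacing $(M, N)$ with $(M^*, N^*)$ if necessary (using Lemma~\ref{csm}, Proposition~\ref{sminordual}, Lemma~\ref{pricklytime}, and the identity $\lambda_M = \lambda_{M^*}$ from Lemma~\ref{compact0}), assume $M\ba\ell$ has $N$ as a c-minor. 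By Lemma~\ref{Step1}, $M\ba\ell$ is $2$-connected, and by Lemma~\ref{muend}, $\mu(\ell) \ge 3$; choose a $2$-separation $(A, B)$ of $M\ba\ell$ with $A$ the $N$-side and $|B| = \mu(\ell)$, and set $X_1 = E(M) - Y_1$.

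Now apply Lemma~\ref{p124}. Case~(ii) fails at once since $|B| \ge 3 > 2 = |Y_1 - \ell|$. In case~(i) the set $Y_1 - \ell$ is the non-$N$-side of a small $2$-separation $(X_1, Y_1 - \ell)$ of $M\ba\ell$ consisting of two lines, so the $2$-sum decomposition $M\ba\ell = M_X \oplus_2 M_Y$ across $(X_1, Y_1 - \ell)$ has $M_Y \in \{P_4, P_7, P_9\}$ by Lemma~\ref{old2}; adapting the eliminations in Lemmas~\ref{fourmost}, \ref{no7}, and \ref{no9} to the coexistence of the larger $2$-separation $(A, B)$, and using Lemmas~\ref{cactus}, \ref{cactus2}, \ref{portia} to produce a prickly $3$-separator of $M$ whose series compression is $3$-connected and retains a c-minor isomorphic to $N$ (via Lemmas~\ref{pricklytime0}, \ref{dennisplus}, and~\ref{claim1}), yields a contradiction.

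In case~(iii), $(A, B)$ crosses $(X_1, Y_1 - \ell)$ with $A \cap (Y_1 - \ell) = \{a\}$ and $B \cap (Y_1 - \ell) = \{b\}$ each a single line; Lemma~\ref{p124}(iii) gives $\lambda_{M\ba\ell}(\{a\}) = 1 = \lambda_{M\ba\ell}(\{b\})$ and $\lambda_{M\ba\ell}(A \cap X_1) = 2 = \lambda_{M\ba\ell}(B \cap X_1)$, while the equality $\lambda_M(Y_1) = \lambda_{M\ba\ell}(Y_1 - \ell) = 2$ forces $\ell \in \cl_M(\{a, b\})$, hence $\ell \in \cl(A \cup b)$. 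A direct rank computation then gives $\lambda_M(B \cap X_1) = \lambda_{M\ba\ell}(B \cap X_1) = 2$. Since by Lemma~\ref{key} the set $B$ contains no points, $B \cap X_1$ consists solely of lines and meets $E(N)$ in at most one element, so $B \cap X_1$ is a non-$N$-$3$-separator of $M$ with $|B \cap X_1| = |B| - 1 \ge 2$. If $|B \cap X_1| = 2$, then $B \cap X_1$ is itself a minimal non-$N$-$3$-separator of size two, contradicting Lemma~\ref{Step5+}. If $|B \cap X_1| \ge 3$, then $B \cap X_1$ contains a three-line minimal non-$N$-$3$-separator $Y_1'$ disjoint from $Y_1$, and either rerunning the argument with $Y_1'$ in place of $Y_1$, or directly identifying (via a case split on $r(\{a, b\}) \in \{3, 4\}$) whether $\{\ell, a\}$ or $\{\ell, b\}$ satisfies the defining conditions of a prickly $3$-separator of $M$ (each element a line, $\lambda = 2$, rank $3$, and the closure conditions inherited from $r(E - \ell) = r(E - a) = r(M)$ and $r(E - \{\ell, a\}) = r(M) - 1$), then appealing to Lemma~\ref{portia}, Lemma~\ref{pricklytime0}(i), Lemma~\ref{dennisplus}(vi), and Lemma~\ref{claim1}(iii) to extract a $3$-connected series compression with $N$ as c-minor, delivers the desired contradiction.

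The main obstacle will be the case~(iii) sub-subcase in which $r(\{a, b\}) = 4$ and $r(\{\ell, a\}) = r(\{\ell, b\}) = 4$, where neither of the candidate two-element prickly $3$-separators is actually $3$-separating, so one cannot directly read off a series compression and must instead rely on the reduction through $B \cap X_1$ and on iterating the whole argument with a disjoint three-line minimal non-$N$-$3$-separator $Y_1'$. Secondary care is needed in case~(i) to ensure that each of the three configurations $M_Y \in \{P_4, P_7, P_9\}$ combined with the larger $2$-separation $(A, B)$ admits the required prickly $3$-separator extraction; this parallels but does not reduce to the $\mu(\ell) = 2$ analyses in Section~\ref{alltwos}.
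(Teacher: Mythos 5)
The paper's actual proof of Lemma~\ref{Step6+} takes a completely different route from what you propose: it fixes a size-three $Y_1$ of three lines, adjoins the guts line $z$ of the $3$-separation $(X_1,Y_1)$ via the Beavers construction, forms the four-line $2$-polymatroid $P=(M+z)|(Y_1\cup z)$, and then runs a clean case analysis on $\theta=|\{y\in Y_1:\sqcap(\{y\},\{z\})=1\}|$, ruling out $\theta=0,1,2,3$ in Lemmas~\ref{3four}, \ref{3three}, \ref{3two}, \ref{3five} (with Lemma~\ref{3five} being a Tutte-Triangle-Lemma-style argument). None of this appears in your outline, so this is not a variant of the paper's proof but an attempt at a genuinely different one — and it has gaps.

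Concretely: in your case~(i), the decomposition factor $M_Y$ on $(Y_1-\ell)\cup p$ has two lines (since $Y_1$ consists of three lines) together with the point $p$, so the candidates from Lemma~\ref{old2} are $P_4,P_5,P_6,P_8,P_9$, not $\{P_4,P_7,P_9\}$ as you write — $P_7$ has a point, not a second line, and the rank-$3$ options $P_5,P_6,P_8$ must be addressed. Moreover, the eliminations in Section~\ref{alltwos} are proved under the hypothesis $\mu(\ell)=2$, so ``adapting'' them when $\mu(\ell)\ge 3$ is a substantive task, not a citation. In case~(iii), your claim that $|B\cap X_1|=2$ ``contradict[s] Lemma~\ref{Step5+}'' is incorrect: Lemma~\ref{Step5+} is an existence statement (``$M$ has a minimal non-$N$-$3$-separator with at least three elements''), not a universal one, so it is compatible with the simultaneous presence of size-two minimal non-$N$-$3$-separators, and you would still need to exploit the prickly structure as in the proof of Lemma~\ref{Step5+} itself. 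Finally, you yourself flag the crux: the sub-subcase with $r(\{a,b\})=4$ and $r(\{\ell,a\})=r(\{\ell,b\})=4$, where no two-element prickly $3$-separator emerges directly, is left to ``rerunning the argument with $Y_1'$'' — an iteration that is not shown to make progress or terminate, since nothing prevents the new $Y_1'$ from reproducing the same impasse. That unresolved core case is precisely the part of the problem that the paper's $\theta$-analysis (via the guts line) is designed to settle, and without an actual replacement for it the proof is incomplete.
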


We have $(X_1,Y_1)$ as a $3$-separation of the \thc\ $2$-polymatroid $M$. We shall consider the extension $M+z$ of $M$ that is obtained by adjoining the line $z$ to $M$ so that $z$ is in the closure of each of $X_1$ and $Y_1$ in $M+z$. To see that this extension exists, we note that, by building on a result of Geelen, Gerards, and Whittle~\cite{ggwtconn}, Beavers~\cite[Proposition~2.2.2]{beavs}
showed that, when $(A,B)$ is a \ths\ in a \thc\ matroid $Q$, we can extend $Q$ by an independent set $\{z_1,z_2\}$ of size two so that  these two points are clones, and each lies in the closure of both $A$ and $B$ in the extension $Q'$. 
By working in the matroid naturally derived from $M$, we can add $z_1$ and $z_2$. This corresponds to adding the line $z$ to $M$  to form $M+z$ where $z = \{z_1,z_2\}$.

More formally, recall that the natural matroid $M'$ derived from $M$ is obtained from $M$ by freely adding two points, $s_{\ell}$ and $t_{\ell}$, on each line $\ell$ of $M$ and then deleting all such lines $\ell$. After we have extended $M'$ by $z_1$ and $z_2$, we have a matroid with points $\{z_1,z_2\} \cup \{p:\text{~$p$ is a point of $M$}\} \cup \{s_{\ell},t_{\ell}:\text{~$\ell$ is a line of $M$}\}$. Taking $z= \{z_1,z_2\}$, we see that $M+ z$ is the $2$-polymatroid with elements $\{z\} \cup \{p:\text{~$p$ is a point of $M$}\} \cup \{\ell:\text{~$\ell$ is a line of $M$}\} = \{z\} \cup E(M)$. We call $M+z$ the $2$-polymatroid that is obtained from $M$ by {\it adding the guts line $z$ of $(X_1,Y_1)$.}

When we have $Y_1$ as a minimal non-$N$-$3$-separator of $M$ consisting of three lines, we look at $(M+z)|(Y_1 \cup z)$. This $2$-polymatroid consists of exactly four lines.

\begin{lemma}
\label{claim1y1} 
$(M+z)|(Y_1 \cup z)$ has no parallel lines, so $r_{M+z}(Y_1 \cup z) \ge 3$. 
\end{lemma}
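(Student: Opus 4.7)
The plan is to argue by contrapositive: if $(M+z)|(Y_1 \cup z)$ contains a pair of parallel lines, then $M$ has a non-$N$-$3$-separator strictly contained in $Y_1$, contradicting the minimality of $Y_1$. There are two cases to consider, according to whether the parallel pair lies entirely in $Y_1$ or involves the adjoined line $z$.

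First suppose $a,b \in Y_1$ are parallel lines of $M$, so $r_M(\{a,b\}) = 2$. Then
$$\lambda_M(\{a,b\}) = 2 + r_M(E-\{a,b\}) - r_M(M) \le 2,$$
and the $3$-connectedness of $M$ forces equality. Since $a$ and $b$ are both lines and $|E(N) \cap \{a,b\}| \le |E(N) \cap Y_1| \le 1$, the pair $\{a,b\}$ is a two-element non-$N$-$3$-separator strictly inside $Y_1$; contradiction.

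Next suppose $z$ is parallel to some $y \in Y_1$, so that $r_{M+z}(\{z,y\}) = 2$, i.e.\ $y \in \cl_{M+z}(\{z\})$. Combined with the defining property $z \in \cl_{M+z}(X_1)$, this gives $y \in \cl_{M+z}(X_1)$; and since the rank function of $M+z$ agrees with that of $M$ on subsets of $E(M)$, we obtain $r_M(X_1 \cup y) = r_M(X_1)$. Setting $\{c,d\} = Y_1 - y$, two lines of $M$ with at most one in $E(N)$, we compute
$$\lambda_M(\{c,d\}) = r_M(\{c,d\}) + r_M(X_1 \cup y) - r_M(M) = r_M(\{c,d\}) + r_M(X_1) - r_M(M).$$
The $3$-connectedness of $M$ forces $\lambda_M(\{c,d\}) \ge 2$, hence $r_M(\{c,d\}) \ge r_M(M) + 2 - r_M(X_1) = r_M(Y_1)$; since $\{c,d\} \subseteq Y_1$, equality must hold throughout, giving $\lambda_M(\{c,d\}) = 2$. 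Thus $\{c,d\}$ is a two-element non-$N$-$3$-separator properly inside $Y_1$, again contradicting minimality.

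Having ruled out any parallel pair among the four lines of $Y_1 \cup z$, any two of them have combined rank at least $3$ (two distinct non-parallel lines have rank $3$ or $4$), so the inequality $r_{M+z}(Y_1 \cup z) \ge 3$ follows immediately. The main obstacle is the second case, where a parallel relation involving the newly adjoined guts line $z$ must be converted back into a closure relation inside $M$; once this translation is in hand, the remaining argument is a direct local-rank computation using $\lambda_M(X_1) = \lambda_M(Y_1) = 2$.
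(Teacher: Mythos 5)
Your proof is correct, and the second case (parallel pair involving $z$) is essentially the paper's one-line argument unpacked: $y \in \cl_{M+z}(\{z\}) \subseteq \cl_{M+z}(X_1)$, hence $y\in\cl_M(X_1)$, hence $Y_1-y$ inherits $\lambda = 2$ from $Y_1$, contradicting minimality. For the first case your route is genuinely different from the paper's. You observe directly that a parallel pair $\{a,b\}\subsetneq Y_1$ of lines is itself a two-element non-$N$-$3$-separator (both are lines, $\lambda_M(\{a,b\})=2$ by $3$-connectedness, and at most one is in $E(N)$), contradicting minimality. The paper instead argues via c-minors: taking $b\notin E(N)$, one of $M\ba b$ or $M/b$ has $N$ as a c-minor, and in the second case $a$ is a loop of $M/b$ so $M\ba a$ has an $N$-minor; since both $M\ba a$ and $M\ba b$ are $3$-connected (deleting one of a parallel pair of lines preserves $3$-connectivity), this contradicts the assumption that $(M,N)$ is a counterexample to the splitter theorem. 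Your version is arguably cleaner: it treats both cases uniformly via minimality of $Y_1$, whereas the paper switches justification between the two cases, and it avoids re-proving (implicitly) that deleting a line from a parallel pair keeps $3$-connectivity. The two are of comparable length; the main thing your reader must check, as in the paper's version, is that $\lambda_M(\{a,b\})=2$ rather than $\le 1$, which is immediate from $|E(M)|\ge 6$.
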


\begin{proof}
Suppose $a$ and $b$ are parallel lines in $Y_1$. Then we may assume that $b\not\in E(N)$. Now $M\ba b$ or $M/b$ has $N$ as a c-minor. In the latter case, as $a$ is a loop of $M/b$, it follows that $a \not \in E(N)$ and $M\ba a$ has $N$ as a c-minor. We conclude that $M\ba b$ or $M\ba a$ has $N$ as a c-minor. Since each of $M\ba b$ and $M\ba a$ is \thc, we obtain the \cn\ that the theorem holds. Thus $Y_1$ contains no pair of parallel lines.

Suppose $z$ is parallel to some element $y$ of $Y_1$. Then $(X_1 \cup y, Y_1 - y)$ is a non-$N$-$3$-separator of $M$ contradicting the minimality of $Y_1$. Thus $(M+z)|(Y_1 \cup z)$ has no parallel lines and the lemma holds.
\end{proof}

\begin{lemma}
\label{claim3} 
 $r(Y_1) > 3$. 
\end{lemma}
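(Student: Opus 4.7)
The plan is to argue by contradiction: suppose $r(Y_1) = 3$ and write $Y_1 = \{a, b, c\}$. By Lemma~\ref{claim1y1}, the three lines are pairwise non-parallel, so any two of them span rank three. The first step is to show $\sqcap(X_1, y) = 1$ for every $y \in Y_1$: the inequality $\sqcap(X_1, y) \ge 1$ is forced by the ranks, and if $\sqcap(X_1, y) = 2$ then $y \in \cl(X_1)$, exhibiting $Y_1 - y$ as a $2$-element non-$N$-$3$-separator (both lines, at most one in $E(N)$), contradicting Lemma~\ref{Step5+}. Routine rank calculations then give that each $y \in Y_1$ is compact in $M$, that $Y_1 - y$ is a prickly $3$-separator of $M\ba y$, and crucially that in $M/y$ the two elements of $Y_1 - y$ form a parallel pair of points (since $r(Y_1) - r(\{y\}) = 1$).

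Next, I would use this parallel-points observation to pick the right element to delete. Since $|Y_1 \cap E(N)| \le 1$, choose distinct $y_1, y_2 \in Y_1 \setminus E(N)$. If both $M/y_1$ and $M/y_2$ had $N$ as a c-minor, the parallel-points structure would let me delete $y_2$ from $M/y_1$, producing $N$ as a c-minor of $M\ba y_2$ as well, so $y_2$ would be doubly labelled, contradicting Lemma~\ref{Step0}. Hence without loss of generality $M\ba y_1$ has $N$ as a c-minor, and setting $\{u, v\} = Y_1 - y_1$ gives $\{u, v\}$ as a prickly $3$-separator of $M\ba y_1$. Since $(M, N)$ is a counterexample, $M\baba y_1 = (M\ba y_1)^{\flat}$ is not $3$-connected, so $M\ba y_1$ must admit a non-trivial $2$-separation $(P, Q)$.

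The final step, which I expect to be the main obstacle, is to rule out every such $(P, Q)$. The crucial observation is that $y_1 \in \cl(\{u, v\})$ (since $r(Y_1) = r(\{u, v\}) = 3$), so $\lambda_M(R) = \lambda_{M\ba y_1}(R)$ for every $R \subseteq E(M\ba y_1)$ disjoint from $\{u, v\}$. Combined with the $3$-connectedness of $M$, this equality immediately rules out the case $\{u, v\} \subseteq P$ or $\{u, v\} \subseteq Q$: the complementary side would be a subset of $X_1$ of size at least $2$ with $\lambda_M \le 1$, contradicting $3$-connectedness. In the remaining case, say $u \in P$ and $v \in Q$, I would uncross $(P, Q)$ with $(X_1, \{u, v\})$ and use $\lambda_{M\ba y_1}(\{u\}) = \lambda_{M\ba y_1}(\{v\}) = 2$ to obtain $\lambda_{M\ba y_1}(P \cap X_1), \lambda_{M\ba y_1}(Q \cap X_1) \le 1$; since $P \cap X_1$ and $Q \cap X_1$ are disjoint from $\{u, v\}$, the same $\lambda$-equality combined with $3$-connectedness of $M$ forces $|P \cap X_1|, |Q \cap X_1| \le 1$, whence $|X_1| \le 2$, contradicting $|X_1| = |E(M)| - 3 \ge 3$. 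Hence $M\baba y_1$ is in fact $3$-connected with a c-minor isomorphic to $N$, contradicting the counterexample hypothesis via outcome~(ii) of Theorem~\ref{modc}. The delicate part is the simultaneous use of uncrossing and the $\lambda_M = \lambda_{M\ba y_1}$ identity, both of which hinge on $r(Y_1) = 3$.
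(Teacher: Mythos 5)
Your overall strategy — assume $r(Y_1)=3$, locate $y\in Y_1$ with $M\ba y$ having $N$ as a c-minor, take a non-trivial $2$-separation $(P,Q)$ of $M\ba y$, show it crosses $(X_1,Y_1-y)$, uncross with $(X_1,\{u,v\})$ using $\lambda_{M\ba y}(\{u\})=\lambda_{M\ba y}(\{v\})=2$, and force $|X_1|\le 2$ — is essentially the paper's proof, and your uncrossing endgame is carried out correctly.

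There is a genuine gap at the step that produces the element $y$ to delete. You argue that if both $M/y_1$ and $M/y_2$ have $N$ as a c-minor then $y_2$ is doubly labelled, ``contradicting Lemma~\ref{Step0}.'' But Lemma~\ref{Step0} only forbids doubly labelled \emph{points}, and $y_2$ is a line. Indeed, $M$ does have doubly labelled lines (Lemma~\ref{dubya} later shows that $Y_1$ contains one), so no contradiction arises. The intended conclusion nevertheless holds, but you should state it as a direct derivation rather than a reductio: if neither $M\ba y_1$ nor $M\ba y_2$ has $N$ as a c-minor, then $M/y_1$ does, and the parallel pair of points in $M/y_1$ lets you delete whichever of the other two elements of $Y_1$ lies outside $E(N)$, producing $N$ as a c-minor of some $M\ba y_j$ with $y_j\in Y_1$. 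That is what the paper does. A smaller slip: ruling out $\sqcap(X_1,\{y\})=2$ (which would make $Y_1-y$ a $2$-element non-$N$-$3$-separator) contradicts the \emph{minimality} of $Y_1$, not Lemma~\ref{Step5+}; the latter merely asserts the existence of some minimal non-$N$-$3$-separator with at least three elements. Finally, the prickly-$3$-separator observation about $Y_1-y_1$ in $M\ba y_1$ is true but unused; what actually drives the endgame is the pair of equalities $\lambda_{M\ba y_1}(\{u\})=\lambda_{M\ba y_1}(\{v\})=2$, which you do have.
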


\begin{proof} 
Assume that 
$r(Y_1) = 3$. Then $r_{M+z}(Y_1 \cup z) = 3$, so $\sqcap(z,y) = 1$ for all $y$ in $Y_1$. Moreover, $r(Y_1 - y) = 3 = r(Y_1)$ for all $y$ in $Y_1$.

Suppose that $y \in Y_1 - E(N)$ and $N$ is a c-minor of $M/y$. Then the remaining two elements, $y_1$ and $y_2$, of $Y$ are parallel points in $M/y$. We may assume that $y_1 \not \in E(N)$. Thus $M\ba y_1$ has $N$ as a c-minor. We conclude that $N$ is a c-minor of $M\ba y$ for some element $y$ of $Y_1$. We now focus on this element $y$.

Let $(R,G)$ be a non-trivial \tws\ of $M\ba y$, that is, $\lambda_{M\ba y}(R)  = 1$ and $\min\{|R|,|G|\} \ge 2$. We show next that 

\begin{sublemma}
\label{claim4} 
$(R,G)$ crosses $(X_1,Y_1 - y)$.
\end{sublemma}

If $R \subseteq X_1$, then $G \supseteq Y_1 - y$ so $y \in \cl_M(G)$ and $(R,G \cup y)$ is a \tws\ of $M$. This \cn\ implies, using symmetry, that both $R$ and $G$ meet $Y_1 - y$.

Suppose $R \cap X_1 = \emptyset$. Then $R$ consists of  single line, so $(R,G)$ is a trivial \tws. This \cn, combined with symmetry, completes the proof of \ref{claim4}.

Let $Y_1 - y = \{a,b\}$. We may assume that $a \in R$ and $b \in G$. Now, as $y \in \cl(Y_1 - y)$, we see that $\lambda_{M\ba y}(Y_1 - y) = 2$. Thus 
\begin{align*} 
1+2 & = \lambda_{M\ba y}(R) + \lambda_{M\ba y}(Y_1 - y)\\
& \ge \lambda_{M\ba y}(\{a\}) + \lambda_{M\ba y}(R\cup (Y_1 - y))\\
& = \lambda_{M\ba y}(\{a\}) + \lambda_{M\ba y}(G \cap X_1).
\end{align*}

We know $r(E- Y_1) = r(X_1) = r(M) - 1$ since $r(Y_1) = 3$. Thus $r(E - \{y,a\}) = r(X_1 \cup b) = r(M)$. Hence 
$$ \lambda_{M\ba y}(\{a\}) = r(\{a\}) + r(E - \{y,a\}) - r(E - y) = r(\{a\}) = 2,$$
so $\lambda_{M\ba y}(G\cap X_1) \le 1$. But $y \in \cl(\{a,b\})$ so $\lambda_M(G \cap X_1) \le 1$. By symmetry, $\lambda_M(R \cap X_1) \le 1$. We conclude that $|G \cap X_1| \le 1$ and $|R \cap X_1| \le 1$, so $|X_1| \le 2$. This is a \cn\ since $|E(N)| \ge 4$. We conclude that the lemma holds.
\end{proof}

\begin{lemma}
\label{claim5} 
 $r_{M+z}(Y_1 \cup z) =r_M(Y_1) = 4$. 
\end{lemma}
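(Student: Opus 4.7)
My plan is to first observe that since $z$ was added so that $z \in \cl_{M+z}(Y_1)$, we have $r_{M+z}(Y_1 \cup z) = r_{M+z}(Y_1) = r_M(Y_1)$, so the task reduces to showing $r_M(Y_1) = 4$. Since $Y_1$ consists of three lines, $r_M(Y_1) \le 6$; and by Lemma~\ref{claim3}, $r_M(Y_1) \ge 4$. I will rule out the two remaining values, $5$ and $6$.

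To rule out $r_M(Y_1) = 6$, I would note that in this case $y_1, y_2, y_3$ are mutually skew and $r(X_1) = r(M) - 4$. Combining this with the compactness of $M$ (which yields $r(E - y_i) = r(M)$ for each $i$), a short calculation gives $\sqcap(X_1, \{y_j, y_k\}) = 0$ for each pair, hence $\sqcap(X_1, y_1) = 0$ by Lemma~\ref{8.2.3}. Lemma~\ref{oswrules}(ii) then produces $\sqcap(X_1 \cup y_1, \{y_2, y_3\}) = 2$, so $r(X_1 \cup y_1) = r(M) - 2$ and $\lambda(\{y_2, y_3\}) = 2$. Then $\{y_2, y_3\}$ is an exactly $3$-separating $2$-element set of lines meeting $E(N)$ in at most one element, hence a non-$N$-$3$-separator strictly inside $Y_1$, contradicting the minimality of $Y_1$.

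To rule out $r_M(Y_1) = 5$, I would use duality. First, I would argue that $(M^*, N^*)$ is also a counterexample to Theorem~\ref{modc}: $N^*$ is a c-minor of $M^*$ by Lemma~\ref{csm}; $3$-connectivity is self-dual since $\lambda_M = \lambda_{M^*}$ (Lemma~\ref{compact0}); wheels and whirls are self-dual; and the three hypothetical outcomes (ii) and (iii) of Theorem~\ref{modc} for $M^*$ translate back to outcomes for $M$ via $(M^* \baba \ell)^* = M/\ell$, $(M^*/\ell)^* = M \baba \ell$, and Proposition~\ref{compdual} together with Lemmas~\ref{pricklytime} and \ref{elemprop23} for the prickly compression case. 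Since $M$ is compact (as $|E(M)| \ge 3$), each line of $M$ remains a line in $M^*$, so $Y_1$ is still a minimal non-$N^*$-$3$-separator of $M^*$ consisting of three lines. Applying Lemma~\ref{claim3} to $(M^*, N^*)$ gives $r_{M^*}(Y_1) > 3$. But a direct computation using $r(X_1) = r(M) - 3$ yields
\[
r_{M^*}(Y_1) = \|Y_1\| + r(X_1) - r(M) = 6 + (r(M) - 3) - r(M) = 3,
\]
a contradiction.

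The main obstacle is the duality step — specifically, verifying cleanly that $(M^*, N^*)$ inherits the counterexample hypothesis so that Lemma~\ref{claim3}, which is stated and proved under the standing assumption that $(M,N)$ is a counterexample, legitimately applies to $(M^*, N^*)$. The necessary tools are all present in Sections~\ref{clc} and \ref{pc2s} (especially Lemma~\ref{csm}, Proposition~\ref{compdual}, Lemma~\ref{pricklytime}, and Lemma~\ref{elemprop23}), but I need to assemble them carefully and also to confirm that the class of minimal non-$N$-$3$-separators consisting of three lines is preserved under duality, which follows because lines of a compact $2$-polymatroid remain lines in the dual and $\lambda$ is self-dual.
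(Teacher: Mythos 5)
Your proposal is correct, and the core of it is the same as the paper's: compute $r_{M^*}(Y_1)$ from the exact $3$-separation, observe it drops below $4$, and invoke Lemma~\ref{claim3} in the dual. The one place you diverge is that you split $r_M(Y_1)\in\{5,6\}$ into two cases and handle $r_M(Y_1)=6$ with a separate primal argument built from Lemma~\ref{8.2.3}, Lemma~\ref{oswrules}(ii), and the minimality of $Y_1$. That argument checks out, but it is unnecessary: the paper simply assumes $r_M(Y_1)\ge 5$, so $r_M(X_1)\le r(M)-3$, giving $r_{M^*}(Y_1)=6+r_M(X_1)-r(M)\le 3$ in one stroke, which already covers $r_M(Y_1)=6$ (that case gives $r_{M^*}(Y_1)=2$, an even stronger contradiction). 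As for the ``main obstacle'' you identify---that $(M^*,N^*)$ inherits the counterexample hypothesis and that $Y_1$ remains a minimal non-$N^*$-$3$-separator of three lines in $M^*$---you are right that this needs to hold, but it is routine given the machinery already in place (Lemma~\ref{csm}, Lemma~\ref{compact0}, Proposition~\ref{compdual}, Lemmas~\ref{pricklytime} and~\ref{elemprop23}, self-duality of $\lambda$, and compactness of $M$ ensuring lines stay lines), and the paper takes it for granted here exactly as it does in, say, Lemma~\ref{dualmu} and Lemma~\ref{3three}. So: same key lemma, same dual computation, with one redundant sub-case on your end.
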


\begin{proof} We know that $r_{M+z}(Y_1 \cup z) = r_M(Y_1) \ge 4$. Suppose $r_M(Y_1) \ge 5$. Then $r_M(X_1) \le r(M) - 3$, so
$$r_{M^*}(Y_1) = \sum_{y \in Y_1} r_M(\{y\}) + r_M(X_1) - r(M) \le 6 + r(M) - 3 - r(M) = 3.$$  By using $M^*$ in place of $M$, we get a \cn\ to Lemma~\ref{claim3}. We conclude that the lemma holds. 
\end{proof}

We will now work with the $2$-polymatroid $(M+z)|(Y_1 \cup z)$, which we  rename $P$. This has rank 4 and consists of four lines, $z, a,b,$ and $c$. 

\begin{lemma}
\label{claim5.5} 
If $B \subseteq Y_1$ and $A = Y_1 - B$, then 
$$\sqcap_P(A \cup z, B) = \sqcap_{M+z}(A \cup X_1 \cup z, B).$$
\end{lemma}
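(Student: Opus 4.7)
The plan is to unfold both sides of the claimed equation using the definition $\sqcap(S,T)=r(S)+r(T)-r(S\cup T)$, and then reduce the whole statement to a single identity about the rank of $A\cup z$ in $M+z$ that falls out of the two closure properties defining the guts line. Since $P=(M+z)|(Y_1\cup z)$, and both $A\cup z$ and $B$ lie in $Y_1\cup z$, ranks in $P$ and in $M+z$ agree on the sets that appear, so $\sqcap_P(A\cup z,B)=\sqcap_{M+z}(A\cup z,B)$.

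Next I would exploit $z\in\cl_{M+z}(X_1)$ and $z\in\cl_{M+z}(Y_1)$ to simplify the terms that appear: $r_{M+z}(A\cup X_1\cup z)=r_M(A\cup X_1)$, $r_{M+z}(Y_1\cup z)=r_M(Y_1)$, $r_{M+z}(E(M)\cup z)=r(M)$, and $r_{M+z}(B)=r_M(B)$. Substituting into the two sides and cancelling the common $r_M(B)$, the desired equality reduces to the single identity
\[
r_{M+z}(A\cup z)=r_M(A\cup X_1)-r_M(X_1)+2,
\]
where I have used $\lambda_M(Y_1)=r_M(X_1)+r_M(Y_1)-r(M)=2$ to replace $r_M(Y_1)-r(M)$ by $2-r_M(X_1)$.

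The identity above is equivalent to $\sqcap_{M+z}(A\cup z,X_1)=2$, so the remaining work is to prove this equality. The key observation is the monotonicity of local connectivity (Lemma~\ref{8.2.3}): since $z\subseteq A\cup z\subseteq Y_1\cup z$,
\[
\sqcap_{M+z}(z,X_1)\;\le\;\sqcap_{M+z}(A\cup z,X_1)\;\le\;\sqcap_{M+z}(Y_1\cup z,X_1).
\]
The two outer terms both equal $2$: $\sqcap_{M+z}(z,X_1)=r(\{z\})+r_M(X_1)-r_M(X_1)=2$ because $z\in\cl(X_1)$, while $\sqcap_{M+z}(Y_1\cup z,X_1)=r_M(Y_1)+r_M(X_1)-r(M)=\lambda_M(Y_1)=2$ because $z\in\cl(Y_1)$ and so adding $z$ to $Y_1$ changes no rank. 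Hence the middle quantity is forced to be $2$, completing the reduction.

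There is no serious obstacle here; the whole argument is bookkeeping provided one remembers to invoke monotonicity of $\sqcap$ to pin down $\sqcap_{M+z}(A\cup z,X_1)=2$. The only mildly subtle step is recognizing that the two defining properties of the guts line—$z\in\cl(X_1)$ and $z\in\cl(Y_1)$—together with $\lambda_M(Y_1)=2$ are all that are needed; no appeal to the deeper clone structure from Beavers/GGW is required for this particular lemma.
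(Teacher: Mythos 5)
Your proof is correct and follows essentially the same route as the paper's: reduce the claim to showing $\sqcap_{M+z}(A\cup z,X_1)=2$, establish this by sandwiching between two quantities that both equal $2$ via the monotonicity of $\sqcap$ (Lemma~\ref{8.2.3}) together with the facts $z\in\cl(X_1)$ and $z\in\cl(Y_1)$, and then substitute back. The only cosmetic difference is that the paper takes the lower bound from $\sqcap(\{z\},\{z\})\le\sqcap(A\cup z,X_1\cup z)$ while you take it from $\sqcap(\{z\},X_1)\le\sqcap(A\cup z,X_1)$; both are valid and equivalent here.
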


\begin{proof} Since $P = (M+z)|(Y_1 \cup z)$, we can do all of these local connectivity calculations in $M+z$. Now 
$ \sqcap(A \cup z, X_1) = \sqcap(A \cup z, X_1\cup z)$, so 
$$2 = \sqcap(Y_1 \cup z, X_1) \ge \sqcap(A \cup z, X_1) = \sqcap(A \cup z, X_1\cup z) \ge 2.$$
Thus 
$$r(A \cup z) - 2 = r(A \cup z \cup X_1) - r(X_1).$$
Hence
\begin{align*}
\sqcap(A \cup z, B) & = r(A \cup z) + r(B) - r(A\cup z \cup B)\\
& = r(A \cup z \cup X_1) - r(X_1) + 2 + r(B) - r(Y_1)\\
& = r(A \cup z \cup X_1) + r(B) - [r(X_1) + r(Y_1) - 2]\\
& = r(A \cup z \cup X_1) + r(B) - r(M)\\
& = \sqcap(A \cup X_1 \cup z, B).
\end{align*}
\end{proof}

\begin{lemma}
\label{claim6} 
$P$ is $3$-connected. 
\end{lemma}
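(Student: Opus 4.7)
The plan is to verify $3$-connectedness of $P$ by checking each possible exact $2$-separation partition. Since $|E(P)| = 4$ and $r(P) = 4$ with every element a line, and no two elements of $P$ are parallel by Lemma~\ref{claim1y1}, any two elements of $P$ span a flat of rank at least $3$. I first dispose of $1$-separations and $2$-element $2$-separations by rank counting: a $1$-separation would need some proper $A$ with $r(A) + r(B) \le 4$, impossible since each nonempty side has rank at least $2$ and three non-parallel lines together have rank at least $3$; and a $2$-element $2$-separation would force $r(A) + r(B) \le 5$ with $r(A), r(B) \ge 3$, also impossible. Hence $P$ is $2$-connected, and the only remaining threat is a singleton $2$-separation, namely an element $y \in Y_1 \cup z$ with $r(P - y) \le 3$.

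If $y = z$, then $r(P - z) = r_P(Y_1) = r_M(Y_1) = 4$ by Lemma~\ref{claim5}, so this case is settled. For $y \in Y_1$, I derive a contradiction with the $3$-connectedness of $M$ under the assumption $r_P(\{z\} \cup (Y_1 - y)) = 3$. Since the three lines in $\{z\} \cup (Y_1 - y)$ are pairwise non-parallel, this forces $r_{M+z}(Y_1 - y) = 3$ and $z \in \cl_{M+z}(Y_1 - y)$. By the construction of $M+z$, we also have $z \in \cl_{M+z}(X_1)$, so $r(X_1 \cup z) = r(X_1)$ and $r((Y_1 - y) \cup z) = r(Y_1 - y)$. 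These two identities yield
\[
\sqcap_{M+z}(X_1 \cup z,\, (Y_1 - y) \cup z) \;=\; \sqcap_{M+z}(X_1,\, Y_1 - y).
\]
Applying Lemma~\ref{8.2.3} to the singleton $\{z\}$ contained in both $X_1 \cup z$ and $(Y_1 - y) \cup z$ gives
\[
2 \;=\; r(\{z\}) \;=\; \sqcap(\{z\},\{z\}) \;\le\; \sqcap(X_1 \cup z,\, (Y_1 - y) \cup z) \;=\; \sqcap_{M}(X_1, Y_1 - y).
\]
The reverse inequality $\sqcap_M(X_1, Y_1 - y) \le \sqcap_M(X_1, Y_1) = \lambda_M(Y_1) = 2$ follows from Lemma~\ref{8.2.3} again. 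Hence $\sqcap_M(X_1, Y_1 - y) = 2$, and expanding gives $r_M(X_1 \cup (Y_1 - y)) = r_M(X_1) + r_M(Y_1 - y) - 2 = (r(M) - 2) + 3 - 2 = r(M) - 1$, that is, $r_M(E(M) - y) = r(M) - 1$. Therefore $\lambda_M(\{y\}) = r(\{y\}) + r(E - y) - r(M) = 2 + (r(M) - 1) - r(M) = 1$, which contradicts the $3$-connectedness of $M$ because $r_M(\{y\}) = 2 > 1$.

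The only non-routine step is establishing that an element lying in the closure of both sides of a separation bounds their local connectivity by its rank; this is an immediate consequence of Lemma~\ref{8.2.3} once the two closure identities for $z$ are recorded, so there is no substantial obstacle.
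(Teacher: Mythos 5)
Your proof is correct, but it takes a genuinely different route from the paper's. The paper has already prepared Lemma~\ref{claim5.5}, which shows that for any partition $(A \cup z, B)$ of $E(P)$ with $B \subseteq Y_1$, the local connectivity $\sqcap_P(A \cup z, B)$ equals $\sqcap_{M+z}(A \cup X_1 \cup z, B)$. With this in hand the paper's proof of Lemma~\ref{claim6} is a single sentence: any $k$-separation $(A,B)$ of $P$ with $z \in A$ immediately lifts to a $k$-separation $(A \cup X_1 \cup z, B)$ of $M+z$, contradicting $3$-connectedness of $M$. Your argument bypasses Lemma~\ref{claim5.5} entirely: you use the rank and parallel-line facts from Lemmas~\ref{claim1y1} and~\ref{claim5} to kill $1$-separations and two-element $2$-separations directly, and for a singleton $2$-separation $(\{y\}, E(P)-y)$ with $y \in Y_1$, you derive from $z \in \cl_{M+z}(X_1) \cap \cl_{M+z}(Y_1 - y)$ and Lemma~\ref{8.2.3} that $\sqcap_M(X_1, Y_1 - y) = 2$, hence $\lambda_M(\{y\}) = 1$, contradicting $3$-connectedness of $M$. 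This is sound. What the paper's route buys is brevity and uniformity — Lemma~\ref{claim5.5} handles all partition types at once with no case split; what your route buys is independence from that lemma, a more elementary self-contained chain of rank inequalities, and it makes the geometric reason transparent (an element sitting in the closure of both halves of a separation forces the local connectivity to be at least its rank). Both are valid proofs of the same statement.
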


\begin{proof} From the last lemma, if $(A,B)$ is a $k$-separation of $P$ for some $k$ in $\{1,2\}$ and $z \in A$, then $ (A\cup X_1\cup z,B)$ is a $k$-separation of $M+z$; a \cn.
\end{proof}

\begin{lemma}
\label{3one} 
If $y \in Y_1$ and $\sqcap(X_1,\{y\}) = 1$, then $r(Y_1 - y) = 4$.
\end{lemma}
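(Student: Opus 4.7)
The plan is to prove this by contradiction, using the minimality of $Y_1$ as a non-$N$-$3$-separator.

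First I would record the basic rank data: by Lemma~\ref{claim5} we have $r(Y_1) = 4$, and since $\lambda(Y_1) = 2$, we get $r(X_1) = r(M) - 2$. Since $y$ is a line with $\sqcap(X_1,\{y\}) = 1$,
\begin{equation*}
r(X_1 \cup y) = r(X_1) + r(\{y\}) - \sqcap(X_1,\{y\}) = (r(M) - 2) + 2 - 1 = r(M) - 1.
\end{equation*}
By Lemma~\ref{claim1y1}, no two lines of $Y_1$ are parallel, so $r(Y_1 - y) \geq 3$. Clearly $r(Y_1 - y) \le r(Y_1) = 4$.

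Now suppose for contradiction that $r(Y_1 - y) = 3$. Then
\begin{equation*}
\lambda_M(Y_1 - y) = r(Y_1 - y) + r(X_1 \cup y) - r(M) = 3 + (r(M) - 1) - r(M) = 2,
\end{equation*}
so $Y_1 - y$ is exactly $3$-separating in $M$. But $Y_1 - y$ is a $2$-element subset of $Y_1$ consisting of two lines of $M$, and $|E(N) \cap (Y_1 - y)| \le |E(N) \cap Y_1| \le 1$. Hence $Y_1 - y$ is a non-$N$-$3$-separator properly contained in $Y_1$, contradicting the minimality of $Y_1$.

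The argument is essentially a one-step rank computation, so there is no serious obstacle; the only point to watch is to confirm that $Y_1 - y$ really satisfies the full definition of a non-$N$-$3$-separator (exactly $3$-separating, two lines when $|Y_1-y|=2$, and meeting $E(N)$ in at most one element), all of which follow immediately from $Y_1$ being a minimal such set of three lines.
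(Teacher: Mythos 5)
Your proof is correct and follows essentially the same route as the paper's: rule out $r(Y_1-y)=2$ via Lemma~\ref{claim1y1}, then show $r(Y_1-y)=3$ would make $Y_1-y$ a smaller non-$N$-$3$-separator (two lines, meeting $E(N)$ in at most one element), contradicting minimality. You have simply made explicit the rank computation $r(X_1\cup y)=r(M)-1$ and the resulting $\lambda_M(Y_1-y)=2$ that the paper leaves implicit.
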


\begin{proof}
By Lemma~\ref{claim1y1}, $r(Y_1 - y) > 2$. If $r(Y_1 - y) = 3$, then $(X_1 \cup y, Y_1 - y)$ is a \ths\ violating the choice of $(X_1,Y_1)$.
\end{proof}

\begin{lemma}
\label{3m} 
Suppose $y \in Y_1$ and $r(Y_1 - y) = 4$. If $m$ is a line  such that $\{m\}$ is $2$-separating in  $M\ba y$, then $m \in Y_1 - y$.
\end{lemma}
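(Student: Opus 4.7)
My plan is to argue by contradiction, exploiting the fact that $r(Y_1-y)=4=r(Y_1)$ (the latter by Lemma~\ref{claim5}) forces $y$ to lie in the closure of $Y_1-y$, so removing $y$ costs nothing in rank. Then the only way a single line $\{m\}$ can be $2$-separating in $M\ba y$ is if $m$ itself already participates in the flat spanned by $Y_1-y$.

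Concretely, I would first observe that since $m$ is a line in $M\ba y$, we have $m\neq y$. Suppose, for a contradiction, that $m\notin Y_1-y$; then $m\in X_1$. Because $M$ is $3$-connected with $|E(M)|\ge 6$, it is compact, so $r(M\ba y)=r(M)$ and $r(E-m)=r(M)$. The hypothesis $r(Y_1-y)=4=r_M(Y_1)$ (using Lemma~\ref{claim5}) yields $y\in\cl(Y_1-y)$, and hence
\[
r(E-\{y,m\}) \;=\; r\bigl((X_1-m)\cup(Y_1-y)\bigr) \;=\; r\bigl((X_1-m)\cup Y_1\bigr) \;=\; r(E-m) \;=\; r(M).
\]

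Finally, I would compute
\[
\lambda_{M\ba y}(\{m\}) \;=\; r_{M\ba y}(\{m\}) + r_{M\ba y}(E-\{y,m\}) - r(M\ba y) \;=\; 2 + r(M) - r(M) \;=\; 2,
\]
which contradicts the assumption that $\{m\}$ is $2$-separating in $M\ba y$. The only step needing any care is verifying that $y\in\cl(Y_1-y)$ can be combined cleanly with compactness to push the rank of $E-\{y,m\}$ all the way up to $r(M)$; once that is seen, the contradiction is immediate.
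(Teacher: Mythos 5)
Your proof is correct and follows essentially the same route as the paper's: both derive a contradiction by showing that, under the assumption $m\notin Y_1-y$, the set $E-\{y,m\}$ still spans $y$ (via $r(Y_1-y)=r(Y_1)=4$), forcing $r(E-\{y,m\})=r(M)$ and hence $\lambda_{M\ba y}(\{m\})=2$. The paper organizes it in the reverse order (starting from $\lambda_{M\ba y}(\{m\})=1$ to get $r(E-\{y,m\})=r(M)-1$, then contradicting this), but the content is identical.
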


\begin{proof}
We have $1 = r(\{m\}) + r(E - \{y,m\}) - r(M\ba y)$. Thus $r(E - \{y,m\}) = r(M) - 1$. Suppose $m \not\in Y_1 - y$. Then $E- \{y,m\}$ contains $Y_1 - y$ and so spans $y$. Thus  $r(E - \{y,m\}) = r(M\ba m) = r(M)$; a \cn.
\end{proof}

The next four lemmas will help eliminate many of the  possibilities for $P$.

\begin{lemma}
\label{parallel}  
If $c$ is skew to $X_1$ in $M$, and $M/c$ has $a$ and $b$ as parallel lines, then $M/c$ is $3$-connected.
\end{lemma}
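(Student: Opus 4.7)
The plan is to rule out both $1$-separations and non-trivial $2$-separations of $M/c$, using the fact that any partition of $E(M/c)$ that splits the parallel pair $\{a,b\}$ has large connectivity, while any partition that keeps them together has $V\subseteq X_1$ and thereby lifts back through $c$ (which is skew to $X_1$) to a low-order separation of $M$.

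First I would record the key parallelism consequence: since $a$ and $b$ are parallel lines in $M/c$, we have $r_{M/c}(\{a\})=r_{M/c}(\{b\})=r_{M/c}(\{a,b\})=2$, so $\sqcap_{M/c}(\{a\},\{b\})=2$. Then, for any partition $(U,V)$ of $E(M/c)=E(M)-c$ with $a\in U$ and $b\in V$, Lemma~\ref{8.2.3} yields $\sqcap_{M/c}(U,V)\ge \sqcap_{M/c}(\{a\},\{b\})=2$. Since $U\cup V=E(M/c)$, the quantity $\sqcap_{M/c}(U,V)$ coincides with $\lambda_{M/c}(U)$, so $\lambda_{M/c}(U)\ge 2$. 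Hence in any $1$- or $2$-separation of $M/c$, the elements $a$ and $b$ must lie on the same side.

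Next I would suppose, for contradiction, that $(U,V)$ is a $k$-separation of $M/c$ with $k\in\{1,2\}$. By the previous paragraph we may relabel so that $\{a,b\}\subseteq U$, which forces $V\subseteq X_1$. Since $c$ is skew to $X_1$, Lemma~\ref{8.2.3} gives that $c$ is skew to $V$, i.e.\ $\sqcap_M(\{c\},V)=0$. For $k=2$, Lemma~\ref{skewer} directly supplies that $(V,U\cup c)$ is a $2$-separation of $M$ (the side $U\cup c$ is clearly non-degenerate as it contains the lines $a,b,c$), contradicting the $3$-connectedness of $M$. For $k=1$ the same lift works by a short direct calculation: using $r_M(V)=r_{M/c}(V)$ and $r_M(U\cup c)=r_{M/c}(U)+r(\{c\})$, one gets
\[
r_M(V)+r_M(U\cup c)=r_{M/c}(V)+r_{M/c}(U)+r(\{c\})=r(M/c)+r(\{c\})=r(M),
\]
so $(V,U\cup c)$ is a $1$-separation of $M$, contradicting the $2$-connectedness of $M$. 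Therefore $M/c$ has neither a $1$-separation nor a $2$-separation, and hence is $3$-connected.

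The only real obstacle is the parallelism step, i.e.\ making sure that any low-$\lambda$ partition of $M/c$ is forced to keep $a$ and $b$ together; once that is in hand, the skewness of $c$ to $X_1$ performs the lift to $M$ without any casework. Notably, this avoids the delicate ``$V=\{b\}$'' and ``$V=\{b,p\}$'' boundary cases, since those would require $a,b$ to be separated by the partition and are ruled out a priori by $\sqcap_{M/c}(\{a\},\{b\})=2$.
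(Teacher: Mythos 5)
Your proof is correct and takes essentially the same route as the paper's: in both, the heart of the argument is that once $\{a,b\}$ is confined to one side of the separation, the other side lies inside $X_1$ and the low-order separation of $M/c$ lifts back to $M$ through the skew element $c$. The only difference is how the split case ($a$ and $b$ on opposite sides) is dispatched: the paper moves $b$ across the partition to reduce to the confined case, whereas you rule out splitting a priori from $\sqcap_{M/c}(\{a\},\{b\})=2$ together with the monotonicity of $\sqcap$ (Lemma~\ref{8.2.3}), which cleanly sidesteps the small-side degeneracy bookkeeping that the ``move $b$'' step would otherwise invite.
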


\begin{proof}
Assume $(A,B)$ is a $k$-separation of $M/c$ for some $k$ in $\{1,2\}$ where $|A| \le |B|$. If $\{a,b\} \subseteq Z$ for some $Z$ in $\{A,B\}$, and $\{Z,W\} = \{A,B\}$, then $r(Z \cup c) + r(W \cup c) - r(M) = k+1$. But $c$ is skew to $W$ since $W \subseteq X_1$, so $(Z \cup c,W)$ is a  $k$-separation of $M$; a \cn. We may now assume that $a \in A$ and $b \in B$. Then $(A \cup b, B-b)$ is a $k$-separation of $M/c$ with $\{a,b\} \subseteq A \cup b$ and this possibility has already been eliminated.
\end{proof}

\begin{lemma}
\label{earlier}  
If $c$ is skew to each of $a$, $b$, and $X_1$ in $M$,  then $M/c$ has no c-minor isomorphic to $N$.
\end{lemma}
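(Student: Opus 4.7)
The plan is to reduce the lemma to Lemma~\ref{parallel}. If $M/c$ can be shown to be $3$-connected, then the existence of a c-minor of $M/c$ isomorphic to $N$ would give outcome~(i) of Theorem~\ref{modc} for the pair $(M,N)$ (with element $c$), contradicting the standing assumption that $(M,N)$ is a counterexample. So the real task is to verify the hypotheses of Lemma~\ref{parallel}, namely that $a$ and $b$ are parallel lines in $M/c$.

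First I would verify the singleton ranks in $M/c$. Because $c$ is skew to $a$ in $M$, we have $r_M(\{a,c\}) = r_M(\{a\}) + r_M(\{c\}) = 4$, so
$$r_{M/c}(\{a\}) = r_M(\{a,c\}) - r_M(\{c\}) = 4 - 2 = 2,$$
and $a$ remains a line in $M/c$; the identical computation, using that $c$ is skew to $b$, shows that $b$ is a line in $M/c$. Next, I would invoke Lemma~\ref{claim5} to recall that $r_M(Y_1) = 4$, whence
$$r_{M/c}(\{a,b\}) = r_M(\{a,b,c\}) - r_M(\{c\}) = 4 - 2 = 2.$$
Combined with the previous step, this shows that $a$ and $b$ are distinct lines of $M/c$ with $r_{M/c}(\{a,b\}) = 2$, i.e., they are parallel lines in $M/c$.

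With $c$ skew to $X_1$ as given by hypothesis, Lemma~\ref{parallel} now delivers that $M/c$ is $3$-connected. Suppose, for contradiction, that $M/c$ has a c-minor isomorphic to $N$; then outcome~(i) of Theorem~\ref{modc} holds for $(M,N)$ with $e = c$, contradicting our assumption that $(M,N)$ is a counterexample to that theorem. Hence $M/c$ has no c-minor isomorphic to $N$, as required. The main obstacle has been bundled into Lemma~\ref{parallel}; once the parallel-lines hypothesis is checked, as in the two short rank computations above, the lemma follows immediately.
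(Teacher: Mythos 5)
Your proof is correct and takes essentially the same route as the paper: the paper simply asserts that $a$ and $b$ are parallel lines in $M/c$ and immediately invokes Lemma~\ref{parallel} together with the counterexample hypothesis, while you spell out the three short rank computations (using skewness of $c$ to $a$, $b$, and Lemma~\ref{claim5}) that justify the parallel-lines claim.
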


\begin{proof} 
We see that $M/c$ has $a$ and $b$ as parallel lines. Since $(M,N)$ is a counterexample to the theorem, we obtain this lemma as a direct consequence  of the last one.
\end{proof}

\begin{lemma}
\label{earlybird}  
Assume that $M\ba b$ has a c-minor isomorphic to $N$ and  that $P\ba b$ has rank $4$, has $c$ skew to each of $a$ and $z$, and has $\sqcap(\{a\},\{z\})   = 1$.   Then $M/c$ has a c-minor isomorphic to $N$.
\end{lemma}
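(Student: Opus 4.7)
The plan is to show that $M\ba b/c$ has $N$ as a c-minor; since $M\ba b/c = (M/c)\ba b$ by Lemma~\ref{elemprop}, applying the c-minor operation of compactified deletion of $b$ to $M/c$ yields $(M\ba b/c)^{\flat}$, which, by Lemma~\ref{complast1}, has every c-minor of $M\ba b/c$ among its c-minors. Hence such a result would give $N$ as a c-minor of $M/c$, as desired.

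The first step is to extract from the hypotheses that $b \in \cl_M(\{a,c\})$. Since $P$ has rank $4$ and $P\ba b$ also has rank $4$ by hypothesis, and $r_P(\{a,c\}) = 4$ because $c$ is skew to $a$, we obtain $r_P(\{a,c,z\}) = 4 = r_P(P)$, so $b \in \cl_P(\{a,c,z\})$; restricting to the subset $\{a,b,c\} \subseteq Y_1$ (where $r_P$ agrees with $r_M$), we conclude $r_M(\{a,b,c\}) = r_M(\{a,c\}) = 4$. Consequently $r_{M/c}(\{a,b\}) = 2 = r_{M/c}(a)$, so $b$ lies in the $2$-flat $\cl_{M/c}(a)$; in particular $b$ is not a loop in $M/c$, for that would force $b$ parallel to $c$ in $P$, producing the $2$-separation $(\{b,c\},\{a,z\})$ of $P$ and contradicting the $3$-connectedness of $P$ established in Lemma~\ref{claim6}.

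It thus suffices to show that $M\ba b/c$ has a c-minor isomorphic to $N$. By Corollary~\ref{complast3}, we may write $((M\ba b)\ba D/C)^{\flat} \cong N$ for disjoint $C, D \subseteq E(M)-b$. If $c \in C$, commuting operations via Lemma~\ref{elemprop} gives $((M\ba b/c)\ba D/(C-c))^{\flat} \cong N$, as required. The main work lies in handling the cases $c \in D$ (the pathway deletes $c$) and $c \in E(N)$ ($c$ is preserved into $N$). In both cases we plan to use the symmetry of $b$ and $c$ within the rank-$4$ flat $\cl_M(\{a,c\})$: both $b$ and $c$ complete $\{a\}$ to a spanning set of this flat, and $b \in \cl_M(\{a,c\})$, so any deletion or preservation of $c$ in the pathway should, modulo the compactification step, be reshuffled into a contraction of $c$ with a compensating relabeling of $b$. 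The hardest part will be the case $c \in E(N)$, where we must verify that relabeling $c$ in the resulting c-minor to an element derived from the compactified $b$ produces a $2$-polymatroid isomorphic to $N$, for which a careful case split on the value of $r_M(\{a,b\}) \in \{3,4\}$ (parallel lines being excluded by the $3$-connectedness of $P$) will be required.
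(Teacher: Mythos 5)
Your preliminary calculations are correct: $b \in \cl_M(\{a,c\})$ does follow from the rank-$4$ hypotheses, and the case $c \in C$ (where $c$ is contracted on the pathway to $N$) does reduce to commuting deletions and contractions. But the remaining cases, which you flag as the main work, are where the argument breaks down, and I do not see how to complete it along the lines you sketch.

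The key gap is that you never invoke the standing assumption of this part of the paper, namely that $(M,N)$ is a counterexample to Theorem~\ref{modc}. That assumption is what guarantees $(M\ba b)^{\flat}$ is not $3$-connected (otherwise outcome (ii) of the theorem holds with $\ell = b$), so $M\ba b$ has a non-trivial $2$-separation $(A,C)$. The paper's proof is built on this $(A,C)$: the containment $b \in \cl(\{a,c\})$ forces $a$ and $c$ onto opposite sides (say $a \in A$, $c \in C$); because $c$ is skew to $X_1 \supseteq C - c$, one moves $c$ and then $b$ across to see that $(A \cup b \cup c, C - c)$ is $2$-separating in the $3$-connected $M$, so $C - c$ is a single point $d$. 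Combined with the fact (via $\sqcap(\{a\},\{z\})=1$ and Lemma~\ref{3m}) that $c$ is the unique $2$-separating line of $M\ba b$, this gives $(M\ba b)^{\flat} = M\ba b\ud c$, in which $c$ has become a point and $\{c,d\}$ is a series pair. That series pair is precisely what lets one trade a deletion of $c$ for a contraction of $c$ while retaining an $N$-minor. Your plan to reshuffle the pathway via ``symmetry of $b$ and $c$'' and a ``compensating relabeling of $b$'' cannot supply this: in $M\ba b$ the element $b$ is already absent, and for a $2$-separating line $c$ of $M\ba b$, the $2$-polymatroids $(M\ba b\ba c)^{\flat}$ and $(M\ba b/c)^{\flat}$ (that is, $R\ba p$ and $R/p$ for the $2$-sum decomposition $M\ba b = R \oplus_2 S$ with basepoint $p$) are in general unrelated, so there is no a priori reason why a c-minor of one is a c-minor of the other. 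The existence of the point $d$ completing $c$ to a series pair is exactly the extra structure needed, and it comes from the non-trivial $2$-separation --- hence from the counterexample hypothesis --- not from the local rank conditions on $\{a,b,c,z\}$ alone.
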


\begin{proof}
Let $(A,C)$ be a non-trivial \tws\ of $M\ba b$. If $\{a,c\}$ is contained in $A$ or $C$, then $M$ has a \tws; a \cn. Thus we may assume that $a \in A$ and $c \in C$. Now $c$ is skew to $C - c$ so $(A \cup c, C-c)$ is 2-separating in $M\ba b$. Hence $(A \cup c \cup b, C - c)$ is 2-separating in $M$. Thus $C - c$ consists of a point $d$  of $M$. Now, by Lemma~\ref{3m}, the only $2$-separating lines in $M\ba b$ can be $a$ and $c$. But $a$ is not 2-separating. Thus $(M\ba b)^{\flat} = M\ba b\ud c$, so $c$ is a point of $M\ba b\ud c$. The rank of this $2$-polymatroid is $r(M) - 1$, and it has $\{c,d\}$ as a series pair since $A$ has rank $r(M) - 2$ in it. Thus $M\ba b\ud c/c$, and hence $M/c$, has a c-minor isomorphic to $N$. 
\end{proof}

\begin{lemma}
\label{earlybird2}  
If $\sqcap(\{a\},\{z\}) = 1$ and both $b$ and $c$ are skew to each other and to $z$, then    $M\ba a$ has no c-minor isomorphic to $N$.
\end{lemma}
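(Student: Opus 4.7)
My plan is to assume for contradiction that $M\ba a$ has a c-minor isomorphic to $N$, and work to produce either a 2-separation of $M$ or a non-$N$-$3$-separator properly contained in $Y_1$, contradicting the 3-connectivity of $M$ or the minimality of $Y_1$.

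First I would translate the hypotheses on $P$ into local-connectivity information in $M$ via Lemma~\ref{claim5.5}. Applying it with $A=\{a\}$ and $B=\{b,c\}$ gives $\sqcap_P(\{a,z\},\{b,c\})=r_P(\{a,z\})+r_P(\{b,c\})-r(P)=3+4-4=3$, which equals $\sqcap_M(X_1\cup a,\{b,c\})$ and unpacks to $r_M(X_1\cup a)=r(M)-1$, hence $\sqcap_M(X_1,\{a\})=1$. Because $r_P(\{b,c\})=4=r(P)$, the set $\{b,c\}$ spans $a$ and $z$ in $P$, so $r_M(X_1\cup b\cup c)=r(M)$ and $\sqcap_M(X_1,\{b,c\})=2$. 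I would then rule out $\sqcap_M(X_1,\{b\})=2$ (and, symmetrically, $\sqcap_M(X_1,\{c\})=2$): otherwise $b\in\cl_M(X_1)$, and $(X_1\cup b,\{a,c\})$ is either a 2-separation of $M$ if $r(\{a,c\})=3$, or else $\{a,c\}$ is a 3-separating pair of lines with $|\{a,c\}\cap E(N)|\le 1$, a non-$N$-$3$-separator strictly inside $Y_1$. Both outcomes contradict our hypotheses, so $\sqcap_M(X_1,\{b\}),\sqcap_M(X_1,\{c\})\in\{0,1\}$.

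Since $(M,N)$ is a counterexample and $M\ba a$ has a c-minor isomorphic to $N$, $M\baba a$ is 2-connected (by Lemma~\ref{Step1}) but not 3-connected; as $M\baba a$ is compact, every 2-separation of it has both sides of size at least two, and so lifts to a non-trivial 2-separation $(R,G)$ of $M\ba a$. I would show that $b$ and $c$ are forced onto opposite sides: if $\{b,c\}\subseteq R$, then $a\in\cl_M(\{b,c\})\subseteq\cl_M(R)$, and $(R\cup a,G)$ is a 2-separation of $M$, a contradiction. So assume $b\in R$, $c\in G$, and set $R_1=R\cap X_1$, $G_1=G\cap X_1$. Rank computations give $r(R)=r(R_1)+2-\sqcap_M(R_1,b)$, $r(G)=r(G_1)+2-\sqcap_M(G_1,c)$, and $r(R)+r(G)=r(M)+1$; combined with the submodular bound $r(R_1)+r(G_1)\ge r(X_1)=r(M)-2$, this forces $\sqcap_M(R_1,b)+\sqcap_M(G_1,c)\ge 1$. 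Swapping $b$ and $c$ if necessary, $\sqcap_M(X_1,\{b\})=1$, which, by the argument above, then forces $r(\{a,c\})=4$.

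The final step is to close the contradiction. Direct expansion gives $\lambda_M(R)=3-\sqcap_M(G,a)$ and $\lambda_M(G)=3-\sqcap_M(R,a)$, and 3-connectivity of $M$ forces both $\sqcap_M(R,a),\sqcap_M(G,a)\le 1$. I would uncross $(R,G)$ with the exact 3-separation $(X_1,\{b,c\})$ of $M\ba a$: since $\sqcap_M(X_1,\{b\})=1$ yields $\lambda_{M\ba a}(X_1\cup b)=1$ (so $(X_1\cup b,\{c\})$ is trivially 2-separating in $M\ba a$), uncrossing pins down $\lambda_{M\ba a}(R_1)\le 2$, and Lemma~\ref{oswrules}(ii) applied to $(G_1,\{c\},\{a\})$ relates $\sqcap_M(G,a)$ to $\sqcap_M(G_1,a)$ and $\sqcap_M(G_1\cup a,c)$. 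A case split on whether $\sqcap_M(X_1,\{c\})$ equals $0$ or $1$ then either forces $\sqcap_M(G,a)=2$, making $(R,G\cup a)$ a 2-separation of $M$, or produces a 3-separating proper subset of $Y_1$ consisting of lines with at most one element of $E(N)$, contradicting the minimality of $Y_1$. The main obstacle is this last step: juggling the local-connectivity identities across the two subcases and invoking the minimality of $Y_1$ repeatedly to kill each intermediate configuration.
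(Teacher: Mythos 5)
There is a genuine gap, and it comes from not translating the hypothesis ``$b$ and $c$ are skew to $z$'' all the way down to $M$. The entire purpose of the guts line $z$ is that, for each $y$ in $Y_1$, we have $\sqcap_P(\{y\},\{z\}) = \sqcap_M(X_1,\{y\})$; this is the identification the paper uses tacitly when it passes between $\sqcap(\{y\},\{z\})$ in the definition of $\theta$ and $\sqcap(X_1,\{y\})$ in Lemma~\ref{3one} and again in the proof of Lemma~\ref{Step6+}. So the hypotheses of Lemma~\ref{earlybird2} say, in $M$: $\sqcap_M(X_1,\{a\})=1$ and $\sqcap_M(X_1,\{b\})=0=\sqcap_M(X_1,\{c\})$. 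Your second paragraph only pins these values to $\{0,1\}$, and your third paragraph then asserts ``Swapping $b$ and $c$ if necessary, $\sqcap_M(X_1,\{b\})=1$.'' That value is impossible under the hypotheses, so the uncrossing and two-case analysis you sketch afterwards is chasing a configuration that cannot occur; it is unsalvageable as a route to the contradiction.

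You have, however, essentially already finished. Once $b\in R$ and $c\in G$ are forced, you have $R-b=R_1\subseteq X_1$ and $G-c=G_1\subseteq X_1$, so $\sqcap_M(R_1,\{b\})\le\sqcap_M(X_1,\{b\})=0$ and likewise $\sqcap_M(G_1,\{c\})=0$; your own computation then reads $0=\sqcap_M(R_1,\{b\})+\sqcap_M(G_1,\{c\})\ge 1$, which is already the contradiction---no uncrossing needed. The paper's proof phrases this more cleanly: since $b$ is skew to $R-b$ (because $R-b\subseteq X_1$), we have $r(R-b)=r(R)-2$, so $(R-b,G\cup b)$ is 2-separating in $M\ba a$; and since $r_P(\{b,c\})=4=r(P)$ gives $a\in\cl_M(\{b,c\})\subseteq\cl_M(G\cup b)$, the partition $(R-b,G\cup b\cup a)$ is 2-separating in $M$. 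Taking $|R|\ge|G|$ (allowed, since the hypotheses are symmetric in $b$ and $c$) forces $|R-b|\ge 2$, contradicting the 3-connectivity of $M$.
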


\begin{proof} Assume that $M\ba a$ has a c-minor isomorphic to $N$. Let $(B,C)$ be a $k$-separation of $M\ba a$ for some $k$ in $\{1,2\}$. If $B$ or $C$ contains $\{b,c\}$, then $M$ has a $k$-separation. Thus we may assume that $b \in B$, that $c \in C$, and that $|B| \ge |C|$. Then $b$ is skew to $B- b$, so the partition $(B-b,C \cup b \cup a)$ of $E(M)$ shows that $M$ is not \thc; a \cn. 
\end{proof}

By Lemma~\ref{claim1y1}, for all $y$ in $Y_1$, we have $\sqcap(\{y\},\{z\}) \in \{0,1\}$. We shall   treat the  possibilities for $P$ based on the number $\theta$ of 
members $y$ of $Y_1$ for which $\sqcap(\{y\},\{z\}) = 1$. The most difficult case is when $\theta = 3$ and we will treat that after we deal with the cases when $\theta=2$ and when $\theta = 1$.

\begin{lemma}
\label{3two} 
$\theta \neq 2$.
\end{lemma}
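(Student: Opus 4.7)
The plan is to suppose, for contradiction, that $\theta=2$; by symmetry, we may assume $\sqcap(\{a\},\{z\})=\sqcap(\{b\},\{z\})=1$ and $\sqcap(\{c\},\{z\})=0$. My first step is to pin down the structure of $P$ and the corresponding local connectivities in $M$. Because $z\in\cl_{M+z}(X_1)$, the relations $\sqcap(X_1,\{a\}),\sqcap(X_1,\{b\})\ge 1$ follow, and the minimality of $Y_1$ (combined with Lemma~\ref{claim1y1}) forces both to equal $1$. Lemma~\ref{3one} applied with $y=a$ and $y=b$ yields $r(\{a,c\})=r(\{b,c\})=4$, so $\sqcap(\{a\},\{c\})=\sqcap(\{b\},\{c\})=0$. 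A short rank computation gives $\sqcap(X_1,\{a,b\})=1$, which together with Lemma~\ref{claim1y1} forces $r_P(\{a,b\})=3$ and $\sqcap(\{a\},\{b\})=1$. Comparing $\sqcap(z,\{a,b\})$ (which equals $2$ if $a,b$ meet $z$ at distinct points and $1$ otherwise) with $\sqcap(X_1,\{a,b\})=1$ then rules out the first possibility and forces the concurrent structure in which $a$, $b$, and $z$ pass through a common point of $\cl(X_1)$.

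Next, I would derive $\sqcap(X_1,\{c\})=0$ from the minimality of $Y_1$. If $\sqcap(X_1,\{c\})=2$, then $c\in\cl(X_1)$, giving $\lambda_M(\{a,b\})=1$ and a $2$-separation of $M$, which contradicts \thc; if $\sqcap(X_1,\{c\})=1$, then $\lambda_M(\{a,b\})=2$, so $\{a,b\}$ is a non-$N$-$3$-separator strictly inside $Y_1$, contradicting the minimality of $Y_1$. Hence $c$ is skew to each of $a$, $b$, and $X_1$, and Lemma~\ref{earlier} gives that $M/c$ has no c-minor isomorphic to $N$. Since $r_P(\{a,c,z\})=r_P(\{b,c,z\})=4$, Lemma~\ref{earlybird} applied with the pairs $(a,b)$ and then $(b,a)$ yields that neither $M\ba a$ nor $M\ba b$ has a c-minor isomorphic to $N$, since otherwise $M/c$ would inherit one.

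Finally, I would perform a case analysis on whether $c\in E(N)$. If $c\notin E(N)$, then $M\ba c$ has an $N$-minor; the $2$-sum decomposition $M\ba c = M_{X_1}\oplus_2 M_{\{a,b\}}$ has $M_{\{a,b\}}$ of rank $3$ with $a,b$ as lines sharing the basepoint, so $M_{\{a,b\}}$ is one of the forms $P_5,P_6,P_8$ of Lemma~\ref{old2}. The unique-$2$-separation obstructions of Lemmas~\ref{duh},~\ref{duh8}, and~\ref{duh5}, combined with Lemma~\ref{22sep} forbidding two disjoint $2$-element $2$-separating sets of $M\ba c$, rule out each of the possibilities and yield the contradiction. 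If $c\in E(N)$, then $a,b\notin E(N)$ and both $M/a$ and $M/b$ have $N$-minors; in $M/a$ the partition $(X_1,\{b,c\})$ is a $2$-separation in which $b$ becomes a point lying on the line $c$. Handling the removal of $b$ forces either $(M/a)\ba b=M\ba b/a$ to carry the $N$-minor (whence $M\ba b$ does, contradicting the conclusion of the preceding step) or $(M/a)/b=M/\{a,b\}$ to carry it, in which case $c$ is parallel to some element of $X_1$. The symmetric analysis applied to $M/b$ combined with this parallel-point structure then produces a doubly labelled point of $M$ via Lemma~\ref{key}, contradicting Lemma~\ref{Step0}.

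The main obstacle will be the case $c\in E(N)$, where one has to juggle the two parallel contractions $M/a$ and $M/b$ simultaneously and trace the $N$-minor through the induced parallel-point structure involving $c$; the case $c\notin E(N)$ is handled more cleanly using the Section~\ref{alltwos} machinery, but the $c\in E(N)$ branch depends on combining Lemmas~\ref{key} and~\ref{Step0} with a careful accounting of how $c$ is repositioned under the further reductions.
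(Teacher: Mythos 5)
Your opening steps match the paper: $\sqcap(X_1,\{a\})=\sqcap(X_1,\{b\})=1$, Lemma~\ref{3one} giving $r(\{a,c\})=r(\{b,c\})=4$, Lemma~\ref{earlier} killing $M/c$, and Lemma~\ref{earlybird} killing $M\ba a$ and $M\ba b$. From there, though, the paper finishes in two lines, while your proposal takes a longer path that has a genuine gap and an unjustified claim.

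First, the claim that $r_P(\{a,b\})=3$ is forced does not follow. By Lemma~\ref{oswrules}(ii), $\sqcap(X_1,\{a,b\})+\sqcap(\{a\},\{b\})=2$, but nothing shows $\sqcap(X_1,\{a,b\})=1$ rather than $2$: if $r(\{a,b\})=4$ we get $\lambda_M(\{a,b\})=4$, so $\{a,b\}$ is not a $3$-separator and the minimality of $Y_1$ is silent. The paper deliberately leaves both possibilities open (``$b$ is either a point on that line or is a parallel line'') precisely because the distinction is irrelevant.

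Second, and more seriously, the case $c\notin E(N)$ does not go through as described. The lemmas you invoke from Section~\ref{alltwos} (Lemmas~\ref{duh},~\ref{duh8},~\ref{duh5}, and~\ref{22sep}) are all proved under the standing hypothesis of that section that $\mu(c)=2$, i.e.\ that \emph{every} non-trivial $2$-separation of $M\ba c$ has non-$N$-side of size exactly two. But Lemma~\ref{muend} (already established) tells us $\mu(c)\geq 3$, so that hypothesis is false here; finding the single $2$-separation $(X_1,\{a,b\})$ with a two-element non-$N$-side does not license those lemmas. No contradiction is actually derived in this branch.

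Finally, the $c\in E(N)$ branch reaches the right conclusion but for a muddled reason. Your ``either $(M/a)\ba b$ carries the $N$-minor or $(M/a)/b$ does'' is a false dichotomy: since $\sqcap_{M/a\ba b}(X_1,\{c\})=1$ (a direct rank computation using $\sqcap(X_1,\{a\})=1$, $r(\{a,c\})=4$, and $b\in\cl_{M/a}(\{c\})$), Lemma~\ref{claim1}(i) gives that $(M/a)\ba b$ \emph{always} has a special $N$-minor, so $M\ba b$ has an $N$-c-minor — contradiction immediately. There is no second branch to analyse and no need for Lemma~\ref{key} or a doubly-labelled point. The paper exploits exactly this: since at least two of $a,b,c$ lie outside $E(N)$ and $M/c$, $M\ba a$, $M\ba b$ all lack $N$-c-minors, WLOG $M/a$ has one, and then Lemma~\ref{claim1} applied to the $2$-separation $(X_1,\{b,c\})$ of $M/a$ kills $M\ba b$, finishing the proof without splitting on whether $c\in E(N)$.
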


\begin{proof} 
Suppose that $\sqcap(\{a\},\{z\}) = 1 =  \sqcap(\{b\},\{z\})$ and $\sqcap(\{c\},\{z\}) = 0$. Then, by Lemma~\ref{3one}, $r(\{b,c\}) = 4 = r(\{a,c\})$. Thus, by Lemma~\ref{earlier}, $M/c$ has no c-minor isomorphic to $N$. By Lemma~\ref{earlybird}, neither $M\ba a$ nor $M\ba b$ has a c-minor isomorphic to $N$. Thus, without loss of generality, we may assume that $M/a$ has a c-minor isomorphic to $N$. Now, in $M/a$, we have $\{b,c\}$ as a $2$-separating set where $c$ is a line and $b$ is either a point on that line or is a parallel line. Thus, by Lemma~\ref{claim1}, $M/a\ba b$, and hence $M\ba b$, has a c-minor isomorphic to $N$; a \cn.
\end{proof}

We can exploit duality to eliminate the case when $\theta = 1$.

\begin{lemma}
\label{3three} 
$\theta \neq 1$.
\end{lemma}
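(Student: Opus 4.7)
The strategy is to pass to $M^*$ and invoke Lemma~\ref{3two} there. Suppose $\theta = 1$ and, after relabeling within $Y_1$, that $\sqcap(\{a\},\{z\}) = 1$ while $\sqcap(\{b\},\{z\}) = 0 = \sqcap(\{c\},\{z\})$. Lemma~\ref{3one} applied to $a$ immediately yields $r(\{b,c\}) = 4$, so $b$ and $c$ are skew.

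First I would verify that the setup of this section transfers to $M^*$. By Lemma~\ref{csm}, $N^*$ is a $3$-connected c-minor of $M^*$ with $|E(N^*)| \ge 4$; by Lemma~\ref{compact0}, the pair $(X_1,Y_1)$ remains a $3$-separation of $M^*$; and each of $a$, $b$, $c$ is still a line of $M^*$ since $M$ is $3$-connected with $|E(M)| \ge 6$. Applying Lemma~\ref{y13} inside $M^*$, the set $Y_1$ is a minimal non-$N^*$-$3$-separator of $M^*$ consisting of three lines, so one may add a guts line $z^\star$ of $(X_1,Y_1)$ in $M^*$, form the analog of $P$, and speak of the corresponding count $\theta^\star$.

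A direct calculation from $r^*(S) = ||S|| + r(E\setminus S) - r(E)$ then gives, for each $y \in Y_1$,
\[
\sqcap_{M^*+z^\star}(\{y\},\{z^\star\}) \;=\; \sqcap_{M^*}(\{y\}, X_1) \;=\; 4 - r(Y_1 - y),
\]
and combining this with $r(\{b,c\}) = 4$ yields $\theta^\star = \sqcap(\{a\},\{b\}) + \sqcap(\{a\},\{c\})$. The principal case is $\theta^\star = 2$, i.e., $\sqcap(\{a\},\{b\}) = \sqcap(\{a\},\{c\}) = 1$: Lemma~\ref{3two} applied to $(M^*, N^*, Y_1)$ then supplies the required contradiction directly.

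The main obstacle will be the residual cases $\theta^\star \in \{0,1\}$, in which at least one of $\sqcap(\{a\},\{b\}), \sqcap(\{a\},\{c\})$ vanishes. Assuming $\sqcap(\{a\},\{c\}) = 0$ after possibly swapping the roles of $b$ and $c$, the line $c$ is skew to $a$, $b$, and $X_1$, so Lemma~\ref{earlier} gives that $M/c$ has no c-minor isomorphic to $N$; choosing $b \notin E(N)$ (available because $|Y_1 \cap E(N)| \le 1$), Lemma~\ref{earlybird} applied to $M\ba b$ then forces $M\ba b$ also to have no such c-minor, so $M/b$ must carry one. A direct analysis of $M/b$---in which $a$ and $c$ become parallel lines when $\sqcap(\{a\},\{b\}) = 0$, or $a$ is contracted into the closure of $X_1$ when $\sqcap(\{a\},\{b\}) = 1$---together with Lemmas~\ref{Step0}, \ref{earlybird2}, and \ref{fantan}, should produce a $3$-connected reduction of $M$ carrying an $N$-c-minor, contradicting the assumption that $(M,N)$ is a counterexample.
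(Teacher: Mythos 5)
Your main line of attack---dualize, verify that $(X_1,Y_1)$ and $a,b,c$ transfer to $M^*$, compute $\theta^\star$, and dispose of the case $\theta^\star = 2$ via Lemma~\ref{3two} in $M^*$---is the same as the paper's, and your identity $\sqcap_{M^*}(\{y\},X_1) = 4 - r(Y_1 - y)$ leading to $\theta^\star = \sqcap(\{a\},\{b\}) + \sqcap(\{a\},\{c\})$ is correct (the paper reaches the same conclusion via Lemma~\ref{general4}).

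The genuine gap is the residual cases $\theta^\star \in \{0,1\}$. Neither can be dispatched by citing a $\theta \neq 0$ or $\theta \neq 1$ statement in $M^*$: Lemma~\ref{3four} is proved \emph{after} this lemma and in fact relies on it, and $\theta^\star = 1$ is precisely the statement under proof. You acknowledge the obstacle, but ``should produce a contradiction'' is not an argument, and the lemmas you cite to close it are partly inapplicable: Lemma~\ref{fantan} concerns fans of \emph{points}, whereas $a,b,c$ are lines, so it has no purchase here, and Lemma~\ref{Step0} plays no role either. What is actually needed is the following. When $\theta^\star = 0$, $b$ and $c$ are symmetric (each skew to $a$, to the other, and to $X_1$), so Lemmas~\ref{earlier} and~\ref{earlybird} show that none of $M/b$, $M/c$, $M\ba b$, $M\ba c$ has a c-minor isomorphic to $N$; since at most one of $b,c$ is in $E(N)$, this is impossible. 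When $\theta^\star = 1$, say $\sqcap(\{a\},\{b\}) = 1$ and $\sqcap(\{a\},\{c\}) = 0$, Lemmas~\ref{earlier}, \ref{earlybird}, \ref{earlybird2} and an application of Lemma~\ref{claim1} inside $M/a$ eliminate $M/c$, $M\ba b$, $M\ba a$, and $M/a$, forcing $M/b$ to carry the c-minor; one then shows directly that $M/b$ is simple (a parallel point of $a$ in $M/b$ would lie on the line $a$ in $M$, making some $M\ba e$ a $3$-connected reduction) and then $3$-connected, which is the final contradiction. Also, your description of the $\sqcap(\{a\},\{b\}) = 1$ sub-case is imprecise: in $M/b$ the element $a$ becomes a \emph{point lying on the line $c$}, not ``contracted into the closure of $X_1$,'' and it is this configuration that drives the simplicity and $3$-connectivity argument.
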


\begin{proof}
Suppose that $\sqcap(\{a\},\{z\}) = 1$ and   $\sqcap(\{b\},\{z\}) = 0 = \sqcap(\{c\},\{z\})$. Then, by Lemma~\ref{3one}, $r(\{b,c\}) = 4$. 
By Lemma~\ref{general4}, for $y$ in $Y_1$, we have $\sqcap^*(\{y\}, X_1) = \lambda_{M/(Y-y_1)}(\{y\})$. Since $\{b,c\}$ spans $a$ in $M$, we deduce that $\sqcap^*(\{a\},X_1) = 0$. If $\sqcap^*(\{b\},X_1) = 1 = \sqcap^*(\{c\},X_1)$, then $\theta = 2$ in $M^*$ so the result follows by Lemma~\ref{3two}. Thus, we may assume, by symmetry, that $\sqcap^*(\{b\},X_1) = 0$. Hence $\{a,c\}$ spans $b$ in $M$, so $r(\{a,c\}) = 4$. Thus, by Lemma~\ref{earlier}, $M/c$ does not have a c-minor isomorphic to $N$. By Lemma~\ref{earlybird}, $M\ba b$ has no c-minor isomorphic to $N$. If $\sqcap(\{a\},\{b\}) = 0$, then, by symmetry, the argument of the last two sentences shows that neither $M/b$ nor $M\ba c$ has a c-minor isomorphic to $N$. Thus both $b$ and $c$ must be in every c-minor of $M$ isomorphic to $N$; a \cn. We deduce that $\sqcap(\{a\},\{b\}) = 1$. 

By Lemma~\ref{earlybird2}, $M\ba a$ has no c-minor isomorphic to $N$. Suppose $M/a$ has a c-minor isomorphic to $N$. In $M/a$, we see that $\{c,b\}$ is a 2-separating set with $c$ as a line and $b$ as a point on it.  Hence, by Lemma~\ref{claim1}, $M/a\ba b$, and so $M\ba b$,  has a c-minor isomorphic to $N$; a \cn.  We conclude that $M/a$ has no c-minor isomorphic to $N$. It follows that $a$ is in every c-minor of $M$ isomorphic to $N$. Thus $M/b$ has $N$ as a c-minor. In $M/b$, we see that $a$ is a point on the line $c$. Suppose that $a$ is parallel to some point $e$, say. Then $e \in X_1$. Moreover,  $M/b\ba e$, and hence $M\ba e$, has a c-minor isomorphic to $N$. Now $r(X_1 \cup \{a,b\}) = r(X_1) + 2$. Thus
\begin{align*}
r(X_1) + 1 + 3 & = r(X_1 \cup a) + r(\{a,b,e\})\\
& \ge r(\{a,e\})  + r(X_1 \cup \{a,b\})\\
& = r(\{a,e\})  + r(X_1) + 2.
\end{align*}
Hence $r(\{a,e\}) = 2$, so 
 $e$ lies on $a$ in $M$. Thus $M\ba e$ is \thc\ having a c-minor isomorphic to $N$; a \cn. We deduce that, in $M/b$, the point $a$ is not parallel to another point, so $M/b$ is simple. 

We complete the proof by showing that $M/b$ is \thc. Suppose it has $(A,C)$ as a $2$-separation. 
 If $A$ or $C$, say $A$, contains $\{a,c\}$, then $b$ is skew to $C$, so $(A\cup b,C)$  is a $2$-separation of $M$; a \cn. Thus, we may assume that $a \in A$ and $c \in C$. Then, as $a$ is a point on the line $c$ in $M/b$, we see that $(A - a,C\cup a)$ is 2-separating in $M/b$. It is not a 2-separation otherwise we obtain a \cn\ as before. It follows that $A$ is a parallel pair of points in $M/b$, contradicting the fact that $M/b$ is simple.
\end{proof}

Next we eliminate the case when $\theta = 3$. The core of the argument in this case mimics the argument used to prove Tutte's Triangle Lemma for matroids (see, for example, \cite[Lemma 8.7.7]{oxbook}).

\begin{lemma}
\label{3five} 
$\theta \neq 3$.
\end{lemma}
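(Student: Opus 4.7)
The plan is to assume $\theta=3$ and extract a rigid geometric description of $Y_1$ in $M$. Since $z\in\cl(X_1)$ we have $\sqcap(\{y\},\{z\})\le \sqcap(\{y\},X_1\cup z)=\sqcap(\{y\},X_1)$, so $\sqcap(\{y\},X_1)\ge 1$ for every $y\in\{a,b,c\}$. First I would rule out $\sqcap(\{y\},X_1)=2$: that would give $y\in\cl(X_1)$, forcing $\lambda_M(Y_1-y)=r(Y_1-y)-2$; then Lemma~\ref{claim1y1} plus $3$-connectedness of $M$ would force $r(Y_1-y)=4$, making $Y_1-y$ a two-element non-$N$-$3$-separator of two non-parallel lines in $M$, contradicting the minimality of $Y_1$. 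Hence $\sqcap(\{y\},X_1)=1$ for every $y\in Y_1$, and Lemma~\ref{3one} yields $r(Y_1-y)=4$ for each $y$; in particular $r(\{a,b\})=r(\{a,c\})=r(\{b,c\})=r(Y_1)=4$, so $a,b,c$ are pairwise skew with each pair spanning $Y_1$.

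From this skew configuration, for every $y\in Y_1$ we have $r_{M/y}(Y_1-y)=r(Y_1)-r(\{y\})=2$, so the remaining two elements of $Y_1$ become parallel lines in $M/y$ and $(Y_1-y,X_1)$ is a non-trivial exact $2$-separation of $M/y$ whose $(Y_1-y)$-side in the $2$-sum decomposition of Lemma~\ref{dennisplus} is isomorphic to $P_4$ of Lemma~\ref{old2}. Since at least two of $a,b,c$ lie outside $E(N)$, I would pick such a $y$ and handle first the case in which $M/y$ has $N$ as a c-minor. Picking the parallel line $y'\in(Y_1-y)\setminus E(N)$, Lemma~\ref{claim1}(i) applied to the induced $2$-sum gives a special $N$-minor of $M/y\baba y'$; commuting operations via Lemma~\ref{elemprop}(iii) and Lemma~\ref{complast1} then produces a c-minor isomorphic to $N$ in $M\baba y'$. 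Writing $Y_1-y'=\{y,y''\}$, the direct computation $r(E-\{y,y'\})=r(X_1\cup y'')=r(X_1)+1=r(M)-1$ gives $\lambda_{M\ba y'}(\{y\})=1=\lambda_{M\ba y'}(\{y''\})$, and by Lemma~\ref{3m} these are the only $2$-separating lines of $M\ba y'$; compactification (Lemma~\ref{compel}) replaces $y$ and $y''$ by points and, using $\lambda_{M\ba y'}=\lambda_{M\baba y'}$ from Lemma~\ref{compact0}(iii) together with uncrossing as in Lemma~\ref{p124}, any remaining non-trivial $2$-separation of $M\baba y'$ would lift to a $2$-separation of $M$, contradicting $3$-connectedness. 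Hence $M\baba y'$ is $3$-connected with $N$ as a c-minor, contradicting that $(M,N)$ is a counterexample to Theorem~\ref{modc}.

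The complementary case, in which no $y\in Y_1\setminus E(N)$ has $M/y$ with an $N$-c-minor, is the main obstacle. I would pass to the dual via Lemma~\ref{csm}: $M/y$ having no $N$-c-minor forces $M^*\ba y$ to have no $N^*$-c-minor, so $M^*/y$ inherits $N^*$ as a c-minor. Using the self-dual nature of the configuration (the pairwise-skew rank-$4$ picture together with the guts line $z$ dualizes to the same pattern for the coguts line in $M^*$, much as for prickly $3$-separators in Lemma~\ref{pricklytime}), I would check that the analogue of $\theta=3$ still holds in $M^*$ for the corresponding auxiliary line, reducing this case to the contraction case already settled. The delicate point here is verifying that the $\sqcap$-values with the coguts line match those with $z$: this is where I would invoke Lemma~\ref{general4} to translate $\sqcap^*_{M}(\{y\},X_1)=\lambda_{M/(Y_1-y)}(\{y\})$ and exploit $r(Y_1-y)=r(Y_1)=4$ to show that the rigidity is exactly preserved under duality, delivering the required contradiction in the deletion case.
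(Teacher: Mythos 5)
Your first paragraph correctly derives the setup (in particular that $\sqcap(\{y\},X_1)=1$ for each $y\in Y_1$, that $r(Y_1-y)=4$ for each $y$, and hence that $a,b,c$ are pairwise skew lines each pair of which spans $Y_1$). The two cases you then split into, however, each have a genuine gap.

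In Case~1, the claim that every non-trivial $2$-separation $(A,B)$ of $M\baba y'$ lifts to a $2$-separation of $M$ is not established. If $y'\in\cl_M(A)$ or $y'\in\cl_M(B)$, the lift works and you are done. But if $y'\notin\cl_M(A)\cup\cl_M(B)$, then from $\lambda_{M\ba y'}(A)=1$ and $r(A)+r(B)=r(M)+1$ you only get $\lambda_M(A)=3-\sqcap(B,\{y'\})\ge 2$, so $(A,B\cup y')$ need only be a $3$-separation of $M$, which is no contradiction. Your observation $y'\in\cl(\{y,y''\})$ does rule out $\{y,y''\}\subseteq A$ or $\{y,y''\}\subseteq B$ in this situation, but it leaves the case $y\in A$, $y''\in B$, and the uncrossings of $(A,B)$ with $(X_1,Y_1-y')$ that you gesture at via Lemma~\ref{p124} only give $\lambda_{M\ba y'}(A\cup y'')\le 2$ and $\lambda_{M\ba y'}(A\cap X_1)\le 2$, which is not enough.

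In Case~2 the duality reduction fails outright. With $\theta=3$ you have $r(Y_1-y)=r(Y_1)=4$ for each $y$, so $y$ becomes a loop in $M/(Y_1-y)$; by Lemma~\ref{general4}, $\sqcap^*(\{y\},X_1)=\lambda_{M/(Y_1-y)}(\{y\})=0$, and hence $\sqcap_{M^*}(\{y\},\{z^*\})\le\sqcap_{M^*}(\{y\},X_1)=0$ for every $y$. Thus the dual parameter is $\theta^*=0$, not $3$: the $\theta=3$ picture is not self-dual, and passing to $M^*$ lands you in the $\theta=0$ case, which the paper handles only in Lemma~\ref{3four}, after Lemma~\ref{3five} and relying on it. The analogy to the self-duality of prickly $3$-separators is misleading here precisely because ``each pair of lines spans $Y_1$'' dualizes to ``each pair of lines is skew to the coguts line.'' For comparison, the paper instead first shows that at least two of $a,b,c$ can be deleted while keeping an $N$-c-minor, then works in $M\ud c$, proves $M\ud c\ba a,b$ is $2$-connected, and runs a Tutte-triangle-lemma--style uncrossing argument on $2$-separations of $M\ud c\ba a$ and $M\ud c\ba b$ to locate a series pair, which is a substantially different route.
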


\begin{proof} 
Assume that $\sqcap(\{a\},\{z\}) =  \sqcap(\{b\},\{z\}) = \sqcap(\{c\},\{z\}) = 1$. Then, by Lemma~\ref{3one}, $r(\{a,b\}) = r(\{b,c\}) = r(\{a,c\}) = 4$. First we show the following. 

\begin{sublemma}
\label{two2}
There are at least two members $y$ of $Y_1$ such that $M\ba y$ has a c-minor isomorphic to $N$.
\end{sublemma}

Assume that this fails. Since $|Y_1 - E(N)| \ge 2$, there is an element, say $a$, of $Y_1 - E(N)$  such that $M/a$ has $N$ as a c-minor. In $M/a$, we see that $b$ and $c$ are parallel lines and $\{b,c\}$ is 2-separating. Thus, by Lemma~\ref{claim1}, each of $M/a\ba b$ and $M/a\ba c$ have special $N$-minors. This \cn\ implies that \ref{two2} holds.

We now assume that both $M\ba a$ and $M\ba b$ have special $N$-minors. Clearly,  $M\ba a$ has $b$ and $c$ as 2-separating lines,  and, by Lemma~\ref{3m}, these are the only 2-separating lines in $M\ba a$. Thus $(M\ba a)^{\flat} = M\ba a \ud b \ud c$. Symmetrically, 
$(M\ba b)^{\flat} = M\ba b \ud a \ud c$. As the theorem fails, neither $(M\ba a)^{\flat}$ nor $(M\ba b)^{\flat}$ is \thc. Thus each of $M\ud c \ba a$ and 
$M\ud c \ba b$ have non-trivial 2-separations. It will be convenient to work in the $2$-polymatroid $M\ud c$, which we shall rename $M_c$. Let $(X_a,Y_a)$ and $(X_b,Y_b)$ be non-trivial 2-separations of $M_c\ba a$ and $M_c\ba b$, respectively, with $b$ in $Y_a$ and $a$ in $Y_b$. 

Now it is straightforward to check the following.

\begin{sublemma}
\label{zex}
If $Z \subseteq X_1$ and $e \in \{a,b\}$, then 
$\sqcap_{M}(Z, \{e\}) =   \sqcap_{M_c}(Z,\{e\})$.
\end{sublemma}

We deduce that 

\begin{sublemma}
\label{two2.5}
$\sqcap_{M_c}(X_1,\{a\}) = 1 = \sqcap_{M_c}(X_1,\{b\})$.
\end{sublemma}


Next we show that 
\begin{sublemma}
\label{two3}
$c \in X_a \cap X_b$.
\end{sublemma}

Suppose $c$ in $Y_a$. Since $\{c,b\}$ spans $a$ in $M_c$, it follows that $(X_a,Y_a \cup a)$ is a 2-separation of $M_c$ and hence of $M$; a \cn. We deduce that $c \in X_a$ and, by symmetry, \ref{two3} holds.

\begin{sublemma}
\label{zztop}
For $Z \subseteq X_1$, if $\sqcap_M(Z,\{a\}) = 1 = \sqcap_M(Z,\{b\})$, then $\sqcap_M(Z,\{a,b\}) = 2.$
\end{sublemma}

Assume $\sqcap_M(Z,\{a,b\}) < 2.$ Then $\sqcap_M(Z,\{a,b\}) = \sqcap_M(Z,\{a\}) = 1$. Thus 
$$r(Z) + r(\{a,b\}) - r(Z \cup \{a,b\}) = r(Z) + r(\{a\}) - r(Z \cup a),$$ 
so $r(\{a,b\}) - r(\{a\}) = r(Z \cup \{a,b\}) - r(Z \cup a)$. Hence $b$ is skew to $Z \cup a$, so $b$ is skew to $Z$; a \cn. We deduce that 
\ref{zztop} holds.

\begin{sublemma}
\label{zztop2}
For $Z \subseteq X_1$, if $\sqcap_{M_c}(Z,\{a\}) = 1 = \sqcap_{M_c}(Z,\{b\})$, then $\sqcap_{M_c}(Z,\{a,b\}) = 2.$
\end{sublemma}

By \ref{zex},  $\sqcap_{M_c}(Z,\{a\}) = \sqcap_{M}(Z,\{a\})$. Moreover, 
\begin{align*}
\sqcap_{M_c}(Z,\{a,b\}) & = r_{M_c}(Z) + r_{M_c}(\{a,b\}) - r_{M_c}( Z \cup \{a,b\})\\
& = r_M(Z) + [r_M(\{a,b\}) - 1] - [ r_{M}( Z \cup \{a,b\}) -1]\\
& = \sqcap_{M}(Z,\{a,b\}).
\end{align*}
Thus \ref{zztop2} follows immediately from \ref{zztop}.

\begin{sublemma}
\label{zztop3}
Assume $Z \subseteq X_1$ and $\sqcap_{M_c}(Z,\{a,b\}) = 2.$ Then $c \in \cl_{M_c}(Z).$
\end{sublemma}

To see this, note that 
$$r_{M_c}(Z \cup \{a,b,c\}) = r_{M_c}(Z \cup \{a,b\}) = r_{M_c}(Z) + r_{M_c}(\{a,b\}) - 2 = r_{M_c}(Z) + 1.$$ 
By submodularity,
$$r_{M_c}(E - \{a,b\}) + r_{M_c}(Z \cup \{a,b,c\}) \ge r(M_c) + r_{M_c}(Z \cup c).$$ 
Thus
$$r(M_c) - 1 + r_{M_c}(Z) + 1 \ge r(M_c) + r_{M_c}(Z \cup c).$$ 
Hence $r_{M_c}(Z) \ge r_{M_c}(Z \cup c)$ and \ref{zztop3} holds.

\begin{sublemma}
\label{two4}
Neither $a$ nor $b$ has a point on it in either $M$ or   $M_c$.
\end{sublemma}

Assume there is a point $e$ on $a$ in $M$. Then $M\ba e$ is \thc. Moreover, in $(M\ba b)^{\flat}$, we see that $e$ is parallel to $a$ so $(M\ba b)^{\flat}\ba e$, and hence $M\ba e$, has a c-minor isomorphic to $N$; a \cn. We conclude that \ref{two4} holds. 

The next  step in the proof of Lemma~\ref{3five} is to show that 

\begin{sublemma}
\label{two5}
$M_c\ba a,b$ is $2$-connected.
\end{sublemma}

Suppose  $(A,B)$ be a 1-separation of $M_c\ba a,b$ having $c$ in  $A$. Then

\begin{equation}
\label{abc}
r_{M_c}(A) + r_{M_c}(B) = r(M_c \ba a,b) = r(M_c) - 1 = r(M) - 2.
\end{equation}
Thus 
\begin{multline*}
r_{M_c}(A \cup a)  + r_{M_c}(B) - r(M_c)\\
\shoveleft{\hspace*{0.8in}= r_{M_c}(A) + r_{M_c}(\{a\}) - \sqcap_{M_c}(A,\{a\}) + r_{M_c}(B) - r(M_c)}\\
\shoveleft{\hspace*{0.8in}= [r_{M_c}(A) + r_{M_c}(B) - r(M_c) + 1] - 1 + r_{M_c}(\{a\}) - \sqcap_{M_c}(A,\{a\})}\\
\shoveleft{\hspace*{0.635in}= 0 - 1 + 2 - \sqcap_{M_c}(A,\{a\}) = 1 -  \sqcap_{M_c}(A,\{a\}).}
\end{multline*}

If $\sqcap_{M_c}(A,\{a\}) =  1$, then $(A \cup a \cup b, B)$ is a 1-separation of $M_c$ and hence of $M$; a \cn. We deduce that $\sqcap_{M_c}(A,\{a\}) =  0$ and $(A\cup a \cup b, B)$ is 2-separating in $M_c$ and hence in $M$. Thus $B$ consists of a point, say $d$, of $M$. Moreover,
$r_{M_c}(A \cup a)  =   r(M_c)$. Thus, as $\sqcap_{M_c}(A,\{a\}) =  0$, we see that 

\begin{equation}
\label{abcd}
r_{M_c}(A) = r(M_c) - 2.
\end{equation}

Still working towards proving \ref{two5}, we show next that 

\begin{sublemma}
\label{two6}
$\{b,d\}$ is a series pair of points in $(M\ba a)^{\flat}$.
\end{sublemma}

Recall that $(M\ba a)^{\flat} = M_c\ba a \ud b$. Now 

$$r_{M_c}(\{d,b\})  + r_{M_c}(A) - r(M_c\ba a) \le 3 + r(M_c) - 2 - r(M_c) = 1.$$
Thus $\{d,b\}$ is 2-separating in $M_c\ba a$. It follows that it is also 2-separating in $M_c\ba a \ud b$, that is, in $(M\ba a)^{\flat}$. 
But $d$ and $b$ are points in $(M\ba a)^{\flat}$, which is $2$-connected. We deduce by \ref{two4} that  \ref{two6} holds.

By \ref{two6},  $(M\ba a)^{\flat}/d$, and hence $M/d$, has a c-minor isomorphic to $N$. Next we show that 

\begin{sublemma}
\label{two7}
$(A- c,\{a,b,c\})$ is a $2$-separation of $M\ba d$.
\end{sublemma}
By (\ref{abcd}), $r_{M_c}(A-c) \le r(M_c) - 2 = r(M) - 3$ and \ref{two7} follows.

It follows from \ref{two7} and Lemma~\ref{newbix} that $M/d$ is \thc\ unless 
$M$ has a pair $\{e,f\}$ of points such that $e$ and $f$ are parallel in $M/d$.  
 Consider the exceptional case. Then $M$ has  $\{d,e,f\}$ as a triangle. Then $\{e,f\} \subseteq A - c$. Thus, by \ref{two7}, $((A- c) \cup d,\{a,b,c\})$ is a $2$-separation of $M$; a \cn. We conclude that \ref{two5} holds.

By \ref{two5}, we deduce that 

\begin{sublemma}
\label{har0}
$\lambda_{M_c\ba a,b}(X_a) = 1 = \lambda_{M_c\ba a}(X_a)$ and $\lambda_{M_c\ba a,b}(X_b) = 1 = \lambda_{M_c\ba b}(X_b)$.
\end{sublemma}

Since $r(M_c \ba a,b) = r(M_c\ba a) - 1$, it follows from \ref{har0}  and symmetry that 
\begin{sublemma}
\label{har1}
$r_{M_c}(Y_a - b) = r_{M_c}(Y_a) - 1$ and $r_{M_c}(Y_b - a) = r_{M_c}(Y_b) - 1$.
\end{sublemma}

It follows from this, symmetry,  and the fact that $r_M(Y_a \cup c) > r_M(Y_a)$ that 

\begin{sublemma}
\label{har2}
$r_{M}(Y_a - b) = r_{M}(Y_a) - 1$ and $r_{M}(Y_b - a) = r_{M}(Y_b) - 1$.
\end{sublemma}

By uncrossing,
\begin{align}
\label{subm}
2 & = \lambda_{M_c\ba a,b}(X_a) + \lambda_{M_c\ba a,b}(Y_b - a)  \nonumber\\
& \ge \lambda_{M_c\ba a,b}(X_a\cap (Y_b - a)) + \lambda_{M_c\ba a,b}(X_a \cup (Y_b - a)).
\end{align}

\begin{sublemma}
\label{xeyfnot}
$X_a\cap Y_b \neq \emptyset \neq  X_b\cap Y_a.$
\end{sublemma}

Suppose $X_a\cap Y_b  =  \emptyset$. Then $Y_b - a \subseteq Y_a - b$. Thus, by \ref{har1}, 
$\sqcap_{M_c}(Y_a - b,\{b\}) = 1 = \sqcap_{M_c}(Y_a - b,\{a\})$. Hence, by \ref{zztop2}, 
$\sqcap_{M_c}(Y_a - b,\{a,b\}) = 2$. Thus, by \ref{zztop3}, $c \in \cl_{M_c}(Y_a - b).$ 
It follows that $(Y_a \cup c, X_a - c)$ is 2-separating in $M_c \ba a$. Thus 
$(Y_a \cup c \cup a, X_a - c)$ is 2-separating in $M$. As $M$ is \thc, we deduce that 
$X_a$ consists of exactly two points, $c$ and $x$, say. If $r_{M_c}(\{x,c\}) = 1$, then, in $M$, we see that $x$ is a point  that lies on the line $c$. Thus $M\ba x$ is \thc. As $(M\ba a)^{\flat}$ has a c-minor isomorphic to $N$ and has $x$ and $c$ as a parallel pair of points, we deduce that $M\ba x$ has a c-minor isomorphic to $N$; a \cn. We conclude that $r_{M_c}(\{x,c\}) = 2$. Thus $\{x\}$ is 1-separating in $M_c$; a \cn.  We deduce that  $X_a\cap Y_b \neq \emptyset$ and \ref{xeyfnot} follows by symmetry.

We now choose the non-trivial 2-separation $(X_a, Y_a)$ of $M_c \ba a$ such that $|X_a|$ is a minimum subject to the condition that $b \in Y_a$. Since $X_a\cap Y_b$  and  $X_b\cap Y_a$ are both non-empty, we deduce from (\ref{subm}) and symmetry that 
$$\lambda_{M_c\ba a,b}(X_a\cap Y_b)  = 1 =  \lambda_{M_c\ba a,b}(X_b \cap Y_a).$$ 

We show next that 

\begin{sublemma}
\label{haz5} 
$\lambda_{M_c\ba a}(X_a\cap Y_b)  = 1 =  \lambda_{M_c\ba b}(X_b \cap Y_a).$
\end{sublemma}

We have $1 = r_{M_c}(X_a \cap Y_b) + r_{M_c}((Y_a - b) \cup X_b) - r(M_c \ba a,b).$ But 
$r(M_c \ba a,b) =  r(M_c \ba a) - 1$ and, by \ref{har1},  $r_{M_c}(Y_a - b) = r_{M_c}(Y_a) - 1$. Hence $r_{M_c}((Y_a - b) \cup X_b) = r_{M_c}(Y_a \cup X_b) - 1$. Thus 
\ref{haz5} follows by symmetry. 

By the choice of $X_a$ and the fact that $b$ and $c$ are the only 2-separating lines of $M\ba a$, we deduce that  $X_a \cap Y_b$ consists of a single point, say $w$.

\begin{sublemma}
\label{haz6} 
$X_a$ consists of a series pair $\{w,c\}$ in $M_c\ba a$.
\end{sublemma}

Suppose $w \notin \cl_{M_c}(X_a - w)$. Then $( X_a - w, Y_a \cup w)$ violates the choice of $(X_a,Y_a)$ unless $|X_a - w| = 1$. In the exceptional case, $\{w,c\}$ is a series pair in $M_c\ba a$.

Now suppose that $w \in \cl_{M_c}(X_a - w)$. Then $w \in \cl_{M_c}(X_b)$. Thus $(X_b \cup w, Y_b - w)$ is a 2-separation of $M_c \ba b$. But $Y_b - w$ avoids $X_a$ so we have a \cn\ to \ref{xeyfnot} when we replace $(X_b,Y_b)$ by $(X_b \cup w, Y_b - w)$ unless $Y_b = \{a,w\}$. In the exceptional case, by \ref{har1}, $r(Y_b) = 2$ and we have a \cn\ to \ref{two4}. We conclude that \ref{haz6} holds. 

Since $M_c \ba a$ has $\{w,c\}$ as a series pair. It follows that $M_c \ba a/w$ has a c-minor isomorphic to $N$. Thus so do $(M\ba a)^{\flat}/w$ and $M/w$. In $M\ba a$, we have $\{c,w\}$ and $\{b\}$ as 2-separating sets. Now $w \notin \cl_{M_c \ba a}(X_1 - w)$. 
Hence $r_M(X_1 - w) = r_M(X_1) - 1 = r(M) - 3$. As $r(Y_1) = 4$, we deduce that $(X_1 - w, Y_1)$ is a \tws\ in $M\ba w$. Thus, by Lemma~\ref{newbix}, $M/w$ is \thc\ unless $M$ has a triangle $T$ of points including $w$. In the exceptional case, $T- w \subseteq X_1 - w$, so 
$(X_1, Y_1)$ is a \tws\ of $M$. This \cn\ completes the proof of Lemma~\ref{3five}.
\end{proof}

\begin{lemma}
\label{3four} 
$\theta \neq 0$.
\end{lemma}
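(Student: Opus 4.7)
Assume $\theta = 0$ for contradiction. Working in the natural matroid derived from $M$, the element $z$ is a free basis of the rank-$2$ guts flat $F = \cl(X_1) \cap \cl(Y_1)$, and since $\cl(y) \subseteq \cl(Y_1)$ for each $y \in Y_1$, one has $\cl(y) \cap \cl(X_1) = \cl(y) \cap F$. Consequently $\sqcap_{M+z}(\{y\}, \{z\}) = \sqcap_M(\{y\}, X_1)$, so the hypothesis $\theta = 0$ translates to every line of $Y_1$ being skew to $X_1$ in $M$.

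Next, I would apply Lemma~\ref{general4} to obtain
\begin{equation*}
\sqcap_{M^*}(\{y\}, X_1) = \lambda_{M/(Y_1 - y)}(\{y\}) = 2 - \sqcap_M(X_1, Y_1 - y),
\end{equation*}
where the last equality is a direct rank calculation using $r(Y_1) = 4$, $r(X_1) = r(M) - 2$, and $r(X_1 \cup Y_1) = r(M)$. Under $\theta = 0$, one checks that $\sqcap_M(X_1, \{y_i, y_j\}) = 2$ when $r(\{y_i, y_j\}) = 4$ and $\sqcap_M(X_1, \{y_i, y_j\}) = 1$ when $r(\{y_i, y_j\}) = 3$ (if $F \subseteq \cl(\{y_i, y_j\})$, then combining with a line $y_i$ disjoint from $F$ would force $r(\cl(\{y_i, y_j\})) \ge 4$). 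By Lemma~\ref{compact0}, the rank-duality formula, and Lemma~\ref{csm}, the set $Y_1$ is a minimal non-$N^*$-$3$-separator of $M^*$ consisting of three lines, and $N^*$ is a c-minor of $M^*$. Defining $\theta^*$ analogously for $M^*$, the same identity yields that $\theta^*$ equals the number of pairs in $Y_1$ of rank $3$. If any such pair exists, then $\theta^* \in \{1, 2, 3\}$, contradicting one of Lemmas~\ref{3two}, \ref{3three}, or \ref{3five} applied to $(M^*, N^*)$.

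The remaining case is when every pair in $Y_1$ has rank $4$: a regulus of three pairwise skew lines, each skew to $X_1$, with $a \in \cl(\{b, c\})$ by a rank count. Pick $a \in Y_1 - E(N)$. By Lemma~\ref{earlier}, $M/a$ has no $N$-c-minor, so $M \ba a$ has one; the condition $a \in \cl(\{b, c\})$ gives $r_{M\ba a}(E - \{a, e\}) = r(E - e) = r(M)$ for every $e \in E - a$, hence $(M \ba a)^{\flat} = M \ba a$. Showing $M \ba a$ is $3$-connected will then contradict that $(M, N)$ is a counterexample, via Theorem~\ref{modc}(ii). To rule out a $2$-separation $(S, T)$ of $M \ba a$: if $\{b, c\} \subseteq S$, then $a \in \cl(S)$ forces $(S \cup a, T)$ to be a $2$-separation of $M$, contradicting $3$-connectivity; if $b \in S$ and $c \in T$, the submodular inequality $\sqcap(S, a) + \sqcap(T, a) \leq \lambda_{M \ba a}(S) + 2$ together with $a \in \cl(\{b, c\}) \subseteq \cl(S \cup T)$ forces either $\lambda_{M \ba a}(S) = 0$ (contradicting Lemma~\ref{Step1}) or one of $\sqcap(S, a) \geq 2$ or $\sqcap(T, a) \geq 2$, which again lifts to a $2$-separation of $M$.

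The main obstacle is precisely this regulus case: duality leaves $\theta^* = 0$ as well, so reduction to a prior lemma fails, and the final contradiction must be extracted from the submodular analysis of $2$-separations of $M \ba a$ together with the rigidity of three pairwise skew lines each skew to $X_1$. In particular, the delicate subcase where $\lambda_{M \ba a}(S) = 1$ with both $\sqcap(S, a) = 1$ and $\sqcap(T, a) = 1$ requires extra care, likely invoking the minimality of $Y_1$ as a non-$N$-$3$-separator of $M$ to eliminate the residual $3$-separation of $M$ produced by lifting $(S, T)$.
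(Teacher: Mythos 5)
Your reduction to the ``regulus'' case --- three pairwise-skew lines in $Y_1$, each skew to $X_1$, so that $r(Y_1 - y) = r(Y_1) = 4$ for all $y \in Y_1$ --- agrees with the paper's: dualize, note $\theta^* = 0$ would otherwise fall to Lemmas~\ref{3two}--\ref{3five}, and deduce each pair in $Y_1$ has rank $4$. The gap is in how you try to finish. You apply Lemma~\ref{earlier} to a single element $a$ and then attempt, and explicitly acknowledge failing, to prove $M\ba a$ is $3$-connected. That path is both unfinished and harder than needed; it is not even clear that $M\ba a$ must be $3$-connected in this configuration, so the ``delicate subcase'' you flag may not be closable by the submodular bookkeeping you set up.

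The paper's argument sidesteps the $3$-connectivity of any single-element deletion. Apply Lemma~\ref{earlier} to \emph{all three} of $a$, $b$, $c$: each is skew to each of the other two and to $X_1$, so none of $M/a$, $M/b$, $M/c$ has a c-minor isomorphic to $N$. Since $|Y_1\cap E(N)|\le 1$, at least two of them, say $a$ and $b$, lie outside $E(N)$; as neither can be contracted in forming $N$, both must be deleted, so $N$ is a c-minor of $M\ba a,b$. Now
\[
\lambda_{M\ba a,b}(\{c\}) = r(\{c\}) + r(X_1) - r(X_1\cup c) = 2 + (r(M)-2) - r(M) = 0,
\]
so $\{c\}$ is a component of $M\ba a,b$. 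A $1$-separating singleton cannot survive into the $3$-connected $N$, so $c\notin E(N)$, and since $\{c\}$ is a component its deletion and contraction agree; hence $N$ is a c-minor of $M\ba a,b/c$ and therefore of $M/c$. This contradicts Lemma~\ref{earlier} and finishes the lemma, resolving the case your analysis left open.
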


\begin{proof}
Assume that $\theta= 0$. Thus $\sqcap(X_1,\{y\}) = 0$ for all $y$ in $Y_1$. We may assume that $\sqcap^*(X_1,\{y\})  = 0$ for all $y$ in $Y_1$ otherwise, in $M^*$, we have $\theta \in \{1,2,3\}$. Thus, for all $y$ in $Y_1$, we have $r(Y_1 - y) = r(Y_1) = 4$. 
Then, by Lemma~\ref{earlier}, none of $M/a$, $M/b$, nor $M/c$ has a c-minor isomorphic to $N$. Hence we may assume that $a$ and $b$ are deleted to get $N$. But, in $M\ba a,b$, we see that $\{c\}$ is a component, so $c$ can be contracted to get $N$; a \cn.
\end{proof}

\begin{proof}[Proof of Lemma~\ref{Step6+}.] 
By Lemma~\ref{y13}, a minimal non-$N$-$3$-separator $Y_1$ of $M$ having exactly three elements consists of three lines. Above, we looked at the number $\theta$ of members $y$ of $Y_1$ for which $\sqcap(X_1,\{y\}) = 1$. In Lemmas~\ref{3two} and \ref{3three}, we showed that $\theta \neq 2$ and $\theta \neq 1$, while Lemmas~\ref{3five} and \ref{3four} showed that $\theta \neq 3$ and $\theta \neq 0$. There
are no remaining possibilities for $\theta$, so Lemma~\ref{Step6+} holds.
 \end{proof}

\section{A minimal non-$N$-$3$-separator with at least four elements} 
\label{fourel}

By \ref{Step6}, we may now assume that $M$ has a minimal non-$N$-$3$-separator $Y_1$ having at least four elements. As before, we write $X_1$ for $E(M) - Y_1$. Our next goal   is to prove \ref{Step7}, which we restate here for convenience.

\begin{lemma}
\label{dubya}
Let $Y_1$ be a minimal non-$N$-$3$-separating set having at least four elements. Then $Y_1$ contains a doubly labelled element.
\end{lemma}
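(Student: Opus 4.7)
The strategy is proof by contradiction: I assume no element of $Y_1$ is doubly labelled. Since $|Y_1|\ge 4$ and $|Y_1\cap E(N)|\le 1$, there are at least three elements in $Y_1-E(N)$, so I may pick such an $\ell$. Because $\ell\notin E(N)$ and $N$ is a c-minor of $M$, one of $M\ba\ell$ or $M/\ell$ has $N$ as a c-minor. By replacing $(M,N)$ with the dual pair $(M^*,N^*)$ if needed---this is legitimate because $3$-connectedness, the counterexample status, doubly-labelledness of every element, and the minimal non-$N$-$3$-separator status of $Y_1$ are all preserved under duality (using $\lambda_{M^*}=\lambda_M$ from Lemma~\ref{compact0}, Lemma~\ref{csm} for c-minors, the stability of the point/line distinction in $3$-connected $2$-polymatroids, and the fact that the ``both lines'' restriction on $2$-element non-$N$-$3$-separators is vacuous when $|Y_1|\ge 4$)---I may assume that $M\ba\ell$ has $N$ as a c-minor. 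I then pick a $2$-separation $(A,B)$ of $M\ba\ell$ with $A$ the $N$-side and $|B|=\mu(\ell)$; by Lemma~\ref{muend}, $|B|\ge 3$.

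Applying Lemma~\ref{p124} gives one of its conclusions (i), (ii), (iii). Case~(ii) is decisive: $B\subseteq Y_1-\ell\subseteq Y_1$, so Lemma~\ref{bubbly} applied to $(A,B)$ produces a doubly labelled element of $B$, which lies in $Y_1$, contradicting the standing assumption. To rule out cases~(i) and~(iii), I use the identity
\[
\lambda_M(Z)=\lambda_{M\ba\ell}(Z)+r(\{\ell\})-\sqcap(\{\ell\},E-\ell-Z),
\]
valid for $Z\subseteq E-\ell$ because $r(M\ba\ell)=r(M)$ in $3$-connected $M$ (no loops or coloops). With $Z=Y_1-\ell$, combining $\lambda_M(Y_1-\ell)\ge 2$ from $3$-connectedness of $M$ (noting $|Y_1-\ell|\ge 3$ and $|X_1\cup\{\ell\}|\ge 4$) with $\lambda_M(Y_1-\ell)\ne 2$ from minimality of $Y_1$ (otherwise $Y_1-\ell$ would itself be a non-$N$-$3$-separator of $M$ properly inside $Y_1$) restricts both remaining cases to the residual configuration in which $\ell$ is a line not lying in $\cl(X_1)$.

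In that residual configuration, I uncross $(A,B)$ with $(X_1,Y_1-\ell)$ and apply the rank identity above to the pieces $Z=B\cap(Y_1-\ell)$ and $W=A\cap(Y_1-\ell)$ supplied by case~(iii) (or directly to $Y_1-\ell$ in case~(i)), showing that adjoining $\ell$ to an appropriately chosen piece yields a proper non-$N$-$3$-separator of $Y_1$: of size at least three when that piece has $\ge 2$ elements (guaranteed by $|Y_1-\ell|\ge 3$), and of size two consisting of two lines in the boundary sub-case, where Lemma~\ref{key} (forbidding points on the non-$N$-side of a $2$-separation of $M\ba\ell$) supplies the line restriction on the single element contributed by $Z$ or $W$. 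This contradicts the minimality of $Y_1$, finishing the proof. The main obstacle is carrying out this final uncrossing cleanly: tracking the local-connectivity terms $\sqcap(\{\ell\},Z)$ and $\sqcap(\{\ell\},W)$ (bounded above by the residual constraint $\sqcap(\{\ell\},X_1)<r(\{\ell\})$ and forced, in aggregate, by the case~(iii) condition $\ell\in\cl(Y_1-\ell)$) and, in the two-element sub-case, invoking Lemma~\ref{key} to meet the ``both lines'' requirement of the non-$N$-$3$-separator definition.
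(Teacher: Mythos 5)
Your outline of the easy case (ii) --- applying Lemma~\ref{bubbly} to the $2$-separation $(A,B)$ with $|B|=\mu(\ell)$ when $B\subseteq Y_1-\ell$ --- matches the paper, and your duality reduction is sound (since $|Y_1|\ge 4$ the ``two lines'' clause is indeed vacuous for $Y_1$, and $\lambda_{M^*}=\lambda_M$ on compact $2$-polymatroids preserves everything needed). But the treatment of cases (i) and (iii) of Lemma~\ref{p124} has a genuine gap, in two places.

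First, you pick $\ell$ arbitrarily in $Y_1-E(N)$. The paper chooses $\ell$ to minimize $\nu(\ell)\in\{\mu(\ell),\mu^*(\ell)\}$ over all of $Y_1-E(N)$, and this choice is load-bearing: in case~(i) the paper invokes Lemmas~\ref{p63rev}, \ref{dichotomy}, and \ref{prep65rev}, and the dismissal of outcome~(iii) of Lemma~\ref{p63rev} and of outcome~(iii)(a) of Lemma~\ref{prep65rev} is exactly that they produce a $y$ or $y_0$ in $Y_1-\ell$ with $\mu(y)<\mu(\ell)$ or $\mu^*(y_0)<\mu(\ell)$, violating the minimality of $\nu(\ell)$. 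With an arbitrary $\ell$, that step has no teeth.

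Second, the uncrossing/rank-identity strategy you propose for the residual configuration does not close. Writing $Z=B\cap(Y_1-\ell)$ and $W=A\cap(Y_1-\ell)$, the identity gives
$\lambda_M(Z\cup\ell)=\lambda_{M\ba\ell}(Z)+r(\{\ell\})-\sqcap(\{\ell\},Z)=1+r(\{\ell\})-\sqcap(\{\ell\},Z)$,
and similarly for $W$; so $Z\cup\ell$ is exactly $3$-separating precisely when $\sqcap(\{\ell\},Z)=r(\{\ell\})-1$. But the only information you have is the \emph{aggregate} $\sqcap(\{\ell\},Z\cup W)=r(\{\ell\})$ (from $\ell\in\cl(Y_1-\ell)$) together with $\sqcap(\{\ell\},X_1)<r(\{\ell\})$; this does not determine the split. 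With $\ell$ a line one can perfectly well have $\sqcap(\{\ell\},Z)=\sqcap(\{\ell\},W)=0$ while $\sqcap(\{\ell\},Z\cup W)=2$ (take $\langle Z\rangle$ and $\langle W\rangle$ complementary and let $\ell$ be a ``diagonal'' plane inside their sum), whence $\lambda_M(Z\cup\ell)=\lambda_M(W\cup\ell)=3$ and neither is a $3$-separator, so minimality of $Y_1$ is never contradicted. In case~(i) the residual constraint forces $r(\{\ell\})=2$ and $\sqcap(\{\ell\},X_1)=0$, so $\lambda_M(Y_1-\ell)=1+2-0=3$, and ``applying it directly to $Y_1-\ell$'' does not produce a $3$-separator either.

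You flagged exactly this as ``the main obstacle,'' but it is not a bookkeeping issue to be cleaned up later --- it is where the substantive content of the lemma lives. The paper handles case~(iii) by Lemma~\ref{general3} followed by a two-level $2$-sum decomposition of $M\ba\ell$ (into $M_A$, $M_B$ and then $M_B$ into $M_{B,X}$, $M_{B,Y}$), culminating in the claim $\lambda_{M_{B,X}/s}(\{q\})=0$ obtained from a double-labelling contradiction, which forces $\lambda_{M\ba\ell}(B\cap X_1)=1$, contradicting case~(iii). It handles case~(i) by the $\nu$-minimal choice of $\ell$ feeding Lemmas~\ref{p63rev}, \ref{dichotomy}, \ref{prep65rev}, and then Lemma~\ref{key}. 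Neither of these can be replaced by the rank-identity uncrossing you sketch.
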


\begin{proof} Assume that the lemma fails. For   each $e$ in $Y_1 - E(N)$, let $\nu(e)$ be equal to the unique member of  $\{\mu(e), \mu^*(e)\}$ that is defined. Choose $\ell$ in $Y_1 - E(N)$ to minimize $\nu(\ell)$.   By switching to the dual if necessary, we may suppose that $\nu(\ell) = \mu(\ell)$.
Let $(A,B)$ be a \tws\ of $M\ba \ell$ where $A$ is the $N$-side and $|B| = \mu(\ell)$. We now apply Lemma~\ref{p124}. Part (ii) of that lemma does not hold otherwise, by Lemma~\ref{bubbly}, $Y_1 - \ell$ contains a doubly labelled element. 

Assume next that (iii) of Lemma~\ref{p124} holds. Then $\lambda_{M\ba \ell}(Y_1 - \ell) = 2$ and 
$\lambda_{M\ba \ell}(A \cap(Y_1 - \ell)) = 1= \lambda_{M\ba \ell}(B \cap(Y_1 - \ell))$, while 
$\lambda_{M\ba \ell}(A \cap X_1) = 2= \lambda_{M\ba \ell}(B \cap X_1)$. Then using the partitions $(A\cap (Y_1 - \ell), A \cap X_1,B)$ 
and $(B\cap (Y_1 - \ell), B \cap X_1,A)$ as $(A,B,C)$ in Lemma~\ref{general3}, we deduce that 
$\sqcap(A \cap (Y_1 - \ell), A\cap X_1) = 1$ and 
$\sqcap(B \cap (Y_1 - \ell), B\cap X_1) = 1$.

Now $M\ba \ell$ is the 2-sum of $2$-polymatroids $M_A$ and $M_B$ having ground sets $A \cup q$ and $B \cup q$, respectively. Since $M\ba \ell$ is $2$-connected, it follows by Proposition~\ref{connconn}, that each of $M_A$ and $M_B$ is $2$-connected. 
Now $\lambda_{M\ba \ell}(B \cap (Y_1 - \ell))=  \sqcap_{M\ba \ell}(B \cap (Y_1 - \ell), (B \cap X_1) \cup A) = 1$ and $\sqcap_{M_B}(B \cap (Y_1 - \ell), B \cap X_1) = 1$. Noting that $M\ba \ell = P(M_A,M_B)\ba q$, we see that, in $P(M_A,M_B)$, we have $\sqcap(B \cap (Y_1 - \ell), (B \cap X_1) \cup A\cup q) = 1$. Hence 
 $\sqcap_{M_B}(B \cap (Y_1 - \ell), (B \cap X_1) \cup q) = 1$. Thus $M_B$ is the 2-sum of two $2$-connected $2$-polymatroids $M_{B,Y}$ and $M_{B,X}$ having ground sets $(B \cap (Y_1 - \ell)) \cup s$ and  $(B \cap X_1) \cup q \cup s$. Note that $M_B = P(M_{B,X},M_{B,Y})\ba s$. 
 Let $M'_B = P(M_{B,X},M_{B,Y})$ and consider $P(M_A,M'_B)$ noting that deleting $q$ and $s$ from this $2$-polymatroid gives $M\ba \ell$.
 
 By Lemma~\ref{oswrules}(ii), 
 $$\sqcap(A,B) + \sqcap(B \cap X_1, B \cap (Y_1 - \ell)) = \sqcap(A \cup (B \cap X_1),B\cap (Y_1 - \ell)) + \sqcap(A, B \cap X_1).$$
 Since the first three terms in this equation equal one, 
 \begin{equation}
 \label{labx}
 \sqcap(A, B \cap X_1) = 1.
 \end{equation}
 We deduce, by Lemma~\ref{claim1}(i) that if $y \in B \cap (Y_1 - \ell)$, then $M\ba \ell\ba y$ has a special $N$-minor.
 
Now $M_{B,X}$ has $q$ and $s$ as points. We show next that 
 
\begin{sublemma}
\label{notzero}  $\lambda_{M_{B,X}/ s}(\{q\}) =  0$.
\end{sublemma}

Assume that  $\lambda_{M_{B,X}/ s}(\{q\}) \neq  0$. When we contract $s$ in $M'_B$, the set $B \cap (Y_1 - \ell)$ becomes 1-separating. Moreover, in $M'_B/(B \cap (Y_1 - \ell))$, the element $s$ is a loop, so $M'_B\ba s/(B \cap (Y_1 - \ell)) = M'_B/s/(B \cap (Y_1 - \ell))$. It follows that 
  $\sqcap_{M\ba \ell/(B \cap (Y_1 - \ell)}(A, B\cap X_1) = 1$. Hence, by Lemma~\ref{claim1}(ii), if $y \in B \cap (Y_1 - \ell)$, then $M\ba \ell/ y$ has a special $N$-minor.
Thus  each $y$ in $B \cap (Y_1 - \ell)$ is doubly labelled. This \cn\ completes the proof of \ref{notzero}. 

By \ref{notzero},  $\{q,s\}$ is a parallel pair of points in $M_{B,X}$. From considering $P(M_A,M'_B)$, we deduce that 
$\lambda_{M\ba \ell}(B \cap X_1) = 1$. 
This \cn\ implies that (iii) of Lemma~\ref{p124} does not hold. 

It remains to consider when (i) of Lemma~\ref{p124} holds. We now apply Lemma~\ref{p63rev} to get, because of the choice of $\ell$, that $\sqcap(\{y\},X_1)  \neq 1$ for some $y$ in $Y_1 - \ell$. Then, by Lemma~\ref{dichotomy}, $\sqcap(Y_1 - y, X_1) = 0$ for all $y$ in $Y_1 - \ell$. 
Thus, by Lemma~\ref{prep65rev} 
and the choice of $\ell$, (iii)(b) rather than (iii)(a)  of that lemma holds. Then 
$(X_1 - z, (Y_1 - \ell) \cup z$ is a 2-separation of $M\ba \ell$ having $X_1 - z$ as the $N$-side. Since $z$ is a point, we have a \cn\ to Lemma~\ref{key}.      
\end{proof}

The doubly labelled element found in the last lemma will be crucial in completing the proof of Theorem~\ref{modc}. We shall need another preliminary lemma.

\begin{lemma} 
\label{series} 
Let $\ell$ be a doubly labelled element of $M$. Then $M\ba \ell$ does not have a series pair of points $\{a,b\}$ such that $r(\{a,b,\ell\}) = 3$.
\end{lemma}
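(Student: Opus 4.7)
The plan is to exhibit $\{a,b,\ell\}$ as a minimal non-$N$-$3$-separator of size three that contains two points, which directly contradicts Lemma~\ref{y13}. I will argue by contradiction: suppose such a series pair $\{a,b\}$ exists in $M\ba \ell$. Since $\ell$ is doubly labelled, $\ell$ is a line by Lemma~\ref{Step0}, and consistent with the way doubly labelled elements are selected in the proof of Lemma~\ref{dubya} (they are chosen from $Y_1 - E(N)$), I may take $\ell \notin E(N)$.

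First I will do the easy rank computations. Compactness of $M$ gives $r(M\ba \ell) = r(M)$, and the series-pair condition gives $r(E-\{a,b,\ell\}) = r(M)-1$; combined with $r(\{a,b,\ell\})=3$, this yields $\lambda_M(\{a,b,\ell\})=2$, so $\{a,b,\ell\}$ is exactly $3$-separating. The identity $r_{M/\ell}(\{a,b\}) = r(\{a,b,\ell\})-r(\ell)=1$ shows that $a,b$ are parallel points of $M/\ell$, and since $M$ is $3$-connected we have $r(E-\{a,b\})=r(M)$, giving $\lambda_{M/\ell}(\{a,b\})=1$ as well.

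The main step, and the principal obstacle, is to show $\{a,b\} \not\subseteq E(N)$. Because $N$ is a c-minor of $M$ with $\ell \notin E(N)$, the commutation properties for deletions, contractions, and compactifications (Lemma~\ref{elemprop} together with Lemma~\ref{complast1}) allow me to reorder the c-minor path so that $\ell$ is removed first. Hence $N$ itself (not merely a relabelled special $N$-minor) is a c-minor of either $M\ba \ell$ or $M/\ell$, according to whether $\ell$ was deleted or contracted in the original path. If $\{a,b\} \subseteq E(N)$, then $3$-connectivity of $N$ together with $|E(N)| \ge 4$ forces $\lambda_N(\{a,b\}) \ge 2$, since $\{a,b\}$ cannot be parallel in $N$. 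But Lemma~\ref{8.2.4} applied to the relevant c-minor relation gives $\lambda_N(\{a,b\}) \le \lambda_{M\ba \ell}(\{a,b\})=1$ or $\lambda_N(\{a,b\}) \le \lambda_{M/\ell}(\{a,b\})=1$, a contradiction. Hence $|E(N) \cap \{a,b\}| \le 1$, and together with $\ell \notin E(N)$, this shows that $\{a,b,\ell\}$ is a non-$N$-$3$-separator.

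Finally, I will verify minimality: the only proper subsets of $\{a,b,\ell\}$ of size at least two are the three pairs $\{a,b\}$, $\{a,\ell\}$, $\{b,\ell\}$, each of which contains one of the points $a$ or $b$, hence fails the requirement that a two-element non-$N$-$3$-separator consist of two lines. So $\{a,b,\ell\}$ is a minimal non-$N$-$3$-separator with exactly three elements, and Lemma~\ref{y13} forces it to consist of three lines. Since $a$ and $b$ are points, this is the desired contradiction.
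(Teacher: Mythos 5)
Your proof is correct in its conclusions, but it takes a genuinely different and substantially heavier route than the paper's, and has two small expository gaps worth flagging.

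The paper's own argument is three lines long and entirely elementary: since $\ell$ is doubly labelled, $\ell$ is a line (Lemma~\ref{Step0}); the hypothesis $r(\{a,b,\ell\})=3$ with $\{a,b\}$ a series pair in $M\ba\ell$ forces $r_{M/\ell}(\{a,b\})=1$, so $M/\ell$ has $a,b$ as a parallel pair of points (or one of them as a loop). Combining the special $N$-minor of $M\ba\ell$ (where $\{a,b\}$ is a series pair, so contracting either preserves it) with the special $N$-minor of $M/\ell$ (where deleting a parallel point or loop preserves it), at least one of $a,b$ is a doubly labelled \emph{point}, contradicting Lemma~\ref{Step0} directly. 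By contrast, you reduce the problem to the statement that $\{a,b,\ell\}$ is a minimal non-$N$-$3$-separator of size three and then invoke Lemma~\ref{y13}, which is one of the most labour-intensive lemmas in Section~\ref{bigtime}--\ref{threeel}. Both approaches are logically valid (Lemma~\ref{y13} precedes Lemma~\ref{series} in the paper, so there is no circularity), but yours substitutes a sledgehammer for what the authors dispatch with a flick; you also have to establish $|E(N)\cap\{a,b\}|\le 1$ from scratch, reconstructing exactly the kind of reasoning that the paper avoids by working with the doubly-labelled property rather than a fixed c-minor path.

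Two smaller points. First, your claim that $\ell\notin E(N)$ is justified only by appeal to ``the way doubly labelled elements are selected in the proof of Lemma~\ref{dubya}''; the lemma as stated allows an arbitrary doubly labelled $\ell$, so you should derive $\ell\notin E(N)$ from the hypothesis: a special $N$-minor differs from $N$ only by relabelling a single \emph{point}, so a line of $E(N)$ must appear in every special $N$-minor, which is incompatible with $M\ba\ell$ having one. Second, your minimality check examines only the two-element subsets of $\{a,b,\ell\}$; you implicitly rely on the convention (used but not spelled out in the paper's definition of non-$N$-$3$-separator) that such sets have at least two elements, since otherwise the compact line $\{\ell\}$ itself satisfies $\lambda(\{\ell\})=2$ and $\ell\notin E(N)$. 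This is consistent with how the paper uses the notion, but it deserves an explicit remark.
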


\begin{proof} Assume that $M\ba \ell$ does have such a series pair $\{a,b\}$. By Lemma~\ref{Step0}, $\ell$ is a line. Thus $M/\ell$ has $\{a,b\}$ as a parallel pair of points or has $a$ or $b$ as a loop. In each case, $M$ has $a$ or $b$ as a doubly labelled point; a \cn.
\end{proof}

We will now take $\ell$ to be a doubly labelled element of $Y_1$, a minimal non-$N$-$3$-separating set having at least four elements.

\begin{lemma} 
\label{p125} There is a $2$-separating set $Q$ in $M\ba \ell$ such that $Q \subseteq Y_1 - \ell$ and $|Q| \ge 2$ and contains no points. 
\end{lemma}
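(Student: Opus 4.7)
The plan is to apply Lemma~\ref{p124} to the minimal non-$N$-$3$-separator $Y_1$ and the doubly labelled element $\ell$. Before doing so, I would note that $\ell$ is a line (by Lemma~\ref{Step0}, doubly-labelled points are forbidden), so $\ell \notin E(N)$ and therefore $|(Y_1 - \ell) \cap E(N)| \le 1$, while $X_1$ contains at least $|E(N)| - 1 \ge 3$ elements of $E(N)$. Hence any $2$-separating subset of $Y_1 - \ell$ in $M\ba \ell$ is automatically the non-$N$-side of its induced $2$-separation, which is precisely the hypothesis needed to invoke Lemma~\ref{key}. Take $(A,B)$ to be a $2$-separation of $M\ba \ell$ with $A$ the $N$-side and $|B| = \mu(\ell)$; by Lemma~\ref{muend}, $|B| \ge 3$.

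In case~(i) of Lemma~\ref{p124}, $Y_1 - \ell$ is itself $2$-separating in $M\ba \ell$, and since $|Y_1 - \ell| \ge 3$ it is not a series pair, so Lemma~\ref{key} forces $Y_1 - \ell$ to contain no points; set $Q = Y_1 - \ell$. In case~(ii), $B \subseteq Y_1 - \ell$ with $|B| \ge 3$, and the same application of Lemma~\ref{key} gives $Q = B$.

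In case~(iii), the crossing condition means both $A \cap (Y_1 - \ell)$ and $B \cap (Y_1 - \ell)$ are non-empty, each of them is exactly $2$-separating in $M\ba \ell$, and together they partition $Y_1 - \ell$ (which has at least three elements). Hence at least one of them, call it $Q_0$, has $|Q_0| \ge 2$. If $|Q_0| \ge 3$, Lemma~\ref{key} gives that $Q_0$ contains no points and we set $Q = Q_0$. If $|Q_0| = 2$ and $Q_0$ is not a series pair of points of $M\ba \ell$, Lemma~\ref{key} again yields $Q = Q_0$.

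The main obstacle is the residual situation in case~(iii) where every block of size at least two among $\{A \cap (Y_1 - \ell), B \cap (Y_1 - \ell)\}$ has exactly two elements and is a series pair of points in $M\ba \ell$; this forces $|Y_1 - \ell| \in \{3,4\}$. To rule this out I would invoke Lemma~\ref{series}, which forbids any series pair of points $\{a,b\}$ in $M\ba \ell$ with $r(\{a,b,\ell\}) = 3$. Combined with the $3$-connectivity of $M$ (which, via a quick submodularity check, forces $r(E-\{a,b\}) = r(M)$ and hence $\lambda_M(\{a,b\}) = 2$), this pins down $r(\{a,b,\ell\}) \in \{2,4\}$. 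In the subcase $r(\{a,b,\ell\}) = 2$, both $a$ and $b$ lie on the line $\ell$, so $M\ba a$ is $3$-connected and inherits a special $N$-minor from $(M\ba \ell)^{\flat}$, exhibiting $a$ as a doubly-labelled point and contradicting Lemma~\ref{Step0}. In the subcase $r(\{a,b,\ell\}) = 4$, the sets $\{a,b\}$ and $\{\ell\}$ are skew, so $\{a,b\}$ remains $2$-separating in $M/\ell$ (by Lemma~\ref{obs1} and the hypothesis that $M/\ell$ has $N$ as a c-minor), and a parallel analysis again produces a doubly-labelled point contradicting Lemma~\ref{Step0}. Eliminating this obstacle completes the proof.
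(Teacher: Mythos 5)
Your overall plan follows the paper's structure closely: split on whether $\ell \in \cl(Y_1 - \ell)$ (your case~(i) of Lemma~\ref{p124} versus your cases~(ii)--(iii)), take a $2$-separation $(A,B)$ with $|B| = \mu(\ell) \ge 3$, and use Lemma~\ref{key}. Your handling of cases~(i) and~(ii) is correct. But case~(iii) has a genuine gap.

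First, you miss a simplification the paper exploits immediately: since $|B| = \mu(\ell) \ge 3$, Lemma~\ref{key} applied directly to the $2$-separation $(A,B)$ already shows that $B$ \emph{contains no points at all}. In particular $B \cap (Y_1 - \ell)$ automatically contains no points, so the series-pair worry can only attach to $A \cap (Y_1 - \ell)$, and only when $|A \cap (Y_1 - \ell)| = 2$ (forcing $|Y_1| = 4$). Your residual case is therefore framed too broadly.

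Second, and more seriously, the subcase $r(\{a,b,\ell\}) = 4$ of your residual analysis is wrong. You assert that because $\{a,b\}$ and $\ell$ are skew, $\{a,b\}$ ``remains $2$-separating in $M/\ell$ by Lemma~\ref{obs1}.'' That is the opposite of what Lemma~\ref{obs1} gives: it says $\sqcap(\{a,b\},\{\ell\}) + \lambda_{M/\ell}(\{a,b\}) = \lambda_M(\{a,b\})$. With $\sqcap(\{a,b\},\{\ell\}) = 0$ and $\lambda_M(\{a,b\}) = 2$ (which you correctly derived from $3$-connectivity), this gives $\lambda_{M/\ell}(\{a,b\}) = 2$, so $\{a,b\}$ is a $3$-separating set, not a $2$-separating one, in $M/\ell$. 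The ``parallel analysis'' you sketch for that subcase therefore has nothing to run on, and the subcase $r(\{a,b,\ell\}) = 4$ is left unresolved. You would need a different argument here (or else lean on the paper's observation about $B$ and address the case $|A \cap (Y_1 - \ell)| = 2$ separately), so as written the proof does not close.
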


\begin{proof} Suppose $\ell \not\in \cl(Y_1 - \ell)$. Then $(X_1, Y_1 - \ell)$ is a \tws\ of $M\ba \ell$ and $r(Y_1) = r(Y_1 - \ell) + 1$. Then, by Lemma~\ref{series}, $Y_1 - \ell$ does not consist of a series pair of points. Hence, by Lemma~\ref{key}, $Y_1 - \ell$ contains no points so the result holds by taking $Q  =Y_1 - \ell$. 

We may now assume that  $\ell \in \cl(Y_1 - \ell)$. Let $(A,B)$ be a \tws\ of $M\ba \ell$ where $A$ is the $N$-side and $|B| = \mu(\ell)$. Since $|B| \ge 3$, it follows by Lemma~\ref{key} that $B$ contains no points. If $B \subseteq Y_1 - \ell$, then the lemma holds by taking $Q = B$. Thus we may assume that $B \cap X_1 \neq \emptyset$.

Since $X_1$ and $A$ are the $N$-sides of their respective separations and $|E(N)| \ge 4$, we see that $|A \cap X_1| \ge 2$. If $A \subseteq X_1$, then $B \supseteq Y_1 - \ell$, so $(A,B \cup \ell)$ is a \tws\ of $M$; a \cn. Likewise, if $B \subseteq X_1$, then $(A \cup \ell,B)$ is a \tws\ of $M$; a \cn. We conclude that $(A,B)$ crosses $(X_1,Y_1 - \ell)$.

Since $|A \cap X_1| \ge 2$ and $\ell \in \cl(Y_1 - \ell)$, it follows that $\lambda_{M\ba \ell}(A \cap X_1) \ge 2$ otherwise 
$(A \cap X_1, B \cup Y_1)$ is a \tws\ of $M$; a \cn. Then, by uncrossing, we deduce that $\lambda_{M\ba \ell}(B \cap Y_1) \le 1$. Since  $B$  contains no   points, the lemma holds with $Q = B \cap Y_1$ unless this set contains a single line.

Consider the exceptional case. As $|B| \ge 3$, we deduce that $|B \cap X_1| \ge 2$. Now $\lambda_{M\ba \ell}(B \cap X_1) \ge 2$ otherwise, as $\ell \in \cl(Y_1 - \ell)$, we obtain the \cn\ that $(B\cap X_1,A \cup Y_1)$ is a \tws\ of $M$. Hence, by uncrossing, $\lambda_{M\ba \ell}(A \cap Y_1) = 1$ and, as $|Y_1| \ge 4$, it follows using Lemma~\ref{key} that the lemma holds by taking $Q = A \cap Y_1$. 
\end{proof}

\begin{lemma} 
\label{ab} The $2$-polymatroid $M\ba \ell$ has a $2$-separation $(D_1,D_2)$ where $D_2$ has at least two elements, is contained in $Y_1 - \ell$, and contains no points. Moreover, either 
\begin{itemize}
\item[(i)] $D_2 \cup \ell = Y_1$; and $\sqcap(D_1,\{\ell\}) = 0$ and $\sqcap(D_2,\{\ell\}) = 1$; or 
\item[(ii)] $Y_1 - \ell - D_2 \neq \emptyset$ and $\sqcap(D_1,\{\ell\}) = 0 = \sqcap(D_2,\{\ell\})$.
\end{itemize}
 \end{lemma}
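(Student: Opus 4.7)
The plan is to take $D_2 = Q$, where $Q$ is the $2$-separating set provided by Lemma~\ref{p125}, and put $D_1 = E(M) - \ell - D_2$. Since $\ell$ is doubly labelled, $M\ba \ell$ has $N$ as a c-minor, so by Lemma~\ref{Step1} it is $2$-connected; consequently $\lambda_{M\ba \ell}(D_2) = 1$, and $(D_1,D_2)$ is an exact $2$-separation of $M\ba \ell$ with $D_2 \subseteq Y_1 - \ell$, $|D_2| \ge 2$, and $D_2$ containing no points. It remains to compute $\sqcap(D_1,\{\ell\})$ and $\sqcap(D_2,\{\ell\})$.

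The key computation is to apply Lemma~\ref{oswrules}(ii) with $A = D_1$, $B = D_2$, $C = \{\ell\}$. Since $M$ is compact, $r(E - \ell) = r(M)$, so $\sqcap(D_1 \cup D_2,\{\ell\}) = \sqcap(E - \ell,\{\ell\}) = r(\{\ell\}) = 2$. Also, $\sqcap(D_1,D_2) = \lambda_{M\ba \ell}(D_1) = 1$ and $\sqcap(D_1 \cup \ell,D_2) = r(D_1 \cup \ell) + r(D_2) - r(M) = \lambda_M(D_2)$. The lemma therefore gives
$$\sqcap(D_1,\{\ell\}) = 3 - \lambda_M(D_2),$$
and by the symmetric application (swapping the roles of $D_1$ and $D_2$) one obtains $\sqcap(D_2,\{\ell\}) = 3 - \lambda_M(D_1)$.

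I will then use the minimality of $Y_1$ to pin these values down. Since $D_2 \subsetneq Y_1$ has at least two elements, contains no points (so in particular at least two lines), and meets $E(N)$ in at most one element, if $\lambda_M(D_2) = 2$ then $D_2$ would be a non-$N$-$3$-separator properly contained in $Y_1$, contradicting minimality. Thus $\lambda_M(D_2) \ge 3$, and since $\sqcap(D_1,\{\ell\}) \ge 0$, we conclude $\sqcap(D_1,\{\ell\}) = 0$. A case split now finishes the argument. If $D_2 \cup \ell = Y_1$, then $D_1 = X_1$, so $\lambda_M(D_1) = \lambda_M(X_1) = 2$, and $\sqcap(D_2,\{\ell\}) = 1$; this is case~(i). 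Otherwise $D_2 \cup \ell$ is a proper subset of $Y_1$ with at least three elements, including at least three lines, so by the same minimality argument $\lambda_M(D_1) \ne 2$; since $\sqcap(D_2,\{\ell\}) \ge 0$ forces $\lambda_M(D_1) \le 3$, we have $\lambda_M(D_1) = 3$ and $\sqcap(D_2,\{\ell\}) = 0$, giving case~(ii).

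The hard part is really just the identification of the right uncrossing identity; once Lemma~\ref{oswrules}(ii) is applied with $C = \{\ell\}$, the two local connectivities become pure functions of $\lambda_M(D_1)$ and $\lambda_M(D_2)$, and the minimality of $Y_1$ dictates exactly which of (i) or (ii) must occur. The only routine verifications needed are that $\sqcap(E-\ell,\{\ell\}) = 2$ (i.e.\ compactness of $M$ at $\ell$, which holds since $M$ is $3$-connected with at least three elements) and that $D_2$ and $D_2 \cup \ell$ satisfy the definition of a non-$N$-$3$-separator whenever $\lambda_M(\cdot) = 2$, both of which are immediate from the definitions.
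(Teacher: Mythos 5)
Your proof is correct and takes essentially the same approach as the paper. The identity $\sqcap(D_i,\{\ell\}) = 3 - \lambda_M(D_j)$ obtained from Lemma~\ref{oswrules}(ii) neatly packages the paper's four-case analysis of the possible values of $(\sqcap(D_1,\{\ell\}),\sqcap(D_2,\{\ell\}))$, but the substance---using compactness of $M$, $3$-connectedness, and the minimality of $Y_1$ to forbid $\lambda_M(D_2) = 2$ and (when $D_2 \cup \ell \neq Y_1$) $\lambda_M(D_2 \cup \ell) = 2$---is the same.
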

 
\begin{proof} Let $D_2$ be the set $Q$ found in Lemma~\ref{p125} and let $D_1 = E(M\ba \ell) - D_2$. 
Now $(D_1,D_2)$ is a \tws\ of $M\ba \ell$. Thus, there are the following four possibilities.
\begin{itemize}
\item[(I)]   $\sqcap(D_1,\{\ell\}) = 1 = \sqcap(D_2,\{\ell\})$;   
\item[(II)] $\sqcap(D_1,\{\ell\}) = 1$ and  $\sqcap(D_2,\{\ell\}) = 0$; 
\item[(III)] $\sqcap(D_1,\{\ell\}) = 0$ and  $\sqcap(D_2,\{\ell\}) = 1$; and  
\item[(IV)] $\sqcap(D_1,\{\ell\}) = 0 = \sqcap(D_2,\{\ell\})$.
\end{itemize}

\begin{sublemma}
\label{elim12}
Neither (I) nor (II) holds.
\end{sublemma}

Suppose (I) or (II) holds. Then $\lambda_M(D_1 \cup \ell) = 2$, so $\lambda_M(D_2) =2$ and $|D_2| \ge 2$. Since $D_2$ contains no points and  $D_2 \subseteq Y_1 - \ell$, we get a contradiction to the minimality of $Y_1$. Thus \ref{elim12} holds.

\begin{sublemma}
\label{case3}
If (III) holds, then $D_1 \cap Y_1 = \emptyset$. 
\end{sublemma}

As $\lambda_M(D_2 \cup \ell) = 2$, we must have that $D_1 \cap Y_1 = \emptyset$ otherwise $D_2 \cup \ell$ violates the minimality of $Y_1$. Thus \ref{case3} holds.

\begin{sublemma}
\label{case4}
If (IV) holds, then $D_1 \cap Y_1 \neq \emptyset$. 
\end{sublemma}

Suppose $D_1 \cap Y_1 = \emptyset$. Then $D_1 = X_1$ and $D_2 = Y_1 - \ell$. Thus $\lambda_M(X_1) > 2$ as $\sqcap(D_2,\{\ell\}) = 0$. This \cn\ establishes that \ref{case4} holds and thereby completes the proof of the lemma.
\end{proof}

\begin{lemma} 
\label{abdual} The $2$-polymatroid $M/ \ell$ has a $2$-separation $(C_1,C_2)$ where $C_2$ contains at least two elements and  is contained in $Y_1 - \ell$, and contains no points of $M$. Moreover, either 
\begin{itemize}
\item[(i)] $C_2 \cup \ell = Y_1$; and $\sqcap(C_1,\{\ell\}) = 1$ and $\sqcap(C_2,\{\ell\}) = 2$; or 
\item[(ii)] $Y_1 - \ell - C_2 \neq \emptyset$ and $\sqcap(C_1,\{\ell\}) = 2 = \sqcap(C_2,\{\ell\})$.
\end{itemize}
 \end{lemma}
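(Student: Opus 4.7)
The plan is to derive Lemma~\ref{abdual} from Lemma~\ref{ab} by applying it to the dual pair $(M^*, N^*)$. First I would check that the hypotheses transfer to the dual. By Lemma~\ref{csm}, $N^*$ is a c-minor of $M^*$; by Lemma~\ref{compact0}(iii), $\lambda_M = \lambda_{M^*}$, so $Y_1$ is again a minimal non-$N^*$-$3$-separator of $M^*$ of size at least four (noting that $E(N^*) = E(N)$). Using Lemma~\ref{compact0}(iv) together with the compactness of $M$, one obtains $(M^*\ba \ell)^\flat = (M/\ell)^*$ and $(M^*/\ell)^* = M\baba \ell$; then Lemma~\ref{csm} shows that $\ell$ is doubly labelled in $(M^*,N^*)$ as well. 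Moreover, since $M$ is compact and $3$-connected with $|E(M)| \ge 4$, points of $M$ coincide with points of $M^*$ and lines with lines.

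Next I would verify that $(M^*, N^*)$ is itself a counterexample to Theorem~\ref{modc}. Conclusion~(ii) transfers between $(M,N)$ and $(M^*,N^*)$ via Lemma~\ref{compact0}(iv) and Lemma~\ref{csm}; conclusion~(iii) transfers via Lemma~\ref{pricklytime} and Proposition~\ref{compdual}. For conclusion~(i), whirls are self-dual, and if $M^*$ were the cycle matroid of a wheel, then $M = M^{**}$ would itself be a matroid, so the classical splitter theorem of Seymour~\cite{pds} would force $(M,N)$ to satisfy~(ii) unless $M$ too were a wheel or whirl, contradicting the counterexample assumption in either case.

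With $(M^*, N^*)$ confirmed as a counterexample, apply Lemma~\ref{ab} to it: this produces a $2$-separation $(D_1^*, D_2^*)$ of $M^*\ba \ell$ with $D_2^* \subseteq Y_1 - \ell$, $|D_2^*| \ge 2$, and $D_2^*$ free of points of $M^*$ (equivalently, of $M$). Since $\lambda_{M^*\ba \ell} = \lambda_{(M/\ell)^*} = \lambda_{M/\ell}$ by Lemma~\ref{compact0}(iii),(iv), this is simultaneously a $2$-separation of $M/\ell$; I then set $(C_1, C_2) := (D_1^*, D_2^*)$.

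To translate the local-connectivity values, I would use the identity
\[
\sqcap_{M^*}(D, \{\ell\}) = r_M(E-D) - r_M(E-D-\ell),
\]
which follows directly from $r_{M^*}(X) = \|X\| + r_M(E-X) - r(M)$ and $r_{M^*}(\{\ell\}) = 2$. Thus $\sqcap_{M^*}(D, \{\ell\}) = 0$ is equivalent to $\ell \in \cl_M(E - D - \ell)$, that is, $\sqcap_M(E-D-\ell, \{\ell\}) = 2$; and $\sqcap_{M^*}(D, \{\ell\}) = 1$ is equivalent to $\sqcap_M(E-D-\ell, \{\ell\}) = 1$. Applying this to the two alternatives in Lemma~\ref{ab}: case~(i) there (with $D_2^* \cup \ell = Y_1$, $\sqcap_{M^*}(D_1^*, \{\ell\}) = 0$, and $\sqcap_{M^*}(D_2^*, \{\ell\}) = 1$) becomes case~(i) here with $\sqcap(C_1, \{\ell\}) = 1$ and $\sqcap(C_2, \{\ell\}) = 2$; and case~(ii) there becomes case~(ii) here with $\sqcap(C_1, \{\ell\}) = 2 = \sqcap(C_2, \{\ell\})$. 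The main obstacle in this plan is the verification that the wheel case of conclusion~(i) of Theorem~\ref{modc} cannot hold for $(M^*, N^*)$; this requires the matroidal invocation of Seymour's classical splitter theorem sketched above.
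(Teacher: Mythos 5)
Your proposal takes essentially the same route as the paper: apply Lemma~\ref{ab} to the dual, use $(M^*\ba\ell)^\flat = (M/\ell)^*$ and the equality of connectivity functions to obtain the $2$-separation of $M/\ell$, then translate the local-connectivity values. Your identity $\sqcap_{M^*}(D,\{\ell\}) = r_M(E-D) - r_M(E-D-\ell)$ is correct and amounts to the same computation the paper condenses into $\sqcap^*(C_i,\{\ell\}) + \sqcap(C_j,\{\ell\}) = 2$; the case analysis then matches the paper's.

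The one place you stray from the shortest path is the wheel case, which you flag as the main obstacle. It is not. Since $M$ is $3$-connected with at least two elements, it is compact, so $(M^*)^* = M^\flat = M$. Cycle matroids of wheels, like whirls, are self-dual as matroids, and since such matroids have neither loops nor coloops, the polymatroid dual used here agrees with the usual matroid dual on them. Hence if $M^*$ were a whirl or the cycle matroid of a wheel, then $M = (M^*)^*$ would be one too, contradicting the counterexample assumption directly. You already use exactly this reasoning for whirls; there is no reason to treat wheels differently. Invoking Seymour's splitter theorem for the wheel case, besides being heavier machinery than needed, would also require a justification (which you omit) that $N$ is an honest matroid minor of $M$ rather than merely a c-minor before the classical theorem applies.
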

 
\begin{proof} We apply the preceding lemma to $M^*\ba \ell$ recalling that $(M^*\ba \ell)^{\flat} = (M/\ell)^*$ and that the connectivity functions of 
$M^*\ba \ell$  and $M/\ell$ are equal. Thus $M/ \ell$ does indeed have a $2$-separation $(C_1,C_2)$ where $C_2$ contains at least two elements, 
   is contained in $Y_1 - \ell$, and contains no points of $M^*$. Thus $C_2$ contains no points of $M$. Since $r(E - \ell) = r(M)$, one easily checks that 
   $\sqcap^*(C_i, \{\ell\}) + \sqcap(C_j,\{\ell\}) = 2$ where $\{i,j\} = \{1,2\}$. The lemma now follows from the preceding one. 
  \end{proof}
   

\begin{lemma} 
\label{bb} The $2$-polymatroids $M\ba \ell$ and $M/ \ell$ have $2$-separations $(D_1,D_2)$ and $(C_1,C_2)$, respectively, such that each of $D_2$ and $C_2$ contains at least two elements, both $D_2$ and $C_2$ are contained in $Y_1 - \ell$, and neither $D_2$ nor $C_2$ contains any points of $M$. Moreover, $Y_1 - \ell - D_2 \neq \emptyset \neq Y_1 - \ell - C_2$ and $\sqcap(D_1,\{\ell\}) = 0 = \sqcap(D_2,\{\ell\})$ while 
$\sqcap(C_1,\{\ell\}) = 2 = \sqcap(C_2,\{\ell\})$. 
\end{lemma}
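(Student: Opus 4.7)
The plan is to show that alternative~(i) in each of Lemmas~\ref{ab} and \ref{abdual} is incompatible with the other lemma, so that the $2$-separations they produce must both satisfy alternative~(ii), which is precisely what Lemma~\ref{bb} asserts. The key observation is that the local-connectivity data in the two lemmas completely determines both of the invariants $r(Y_1) - r(Y_1 - \ell)$ and $\sqcap(X_1, \{\ell\})$, and the constraints cross.

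First I would eliminate alternative~(i) of Lemma~\ref{ab} using a rank comparison on $Y_1$. If that alternative held, then $D_2 = Y_1 - \ell$ with $\sqcap(D_2, \{\ell\}) = 1$, so the definition of local connectivity gives $r(Y_1) = r(Y_1 - \ell) + 1$. Now apply Lemma~\ref{abdual}. In its alternative~(i), $C_2 = Y_1 - \ell$ with $\sqcap(C_2, \{\ell\}) = 2$, which yields $r(Y_1) = r(Y_1 - \ell)$. In its alternative~(ii), $C_2 \subsetneq Y_1 - \ell$ with $\sqcap(C_2, \{\ell\}) = 2$; thus $\ell \in \cl(C_2) \subseteq \cl(Y_1 - \ell)$ and again $r(Y_1) = r(Y_1 - \ell)$. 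Either branch of Lemma~\ref{abdual} contradicts $r(Y_1) = r(Y_1 - \ell) + 1$, so alternative~(i) of Lemma~\ref{ab} is impossible, and the $(D_1, D_2)$ supplied by Lemma~\ref{ab} must satisfy alternative~(ii) and hence all the requirements Lemma~\ref{bb} places on it.

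Second I would eliminate alternative~(i) of Lemma~\ref{abdual} by the dual bookkeeping on $\sqcap(X_1, \{\ell\})$. Under alternative~(i) of Lemma~\ref{abdual}, $C_1 = X_1$ and $\sqcap(C_1, \{\ell\}) = 1$, so $\sqcap(X_1, \{\ell\}) = 1$. But Lemma~\ref{ab} (now known to be in alternative~(ii)) provides $D_1 \supseteq X_1$ with $\sqcap(D_1, \{\ell\}) = 0$; the monotonicity of local connectivity (Lemma~\ref{8.2.3}) then forces $\sqcap(X_1, \{\ell\}) \le \sqcap(D_1, \{\ell\}) = 0$, contradicting $\sqcap(X_1, \{\ell\}) = 1$. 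Hence alternative~(i) of Lemma~\ref{abdual} is also impossible, and the $(C_1, C_2)$ it supplies satisfies alternative~(ii).

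The argument is essentially bookkeeping built on Lemmas~\ref{ab} and \ref{abdual}: once one notices that alternative~(i) of Lemma~\ref{ab} uniquely pins down $r(Y_1) - r(Y_1 - \ell) = 1$ while every alternative of Lemma~\ref{abdual} pins it down to $0$, and that alternative~(i) of Lemma~\ref{abdual} pins down $\sqcap(X_1, \{\ell\}) = 1$ while every alternative of Lemma~\ref{ab} pins it down to $0$, the mutually exclusive pairings are immediate. I do not foresee any obstacle beyond identifying these two invariants as the right handles for the comparison; no appeal to the doubly labelled structure of $\ell$ beyond what is already embedded in Lemmas~\ref{ab} and \ref{abdual} is needed.
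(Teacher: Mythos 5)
Your proof is correct and takes essentially the same approach as the paper: both eliminate alternative~(i) of Lemma~\ref{ab} by showing it clashes with the local-connectivity data that every branch of Lemma~\ref{abdual} forces, then eliminate alternative~(i) of Lemma~\ref{abdual} via monotonicity applied to $X_1 \subseteq D_1$. The only cosmetic difference is that you translate the $\sqcap$ conditions into rank statements, whereas the paper works with $\sqcap$ directly and invokes monotonicity (Lemma~\ref{8.2.3}) for the second subcase.
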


\begin{proof} 
Assume that (i) of Lemma~\ref{ab} holds. Then, as $D_2 = Y_1 - \ell$, we see that $\sqcap(Y_1 - \ell,\{\ell\}) = 1$. Thus (i) of Lemma~\ref{abdual} cannot hold. Moreover, if (ii) of Lemma~\ref{abdual} holds, then $\sqcap(C_2, \{\ell\}) = 2$. This is a \cn\ as $\sqcap(D_2,\{\ell\}) = 1$ and $C_2 \subseteq D_2$. We conclude that (ii) of Lemma~\ref{ab} holds. If (i) of Lemma~\ref{abdual} holds, then 
$\sqcap(X_1, \{\ell\}) = 1$. But $X_1 \subseteq D_1$ and $\sqcap(D_1, \{\ell\}) = 0$; a \cn.
\end{proof}

We now use the $2$-separations $(D_1,D_2)$ and $(C_1,C_2)$ of $M\ba \ell$ and $M/ \ell$, respectively, found in the last lemma.

\begin{lemma} 
\label{linemup} The partitions $(D_1,D_2)$ and $(C_1,C_2)$ have the following properties.
\begin{itemize}
\item[(i)] $\lambda_{M\ba \ell}(C_1) = 3 = \lambda_{M\ba \ell}(C_2)$; 
\item[(ii)] $(D_1,D_2)$ and $(C_1,C_2)$ cross;  
\item[(iii)] each of $D_1 \cap C_2, D_2 \cap C_2$, and $D_2 \cap C_1$ consists of a single line, and $C_2 \cup D_2 = Y_1 - \ell$; and 
\item[(iv)] $\lambda_{M\ba \ell}(D_1 \cap C_1) = \lambda_{M}(D_1 \cap C_1) = 2.$
\end{itemize}
\end{lemma}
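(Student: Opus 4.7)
The plan is to prove the four parts in sequence, with part (iii) being the main obstacle.

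Part (i) is a direct computation. Since $(C_1, C_2)$ is a $2$-separation of $M/\ell$, we have $\lambda_{M/\ell}(C_1) = 1$, which expands as $r(C_1 \cup \ell) + r(C_2 \cup \ell) = r(M) + 3$. The hypothesis $\sqcap(C_i, \{\ell\}) = 2$ (together with $r(\{\ell\}) = 2$) forces $r(C_i \cup \ell) = r(C_i)$, so $r(C_1) + r(C_2) = r(M\ba \ell) + 3$, giving $\lambda_{M\ba \ell}(C_1) = \lambda_{M\ba \ell}(C_2) = 3$.

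Part (ii) rests on a closure argument. The hypothesis $\sqcap(C_i, \{\ell\}) = 2$ is equivalent to $\ell \in \cl_M(C_i)$, while $\sqcap(D_j, \{\ell\}) = 0$ gives $r(D_j \cup \ell) = r(D_j) + 2$ and hence $\ell \notin \cl_M(D_j)$. By monotonicity of closure, $C_2 \subseteq D_j$ would force $\ell \in \cl_M(D_j)$, a contradiction, so $C_2 \not\subseteq D_1$ and $C_2 \not\subseteq D_2$. For the symmetric direction I pass to $M^*$: using Lemma~\ref{sqcapdual} a short calculation (via $\sqcap^*(S, \{\ell\}) = \sqcap_{M/(E-S-\ell)}(S, \{\ell\})$) gives $\sqcap^*(D_i, \{\ell\}) = 2$ and $\sqcap^*(C_i, \{\ell\}) = 0$, so the same closure argument in $M^*$ yields $D_2 \not\subseteq C_1$ and $D_2 \not\subseteq C_2$. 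Hence all four quadrants $D_i \cap C_j$ are non-empty, and $(D_1, D_2)$ crosses $(C_1, C_2)$.

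For part (iii), I first establish $\lambda_{M\ba \ell}(D_1 \cap C_1) \ge 2$. If this $\lambda$ were $1$, then $(D_1 \cap C_1,\, D_2 \cup C_2)$ would be a $2$-separation of $M\ba \ell$; since $\ell \in \cl(C_2) \subseteq \cl(D_2 \cup C_2)$, adjoining $\ell$ yields $\lambda_M(D_2 \cup C_2 \cup \ell) = 1$, with both sides non-trivial (as $X_1 \subseteq D_1 \cap C_1$ has at least $|E(N)| - 1 \ge 3$ elements), contradicting $3$-connectivity of $M$. Uncrossing the pairs $(D_i, D_j)$ with $(C_i, C_j)$ in $M\ba \ell$ then yields $\lambda_{M\ba \ell}(D_2 \cap C_2) \le 2$ and $\lambda_{M\ba \ell}(D_1 \cap C_2) + \lambda_{M\ba \ell}(D_2 \cap C_1) \le 4$. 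The hard step is upgrading these bounds to the conclusion that each of $D_1 \cap C_2$, $D_2 \cap C_2$, $D_2 \cap C_1$ is a single line and that $D_1 \cap C_1 \cap (Y_1 - \ell) = \emptyset$, so $D_2 \cup C_2 = Y_1 - \ell$. Each off-diagonal quadrant lies in $Y_1 - \ell$ and contains no points of $M$ (by hypothesis on $D_2$ and $C_2$), so all its elements are lines. If some quadrant had at least two elements, I would combine it with appropriate neighbouring quadrants and, if necessary, $\ell$ itself to produce a non-$N$-$3$-separator properly contained in $Y_1$, contradicting the minimality of $Y_1$. This casework tracks carefully the discrepancy $\lambda_M(S) - \lambda_{M\ba \ell}(S)$ for the candidate sets $S$, using the closure data ($\ell$ skew to $D_1$, $\ell$ skew to $D_2$, $\ell \in \cl(C_2)$, and $\ell \in \cl(Y_1 - \ell)$, the last being immediate from $\sqcap(C_2, \{\ell\}) = 2$ and $C_2 \subseteq Y_1 - \ell$).

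With (iii) in hand, part (iv) is immediate: $D_1 \cap C_1 = X_1$ gives $\lambda_M(D_1 \cap C_1) = \lambda_M(X_1) = \lambda_M(Y_1) = 2$, and $\lambda_{M\ba \ell}(D_1 \cap C_1) = r(X_1) + r(Y_1 - \ell) - r(M\ba \ell) = r(X_1) + r(Y_1) - r(M) = 2$, using $r(Y_1 - \ell) = r(Y_1)$ from $\ell \in \cl(Y_1 - \ell)$.
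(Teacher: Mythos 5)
Your parts (i) and (iv) are correct and match the paper, and your closure observation $\sqcap(C_i,\{\ell\}) = 2 > 0 = \sqcap(D_j,\{\ell\})$ is exactly the right tool for (ii), but you apply it only to $C_2$ and then detour through $M^*$. The same observation applied to \emph{both} $C_1$ and $C_2$ (monotonicity of $\sqcap$, Lemma~\ref{8.2.3}) gives $C_i \not\subseteq D_j$ for all $i,j$, which directly yields all four quadrants non-empty; as written you establish only three and would still need to note $D_1 \cap C_1 \supseteq X_1 \neq \emptyset$.

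The genuine gap is in (iii). You correctly isolate $\lambda_{M\ba\ell}(D_1 \cap C_1) \ge 2$, but the plan you describe --- ``if some quadrant had at least two elements, I would combine it with appropriate neighbouring quadrants'' --- is not what is needed and does not by itself produce the conclusion $D_2 \cup C_2 = Y_1 - \ell$. The argument that works is the following sequence, each step of which must actually be carried out. First, uncross $\lambda_{M\ba\ell}(D_2)$ with $\lambda_{M\ba\ell}(C_i)$: $1 + 3 \ge \lambda_{M\ba\ell}(D_2 \cap C_i) + \lambda_{M\ba\ell}(D_2 \cup C_i)$. Second, because $\ell \in \cl(C_j)$ (with $j \neq i$) lies in the closure of the complement of $D_2 \cap C_i$ in $E - \ell$, Corollary~\ref{8.7.6} gives $\lambda_{M\ba\ell}(D_2 \cap C_i) = \lambda_M(D_2 \cap C_i)$; since $D_2 \cap C_i$ is non-empty and contains no points, this is $\ge 2$. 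Combining, $\lambda_{M\ba\ell}(D_2 \cup C_i) \le 2$. Third, because $\ell \in \cl(C_i) \subseteq \cl(D_2 \cup C_i)$, the same corollary transfers this to $\lambda_M(D_2 \cup C_i \cup \ell) \le 2$. Taking $i = 2$: $D_2 \cup C_2 \cup \ell$ is a subset of $Y_1$ with at least three elements, $\lambda_M \le 2$, and at most one element of $E(N)$, so it is a non-$N$-$3$-separator inside $Y_1$; by minimality of $Y_1$ it equals $Y_1$, giving $D_2 \cup C_2 = Y_1 - \ell$ and $D_1 \cap C_1 = X_1$. Then each off-diagonal quadrant is a non-empty set of lines in $Y_1 - \ell$ with $\lambda_M = 2$ (the diagonal case $D_1 \cap C_2$ follows because its complement is $D_2 \cup C_1 \cup \ell$, treated by the $i=1$ case); if any had $\ge 2$ elements it would \emph{itself} be a non-$N$-$3$-separator properly inside $Y_1$, contradicting minimality --- no combining with neighbours is involved. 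Your write-up signals awareness of the tools (``tracks carefully the discrepancy $\lambda_M(S) - \lambda_{M\ba\ell}(S)$'') but leaves the decisive uncrossing and both closure transfers unstated, and mis-describes the final minimality application.
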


\begin{proof} We have $\sqcap(D_1,\{\ell\}) = 0 = \sqcap(D_2,\{\ell\})$  and  
$\sqcap(C_1,\{\ell\}) = 2 = \sqcap(C_2,\{\ell\})$. Thus neither $C_1$ nor $C_2$ is contained in $D_1$ or $D_2$, so (ii) holds. Moreover, as $r(C_1 \cup \ell) + r(C_2 \cup \ell) - r(M) = 3$,  we see that (i) holds. To prove (iii) and (iv), we use an uncrossing argument. We have, for each $i$ in $\{1,2\}$,  
\begin{align*}
1 + 3 & = \lambda_{M\ba \ell}(D_2) + \lambda_{M\ba \ell}(C_i)\\
& \ge\lambda_{M\ba \ell}(D_2\cap C_i) + \lambda_{M\ba \ell}(D_2 \cup C_i).
\end{align*}
Since $D_2 \cap C_i \neq \emptyset$ and contains no points and $\ell \in \cl(C_j)$ where $j \neq i$, we deduce that 
$\lambda_{M\ba \ell}(D_2 \cap C_i) = \lambda_{M}(D_2 \cap C_i) \ge 2$. Thus 
$\lambda_{M\ba \ell}(D_2 \cup C_i) \le 2.$
Hence, as $\ell \in \cl(C_i)$, we see that 
\begin{equation}
\label{addno}
2 \ge \lambda_{M\ba \ell}(D_2 \cup C_i) = \lambda_M(D_2 \cup C_i \cup \ell).
\end{equation}
But $D_2 \cup C_2 \subseteq Y_1 - \ell$, so, by the definition of $Y_1$, we deduce that
\begin{equation}
\label{addno2}
D_2 \cup C_2 = Y_1 - \ell \text{~and~} D_1 \cap C_1 = X_1.
\end{equation}
Moreover, as $\lambda_{M\ba \ell}(D_2 \cup C_i) = 2$, we see that $\lambda_{M\ba \ell}(D_2 \cap C_i) = 2$. Hence 
\begin{equation}
\label{addno4}
\lambda_M(D_2 \cap C_i) = 2.
\end{equation}

Since $D_1 \cap C_1 = X_1$ and $D_1 \cap C_2$ is non-empty containing no points, it follows from (\ref{addno}) that 
\begin{equation}
\label{addno3}
2  =  \lambda_{M\ba \ell}(D_2 \cup C_i) = \lambda_M(D_2 \cup C_i \cup \ell) = \lambda_{M\ba \ell}(D_1 \cap C_j) = \lambda_M(D_1 \cap C_j).
\end{equation}
Thus (iv) holds. By that and (\ref{addno4}), it follows, using the minimality of $Y_1$, that each of $D_1 \cap C_2, D_2 \cap C_2$, and $D_2 \cap C_1$ consist of a single line in $M$. Hence (iii) holds.  
\end{proof}


For each $(i,j)$ in $\{(1,2),(2,2),(2,1)\}$, let $C_i \cap D_j = \{\ell_{ij}\}$.

\begin{lemma}
\label{ranks}
The following hold.
\begin{itemize}
\item[(i)] $r(D_2) = 3$ so $r(D_1) = r(M) - 2$; 
\item[(ii)] $r(D_1) = r(X_1) + 1$;   
\item[(iii)] $r(Y_1) = 5$; and 
\item[(iv)]  $r(C_2) = 4$.
\end{itemize}
\end{lemma}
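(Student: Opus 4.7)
The plan is to convert the connectivity data in Lemmas~\ref{bb} and \ref{linemup} into rank equations, reduce everything to pinning down the single unknown $r(M) - r(X_1)$, and then rule out the extreme values of this quantity. As preliminaries, I would use that $\ell$ is a line (by Lemma~\ref{Step0}) together with the $3$-connectedness of $M$ to get $r(M\ba\ell) = r(M)$. The separations $\lambda_M(X_1) = 2$, $\lambda_{M\ba\ell}(D_2) = 1$, and $\lambda_{M\ba\ell}(C_i) = 3$ then give
\[
r(X_1) + r(Y_1) = r(M) + 2, \quad r(D_1) + r(D_2) = r(M) + 1, \quad r(C_1) + r(C_2) = r(M) + 3.
\]
Monotonicity of $\sqcap$ (Lemma~\ref{8.2.3}) applied to $X_1 \subseteq D_1$ and $\sqcap(D_1, \{\ell\}) = 0$ yields $\sqcap(X_1, \{\ell\}) = 0$, so $r(X_1 \cup \ell) = r(X_1) + 2$; and $\sqcap(C_i, \{\ell\}) = 2$ gives $\ell \in \cl(C_1) \cap \cl(C_2)$, so $r(Y_1) = r(Y_1 - \ell)$.

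Part (ii) comes next. Because $\ell \in \cl(C_1) = \cl(X_1 \cup \ell_{21})$ while $\ell$ contributes rank $2$ over $X_1$, the chain $r(X_1 \cup \ell_{21}) \ge r(X_1 \cup \ell_{21} \cup \ell) \ge r(X_1 \cup \ell) = r(X_1) + 2$ together with the trivial bound $r(X_1 \cup \ell_{21}) \le r(X_1) + 2$ forces $r(C_1) = r(X_1) + 2$ and $\sqcap(X_1, \{\ell_{21}\}) = 0$. A symmetric argument, using $\ell \in \cl(C_2) = \cl(\{\ell_{12}, \ell_{22}\}) \subseteq \cl(D_2 \cup \ell_{12})$ together with $r(D_2 \cup \ell) = r(D_2) + 2$, gives $\sqcap(D_2, \{\ell_{12}\}) = 0$, so $r(Y_1 - \ell) = r(D_2 \cup \ell_{12}) = r(D_2) + 2$. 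Equating with $r(Y_1 - \ell) = r(Y_1) = r(M) - r(X_1) + 2$ yields $r(D_2) = r(M) - r(X_1)$. Substituting into $r(D_1) + r(D_2) = r(M) + 1$ gives $r(D_1) = r(X_1) + 1$, establishing (ii).

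The remaining parts all reduce to showing $r(M) - r(X_1) = 3$. From $r(C_1) + r(C_2) = r(M) + 3$ and $r(C_1) = r(X_1) + 2$ we get $r(C_2) = r(M) - r(X_1) + 1$. Since $C_2$ consists of two lines, $r(C_2) \le 4$, forcing $r(M) - r(X_1) \le 3$; and $r(D_2) \ge 2$ forces $r(M) - r(X_1) \ge 2$. The hard part will be ruling out $r(M) - r(X_1) = 2$. In this case $r(D_2) = 2$, so $\ell_{21}$ and $\ell_{22}$ are parallel lines. Since $|Y_1 \cap E(N)| \le 1$ and $\ell \notin E(N)$, at least one of $\ell_{21}, \ell_{22}$---call it $\ell'$---lies outside $E(N)$. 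Using the interchangeability of deletion and contraction on a parallel pair of lines, one verifies that $M\ba\ell'$ still has $N$ as a c-minor; and any non-trivial $2$-separation of $M\ba\ell'$ would extend to $M$ by restoring $\ell'$ to the side containing its parallel partner (which does not alter rank), so $M\ba\ell'$ is $3$-connected. Being $3$-connected with at least three elements, $M\ba\ell'$ is compact, so $M\baba\ell' = M\ba\ell'$ is $3$-connected with $N$ as a c-minor, contradicting that $(M,N)$ is a counterexample to Theorem~\ref{modc}. Hence $r(M) - r(X_1) = 3$, and (i), (iii), (iv) follow by direct substitution into the formulas above.
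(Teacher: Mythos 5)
Your proof is correct and takes a genuinely different route from the paper. The paper's proof establishes (i) first by eliminating $r(D_2)=2$ (via the parallel-lines/$N$-minor argument) and $r(D_2)=4$ (via a rank computation making $\{\ell_{22}\}$ $2$-separating), then establishes (ii) by separately eliminating $r(D_1)=r(X_1)$ and $r(D_1)=r(X_1)+2$, and finally derives (iii) and (iv) by substitution and a submodularity calculation. You instead derive (ii) directly, via the nice observation that $\ell\in\cl(C_1)$ and $\sqcap(X_1,\{\ell\})=0$ pin down $r(C_1)=r(X_1)+2$ and $\sqcap(X_1,\{\text{line in }D_1\cap C_2\})=0$, with a symmetric computation giving $r(D_2)=r(M)-r(X_1)$. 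This reduces the whole lemma to one unknown $k=r(M)-r(X_1)$, and you bound $k\in\{2,3\}$ cheaply from $r(C_2)\le 4$ and $r(D_2)\ge 2$. This is structurally cleaner. Two small loose ends in your write-up: (a) when ruling out $k=2$, "$M\ba\ell'$ has $N$ as a c-minor" for the specific $\ell'$ you fixed needs unpacking — the direct argument shows $N$ is a c-minor of $M\ba\ell'$ or of $M/\ell'$, and in the latter case the parallel partner is a loop, whence $M\ba(\text{partner})$ has the $N$-minor; you then need either the observation that the two single-line deletions are isomorphic, or to argue (as the paper does) that one of the two deletions is $3$-connected with an $N$-minor; and (b) you restrict to "non-trivial $2$-separations," but in fact your restoration argument applies verbatim to trivial ones as well (restoring $\ell'$ to the side with its parallel partner never increases rank), so you should drop "non-trivial" to make the $3$-connectedness conclusion airtight. (Incidentally, both you and the paper do more work than necessary for $r(D_2)\ge 3$: since $\sqcap(D_1,\{\ell\})=0$, we have $r(D_1\cup\ell)=r(D_1)+2\le r(M)$, so $r(D_1)\le r(M)-2$ and hence $r(D_2)=r(M)+1-r(D_1)\ge 3$ immediately, with no appeal to $N$-minors or $3$-connectivity of deletions.) Also note that your index convention $\ell_{ij}\in D_i\cap C_j$ is the transpose of the paper's $\ell_{ij}\in C_i\cap D_j$, which does not affect the substance but would need to be reconciled if splicing into the surrounding text, since Lemma~\ref{final} refers to $\ell_{22}$ (which coincides under both conventions).
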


\begin{proof} 
Now $D_2$ consists of two lines, $\ell_{12}$ and $\ell_{22}$. Suppose first that $r(D_2) = 2$. Then both $M\ba \ell_{12}$ and  $M\ba \ell_{22}$ are \thc. Without loss of generality,   $M/ \ell_{12}$ has $N$ as a c-minor. But $M/ \ell_{12}$ has $\ell_{22}$ as a loop, so $M\ba \ell_{22}$ is \thc\ having a c-minor isomorphic to $N$. Thus $r(D_2) \ge 3$.

Now suppose that $r(D_2) = 4$. Then $r(D_1) = r(M) - 3$. Clearly $r(D_1 \cup \ell_{12}) \le   r(M) - 1$. Now $D_1 \cup \ell_{12} \supseteq C_2$ so 
$r(D_1 \cup \ell_{12} \cup \ell) \le   r(M) - 1$. Hence $\{\ell_{22}\}$ is 2-separating in $M$; a \cn. Hence (i) holds.

Since $C_2 \cup D_2 = Y_1 - \ell$, we see that $D_1 = X_1 \cup \ell_{21}$. Suppose $r(D_1) = r(X_1)$.  As $X_1 \subseteq C_1$, we deduce that   
 $\ell_{21} \in \cl(C_1)$. But $\ell \in \cl(C_1)$. Hence 
\begin{align*} 
r(M) + 3 & = r(C_1) + r(C_2)\\
& =r(C_1 \cup \ell) + r(C_2 \cup \ell)\\
& = r(C_1 \cup \ell\cup \ell_{21}) + r(C_2 \cup \ell)\\
& \ge r(C_1 \cup C_2 \cup \ell) + r(\{\ell,\ell_{21}\}).
\end{align*}
Thus $r(\{\ell,\ell_{21}\}) \le 3$, so $\sqcap(D_1,\{\ell\}) \ge 1$; a \cn. Hence $r(D_1) \ge r(X_1) + 1$.

Suppose $r(D_1) = r(X_1) + 2.$ Then 
$$r(M) + 1 = r(D_1) + r(D_2) = r(X_1) + 2 + 3,$$ 
so $r(X_1) = r(M) - 4$. Thus $r(Y_1) = 6$. Now $r(C_2) = r(C_2 \cup \ell)$. Thus $6 = r(Y_1) = r(Y_1 - \ell)$. Since $Y_1 - \ell$ consists of three lines, two of which are in $D_2$, we deduce that $r(D_2) = 4$; a \cn\ to (i). We conclude that $r(D_1) = r(X_1) + 1$, that is, (ii) holds.

Finally, as $r(D_2) = 3$, we see that $r(M) = r(D_1) + r(D_2) - 1 = [r(X_1) +1] + 3 - 1$. But $r(M) = r(X_1) + r(Y_1) - 2$. Thus $r(Y_1) = 5$, so (iii) holds. Moreover, $r(Y_1 - \ell) = 5$, that is, $r(C_2 \cup D_2) = 5.$ Now $C_2$ consists of two lines so $r(C_2) \le 4$. Thus
\begin{align*}
4 + 3 & \ge r(C_2) + 3\\
& = r(C_2) + r(D_2)\\
& \ge r(C_2 \cup D_2) + r(C_2 \cap D_2)\\
& = 5 + r(\{\ell_{22}\})\\
& = 5 + 2.
\end{align*}
We deduce that $r(C_2) = 4$ so (iv) holds.
\end{proof}

By proving the following lemma, we will establish the final \cn\ that completes the proof of Theorem~\ref{modc}.

\begin{lemma} 
\label{final} The $2$-polymatroid $M/ \ell_{22}$   is $3$-connected having a c-minor isomorphic to $N$. 
\end{lemma}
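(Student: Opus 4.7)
The plan is to establish two claims, which together contradict $(M, N)$ being a counterexample to Theorem~\ref{modc}: (1) $M/\ell_{22}$ has a c-minor isomorphic to $N$, and (2) $M/\ell_{22}$ is $3$-connected.

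For (1), I would first show $\sqcap_M(D_1, \ell_{22}) = 0$. Since $\sqcap_M(C_2, \ell) = 2$ gives $\ell \in \cl_M(C_2)$, and $C_2 \subseteq E(M) - \ell - \ell_{21} = D_1 \cup \ell_{22}$, the set $D_1 \cup \ell_{22}$ has the same rank as $(D_1 \cup \ell_{22}) \cup \ell = E(M) - \ell_{21}$, which is $r(M)$ by the $3$-connectedness of $M$. Hence $\sqcap_M(D_1, \ell_{22}) = 0$, and Lemma~\ref{dennisplus}(iv) applied to the $2$-sum $M\ba \ell = M_{D_1} \oplus_2 M_{D_2}$ at $y = \ell_{22}$ yields $M\ba \ell/\ell_{22} = M_{D_1} \oplus_2 (M_{D_2}/\ell_{22})$. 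A direct rank computation inside the rank-$3$ $2$-polymatroid $M_{D_2}$ shows that in $M_{D_2}/\ell_{22}$ the remaining element $\ell_{21}$ becomes a point parallel to the basepoint $p$, so $M\ba \ell/\ell_{22}$ is $M_{D_1}$ with $p$ relabelled by $\ell_{21}$. By Lemma~\ref{p69} applied to $(D_1, D_2)$, $M_{D_1}$ carries a special $N$-minor; Lemma~\ref{p49} then transfers this to a special $N$-minor of $M\ba \ell/\ell_{22}$, proving (1).

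For (2), $M/\ell_{22}$ is $2$-connected by Lemma~\ref{Step1}, and the key structural features are that $\ell_{21}$ becomes a point of $M/\ell_{22}$ (since $\sqcap_M(\ell_{21}, \ell_{22}) = 1$) while $\ell$ and $\ell_{12}$ become parallel lines of $M/\ell_{22}$ (since $\sqcap_M(\ell, \ell_{22}) = 0$ and $\ell \in \cl_M(C_2)$, giving $r_{M/\ell_{22}}(\{\ell, \ell_{12}\}) = 2$). Supposing $(A, B)$ is a non-trivial $2$-separation of $M/\ell_{22}$, Lemma~\ref{skewer} together with the $3$-connectedness of $M$ forces $\sqcap_M(A, \ell_{22}), \sqcap_M(B, \ell_{22}) \ge 1$, so both $(A \cup \ell_{22}, B)$ and $(A, B \cup \ell_{22})$ are exact $3$-separations of $M$. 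Exploiting the parallel status of $\{\ell, \ell_{12}\}$, I would place both on one side, say $A$, and then uncross $(A, B)$ with the $3$-separation $(X_1, Y_1)$ of $M$ and with the $2$-separations $(D_1, D_2)$ of $M\ba \ell$ and $(C_1, C_2)$ of $M/\ell$ to reduce to a small number of configurations for how $A$ and $B$ meet $X_1$, $D_2$, and $C_2$.

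The main obstacle is ruling out the distinguished candidate $2$-separation $(\{\ell, \ell_{12}\}, E(M/\ell_{22}) - \{\ell, \ell_{12}\})$, whose connectivity in $M/\ell_{22}$ computes to $2 - \sqcap_M(X_1, D_2)$ and which therefore exists exactly when $\sqcap_M(X_1, D_2) = 1$. In that case the uncrossing identity $2 = \sqcap_M(X_1 \cup D_2, \ell_{12}) + \sqcap_M(X_1, D_2)$ forces $\{\ell_{12}\}$ to be trivially $2$-separating in $M\ba \ell$ and $\sqcap_M(C_1, \ell_{22}) = 2$, i.e.\ $\ell_{22} \in \cl_M(C_1)$. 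I would leverage the latter by replaying the argument of (1) with the roles of $\ell_{12}$ and $\ell_{22}$ interchanged, producing a $3$-connected contraction of $M$ carrying an $N$-minor and so contradicting the counterexample assumption via outcome (i) of Theorem~\ref{modc}. With this configuration excluded, every remaining placement of $(A, B)$ is dispatched by a parallel uncrossing argument, invoking the minimality of $Y_1$ or the $\nu$-minimal choice of $\ell$ to close out the proof.
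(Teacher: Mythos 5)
Your plan to prove the two claims separately matches the paper's strategy, and your argument for claim (1) is essentially correct: you verify $\sqcap_M(D_1,\{\ell_{22}\})=0$ directly from the rank bookkeeping, then decompose $M\ba\ell/\ell_{22}$ via the $2$-sum to exhibit the c-minor isomorphic to $N$. (Be warned that you have systematically swapped $\ell_{12}$ and $\ell_{21}$ throughout; the element of $D_2$ surviving in $M_{D_2}/\ell_{22}$ is $\ell_{12}$, not $\ell_{21}$, so the relabelling of $p$ should be by $\ell_{12}$, and in $M/\ell_{22}$ it is $\ell_{12}$ that becomes a point while $\ell$ and $\ell_{21}$ become parallel lines.) However, claim (2) is where the substance of the proof lies and your proposal does not actually establish it.

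First, a local error: from $\sqcap_M(A,\{\ell_{22}\})\ge 1$ and $\sqcap_M(B,\{\ell_{22}\})\ge 1$ you cannot conclude that both $(A\cup\ell_{22},B)$ and $(A,B\cup\ell_{22})$ are exact $3$-separations of $M$; by Lemma~\ref{obs1}, $\lambda_M(A)=1+\sqcap_M(A,\{\ell_{22}\})$, and nothing rules out $\sqcap_M(A,\{\ell_{22}\})=2$, which gives $\lambda_M(A)=3$. Second, and more fundamentally, the reduction "to a small number of configurations" and the closing sentence that "every remaining placement of $(A,B)$ is dispatched by a parallel uncrossing argument" is precisely the hard part of the proof and is left as a gesture. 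The paper does not take a generic $2$-separation; it fixes one whose non-$N_1$-side $B'$ has maximum cardinality, uses Lemma~\ref{dualmu} to get $|B'|\ge 3$, shows $\ell_{21}\in A'$ because $\ell$ and $\ell_{21}$ are parallel lines, and only then uncrosses $B'$ with $X_1$ to land on the dichotomy $\lambda_{M/\ell_{22}}(\{\ell_{12},\ell_{21},\ell\})=1$ or $\lambda_{M/\ell_{22}}(\{\ell_{21},\ell\})=1$; both are then killed by separate rank computations. Your "distinguished candidate" $\{\ell,\ell_{21}\}$ is indeed the crux, and your formula $\lambda_{M/\ell_{22}}(\{\ell,\ell_{21}\})=2-\sqcap_M(X_1,D_2)$ is correct, but the resolution is not what you propose: $\sqcap_M(X_1,D_2)=0$ follows immediately from $r(X_1)=r(M)-3$, $r(D_2)=3$, and $r(X_1\cup D_2)=r(E(M)-\ell-\ell_{21})=r(M)$, with no need to "replay (1) with the roles of $\ell_{12}$ and $\ell_{22}$ interchanged." Moreover, the "uncrossing identity" $2=\sqcap_M(X_1\cup D_2,\ell_{12})+\sqcap_M(X_1,D_2)$ is not an application of any lemma in the paper (the first summand is trivially $2$ since $\ell_{12}\in D_2$), and producing "a $3$-connected contraction of $M$ carrying an $N$-minor" would contradict the counterexample assumption only once its $3$-connectedness is actually established, which is the very thing under dispute. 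As it stands, the argument for $3$-connectedness of $M/\ell_{22}$ is missing the careful choice of $2$-separation, the uncrossing with $X_1$ that isolates the two candidate sets, and the elimination of the three-element case $\{\ell_{12},\ell_{21},\ell\}$.
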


\begin{proof}
First we show the following.

\begin{sublemma}
\label{finalfirst}
$\sqcap(D_1,\{\ell_{i2}\}) = 0$ for each $i$ in $\{1,2\}$.
\end{sublemma}

Suppose $\sqcap(D_1,\{\ell_{i2}\}) \ge 1$. Then $r(D_1 \cup \ell_{i2}) \le r(D_1) + 1$. But $\ell \in \cl(C_i) \subseteq \cl(D_1 \cup \ell_{i2})$, so 
 $r(D_1 \cup \ell \cup \ell_{i2}) \le r(D_1) + 1$. Also $r(\{\ell_{j2}\}) = 2$ where $\{i,j\} = \{1,2\}$. Thus
 \begin{align*}
 r(D_1 \cup \ell \cup \ell_{i2})+ r(\{\ell_{j2}\}) & \le r(D_1) + 1 + 2\\
 & = r(M) - 2 + 1 +2\\
 & = r(M) + 1.
 \end{align*}
Hence $\{\ell_{j2}\}$ is 2-separating in $M$; a \cn. Hence \ref{finalfirst} holds.
 
 Now $M\ba \ell$ has a c-minor isomorphic to $N$ and $\sqcap(D_1,D_2) = 1$. 
As $\sqcap(D_1,\{\ell_{i2}\}) = 0$, Lemma~\ref{obs1} implies that  $\sqcap_{M/\ell_{i2}}(D_1,D_2 - \ell_{i2}) = 1$ for each $i$ in $\{1,2\}$. Thus, by Lemma~\ref{claim1}(ii), 
 \begin{sublemma}
\label{finalsecond}
$M\ba \ell/\ell_{i2}$  has a c-minor isomorphic to $N$ for each $i$ in $\{1,2\}$.
\end{sublemma}

It remains to show that $M/\ell_{22}$ is \thc. This matroid is certainly $2$-connected. Next we show that
 \begin{sublemma}
\label{end1}
$\ell$ and $\ell_{21}$ are parallel lines in $M/\ell_{22}$.
\end{sublemma}

To see this, note that, by Lemma~\ref{ranks}(iv), 
$$r(C_2 \cup \ell) = r(C_2) = r(\{\ell_{21},\ell_{22}\}.$$
Also, for each $i$ in $\{1,2\}$, we have $\sqcap(\{\ell\}, \{\ell_{2i}\}) \le \sqcap(\{\ell\},D_i) = 0$, so \ref{end1} holds.

Now take a fixed c-minor of $M\ba \ell/\ell_{22}$ isomorphic to $N$; call it $N_1$. Let $(A' \cup \ell, B')$ be a \tws\ of $M/\ell_{22}$ in which the non-$N_1$-side has maximum size and $\ell \not\in A'$. By Lemma~\ref{dualmu}, both $A' \cup \ell$ and $B'$ have at least three elements.

 \begin{sublemma}
\label{ell12a'}
 $\ell_{21} \in A'$.
\end{sublemma}

To see this, note that, since $\ell$ and $\ell_{21}$ are parallel lines in $M/\ell_{22}$, if  $\ell_{21} \in B'$, then $\ell \in \cl_{M/\ell_{22}}(B')$, so 
$\sqcap_{M/\ell_{22}}(A' \cup \ell, B') \ge 2$; a \cn.

 \begin{sublemma}
\label{ranksagain}
 $r_M(D_1 \cap C_1) = r(M) -3= r_{M/\ell_{22}}(D_1 \cap C_1)$.
\end{sublemma}

By Lemma~\ref{linemup}(iii), $D_1 \cap C_1 = X_1$. By Lemma~\ref{ranks}(i) and (ii), $r(D_1) = r(M) - 2$ and $r(D_1) = r(D_1 \cap C_1) + 1$, so 
$r_M(D_1 \cap C_1) = r(M) -3$. By \ref{finalfirst},  $\sqcap(D_1 \cap C_1, \{\ell_{22}\}) = 0$, so $r_M(D_1 \cap C_1) =  r_{M/\ell_{22}}(D_1 \cap C_1)$.

 \begin{sublemma}
\label{ranksagain2}
 $r_{M/\ell_{22}}((D_1 \cap C_1)\cup \ell_{12})= r(M) -2$.
\end{sublemma}

To see this, observe that 
\begin{align*}
r_{M/\ell_{22}}((D_1 \cap C_1)\cup \ell_{12}) & = r((D_1 \cap C_1)\cup \ell_{12}\cup \ell_{22}) - 2\\
& = r((D_1 \cap C_1)\cup \ell_{12}\cup \ell \cup \ell_{22}) - 2 \text{~~as $\ell \in \cl(C_1)$;}\\
& = r(M\ba \ell_{21}) - 2\\
&= r(M) - 2.
\end{align*}

By combining \ref{ranksagain} and \ref{ranksagain2}, we deduce that 

 \begin{sublemma}
\label{ell12not}
 $r_{M/\ell_{22}}(D_1 \cap C_1) = r_{M/\ell_{22}}((D_1 \cap C_1)\cup \ell_{12}) - 1$.
\end{sublemma}

Next we show that 
 \begin{sublemma}
\label{onetime}
 $\lambda_{M/\ell_{22}}(\{\ell_{12}, \ell_{21},\ell\}) = 1$ or $\lambda_{M/\ell_{22}}(\{\ell_{21},\ell\}) = 1$.
\end{sublemma}

Recall that $X_1 = D_1 \cap C_1$ and $Y_1  = \{\ell,\ell_{12},\ell_{21},\ell_{22}\}$. By
uncrossing, we have 
\begin{align*}
1 + 2 & = \lambda_{M/\ell_{22}}(B') + \lambda_{M/\ell_{22}}(X_1)\\
& \ge \lambda_{M/\ell_{22}}(B' \cup X_1) + \lambda_{M/\ell_{22}}(B' \cap X_1).
\end{align*}

As $|B'| \ge 3$, it follows by \ref{ell12a'} that   $|B'\cap X_1| \ge 2$. Suppose $\lambda_{M/\ell_{22}}(B' \cap X_1) = 1$. Now $B'\cap X_1 \subseteq D_1$, so $\sqcap(B' \cap X_1,\{\ell_{22}\}) \le \sqcap(D_1,\{\ell_{22}\}) \le 0$. Thus, by Lemma~\ref{obs1}, $ \lambda_{M}(B' \cap X_1) = 1$. This \cn\   implies that $\lambda_{M/\ell_{22}}(B' \cap X_1) = 2$, so $1 = \lambda_{M/\ell_{22}}(B' \cup X_1)$. Now, by \ref{ell12a'}, $\ell_{21} \in A'$, so 
$E- \ell - (B' \cup X_1)$ is $\{\ell_{12}, \ell_{21},\ell\}$ or $\{\ell_{21},\ell\}$. Thus  \ref{onetime} holds.  

Suppose $\lambda_{M/\ell_{22}}(\{\ell_{12}, \ell_{21},\ell\}) = 1$. Then, as $\ell_{22}$ is skew to $X_1$ in $M$, we deduce that $\lambda_{M}(\{\ell_{12}, \ell_{21},\ell, \ell_{22}\}) = 1$, that is, $\lambda_{M}(Y_1) = 1$; a \cn. We conclude that $\lambda_{M/\ell_{22}}(\{\ell_{21},\ell\}) = 1$.


By Lemma~\ref{obs1}, as $\lambda_{M}(D_1 \cap C_1) = 2$ and $\sqcap(D_1 \cap C_1, \{\ell_{22}\}) = 0$, we see that 
$\lambda_{M/\ell_{22}}(D_1 \cap C_1) = 2$. Thus, by \ref{ell12not} and Lemma~\ref{ranks}, 
\begin{align*}
2 & = r_{M/\ell_{22}}(D_1 \cap C_1) + r_{M/\ell_{22}}(\{\ell_{12},\ell_{21},\ell\}) - r(M/\ell_{22})\\
& = [r_{M/\ell_{22}}(D_1 \cap C_1) + 1] + [r_{M/\ell_{22}}(\{\ell_{12},\ell_{21},\ell\}) - 1] - r(M/\ell_{22})\\
& = r_{M/\ell_{22}}((D_1 \cap C_1) \cup \ell_{12}) + [r(Y_1) - 2 - 1] - r(M/\ell_{22})\\
& = r_{M/\ell_{22}}((D_1 \cap C_1) \cup \ell_{12}) + [r(C_2) - 2] - r(M/\ell_{22})\\
& = r_{M/\ell_{22}}((D_1 \cap C_1) \cup \ell_{12}) + r_{M/\ell_{22}}(\{\ell_{21},\ell\}) - r(M/\ell_{22})\\
& = \lambda_{M/\ell_{22}}(\{\ell_{21},\ell\}).
\end{align*}
This \cn\ to \ref{onetime} completes the proof of the lemma and thereby finishes the proof of Theorem~\ref{modc}.
\end{proof}

\end{document}